\tikzset{
    labl/.style={anchor=south, rotate=90, inner sep=.4mm}
}
\newcommand{\Addresses}{{
  \bigskip
  \footnotesize

  \textsc{Max Planck Institute for Mathematics, Bonn, Germany}\par\nopagebreak
  \textit{E-mail address}: \texttt{maarten.mol.math@gmail.com}

}}
\newtheorem{thm}{Theorem}[section]
\newtheorem{Thm}{Theorem}
\newtheorem{cor}[thm]{Corollary}
\newtheorem{prop}[thm]{Proposition}
\newtheorem{lemma}[thm]{Lemma}
\theoremstyle{definition}
\newtheorem{defi}[thm]{Definition}
\newtheorem{Ex}{Example}
\newtheorem{ex}[thm]{Example}
\newtheorem{rem}[thm]{Remark}
\newtheorem{Rem}{Remark}
\newtheorem*{Defi}{Definition}
\pgfmathsetmacro{\shift}{0.65ex}
\renewcommand{\L}{\mathcal{L}}
\newcommand{\G}{\mathcal{G}}
\newcommand{\U}{\text{$\mathcal{U}$}}
\newcommand{\g}{\mathfrak{g}}
\newcommand{\h}{\mathfrak{h}}
\renewcommand{\t}{\mathfrak{t}}
\newcommand{\F}{\mathcal{F}}
\renewcommand{\H}{\mathcal{H}}
\newcommand{\T}{\mathbb{T}}
\newcommand{\A}{\mathcal{A}}
\newcommand{\No}{\mathcal{N}}
\newcommand{\R}{\mathbb{R}}
\newcommand{\C}{\mathbb{C}}
\newcommand{\Z}{\mathbb{Z}}
\renewcommand{\dim}{\text{dim}}
\renewcommand{\ker}{\textrm{Ker}}
\newcommand{\im}{\textrm{Im}}
\newcommand{\rk}{\textrm{rank}}
\renewcommand{\O}{\mathcal{O}}
\renewcommand\S{\mathcal{S}}
\renewcommand\phi{\varphi}
\newcommand{\sT}{\mathcal{T}}
\newcommand{\GL}{\textrm{GL}}
\renewcommand{\d}{\textrm{d}}
\newcommand{\E}{\mathcal{E}}
\renewcommand{\c}{\mathfrak{c}}
\newcommand{\B}{\mathcal{B}}
\begin{document}
\title[\resizebox{5.5in}{!}{On the classification of multiplicity-free Hamiltonian actions by regular proper symplectic groupoids}]{On the classification of multiplicity-free Hamiltonian actions by regular proper symplectic groupoids}
\author[\resizebox{0.65in}{!}{Maarten Mol}]{Maarten Mol}
\date{}
\maketitle
\begin{abstract} In this paper we study a natural generalization of symplectic toric manifolds in the context of regular Poisson manifolds of compact types. To be more precise, we consider a class of multiplicity-free Hamiltonian actions by regular proper symplectic groupoids that we call faithful. Given such a groupoid, we classify its faithful multiplicity-free Hamiltonian actions in terms of what we call Delzant subspaces of its orbit space --  
certain `suborbifolds with corners' satisfying the Delzant condition relative to the integral affine orbifold structure of the orbit space. This encompasses both the classification of symplectic toric manifolds (due to Delzant) in terms of Delzant polytopes 
and the classification of proper Lagrangian fibrations over an integral affine base manifold (due to Duistermaat) in terms of a sheaf cohomology group. Each Delzant subspace comes with an orbifold version of this cohomology, the degree one part of which classifies faithful multiplicity-free Hamiltonian actions with momentum map image equal to the Delzant subspace, provided there exists such an action. The obstruction to existence is encoded by a degree two class in this cohomology: the Lagrangian Dixmier-Douady class. In addition to the above, we introduce another invariant, which leads to a variation of our classification result involving only classical sheaf cohomology and the group cohomology of certain modules for the isotropy groups of the groupoid. 

\end{abstract}
\tableofcontents

\section*{Introduction}
This paper concerns the classification of {multiplicity-free} Hamiltonian actions of regular proper symplectic groupoids. Recall that the notion of Hamiltonian action for symplectic groupoids (introduced in \cite{WeMi}) unifies various momentum map theories appearing in Poisson geometry, with the common feature that in each case the momentum map is a Poisson map:
\begin{equation*} J:(S,\omega)\to (M,\pi)
\end{equation*} from a symplectic manifold into a specified Poisson manifold $(M,\pi)$, equipped with a Hamiltonian action of a natural symplectic groupoid integrating $(M,\pi)$ (see e.g. \cite{We7,Zu} and the references therein). Here, we focus on such actions for which the Poisson structure is regular and the symplectic groupoid is proper, using the framework developed in \cite{CrFeTo} for such symplectic groupoids and their associated Poisson structures. Furthermore, the actions that we consider are multiplicity-free, in the sense of \cite{GS3}. The main point of this paper is to give a classification of a large class of such actions, that we call faithful.\\ 

To explain this classification we first focus on actions of symplectic torus bundles, which form a class of symplectic groupoids analogous to the class of Lie groups formed by tori. Like tori, these admit a particularly concrete description in terms of lattices. Indeed, a symplectic torus bundle $(\sT,\Omega)$ induces an integral affine structure on its base $M$, encoded by a Lagrangian lattice bundle $\Lambda$ in the cotangent bundle $T^*M$, and $(\sT,\Omega)$ is entirely encoded by $\Lambda$. More precisely, it is canonically isomorphic to the symplectic torus bundle $(\sT_\Lambda,\Omega_\Lambda)$ over $M$, where $\sT_\Lambda:=T^*M/\Lambda$ denotes the bundle of tori associated to $\Lambda$ (a groupoid over $M$ with multiplication given by fiberwise addition) and $\Omega_\Lambda$ is the symplectic form induced by the canonical symplectic form $\d \lambda_\textrm{can}$ on the cotangent bundle $T^*M$ (see e.g. \cite[Proposition 3.1.6]{CrFeTo}). The starting point for our classification in the case of $(\sT,\Omega)$-spaces (Theorem \ref{classtorsymptorbunthm} below) was to find the connection between two classical results: the classification of symplectic toric manifolds in terms of Delzant polytopes (due to Delzant \cite{De}) and the classification of proper Lagrangian fibrations via their Lagrangian Chern class (implicit in Duistermaat's work \cite{Du}), recalled in Examples \ref{symptormanex} and \ref{proplagrfibrex} below. {Inspired by these, we consider Hamiltonian $(\sT,\Omega)$-spaces $J:(S,\omega)\to M$ with the three properties below. }
\begin{itemize}\item[i)] The $\sT$-action is free on a dense subset of $S$.
\item[ii)] $\dim(S)=2\dim(M)$ and the fibers of $J$ are connected. 
\item[iii)] The momentum map $J:S\to M$ is proper as a map into its image. 
\end{itemize} {We call these \textbf{faithful toric} $(\sT,\Omega)$-spaces.} An equivalent set of conditions consists of i) together with the two conditions below (cf. Appendix A).
\begin{itemize} \item[ii')] The $\sT$-orbits coincide with the fibers of $J$.
\item[iii')] The map $S/\sT\to M$ induced by $J$ is a topological embedding. 
\end{itemize} Note that the last condition here is purely topological, as the second condition means that the induced map $S/\sT\to M$ is injective. {Like symplectic toric manifolds, faithful toric $(\sT,\Omega)$-spaces are closely related to integrable systems. For instance, in local coordinates for $M$, the momentum map $J$ of such a $(\sT,\Omega)$-space is an integrable system with only elliptic singularities.}  

\begin{Thm}[Classification of faithful toric spaces]\label{classtorsymptorbunthm} Let $(\sT,\Omega)$ be a symplectic torus bundle over $M$ with induced integral affine structure $\Lambda$ on $M$.
\begin{itemize}\item[a)] For any {faithful} toric $(\sT,\Omega)$-space $J:(S,\omega)\to M$, the image of the momentum map $J$ is a Delzant subspace of the integral affine manifold $(M,\Lambda)$ (Definition \ref{delzsubmandefi}; cf. Example \ref{symptormanex}).
\item[b)] For any Delzant subspace $\Delta$ in $(M,\Lambda)$, there is a canonical bijection: 
\begin{equation*} \left\{\begin{aligned} &\text{ }\text{ }\textrm{Isomorphism classes of} \\ &\textrm{ {faithful} toric $(\sT,\Omega)$-spaces } \\ & \textrm{ with momentum image $\Delta$} \end{aligned}\right\}\longleftrightarrow H^1(\Delta,\L),
\end{equation*} {defined by associating} to such a $(\sT,\Omega)$-space its `Lagrangian Chern class' (cf. Example \ref{proplagrfibrex}). Here the right-hand side denotes the degree one cohomology of the sheaf $\L$ on $\Delta$ consisting of Lagrangian sections {of 
$\sT$ over $\Delta$} (Definition \ref{shfinvlagrsecdefi:torbunversion}).
\end{itemize} So, there is a canonical bijection:
\begin{equation}\label{finalbijclassthmtorbunactintro}
\left\{\begin{aligned} &\text{ }\text{ }\textrm{Isomorphism classes of\text{ }} \\ &\textrm{{faithful} toric $(\sT,\Omega)$-spaces } \end{aligned}\right\}\longleftrightarrow \left\{\begin{aligned} &\text{Pairs $(\Delta,\kappa)$ consisting of a Delzant subspace $\Delta$}\\ & \quad\text{ of $(M,\Lambda)$ and an element $\kappa$ of $H^1(\Delta,\L)$ }\end{aligned}\right\}.
\end{equation} 
\end{Thm}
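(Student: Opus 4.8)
The plan is to deduce the final bijection \eqref{finalbijclassthmtorbunactintro} by assembling parts a) and b), so I will first indicate how I would establish those. Throughout I would use the identification $(\sT,\Omega)\cong(\sT_\Lambda,\Omega_\Lambda)$ recalled above, together with the local normal form for Hamiltonian actions near an orbit (the symplectic slice / Marle--Guillemin--Sternberg theorem, adapted to symplectic groupoids). For part a), fix a faithful toric space $J:(S,\omega)\to M$ and a point $s\in S$ lying over $x=J(s)$. The isotropy of the $\sT$-action at $s$ is a subtorus of $\sT_x=T^*_xM/\Lambda_x$, corresponding to a subspace $W\subseteq T^*_xM$ that is rational with respect to $\Lambda_x$; by faithfulness (condition i)) and the dimension count (condition ii)), the symplectic slice is a linear symplectic torus representation, so near $s$ the space $S$ is modeled on a product of a piece of $\sT_\Lambda$ with a standard $\C^k$-toric model. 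Computing $J$ in this model shows that its image near $x$ is the translate of a rational polyhedral corner spanned by primitive vectors of $\Lambda_x$ that form part of a $\Z$-basis -- precisely the Delzant condition -- while properness (condition iii), equivalently iii')) forces $S/\sT\to M$ to be a topological embedding onto a closed subset. Patching these corner models over $J(S)$ shows that the image is a Delzant subspace of $(M,\Lambda)$ in the sense of Definition \ref{delzsubmandefi}.

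For part b), fix a Delzant subspace $\Delta$ and proceed by a sheaf-theoretic (\v{C}ech) torsor argument over $\Delta$. The first step is a \emph{local uniqueness} statement: any two faithful toric spaces with momentum image $\Delta$ are isomorphic over sufficiently small opens of $\Delta$. Over the interior $\Delta^{\circ}$, where the action is free, this is Duistermaat's local triviality of proper Lagrangian fibrations; near the boundary strata it is Delzant's uniqueness of the local toric model obtained in the analysis of a), and the Delzant condition is exactly what makes these two local pictures compatible. The second step is to identify the sheaf of vertical symplectic automorphisms of such a local model: a section $\tau$ of $\sT$ over an open $U\subseteq\Delta$ acts by $s\mapsto \tau(J(s))\cdot s$, and this automorphism is symplectic precisely when $\tau$ is a Lagrangian section; one checks that, including compatibility with the isotropy along the boundary, the resulting automorphism sheaf is exactly the sheaf $\L$ of Definition \ref{shfinvlagrsecdefi:torbunversion}. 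These two steps exhibit the faithful toric spaces with image $\Delta$ as a torsor under $\L$: choosing a reference space $S_0$ and local isomorphisms $\phi_i\colon S|_{U_i}\xrightarrow{\sim} S_0|_{U_i}$, the comparisons $\phi_i\circ\phi_j^{-1}$ form a \v{C}ech $1$-cocycle valued in $\L$, whose class is the Lagrangian Chern class $\kappa(S)\in H^1(\Delta,\L)$. Standard torsor arguments then give that $S\mapsto\kappa(S)$ is well defined (a change of local isomorphisms alters the cocycle by a coboundary), injective (equal classes let the local isomorphisms be patched into a global one), and surjective (re-gluing $S_0$ by the transition automorphisms attached to a given cocycle realizes any prescribed class). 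Existence of the reference $S_0$ -- equivalently the vanishing of the degree-two obstruction in this abelian setting -- I would obtain by gluing the local Delzant models, using that the relevant Lagrangian Dixmier--Douady class is trivial for a symplectic torus bundle.

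Finally, the combined bijection \eqref{finalbijclassthmtorbunactintro} assembles a) and b). Associate to each isomorphism class of faithful toric space $J:(S,\omega)\to M$ the pair $(\Delta,\kappa)$ with $\Delta=J(S)$ and $\kappa=\kappa(S)$; by a) the first entry is a Delzant subspace, so this lands in the right-hand set. Both sides decompose as disjoint unions indexed by the Delzant subspace $\Delta$ -- the left by momentum image, the right tautologically -- and on each index the assignment restricts to $S\mapsto\kappa(S)$, which is the bijection onto $H^1(\Delta,\L)$ furnished by b). A map between disjoint unions that is a bijection on each indexing piece is itself a bijection, which yields \eqref{finalbijclassthmtorbunactintro}.

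The main obstacle I expect is the boundary analysis underlying the two steps of b): establishing the rigid local normal form of a faithful toric space along the singular (corner) strata of $\Delta$, and verifying that its sheaf of vertical symplectic automorphisms is exactly $\L$. This is where Delzant's local toric theory and Duistermaat's Lagrangian fibration theory must be merged into a single statement valid across all strata, and where the precise formulation of the Delzant condition and of the sheaf $\L$ over a space with corners (Definitions \ref{delzsubmandefi} and \ref{shfinvlagrsecdefi:torbunversion}) carries the real weight.
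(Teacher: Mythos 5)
Your treatment of part (a) and of the torsor structure in part (b) tracks the paper's own route closely: for (a), reduction via the local normal form to known facts about (possibly non-compact) symplectic toric manifolds (Proposition \ref{prop:properties-non-compact-symp-tor-man}); for (b), the three ingredients you list are exactly the paper's Proposition \ref{locclasstoricthm:torbunversion} (local uniqueness, proved from the MGS-type Theorem \ref{loceqthmhamact}), Theorem \ref{isoautsymplagseccor:torbunversion} (the equivariant symplectomorphism sheaf is $\L$, proved by the Whitney/Schwarz smooth-extension argument in the style of Delzant's Lemma 2.6), and the \v{C}ech regluing argument of Theorem \ref{torsortorspthm:torbunversion} making the nonempty set of isomorphism classes a torsor under $H^1(\Delta,\L)$.

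The genuine gap is your existence step, i.e.\ the production of the reference space $S_0$. You dispose of it in one sentence \textemdash{} ``gluing the local Delzant models, using that the relevant Lagrangian Dixmier--Douady class is trivial for a symplectic torus bundle'' \textemdash{} but this is where the paper does its heaviest work, and the appeal is circular within the paper's logic. The obstruction to patching local toric models into a global faithful toric space over $\Delta$ is a priori a gerbe-type class in $H^2(\Delta,\L)$, and nothing you have established shows it vanishes: the Dixmier--Douady class of Definition \ref{def:lagrdixmierdouadyclass} controls lifting bisections of $\B$ to Lagrangian bisections of $\G$ (and is indeed trivially zero for a torus bundle, since the unit section is a coherent Lagrangian lift), but the implication ``this class vanishes $\Rightarrow$ a toric space with momentum image $\Delta$ exists'' is the splitting theorem (Theorem \ref{globalsplitthm}), whose proof in the paper runs through Proposition \ref{exttoractsemidirectprodprop} and hence through the \emph{explicit} construction of Theorem \ref{constrtorictorbunspthm}: one forms $S_\Delta=\sT_\Lambda\vert_\Delta/\F_\Delta$ by collapsing the subtori determined by the face structure of $\Delta$, builds charts from $\Delta$-admissible triples, and verifies by a careful $\GL_n(\Z)$/permutation argument (Proposition \ref{smoothandsympstrnattorspprop}) that the local smooth and symplectic structures patch. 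Without this construction (or an independent direct proof that your gluing 2-cocycle is a coboundary) your surjectivity argument has no object to reglue. A secondary but real defect: the theorem asserts a \emph{canonical} bijection, and a torsor is canonically identified with its structure group only after a canonical choice of base point; it is precisely the canonical space $J_\Delta:(S_\Delta,\omega_\Delta)\to M$ and its naturality (Remark \ref{canbijremintro}) that make the paper's bijection canonical, whereas an $S_0$ produced by an unspecified gluing would depend on the choices made.
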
 
This theorem is an amalgam of the classifications in the work of Delzant and Duistermaat mentioned before. The Delzant subspaces appearing in part $a$ are codimension zero submanifolds with corners that are `fully compatible with the integral affine structure'. {The Delzant polytopes in Delzant's classification} are examples of these (see Example \ref{symptormanex} below). On the other hand, the idea behind the bijection in part $b$ is the same as that in the classification in Duistermaat's work (briefly recalled in Example \ref{proplagrfibrex} below). {As in those classifications, the integral affine structure plays a key role in Theorem \ref{classtorsymptorbunthm}}. 
\begin{Ex}\label{symptormanex} Delzant's classification of toric symplectic manifolds can be recovered from Theorem \ref{classtorsymptorbunthm}, as follows. Let $T$ be a torus with Lie algebra $\t$ and character lattice $\Lambda_T^*$ in $\t^*$. Recall that a Delzant polytope in the integral affine vector space $(\t^*,\Lambda_T^*)$ is a convex polytope $\Delta$ in $\t^*$ with the property that at each vertex $x$ the polyhedral cone $C_x(\Delta)$ spanned by the edges meeting at $x$ is generated by a $\Z$-basis of $\Lambda_T^*$. Delzant showed that there is a canonical bijection:
\begin{equation*}
\left\{\begin{aligned} &\quad\textrm{ Isomorphism classes of compact, \text{ }} \\ &\textrm{ connected toric symplectic $T$-spaces } \end{aligned}\right\}\longleftrightarrow \left\{\begin{aligned} \textrm{Delzant polytopes in $(\t^*,\Lambda_T^*)$}\end{aligned}\right\},
\end{equation*} defined by assigning to such a $T$-space the image of its momentum map. This can be recovered from the theorem above, applied to the {cotangent} groupoid (see e.g. \cite[Section 3]{WeMi}):
\begin{equation*} (\sT,\Omega)=(T{\times} \t^*,\Omega_\textrm{can}),
\end{equation*}
by means of the following two observations:
\begin{itemize}\item $\{\textrm{Delzant polytopes in $(\t^*,\Lambda_T^*)$}\}=\{\textrm{Compact, connected Delzant subspaces of $(\t^*,\Lambda_T^*)$}\}$,
\item for any Delzant polytope $\Delta$ the cohomology $H^1(\Delta,\L)$ vanishes, due to convexity of $\Delta$.
\end{itemize}
\end{Ex}
\begin{Ex}\label{proplagrfibrex} For any integral affine manifold $(M,\Lambda)$, $\Delta:=M$ itself is a Delzant subspace. In this case Theorem \ref{classtorsymptorbunthm} recovers the aforementioned classification of proper Lagrangian fibrations. To explain this, recall that a proper Lagrangian fibration is a symplectic manifold $(S,\omega)$ together with a proper surjective submersion $J:(S,\omega)\to M$ with Lagrangian fibers. We will always assume such fibrations to have connected fibers, without further notice. Any such fibration over $M$ induces an integral affine structure $\Lambda$ on $M$. 
Duistermaat implicitly showed that, given any fixed integral affine manifold $(M,\Lambda)$, there is a canonical bijection: \begin{equation}\label{duisbijlagrchernclassintro} \left\{\begin{aligned} &\quad\quad\quad\quad\textrm{ Isomorphism classes of} \\ &\quad\textrm{ proper Lagrangian fibrations over $M$ } \\ &\textrm{ with induced integral affine structure $\Lambda$ } \end{aligned}\right\}\longleftrightarrow H^1(M,\L)
\end{equation} that associates to $[J:(S,\omega)\to M]$ its so-called Lagrangian Chern class (called Lagrangian class in \cite{Zu1,Sj1}). The key insight leading to (\ref{duisbijlagrchernclassintro}) is that fibrations as in (\ref{duisbijlagrchernclassintro}) can be viewed as certain principal $\sT_\Lambda$-bundles, called symplectic $(\sT_\Lambda,\Omega_\Lambda)$-torsors in \cite{Sj1,CrFeTo}. Indeed, any such fibration is a Poisson map into $(M,0)$ that admits a (necessarily unique) Hamiltonian $(\sT_\Lambda,\Omega_\Lambda)$-action along it, which turns it into a symplectic $(\sT_\Lambda,\Omega_\Lambda)$-torsor (cf. \cite[pg. 23, pg. 88]{CrFeTo}). From that point of view the construction of the bijection (\ref{duisbijlagrchernclassintro}) is analogous to that in the usual classification of principal $G$-bundles in terms of \v{C}ech cohomology. {Given a} symplectic torus bundle $(\sT,\Omega)$ over $M$, the notion of a symplectic $(\sT,\Omega)$-torsor coincides with that of a {faithful} toric $(\sT,\Omega)$-space with momentum image $M$. So, in the setting of Theorem \ref{classtorsymptorbunthm} there is a canonical bijection: 
\begin{equation*} \left\{\begin{aligned} &\quad\quad\quad\quad\textrm{Isomorphism classes of} \\ &\quad\textrm{proper Lagrangian fibrations over $M$ } \\ &\textrm{ with induced integral affine structure $\Lambda$ } \end{aligned}\right\}\longleftrightarrow \left\{\begin{aligned} &\text{ }\text{ }\text{ }\textrm{Isomorphism classes of} \\ &\textrm{ {faithful} toric $(\sT,\Omega)$-spaces } \\ & \text{}\textrm{ with momentum image $M$} \end{aligned}\right\}.
\end{equation*}
In this way Theorem \ref{classtorsymptorbunthm} recovers the classification of proper Lagrangian fibrations above. 
\end{Ex}

Next, we explain our classification {in full generality}. Throughout this paper, we use the following as a working definition {for the class of actions that we consider}.
\begin{Defi}\label{toractdefi} Let $(\G,\Omega)\rightrightarrows M$ be a regular and proper symplectic groupoid and let $\sT$ be the torus bundle over $M$ with fibers the identity components of the isotropy groups of $\G$ (also see Subsection \ref{iaorbsec}). By a \textbf{faithful multiplicity-free} $(\G,\Omega)$-space we mean a Hamiltonian $(\G,\Omega)$-space $J:(S,\omega)\to M$ with the following three properties. 
\begin{itemize}
\item[i)] The induced $\sT$-action is free on a dense subset of $S$.
\item[ii)] The $\sT$-orbits coincide with the fibers of $J$.
\item[iii)] The transverse momentum map $\underline{J}:\underline{S}\to \underline{M}$ is a topological embedding. 
\end{itemize} Here and throughout:
\begin{itemize}\item $\underline{S}:=S/\G$ denotes the orbit space of the $\G$-action,
\item $\underline{M}:=M/\G$ denotes the orbit space of $\G$, 
\item $\underline{J}$ denotes the map induced by $J$, which we refer to as the \textbf{transverse momentum map}.
\end{itemize} 
\end{Defi} 
Of course, when $(\G,\Omega)$ is a symplectic torus bundle this boils down to the notion of faithful toric space considered before. {As in that case,} the last condition above is purely topological, since $\underline{J}$ is injective by the second condition. Moreover, there is an equivalent set of conditions (given in Proposition \ref{toracttorrepprop-cd}) of the same type as the first set of conditions for faithful toric spaces. From these conditions it is clearer that faithful multiplicity-free $(\G,\Omega)$-spaces are closely related to non-commutative integrable systems (e.g. in the sense of \cite{LaMiVa}).\\ 

For the rest of this introduction, let $(\G,\Omega)\rightrightarrows M$ and $\sT$ be as in the above definition. The fact that symplectic torus bundles induce an integral affine structure on their base generalizes as follows. The symplectic groupoid $(\G,\Omega)$ induces an integral affine structure, not on $M$, but on its orbit space $\underline{M}$, which has the structure of an orbifold encoded by the orbifold groupoid $\B:=\G/\sT\rightrightarrows M$. The induced integral affine structure on this orbifold is encoded by a $\B$-invariant lattice bundle $\Lambda$ in the co-normal bundle $\No^*\F$ to the {orbits} of $\G$. This is shown in \cite{CrFeTo} and recalled in Subsection \ref{iaorbsec}. Part $a$ of Theorem \ref{classtorsymptorbunthm} generalizes as follows.
\begin{Thm}\label{momimtoricthm} Let $(\G,\Omega)$ be a regular and proper symplectic groupoid. For any faithful multiplicity-free $(\G,\Omega)$-space $J:(S,\omega)\to M$, the image of the transverse momentum map: 
\begin{equation*} \underline{J}(\underline{S})\subset \underline{M}
\end{equation*} is a Delzant subspace of the orbit space $\underline{M}$ (in the sense of Definition \ref{defdelzorb}, with respect to the integral affine orbifold structure mentioned above).
\end{Thm}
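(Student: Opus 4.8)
The plan is to reduce the statement to the case of symplectic torus bundles, which is Theorem~\ref{classtorsymptorbunthm}(a), by a local normal form argument. The property of being a Delzant subspace (Definition~\ref{defdelzorb}) is local on $\underline{M}$ with respect to the integral affine orbifold atlas recalled in Subsection~\ref{iaorbsec}: it asserts that in each integral affine orbifold chart the subset is the quotient of a $\Gamma$-invariant Delzant submanifold-with-corners of the chart, where $\Gamma$ denotes the local structure group. Accordingly, it suffices to verify the condition in a neighbourhood of each point $\underline{x}\in\underline{J}(\underline{S})$. Before localizing I would first record that $\underline{J}(\underline{S})$ is closed in $\underline{M}$: this follows from condition (iii), since $\underline{J}$ is a topological embedding and, by the equivalent description of faithful spaces, the underlying momentum map is proper onto its image.

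Next I would invoke the linearization/normal form for regular proper symplectic groupoids from \cite{CrFeTo} near the orbit $\O\subset M$ corresponding to $\underline{x}$. Writing $\sT_x=\No^*_x\F/\Lambda_x$ for the identity component of the isotropy group and $\Gamma:=\G_x/\sT_x$ for the finite structure group, this realizes a neighbourhood of $\underline{x}$ in $\underline{M}$ as a quotient $W/\Gamma$, where $W$ is a $\Gamma$-invariant open subset of the integral affine vector space modelling $\underline{M}$ at $\underline{x}$ (its lattice coming from the isotropy lattice $\Lambda_x\subset\No^*_x\F$), and $\Gamma$ acts by integral affine transformations preserving the lattice. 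Over the desingularized chart $W$ the transverse part of $(\G,\Omega)$ is, up to the $\Gamma$-action, the symplectic torus bundle $(\sT_W,\Omega_W)$ associated to the lattice bundle $\Lambda|_W$.

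Then I would transport the faithful multiplicity-free $(\G,\Omega)$-space $J:(S,\omega)\to M$ through this normal form: restricting to a transversal and lifting to the cover $W$ produces a faithful toric $(\sT_W,\Omega_W)$-space over $W$, carrying a $\Gamma$-action that covers the $\Gamma$-action on $W$ and whose momentum map is $\Gamma$-equivariant. Here the free-on-a-dense-subset condition (i) and the orbit-equals-fiber condition (ii) pass directly to the transverse model, and (after possibly shrinking $W$) the embedding condition (iii) yields the requisite properness, since the $\sT_W$-orbits are compact and equal to the fibers of the lifted momentum map. Applying Theorem~\ref{classtorsymptorbunthm}(a) to this space shows that its momentum image $\Delta_W\subset W$ is a Delzant subspace of the integral affine manifold $(W,\Lambda|_W)$. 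By $\Gamma$-equivariance $\Delta_W$ is $\Gamma$-invariant, so it descends to $\Delta_W/\Gamma\subset W/\Gamma$, and by construction this coincides with $\underline{J}(\underline{S})$ in the chart. Since $\Gamma$ acts by lattice-preserving integral affine transformations, $\Delta_W/\Gamma$ is precisely a Delzant subspace in the orbifold sense of Definition~\ref{defdelzorb}, which is what had to be shown in the chart.

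The main obstacle I expect lies in the second and third steps: arranging the two normal forms so that they are compatible, i.e. showing that restricting a faithful multiplicity-free $(\G,\Omega)$-space to a transversal and lifting to $W$ genuinely yields a $\Gamma$-equivariant faithful toric $(\sT_W,\Omega_W)$-space to which Theorem~\ref{classtorsymptorbunthm}(a) applies. This requires a Hamiltonian slice/normal form theorem for actions of regular proper symplectic groupoids near an orbit, compatible with the groupoid linearization of \cite{CrFeTo}, together with a careful check that conditions (i)--(iii) and the integral affine data survive the reduction. Handling the finite structure group $\Gamma$ equivariantly throughout, and confirming that the descended image is a bona fide Delzant suborbifold-with-corners rather than merely a quotient set, is the delicate part of the argument.
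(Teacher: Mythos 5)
Your overall skeleton (restrict to a transversal, reduce to the \'etale/torus-bundle case, invoke the local structure of symplectic toric manifolds, then descend to the orbit space) is exactly the route the paper takes, but the step you yourself single out as the delicate one is a genuine gap, and it cannot be filled the way you propose. You require the space obtained over the local chart $W$ to carry a genuine $\Gamma$-action on its total space, covering the $\Gamma$-action on $W$ and making the momentum map equivariant. No such action is available at this stage: what acts on $J^{-1}(W)$ is the groupoid $\G\vert_W$, an extension of $\Gamma\ltimes W$ by $\sT\vert_W$, and promoting this to an honest $\Gamma$-action upstairs amounts to splitting that extension over $W$ (compatibly with the action). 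The obstruction to such splittings is precisely what the Lagrangian Dixmier--Douady class of Definition \ref{def:lagrdixmierdouadyclass} measures, and the fact that it vanishes (even locally) in the presence of a faithful multiplicity-free space is the content of the splitting theorem (Theorem \ref{globalsplitthm}), which in the paper is proved \emph{after}, and by means of, Theorem \ref{momimtoricthm} and the structure theory built on it. So, as structured, your argument is either circular or contains an unproven step of difficulty comparable to the splitting theorem. (A related logical point: Theorem \ref{classtorsymptorbunthm}$a$, which you take as the base case, is in the paper a special case of Theorem \ref{momimtoricthm}; the honest base case is the classical toric statement, Proposition \ref{prop:properties-non-compact-symp-tor-man}.)

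The repair is to notice that the equivariance you are trying to arrange is not needed at all. First, $\Gamma$-invariance of the momentum image is automatic: $J$ is $\G$-equivariant, i.e.\ $J(g\cdot p)=t(g)$, so $J(S)$ is an invariant subset of $M$. Second, Definition \ref{defdelzorb} imposes a condition only on the invariant subset $\Delta\subset M$ upstairs \textemdash at each $x\in \Delta$ the set germ $\log_x(\Delta)$ must be that of a smooth polyhedral cone in $(\No_x\F,\Lambda_x^*)$ \textemdash so no quotient by $\Gamma$ and no action on $S$ ever enters the verification. This is how the paper proceeds: restrict to a transversal $\Sigma$ (a Poisson transversal, so $J$ is automatically transverse to it and the Hamiltonian action restricts to $J^{-1}(\Sigma)$; no groupoid linearization is needed for the reduction), observe via Lemma \ref{lemma:indtorbunacttoric} that the restricted action is a faithful toric $(\sT,\Omega_\sT)\vert_\Sigma$-space, apply Proposition \ref{prop:properties-non-compact-symp-tor-man}, and transport the conclusion back through Morita invariance of the Delzant condition (Corollary \ref{delzsubspmorinvcor}). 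Two smaller problems in your write-up: your justification of properness (``the $\sT_W$-orbits are compact and equal to the fibers'') is insufficient, since compact fibers alone do not give closedness of $J$ onto its image; the paper's Lemma \ref{lemma:indtorbunacttoric} derives this from source-properness of the local action groupoid via Lemma \ref{proetgpoidlinlem}. And your preliminary claim that $\underline{J}(\underline{S})$ is closed in $\underline{M}$ is false in general \textemdash the conditions in Definition \ref{toractdefi} and Proposition \ref{toracttorrepprop-cd} give closedness only onto the image, and e.g.\ an open interval in $\R$ is a perfectly good Delzant subspace and momentum image \textemdash though this is harmless, since the Delzant condition is purely local at points of the image.
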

Here, intuitively, one can think of Delzant subspaces as codimension zero `suborbifolds with corners' that are `fully compatible with the integral affine structure'.
We split the generalization of part $b$ of Theorem \ref{classtorsymptorbunthm} into two theorems. The first explains to which extent the momentum map image determines the isomorphism class of a given faithful multiplicity-free $(\G,\Omega)$-space.
\begin{Thm}[\label{firststrthm}{First structure theorem}] Let $(\G,\Omega)\rightrightarrows M$ be a regular and proper symplectic groupoid, let $\underline{\Delta}$ be a Delzant subspace of the orbit space $\underline{M}$ (in the same sense as in Theorem \ref{momimtoricthm}) and let $\Delta$ be the corresponding invariant subspace of $M$. If:
\begin{equation}\label{isoclwithprescrmomimnonempty} \left\{\begin{aligned} &\quad\quad\text{ }\text{ }\text{ }\textrm{Isomorphism classes of} \\ &\textrm{ faithful multiplicity-free $(\G,\Omega)$-spaces } \\ & \quad\text{ }\text{ }\textrm{with momentum map image $\Delta$} \end{aligned}\right\}\neq \emptyset,
\end{equation} then it is a torsor with abelian structure group:
\begin{equation}\label{strgpfirststrthm} H^1(\B\vert_{\Delta},\L),
\end{equation} the degree one {cohomology} of the $\B\vert_\Delta$-sheaf $\L$ {consisting of isotropic} sections of $\sT$ over $\Delta$ (defined in {Subsection \ref{subsec:equivshfcohom}}). This action is natural with respect to Morita equivalences (in the sense of Proposition \ref{naturalityofactprop}).
\end{Thm}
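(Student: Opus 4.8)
The plan is to establish the torsor structure by a Čech-style argument, adapting the classical template (from the classification of principal bundles, and from Duistermaat's work recalled in Example \ref{proplagrfibrex}) to the orbifold-equivariant setting of the groupoid $\B|_\Delta$. The key realization is that a faithful multiplicity-free $(\G,\Omega)$-space with fixed momentum image $\Delta$ should be thought of as a "bundle" whose local models are all isomorphic, with the freedom in gluing them measured by the sheaf $\L$ of isotropic sections of $\sT$. Given two such spaces $J_0:(S_0,\omega_0)\to M$ and $J_1:(S_1,\omega_1)\to M$ with the same momentum image $\Delta$, I would first show that they are \emph{locally} isomorphic as Hamiltonian $(\G,\Omega)$-spaces over $\Delta$; that is, over sufficiently small invariant opens $U$ of $M$ meeting $\Delta$, there is an isomorphism $\phi_U:J_0^{-1}(U)\to J_1^{-1}(U)$ of Hamiltonian spaces covering the identity on $M$. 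This local rigidity is where the hypotheses enter essentially: the faithfulness conditions force the fibers of $J$ to be $\sT$-orbits and force $\underline{J}$ to be an embedding, so that the local structure near an orbit is pinned down by the integral affine data, exactly as an elliptic integrable system is locally modeled on its Delzant corner chart.

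**Next I would** define the action of the cohomology group and the difference cocycle. Given a class in $H^1(\B|_\Delta,\L)$ represented by an equivariant Čech $1$-cocycle $\{\sigma_{UV}\}$ valued in isotropic sections of $\sT$, one acts on an isomorphism class $[J:(S,\omega)\to M]$ by regluing: cover $S$ by the pieces $S|_U:=J^{-1}(U)$, and reglue $S|_U$ to $S|_V$ over $U\cap V$ by composing the identity gluing with the fiberwise translation by $\sigma_{UV}$, using the $\sT$-action. Since $\sigma_{UV}$ takes values in \emph{isotropic} (equivalently, Lagrangian) sections, these translations preserve the symplectic form, so the regluing yields a genuine Hamiltonian $(\G,\Omega)$-space; the cocycle condition guarantees that the regluing is consistent on triple overlaps, and equivariance with respect to $\B$ guarantees compatibility with the full groupoid action rather than merely the $\sT$-action. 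Conversely, given two spaces $J_0, J_1$ with the same momentum image, the locally-defined isomorphisms $\phi_U$ from the first step, compared on overlaps via $\phi_U\circ\phi_V^{-1}$, produce exactly such a cocycle $\{\sigma_{UV}\}$ valued in $\L$; its cohomology class is the obstruction to patching the local isomorphisms into a global one. I would then verify the three torsor axioms: the action is well-defined on isomorphism classes (independence of cocycle representative and of cover, up to refinement), it is free (a trivial cocycle produces an isomorphic space, and conversely an isomorphism forces the difference cocycle to be a coboundary), and it is transitive (any two spaces differ by the cocycle just constructed). Finally, naturality under Morita equivalence would follow by checking that both the regluing construction and the difference-cocycle construction are compatible with the pullback of $\B|_\Delta$-sheaves along a Morita equivalence, as asserted in Proposition \ref{naturalityofactprop}.

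**The hard part will be** the local rigidity step: proving that any two faithful multiplicity-free $(\G,\Omega)$-spaces with the same momentum image are isomorphic over small invariant opens. Away from the "corner" strata (where $\sT$ acts freely and $J$ is a genuine Lagrangian fibration) this is Duistermaat's local result that all such fibrations are locally isomorphic to the standard one $\sT_\Lambda\to M$; the genuine difficulty is carrying this uniformly across the singular strata where the $\sT$-action degenerates and one must work equivariantly with respect to the isotropy groups of $\B$. There one needs a normal-form theorem for faithful multiplicity-free spaces near a singular orbit — a model playing the role of the symplectic slice in Delzant's construction — showing that the local isomorphism type is completely determined by the Delzant corner data of $\underline{\Delta}$. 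I expect this to rest on the equivalent reformulation of the faithfulness conditions in Proposition \ref{toracttorrepprop-cd} together with the local structure of regular proper symplectic groupoids from \cite{CrFeTo}, and on care in making the local isomorphisms $\phi_U$ sufficiently canonical (unique up to translation by a \emph{Lagrangian} section) so that the difference cocycle genuinely lands in $\L$ and so that all constructions descend correctly to the orbit space $\underline{M}$.
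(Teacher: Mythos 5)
Your proposal contains a genuine gap, and it comes in two related pieces, both stemming from treating $H^1(\B\vert_\Delta,\L)$ as if it were \v{C}ech cohomology of a sheaf on the space $\Delta$ (with cocycles indexed by overlaps of opens) rather than \emph{groupoid-equivariant} cohomology.

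First, your local rigidity step is false as stated. You claim that two faithful multiplicity-free $(\G,\Omega)$-spaces with the same momentum image $\Delta$ are isomorphic as Hamiltonian $(\G,\Omega)$-spaces over sufficiently small \emph{invariant} opens. The ext-invariant of Part 4 of the paper is precisely a locally defined obstruction to this. Concretely, in the setting of Example \ref{almabcompLiegpexintro0}, take $G$ disconnected and twist a given maximally toric $G$-action by a nontrivial group $1$-cocycle $c:\varGamma\to T$: the twisted and untwisted spaces have the same momentum image, but near a $\varGamma$-fixed point $x\in\Delta$ their ext-classes in $I^1(\G_x,\sT_x)$ differ by the (free, by Remark \ref{degonegpcohomtorsorrem}) action of $[c]\in H^1(\varGamma,T)$, so by Proposition \ref{locclasstoricthm} they are not isomorphic over \emph{any} invariant open around $x$. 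What is true, and what the paper uses, is local rigidity only for the underlying $(\sT,\Omega_\sT)$-actions (Lemma \ref{lemma:indtorbunacttoric} together with Theorem \ref{torsortorspthm:torbunversion} and $\U$-acyclicity of $\L$): the local identifications $\psi_U$ are merely $\sT$-equivariant, over small non-invariant opens.

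Second, the cocycle bookkeeping in your gluing argument cannot produce the right group. If, as you propose, the local isomorphisms $\phi_U$ are $\G$-equivariant, then the differences $\phi_U\circ\phi_V^{-1}$ are $\G$-equivariant automorphisms, which by Theorem \ref{isoautsymplagseccor} correspond to \emph{$\B$-invariant} isotropic sections; your difference cocycle therefore lands in $\underline{\L}$ and defines a class in $H^1(\underline{\Delta},\underline{\L})$, which by the exact sequence (\ref{eqn:fivetermexseqlowdeg}) is in general a proper subgroup of $H^1(\B\vert_\Delta,\L)$ (in the example above, $H^1(\underline{\Delta},\underline{\L})=0$ while $H^1(\B\vert_\Delta,\L)\cong H^1(\varGamma,T)$, by Proposition \ref{prop:strgps:caseofgpactions}). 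The same problem affects your definition of the action: an overlap-indexed, $\B$-equivariant cocycle only realizes regluings by this subgroup. In the paper's \v{C}ech model (Subsection \ref{bisectionsembcatsec}) cochains are indexed by strings of arrows of an embedding category, i.e. by \emph{bisections} of $\B\vert_\Delta$, and the $\G$-action on the reglued space $S_\tau$ is defined using the values of the cocycle on these arrows. The essential ingredient your proposal is missing is Lemma \ref{lifrlagrcocyleprop}: every bisection of $\B\vert_\Delta$ lifts to a Lagrangian bisection of $(\G,\Omega)\vert_\Delta$, and the lifts can be chosen to satisfy the cocycle condition. This is exactly where the hypothesis (\ref{isoclwithprescrmomimnonempty})$\,\neq\emptyset$ enters (it is equivalent to the vanishing of the Lagrangian Dixmier--Douady class), and it is what allows one to compare the $\G$-actions on two spaces through merely $\sT$-equivariant local isomorphisms, yielding a difference cocycle genuinely valued in the equivariant complex. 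Carried through as written, your argument would at best prove a version of the \emph{second} structure theorem (Theorem \ref{torsortorspthm}): a torsor structure under $H^1(\underline{\Delta},\underline{\L})$ on the set of classes with fixed momentum image \emph{and fixed ext-invariant}, not the stated torsor under $H^1(\B\vert_\Delta,\L)$.
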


The second addresses the condition (\ref{isoclwithprescrmomimnonempty}).

\begin{Thm}[\label{globalsplitthm}{Splitting theorem}] In the setting of Theorem \ref{firststrthm}, {the following are equivalent.
\begin{itemize}\item[a)] The condition (\ref{isoclwithprescrmomimnonempty}) holds.
\item[b)] As pre-symplectic groupoid with corners over $\Delta$ (see Remark \ref{etalecasedelzsubmanrem} and Definition \ref{sympgpwithcorndef}), $(\G,\Omega)\vert_\Delta$ and $(\B\Join \sT,\textrm{pr}_{\sT}^*\Omega_{\sT})\vert_\Delta$ are Morita equivalent.
\item[c)] The Lagrangian Dixmier-Douady class (see Definition \ref{def:lagrdixmierdouadyclass}): 
\begin{equation}\label{lagrdixmdouadcl} \emph{\textrm{c}}_2(\G,\Omega,\underline{\Delta})\in H^2(\B\vert_\Delta,\L)
\end{equation}
vanishes.
\end{itemize}}
\end{Thm}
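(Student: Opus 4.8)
The plan is to prove the cycle of equivalences $a)\Leftrightarrow b)\Leftrightarrow c)$ by passing to the local, Morita-invariant picture over the Delzant subspace. The key conceptual point is that existence of an action with prescribed momentum image $\Delta$ is the same as a \emph{global} splitting of the restricted symplectic groupoid into a torus-bundle part and an orbifold part, and that the obstruction to such a splitting is measured by a single degree-two class. Throughout I would work with the restricted data $(\G,\Omega)\vert_\Delta$ as a pre-symplectic groupoid with corners and exploit that everything in sight --- the sheaf $\L$, its cohomology, and the model groupoid $\B\Join\sT$ --- is invariant under Morita equivalence, so that the three statements can be compared on a convenient atlas of the orbifold $\underline\Delta$.

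First I would establish $b)\Rightarrow a)$, which I expect to be the most direct implication. Assuming the Morita equivalence in $b)$, the model $(\B\Join\sT,\textrm{pr}_\sT^*\Omega_\sT)\vert_\Delta$ comes equipped with a tautological faithful multiplicity-free action (the fiberwise action of $\sT$ on itself, glued over the orbifold $\B\vert_\Delta$), with momentum image exactly $\Delta$. A Morita equivalence of pre-symplectic groupoids with corners transports Hamiltonian spaces; so pulling this tautological action back along the equivalence and checking that the three defining conditions of Definition \ref{toractdefi} (density of the free locus, orbits equal to fibers, and the transverse momentum map being a topological embedding) are preserved --- each of which is a Morita-invariant statement --- yields a genuine faithful multiplicity-free $(\G,\Omega)$-space with momentum image $\Delta$, proving \eqref{isoclwithprescrmomimnonempty} is nonempty. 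The converse $a)\Rightarrow b)$ requires showing that \emph{any} such space furnishes the Morita bimodule: the space $S$ itself, with its two commuting structures (the $\G$-action and the infinitesimal $\sT$-symmetry along the fibers of $J$), should be the bimodule realizing the equivalence, and conditions i)--iii) are precisely what guarantee it is a principal bibundle over the two groupoids.

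The heart of the argument is the equivalence $b)\Leftrightarrow c)$, where the Lagrangian Dixmier--Douady class enters. Here I would mimic the standard gerbe-theoretic obstruction story: choose a good cover of the orbifold $\underline\Delta$ on which $(\G,\Omega)\vert_\Delta$ is locally isomorphic to the model $\B\Join\sT$ (local splittings exist by the regular local normal form for proper symplectic groupoids, i.e. the linearization results underlying the integral affine orbifold structure recalled in Subsection \ref{iaorbsec}). On overlaps, two such local splittings differ by an isotropic section of $\sT$, and the failure of these local trivializations to patch into a global one is recorded by a \v{C}ech two-cocycle valued in $\L$, whose class is by definition $\textrm{c}_2(\G,\Omega,\underline\Delta)\in H^2(\B\vert_\Delta,\L)$. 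The global Morita equivalence in $b)$ exists if and only if this cocycle is a coboundary, i.e. the class vanishes; that $H^2$ of the $\B\vert_\Delta$-sheaf $\L$ is the correct receptacle (rather than ordinary sheaf cohomology) is exactly what accounts for the orbifold isotropy of $\B$.

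The main obstacle I anticipate is the careful bookkeeping over the \emph{corner} strata: the Delzant subspace is a codimension-zero suborbifold with corners, and the local models degenerate along the boundary faces (where some circle factors of $\sT$ collapse in the toric fashion of Delzant's classification, cf. Example \ref{symptormanex}). One must verify that the local splittings can be chosen compatibly with this stratification and that the cocycle condition --- and hence the class $\textrm{c}_2$ --- is well defined on the nose across strata of different depth, so that vanishing of $\textrm{c}_2$ genuinely produces a splitting valid up to and including the corners. A secondary technical point is checking that the whole construction is independent of the chosen cover and of the local splittings, which is the usual refinement argument for \v{C}ech classes but must be carried out in the equivariant (orbifold) setting with the correct notion of $\B\vert_\Delta$-sheaf cohomology from Subsection \ref{subsec:equivshfcohom}.
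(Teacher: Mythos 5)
There is a genuine gap, and it sits in your proposed implication $a)\Rightarrow b)$. You claim that the space $S$ of a faithful multiplicity-free $(\G,\Omega)$-space itself serves as the Morita bimodule, with conditions i)--iii) of Definition \ref{toractdefi} guaranteeing principality. This cannot work: a Morita equivalence of groupoids with corners (Definition \ref{def:moreqLiegpoidswithcorners}) requires the left $\G\vert_\Delta$-action on the bimodule to be \emph{free}, but the $\G$-action on $S$ is free only on a dense subset --- over the boundary faces of $\Delta$ the isotropy groups $\G_p$ contain positive-dimensional subtori of $\sT$ (this is exactly the toric collapsing), and even over $\mathring{\Delta}$ they may contain nontrivial finite groups mapping onto components of $\B_x$. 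For the same reason $S$ carries no free commuting $(\B\Join\sT)\vert_\Delta$-action whose orbits are the $J$-fibers. The paper avoids any direct $a)\Rightarrow b)$ argument and instead proves the cycle $a)\Rightarrow c)\Rightarrow b)\Rightarrow a)$: from the existence of the space one extracts a \emph{cocycle of Lagrangian bisection lifts} of $\B\vert_\Delta$ into $(\G,\Omega)\vert_\Delta$ (Lemma \ref{lifrlagrcocyleprop}, which rests on Proposition \ref{liftinglagrbiswithsymplectoprop} and on local uniqueness of the space via the structure theory), giving $c)$; and from such a cocycle the bimodule is built by gluing the \emph{source-fibers of $\G$ itself} over a basis of $\Delta$ (Proposition \ref{fromlagrcocycletofibhamprincbunprop}), not from $S$. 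Your $b)\Rightarrow c)$ direction, as stated, would in addition require Morita invariance of $\textrm{c}_2$ under arbitrary pre-symplectic Morita equivalences with corners, which the cyclic structure of the paper's proof is designed to circumvent.

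A second, smaller but real, inaccuracy occurs in your $b)\Rightarrow a)$ step. The ``tautological'' faithful multiplicity-free space of the model $(\B\Join\sT,\textrm{pr}_\sT^*\Omega_\sT)\vert_\Delta$ is \emph{not} the fiberwise action of $\sT$ on itself: when $\partial\Delta\neq\emptyset$ the total space $\sT\vert_\Delta$ is a manifold with corners, whereas a Hamiltonian space must be a genuine symplectic manifold. The correct object is the collapsed model $S_\Delta=\sT_\Lambda\vert_\Delta/\F_\Delta$ of Theorem \ref{constrtorictorbunspthm}, in which the subtori $\F_\Delta$ determined by the face structure are quotiented out, together with the extension of its $\sT_\Lambda$-action to the semidirect product (Proposition \ref{exttoractsemidirectprodprop}); producing this space and verifying its smooth and symplectic structure is a substantial part of the proof, not a tautology. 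Once that input is in place, your transport of the space across the Morita equivalence is indeed the paper's argument (Proposition \ref{indequivofcatsmoreqsympgroupoidscorners}, after restricting to a complete transversal so that $\B$ is etale). So the corner degeneration you flag as a bookkeeping issue for $b)\Leftrightarrow c)$ is in fact the central construction needed for $b)\Rightarrow a)$, and the implication out of $a)$ must be rerouted through $c)$ as above.
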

Here $\B\Join \sT$ denotes the semi-direct product groupoid over $M$ associated to the $\B$-action along $\sT\to M$ via conjugation in $\G$. Moreover, $\Omega_\sT$ is the restriction {to $\sT$} of the symplectic form $\Omega$ on $\G$. 
\begin{Rem}\label{canbijremintro} Theorem \ref{firststrthm} and the proof of Theorem \ref{globalsplitthm} lead to a sharper conclusion in the setting of Theorem \ref{classtorsymptorbunthm}. Indeed, Theorem \ref{constrtorictorbunspthm} shows that for any Delzant subspace $\Delta$ of $(M,\Lambda)$ there is a canonically associated faithful toric $(\sT,\Omega)$-space $J_\Delta:(S_\Delta,\omega_\Delta)\to M$ with momentum image $\Delta$. So, there is a canonical section of the map:
\begin{equation*} \left\{\begin{aligned} &\quad\textrm{Isomorphism classes of} \\ &\textrm{ faithful toric $(\sT,\Omega)$-spaces } \end{aligned}\right\} \longrightarrow \{\textrm{Delzant subspaces of $(M,\Lambda)$}\}
\end{equation*}{defined by sending} such a $(\sT,\Omega)$-space to its momentum map image. Combined with Theorem \ref{firststrthm} this leads to the bijection (\ref{finalbijclassthmtorbunactintro}), where a pair $(\Delta,\kappa)$ corresponds to the isomorphism class obtained by letting $\kappa$ act on the isomorphism class of $J_\Delta$. More generally, Theorem \ref{firststrthm} leads to such a bijection when $(\G,\Omega)$ is a semi-direct product symplectic groupoid $(\B\Join \sT,\textrm{pr}_{\sT}^*\Omega_\sT)$, with $\B$ an etale orbifold groupoid acting on a symplectic torus bundle $(\sT,\Omega_\sT)$ as in Subsection \ref{constrtorspsemidirprodsec} (by the same reasoning, using Proposition \ref{exttoractsemidirectprodprop} in addition to Theorem \ref{constrtorictorbunspthm}).

\end{Rem}

\begin{Ex}\label{almabcompLiegpexintro0} Let $G$ be an infinitesimally abelian compact Lie group, meaning that its Lie algebra $\g$ is abelian. For Hamiltonian $G$-spaces, the outcome of Theorems \ref{firststrthm} and \ref{globalsplitthm} can be rephrased in terms of the group $G$. To be more precise, recall that Hamiltonian $G$-spaces correspond to Hamiltonian $(\G,\Omega)$-spaces for the {cotangent} groupoid: 
\begin{equation*} (\G,\Omega):=(G\ltimes \g^*,\Omega_\textrm{can})
\end{equation*} over $\g^*$ (see e.g. \cite{WeMi}). A compact and connected Hamiltonian $G$-space $J:(S,\omega)\to \g^*$ corresponds to a faithful multiplicity-free $(\G,\Omega)$-space if and only if $J:(S,\omega)\to \g^*$ is a {toric symplectic manifold} with respect to the induced action of the identity component $T$ of $G$ (a torus). So, these are toric symplectic manifolds with additional symmetry, which is reflected by the fact that the Delzant polytope corresponding to such a $T$-space (its momentum map image) is invariant with respect to the induced action of the finite group $\varGamma:=G/T$ on $\g^*$. Let us call such Hamiltonian $G$-spaces \textbf{maximally toric}. Our theorems lead to the following conclusions, which (somewhat surprisingly) we could not find in the literature. Let $\Delta$ be a $\varGamma$-invariant Delzant polytope in $(\g^*,\Lambda_T^*)$ and let $J:(S,\omega)\to \g^*$ be a toric symplectic $T$-space with momentum map image $\Delta$. The latter is unique up to $T$-equivariant symplectomorphism and any isomorphism class of {maximally toric Hamiltonian $G$-spaces} with momentum map image $\Delta$ can be represented by a {Hamiltonian} $G$-action on $(S,\omega)$ with momentum map $J$ (cf. Example \ref{symptormanex}).  
\begin{itemize}\item The condition (\ref{isoclwithprescrmomimnonempty}) can be rephrased as requiring that the symplectic $T$-action on $(S,\omega)$ extends to such an action of $G$ that is compatible with the $\varGamma$-action on $\Delta$. It follows from Theorem \ref{globalsplitthm} that this holds if and only if the short exact sequence of groups:
\begin{equation}\label{extinfabliegpintro} 1\to T\to G\to \varGamma\to 1
\end{equation} admits a right splitting (i.e. the extension is trivial up to isomorphism). {Of course, there is a natural group cohomology class in $H^2(\varGamma,T)$ that encodes the obstruction to the existence of such a splitting, where $T$ is viewed as $\varGamma$-module via the action by conjugation in $G$. This class is recovered by the Lagrangian Dixmier-Douady class via a canonical isomorphism between $H^2(\varGamma,T)$ and $H^2(\B\vert_\Delta,\L)$ (see Proposition \ref{prop:strgps:caseofgpactions}$a$).}
\item Theorem \ref{firststrthm} explains in which different ways the $T$-action on $(S,\omega)$ extends to a $G$-action as above. Namely: if the $T$-action extends, then {up to isomorphism of Hamiltonian $G$-spaces} there is exactly one such $G$-action for each group cohomology class in $H^1(\varGamma,T)$. This is because the structure group (\ref{strgpfirststrthm}) is isomorphic to $H^1(\varGamma,T)$ (see Proposition \ref{prop:strgps:caseofgpactions}$a$). Explicitly, {given such a $G$-action, the $H^1(\varGamma,T)$-action of Theorem \ref{firststrthm} twists the $G$-action by a group $1$-cocycle} ${c}:\varGamma\to T$: 
\begin{equation*} g\cdot_{{c}} p=g{c}([g])\cdot p,\quad g\in G,\quad p\in S.
\end{equation*} So, the $T$-action extends in different ways, each corresponding to an automorphism of the extension (\ref{extinfabliegpintro}), modulo automorphisms given by conjugation by elements of $T$.
\end{itemize} 
\end{Ex}

\begin{Ex}\label{thirdexampleintro} Example \ref{proplagrfibrex} generalizes as follows: for source-connected $(\G,\Omega)\rightrightarrows M$, faithful multiplicity-free $(\G,\Omega)$-spaces with momentum image $\Delta=M$ correspond to certain proper isotropic fibrations over $M$. To elaborate, recall that proper isotropic fibrations are defined by replacing `Lagrangian' by `isotropic' in Example \ref{proplagrfibrex}, with the additional requirement that the symplectic orthogonal $\ker(\d J)^\omega$ is an involutive distribution (these are symplectically complete isotropic fibrations in the sense of \cite{DaDe}). Any such fibration $J:(S,\omega)\to M$ induces a regular Poisson structure $\pi$ on $M$, uniquely determined by the property that $J:(S,\omega)\to (M,\pi)$ is a Poisson map (as a consequence of Libermann's theorem \cite{Li}). Moreover, as shown in \cite{DaDe}, such a fibration induces a transverse integral affine structure to the foliation $\F_\pi$ on $M$ by symplectic leaves of $\pi$, encoded by a lattice bundle $\Lambda$ in the co-normal bundle $\No^*\F_\pi$. Suppose now that $\F_\pi$ is of proper type, meaning that its holonomy groupoid $\text{Hol}(M,\F_\pi)$ is proper. It is shown in \cite{CrFeTo} that there is a natural source-connected proper symplectic groupoid: 
\begin{equation}\label{holgpoidintro} (\textrm{Hol}_J(M),\Omega_J)\rightrightarrows M
\end{equation} associated to $J:(S,\omega)\to M$, with the following properties.
\begin{itemize} \item It integrates $(M,\pi)$. 
\item The (transverse) integral affine structure induced by $(\textrm{Hol}_J(M),\Omega)$ is that induced by $J$.
\item The associated orbifold structure on $\underline{M}$ is that encoded by $\text{Hol}(M,\F_\pi)$, because the associated orbifold groupoid is the integration $\textrm{Hol}(M,\F_\pi)$ of $\F_\pi$. 
\end{itemize}
The map $J:(S,\omega)\to M$ admits a canonical Hamiltonian $(\textrm{Hol}_J(M),\Omega)$-action, which is makes it a faithful multiplicity-free $(\textrm{Hol}_J(M),\Omega)$-space. More generally, for any orbifold groupoid $\B$ integrating $\F_\pi$, there is a natural integration $(\G,\Omega)$ of $(M,\pi)$ associated to $J$, for which the associated orbifold groupoid $\G/\sT$ is the integration $\B$ of $\F_\pi$. This is called the $\B$-integration of $(M,\pi)$ relative to $J$ (\cite[pg. 82]{CrFeTo}). It also induces the same (transverse) integral affine structure as $J$ and acts canonically along $J$, making it a faithful multiplicity-free $(\G,\Omega)$-space. This explains how such proper isotropic fibrations can naturally be viewed as faithful multiplicity-free spaces. For the converse, let $(\G,\Omega)$ be any source-connected, regular and proper symplectic groupoid, let $\B=\G/\sT$ and let $\pi$ be the induced Poisson structure on its base $M$. Then for any faithful multiplicity-free $(\G,\Omega)$-space $J:(S,\omega)\to M$ with $J(S)=M$, the momentum map is a proper isotropic fibration that induces this same Poisson structure on $M$. In fact, $(\G,\Omega)$ is the $\B$-integration of $(M,\pi)$ relative to $J$. An isomorphism of two such proper isotropic fibrations:
\begin{center}
\begin{tikzcd} (S_1,\omega_1)\arrow[rr,"\sim"]\arrow[rd,"J_1"'] &  & (S_2,\omega_2)\arrow[ld,"J_2"] \\
& M & 
\end{tikzcd}
\end{center}
is automatically $\G$-equivariant, or in other words, it is an isomorphism of Hamiltonian $(\G,\Omega)$-spaces. Hence, there is a canonical bijection:
\begin{equation*} \left\{\begin{aligned} &\quad\quad\quad\textrm{ Isomorphism classes of } \\ &\textrm{proper isotropic fibrations over $(M,\pi)$} \\ & \quad\quad\quad\textrm{with $\B$-integration $(\G,\Omega)$} \end{aligned}\right\}\longleftrightarrow \left\{\begin{aligned} &\quad\quad\quad\text{ }\textrm{ Isomorphism classes of} \\ &\textrm{ faithful multiplicity-free $(\G,\Omega)$-spaces } \\ & \quad\quad\quad\textrm{with momentum image $M$} \end{aligned}\right\}.
\end{equation*} 
This explains the relationship between faithful multiplicity-free $(\G,\Omega)$-spaces with momentum image $\Delta=M$ and proper isotropic fibrations over $M$. Finally, let us point out: by the splitting theorem, $(\G,\Omega)$ is the $\B$-integration relative to some proper isotropic fibration if and only if $(\G,\Omega)$ is Morita equivalent to $(\B\Join \sT,\textrm{pr}_{\sT}^*\Omega_\sT)$. This recovers a result of \cite{CrFeTo}. 
\end{Ex}

Our final two main theorems provide an alternative way of listing the elements of (\ref{isoclwithprescrmomimnonempty}), by means of an additional invariant of faithful multiplicity-free $(\G,\Omega)$-spaces that we call the \textbf{ext-invariant} (Definition \ref{extinvdefi}). This invariant is a global section of what we call the \textbf{ext-sheaf} (Definition \ref{extinvsheafdef}):
\begin{equation}\label{flatsecshintro} \mathcal{I}^1=\mathcal{I}^1_{(\G,\Omega,\underline{\Delta})}.
\end{equation} This is a sheaf of sets on $\underline{\Delta}$ naturally associated to $(\G,\Omega)$. Its stalk at an orbit $\O_x$ of $\G$ through a point $x\in \Delta$ is the set: 
\begin{equation*} I^1(\G_x,\sT_x),
\end{equation*} consisting of right-splittings of the short exact sequence of isotropy groups:
\begin{equation}\label{sesisogpsatxintro} 1\to \sT_x\to \G_x\to \B_x\to 1,
\end{equation} modulo the $\sT_x$-action by conjugation (cf. Definition \ref{classgpcohomdefi} and Proposition \ref{splitcharprop}). 
\begin{Thm}[\label{torsortorspthm}{Second structure theorem}] Suppose that we are in the setting of Theorem \ref{firststrthm} and let $e$ be a global section of the ext-sheaf (\ref{flatsecshintro}). If: 
\begin{equation}\label{isoclsetsecondstrthmnonempty} \left\{\begin{aligned} &\quad\quad\text{ }\text{ }\textrm{Isomorphism classes of} \\ &\textrm{ faithful multiplicity-free $(\G,\Omega)$-spaces } \\ & \quad\textrm{ with momentum map image $\Delta$} \\& \quad\quad\textrm{ and with ext-invariant $e$ } \end{aligned}\right\}\neq \emptyset,
\end{equation} then it is a torsor with abelian structure group: 
\begin{equation}\label{strgpsecstrthm} H^1(\underline{\Delta},\underline{\L}),
\end{equation} the degree one cohomology of the sheaf $\underline{\L}$ on $\underline{\Delta}$ associated to the $\B\vert_\Delta$-sheaf $\L$ by considering $\B$-invariant sections (see Subsection \ref{sec:shfofautosandinvisotrsections}). This action is natural with respect to Morita equivalences (in the sense explained in Subsection \ref{extinvconstsec}).
\end{Thm}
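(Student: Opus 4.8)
The plan is to reduce the Second Structure Theorem to the First Structure Theorem by analyzing how the ext-invariant interacts with the torsor structure of Theorem \ref{firststrthm}. The key conceptual point is that the set in (\ref{isoclwithprescrmomimnonempty}) fibers over the global sections of the ext-sheaf $\mathcal{I}^1$ via the ext-invariant, and (\ref{isoclsetsecondstrthmnonempty}) is the fiber over $e$. So I would first establish that the ext-invariant is constant along the $H^1(\B\vert_\Delta,\L)$-action of Theorem \ref{firststrthm}; this is the heart of the matter. Given that, the orbits of the $H^1(\B\vert_\Delta,\L)$-action refine to actions on each fiber (\ref{isoclsetsecondstrthmnonempty}), and I would identify the subgroup of $H^1(\B\vert_\Delta,\L)$ that acts on a fixed fiber.

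The technical engine should be a short exact sequence of $\B\vert_\Delta$-sheaves relating $\L$, its $\B$-invariant subsheaf $\underline{\L}$, and a sheaf whose cohomology controls the ext-invariant. Concretely, I expect the sheaf $\L$ of isotropic sections of $\sT$ over $\Delta$ to sit in a sequence whose associated long exact sequence in equivariant cohomology reads, schematically,
\begin{equation*}
H^0(\underline{\Delta},\underline{\L})\to H^1_{\textrm{ext}}\xrightarrow{\ \partial\ } H^1(\B\vert_\Delta,\L)\to H^1(\underline{\Delta},\underline{\L}),
\end{equation*}
where the degree-zero ext-data (the choice of $e$, i.e. the stalkwise splittings of (\ref{sesisogpsatxintro}) modulo conjugation) feeds into the connecting-type behaviour. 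The point is that passing from the full equivariant cohomology $H^1(\B\vert_\Delta,\L)$ to the invariant cohomology $H^1(\underline{\Delta},\underline{\L})$ precisely kills the difference in ext-invariants: two spaces have the same ext-invariant exactly when their Lagrangian Chern classes differ by an element coming from the fiber, which acts through $H^1(\underline{\Delta},\underline{\L})$. I would make this precise by comparing the Čech/local description of the ext-sheaf with the group cohomology $I^1(\G_x,\sT_x)$ of (\ref{sesisogpsatxintro}) at each stalk, using Proposition \ref{splitcharprop} and the identification in Proposition \ref{prop:strgps:caseofgpactions} in the pointwise (Lie group) model of Example \ref{almabcompLiegpexintro0}.

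With the cohomological comparison in place, the proof structure mirrors Theorem \ref{firststrthm}. First I would show the action of $H^1(\underline{\Delta},\underline{\L})$ on (\ref{isoclsetsecondstrthmnonempty}) is well-defined: an invariant isotropic section, acting as in the First Structure Theorem, preserves the momentum image and — because it is $\B$-invariant — preserves the ext-invariant, so it maps the fiber over $e$ to itself. Freeness would follow from freeness of the $H^1(\B\vert_\Delta,\L)$-action together with injectivity of $H^1(\underline{\Delta},\underline{\L})\to H^1(\B\vert_\Delta,\L)$; transitivity on the fiber would follow from the exactness statement above, namely that any two spaces with the same ext-invariant differ by a Chern class lying in the image of $H^1(\underline{\Delta},\underline{\L})$. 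Finally, naturality with respect to Morita equivalences reduces to the functoriality of equivariant sheaf cohomology and of the ext-sheaf under the constructions of Subsection \ref{extinvconstsec}, so I would defer to the naturality already established in Theorem \ref{firststrthm} and check compatibility on stalks.

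The main obstacle I anticipate is the stalkwise-to-global passage for the ext-invariant: showing that the ext-sheaf $\mathcal{I}^1$, built from group-cohomological splitting data $I^1(\G_x,\sT_x)$ that varies over the singular orbifold base $\underline{\Delta}$, correctly slots into a long exact sequence compatible with the equivariant sheaf cohomology $H^\bullet(\B\vert_\Delta,\L)$. The difficulty is twofold: the isotropy groups $\G_x$ jump across the corner strata of the Delzant subspace, so the sheaves are genuinely non-locally-constant near the boundary; and one must verify that the torsor structure group $H^1(\underline{\Delta},\underline{\L})$ is exactly the stabilizer of a fiber rather than merely contained in it. I would handle the corner behaviour by working in the local normal-form models for faithful multiplicity-free spaces near a boundary orbit (the elliptic integrable-system models alluded to after Definition \ref{toractdefi}), where the splitting sequence (\ref{sesisogpsatxintro}) and its cohomology can be computed explicitly, and then glue.
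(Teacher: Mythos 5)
The central claim you call ``the heart of the matter'' --- that the ext-invariant is constant along the $H^1(\B\vert_\Delta,\L)$-action of Theorem \ref{firststrthm} --- is false, and it contradicts both Theorem \ref{thirdstrthm} and your own later sentences. Since that action is transitive on (\ref{isoclwithprescrmomimnonempty}), constancy of the ext-invariant along it would force every faithful multiplicity-free space with momentum image $\Delta$ to have the same ext-invariant, i.e. the map (\ref{extinvariantmapthrdstrthm}) would have a one-point image; Example \ref{example:concretecompsecondstrthm} (eight classes distinguished precisely by their ext-invariants) rules this out. What is true is that the ext-invariant transforms along the action through the middle map $H^1(\B\vert_\Delta,\L)\to\H^1(\underline{\Delta})$ of (\ref{eqn:fivetermexseqlowdeg}), and is constant exactly along the subgroup $H^1(\underline{\Delta},\underline{\L})$. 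Relatedly, your schematic exact sequence has the arrows backwards: in the Leray-type sequence (\ref{eqn:fivetermexseqlowdeg}) the invariant cohomology $H^1(\underline{\Delta},\underline{\L})$ is the \emph{sub}object injecting into $H^1(\B\vert_\Delta,\L)$, and the ext-controlling term $\H^1=R^1(q_\Delta)_*(\L)$ is the target; $H^1(\underline{\Delta},\underline{\L})$ is in no sense a quotient of the equivariant cohomology that ``kills the difference in ext-invariants.''

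Even after correcting this, your route (derive Theorem \ref{torsortorspthm} from Theorem \ref{firststrthm} by identifying the stabilizer of the ext-invariant) leaves its hardest step unsubstantiated: transitivity requires that whenever $\textrm{c}\cdot[J]$ and $[J]$ have the same ext-invariant, the class $\textrm{c}$ lies in the image of (\ref{inclstrgpsintro}). That statement is exactly the combination of the equivariance formula $e(J_\tau)_x=\Phi_x(\textrm{c})\cdot e(J)_x$ with the \emph{freeness} of the $\H^1$-action on the ext-sheaf $\mathcal{I}^1$, i.e. the content of Theorem \ref{thm:extsheaftorsorfirstdirectimage}, whose proof occupies Section \ref{sec:thirdstrthm} and rests on the linearization theorem for proper symplectic groupoids and the lengthy computation of Lemma \ref{linthmintaffmoreqprop}; your proposal only gestures at this via ``local normal forms and gluing.'' The paper avoids this dependency entirely: it proves the second structure theorem directly and independently of the first, by running the \v{C}ech argument of Theorem \ref{torsortorspthm:torbunversion} over $\G$-invariant opens with coefficients in $\underline{\L}$, with freeness supplied by the automorphism-sheaf isomorphism $\underline{\textrm{Aut}}_{\G}(J,\omega)\cong\underline{\L}$ (Theorem \ref{isoautsymplagseccor}) and transitivity by the local rigidity result Proposition \ref{locclasstoricthm} (equal momentum-image germs and equal ext-classes at a point yield an isomorphism over an invariant open); the third structure theorem is then deduced from the first and second via Proposition \ref{compatfirstsecstrthmprop} --- the opposite logical order to yours. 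Your approach could be made to work, but only by first establishing the Section \ref{sec:thirdstrthm} machinery, so as written it has a genuine gap.
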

The structure groups in the first and second structure theorem are related by a natural injective group homomorphism:
\begin{equation}\label{inclstrgpsintro} H^1(\underline{\Delta},\underline{\L}) \hookrightarrow H^1(\B\vert_\Delta,\L), 
\end{equation} which is compatible with the actions in these theorems. This leads to:
\begin{Thm}[\label{thirdstrthm}{Third structure theorem}] In the setting of Theorem \ref{firststrthm}, if (\ref{isoclwithprescrmomimnonempty}) holds, then the image of the map:
\begin{equation}\label{extinvariantmapthrdstrthm} \left\{\begin{aligned} &\quad\quad\quad\text{ }\text{}\textrm{Isomorphism classes of} \\ &\textrm{ faithful multiplicity-free $(\G,\Omega)$-spaces } \\ & \quad\quad\textrm{with momentum map image $\Delta$} \end{aligned}\right\}\longrightarrow \mathcal{I}^1(\underline{\Delta})
\end{equation} that associates to an isomorphism class of such a $(\G,\Omega)$-space its ext-invariant, is a torsor with abelian structure group the quotient:
\begin{equation}\label{quotgpthirdstrthmeqintro} \frac{H^1(\B\vert_\Delta,\L)}{H^1(\underline{\Delta},\underline{\L})}.
\end{equation} This action is natural with respect to Morita equivalences {as well}.
\end{Thm}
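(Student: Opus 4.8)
The plan is to reduce the statement to a formal fact about torsors and then feed in the first two structure theorems together with the compatibility of the inclusion (\ref{inclstrgpsintro}). Write $X$ for the (nonempty, by hypothesis) set (\ref{isoclwithprescrmomimnonempty}), write $\Phi:X\to \mathcal{I}^1(\underline{\Delta})$ for the ext-invariant map (\ref{extinvariantmapthrdstrthm}), and abbreviate $P:=H^1(\B\vert_\Delta,\L)$ and $Q:=H^1(\underline{\Delta},\underline{\L})$, the latter regarded as a subgroup of $P$ via (\ref{inclstrgpsintro}). By Theorem \ref{firststrthm}, $X$ is a $P$-torsor, and $P$ is abelian; hence $Q$ is normal, the quotient group $P/Q$ is (\ref{quotgpthirdstrthmeqintro}), and $X/Q$ is in a natural way a $(P/Q)$-torsor. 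I would prove the theorem by showing that $\Phi$ descends to a bijection $\overline{\Phi}:X/Q\xrightarrow{\sim}\Phi(X)$ and then transporting the $(P/Q)$-torsor structure along $\overline{\Phi}$.

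Two inputs are needed. First, the elements of $Q\subseteq P$ act on $X$ without changing the ext-invariant, i.e.\ $\Phi$ is constant on $Q$-orbits. This is exactly the content of the compatibility of (\ref{inclstrgpsintro}) with the actions: by Theorem \ref{torsortorspthm} the group $Q$ acts on each nonempty fiber of $\Phi$, and this $Q$-action coincides with the restriction to $Q$ of the $P$-action of Theorem \ref{firststrthm}, so $q\cdot[J]$ lies in the same fiber as $[J]$ for every $q\in Q$ and $[J]\in X$. Second, over its image $\Phi$ separates distinct $Q$-orbits: if $\Phi([J_1])=\Phi([J_2])=e\in\Phi(X)$, then both classes lie in $\Phi^{-1}(e)$, which by Theorem \ref{torsortorspthm} is a $Q$-torsor (nonempty since $e\in\Phi(X)$), whence $[J_2]=q\cdot[J_1]$ for a unique $q\in Q$. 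Combining the two, the nonempty fibers of $\Phi$ are exactly the $Q$-orbits in $X$, so $\Phi$ factors through $X\to X/Q$ as an injection $\overline{\Phi}:X/Q\hookrightarrow\mathcal{I}^1(\underline{\Delta})$ with image $\Phi(X)$.

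It remains to transport the torsor structure and to check that it matches the statement. Concretely, the $(P/Q)$-action on $\Phi(X)$ is given by $[\mu]\cdot e:=\Phi(\mu\cdot[J])$, for $[\mu]\in P/Q$, any representative $\mu\in P$, and any $[J]\in\Phi^{-1}(e)$. Independence of $\mu$ uses the first input; independence of $[J]$ uses that $P$ is abelian, since if $[J_2]=q\cdot[J_1]$ with $q\in Q$ then $\mu\cdot[J_2]=q\cdot(\mu\cdot[J_1])$ has the same ext-invariant as $\mu\cdot[J_1]$. Freeness follows from freeness of the $P$-torsor $X$: if $[\mu]\cdot e=e$ then $\mu\cdot[J]=q\cdot[J]$ for some $q\in Q$, so $\mu=q$ and $[\mu]=0$. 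Transitivity follows from transitivity of $X$: given $e_i=\Phi([J_i])$, write $[J_2]=\mu\cdot[J_1]$ and read off $e_2=[\mu]\cdot e_1$. This exhibits $\Phi(X)$ as a torsor with abelian structure group $P/Q=$ (\ref{quotgpthirdstrthmeqintro}).

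For naturality with respect to Morita equivalences, I would observe that every ingredient is already natural: the $P$-torsor structure on $X$ (Theorem \ref{firststrthm}), the $Q$-torsor structure on the fibers (Theorem \ref{torsortorspthm}), the inclusion (\ref{inclstrgpsintro}), and the ext-invariant map $\Phi$ (by naturality of the ext-invariant construction in Subsection \ref{extinvconstsec}). A Morita equivalence thus induces a bijection of the sets $X$ and a compatible isomorphism of $P$ intertwining the subgroups $Q$ and the maps $\Phi$; passing to $P/Q$ and to $\Phi(X)$ via $\overline{\Phi}$ yields the asserted naturality, as $\overline{\Phi}$ and the transported action were built solely from these natural data. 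The one genuinely non-formal point is the identification of the fibers of $\Phi$ with the $Q$-orbits, i.e.\ that the $Q$-action of the second structure theorem really is the restriction of the $P$-action of the first; this is precisely the compatibility asserted for (\ref{inclstrgpsintro}), and it is there, rather than in the torsor bookkeeping, that the geometric content resides.
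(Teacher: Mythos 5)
Your formal argument is correct and is essentially the paper's own: the paper's proof of Theorem \ref{thirdstrthm} is exactly the descent-and-transport argument you spell out, compressed into two sentences \textemdash{} the second structure theorem plus the compatibility of (\ref{inclstrgpsintro}) with the two actions make the $H^1(\B\vert_\Delta,\L)$-action descend to an action of the quotient (\ref{quotgpthirdstrthmeqintro}) on the image of (\ref{extinvariantmapthrdstrthm}), and freeness and transitivity then follow by combining the first and second structure theorems with that same compatibility. Your identification of the fibers of the ext-invariant map with the $Q$-orbits, the factorization through $X/Q$, and the well-definedness, freeness and transitivity checks are a faithful elaboration of this, and your observation that naturality is inherited from the naturality of all the ingredients matches the paper, which offers no separate argument for naturality either.

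The caveat is the ingredient you defer. The compatibility of (\ref{inclstrgpsintro}) with the actions is not a standing hypothesis available for free: in the paper it is Proposition \ref{compatfirstsecstrthmprop}, and proving it is where virtually all of the work of Section \ref{sec:thirdstrthm} goes. The paper's argument restricts to a complete transversal to reduce to the etale case, represents a class in $H^1(\underline{\Delta},\underline{\L})$ by a \v{C}ech cocycle $\underline{\tau}$ on a cover by invariant opens, pulls it back along $q_\Delta$ to an equivariant cocycle $\tau=(q_\Delta)^*(\underline{\tau})$, and then exhibits an explicit $\G$-equivariant symplectomorphism $S_\tau\xrightarrow{\sim} S_{\underline{\tau}}$, $[p,U]\mapsto [p,\widehat{U}]$ (with $\widehat{U}$ the saturation of $U$), between the space twisted by the first structure theorem's action and the space twisted by the second's. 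You correctly flag this compatibility as the sole non-formal input and as the place where the geometric content lives, but since you cite it rather than prove it, your write-up establishes the theorem only modulo Proposition \ref{compatfirstsecstrthmprop}; to be self-contained it would need that gluing argument (or an equivalent one) filled in. Everything else in your proposal is sound.
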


\begin{Rem}\label{expldescrpactthirdstrcthmrem} The injection (\ref{inclstrgpsintro}) fits in an exact sequence:
\begin{equation}\label{eqn:fivetermexseqlowdeg} 0\to H^1(\underline{\Delta},\underline{\L})\to H^1(\B\vert_\Delta,\L)\to \H^1(\underline{\Delta})\to H^2(\underline{\Delta},\underline{\L})\to H^2(\B\vert_\Delta,\L),\quad\H^1:=R^1(q_\Delta)_*(\L),
\end{equation} which is the exact sequence in low degrees of a Leray-type spectral sequence for the canonical map of topological groupoids $q_\Delta$ from $\B\vert_\Delta$ to the unit groupoid of $\underline{\Delta}$. Here $\H^1$ denotes the first right-derived functor of the push-forward along $q_\Delta$ applied to the $\B\vert_\Delta$-sheaf $\L$. In Subsection \ref{explcdescrpactthirdstrthmsec} we show that there is a natural free and transitive action of the sheaf of abelian groups $\H^1$ on $\mathcal{I}^1$. Moreover, we show that the action of (\ref{quotgpthirdstrthmeqintro}) on the image of (\ref{extinvariantmapthrdstrthm}) in Theorem \ref{thirdstrthm} (which is inherited from the action of (\ref{strgpfirststrthm}) in Theorem \ref{firststrthm}) factors through this $\H^1$-action via the middle map in (\ref{eqn:fivetermexseqlowdeg}).   
\end{Rem}

Together, the second and third structure theorem provide a way of listing the isomorphism classes of faithful multiplicity-free $(\G,\Omega)$-spaces different from that in the first structure theorem.

\begin{Ex}\label{almabcompLiegpexintro} To illustrate this difference we return to Example \ref{almabcompLiegpexintro0}. From the second and third structure theorem and the splitting theorem we obtain a canonical bijection:
\begin{equation*}\left\{\begin{aligned} &\text{ Isomorphism classes of compact, connected, }\\ &\quad\text{ {maximally} toric Hamiltonian $G$-spaces }\end{aligned}\right\}\longleftrightarrow \left\{\begin{aligned} &\text{$\varGamma$-invariant Delzant}\\ & \text{polytopes in $(\g^*,\Lambda_T^*)$}\end{aligned}\right\}\times I^1(G,T),
\end{equation*} that associates to a class $[J:(S,\omega)\to \g^*]$ the pair consisting of the image $\Delta$ of $J$ and the germ of its ext-invariant at any $\varGamma$-fixed point in $\Delta$. This is because for any $\varGamma$-invariant Delzant polytope $\Delta$ in $(\g^*,\Lambda_T^*)$ the following hold.
\begin{itemize}\item The structure group (\ref{strgpsecstrthm}) is trivial, due to convexity of $\Delta$ (see Proposition \ref{prop:strgps:caseofgpactions}$b$).
\item As mentioned before: the structure group (\ref{strgpfirststrthm}) is naturally isomorphic to the degree one group cohomology $H^1(\varGamma,T)$. 
\item The set of global sections of (\ref{flatsecshintro}) is naturally in bijection with the set $I^1(G,T)$ consisting of right splittings of (\ref{extinfabliegpintro}) modulo the $T$-action by conjugation. Moreover, under this and the previous identification the action of (\ref{strgpfirststrthm}) on the set of global sections of (\ref{flatsecshintro}) is identified with the free and transitive $H^1(\varGamma,T)$-action on $I^1(G,T)$ obtained by viewing $H^1(\varGamma,T)$ as the group of automorphisms of the extension (\ref{extinfabliegpintro}) modulo the subgroup of automorphisms given by conjugation by elements of $T$ (see Proposition \ref{prop:strgps:caseofgpactions}$c$). 
\end{itemize} 
\end{Ex}
To further illustrate the use of the second and third structure theorem, in Subsection \ref{flatsecdefsec} we give a more concrete example (Example \ref{example:concretecompsecondstrthm}) of a symplectic groupoid and a Delzant subspace to which we apply these theorems, in order to obtain an explicit list of all isomorphism classes of faithful multiplicity-free actions of that symplectic groupoid with that Delzant subspace as their momentum map image. \\

{ In fact, the above theorems apply to a more general class of Hamiltonian group actions than that in Example \ref{almabcompLiegpexintro}, to give the following classification.  
\begin{Ex}\label{gencompLiegpexintro} 
Let $G$ be any compact Lie group. A compact and connected Hamiltonian $G$-space is called multiplicity-free if all its symplectic reduced spaces are points. Let us call a Hamiltonian $G$-space \textbf{regular} if its momentum map image is contained in $\g^*_\textrm{reg}$ \textemdash the union of coadjoint orbits of maximal dimension. Fix a maximal torus $T$ and an open Weyl chamber $\c$ in $\t^*$. Further, let $N(\c)$ be the normalizer of $\c$ in $G$. 
It follows from the second and third structure theorem and the splitting theorem (see Example \ref{ex:gencompLiegpexintro:details}) that compact, connected, regular, multiplicity-free Hamiltonian $G$-spaces are classified up to isomorphism by triples of:
\begin{itemize}
\item a closed normal subgroup $K$ of $N(\c)$ that is contained in $T$,
\item an $N(\c)$-invariant polytope $\Delta$ in $\c$ that is Delzant as polytope in an integral affine subspace of $(\t^*,\Lambda_T^*)$ parallel to $((\t/\mathfrak{k})^*,\Lambda_{T/K}^*)$,
\item an element of $I^1(N(\c)/K,T/K)$.
\end{itemize}
\end{Ex}
Whilst for connected $G$ Examples \ref{almabcompLiegpexintro} and \ref{gencompLiegpexintro} are a simple case of the existing classification of all multiplicity-free Hamiltonian actions of such Lie groups \cite{De,De1,Wo,Kn} (since in that case $N(\c)=T$), for disconnected $G$ it seems that there are no such classifications in the literature.} \\ 

Finally, let us mention that the main theorems above readily generalize to quasi-Hamiltonian actions of regular, proper, twisted pre-symplectic groupoids (in the sense of \cite{Xu1,BuCrWeZh}), essentially because every regular twisted pre-symplectic groupoid is Morita equivalent to a regular (non-twisted) symplectic groupoid. In particular, for any compact Lie group $G$, this applies to a class of quasi-Hamiltonian $G$-spaces (in the sense of \cite{AlMaMe}) for which the momentum map image is contained in the regular part of $G$. This provides some new insight into the classification problem for multiplicity-free quasi-Hamiltonian $G$-spaces, for in the current results on this \cite{Kn1} the Lie group $G$ is assumed to be simply-connected. \\

\textbf{\underline{Brief outline:}} In Part 1 we introduce Delzant subspaces and prove that the image of the momentum map of a faithful multiplicity-free Hamiltonian action is such a subspace (Theorem \ref{momimtoricthm}). The classification for the case of symplectic torus bundles (Theorem \ref{classtorsymptorbunthm}) is proved in Part 2. In Part $3$ we prove the first structure theorem and the splitting theorem (Theorem \ref{firststrthm} and Theorem \ref{globalsplitthm}) and in Part 4 we introduce the ext-invariant and the ext-sheaf, and prove the second and third structure theorems (Theorem \ref{torsortorspthm} and Theorem \ref{thirdstrthm}). Apart from the section on the third structure theorem, Part 4 can be read independently of Part 3. More detailed outlines can be found at the beginning of each of these parts. \\

\textbf{\underline{Acknowledgements:}} First, I wish to thank my PhD supervisor Marius Crainic for suggesting this project to me, for useful discussions on the cohomology of orbifold sheaves and for commenting on an earlier version of this text. The latter has surely helped me to improve the exposition. Furthermore, I would like to thank Rui Loja Fernandes for sharing a private note on proper isotropic fibrations with me. Th{at} short note contains a statement similar to the outcome of the first structure theorem above when applied to the particular case of Example \ref{thirdexampleintro}, which was a source of inspiration for the statement of the first structure theorem. This work was supported by NWO (Vici Grant no. 639.033.312) and the Max Planck Institute for Mathematics.\\

\textbf{\underline{Conventions:}} Throughout, we require smooth manifolds (with or without corners) to be both Hausdorff and second countable and we require the same for both the base and the space of arrows of a Lie groupoid (with or without corners). Furthermore, given a groupoid $\G\rightrightarrows X$ we use the notation $\underline{X}:=X/\G$ for its orbit space and we denote subsets of $\underline{X}$ as $\underline{A}$, where $A$ denotes the corresponding invariant subset of $X$. 

\newpage
\section{Delzant subspaces and the momentum map image}
This part concerns the properties of the image of the momentum map and its relation to those of the Hamiltonian action. \\

In Section \ref{classtorrepsec} we formulate and prove linear versions of the classification results in Example \ref{almabcompLiegpexintro0} and Example \ref{almabcompLiegpexintro}, which illustrate our classification results in a particularly simple case. Moreover, these form the foundation for the construction of the ext-invariant and the proof of Theorem \ref{torsortorspthm}, given in Part 4. In Section \ref{momimsec} we introduce Delzant subspaces, present some of their basic features and recall the necessary background on integral affine orbifolds. This can be read independently from Section \ref{classtorrepsec}, except for the fact that some of the terminology on linear integral affine geometry used in Section \ref{momimsec} is introduced in Subsection \ref{torrepoftorsec}. Lastly, in Section \ref{sec:momimisdelzsubsp} we prove Theorem \ref{momimtoricthm} and Proposition \ref{conedescrpsympnormrepprop}, which connect the objects studied in Section \ref{classtorrepsec} to those in Section \ref{momimsec} and explain how to read off some invariants of the Hamiltonian action from the momentum map image. 


\subsection{Classification of maximally toric representations}\label{classtorrepsec} 
\subsubsection{Statement of the classification theorem}
Let us first introduce some terminology. 
\begin{defi} By a \textbf{symplectic representation} $(V,\omega)$ of a Lie group $H$ we mean finite-dimensional real symplectic vector space $(V,\omega)$ together with a morphism $H\to \textrm{Sp}(V,\omega)$ into the Lie group of linear symplectic automorphisms of $(V,\omega)$.
\end{defi} 
For any such representation $(V,\omega)$, the $H$-action is Hamiltonian with equivariant momentum map:
\begin{equation}\label{quadsympmommap} J_V:(V,\omega)\to \h^*, \quad \langle J_V(v),\xi \rangle=\frac{1}{2}\omega(\xi\cdot v,v).
\end{equation}  
\begin{defi}\label{torrepdef} We call a Lie group $H$ \textbf{infinitesimally abelian} if its Lie algebra is abelian. Given an infinitesimally abelian compact Lie group $H$ with identity component $T$, we call a symplectic representation $H\to \textrm{Sp}(V,\omega)$ \textbf{maximally toric} if the induced torus representation $T\to \textrm{Sp}(V,\omega)$ is faithful and $\dim(V)=2\dim(T)$. 
\end{defi} 
{The representations in this definition are linear versions of the Hamiltonian group actions considered in Example \ref{almabcompLiegpexintro0} and Example \ref{almabcompLiegpexintro}. The theorem below is a linear version of the classification result in Example \ref{almabcompLiegpexintro}. It will also be important later for understanding the local properties of general faithful multiplicity-free actions. }
\begin{thm}\label{isoclasstorrepthm} Let $H$ be an infinitesimally abelian compact Lie group with identity component $T$ and let $\varGamma:=H/T$ be the group of connected components of $H$. There is a canonical bijection:
\begin{equation*} \left\{\begin{aligned}&\quad\quad\text{Isomorphism classes of\text{ }}\\
&\text{ maximally toric $H$-representations }\end{aligned}\right\}\longleftrightarrow \left\{\begin{aligned}&\text{ \quad\quad Smooth \& $\varGamma$-invariant }\\ &\text{ pointed polyhedral cones in $(\t^*,\Lambda_T^*)$ }\end{aligned}\right\}\times I^1(H,T)
\end{equation*} defined by sending the class of such an $H$-representation $(V,\omega)$ to the pair consisting of the image $\Delta_V$ of its momentum map (\ref{quadsympmommap}) and its ext-class $\textrm{e}(V,\omega)$ (see Definition \ref{extclasstoricrep}). 
\end{thm}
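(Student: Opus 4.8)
The plan is to reduce the whole statement to the structure of the diagonal torus inside the unitary group together with the theory of extensions of $\varGamma$ by $T$. First I would treat the underlying torus representation. Given a maximally toric $H$-representation $(V,\omega)$, averaging over the compact group $H$ produces an $H$-invariant $\omega$-compatible complex structure, turning $(V,\omega)$ into a Hermitian representation $H\to\textrm{U}(V)$. Restricting to $T$ and decomposing into weight lines gives $V\cong\bigoplus_{j=1}^n\C_{\alpha_j}$ with $n=\dim(T)$ and weights $\alpha_j\in\Lambda_T^*$. Faithfulness of the $T$-action together with $\dim_\R(V)=2n$ forces the $\alpha_j$ to be distinct and to generate $\Lambda_T^*$, hence to form a $\Z$-basis; in particular the map $T\to(U(1))^n$, $t\mapsto(\chi_{\alpha_1}(t),\dots,\chi_{\alpha_n}(t))$, is an isomorphism onto the diagonal torus. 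A direct computation with (\ref{quadsympmommap}) shows that the momentum image $\Delta_V$ is the cone generated by the $\alpha_j$, which is therefore a smooth pointed polyhedral cone. This already settles the torus case; it remains to incorporate $\varGamma$.

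Next I would establish $\varGamma$-invariance of $\Delta_V$ and construct the ext-class. Since $T$ is normal in $H$, the subgroup $\rho(H)$ normalizes the diagonal torus $(U(1))^n$, hence lands in the group of monomial matrices $N_{\textrm{U}(n)}((U(1))^n)=(U(1))^n\rtimes S_n$. The induced map $\bar\rho\colon\varGamma\to S_n$ records how $\varGamma$ permutes the weight lines, and this permutation agrees with the conjugation-induced $\varGamma$-action on $\t^*$ preserving $\Lambda_T^*$; thus the weight set, and so $\Delta_V$, is $\varGamma$-invariant. For the ext-class, consider $H_0:=\rho^{-1}(S_n)$, the preimage of the permutation matrices. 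Faithfulness gives $H_0\cap T=\{e\}$ (a unitary-diagonal permutation matrix is the identity), while the monomial decomposition shows $H_0\to\varGamma$ is onto; hence $H_0\to\varGamma$ is an isomorphism whose inverse is a right splitting $s\colon\varGamma\to H$ of $1\to T\to H\to\varGamma\to1$. Its class in $I^1(H,T)$, independent up to $T$-conjugation of the auxiliary Hermitian structure and of the ordering of the weights, is the ext-class $\textrm{e}(V,\omega)$ of Definition \ref{extclasstoricrep}. In particular, the mere existence of a maximally toric $H$-representation forces the extension to split.

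To invert the assignment I would build, from a $\varGamma$-invariant smooth pointed cone $\Delta$ and a class $[s]\in I^1(H,T)$, an explicit representation. The cone determines its primitive ray generators, a $\Z$-basis $\alpha_1,\dots,\alpha_n$ of $\Lambda_T^*$, hence a faithful $T$-representation $V_0=\bigoplus_j\C_{\alpha_j}$ with diagonal action $\rho_0$ and the permutation $\gamma\mapsto\sigma_\gamma$ by which $\varGamma$ acts on the $\alpha_j$. Setting $\rho(t\,s(\gamma)):=\rho_0(t)P_{\sigma_\gamma}$, with $P_{\sigma_\gamma}$ the permutation matrix, defines a homomorphism: the only point to verify is that conjugation by $s(\gamma)$ on $T$ corresponds under $\rho_0$ to conjugation by $P_{\sigma_\gamma}$, which holds precisely because the intrinsic $\varGamma$-action on $T$ is the permutation $\sigma_\gamma$ of the weights. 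This yields a maximally toric $H$-representation with momentum image $\Delta$ and ext-class $[s]$, giving surjectivity. For injectivity, two such representations with the same cone can be put in this standard form with identical $T$-action, and then they differ only by their splittings; equal ext-classes mean the splittings are $T$-conjugate, and conjugating by the corresponding element of $T$ produces an isomorphism of $H$-representations. Combined with the reconstruction, this shows $(V,\omega)\mapsto(\Delta_V,\textrm{e}(V,\omega))$ is a bijection.

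The main obstacle is the bookkeeping in the last two steps: making the ext-class genuinely well defined — independent of the chosen invariant complex structure, of the identification $V\cong\C^n$, and of the ordering of the weights — while being natural enough that the reconstruction is a two-sided inverse. The conceptual crux on which everything hinges is the single compatibility that the conjugation action of $\varGamma$ on $T$ coincides, under the weight identification $T\cong(U(1))^n$, with the permutation action $\bar\rho$; once this is pinned down, the homomorphism property in the reconstruction and the matching of ext-classes are forced, and the remaining verifications are routine.
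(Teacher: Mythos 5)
Your proposal is correct, and its overall skeleton (torus case via weights, extraction of a splitting from how $H$ moves the weight lines, injectivity by twisting with an element of $T$, surjectivity via an explicit semidirect-product construction) coincides with the paper's. The genuine difference lies in how you establish that $H$ acts monomially: you average to obtain an $H$-invariant compatible complex structure, so that the representation $\rho:H\to\textrm{U}(n)$ normalizes the diagonal torus $\rho(T)\cong\T^n$, and then invoke $N_{\textrm{U}(n)}(\T^n)=\T^n\rtimes S_n$; the paper instead argues purely symplectically, combining uniqueness of the decomposition into irreducibles with equivariance of the momentum map (\ref{quadmom}) \textemdash each $h$ maps the unit circle $J_V^{-1}(\alpha_i)$ onto $J_V^{-1}([h]\cdot\alpha_i)$, hence acts by a rotation \textemdash with no auxiliary Hermitian structure chosen (see the proof of Proposition \ref{cirepsplit}). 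Relatedly, you package the ext-class as the subgroup $H_0=\rho^{-1}(S_n)$, i.e.\ as a right splitting, whereas Definition \ref{extclasstoricrep} packages it as the explicit cocycle $c_\psi$ of (\ref{defprop1coc}); the two agree because $H_0$ is exactly the kernel (\ref{kernel1cocycleeq}) of $c_\psi$ for a unitary choice of $\psi$, and Proposition \ref{splitcharprop} identifies such subgroups with cocycles restricting to the identity on $T$. Your route buys a single structural fact from which both the $\varGamma$-invariance of $\Delta_V$ and the splitting drop out at once; the paper's route stays inside the symplectic category and produces the cocycle formula (\ref{defprop1coc}) in the explicit form that is reused later in the paper (e.g.\ in the proof of Proposition \ref{torlocmodextinvconstprop}). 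Finally, the ``main obstacle'' you flag (independence of the ext-class of the Hermitian structure, the identification with $\C^n$, and the ordering of the weights) is disposed of by one observation, which is also how the paper handles it implicitly: since the weights are distinct and form a basis, any two $T$-equivariant symplectic identifications $V\cong\bigoplus_i(\C_{\alpha_i},\omega_\textrm{st})$ differ by a diagonal rotation, i.e.\ by the action of an element of $T$, so the resulting cocycles (equivalently, splittings) differ by a $T$-coboundary (equivalently, by $T$-conjugation) and the class in $I^1(H,T)$ is unchanged; reordering the weights conjugates $S_n$ into itself and leaves $H_0$ untouched.
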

In this section we explain and prove this statement, which is a modest extension of a well-known result for representations of tori (Proposition \ref{classtorrep} recalled below). First, we elaborate on the set $I^1(H,T)$. Notice that $T$ is a torus and the quotient ${\varGamma}=H/T$ is a finite group. Furthermore, we have a canonical short exact sequence of Lie groups:
\begin{equation}\label{sesamabliegp} 1\to T\to H\to \varGamma\to 1,
\end{equation} 
via which we view $H$ as an extension of $\varGamma$ by the abelian group $T$ and we view $T$ both as an $H$-module and as a $\varGamma$-module, equipped with the actions by conjugation in $H$. We denote by $H^1(H,T)$ the degree one group cohomology of the $H$-module $T$, represented {throughout this section} as the abelian group of maps ${c}:H\to T$ satisfying the cocycle condition:
\begin{equation*} {c}(h_1h_2)=({c}(h_1)\cdot h_2){c}(h_2), \quad h_1,h_2\in H,
\end{equation*} modulo the subgroup of those ${c}:H\to T$ for which there is a $t\in T$ such that ${c}$ is given by:
\begin{equation*} {c}(h)=(t\cdot h)t^{-1},\quad h\in H. 
\end{equation*} 
\begin{defi}\label{classgpcohomdefi} Let $H$ be an infinitesimally abelian compact Lie group with identity component $T$. We let $I^1(H,T)$ denote the subset of $H^1(H,T)$ consisting of cohomology classes whose representatives restrict to the identity map on $T$. 
\end{defi}
\begin{rem}\label{degonegpcohomtorsorrem} If $I^1(H,T)$ is non-empty, then it is naturally a torsor with abelian structure group $H^1(\varGamma,T)$ (the degree one group cohomology), which makes it amenable to computation in explicit examples. Here, the action is defined by assigning to $[\kappa]\in H^1(\varGamma,T)$ and $[{c}]\in I^1(H,T)$ the class $[\kappa]\cdot [{c}]$ represented by the $1$-cocycle $H\to T$ that maps $h\in H$ to $\kappa([h]){c}(h)\in T$.   
\end{rem} 
The set $I^1(H,T)$ is non-empty if and only if $H$ is split (by Proposition \ref{splitcharprop}), in the sense below.
\begin{defi}\label{almabliegpsplitdefi} We will say that an infinitesimally abelian compact Lie group $H$ is \textbf{split} if the short exact sequence of groups (\ref{sesamabliegp}) admits a right-splitting. 
\end{defi} 
In Subsection \ref{splitabgpextsec} we will prove the proposition below, which is a linear version of the forward implication in the Theorem \ref{globalsplitthm}. In view of this proposition, the statement of Theorem \ref{isoclasstorrepthm} is trivially true when $H$ is not split, since in this case both sets in the bijection are empty. 
\begin{prop}\label{cirepsplit} Let $H$ be an infinitesimally abelian compact Lie group. If $H$ admits a maximally toric representation, then it is split. \end{prop}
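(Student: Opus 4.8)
The plan is to exploit the fact that a maximally toric representation realizes (a finite quotient of) $H$ as a subgroup of a unitary group whose identity component is a maximal torus, and then to invoke the splitness of the normalizer of a maximal torus in $\textrm{U}(n)$. Throughout, write $\rho\colon H\to \textrm{Sp}(V,\omega)$ for the given representation, $n:=\dim(T)$, and $\pi\colon H\to\varGamma$ for the projection in (\ref{sesamabliegp}).

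First I would average a Hermitian inner product over the compact group $H$ to produce an $H$-invariant, $\omega$-compatible complex structure on $V$. This identifies $(V,\omega)$ with $\C^n$ and makes $\rho(H)$ a subgroup of $\textrm{U}(n)$. Since the $T$-action is faithful and $\dim(V)=2\dim(T)$, the torus $T$ maps isomorphically onto an $n$-dimensional torus in $\textrm{U}(n)$; having dimension equal to the rank, this is a maximal torus, which after conjugation I may take to be the standard diagonal one. Put $\bar H:=\rho(H)=H/N$, where $N:=\ker(\rho)$ is finite and satisfies $N\cap T=\{1\}$ (by $T$-faithfulness). As $T$ is the identity component of $\bar H$ it is normal there, so $\bar H\subseteq N_{\textrm{U}(n)}(T)$.

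The key input is that $N_{\textrm{U}(n)}(T)$ is the group of unitary monomial matrices, i.e.\ a split extension $N_{\textrm{U}(n)}(T)=T\rtimes S_n$ whose Weyl group $S_n$ is realized by permutation matrices. Writing $p$ for the projection onto $S_n$, whose kernel is exactly $T$, the inclusion $T\subseteq\bar H$ forces $\bar H=p^{-1}(p(\bar H))$ to be a full preimage; hence $\bar H=T\rtimes\bar\varGamma$ with $\bar\varGamma:=p(\bar H)$, and the permutation-matrix section restricts to a splitting $\bar s\colon\bar\varGamma\to\bar H$ of $1\to T\to\bar H\to\bar\varGamma\to 1$.

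It then remains to descend from $\bar H$ to $H$, which is where the finite kernel $N$ — equivalently, the possible non-faithfulness of the $H$-action (as in the example $H=T\times\Z/2$ with $\Z/2$ acting trivially) — is the main obstacle. I would handle it by identifying $H$ with the pullback of the extension $1\to T\to\bar H\to\bar\varGamma\to1$ along the canonical surjection $\varGamma\twoheadrightarrow\bar\varGamma=\varGamma/(TN/T)$: the map $h\mapsto(\rho(h),\pi(h))$ is a homomorphism from $H$ onto $\{(\bar h,\gamma)\in\bar H\times\varGamma:\pi_{\bar H}(\bar h)=\bar\gamma\}$, and it is an isomorphism because $N\cap T=\{1\}$ makes it injective while it is the identity on the normal subgroup $T$ and on the quotient $\varGamma$. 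Since the pullback of a split extension is split, the formula $s(\gamma):=(\bar s(\bar\gamma),\gamma)$ defines a splitting $\varGamma\to H$ of (\ref{sesamabliegp}), so $H$ is split; equivalently, the class of (\ref{sesamabliegp}) in $H^2(\varGamma,T)$ is the inflation of the trivial class of $\bar H$, hence trivial.
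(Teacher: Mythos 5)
Your proof is correct, but it takes a genuinely different route from the paper's. The paper never leaves the symplectic/weight picture: it fixes an isomorphism $(V,\omega)\cong(\C_{\alpha_1},\omega_\textrm{st})\oplus\dots\oplus(\C_{\alpha_n},\omega_\textrm{st})$, uses the uniqueness of the decomposition into irreducible symplectic subrepresentations (a lemma proved via linear independence of the weights and the momentum map) to show that each $h\in H$ permutes the lines $\C_{\alpha_i}$, and then uses $H$-equivariance of $J_V$ (unit circles go to unit circles) to see that $h$ acts by a rotation $h_i\in\mathbb{S}^1$ on each line; packaging $(h_1,\dots,h_n)$ via the weight isomorphism $T\cong\T^n$ produces an explicit $1$-cocycle ${c}\colon H\to T$ restricting to the identity on $T$, and Proposition \ref{splitcharprop} (item iii) concludes. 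Your argument instead passes to the unitary picture: an invariant compatible complex structure realizes $\rho(H)$ inside $\textrm{U}(n)$, the image of $T$ is a maximal torus, and splitness of the normalizer $N_{\textrm{U}(n)}(T)=T\rtimes S_n$ (monomial matrices) hands you a splitting of $\bar H=\rho(H)$ over $\bar\varGamma$; both proofs thus rest on the same underlying phenomenon --- the action is by ``monomial'' symplectic transformations --- but you import it from the structure theory of $\textrm{U}(n)$ rather than deriving it from the momentum map. The price of working with the image $\bar H$ is that you must handle the finite kernel $N=\ker\rho$, which you do correctly: the map $h\mapsto(\rho(h),\pi(h))$ is injective because $N\cap T=\{1\}$, surjective onto the fibered product because $\ker(\varGamma\to\bar\varGamma)=\pi(N)$, so $H$ is the pullback of the split extension and inherits the splitting $s(\gamma)=(\bar s(\bar\gamma),\gamma)$. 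The paper's cocycle construction avoids this step entirely (the rotation parts are defined on $H$ itself, faithful or not), and it has the added benefit of producing the canonical cocycle ${c}_\psi$ that is reused immediately afterwards to define the ext-class $e(V,\omega)$ in (\ref{gpcohclasstorrep}); your approach proves the proposition cleanly but does not single out that invariant.
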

The proof of this will reveal the group cohomology class (\ref{gpcohclasstorrep}). In the coming subsections we will explain the remaining parts of the statement and give a proof of Theorem \ref{isoclasstorrepthm}.

\subsubsection{Isomorphism versus equivalence of symplectic representations} Throughout, we will use the following notions of equivalence between symplectic representations.
\begin{defi}\label{isoequivsymprepdef} An \textbf{isomorphism} of symplectic $H$-representations is an $H$-equivariant symplectic linear isomorphism. Given two Lie groups $H_1$ and $H_2$, by an \textbf{equivalence} between a symplectic $H_1$-representation $(V_1,\omega_1)$ and a symplectic $H_2$-representation $(V_2,\omega_2)$ we mean a pair of maps:
\begin{equation}\label{symprepeq} (\phi,\psi):(H_1,(V_1,\omega_1))\to (H_2,(V_2,\omega_2))
\end{equation} consisting of an isomorphism of Lie groups $\phi:H_1\to H_2$ and a symplectic linear isomorphism $\psi:(V_1,\omega_1)\to (V_2,\omega_2)$ that are compatible with the actions, in the sense that:
\begin{equation*} \phi(h)\cdot\psi(v)=\psi(h\cdot v),\quad h\in H_1,\text{ } v\in V_1.
\end{equation*}
\end{defi}
Notice that the momentum map (\ref{quadsympmommap}) is an invariant of a symplectic representation, in the sense that an equivalence (\ref{symprepeq}) induces an identification:
\begin{center} \begin{tikzcd} (V_1,\omega_1)\arrow[r,"\psi"] \arrow[d,"J_{V_1}"] & (V_2,\omega_2) \arrow[d,"J_{V_2}"] \\
 \h_1^* \arrow[r,"\phi_*"] & \h_2^*
\end{tikzcd} 
\end{center} 

\subsubsection{{Maximally} toric representations of tori}\label{torrepoftorsec} If an infinitesimally abelian compact Lie group is connected, then it is simply a torus. In this case, Theorem \ref{isoclasstorrepthm} boils down to:
\begin{prop}[\cite{De}]\label{classtorrep} Let $T$ be an $n$-dimensional torus. Then:
\begin{enumerate}
\item [a)] A symplectic representation of $T$ is {maximally} toric if and only if its weight-tuple forms a basis of the character lattice $\Lambda_T^*$ in $\t^*$.
\item[b)] The map that associates to each unordered basis $\{\alpha_1,...,\alpha_n\}$ of $\Lambda_T^*$ the polyhedral cone generated by $(\alpha_1,...,\alpha_n)$ is a bijection from the set of such $n$-tuples to the set of smooth pointed polyhedral cones in $(\t^*,\Lambda_T^*)$. 
\end{enumerate} 
Therefore we have a bijection:
\begin{equation*}
\left\{\begin{aligned}&\quad\quad\text{ Isomorphism classes of }\\
 &\text{ {maximally} toric $T$-representations }\end{aligned}\right\}\longleftrightarrow \left\{\begin{aligned}&\quad\quad\text{ Smooth pointed}\\ &\text{ polyhedral cones in }(\t^*,\Lambda_T^*)\text{ }\end{aligned}\right\}
\end{equation*} that associates to such a $T$-representation $(V,\omega)$ the image $\Delta_V$ of the momentum map $(\ref{quadsympmommap})$. 
\end{prop}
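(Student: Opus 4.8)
The plan is to pass to the standard complex-unitary picture of symplectic torus representations and to read everything off from the weight decomposition. First I would choose a $T$-invariant $\omega$-compatible complex structure on $V$; such a structure exists because $T$ is compact (average an arbitrary compatible complex structure, or use that the contractible space of compatible complex structures carries a $T$-action and hence a fixed point). This turns $(V,\omega)$ into a unitary $T$-representation, which splits into complex weight lines with weights $\alpha_1,\dots,\alpha_m\in\Lambda_T^*$ (listed with multiplicity), where $2m=\dim(V)$. Writing $v=\sum_j z_j$ in these weight lines, a direct computation from (\ref{quadsympmommap}) gives $J_V(v)=\tfrac12\sum_j|z_j|^2\alpha_j$, so that, as $(|z_1|^2,\dots,|z_m|^2)$ sweeps out $\R_{\geq 0}^m$, the image $\Delta_V$ is precisely the polyhedral cone generated by $\alpha_1,\dots,\alpha_m$. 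This identity is the backbone of the whole statement.

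For part $a$, I would translate the two conditions defining maximal toricity into conditions on the weights. The equality $\dim(V)=2\dim(T)=2n$ says exactly that $m=n$. Faithfulness of $T\to\textrm{Sp}(V,\omega)$ amounts to triviality of $\bigcap_j\ker(\alpha_j)$, i.e. injectivity of $t\mapsto(\alpha_1(t),\dots,\alpha_n(t))$; I would show this holds if and only if the $\alpha_j$ span $\t^*$ over $\R$ (injectivity on Lie algebras) and generate $\Lambda_T^*$ over $\Z$ (triviality of the remaining discrete kernel, which is Pontryagin-dual to $\Lambda_T^*/\langle\alpha_1,\dots,\alpha_n\rangle_{\Z}$). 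Combined with $m=n$, these are equivalent to $(\alpha_1,\dots,\alpha_n)$ being a $\Z$-basis of $\Lambda_T^*$, which is part $a$.

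Part $b$ is then linear algebra in the spirit of toric geometry. The assignment is well defined, since a cone generated by a lattice basis is pointed (its generators being linearly independent) and smooth by definition. For injectivity I would recover the generators intrinsically as the primitive lattice vectors along the edges of the cone; because the cone is simplicial and smooth, these primitive edge generators are exactly the original basis, so the unordered set is determined. Surjectivity is immediate from the definition of a smooth, full-dimensional, pointed cone as one generated by a lattice basis.

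It remains to assemble the displayed bijection. The momentum map is an invariant of the equivalence class (as noted before the proposition) and equals the cone over the weights by the first step, so by parts $a$ and $b$ the image $\Delta_V$ is a smooth pointed cone whose edge generators recover the weight-basis. Surjectivity of $[(V,\omega)]\mapsto\Delta_V$ then follows by forming, from a smooth pointed cone with edge-basis $\{\alpha_j\}$, the model $\bigoplus_j\C_{\alpha_j}$, which is maximally toric with that cone as its momentum image. Injectivity reduces to showing that a maximally toric representation is determined up to isomorphism by its weight-tuple, and this last point is where I expect the main obstacle to lie: one must check that $\omega$ pins down the sign of each weight (so that $\alpha$ and $-\alpha$ give non-isomorphic lines), that the momentum cone recovers these signed weights independently of the auxiliary complex structure, and that the standard unitary classification of torus representations upgrades to the symplectic category — after which $(V,\omega)$ is identified unitarily, hence symplectically, with $\bigoplus_j\C_{\alpha_j}$. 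Granting this, the rest is bookkeeping built on the momentum-image identity of the first paragraph.
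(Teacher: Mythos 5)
Your proposal is correct and follows essentially the same route as the paper: both rest on the weight-line decomposition of a symplectic $T$-representation and the resulting explicit momentum formula $J_V(z)=\sum_i |z_i|^2\alpha_i$ (up to normalization), from which part $a$ becomes the lattice-theoretic statement about faithfulness, part $b$ is read off from the definition of smooth pointed cones, and the bijection follows. The step you defer at the end --- that a symplectic $T$-representation is determined up to equivariant symplectic isomorphism by its signed weight-tuple --- is precisely the classification fact the paper also takes as known (citing the appendices of \cite{LeTo,KaLe}), so your treatment is no less self-contained than the paper's own.
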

Let us briefly recall the meaning of the various notions appearing in this statement. The character lattice $\Lambda_T^*$ of $T$ in $\t^*$ is the dual of the full rank lattice $\Lambda_T:=\ker(\exp_T)$ in $\t$. This gives $\t^*$ the structure of an \textbf{integral affine vector space}, by which we mean a pair $(V,\Lambda)$ consisting of a finite-dimensional real vector space $V$ and a full rank lattice $\Lambda$ in $V$. Any such lattice $\Lambda$ admits a \textbf{basis} (that is, a basis of $V$ consisting of vectors that $\Z$-span $\Lambda$). \\

Recall that symplectic $T$-representations are classified by their weight-tuples, as follows. Associated to $\alpha\in \Lambda_T^*$ is the irreducible symplectic representation $T\to \textrm{Sp}(\C,\omega_\textrm{st})$ defined via the character: \begin{equation}\label{chareq} \chi_\alpha:T\to \mathbb{S}^1, \quad \chi_\alpha(\exp_T(\xi))=e^{2\pi i \langle \alpha,\xi\rangle},
\end{equation} and the $\mathbb{S}^1$-action on $\C$ by complex multiplication. Here $(\C,\omega_\textrm{st})$ is viewed as real vector space equipped with the standard linear symplectic form: 
\begin{equation*} \omega_{\textrm{st}}(w,z)=\frac{1}{2\pi i}(w\overline{z}-\overline{w}z)\in \R,\quad w,z\in \C. 
\end{equation*} We denote this symplectic $T$-representation associated to $\alpha\in \Lambda_T^*$ as:
\begin{equation*} (\C_\alpha,\omega_\textrm{st}).
\end{equation*}
The \textbf{weight-tuple} of a symplectic $T$-representation $(V,\omega)$ is the unique unordered tuple $(\alpha_1,...,\alpha_n)$ such that: 
\begin{equation}\label{weightclassisosympTrep} (V,\omega)\cong (\C_{\alpha_1},\omega_\textrm{st})\oplus...\oplus (\C_{\alpha_n},\omega_\textrm{st})
\end{equation} as symplectic $T$-representation (for details on this, see for instance the appendices of \cite{LeTo,KaLe}). This explains the terminology in part $a$ of the proposition. Turning to part $b$, by a \textbf{polyhedral cone} in a finite-dimensional real vector space $V$, we mean a subset of the form:
\begin{equation*} \textrm{Cone}(v_1,...,v_k):=\left\{ \sum_{i=1}^k s_i v_i\mid s_i\geq 0\right\}, 
\end{equation*} with $v_1,...,v_k\in V$. The tuple $(v_1,...,v_k)$ is said to \textbf{generate} the polyhedral cone. We call a polyhedral cone \textbf{pointed} if it does not contain a line through the origin in $V$. Finally, we call a polyhedral cone $C$ in an integral affine vector space $(V,\Lambda)$ \textbf{smooth} if there is a basis $\{v_1,...,v_n\}$ of the lattice $\Lambda$ in $V$ and a $k\leq n$ such that: 
\begin{equation*} C=\textrm{Cone}(v_1,...,v_k,v_{k+1},-v_{k+1},...,v_n,-v_n).
\end{equation*} This explains the terminology in part $b$. For the remainder, notice that under an identification of symplectic $T$-representations (\ref{weightclassisosympTrep}) the momentum map $J_V$ is identified with:
\begin{equation}\label{quadmom} J_V(z_1,...,z_n)=\sum_{i=1}^n|z_i|^2\alpha_i,
\end{equation} where $z_i$ denotes the standard complex linear coordinate on $\C_{\alpha_i}$. Therefore, the image $\Delta_V$ of $J_V$ is the polyhedral cone in $\t^*$ generated by the weight-tuple $(\alpha_1,...,\alpha_n)$, which is smooth and pointed if the symplectic $T$-representation is maximally toric. 

\begin{rem}\label{torrepcharrem} Let $H$ be an infinitesimally abelian compact Lie group with identity component $T$ and let $(V,\omega)$ be a symplectic $H$-representation. For future reference, let us point out that the representation $H\to \textrm{Sp}(V,\omega)$ is maximally toric if and only if the map of Lie groups induced by the weights of the induced representation $T\to \textrm{Sp}(V,\omega)$:
\begin{equation}\label{weightisotori} (\chi_{\alpha_1},...,\chi_{\alpha_n}):T\to \T^n
\end{equation} is an isomorphism. 
\end{rem}

\subsubsection{Splittings of abelian group extensions}\label{splitabgpextsec}
The main point of this subsection will be to prove Proposition \ref{cirepsplit}. It will be useful to have some alternative descriptions of right splittings of abelian group extensions. Let $A$ and $K$ be groups and suppose that $A$ is abelian. Recall the following.
\begin{itemize}\item An extension of $K$ by $A$ is a short exact sequence of groups:
\begin{equation*} 1\to A\to G\to K\to 1.
\end{equation*} When the maps are clear, we simply refer to $G$ as an extension of $K$ by $A$. Given any such extension, conjugation yields an action of $G$ on $A$ by group automorphisms, which descends to an action of $K$ on $A$. 
\item A morphism of two extensions of $K$ by $A$:
\begin{align*} &1\to A\to G_1\to K\to 1\\
&1\to A\to G_2\to K\to 1
\end{align*} is a morphism of groups $\phi:G_1\to G_2$ that makes the diagram:
\begin{center} \begin{tikzcd} 1\arrow[r]& A \arrow[r] \arrow[d,equal] &G_1 \arrow[d,"\phi"] \arrow[r] &K \arrow[d,equal] \arrow[r] & 1\\
1 \arrow[r]& A\arrow[r] & G_2\arrow[r] &K \arrow[r] & 1
\end{tikzcd} 
\end{center} 
commute. By a version of the five-lemma, any such morphism $\phi$ must be an isomorphism. 
\item A right splitting $\sigma:K\to G$ of an extension as above is a group homomorphism that is a section of the map $G\to K$.
\item Given an action of $K$ on $A$ by group automorphisms, the semi-direct product $K\ltimes A$ is canonically an extension of $K$ by $A$. 
\item Given an extension as above, a $1$-cocycle (with respect to the right action of $G$ on $A$ by conjugation) is a map ${c}:G\to A$ satisfying:
\begin{equation*} {c}(g_1g_2)=({c}(g_1)\cdot g_2){c}(g_2), \quad g_1,g_2\in G.
\end{equation*} 
\end{itemize}
We can now give the desired characterizations of right splittings.
\begin{prop}\label{splitcharprop} Consider an extension 
\begin{equation*} 1\to A\to G\to K\to 1
\end{equation*} of a group $K$ by an abelian group $A$. There are natural bijections between:
\begin{itemize}\item[i)] The set of right splittings $K\to G$.
\item[ii)] The set of isomorphisms of extensions: \begin{equation*} K\ltimes A\to G,\end{equation*} where $K$ acts on $A$ via conjugation in $G$.
\item[iii)] The set of $1$-cocycles $G\to A$ that restrict to the identity map on $A$.
\item[iv)] The set of subgroups $K_G$ in $G$ such that the composite $K_G\hookrightarrow G\to K$ is an isomorphism. 
\end{itemize} 
\end{prop}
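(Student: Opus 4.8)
The plan is to establish the four bijections by constructing explicit maps and checking they are mutually inverse, organizing the argument around the semi-direct product description in (ii) as the central hub. All four descriptions encode the same data --- namely, a compatible way of realizing $K$ inside $G$ --- so the natural strategy is to fix the standard $1$-cocycle/splitting correspondence for a single extension and then translate it into the isomorphism-of-extensions and subgroup formulations.

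First I would treat the equivalence of (i) and (iv). Given a right splitting $\sigma\colon K\to G$, its image $K_G:=\sigma(K)$ is a subgroup on which the projection $G\to K$ restricts to a bijection (injectivity because $\sigma$ is a section, surjectivity because $\sigma$ is defined on all of $K$); conversely, given such a subgroup $K_G$, the inverse of the isomorphism $K_G\hookrightarrow G\to K$ composed with the inclusion yields a homomorphic section. These assignments are visibly inverse to one another, so this part is essentially bookkeeping. Next I would handle (i) versus (ii): a right splitting $\sigma$ determines a map $K\ltimes A\to G$, $(k,a)\mapsto \sigma(k)a$, and one checks this is a group homomorphism precisely because $K$ acts on $A$ by conjugation in $G$ (so $\sigma(k)a\sigma(k)^{-1}=k\cdot a$), that it makes the extension diagram commute, and hence by the five-lemma remark already recorded in the excerpt it is automatically an isomorphism of extensions. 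Conversely, any isomorphism of extensions $\Phi\colon K\ltimes A\to G$ restricts on the copy $K\times\{1\}$ to a splitting, and commutativity of the diagram forces $\Phi$ to have the stated form, giving the inverse correspondence.

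Finally I would treat (i) versus (iii). Identifying $A$ with its image in $G$, a right splitting $\sigma$ lets me write each $g\in G$ uniquely as $g=\sigma([g])\,{c}(g)$ with ${c}(g)\in A$, where $[g]$ denotes the image of $g$ in $K$; the resulting map ${c}\colon G\to A$ restricts to the identity on $A$ (since for $a\in A$ we have $[a]=1$ and $\sigma(1)=1$) and satisfies the $1$-cocycle identity ${c}(g_1g_2)=({c}(g_1)\cdot g_2)\,{c}(g_2)$ by a direct computation using that $\sigma$ is a homomorphism and that the right action is by conjugation. Conversely, given a $1$-cocycle ${c}\colon G\to A$ restricting to the identity on $A$, I would recover the splitting as $\sigma(k):=\tilde{k}\,{c}(\tilde{k})^{-1}$ for any lift $\tilde{k}$ of $k$; the cocycle condition guarantees this is independent of the lift and is multiplicative. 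I expect the only genuinely delicate point to be the cocycle-to-splitting direction: one must verify well-definedness (independence of the chosen lift $\tilde{k}$) and homomorphy of $\sigma$ simultaneously, and here the hypothesis that ${c}$ restricts to the identity on $A$ is exactly what pins down the ambiguity, since two lifts of the same $k$ differ by an element of $A$ on which ${c}$ acts as the identity. Everything else reduces to routine verification that the composites of these maps are the respective identities, which I would state but not belabor.
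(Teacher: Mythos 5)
Your proposal is correct, and the constructions it uses are the same ones the paper uses; the difference is purely organizational. The paper sets up a single cyclic square of maps (i)$\to$(ii)$\to$(iii)$\to$(iv)$\to$(i) — splitting $\sigma$ to the isomorphism $(k,a)\mapsto\sigma(k)a$, isomorphism $\phi$ to the cocycle $\mathrm{pr}_A\circ\phi^{-1}$, cocycle $c$ to the subgroup $K_c=\{g\in G\mid c(g)=1\}$, and subgroup $K_G$ back to a splitting — and then declares that each map is inverse to the composite of the other three, leaving that verification to the reader. You instead use (i) as a hub and build three two-way bijections, which forces you to write down explicit inverses in each direction; the only one that costs real work is the cocycle-to-splitting map $\sigma(k)=\tilde{k}\,c(\tilde{k})^{-1}$, whose well-definedness and multiplicativity you correctly flag as the delicate point (both hinge on $c|_A=\mathrm{id}$ together with commutativity of $A$). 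The paper sidesteps exactly this verification by routing (iii) to (i) through (iv): the subgroup $K_c$ packages the same data, and the splitting it produces is precisely $k\mapsto\tilde{k}\,c(\tilde{k})^{-1}$, since $c(\tilde{k}\,c(\tilde{k})^{-1})=1$. So the paper's cycle buys economy of verification at the price of less explicit inverse formulas, while your version gives formulas usable directly (e.g., in the proofs of Proposition \ref{cirepsplit} and Theorem \ref{isoclasstorrepthm}) but must invoke the abelianness of $A$ one extra time.
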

\begin{proof} We will define a square of maps:
\begin{center} 
\begin{tikzcd} \textrm{(i)}\arrow[r] & \textrm{(ii)} \arrow[d]\\
 \textrm{(iv)}\arrow[u]& \textrm{(iii)}\arrow[l]
\end{tikzcd}
\end{center} Firstly, given a right splitting $\sigma:K\to G$, the corresponding isomorphism of extensions is:
\begin{equation}\label{splitiso} K\ltimes A\to G,\quad (k,a)\mapsto\sigma(k)a.
\end{equation} 
Secondly, given an isomorphism of extensions $\phi:K\ltimes A\to G$, the corresponding $1$-cocycle is the composition: \begin{equation*}G\xrightarrow{\phi^{-1}}K\ltimes A\xrightarrow{\textrm{pr}_A} A.
\end{equation*} Thirdly, given a $1$-cocycle ${c}:G\to A$ that restricts to the identity map on $A$, the subset: 
\begin{equation}\label{kernel1cocycleeq} K_{{c}}:=\{g\in G\mid {c}(g)=1\}
\end{equation} is a subgroup of $G$ and, because ${c}$ restricts to the identity map on $A$, the composite $K_{{c}}\hookrightarrow G\to K$ is an isomorphism. Finally, given a subgroup $K_G$ of $G$ with the property that the composite $K_G\hookrightarrow G\to K$ is an isomorphism, the inverse to this map yields the corresponding right splitting: 
\begin{equation*} K\to K_G\hookrightarrow G,
\end{equation*} which completes the square. We leave it to the reader to verify that each of the four maps in this triangle is inverse to the composition by the other three, so as to complete the proof. 
\end{proof}

With this and the lemma below, we are ready to prove Proposition \ref{cirepsplit} . 
\begin{lemma} A maximally toric representation of a torus has a unique decomposition into irreducible symplectic subrepresentations. 
\end{lemma}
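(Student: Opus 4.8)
The plan is to reduce everything to the multiplicity-one phenomenon: a semisimple representation decomposes uniquely into irreducible subobjects precisely when no irreducible occurs more than once, and the maximally toric hypothesis is exactly what forces this in the symplectic category.

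First I would record the starting data. By part $a$ of Proposition \ref{classtorrep}, the weight-tuple $(\alpha_1,\dots,\alpha_n)$ of $(V,\omega)$ is a basis of $\Lambda_T^*$; in particular the $\alpha_i$ are pairwise distinct and, being linearly independent, satisfy $\alpha_i \neq \pm\alpha_j$ for $i\neq j$. Fix the standard decomposition $V = V_1\oplus\cdots\oplus V_n$ with $V_i \cong (\C_{\alpha_i},\omega_\textrm{st})$ coming from (\ref{weightclassisosympTrep}). Since the weight is a complete invariant of symplectic $T$-representations, distinctness of the $\alpha_i$ means these summands are pairwise non-isomorphic.

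Now suppose $V = W_1\oplus\cdots\oplus W_m$ is any decomposition into irreducible symplectic $T$-subrepresentations. Each $W_j$, being irreducible, has a single-element weight-tuple, so $W_j \cong (\C_{\beta_j},\omega_\textrm{st})$ for a unique $\beta_j \in \Lambda_T^*$; additivity of the weight-tuple over symplectic direct sums then gives that $\{\beta_1,\dots,\beta_m\}$ and $\{\alpha_1,\dots,\alpha_n\}$ agree as multisets, so $m=n$ and, after reindexing, $W_j \cong \C_{\alpha_j}$. To upgrade this isomorphism of summands to an equality of subspaces I would use the $T$-equivariant projections $p_k : V \to V_k$ attached to the standard decomposition. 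For $j \neq k$ the restriction $p_k|_{W_j}$ is a $T$-map between real irreducibles of weights $\alpha_j$ and $\alpha_k$; since $\alpha_j \neq \pm\alpha_k$ these are non-isomorphic even as real $T$-representations, so Schur's lemma forces $p_k|_{W_j} = 0$. Hence $W_j \subseteq \bigcap_{k\neq j}\ker p_k = V_j$, and a comparison of real dimensions ($\dim W_j = \dim V_j = 2$) yields $W_j = V_j$. Thus the two decompositions coincide as collections of subspaces.

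The main obstacle is conceptual rather than computational: uniqueness of an irreducible decomposition fails in the presence of higher multiplicities, so the proof must pin down exactly where the maximally toric hypothesis intervenes. The one genuine subtlety is that one works in the real symplectic category, where $\C_\alpha$ and $\C_{-\alpha}$ are isomorphic as real (non-symplectic) $T$-representations; the Schur argument above therefore needs the full strength of the basis condition, namely $\alpha_i \neq \pm\alpha_j$, and not merely distinctness of the weights.
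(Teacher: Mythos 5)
Your proof is correct, but it follows a genuinely different route from the paper's. The paper argues via the momentum map: after identifying the summands of one fixed decomposition with $\C_{\alpha_1},\dots,\C_{\alpha_n}$, the momentum map takes the explicit form (\ref{quadmom}), and linear independence of the weights forces each summand to coincide with $(J_V)^{-1}(\R_+\cdot\alpha_i)$; since $J_V$ is intrinsic to the symplectic representation, every decomposition must have exactly these subspaces as its summands. You instead run a classical multiplicity-one argument: the weight multisets of any two decompositions agree by the classification, and then Schur's lemma over $\R$ kills the cross-projections $p_k\vert_{W_j}$ because $\alpha_j\neq\pm\alpha_k$, pinning each $W_j$ inside (hence equal to) $V_j$. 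Both proofs ultimately lean on Proposition \ref{classtorrep}$a$, but they buy different things. The paper's argument is shorter given that (\ref{quadmom}) is already available, and it yields an intrinsic geometric description of the summands as momentum preimages of rays, which is very much in the spirit of the paper (reading off structure from the momentum image) and echoes the way the circles $(J_V)^{-1}(\alpha_i)$ are used in the proof of Proposition \ref{cirepsplit}. Your argument is more elementary and purely representation-theoretic, and it isolates the exact mechanism behind uniqueness: what is really needed is multiplicity-freeness of the underlying real isotypic decomposition, i.e.\ that the weights are nonzero and pairwise satisfy $\alpha_i\neq\pm\alpha_j$ --- a strictly weaker condition than forming a basis. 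Your closing caveat is also well taken: mere distinctness of weights would not suffice, since $\C_\alpha\oplus\C_{-\alpha}$ admits many invariant symplectic splittings (graphs of $z\mapsto c\bar z$ with $c\neq 0$, $|c|\neq 1$), a failure mode that the paper's linear-independence hypothesis excludes but its proof leaves implicit.
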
  
\begin{proof} The existence of such a decomposition holds for all symplectic representations of tori. Now suppose $T\to \text{Sp}(V,\omega)$ is a maximally toric representation of an $n$-dimensional torus. Let $V=V_{\alpha_1}\oplus ... \oplus V_{\alpha_n}$ be a decomposition into irreducible symplectic subrepresentations indexed by the corresponding weights $(\alpha_1,...,\alpha_n)$. By Proposition \ref{classtorrep}$a$ the weights are linearly independent. So, since after identifying $V_{\alpha_i}$ with $\C_{\alpha_i}$ as symplectic $T$-representations the momentum map is given by (\ref{quadmom}), each subspace $V_{\alpha_i}$ coincides with $(J_V)^{-1}(\R_+\cdot\alpha_i)$. Hence, the decomposition is indeed unique. 
\end{proof}
\begin{proof}[Proof of Proposition \ref{cirepsplit}] Let $H\to \text{Sp}(V,\omega)$ be a maximally toric representation. After a choice of isomorphism of symplectic $T$-representations, we can assume that: \begin{equation}\label{isoirrep} (V,\omega)=(\C_{\alpha_1},\omega_\textrm{st})\oplus ...\oplus (\C_{\alpha_n},\omega_\textrm{st}), \end{equation} as symplectic $T$-representations. Let $h\in H$. The pair of maps: 
\begin{equation*} (C_h,m_h):(T,(V,\omega))\to (T,(V,\omega)),
\end{equation*} consisting of conjugation and multiplication by $h$, is a self-equivalence of the induced symplectic representation of $T$ on $(V,\omega)$. Therefore, $[h]\in \varGamma$ (which acts on $\t^*$ via the coadjoint $H$-action) permutes the weights $(\alpha_1,...,\alpha_n)$ and, by uniqueness of the decomposition into irreducibles, it follows that $h$ acts as a symplectic $\R$-linear map $\C_{\alpha_i}\to \C_{[h]\cdot \alpha_i}$ for each $i$. Since $J_V$ is given by (\ref{quadmom}) and is $H$-equivariant, any $h$ maps the unit circle $(J_V)^{-1}(\alpha_i)$ to the unit circle $(J_V)^{-1}([h]\cdot \alpha_i)$, and it follows that $h:\C_{\alpha_i}\to \C_{[h]\cdot \alpha_i}$ acts by a rotation determined by an element $h_i\in \mathbb{S}^1$. So, we can associate to every $h\in H$ an element $(h_1,...,h_n)\in \T^n$. Via the isomorphism (\ref{weightisotori}) we obtain a map ${c}:H\to T$. This is a $1$-cocycle for the extension (\ref{sesamabliegp}) that restricts to the identity map on $T$. So, in view of Proposition \ref{splitcharprop}, $H$ is split. 
\end{proof}
\subsubsection{The classification of maximally toric representations}
We will now address the associated group cohomology class appearing in Theorem \ref{isoclasstorrepthm}. Let $(V,\omega)$ be a maximally toric $H$-representation. As in the proof of Proposition \ref{cirepsplit}, the choice of an isomorphism of symplectic $T$-representations:
\begin{equation}\label{choiceisosympTreps} \psi:(V,\omega)\xrightarrow{\sim}(\C_{\alpha_1},\omega_\textrm{st})\oplus ...\oplus (\C_{\alpha_n},\omega_\textrm{st})
\end{equation} induces a $1$-cocycle ${c}_\psi:H\to T$, determined by the fact that for each $h\in H$, $v\in V$ and each $\alpha_i$:
\begin{equation}\label{defprop1coc} \chi_{\alpha_i}({c}_\psi(h))\cdot \psi(v)_{\alpha_i}=\psi(h\cdot v)_{[h]\cdot \alpha_i}.
\end{equation} This $1$-cocycle restricts to the identity map on $T$ and for different choices of $\psi$ the resulting cocycles are cohomologous. Hence, the class: \begin{equation}\label{gpcohclasstorrep} 
e(V,\omega):=[{c}_\psi]\in I^1(H,T)
\end{equation} is independent of the choice of $\psi$. 
\begin{defi}\label{extclasstoricrep} We call (\ref{gpcohclasstorrep}) the \textbf{ext-class} of the maximally toric $H$-representation $(V,\omega)$. 
\end{defi}
In fact, this class is the same for any two maximally toric representations that are isomorphic as symplectic $H$-representations. 
This explains the remaining part of the statement of Theorem \ref{isoclasstorrepthm}. We now turn to its proof. 
\begin{proof}[Proof of Theorem \ref{isoclasstorrepthm}] For injectivity, let $(V_1,\omega_1)$ and $(V_2,\omega_2)$ be maximally toric $H$-representation such that $\Delta_{V_1}=\Delta_{V_2}$ and $e(V_1,\omega_1)=e(V_2,\omega_2)$. Then since $\Delta_{V_1}=\Delta_{V_2}$, it follows from Proposition \ref{classtorrep} that the two symplectic representations have the same weight-tuple, say $(\alpha_1,...,\alpha_n)$. Hence, there are isomorphisms of symplectic $T$-representations:
\begin{equation*} (V_1,\omega_1)\xrightarrow{\psi_1} (\C_{\alpha_1},\omega_\textrm{st})\oplus ...\oplus (\C_{\alpha_n},\omega_\textrm{st}) \xleftarrow{\psi_2} (V_2,\omega_2).
\end{equation*} Since $e(V_1,\omega_1)=e(V_2,\omega_2)$, there is a $t\in T$ such that for each $h\in H$:
\begin{equation}\label{cohomcocyceq} {c}_{\psi_1}(h)=(t\cdot h)t^{-1}{c}_{\psi_2}(h).
\end{equation} Consider: 
\begin{equation*} \psi_t:(V_1,\omega_1)\to (\C_{\alpha_1},\omega_\textrm{st})\oplus ...\oplus (\C_{\alpha_n},\omega_\textrm{st}) , \quad \psi_t(v)=t^{-1}\cdot \psi_1(v),
\end{equation*} which is again an isomorphism of symplectic $T$-representations.  As one readily verifies, it follows from (\ref{cohomcocyceq}) that $\psi_2^{-1}\circ \psi_t:(V_1,\omega_1)\to (V_2,\omega_2)$ is $H$-equivariant, and hence it is an isomorphism of symplectic $H$-representations. This proves injectivity. \\

For surjectivity, let $\Delta$ be a $\varGamma$-invariant and smooth pointed polyhedral cone in $(\t^*,\Lambda_T)$, and let ${c}:H\to T$ be a $1$-cocycle that restricts to the identity on $T$. Let $(\alpha_1,...,\alpha_n)$ be a tuple that generates $\Delta$ and forms a basis of $\Lambda_T^*$. Consider the maximally toric $T$-representation: \begin{equation*}(V,\omega):=(\C_{\alpha_1},\omega_\textrm{st})\oplus ...\oplus (\C_{\alpha_n},\omega_\textrm{st}). 
\end{equation*} Since $\varGamma$ leaves both $\Delta$ and $\Lambda_T^*$ invariant, by Proposition \ref{classtorrep}$b$ it must permute the ordered tuple $(\alpha_1,...,\alpha_n)$. This induces an action of $\varGamma$ on $V$ by permuting the components indexed by this tuple. Using this, the $T$-representation extends to a representation $r:\varGamma\ltimes T \to \textrm{Sp}(\C^n,\omega_\textrm{st})$ by setting $(\gamma,t)\cdot z=\gamma\cdot (t\cdot z)$. By construction, this representation is maximally toric and has momentum image $\Delta$. Next, consider the map:
\begin{equation*} \phi_{{c}}:H\to \varGamma\ltimes T, \quad \phi_{{c}}(h)=([h],{c}(h)). 
\end{equation*} This is the isomorphism of extensions of $\varGamma$ by $T$ corresponding to ${c}$ via the bijection in Proposition \ref{splitcharprop}. Composing $\phi_{{c}}$ with $r$, we obtain a maximally toric $H$-representation for which $\Delta$ is the momentum image. Moreover, the cohomology class associated to this is represented by the $1$-cocycle ${c}_\psi$ where we may pick $\psi$ to be the identity map on $\C^n$. Using (\ref{defprop1coc}), this $1$-cocycle is readily seen to coincide with the given $1$-cocycle ${c}$. So we have constructed a maximally toric $H$-representation with momentum image $\Delta$ and associated group cohomology class $[{c}]$. 
\end{proof}
\begin{rem}\label{firststrthmlinversrem} The analogue of Example \ref{almabcompLiegpexintro0} holds as well in this linear setting: by Theorem \ref{isoclasstorrepthm} and Remark \ref{degonegpcohomtorsorrem} the set of isomorphism of class of maximally toric $H$-representations with momentum image a prescribed $\Delta$ is a torsor with structure group $H^1(\varGamma,T)$, provided $H$ is split, and (using the lemma below) it is readily verified that this action can be described more explicitly, as follows. Consider the isomorphism between the abelian group of $1$-cocycles ${c}:\varGamma\to T$ and the group of automorphisms of the extension (\ref{sesamabliegp}) (with group structure given by composition of maps), that associates to a $1$-cocycle ${c}$ the automorphism: 
\begin{equation*} \phi_{{c}}:H\to H, \quad h\mapsto h{c}([h]). 
\end{equation*} This descends to an isomorphism between $H^1(\varGamma,T)$ and the group of automorphisms of (\ref{sesamabliegp}) modulo the subgroup of automorphisms given by conjugation by elements of $T$. Now, the $H^1(\varGamma,T)$-action on the set of isomorphism classes of maximally toric $H$-representations is given by:
\begin{equation*} [{c}]\cdot[(V,\omega)]:=[(V_{{c}},\omega)],
\end{equation*} where $(V_{{c}},\omega)$ is equal to $(V,\omega)$ as symplectic vector space, but equipped with the linear symplectic action given by $h\cdot_{{c}} v=\phi_{{c}}(h)\cdot v$, for $h\in H$ and $v\in V$, where the right-hand dot denotes the original action of $H$ on $V$.
\end{rem}

In the above remark we referred to:
\begin{lemma}\label{smpeqtorrepindisoextinvprop} Suppose that we are given an equivalence of maximally toric representations of infinitesimally abelian compact Lie groups:
\begin{equation*} (\phi,\psi):(H_1,(V_1,\omega_1))\to (H_2,(V_2,\omega_2)).\end{equation*} 
Then the induced isomorphism: 
\begin{equation*} \phi_*:I^1(H_1,T_1)\xrightarrow{\sim} I^1(H_2,T_2)
\end{equation*} sends $e(V_1,\omega_1)$ to $e(V_2,\omega_2)$.
\end{lemma}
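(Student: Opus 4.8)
The plan is to reduce everything to the cochain level, exploiting the fact (noted after \eqref{gpcohclasstorrep}) that the ext-class does not depend on the chosen symplectic $T$-representation isomorphism. First recall that since $\phi\colon H_1\to H_2$ is an isomorphism of Lie groups, it restricts to an isomorphism $\phi\colon T_1\to T_2$ of identity components, and the induced map $\phi_*\colon I^1(H_1,T_1)\to I^1(H_2,T_2)$ is represented at the level of $1$-cocycles by
\[
c_1\longmapsto \phi\circ c_1\circ\phi^{-1}.
\]
(One checks directly that this sends cocycles to cocycles and preserves the condition of restricting to the identity on the torus, so it is well defined on $I^1$.) Accordingly, I would fix a symplectic $T_1$-representation isomorphism $\psi_1$ as in \eqref{choiceisosympTreps}, with weight-tuple $(\alpha_1,\dots,\alpha_n)$ a basis of $\Lambda_{T_1}^*$, yielding a cocycle $c_{\psi_1}$ with $e(V_1,\omega_1)=[c_{\psi_1}]$. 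The goal then becomes to produce a convenient symplectic $T_2$-representation isomorphism $\psi_2$ for which $c_{\psi_2}=\phi\circ c_{\psi_1}\circ\phi^{-1}$.

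The key construction is to transport $\psi_1$ through the equivalence: set $\psi_2:=\psi_1\circ\psi^{-1}\colon (V_2,\omega_2)\to \bigoplus_i(\C,\omega_{\textrm{st}})$, a composite of symplectic linear isomorphisms and hence again one. Using the intertwining relation $\phi(h)\cdot\psi(v)=\psi(h\cdot v)$ of Definition \ref{isoequivsymprepdef}, a short computation shows that for $t_2\in T_2$ and $w\in V_2$ one has $\psi_2(t_2\cdot w)=\phi^{-1}(t_2)\cdot\psi_1(\psi^{-1}(w))$, where $\phi^{-1}(t_2)$ acts on the target via the weights $\alpha_i$. Thus $\psi_2$ is $T_2$-equivariant once the $i$-th summand $\C$ is regarded as $\C_{\beta_i}$, where $\chi_{\beta_i}:=\chi_{\alpha_i}\circ\phi^{-1}$. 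By maximal toricity and Remark \ref{torrepcharrem} (applied on both sides), the $\beta_i$ form a basis of $\Lambda_{T_2}^*$, so $\psi_2$ genuinely realizes $(V_2,\omega_2)$ as a sum of irreducibles and the cocycle $c_{\psi_2}$ of \eqref{defprop1coc} is defined and computes $e(V_2,\omega_2)$.

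The final computation matches the two cocycles. Fix $h_2\in H_2$ and put $h_1:=\phi^{-1}(h_2)$. One first checks that the permutation of the $\beta$'s induced by $[h_2]\in \varGamma_2$ agrees with the permutation of the $\alpha$'s induced by $[h_1]\in\varGamma_1$; this follows from the coadjoint equivariance $\textrm{Ad}_{h_2}=\d\phi\circ\textrm{Ad}_{h_1}\circ\d\phi^{-1}$ together with the identity $\beta_i=(\d\phi^{-1})^*\alpha_i$ dual to the definition of $\chi_{\beta_i}$. Substituting $w=\psi(v)$ into the defining relation \eqref{defprop1coc} for $c_{\psi_2}$ and using $\psi_2\circ\psi=\psi_1$ together with $h_2\cdot\psi(v)=\psi(h_1\cdot v)$, the relation reduces termwise to \eqref{defprop1coc} for $c_{\psi_1}$, giving
\[
\chi_{\beta_i}\bigl(c_{\psi_2}(h_2)\bigr)=\chi_{\alpha_i}\bigl(c_{\psi_1}(h_1)\bigr)=\chi_{\beta_i}\bigl(\phi(c_{\psi_1}(h_1))\bigr)
\]
for every $i$, the last equality by $\chi_{\beta_i}\circ\phi=\chi_{\alpha_i}$. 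Since $\{\beta_i\}$ is a basis of $\Lambda_{T_2}^*$, the characters $\chi_{\beta_i}$ jointly separate the points of $T_2$, whence $c_{\psi_2}(h_2)=\phi\bigl(c_{\psi_1}(\phi^{-1}(h_2))\bigr)$, i.e. $c_{\psi_2}=\phi\circ c_{\psi_1}\circ\phi^{-1}$. Therefore $e(V_2,\omega_2)=[c_{\psi_2}]=\phi_*[c_{\psi_1}]=\phi_*\,e(V_1,\omega_1)$.

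I expect the only real obstacle to be the bookkeeping in the second and third steps: verifying that the transported map $\psi_2$ is equivariant with the correctly identified weights $\beta_i$, and confirming the compatibility of the two weight-permutations under $\phi$. Everything else is formal once one commits to comparing $c_{\psi_2}$ with $\phi\circ c_{\psi_1}\circ\phi^{-1}$ and invokes the independence of the ext-class from the chosen decomposition.
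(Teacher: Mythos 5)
Your proof is correct: transporting a $T_1$-equivariant decomposition $\psi_1$ through $\psi$ to get $\psi_2=\psi_1\circ\psi^{-1}$ with weights $\beta_i=(\d\phi^{-1})^*\alpha_i$, checking the compatibility of the weight permutations under $\phi$, and matching the defining relation (\ref{defprop1coc}) termwise to conclude $c_{\psi_2}=\phi\circ c_{\psi_1}\circ\phi^{-1}$ is exactly the verification the paper has in mind when it declares the lemma ``straightforward to verify'' and omits the argument. Since the paper gives no proof, yours simply supplies the intended direct computation, and it is complete.
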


This lemma is straightforward to verify. It will also be useful for later reference. 
\subsection{Delzant subspaces of integral affine orbifolds}\label{momimsec}
\subsubsection{Background on orbifolds}\label{orbdefsec} Following \cite{CrFeTo}, we use the terminology below for orbifolds.
\begin{itemize} 
\item An \textbf{orbifold groupoid} is a proper foliation groupoid (that is, a proper Lie groupoid with discrete isotropy groups).
\item By an \textbf{orbifold atlas} on a topological space $B$ we mean an orbifold groupoid $\B\rightrightarrows M$, together with a homeomorphism $p$ between the {orbit} space $\underline{M}$ and $B$.
\item By an \textbf{orbifold} we mean a pair consisting of a topological space $B$ together with an orbifold atlas $(\B,p)$ on $B$.
\item We call two orbifold atlases on $B$ \textbf{equivalent} if there is a Morita equivalence between the given orbifold groupoids that intertwines the respective homeomorphisms between their {orbit} spaces and $B$.   
\end{itemize} This approach to orbifolds (using groupoids instead of atlases of charts) is in the spirit of \cite{Mo1}.
\subsubsection{Background on integral affine structures on orbifolds}\label{iaorbdefsec} 
An integral affine atlas on a manifold $M$ is one for which the coordinate changes are (restrictions of) integral affine transformations of $\R^n$. A maximal such atlas is called an \textbf{integral affine structure} on $M$. Such a structure can be encoded globally, as follows. Given a vector bundle $E\to M$, a smooth lattice in $E$ is a subbundle $\Lambda$ with the property that, for each $x_0\in M$, there is a local frame $e$ of $E$ defined on an open neighbourhood $U$ of $x_0$, such that for all $x\in U$:
\begin{equation*} \Lambda_x=\Z (e_1)_x\oplus...\oplus \Z (e_n)_x.
\end{equation*} In particular, $\Lambda_x$ is a full rank lattice in $E_x$ for each $x\in M$. The data of an integral affine structure on $M$ is equivalent to that of a smooth lattice $\Lambda$ in the cotangent bundle $T^*M$ satisfying the integrability condition that $\Lambda$ is locally spanned by closed $1$-forms, or equivalently, that $\Lambda$ is Lagrangian as submanifold of $(T^*M,\Omega_\textrm{can})$. {The corresponding lattice is locally spanned by the coframe associated to any integral affine chart. See \cite{CrFeTo} for more details.
\begin{ex} The $n$-torus $\T^n$ has an integral affine structure with lattice $\Z\d\theta_1\oplus ...\oplus \Z\d\theta_n$. 
\end{ex}
\begin{rem} Besides the $2$-torus, the only other compact surface that admits an integral affine structure is the Klein bottle \cite{Benz,Miln} and the various such structures on these have been classified \cite{Mish1,Sep}. There are also classification results for integral affine structures on three-dimensional compact manifolds \cite{Koz}. 
\end{rem} }

{The }global description is well-suited for a generalization of integral affine structures to orbifolds. First of all, the notion of vector bundle generalizes: a vector bundle over an orbifold $(B,\B,p)$ is a representation of $\B$, meaning that it is a vector bundle $E\to M$ (in the sense of manifolds) equipped with a fiberwise linear action of $\B$. For example, the tangent bundle of an orbifold $(B,\B,p)$ is the canonical representation of $\B$ on the normal bundle $\No\F$ to the foliation $\F$ on $M$ by connected components of the {orbits} of $\B$. Explicitly, this representation is given by:
\begin{equation}\label{normrep} g\cdot [v]=[\d t(\widehat{v})], \quad g\in \B, \quad v\in \No_{s(g)}\F,
\end{equation} where $\widehat{v}\in T_g\B$ is any choice of tangent vector such that $\d s(\widehat{v})=v$. If $\B$ is source-connected, then this coincides with the linear holonomy representation. The cotangent bundle of the orbifold is the dual representation of $\B$ on the co-normal bundle:
\begin{equation*} \No^*\F=T\F^0\subset T^*M.
\end{equation*} Now, the definition of an integral affine structure generalizes to orbifolds, as follows.
\begin{defi} An integral affine structure on an orbifold $(B,\B,p)$ is a smooth lattice $\Lambda$ in $\No^*\F$ with the property that $\Lambda$ is Lagrangian as submanifold of $(T^*M,\Omega_\textrm{can})$ and $\Lambda$ is invariant with respect to the co-normal representation of $\B$. We call $(B,\B,p,\Lambda)$ an \textbf{integral affine orbifold}.
\end{defi}
\begin{ex}\label{example:IAorb:discrgpaction}
 {For a proper action of a countable discrete group $\Gamma$ on a manifold $M$, the data of an integral affine structure on the orbit space $M/\Gamma$ (viewed as orbifold with orbifold groupoid the action groupoid $\Gamma\ltimes M$) is that of a $\Gamma$-invariant integral affine structure on $M$.   }
\end{ex} 
\begin{ex} Given a foliated manifold $(M,\F)$, the data of a smooth Lagrangian lattice in $\No^*\F$ is the same as that of a transverse integral affine structure on $(M,\F)$. If an orbifold groupoid $\B\rightrightarrows M$ is source-connected, then every smooth Lagrangian lattice in $\No^*\F$ is automatically $\B$-invariant, so that in this case the data of an integral affine structure on $(B,\B,p)$ is simply that of a transverse integral affine structure on the associated foliation $\F$ on $M$.
\end{ex}
\subsubsection{Background on the integral affine orbifold associated to a regular proper symplectic groupoid}\label{iaorbsec} Let $(\G,\Omega)\rightrightarrows M$ be a regular and proper symplectic groupoid. Let $\F$ denote the foliation of $M$ by connected components of the $\G$-orbits, and let $\underline{M}:=M/\G$ denote the {orbit} space of $\G$. It follows from \cite[Proposition 2.5]{Mo} that there is a canonical short exact sequence of Lie groupoids over $M$:
\begin{equation}\label{sestorbun} 1\to \sT\to \G\to \B\to 1
\end{equation} where $\sT$ is the bundle of Lie groups with fiber $\sT_x$ the identity component of the isotropy group $\G_x$ of $\G$ at $x\in M$, and $\B=\G/\sT$ is an orbifold groupoid\index{Groupoid!orbifold} over $M$. The fibers of $\sT$ are in fact tori and the orbifold $(\underline{M},\B,\textrm{Id}_{\underline{M}})$ comes with a natural integral affine structure. To see this, recall first that, as for any symplectic groupoid, the conormal space $\No^*_x\F$ at $x\in M$ can be canonically identified with the isotropy Lie algebra $\g_x$ of $\G$ at $x$ via the isomorphism of Lie algebroids:
\begin{equation}\label{imsymp} \rho_\Omega:T_\pi^*M\to \text{Lie}(\G), \quad \iota_{\rho_\Omega(\alpha)}\Omega_{1_x}=(\d t_{1_x})^*\alpha, \quad \alpha\in T^*_xM,\text{ } x\in M,
\end{equation} where $T^*_\pi M$ denotes the cotangent bundle equipped with the Lie algebroid structure associated to the Poisson structure $\pi$ on $M$ induced by $(\G,\Omega)$. Since the Poisson structure $\pi$ is regular, its isotropy Lie algebras are abelian. Hence, so are the isotropy Lie algebras of $\G$. Since $\G$ is proper, its isotropy groups are compact. Therefore, $\sT$ is a bundle of tori and the kernel of each exponential map $\g_x\to \G_x^0=\sT_x$ determines a full rank lattice $\Lambda_x$ in $\No^*_x\F$. All together, this yields a map of Lie groupoids:
\begin{equation}\label{exptorbun} \No^*\F\to \sT
\end{equation} with kernel the desired smooth lattice $\Lambda$ in $\No^*\F$. 
\begin{rem}\label{presymptorbunrem} The map (\ref{exptorbun}) factors through an isomorphism of Lie groupoids:
\begin{equation}\label{exptorbun2} \sT_\Lambda:=\No^*\F/\Lambda\xrightarrow{\sim} \sT,
\end{equation} 
and the co-normal representation of $\B$ on $\No^*\F$ descends to an action of $\B$ on $\sT_\Lambda$, which under the above isomorphism is identified with the action of $\B$ on $\sT$ by conjugation. The symplectic form $\Omega$ on $\G$ restricts to a pre-symplectic form $\Omega_\sT$ on $\sT$, which makes:\begin{equation}\label{presymtorbuneq} (\sT,\Omega_\sT)\to M
\end{equation} into a pre-symplectic torus bundle. On the other hand, the canonical symplectic form on $T^*M$ restricts to a pre-symplectic form on $\No^*\F$, which in turn descends to a pre-symplectic form $\Omega_\Lambda$ on $\sT_\Lambda$. The map (\ref{exptorbun2}) identifies $\Omega_\sT$ with $\Omega_\Lambda$. So, (\ref{presymtorbuneq}) is fully encoded by the integral affine orbifold associated to $(\G,\Omega)$. 
\end{rem}

\subsubsection{Delzant subspaces}\label{delzsuborbdefsec} We define Delzant subspaces of integral affine manifolds as follows. 
\begin{defi}\label{delzsubmandefi} A \textbf{Delzant subspace $\Delta$ of an integral affine manifold} $(M,\Lambda)$ is a subset of $M$ with the property that for every $x\in \Delta$ and every (or equivalently some) integral affine chart $(U,\chi)$ around $x$ into $\R^n$, there is a smooth polyhedral cone $C^\chi_x(\Delta)$ in $(\R^n,\Z^n)$ (in the sense of Subsection \ref{torrepoftorsec}) such that the germ of $\chi(U\cap \Delta)$ at $\chi(x)$ in $\R^n$ is that of $\chi(x)+C^\chi_x(\Delta)$ at $\chi(x)$.
\end{defi}
{ 
As mentioned in the introduction: Delzant subspaces are embedded submanifolds with corners of codimension zero (as in Definition \ref{embsubmanwithcorndefi}), Delzant polytopes are examples of Delzant subspaces, and so are integral affine manifolds (being Delzant subspaces of themselves). Below we give a few other examples. 
\begin{ex} Let $(M,\Lambda):=(\R\times \mathbb{S}^1,\Z\d x\oplus\Z\d\theta)$ \textemdash a cylinder equipped with its standard integral affine structure. The cylinder with boundary $\Delta:=[0,1]\times \mathbb{S}^1$ is a Delzant subspace. 
\end{ex}
\begin{ex}\label{example:exoticdelzsubsp}
Let $(M,\Lambda):=(\R\times\mathbb{S}^1,\Z(\d x+(\theta-\frac{1}{2})\d\theta)\oplus \Z\d\theta)$ \textemdash a cylinder equipped with a non-standard integral affine structure. The subspace:
\begin{equation*} \Delta:=\{(x,e^{2\pi i\theta})\in \R\times\mathbb{S}^1\mid x+\theta(\theta-1)/2\geq 0,\text{ }\theta\in [0,1[\}
\end{equation*} is a Delzant subspace with a single vertex and a single stratum of codimension one (see Figure \ref{figure1}).
\end{ex}}
\begin{figure}[H]
  \centering
  \includegraphics[scale=0.2]{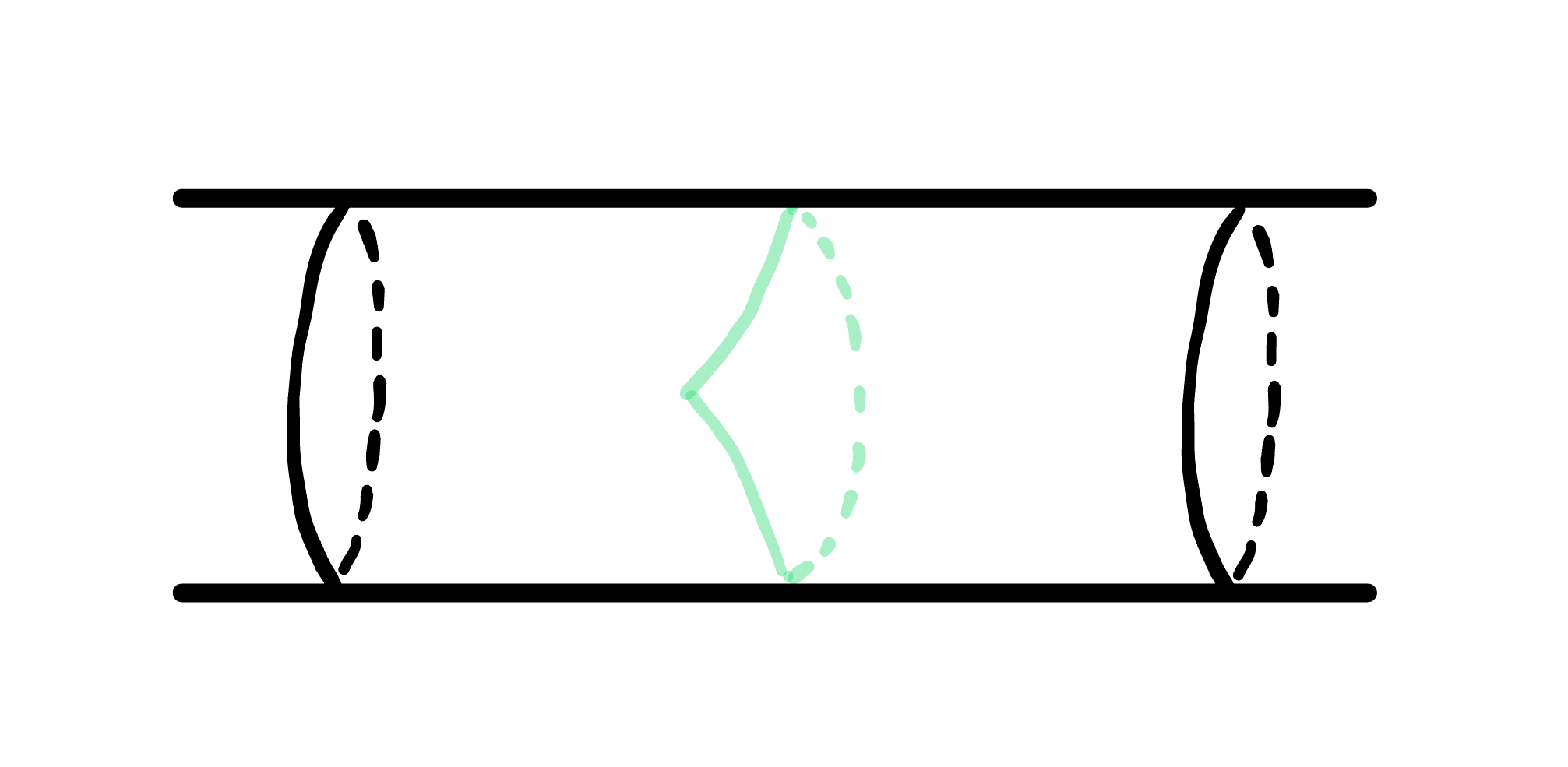}
    \caption{\footnotesize{A sketch of the boundary $\partial\Delta$ (coloured in green) of the Delzant subspace in Example \ref{example:exoticdelzsubsp}.}}
    \label{figure1}
  \end{figure}
To extend this definition to orbifolds (in the sense of Subsection \ref{orbdefsec}) it is convenient to have a coordinate-free description of Delzant subspaces of integral affine manifolds. To this end, notice that around each point in an integral affine manifold $(M,\Lambda)$ there is one natural choice of integral affine `chart'. More precisely, around each $x\in M$ there is unique map germ:
\begin{equation*} \log_x\in \textrm{Germ}_x(M;T_xM)
\end{equation*} induced by an integral affine isomorphism $\iota$ from $(U,\Lambda)$ onto an open in $(T_xM,\Lambda_x^*)$, that maps $x$ to the origin in $T_xM$ and the derivative of which at $x$ is the identity map on $T_xM$. Given a subset $\Delta$ of $M$ and an $x\in \Delta$, there is an associated set germ $\log_x(\Delta)$ at the origin in $T_xM$, defined as the germ of $\iota(U\cap \Delta)$ at the origin, which is independent of the choice of $\iota$ as above. Now, $\Delta$ is a Delzant subspace of $(M,\Lambda)$ if and only if for every $x\in \Delta$ the set germ $\log_x(\Delta)$ is the germ of a smooth polyhedral cone in the integral affine vector space $(T_xM,\Lambda_x^*)$ (in the sense of Subsection \ref{torrepoftorsec}). To define Delzant subspaces of integral affine orbifolds we will now generalize this characterization, starting with: 
\begin{prop}\label{logmapgermprop} Let $(M,\F,\Lambda)$ be a foliated manifold with a transverse integral affine structure and let $x\in M$. There is a unique map germ:
\begin{equation*} \log_x\in \textrm{Germ}_x(M;\No_x\F)
\end{equation*} induced by a submersion $\nu$ defined on an open $U$ around $x$ in $M$, with the following properties.
\begin{itemize} \item[i)] The tangent distribution to the fibers of $\nu$ coincides with that of the foliation $\F$ over $U$.
\item[ii)] It maps $x$ to the origin in $\No_x\F$ and its differential at $x$ is the projection $T_xM\to \No_x\F$.
\item[iii)] It is compatible with the integral affine structure, in the sense that for each $y\in U$:
\begin{equation*} \d \nu_y:(\No_y\F,\Lambda_y^*)\to (\No_x\F,\Lambda_x^*)
\end{equation*} is an isomorphism of integral affine vector spaces. 
\end{itemize} 
\end{prop}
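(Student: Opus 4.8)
The plan is to construct $\log_x$ by hand from a closed $1$-form frame of the lattice, and then to prove uniqueness by a rigidity argument that exploits the discreteness of $\GL_q(\Z)$, where $q:=\rk \No^*\F$ is the codimension of $\F$.

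For existence, I would start from the defining property of the transverse integral affine structure: since $\Lambda$ is a smooth Lagrangian lattice in $\No^*\F=T\F^0$, by the integrability condition recalled in Subsection \ref{iaorbdefsec} it is locally spanned by closed $1$-forms. Fix a $\Z$-basis $(e^1,\dots,e^q)$ of $\Lambda_x\subset \No_x^*\F$, with dual basis $(e_1^*,\dots,e_q^*)$ of $\Lambda_x^*\subset \No_x\F$. After adjusting an arbitrary closed frame of $\Lambda$ near $x$ by a matrix in $\GL_q(\Z)$, I obtain closed $1$-forms $\alpha_1,\dots,\alpha_q$ on a neighbourhood $U$ of $x$, each a section of $\No^*\F$, fibrewise $\Z$-spanning $\Lambda$, with $\alpha_i|_x=e^i$. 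Shrinking $U$ and using the Poincaré lemma, write $\alpha_i=\d f_i$ with $f_i(x)=0$ and set $\nu:=\sum_{i=1}^q f_i\, e_i^*\colon U\to \No_x\F$. Property (i) holds because $\ker \d\nu_y=\bigcap_i\ker(\alpha_i|_y)=T_y\F$, as the $\alpha_i|_y$ span $\No_y^*\F$; property (ii) holds because $\nu(x)=0$ and $\d\nu_x(v)=\sum_i e^i(v)\,e_i^*$ is precisely the projection $T_xM\to \No_x\F$ (the $e^i$ descend to $\No_x\F$ and are dual to the $e_i^*$); property (iii) holds because the transpose of the descended map $\overline{\d\nu_y}\colon \No_y\F\to \No_x\F$ sends $e^j\mapsto \alpha_j|_y$, carrying the $\Z$-basis $(e^j)$ of $\Lambda_x$ to the $\Z$-basis $(\alpha_j|_y)$ of $\Lambda_y$, so that $\overline{\d\nu_y}$ maps $\Lambda_y^*$ isomorphically onto $\Lambda_x^*$.

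For uniqueness, suppose $\nu,\nu'$ both induce germs satisfying (i)--(iii) on a common $U$. By (i), both are submersions whose fibres are, after shrinking, the plaques of $\F$; hence $\nu'=\phi\circ \nu$ for the germ of a diffeomorphism $\phi$ of $(\No_x\F,0)$. Condition (ii) forces $\phi(0)=0$, and from $\d\nu'_x=\d\phi_0\circ \d\nu_x$ with $\d\nu_x$ surjective it forces $\d\phi_0=\mathrm{Id}$. Condition (iii) gives, for each $y\in U$, that $\d\phi_{\nu(y)}=\overline{\d\nu'_y}\circ(\overline{\d\nu_y})^{-1}$ is an automorphism of the lattice $\Lambda_x^*$; since $\nu$ is an open map, $\d\phi_z\in \GL(\Lambda_x^*)$ for all $z$ in a neighbourhood of $0$. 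As $\GL(\Lambda_x^*)\cong \GL_q(\Z)$ is discrete and $z\mapsto \d\phi_z$ is continuous with $\d\phi_0=\mathrm{Id}$, it is constantly $\mathrm{Id}$; thus $\phi$ is a translation, and $\phi(0)=0$ yields $\phi=\mathrm{Id}$, i.e. $\nu'=\nu$ as germs.

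The bulk of the routine work is the conormal/normal and lattice/dual-lattice bookkeeping needed to verify (ii) and (iii). The genuinely structural step — and the one I expect to be the main point — is the uniqueness rigidity: the pointwise normalization at $x$ in (ii) propagates to an equality of germs only because (iii) confines the derivative of the comparison diffeomorphism to the discrete group $\GL_q(\Z)$, so that continuity pins it down. This discreteness is exactly the transverse analogue of the classical fact that integral affine charts are unique once their $1$-jet at a point is fixed.
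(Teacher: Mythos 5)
Your proof is correct and takes essentially the same route as the paper: existence by integrating a closed frame of $\Lambda$ via the Poincar\'e lemma, and uniqueness via the rigidity of integral affine maps. The only difference is organizational: the paper restricts the two submersions to a transversal and invokes Lemma \ref{iamorphintaffvectsplemma} (morphisms of integral affine manifolds between connected opens of integral affine vector spaces are affine with integral linear part), whereas you factor $\nu'=\phi\circ\nu$ through the submersion and inline the same argument, using discreteness of $\GL_q(\Z)$ and $\d\phi_0=\mathrm{Id}$ to force $\d\phi\equiv\mathrm{Id}$.
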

\begin{proof} First, we prove existence. Since $\Lambda$ is a smooth lattice in $\No^*\F$, we can choose a local frame $\alpha$ of $\No^*\F$, defined on an open $U$ around $x$, such that:
\begin{equation*} \Lambda\vert_U=\Z\text{ }\alpha_1\oplus ...\oplus \Z\text{ }\alpha_n.
\end{equation*} Since $\Lambda$ is Lagrangian in $T^*M$, all of its local sections are closed $1$-forms. So, by the Poincar\'e Lemma, we can (after shrinking $U$) arrange that $\alpha_i=\d f_i$ for some $f_i\in C^\infty(U)$ such that $f_i(x)=0$. Consider: \begin{equation*} f=(f_1,...,f_n):U\to \R^n.
\end{equation*} Then $(\underline{\d f})_x: \No_x\F\to \R^n$ is a linear isomorphism, so that we can define 
\begin{equation*} \nu=(\underline{\d f})_x^{-1}\circ f: U\to \No_x\F.
\end{equation*} As is readily verified, this has the desired properties. To prove uniqueness, let $\nu_1:U_1\to \No_x\F$ and $\nu_2:U_2\to \No_x\F$ be two submersions as above. Since both $\nu_1$ and $\nu_2$ satisfy property i), we can find an open neighbourhood $U$ of $x$ in $U_1\cap U_2$, together with a connected transversal $\Sigma$ to $\F$ through $x$, with the property that:
\begin{itemize} \item $\Sigma$ is contained in $U$ and every leaf of the foliation on $U$ induced by $\F$ intersects $\Sigma$,
\item both $\nu_1\vert_{\Sigma}$ and $\nu_2\vert_{\Sigma}$ are open embeddings into $\No_x\F$.  
\end{itemize} The transversal $\Sigma$ inherits an honest integral affine structure $\Lambda_\Sigma$ from the transverse integral affine structure $\Lambda$, and by property iii) both $\nu_1\vert_{\Sigma}$ and $\nu_2\vert_{\Sigma}$ are isomorphisms of integral affine manifolds onto their image in $(\No_x\F,\Lambda_x^*)$ with respect to $\Lambda_\Sigma$. Therefore, $\nu_1\vert_{\Sigma}\circ (\nu_2\vert_\Sigma)^{-1}$ is a morphism of integral affine manifolds between connected opens in the integral affine vector space $(\No_x\F,\Lambda_x^*)$. By the lemma below, this means that it must be the restriction of an integral affine transformation of $(\No_x\F,\Lambda_x^*)$. So, it is determined by its value and its derivative at the origin in $\No_x\F$. By property ii), $\nu_1\vert_{\Sigma}\circ (\nu_2\vert_\Sigma)^{-1}$ fixes the origin and its derivative at the origin is the identity map. Hence, $\nu_1\vert_{\Sigma}\circ (\nu_2\vert_\Sigma)^{-1}$ must be the restriction of the identity map on $\No_x\F$ to $\nu_2(\Sigma)$, so that $\nu_1\vert_{\Sigma}=\nu_2\vert_{\Sigma}$. It follows from this and property i) that in fact $\nu_1\vert_U=\nu_2\vert_U$, because every leaf of the foliation on $U$ induced by $\F$ intersects $\Sigma$. So, $\nu_1$ and $\nu_2$ have the same germ at $x$, as was to be shown.
\end{proof}
\begin{lemma}\label{iamorphintaffvectsplemma} Let $(V_1,\Lambda_1)$ and $(V_2,\Lambda_2)$ be integral affine vector spaces. Then every morphism of integral affine manifolds from a connected open in $(V_1,\Lambda_1)$ into $(V_2,\Lambda_2)$ is of the form $v\mapsto Av+b$ for some linear map $A:V_1\to V_2$ that maps $\Lambda_1$ into $\Lambda_2$ and some $b\in V_2$. 
\end{lemma}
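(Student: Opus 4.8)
The plan is to reduce everything to the observation that the cotangent lattices of $(V_1,\Lambda_1)$ and $(V_2,\Lambda_2)$ are globally constant, and then to integrate a local integral-affine description of $f$ into a global affine one using discreteness of the lattice together with connectedness of the domain. Recall that the integral affine structure carried by an integral affine vector space $(V,\Lambda)$ is the one whose associated Lagrangian lattice in $T^*V\cong V\times V^*$ is the constant lattice $\Lambda^*\subset V^*$ of integral linear forms, $\Lambda^*:=\{\xi\in V^*\mid \xi(\Lambda)\subseteq \Z\}$; equivalently, its integral affine charts are the global linear coordinate systems dual to a lattice basis of $\Lambda$. Recall also that a morphism of integral affine manifolds $f:U\to V_2$, with $U\subseteq V_1$ a connected open, is by definition a smooth map that pulls back integral $1$-forms on $V_2$ to integral $1$-forms on $V_1$ (equivalently, one that is affine with integral linear part in integral affine charts).

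First I would fix a basis $\xi_1,\dots,\xi_m$ of the lattice $\Lambda_2^*\subset V_2^*$, which is the same as a system of integral affine linear coordinates on $V_2$, and write $f^j:=\xi_j\circ f\in C^\infty(U)$ for the corresponding components of $f$. Since each $\xi_j$ is a constant (hence closed) integral $1$-form on $V_2$, we have $f^*\xi_j=\d f^j$, so the morphism condition says precisely that each closed $1$-form $\d f^j$ is a section of the cotangent lattice $\Lambda_1^*$ over $U$.

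The key step is then the globalization argument. Under the trivialization $T^*V_1=V_1\times V_1^*$, the assignment $v\mapsto (\d f^j)_v\in V_1^*$ is continuous and takes values in the fixed discrete set $\Lambda_1^*\subset V_1^*$; since $U$ is connected it is therefore constant, say equal to $\eta_j\in \Lambda_1^*$. Hence each component $f^j$ is an affine function $f^j(v)=\eta_j(v)+c_j$, and assembling the components shows $f(v)=Av+b$ for a unique linear map $A:V_1\to V_2$ and a unique $b\in V_2$, with $\xi_j\circ A=\eta_j$ for every $j$.

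It remains to verify the integrality of the linear part. From $A^*\xi_j=\eta_j\in \Lambda_1^*$ for each basis vector $\xi_j$ of $\Lambda_2^*$ we obtain $A^*(\Lambda_2^*)\subseteq \Lambda_1^*$; dualizing, and using that a full rank lattice is recovered from its dual as $\Lambda_2=\{w\in V_2\mid \xi(w)\in \Z\text{ for all }\xi\in \Lambda_2^*\}$, this is equivalent to $A(\Lambda_1)\subseteq \Lambda_2$, as claimed. The only genuinely nontrivial point is the globalization step \textemdash{} passing from the local integral-affine behaviour of $f$ to a single global affine map \textemdash{} which hinges precisely on the discreteness of the cotangent lattice together with the connectedness of $U$, both of which are essential.
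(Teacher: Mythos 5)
Your proof is correct and follows essentially the same route as the paper's: both arguments hinge on the fact that the differential of $f$ takes values in a discrete lattice, hence is constant on the connected domain, and then integrate to conclude that $f$ is affine. The only difference is bookkeeping — the paper chooses lattice bases on both sides and argues with integer-valued partial derivatives of the Jacobian, while you work with a basis of $\Lambda_2^*$ and dualize at the end to get $A(\Lambda_1)\subseteq\Lambda_2$, a step the paper absorbs into its choice of coordinates.
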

\begin{proof} After a choosing bases for $\Lambda_1$ and $\Lambda_2$ we can assume that $(V_1,\Lambda_1)=(\R^{n_1},\Z^{n_1})$ and $(V_2,\Lambda_2)=(\R^{n_2},\Z^{n_2})$. Let $f$ be a morphism of integral affine manifolds from a connected open $U_1$ in $(\R^{n_1},\Z^{n_1})$ into $(\R^{n_2},\Z^{n_2})$, meaning that its partial derivatives take values in $\Z$. Since $U_1$ is connected and $\Z$ is a discrete subspace of $\R$, the Jacobian of $f$ must be constant. Fix a $u_1\in U_1$. Since any two points in $U_1$ can be connected to $u_1$ by a smooth path, it follows by integrating along such paths that $f(v)=A(v)+b$ for all $v\in U_1$, where $A$ is the constant value of the Jacobian of $f$ and $b=f(u_1)-A(u_1)$. 
\end{proof}
Now, let $(M,\F,\Lambda)$ be a foliated manifold with a transverse integral affine structure, let $\Delta$ be a subset of $M$ and let $x\in \Delta$. Then, as for integral affine manifolds, there is an associated set germ $\log_x(\Delta)$ at the origin in $\No_x\F$. To define this, let us call a submersion $\nu:U\to \No_x\F$ representing $\log_x$ \textbf{$\Delta$-adapted} if: 
\begin{equation*} \nu^{-1}(\nu(U\cap \Delta))=U\cap \Delta.
\end{equation*} The set-germ of $\nu(U\cap \Delta)$ at the origin in $\No_x\F$ is independent of the choice of $\Delta$-adapted submersion $\nu:U\to \No_x\F$ representing $\log_x$. Moreover, if $\Delta$ is $\F$-invariant, we can always find a small enough open $U$ around $x$ in $M$ that admits a $\Delta$-adapted submersion $\nu:U\to \No_x\F$ representing $\log_x$. Therefore, it makes sense to define:
\begin{defi} Let $(M,\F,\Lambda)$ be a foliated manifold with a transverse integral affine structure, let $\Delta$ be an $\F$-invariant subset of $M$ and let $x\in \Delta$. We define $\log_x(\Delta)$ to be the set germ of $\nu(U\cap \Delta)$ at the origin in $\No_x\F$, for any $\Delta$-adapted submersion $\nu:U\to \No_x\F$ representing $\log_x$. 
\end{defi}
We are now ready to define Delzant subspaces of integral affine orbifolds. 
\begin{defi}\label{defdelzorb} Let $(B,\B,p,\Lambda)$ be an integral affine orbifold. A \textbf{Delzant subspace} $\underline{\Delta}$ is a subset of $B$ with the property that for every $x\in \Delta$ (the corresponding invariant subset of $M$), the set germ $\log_x(\Delta)$ is the germ of a smooth polyhedral cone in the integral affine vector space $(\No_x\F,\Lambda_x^*)$ (in the sense of Subsection \ref{torrepoftorsec}). For each $x\in \Delta$, we denote this polyhedral cone in $\No_x\F$ (which is necessarily unique) by $C_x(\underline{\Delta})$ and call it the \textbf{cone of $\underline{\Delta}$ at $x$}.
\end{defi}
\begin{rem}\label{etalecasedelzsubmanrem} Let $(B,\B,p,\Lambda)$ be an integral affine orbifold and let $\underline{\Delta}$ be a Delzant subspace. Then $\Delta$ is an embedded submanifold with corners of $M$ of codimension zero (as in Definition \ref{embsubmanwithcorndefi}), with tangent cone $C_x(\Delta)$ the pre-image of $C_x(\underline{\Delta})$ under the projection $T_xM\to \No_x\F$. The restriction $\B\vert_\Delta$ is a Lie groupoid with corners (as in Definition \ref{liegpwithcorndef}; cf. Example \ref{prototamesubmex}). 
If $\underline{M}$ is the {orbit} space of a regular and proper symplectic groupoid $(\G,\Omega)$ (equipped with the associated integral affine orbifold structure), then $(\G,\Omega)\vert_\Delta$ is a symplectic groupoid with corners (as in Definition \ref{sympgpwithcorndef}). 
\end{rem}
\begin{ex}{ Let $\B\rightrightarrows (M,\Lambda)$ be an etale integral affine orbifold groupoid. For any $\B$-invariant Delzant subspace $\Delta$ of the integral affine manifold $(M,\Lambda)$ (in the sense of Definition \ref{delzsubmandefi}), the quotient $\underline{\Delta}$ is a Delzant subspace of $\underline{M}$. In fact, any Delzant subspace of $\underline{M}$ is of this form. 
}
\end{ex} 
{
\begin{ex}\label{ex:compLiegp:Delzantsubspleafsp:1} Let $G$ be a compact Lie group. Consider the open and dense subset $\g^*_\textrm{reg}$ of $\g^*$ consisting of coadjoint $G$-orbits of maximal dimension and consider the regular and proper symplectic groupoid $(\G,\Omega):=(G\ltimes \g^*_\textrm{reg},-\d\lambda_\textrm{can})$ over $M:=\g^*_\textrm{reg}$ (the restriction of the cotangent symplectic groupoid of $G$). Fix a maximal torus $T$ in $G$ and let $\c$ be the interior of a Weyl chamber in $\t^*$. As is common, we identify $\t^*$ with the $T$-fixed point set in $\g^*$. Let $N(\c)$ be the normalizer of $\c$ in $G$. Delzant subspaces of $\underline{M}=\g^*_\textrm{reg}/G$ are naturally in bijection with Delzant subspaces of the integral affine vector space $(\t^*,\Lambda_T)$ that are contained in $\c$ and that are invariant with respect to the natural action of the finite group $N(\c)/T$ on $\c$. If $G$ is connected, then $N(\c)=T$, so that the invariance condition becomes vacuous. A conceptual explanation for this correspondence will be given in Example \ref{ex:compLiegp:Delzantsubspleafsp:2}, using Corollary \ref{delzsubspmorinvcor} below. 
\end{ex}
}
We think of a Delzant subspace as what should be an integral affine suborbifold with corners, without making this precise. In particular, these objects should be well-behaved with respect to equivalences of the ambient integral affine orbifold that respect the integral affine structure. The latter we will make precise, for besides its conceptual value it will be of use throughout.  
\begin{defi}\label{iaorbgpoiddef} By an \textbf{integral affine orbifold groupoid} $\B \rightrightarrows (M,\Lambda)$ we mean an orbifold groupoid $\B\rightrightarrows M$ with an integral affine structure $\Lambda$ on the orbifold ($\underline{M},\B,\textrm{Id}_{\underline{M}}$) (its {orbit} space). We call a Delzant subspace $\underline{\Delta}$ of this integral affine orbifold simply a Delzant subspace of $\underline{M}$. 
\end{defi}
\begin{defi}
By an \textbf{integral affine Morita equivalence} between integral affine orbifold groupoids $\B_1\rightrightarrows (M_1,\Lambda_1)$ and $\B_2\rightrightarrows (M_2,\Lambda_2)$ we mean a Morita equivalence: 
\begin{center}
\begin{tikzpicture} \node (G1) at (0,0) {$\B_1$};
\node (M1) at (0,-1.3) {$M_1$};
\node (S) at (1.4,0) {$P$};
\node (M2) at (2.7,-1.3) {$M_2$};
\node (G2) at (2.7,0) {$\B_2$};
 
\draw[->,transform canvas={xshift=-\shift}](G1) to node[midway,left] {}(M1);
\draw[->,transform canvas={xshift=\shift}](G1) to node[midway,right] {}(M1);
\draw[->,transform canvas={xshift=-\shift}](G2) to node[midway,left] {}(M2);
\draw[->,transform canvas={xshift=\shift}](G2) to node[midway,right] {}(M2);
\draw[->](S) to node[pos=0.25, below] {$\text{ }\text{ }\alpha_1$} (M1);
\draw[->] (0.8,-0.15) arc (315:30:0.25cm);
\draw[<-] (1.9,0.15) arc (145:-145:0.25cm);
\draw[->](S) to node[pos=0.25, below] {$\alpha_2$\text{ }} (M2);
\end{tikzpicture}
\end{center} with the additional property that $\alpha_1^*(\Lambda_1)=\alpha_2^*(\Lambda_2)$ as subbundles of $\alpha_1^*(\No^*\F_1)=\alpha_2^*(\No^*\F_2)$. 
\end{defi}
\begin{rem}\label{altdefiamoreqrem} A Morita equivalence as above is integral affine if and only if for each $p\in P$, writing $x_1=\alpha_1(p)$ and $x_2=\alpha_2(p)$, the induced linear isomorphism:
\begin{equation}\label{moreqindisoisotreps} \psi_p:\No_{x_1}\F_1\xrightarrow{\sim} \No_{x_2}\F_2,\quad [v]\mapsto [\d\alpha_2(\widehat{v})],
\end{equation} where $\widehat{v}\in T_pP$ is any tangent vector with the property that $\d\alpha_1(\widehat{v})=v$, is an isomorphism of integral affine vector spaces:
\begin{equation*} \psi_p:(\No_{x_1}\F_1,(\Lambda_1)^*_{x_1})\xrightarrow{\sim} (\No_{x_2}\F_2,(\Lambda_2)^*_{x_2}).
\end{equation*} In particular, for each such $p\in P$ there is an induced isomorphism of tori:
\begin{equation*} (\psi_p)_*:(\sT_{\Lambda_1})_{x_1}\xrightarrow{\sim}(\sT_{\Lambda_2})_{x_2}.
\end{equation*} 
\end{rem}

\begin{rem}\label{iamoreqtransportrem} Given a Morita equivalence between orbifold groupoids $\B_1\rightrightarrows M_1$ and $\B_2\rightrightarrows M_2$, and an integral affine structure $\Lambda_1$ on the orbifold $(\underline{M}_1,\B_1,\textrm{Id}_{\underline{M}_1})$, there is a unique integral affine structure $\Lambda_2$ on the orbifold $(\underline{M}_2,\B_2,\textrm{Id}_{\underline{M}_2})$ with respect to which the given Morita equivalence becomes integral affine.
\end{rem}

\begin{ex}\label{transversalmoreqex} Let $\B\rightrightarrows (M,\Lambda)$ be an integral affine orbifold groupoid and $\Sigma$ a transversal for $\B$ {(by which we mean a transversal to the foliation $\F$ by connected components of $\B$-orbits, of complementary dimension)}. There is a canonical Morita equivalence:
\begin{center}
\begin{tikzpicture} \node (G1) at (-0.6,0) {$\B\vert_{\widehat{\Sigma}}$};
\node (M1) at (-0.6,-1.3) {$\widehat{\Sigma}$};
\node (S) at (1.4,0) {$s_\B^{-1}(\Sigma)$};
\node (M2) at (3,-1.3) {$\Sigma$};
\node (G2) at (3,0) {$\B\vert_\Sigma$};
 
\draw[->,transform canvas={xshift=-\shift}](G1) to node[midway,left] {}(M1);
\draw[->,transform canvas={xshift=\shift}](G1) to node[midway,right] {}(M1);
\draw[->,transform canvas={xshift=-\shift}](G2) to node[midway,left] {}(M2);
\draw[->,transform canvas={xshift=\shift}](G2) to node[midway,right] {}(M2);
\draw[->](S) to node[pos=0.25, below] {$\text{ }\text{ }t_\B$} (M1);
\draw[->] (0.6,-0.15) arc (315:30:0.25cm);
\draw[<-] (2.1,0.15) arc (145:-145:0.25cm);
\draw[->](S) to node[pos=0.3, below] {$s_\B$\text{ }} (M2);
\end{tikzpicture} 
\end{center} where $\widehat{\Sigma}:=t(s^{-1}(\Sigma))$ denotes the saturation of $\Sigma$ with respect to $\B$ (which is open in $M$). The manifold $\Sigma$ inherits an honest integral affine structure $\Lambda_\Sigma$ from $\Lambda$, $\B\vert_{\Sigma}\rightrightarrows (\Sigma,\Lambda_{\Sigma})$ is an etale integral affine orbifold groupoid and the above Morita equivalence becomes integral affine. 
 
\end{ex}

\begin{ex}\label{sympmoreqiamoreq} Let $(\G_1,\Omega_1)\rightrightarrows M_1$ and $(\G_2,\Omega_2)\rightrightarrows M_2$ be regular and proper symplectic groupoids and let $\B_1\rightrightarrows (M_1,\Lambda_1)$ and $\B_2\rightrightarrows (M_2,\Lambda_2)$ be the associated integral affine orbifold groupoids (as in the previous subsection). A symplectic Morita equivalence:
\begin{center}
\begin{tikzpicture} \node (G1) at (-0.5,0) {$(\G_1,\Omega_1)$};
\node (M1) at (-0.5,-1.3) {$M_1$};
\node (S) at (1.4,0) {$(P,\omega_P)$};
\node (M2) at (3.2,-1.3) {$M_2$};
\node (G2) at (3.2,0) {$(\G_2,\Omega_2)$};
 
\draw[->,transform canvas={xshift=-\shift}](G1) to node[midway,left] {}(M1);
\draw[->,transform canvas={xshift=\shift}](G1) to node[midway,right] {}(M1);
\draw[->,transform canvas={xshift=-\shift}](G2) to node[midway,left] {}(M2);
\draw[->,transform canvas={xshift=\shift}](G2) to node[midway,right] {}(M2);
\draw[->](S) to node[pos=0.25, below] {$\text{ }\text{ }\alpha_1$} (M1);
\draw[->] (0.65,-0.15) arc (315:30:0.25cm);
\draw[<-] (2.05,0.15) arc (145:-145:0.25cm);
\draw[->](S) to node[pos=0.25, below] {$\alpha_2$\text{ }} (M2);
\end{tikzpicture}
\end{center} 
induces an integral affine Morita equivalence:
\begin{center}
\begin{tikzpicture} \node (G1) at (-0.4,0) {$\B_1$};
\node (M1) at (-0.4,-1.3) {$(M_1,\Lambda_1)$};
\node (S) at (1.4,0) {$\underline{P}$};
\node (M2) at (3.1,-1.3) {$(M_2,\Lambda_2)$};
\node (G2) at (3.1,0) {$\B_2$};
 
\draw[->,transform canvas={xshift=-\shift}](G1) to node[midway,left] {}(M1);
\draw[->,transform canvas={xshift=\shift}](G1) to node[midway,right] {}(M1);
\draw[->,transform canvas={xshift=-\shift}](G2) to node[midway,left] {}(M2);
\draw[->,transform canvas={xshift=\shift}](G2) to node[midway,right] {}(M2);
\draw[->](S) to node[pos=0.25, below] {$\text{ }\text{ }\underline{\alpha}_1$} (M1);
\draw[->] (0.75,-0.15) arc (315:30:0.25cm);
\draw[<-] (1.95,0.15) arc (145:-145:0.25cm);
\draw[->](S) to node[pos=0.25, below] {$\underline{\alpha}_2$\text{ }} (M2);
\end{tikzpicture}
\end{center} 
where $\underline{P}=P/\sT_1=P/\sT_2$. To see this, let $p\in P$ and denote $x_1=\alpha_1(p)$ and $x_2=\alpha_2(p)$. The given Morita equivalence induces an isomorphism of Lie groups:
\begin{equation}\label{moreqindisoisotgps} \phi_p:(\G_1)_{x_1}\to (\G_2)_{x_2},\end{equation} uniquely determined by the property that for each $g\in (\G_1)_{x_1}$:
\begin{equation*} g\cdot p=p\cdot \phi_p(g). 
\end{equation*} Since $\phi_p$ is an isomorphism of Lie groups, it identifies the identity component of $(\G_1)_{x_1}$ with the identity component of $(\G_2)_{x_2}$. In other words, it identifies $(\sT_1)_{x_1}$ with $(\sT_2)_{x_2}$ and hence it follows that the $\sT_1$-orbit through $p$ coincides with the $\sT_2$-orbit through $p$. This shows that $P/\sT_1=P/\sT_2$. From the lemma below, it is clear that the induced Morita equivalence is integral affine.
\end{ex}
\begin{lemma}\label{sympmoreqnormreplem} Suppose that we are given a symplectic Morita equivalence:
\begin{center}
\begin{tikzpicture} \node (G1) at (-0.5,0) {$(\G_1,\Omega_1)$};
\node (M1) at (-0.5,-1.3) {$M_1$};
\node (S) at (1.4,0) {$(P,\omega_P)$};
\node (M2) at (3.2,-1.3) {$M_2$};
\node (G2) at (3.2,0) {$(\G_2,\Omega_2)$};
 
\draw[->,transform canvas={xshift=-\shift}](G1) to node[midway,left] {}(M1);
\draw[->,transform canvas={xshift=\shift}](G1) to node[midway,right] {}(M1);
\draw[->,transform canvas={xshift=-\shift}](G2) to node[midway,left] {}(M2);
\draw[->,transform canvas={xshift=\shift}](G2) to node[midway,right] {}(M2);
\draw[->](S) to node[pos=0.25, below] {$\text{ }\text{ }\alpha_1$} (M1);
\draw[->] (0.65,-0.15) arc (315:30:0.25cm);
\draw[<-] (2.05,0.15) arc (145:-145:0.25cm);
\draw[->](S) to node[pos=0.25, below] {$\alpha_2$\text{ }} (M2);
\end{tikzpicture}
\end{center} For each $p\in P$, writing $x_1=\alpha_1(p)$ and $x_2=\alpha_2(p)$, we have a commutative square:
\begin{center}
\begin{tikzcd} \No_{x_2}{\O_2} \arrow[r,"\psi_p^{-1}"] & \No_{x_1}{\O_1} \\
(\g_2)^*_{x_2} \arrow[u,"\rho_{\Omega_2}^*"]\arrow[r, "({\phi_p})^*"] & (\g_1)^*_{x_1} \arrow[u, "\rho_{\Omega_1}^*"'] 
\end{tikzcd}
\end{center} with horizontal arrows defined as in (\ref{moreqindisoisotreps}), respectively (\ref{moreqindisoisotgps}), and vertical arrows defined as in (\ref{imsymp}).

\end{lemma}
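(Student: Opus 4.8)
The plan is to dualize the square and then to recognize both composites as fundamental vectors of the two Hamiltonian actions on $(P,\omega_P)$, which can be compared using the nondegeneracy of $\omega_P$. First I would note that all four maps are linear isomorphisms, so the asserted identity $\rho_{\Omega_1}^*\circ(\phi_p)^*=\psi_p^{-1}\circ\rho_{\Omega_2}^*$ is equivalent to its transpose. Using $(\rho_{\Omega_i}^*)^*=\rho_{\Omega_i}$, $((\d\phi_p)^*)^*=\d\phi_p$, $(\psi_p^{-1})^*=(\psi_p^*)^{-1}$ and the canonical identification $(\No_{x_i}^*\F_i)^*=\No_{x_i}\F_i$, the transposed statement reads $\d\phi_p\circ\rho_{\Omega_1}=\rho_{\Omega_2}\circ(\psi_p^*)^{-1}$, i.e.
\[ \d\phi_p\circ\rho_{\Omega_1}\circ\psi_p^*=\rho_{\Omega_2}\colon \No_{x_2}^*\F_2\to(\g_2)_{x_2}. \]
So it suffices to fix $\beta\in\No_{x_2}^*\F_2$ and prove $\d\phi_p(\rho_{\Omega_1}(\psi_p^*\beta))=\rho_{\Omega_2}(\beta)$.

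Next I would recall the relevant structure of a symplectic Morita equivalence. The manifold $(P,\omega_P)$ carries a left Hamiltonian $(\G_1,\Omega_1)$-action with moment map $\alpha_1$ and a right Hamiltonian $(\G_2,\Omega_2)$-action with moment map $\alpha_2$; both actions are free and proper and form a dual pair, $\ker\d\alpha_1=(\ker\d\alpha_2)^{\omega_P}$. For $\eta\in(\g_1)_{x_1}$, write $a_1(\eta)\in T_pP$ for the value at $p$ of the fundamental vector field of the (left) isotropy action, and similarly $a_2(\zeta)\in T_pP$ for $\zeta\in(\g_2)_{x_2}$ and the right action. Rewriting the infinitesimal moment-map condition by means of the isomorphisms (\ref{imsymp}), it takes the form
\[ \omega_P(a_1(\eta),w)=\langle\rho_{\Omega_1}^{-1}(\eta),\d(\alpha_1)_p(w)\rangle,\qquad w\in T_pP, \]
for the $\G_1$-action, and the analogous identity for the right $\G_2$-action with $\alpha_2$ in place of $\alpha_1$ (the sign being governed by the convention that $\alpha_1$ is Poisson and $\alpha_2$ is anti-Poisson). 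Finally, since the right $\G_2$-action is free and proper, the map $(\g_2)_{x_2}\to T_pP$, $\zeta\mapsto a_2(\zeta)$, is injective.

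For the computation, differentiating the defining relation $g\cdot p=p\cdot\phi_p(g)$ from (\ref{moreqindisoisotgps}) at $g=1_{x_1}$ gives $a_1(\eta)=a_2(\d\phi_p(\eta))$ for all $\eta\in(\g_1)_{x_1}$. Now set $\eta:=\rho_{\Omega_1}(\psi_p^*\beta)$, so that $\rho_{\Omega_1}^{-1}(\eta)=\psi_p^*\beta$. Using the definition (\ref{moreqindisoisotreps}) of $\psi_p$, namely $\psi_p([\d(\alpha_1)_p(w)])=[\d(\alpha_2)_p(w)]$, and that a conormal covector annihilates the orbit directions, I would compute for every $w\in T_pP$:
\[ \langle\rho_{\Omega_1}^{-1}(\eta),\d(\alpha_1)_p(w)\rangle=\langle\psi_p^*\beta,[\d(\alpha_1)_p(w)]\rangle=\langle\beta,\psi_p([\d(\alpha_1)_p(w)])\rangle=\langle\beta,\d(\alpha_2)_p(w)\rangle. \]
Comparing the two moment-map conditions, this shows $\omega_P(a_1(\eta),w)=\omega_P(a_2(\rho_{\Omega_2}(\beta)),w)$ for all $w$, so by nondegeneracy of $\omega_P$ we get $a_2(\d\phi_p(\eta))=a_1(\eta)=a_2(\rho_{\Omega_2}(\beta))$. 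Injectivity of $\zeta\mapsto a_2(\zeta)$ then yields $\d\phi_p(\eta)=\rho_{\Omega_2}(\beta)$, which is exactly the reduced identity.

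The one genuinely delicate point — and the step I expect to be the main obstacle — is the bookkeeping of signs: one must match the precise form of the Hamiltonian-action condition for a left versus a right action as set up in \cite{CrFeTo,WeMi} (with $\alpha_1$ Poisson and $\alpha_2$ anti-Poisson), together with the sign built into (\ref{imsymp}), so that the two fundamental vectors come out equal rather than opposite. Everything else — the transpose bookkeeping of the first paragraph, the elementary fact that $\psi_p$ intertwines $\d\alpha_1$ and $\d\alpha_2$, and the injectivity coming from freeness and properness of the bibundle actions — is routine.
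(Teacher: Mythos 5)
Your proof is correct and follows essentially the same route as the paper's: both transpose the square, differentiate the defining relation $g\cdot p = p\cdot \phi_p(g)$ to compare the infinitesimal actions at $p$, and conclude from the momentum-map conditions of the left and right Hamiltonian actions (with exactly the sign cancellation you flag, which does work out under the paper's conventions). The only cosmetic difference is that the paper stops at the resulting identity of covectors $\alpha_1^*\bigl(\rho_{\Omega_1}^{-1}\xi\bigr)=\alpha_2^*\bigl(\rho_{\Omega_2}^{-1}(\phi_p)_*\xi\bigr)$, which already is the dualized square, whereas you convert back to tangent vectors via nondegeneracy of $\omega_P$ and then invoke injectivity of the infinitesimal $\G_2$-action coming from freeness \textemdash two extra (valid) steps that the paper's arrangement sidesteps.
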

\begin{proof}
Notice (by dualizing and unravelling the definition of the upper horizontal map) that we ought to prove the commutativity of the diagram:
\begin{center}
\begin{tikzcd} & \No_{p}^*{\O}& \\
 \No_{x_1}^*\O_1\arrow[ru,"(\d \alpha_1)^*"]& & \No_{x_2}^*\O_2\arrow[lu,"(\d \alpha_2)^*"'] \\
(\g_1)_{x_1}\arrow[u,"(\rho_{\Omega_1})^{-1}"]\arrow[rr,"(\phi_p)_*"'] & & (\g_2)_{x_2}\arrow[u,"(\rho_{\Omega_2})^{-1}"']   
\end{tikzcd}
\end{center} First notice that for all $\xi \in (\g_1)_{x_1}$:
\begin{equation}\label{iamoreq1} \exp(\xi)\cdot p=p\cdot \exp((\phi_p)_*(\xi)).
\end{equation} 
Now consider the respective Lie algebroid actions:
\begin{equation*} a_L:\alpha_1^*(T^*M_1)\to TP \quad \& \quad a_R:\alpha_2^*(T^*M_2)\to TP
\end{equation*} induced by the $(\G_1,\Omega_1)$-action via $\rho_{\Omega_1}$ and by the $(\G_2,\Omega_2)$-action via $\rho_{\Omega_2}$. By definition of $a_L$ and $a_R$, for every $\xi \in (\g_1)_{x_1}$ and $\eta\in (\g_2)_{x_2}$ we have:
\begin{equation*} a_L\left((\rho_{\Omega_1})^{-1}(\xi)\right)=\left.\frac{\d}{\d t}\right|_{t=0} \exp(t\xi)\cdot p \quad \& \quad a_R\left((\rho_{\Omega_2})^{-1}(\eta)\right)=\left.\frac{\d}{\d t}\right|_{t=0} p\cdot \exp(-t\eta),
\end{equation*} which combined with (\ref{iamoreq1}) gives:
\begin{equation}\label{iamoreq2} a_L\left((\rho_{\Omega_1})^{-1}(\xi)\right)=-a_R\left((\rho_{\Omega_2})^{-1}((\phi_p)^*(\xi))\right).
\end{equation} Since the (left) $(\G_1,\Omega_1)$-action and the (right) $(\G_2,\Omega_2)$-action are Hamiltonian, $a_L$ and $a_R$ satisfy the momentum map condition. This means that for all $\beta_1\in \Omega^1(M_1)$ and $\beta_2\in \Omega^1(M_2)$:
\begin{equation*} \iota_{a_L(\beta_1)}\omega_P=\alpha_1^*(\beta_1) \quad\&\quad  \iota_{a_R(\beta_2)}\omega_P=-\alpha_2^*(\beta_2).
\end{equation*} Combined with (\ref{iamoreq2}) this implies the desired commutativity, which concludes the proof.
\end{proof}
Below we give a precise meaning to the statement that Delzant subspaces are well-behaved with respect to integral affine Morita equivalences of the ambient integral affine orbifold.
\begin{prop}\label{iamoreqdelzsubconeprop} Suppose that we are given an integral affine Morita equivalence:
\begin{center}
\begin{tikzpicture} \node (G1) at (0,0) {$\B_1$};
\node (M1) at (0,-1.3) {$M_1$};
\node (S) at (1.4,0) {$P$};
\node (M2) at (2.7,-1.3) {$M_2$};
\node (G2) at (2.7,0) {$\B_2$};
 
\draw[->,transform canvas={xshift=-\shift}](G1) to node[midway,left] {}(M1);
\draw[->,transform canvas={xshift=\shift}](G1) to node[midway,right] {}(M1);
\draw[->,transform canvas={xshift=-\shift}](G2) to node[midway,left] {}(M2);
\draw[->,transform canvas={xshift=\shift}](G2) to node[midway,right] {}(M2);
\draw[->](S) to node[pos=0.25, below] {$\text{ }\text{ }\alpha_1$} (M1);
\draw[->] (0.8,-0.15) arc (315:30:0.25cm);
\draw[<-] (1.9,0.15) arc (145:-145:0.25cm);
\draw[->](S) to node[pos=0.25, below] {$\alpha_2$\text{ }} (M2);
\end{tikzpicture}
\end{center} that relates a given subset $\underline{\Delta}_1$ of $\underline{M}_1$ to a subset $\underline{\Delta}_2$ of $\underline{M}_2$. Then for each $p\in P$ such that $x_1:=\alpha_1(p)\in \Delta_1$, and $x_2:=\alpha_2(p)\in \Delta_2$, it holds that:
\begin{equation*} \psi_p(\log_{x_1}(\Delta_1))=\log_{x_2}(\Delta_2),
\end{equation*} where $\psi_p$ is defined as in (\ref{moreqindisoisotreps}).
\end{prop}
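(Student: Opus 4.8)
The plan is to transport the $\log$-germs of Proposition~\ref{logmapgermprop} through the bibundle $P$ and to deduce the claim from the uniqueness clause of that proposition. Write $x_i=\alpha_i(p)$, let $\F_P$ denote the foliation of $P$ by connected components of $(\B_1\times\B_2)$-orbits, and set $\Lambda_P:=\alpha_1^*(\Lambda_1)=\alpha_2^*(\Lambda_2)$ (these agree since the Morita equivalence is integral affine). First I would record the local geometry of $P$. Since each $\alpha_i$ is a $\B_i$-equivariant, $\B_j$-invariant surjective submersion ($j\neq i$) whose fibres are the $\B_j$-orbits, a short computation with the two commuting actions shows that near $p$ one has $\alpha_1^{-1}(L_1)=\alpha_2^{-1}(L_2)$ for the $\F_i$-leaf $L_i$ through $x_i$, both equal to the $\F_P$-leaf through $p$; in particular $\F_P=\alpha_i^{-1}(\F_i)$ is the pullback foliation and $\alpha_i^*(\No^*\F_i)=\No^*\F_P$. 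Consequently $\Lambda_P$ is a smooth Lagrangian lattice in $\No^*\F_P$, so that $(P,\F_P,\Lambda_P)$ is a foliated manifold with a transverse integral affine structure, and by Remark~\ref{altdefiamoreqrem} the maps induced on normal spaces $\overline{\d\alpha_i}\colon(\No_p\F_P,(\Lambda_P)^*_p)\to(\No_{x_i}\F_i,(\Lambda_i)^*_{x_i})$ are isomorphisms of integral affine vector spaces with $\psi_p=\overline{\d\alpha_2}\circ\overline{\d\alpha_1}^{-1}$.

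Next I would compare the $\log$-maps. Let $\nu_P$ represent $\log_p$ for $(P,\F_P,\Lambda_P)$ and let $\nu_i$ be a $\Delta_i$-adapted submersion representing $\log_{x_i}$ (which exist because $\Delta_i$ is $\F_i$-invariant). I claim that $\overline{\d\alpha_i}^{-1}\circ\nu_i\circ\alpha_i$ satisfies conditions i)--iii) of Proposition~\ref{logmapgermprop} at $p$: its fibres are the $\F_P$-leaves by the previous paragraph; it vanishes at $p$ with differential the projection $T_pP\to\No_p\F_P$, since $\d(\nu_i)_{x_i}$ is the projection $T_{x_i}M_i\to\No_{x_i}\F_i$ and $\d\alpha_i$ carries $T_p\F_P$ into $T_{x_i}\F_i$; and it is integral affine because $\nu_i$ is, the transverse differentials of $\alpha_i$ preserve the lattices (as $\alpha_i^*\Lambda_i=\Lambda_P$), and $\overline{\d\alpha_i}$ is an integral affine isomorphism. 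By the uniqueness clause of Proposition~\ref{logmapgermprop} it therefore equals $\nu_P$ as a germ at $p$, i.e. $\nu_i\circ\alpha_i=\overline{\d\alpha_i}\circ\nu_P$. Applying $\psi_p=\overline{\d\alpha_2}\circ\overline{\d\alpha_1}^{-1}$ to the case $i=1$ and comparing with $i=2$ yields the key identity of germs at $p$:
\begin{equation*} \psi_p\circ\nu_1\circ\alpha_1=\nu_2\circ\alpha_2. \end{equation*}

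Finally I would feed into this the relation $\alpha_1^{-1}(\Delta_1)=\alpha_2^{-1}(\Delta_2)$, which is exactly what it means for the Morita equivalence to relate $\underline{\Delta}_1$ to $\underline{\Delta}_2$. Choosing a neighbourhood $W$ of $p$ on which the displayed identity holds and with $\alpha_i(W)\subset U_i$, one has $\alpha_i\bigl(\alpha_i^{-1}(\Delta_i)\cap W\bigr)=\Delta_i\cap\alpha_i(W)$, so evaluating the identity on the common set $\alpha_1^{-1}(\Delta_1)\cap W=\alpha_2^{-1}(\Delta_2)\cap W$ gives $\psi_p\bigl(\nu_1(\Delta_1\cap\alpha_1(W))\bigr)=\nu_2(\Delta_2\cap\alpha_2(W))$. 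Since $\alpha_i$ is an open map and $\nu_i$ is $\Delta_i$-adapted, the set-germ at the origin of $\nu_i(\Delta_i\cap V)$ equals $\log_{x_i}(\Delta_i)$ for every neighbourhood $V$ of $x_i$ contained in $U_i$ (the adaptedness forces $\nu_i(V)\cap\nu_i(\Delta_i\cap U_i)=\nu_i(\Delta_i\cap V)$). Taking germs on both sides then yields $\psi_p(\log_{x_1}(\Delta_1))=\log_{x_2}(\Delta_2)$, as desired.

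I expect the main obstacle to be the clean setup of the intermediate object $(P,\F_P,\Lambda_P)$ --- namely the local identification $\alpha_1^{-1}(\F_1)=\alpha_2^{-1}(\F_2)$ near $p$ and the verification that $\Lambda_P$ is genuinely a transverse integral affine structure through which $\overline{\d\alpha_i}$ becomes integral affine; this is where the integral affine hypothesis (via Remark~\ref{altdefiamoreqrem}) enters decisively. Once this is in place the heart of the argument is a single application of the uniqueness clause of Proposition~\ref{logmapgermprop}, and the only remaining care is the passage from the pointwise identity on $W$ to the equality of set-germs, which rests on the openness of the submersions $\alpha_i$ and the $\Delta_i$-adaptedness of the $\nu_i$.
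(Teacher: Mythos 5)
Your proof is correct and takes essentially the same route as the paper's: both arguments pull everything back to the bibundle $P$, equip it with the common pullback foliation and lattice (checking this is a transverse integral affine structure), and then invoke the uniqueness clause of Proposition \ref{logmapgermprop} applied to the compositions $\nu_i\circ\alpha_i$. The only difference is bookkeeping: the paper condenses your last two paragraphs into the single observation that $(\underline{\d\alpha}_i)_p^{-1}\circ\nu_i\circ\alpha_i$ is a $\Delta$-adapted submersion representing $\log_p$ on $(P,\F,\Lambda)$, so that $\log_p(\Delta)$ is identified at once with both $\log_{x_1}(\Delta_1)$ and $\log_{x_2}(\Delta_2)$, whereas you derive the germ identity $\psi_p\circ\nu_1\circ\alpha_1=\nu_2\circ\alpha_2$ and then handle the set-germs explicitly via adaptedness and openness of the $\alpha_i$.
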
  
\begin{cor}\label{delzsubspmorinvcor} In the setting of Proposition \ref{iamoreqdelzsubconeprop}, $\underline{\Delta}_1$ is a Delzant subspace of $\underline{M}_1$ if and only if $\underline{\Delta}_2$ is a Delzant subspace of $\underline{M}_2$. In this case, for each $p\in P$ such that $x_1:=\alpha_1(p)\in \Delta_1$, and $x_2:=\alpha_2(p)\in \Delta_2$, their cones at $x_1$ and $x_2$ are related as:
\begin{equation*} \psi_p(C_{x_1}(\underline{\Delta}_1))=C_{x_2}(\underline{\Delta}_2),
\end{equation*}  where $\psi_p$ is defined as in (\ref{moreqindisoisotreps}).
\end{cor}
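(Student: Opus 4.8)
The plan is to deduce the corollary directly from Proposition \ref{iamoreqdelzsubconeprop}, the point being that the linear isomorphism $\psi_p$ of (\ref{moreqindisoisotreps}) respects the relevant integral affine structures. First I would record the key linear fact: by Remark \ref{altdefiamoreqrem}, for each relevant $p\in P$ the map
\[ \psi_p:(\No_{x_1}\F_1,(\Lambda_1)^*_{x_1})\xrightarrow{\sim} (\No_{x_2}\F_2,(\Lambda_2)^*_{x_2}) \]
is an isomorphism of integral affine vector spaces. In particular it carries any basis of $(\Lambda_1)^*_{x_1}$ to a basis of $(\Lambda_2)^*_{x_2}$, and hence --- directly from the lattice-basis definition of smoothness in Subsection \ref{torrepoftorsec} --- sends smooth polyhedral cones to smooth polyhedral cones. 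Being moreover a linear homeomorphism fixing the origin, it identifies germs of smooth polyhedral cones at the origin on the two sides.

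Next I would set up the point-correspondence. Since $\alpha_1$ and $\alpha_2$ are surjective submersions and the Morita equivalence relates $\underline{\Delta}_1$ to $\underline{\Delta}_2$, one has $\alpha_1^{-1}(\Delta_1)=\alpha_2^{-1}(\Delta_2)$ as subsets of $P$; thus for $p\in P$ the conditions $\alpha_1(p)\in\Delta_1$ and $\alpha_2(p)\in\Delta_2$ are equivalent, and every point of $\Delta_1$ (resp.\ $\Delta_2$) arises as $\alpha_1(p)$ (resp.\ $\alpha_2(p)$) for such a $p$. Now suppose $\underline{\Delta}_2$ is a Delzant subspace and fix $x_1\in\Delta_1$. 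Choosing $p$ with $\alpha_1(p)=x_1$ and putting $x_2:=\alpha_2(p)\in\Delta_2$, Proposition \ref{iamoreqdelzsubconeprop} gives $\log_{x_1}(\Delta_1)=\psi_p^{-1}(\log_{x_2}(\Delta_2))$. As $\log_{x_2}(\Delta_2)$ is by assumption the germ of a smooth polyhedral cone, so is $\log_{x_1}(\Delta_1)$ by the first paragraph; since $x_1\in\Delta_1$ was arbitrary, $\underline{\Delta}_1$ is a Delzant subspace by Definition \ref{defdelzorb}. The reverse implication is symmetric, giving the claimed equivalence.

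Finally, assuming both are Delzant, the cone relation follows from the uniqueness of the defining cone in Definition \ref{defdelzorb}: the image $\psi_p(C_{x_1}(\underline{\Delta}_1))$ is a smooth polyhedral cone whose germ at the origin is $\psi_p(\log_{x_1}(\Delta_1))=\log_{x_2}(\Delta_2)$, and since $C_{x_2}(\underline{\Delta}_2)$ is the unique smooth polyhedral cone with that germ, the two coincide. I do not anticipate a genuine obstacle in this argument: all of the geometric content is already carried by Proposition \ref{iamoreqdelzsubconeprop}, and the corollary is a formal consequence. The only steps requiring a little care are the bookkeeping of the point-correspondence through the bibundle $P$ and the (immediate) observation that an integral affine isomorphism preserves the class of smooth polyhedral cones together with their germs.
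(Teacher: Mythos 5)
Your proposal is correct and follows essentially the same route as the paper, which states Corollary \ref{delzsubspmorinvcor} without a separate proof precisely because it is the immediate consequence of Proposition \ref{iamoreqdelzsubconeprop} that you spell out: $\psi_p$ is an integral affine isomorphism (Remark \ref{altdefiamoreqrem}), hence preserves smooth polyhedral cones and their germs at the origin, and the cone identity follows from the uniqueness of the cone in Definition \ref{defdelzorb}. Your bookkeeping of the point-correspondence via $\alpha_1^{-1}(\Delta_1)=\alpha_2^{-1}(\Delta_2)$ and surjectivity of the moment maps is exactly the intended argument.
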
 
\begin{proof}[Proof of Proposition \ref{iamoreqdelzsubconeprop}] First notice that the respective foliations $\F_1$ and $\F_2$ pull back along $\alpha_1$ and $\alpha_2$ to the same foliation $\F$ on $P$. Moreover, since the Morita equivalence is integral affine, the smooth lattices $\Lambda_1$ and $\Lambda_2$ pull back to the same smooth lattice $\Lambda$ in $\No^*\F$. One readily verifies that this lattice $\Lambda$ is Lagrangian in $(T^*P,\Omega_\textrm{can})$. So, it defines a transverse integral affine structure on the foliated manifold $(P,\F)$. Being related by the Morita equivalence, the respective invariant subsets $\Delta_1$ in $M_1$ and $\Delta_2$ in $M_2$, corresponding to $\underline{\Delta}_1$ and $\underline{\Delta}_2$, have the same pre-image $\Delta$ in $P$ under $\alpha_1$ and $\alpha_2$. Let $p\in \Delta$. Notice that, to prove the proposition, it is enough to show that for both $i\in\{1,2\}$ the linear isomorphism $(\underline{\d\alpha}_i)_p:\No_p\F\to \No_{x_i}\F_i$ identifies the set germ $\log_p(\Delta)$ with the set-germ $\log_{x_i}(\Delta_i)$. To see that this is indeed the case, observe that if $\nu_i:U_i\to \No_{x_i}\F_i$ is a $\Delta_i$-adapted submersion as in Proposition \ref{logmapgermprop}, with respect to $(M_i,\F_i,\Lambda_i)$, then the composition:
\begin{equation*} \alpha_i^{-1}(U_i)\xrightarrow{\nu_i\circ \alpha_i} \No_{x_i}\F_i\xrightarrow{(\underline{\d\alpha}_i)^{-1}_p} \No_p\F
\end{equation*} is a $\Delta$-adapted submersion as in Proposition \ref{logmapgermprop}, with respect to $(P,\F,\Lambda)$. 
\end{proof}
\begin{cor}\label{coneatxisinvrem} Let $(B,\B,p,\Lambda)$ be an integral affine orbifold and let $\underline{\Delta}$ be a Delzant subspace. For each $x\in \Delta$ the cone $C_x(\underline{\Delta})$ of $\underline{\Delta}$ at $x$ is $\B_x$-invariant. \end{cor}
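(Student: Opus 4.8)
The plan is to deduce this directly from the Morita-invariance of cones established in Corollary \ref{delzsubspmorinvcor}, applied to the canonical Morita equivalence of $\B$ with itself. Concretely, I would take the bibundle $P := \B$ (the manifold of arrows) with moment maps $\alpha_1 := s$ and $\alpha_2 := t$ and the left and right actions of $\B$ on itself by multiplication; this is the identity Morita self-equivalence of $\B \rightrightarrows M$. The first thing to check is that this self-equivalence is integral affine: the required equality $\alpha_1^*(\Lambda) = \alpha_2^*(\Lambda)$, i.e. $s^*\Lambda = t^*\Lambda$ as subbundles of $s^*\No^*\F = t^*\No^*\F$, is precisely the $\B$-invariance of $\Lambda$, which is part of the definition of an integral affine orbifold.

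Next I would verify that this self-equivalence relates $\underline{\Delta}$ to itself in the sense of Proposition \ref{iamoreqdelzsubconeprop}. Since $\Delta$ is $\B$-invariant, an arrow $g$ satisfies $s(g) \in \Delta$ if and only if $t(g) \in \Delta$, so $\alpha_1^{-1}(\Delta) = \alpha_2^{-1}(\Delta)$ as subsets of $P = \B$. Thus, taking $\underline{\Delta}_1 = \underline{\Delta}_2 = \underline{\Delta}$, the hypotheses of Proposition \ref{iamoreqdelzsubconeprop} and Corollary \ref{delzsubspmorinvcor} are met.

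The crucial observation is then that, for an arrow $g \in \B_x$ (so that $x_1 = x_2 = x$), the linear isomorphism $\psi_g : \No_x\F \to \No_x\F$ of (\ref{moreqindisoisotreps}) coincides with the action of $g$ in the normal representation (\ref{normrep}): by construction both send $[v]$ to $[\d t(\widehat{v})]$ for any lift $\widehat{v} \in T_g\B$ with $\d s(\widehat{v}) = v$. Hence Corollary \ref{delzsubspmorinvcor} yields $\psi_g(C_x(\underline{\Delta})) = C_x(\underline{\Delta})$, which is exactly the assertion that $g$ preserves the cone $C_x(\underline{\Delta}) \subset \No_x\F$. As $g \in \B_x$ was arbitrary, the cone is $\B_x$-invariant.

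Once Corollary \ref{delzsubspmorinvcor} is available the argument is essentially bookkeeping, so I do not expect a genuine obstacle. The only points deserving care are confirming that the unit bibundle really is an integral affine Morita equivalence with the stated moment maps, and matching the a priori distinct formulas (\ref{moreqindisoisotreps}) and (\ref{normrep}) for $\psi_g$ and the normal representation; both reduce to the fact that the ambiguity in the lift $\widehat{v}$ lies in $T_x\F$ and therefore projects to zero in $\No_x\F$.
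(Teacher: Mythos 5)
Your proposal is correct and is exactly the paper's proof, which reads in its entirety: ``Apply Corollary \ref{delzsubspmorinvcor} to the identity equivalence''; you have simply spelled out the verifications (integral affineness via $\B$-invariance of $\Lambda$, self-relatedness of $\underline{\Delta}$ via its $\B$-invariance, and the identification of $\psi_g$ with the normal representation (\ref{normrep})) that the paper leaves implicit. The only cosmetic quibble is that for left/right multiplication on $P=\B$ the moment maps are conventionally $\alpha_1=t$ and $\alpha_2=s$ rather than your labelling, but this swap is harmless since it only replaces $\psi_g$ by $\psi_{g^{-1}}$, and invariance of the cone under $\B_x$ is the same statement either way.
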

\begin{proof}[Proof of Corollary \ref{coneatxisinvrem}] Apply Corollary \ref{delzsubspmorinvcor} to the identity equivalence. 
\end{proof}

\begin{ex}\label{ex:compLiegp:Delzantsubspleafsp:2} {The description of the Delzant subspaces in Example \ref{ex:compLiegp:Delzantsubspleafsp:1} can be obtained as follows. Recall that $\c$ is a complete transversal for the foliation of $\g^*_\textrm{reg}$ by coadjoint $G^0$-orbits, where $G^0$ denotes the identity component of $G$. The transverse integral affine structure $\Lambda_\G$ induced by $(\G,\Omega):=(G\ltimes \g^*_\textrm{reg},-\d\lambda_\textrm{can})$ restricts to the same integral affine structure on $\c$ as that inherited from the integral affine vector space $(\t^*,\Lambda_T)$ (of which $\c$ is an open subset). Moreover, the restriction of the orbifold groupoid $\B=\G/\sT$ to $\c$ is canonically isomorphic to the action groupoid $N(\c)/T\ltimes \c$ (cf. \cite[Proposition 3.15.1]{DuKo}). So, via Example \ref{iamoreqtransportrem} we obtain an integral affine Morita equivalence:}
\begin{center}
\begin{tikzpicture} \node (G1) at (-0.8,0) {$\B$};
\node (M1) at (-0.8,-1.3) {$(\g^*_\textrm{reg},\Lambda_\G)$};
\node (S) at (1.4,0) {$s_\B^{-1}(\c)$};
\node (M2) at (3.5,-1.3) {$(\c,\Lambda_T)$};
\node (G2) at (3.5,0) {$N(\c)/T\ltimes \c$};
 
\draw[->,transform canvas={xshift=-\shift}](G1) to node[midway,left] {}(M1);
\draw[->,transform canvas={xshift=\shift}](G1) to node[midway,right] {}(M1);
\draw[->,transform canvas={xshift=-\shift}](G2) to node[midway,left] {}(M2);
\draw[->,transform canvas={xshift=\shift}](G2) to node[midway,right] {}(M2);
\draw[->](S) to node[pos=0.25, below] {} (M1);
\draw[->] (0.65,-0.15) arc (315:30:0.25cm);
\draw[<-] (2.05,0.15) arc (145:-145:0.25cm);
\draw[->](S) to node[pos=0.25, below] {} (M2);
\end{tikzpicture}
\end{center} Applying Corollary \ref{delzsubspmorinvcor} to this leads to the bijection in Example \ref{ex:compLiegp:Delzantsubspleafsp:1}.
\end{ex}

\subsection{On the image of the momentum map}\label{sec:momimisdelzsubsp}
\subsubsection{The symplectic normal representations and the momentum image}\label{momimsubsectoract} The objects in the previous two sections are related via the proposition below, which we prove in the next subsection along with Theorem \ref{momimtoricthm}. 
\begin{prop}\label{conedescrpsympnormrepprop} Let $(\G,\Omega)$ be a regular and proper symplectic groupoid and let $J:(S,\omega)\to M$ be a faithful multiplicity-free Hamiltonian $(\G,\Omega)$-space. Further, let $p\in S$ and $x:=J(p)$. 
\begin{itemize}\item[a)] The symplectic normal representation $(\S\No_p,\omega_p)$ at $p$ is maximally toric (Definition \ref{torrepdef}).
\item[b)] The inclusion of the isotropy group $\G_p$ of the action into the isotropy group $\G_x$ of $\G$ induces an isomorphism between their groups of connected components: 
\begin{equation}\label{fundgpisogps} \varGamma_{\G_p}\xrightarrow{\sim} \varGamma_{\G_x}.
\end{equation} 
\item[c)] The cone at $x$ of $\underline{\Delta}:=\underline{J}(\underline{S})$ (which is a Delzant subpace by Theorem \ref{momimtoricthm}) is given by:
\begin{equation}\label{conedescrpsympnormrepeq} C_x(\underline{\Delta})=(\rho_{\Omega})_x^*(\pi_{\g_p^*}^{-1}(\Delta_{J_{\S\No_p}})),
\end{equation} where:
\begin{itemize}\item $(\rho_\Omega)_x:\No_x^*\F \to \g_x$ is defined as in (\ref{imsymp}), 
\item $\pi_{\g_p^*}:\g_x^*\to \g_p^*$ denotes the canonical projection (dual to the inclusion $\g_p\hookrightarrow \g_x$),
\item $J_{\S\No_p}:(\S\No_p,\omega_p)\to \g_p^*$ denotes the quadratic momentum map of the symplectic normal representation at $p$, as defined in (\ref{quadsympmommap}), and $\Delta_{J_{\S\No_p}}$ denotes its image. 
\end{itemize}
\end{itemize}
\end{prop}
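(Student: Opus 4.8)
The plan is to derive all three statements from a local normal form for the Hamiltonian action around the orbit $\O_p=\G\cdot p$, together with the classification of maximally toric representations from Subsection \ref{classtorrepsec}. Concretely, I would first invoke the symplectic slice (Marle--Guillemin--Sternberg type) theorem for Hamiltonian actions of proper symplectic groupoids, which models $(S,\omega,J)$ near $\O_p$ in terms of the isotropy data $\G_p\subseteq\G_x$ and the symplectic normal representation $(\S\No_p,\omega_p)$ of $\G_p$, and which gives an explicit formula for the transverse behaviour of $J$: modulo the orbit directions, $J$ is governed by the quadratic momentum map $J_{\S\No_p}\colon(\S\No_p,\omega_p)\to\g_p^*$ composed with the identifications $(\rho_\Omega)_x\colon\No_x^*\F\xrightarrow{\sim}\g_x$ and $\pi_{\g_p^*}\colon\g_x^*\to\g_p^*$. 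Since $\G$ is proper and regular, $\G_p$ is a compact Lie group whose Lie algebra $\g_p\subseteq\g_x$ is abelian, so $\G_p$ is infinitesimally abelian and the terminology of Definition \ref{torrepdef} applies.

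For part $a$, I would translate the defining properties of a faithful multiplicity-free space (Definition \ref{toractdefi}) into the slice. Condition i) forces the induced action of the identity component $T_p:=(\G_p)^0$ on $\S\No_p$ to be faithful with trivial generic stabilizer, while condition ii) (the $\sT$-orbits are the fibres of $J$) pins down the dimension, giving $\dim(\S\No_p)=2\dim(T_p)$; by Definition \ref{torrepdef} this is exactly the statement that $(\S\No_p,\omega_p)$ is maximally toric. For part $b$, surjectivity of $\varGamma_{\G_p}\to\varGamma_{\G_x}$ is immediate from condition ii): since $\sT_x\cdot p$ is the fibre $J^{-1}(x)$ through $p$, any $g\in\G_x$ satisfies $g\cdot p=t\cdot p$ for some $t\in\sT_x$, whence $\G_x=\sT_x\cdot\G_p$; injectivity amounts to $\G_p\cap\sT_x$ being connected, which I would deduce from part $a$, using that in a maximally toric representation the weights form a $\Z$-basis of $\Lambda_{T_p}^*$ and hence every stabilizer is a connected subtorus (equivalently, from the fact that faithful toric spaces are locally modelled on symplectic toric manifolds, whose isotropy groups are connected).

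Part $c$ is the computational core, and I would prove it together with Theorem \ref{momimtoricthm} by computing the set germ $\log_x(\Delta)$ directly in the slice. Using the normal form, the transverse momentum map $\underline{J}$ near $\O_x$ is identified, via $\log_x$ and $(\rho_\Omega)_x$, with the map into $\g_x^*$ whose $\g_p^*$-component is $J_{\S\No_p}$ and whose complementary component ranges freely over the orbit (equivalently, the extra $\dim(\g_x)-\dim(\g_p)$ directions, along which the torus already acts freely at $p$, are unconstrained). Taking images yields $\log_x(\Delta)=(\rho_\Omega)_x^*\big(\pi_{\g_p^*}^{-1}(\Delta_{J_{\S\No_p}})\big)$, which is the asserted formula \eqref{conedescrpsympnormrepeq}. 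Finally, since $\Delta_{J_{\S\No_p}}=\textrm{Cone}(\alpha_1,\dots,\alpha_k)$ is a smooth pointed cone in $(\g_p^*,\Lambda_{T_p}^*)$ by part $a$ and Proposition \ref{classtorrep}, and since the complementary directions are primitive lattice directions for $\Lambda_x^*$ (because $T_p\subseteq\sT_x$ is primitively embedded, a consequence of part $b$), the cylinder $\pi_{\g_p^*}^{-1}(\Delta_{J_{\S\No_p}})$ is a smooth polyhedral cone in $(\No_x\F,\Lambda_x^*)$; this simultaneously shows that $\log_x(\Delta)$ is the germ of such a cone, so that $\underline{\Delta}$ is a Delzant subspace, proving Theorem \ref{momimtoricthm}.

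I expect the main obstacle to be part $c$: setting up the local normal form with enough precision that the transverse momentum map can be read off, and then carefully tracking the dualities $(\rho_\Omega)_x$, $(\rho_\Omega)_x^*$ and the projection $\g_x^*\to\g_p^*$ so as to identify exactly which directions are constrained by $J_{\S\No_p}$ and which are free. In particular, matching the canonical integral affine chart $\log_x$ of Proposition \ref{logmapgermprop} with the transverse coordinates produced by the slice, and verifying the lattice compatibility needed for smoothness of the resulting cone, is the delicate point on which both part $c$ and Theorem \ref{momimtoricthm} hinge.
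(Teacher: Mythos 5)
Your proposal is correct in substance and rests on the same underlying mechanism (a Marle--Guillemin--Sternberg-type normal form), but it is organized genuinely differently from the paper's proof. The paper never invokes a slice theorem for symplectic groupoid actions. Instead it reduces in two steps: first, it takes a transversal $\Sigma$ for $\B=\G/\sT$ through $x$; since $\Sigma$ is a Poisson transversal, $J^{-1}(\Sigma)$ is symplectic, the action restricts to a faithful multiplicity-free $(\G,\Omega)\vert_\Sigma$-space with canonically isomorphic symplectic normal representation at $p$, and Morita invariance of Delzant subspaces and of their cones (Corollary \ref{delzsubspmorinvcor}, via Examples \ref{transversalmoreqex} and \ref{sympmoreqiamoreq}) transports the conclusions; this reduces everything to the case where $\B$ is etale. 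Second, in the etale case it uses Lemma \ref{lemma:indtorbunacttoric} and the trivialization (\ref{eqn:trivializationsymptorbun}) to convert the action into an honest Hamiltonian torus action whose momentum map descends to a topological embedding, and then simply cites the known facts about possibly non-compact symplectic toric manifolds (Proposition \ref{prop:properties-non-compact-symp-tor-man}): maximally toric slice representation, connected stabilizers, and the local description of the momentum image as $\pi_{\g_p}^{-1}(\Delta_{J_{\S\No_p}})$. What the paper's route buys is that the only normal form ever needed is the classical one for compact torus actions, and all the foliated bookkeeping you single out as delicate (matching $\log_x$ with slice coordinates, lattice compatibility transverse to $\F$) is absorbed by the Morita-invariance machinery rather than done by hand. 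What your route buys is a direct argument at the orbit of interest, without the etale reduction; but it requires an explicit normal form for Hamiltonian actions of proper symplectic groupoids, including a formula for the momentum map in the model. Note that Theorem \ref{loceqthmhamact}, which is what the paper imports from that theory, is only a local equivalence statement and would not, by itself, give you such a formula.

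Two soft spots, neither fatal. For injectivity in part $b$ (connectedness of $\G_p\cap\sT_x$), your primary argument -- that stabilizers in a maximally toric representation are connected subtori -- does not apply as stated: part $a$ concerns only the identity component $(\G_p)^0$, while $\G_p\cap\sT_x$ is the stabilizer of $p$ itself, not of a point of $\S\No_p$. To conclude you need either faithfulness of the full group $\G_p\cap\sT_x$ on the slice combined with the dimension count (a faithful symplectic representation of a compact abelian group $A$ on a space of dimension $2\dim(A)$ forces $A$ to be a torus), or the classical fact in your parenthesis, which is exactly what the paper cites as Proposition \ref{prop:properties-non-compact-symp-tor-man}$b$. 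Similarly, the lattice compatibility in part $c$ is not a consequence of part $b$: it only uses that $(\G_p)^0$ is a closed subtorus of $\sT_x$, so that $\g_p$ and its annihilator are rational with respect to the lattice, which is automatic.
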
 Recall here that, given a Hamiltonian $(\G,\Omega)$-space $J:(S,\omega)\to M$, for each $p\in S$ there is a naturally associated symplectic representation:
\begin{equation*} (\S\No_p,\omega_p)\in \textrm{SympRep}(\G_p),
\end{equation*} of the isotropy group $\G_p$ of the action at $p$. We call this the \textbf{symplectic normal representation} at $p$. Explicitly, this consists of the symplectic normal space to the $\G$-orbit $\O\subset (S,\omega)$ through $p$:
\begin{equation*} \S\No_p:=\frac{T_p\O^\omega}{T_p\O\cap T_p\O^\omega},
\end{equation*} equipped with the linear symplectic form induced by $\omega$ and with the $\G_p$-action:
\begin{equation*} g\cdot [v]=[\d m_{(g,p)}(0,v)], \quad g\in \G_p,\quad v\in T_p\O^\omega,
\end{equation*} where $m:\G\ltimes S\to S$ denotes the action map. Here $T_p\O^\omega$ denotes the $\omega$-orthogonal to the tangent space $T_p\O$ of the orbit at $p$. This generalizes the so-called symplectic normal (or slice) representations for Hamiltonian Lie group actions. For further details on this definition in the generality of symplectic groupoid actions we refer to \cite{Mol1}.
\begin{rem} {As for symplectic toric manifolds, there is a natural way to read off the isomorphism type of the isotropy groups (and so, also the canonical stratification of the orbit space) of a faithful multiplicity-free Hamiltonian action from its momentum image. Indeed, in view of Proposition \ref{cirepsplit} and Proposition \ref{conedescrpsympnormrepprop}, the isotropy group $\G_p$ at $p\in S$ is isomorphic to the semi-direct product of $\Gamma_{\G_x}$ with the $\Gamma_{\G_x}$-invariant subtorus of $(\sT_\Lambda)_x$ with Lie algebra given by the annihilator in $T^*_xM$ of the face of the polyhedral cone $C_x(\underline{\Delta})$ through the origin. }
\end{rem}
\subsubsection{Proofs of Theorem \ref{momimtoricthm} and Proposition \ref{conedescrpsympnormrepprop}} For torus actions, Theorem \ref{momimtoricthm} and Proposition \ref{conedescrpsympnormrepprop} boil down to the known facts below about symplectic toric manifolds (which follow from an argument using the Marle-Guillemin-Sternberg normal form; see \cite{De} and \cite[Lemma B.3]{KaLe}). 
\begin{prop}\label{prop:properties-non-compact-symp-tor-man} 
Let $T$ be a torus and let $J:(S,\omega)\to \t^*$ be a possibly non-compact and non-connected symplectic toric manifold for which $J$ descends to a topological embedding $S/T\to \t^*$. Let $p\in S$ and $x=J(p)$.
\begin{itemize} 
\item[a)] The symplectic normal representation $(\S\No_p,\omega_p)$ at $p$ is maximally toric.
\item[b)] The isotropy group $T_p$ of the $T$-action is connected.
\item[c)] The germ at $x$ of the image of $J$ is equal to that of the subset $x+C_x$, where $C_x$ is the pre-image of the polyhedral cone $\Delta_{J_{\S\No_p}}\subset \t_p^*$ under the canonical projection $\t^*\to \t_p^*$. 
\end{itemize}
\end{prop}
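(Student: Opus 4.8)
The plan is to reduce everything to the Marle--Guillemin--Sternberg (MGS) local normal form for the Hamiltonian $T$-action near $p$ and then to read off (a), (b) and (c) from the model. Write $H:=T_p$ for the isotropy group, $\h:=\t_p=\textrm{Lie}(H)$, and recall that the symplectic slice appearing in the normal form is exactly the symplectic normal representation $W:=\S\No_p$ of $H$. Choose a complement $\m$ with $\t=\h\oplus\m$ (every subspace is automatically $H$-invariant, since $T$ is abelian), so that $\t^*=\mathrm{Ann}(\h)\oplus\mathrm{Ann}(\m)$ with $\mathrm{Ann}(\h)\cong\m^*$ and $\mathrm{Ann}(\m)\cong\h^*=\t_p^*$. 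As $T$ is abelian, its coadjoint action is trivial and $x=J(p)$ is fixed, so the normal form states that a $T$-invariant neighbourhood $U$ of the orbit $T\cdot p$ is equivariantly symplectomorphic to a neighbourhood of the zero section in $Y:=T\times_H(\mathrm{Ann}(\h)\times W)$, under which the momentum map becomes
\begin{equation*} J_Y([t,\eta,w])=x+\eta+J_W(w),\qquad \eta\in\mathrm{Ann}(\h),\quad J_W(w)\in\mathrm{Ann}(\m)\cong\h^*, \end{equation*}
where $J_W=J_{\S\No_p}$ is the quadratic momentum map $(\ref{quadsympmommap})$ of the slice representation and $H$ acts trivially on $\mathrm{Ann}(\h)$.

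First I would settle the dimension and faithfulness. Comparing $\dim Y=2\dim\m+\dim W$ with $\dim S=2\dim T=2(\dim\h+\dim\m)$ gives $\dim W=2\dim\h=2\dim H^0$. In the model the stabiliser of $[t,\eta,w]$ in $T$ equals the $H$-stabiliser $H_w$ of $w$, so the fact that the $T$-action is free on a dense subset forces a generic $w$ to have $H_w=\{e\}$; hence $H$ — and therefore its identity component $H^0=T_p^0$ — acts faithfully on $W$. Together with $\dim W=2\dim H^0$ this is precisely the assertion that $(\S\No_p,\omega_p)=(W,\omega_W)$ is a maximally toric $H$-representation, which is (a). By Proposition \ref{classtorrep} the $H^0$-weights $\beta_1,\dots,\beta_{n'}$ (with $n':=\dim H^0$) then form a basis of the character lattice $\Lambda_{H^0}^*$, and by Remark \ref{torrepcharrem} the map $(\chi_{\beta_1},\dots,\chi_{\beta_{n'}})\colon H^0\to\T^{n'}$ is an isomorphism.

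For (b), I would use that $H\subseteq T$ is abelian, so its trivial coadjoint action fixes each weight; each $h\in H$ therefore preserves every one-dimensional weight space $\C_{\beta_i}$, and since it commutes with the circle action of $H^0$ on $\C_{\beta_i}$ it must act there by a phase $\chi_i(h)\in\mathbb{S}^1$ with $\chi_i\vert_{H^0}=\chi_{\beta_i}$. Faithfulness of $H$ on $W$ means $(\chi_1,\dots,\chi_{n'})\colon H\to\T^{n'}$ is injective, while its restriction to $H^0$ is the isomorphism above; an injective homomorphism that already maps the subgroup $H^0$ onto all of $\T^{n'}$ must satisfy $H=H^0$, so $T_p$ is connected. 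For (c), the model gives $J(U)=x+\mathrm{Ann}(\h)+J_W(B)$ for a small ball $B\subseteq W$; since $J_W$ is homogeneous, its image has the germ at $0$ of the full cone $\Delta_{J_{\S\No_p}}=J_W(W)\subseteq\h^*$, so the germ of $J(U)$ at $x$ is that of $x+\big(\mathrm{Ann}(\h)\oplus\Delta_{J_{\S\No_p}}\big)=x+C_x$, where $C_x$ is the preimage of $\Delta_{J_{\S\No_p}}$ under $\t^*\to\t_p^*$. It then remains to replace $J(U)$ by $J(S)$: because $\underline{J}\colon S/T\to\t^*$ is a homeomorphism onto its image $\Delta:=J(S)$, the open set $q(U)\subseteq S/T$ (with $q\colon S\to S/T$ the quotient map) maps to a neighbourhood of $x$ in $\Delta$, so there is an open $O\ni x$ in $\t^*$ with $\Delta\cap O=J(U)\cap O$; hence the germs of $J(S)$ and of $J(U)$ at $x$ coincide, completing (c).

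The routine part is the normal-form computation together with the dimension and phase-character bookkeeping for (a) and (b); the one genuinely delicate point is the final step of (c). Without compactness or properness of $J$, far-away points of $S$ could a priori map arbitrarily close to $x$ and enlarge the germ of $J(S)$, while the local model only controls a neighbourhood of the single orbit $T\cdot p$. The topological-embedding hypothesis is exactly what rules this out, by identifying a neighbourhood of $[p]$ in $S/T$ with a neighbourhood of $x$ in the image. I expect that making this identification of germs precise — and double-checking that the MGS slice is genuinely $\S\No_p$ with the stated momentum map — will be the main thing to get right.
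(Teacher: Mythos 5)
Your proof is correct and takes exactly the route the paper intends: the paper does not write out a proof of this proposition but cites it as a known fact that follows ``from an argument using the Marle-Guillemin-Sternberg normal form'' (referring to \cite{De} and \cite[Lemma B.3]{KaLe}), and your argument is precisely that MGS computation, with the slice identified with $(\S\No_p,\omega_p)$, the dimension/faithfulness count giving $a$, the phase-character argument giving $b$, and the homogeneity-plus-embedding germ argument giving $c$. Your reading of the hypotheses (freeness on a dense subset, orbits equal to $J$-fibers via injectivity of $\underline{J}$) also matches how the proposition is actually applied in the paper, and is the reading needed for the statement to hold for disconnected $S$.
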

Our strategy of proof will be to reduce to this case. For this we will use: 
\begin{lemma}\label{proetgpoidlinlem} Let $\B\rightrightarrows M$ be a proper etale Lie groupoid. Every $x\in M$ admits a connected open neighbourhood $U$ such that for each $\gamma\in \B_x$ there exists a (necessarily unique) smooth local section $\sigma_\gamma:U\to \B\vert_U$ of the source-map that sends $x$ to $\gamma$, and such that the map:
\begin{equation*} \B_x\times U\to \B\vert_U,\quad (\gamma,y)\mapsto \sigma_\gamma(y)
\end{equation*} is surjective. This defines an isomorphism of Lie groupoids between the action groupoid $\B_x\ltimes U$ of the $\B_x$-action on $U$ given by $\gamma\cdot y:=t(\sigma_\gamma(y))$ and $\B\vert_U$. In particular, $\B\vert_U$ is source-proper. 
\end{lemma}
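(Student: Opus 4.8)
The plan is to exploit the complementary roles of the two hypotheses: étaleness manufactures local sections of the source map and makes everything a local diffeomorphism, while properness controls how many arrows can connect points near $x$. I would begin by observing that $\B_x$ is finite, since $s^{-1}(x)$ is discrete ($\B$ étale) and $\B_x=s^{-1}(x)\cap t^{-1}(x)$ is compact ($\B$ proper). For each $\gamma\in\B_x$, étaleness of $s$ provides an open $W_\gamma\ni\gamma$ on which $s$ is a diffeomorphism onto an open neighbourhood of $x$; as $\B_x$ is finite and the space of arrows is Hausdorff, I can shrink so that the $W_\gamma$ are pairwise disjoint and share a common image $U_0\ni x$. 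Setting $\sigma_\gamma:=(s\vert_{W_\gamma})^{-1}:U_0\to W_\gamma$ yields the desired local sections through $\gamma$, their uniqueness being immediate from $s$ being a local diffeomorphism.

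Next I would produce an invariant neighbourhood. Put $\tau_\gamma:=t\circ\sigma_\gamma:U_0\to M$, a diffeomorphism onto its image fixing $x$. Composing the local bisections $\sigma_\gamma,\sigma_{\gamma'}$ in the groupoid gives a local bisection through $\gamma\gamma'$ whose germ, by uniqueness, is that of $\sigma_{\gamma\gamma'}$; hence $\tau_\gamma\circ\tau_{\gamma'}=\tau_{\gamma\gamma'}$ as germs at $x$, so the $\tau_\gamma$ realise a germ action of the finite group $\B_x$. Choosing $V\subseteq U_0$ small enough that all (finitely many) relations $\tau_\gamma\circ\tau_{\gamma'}=\tau_{\gamma\gamma'}$ hold genuinely on $V$, and taking $U$ to be the connected component of $x$ in $\bigcap_{\gamma\in\B_x}\tau_\gamma^{-1}(V)$, produces a connected open $U\ni x$ on which $\gamma\cdot y:=\tau_\gamma(y)$ is an honest $\B_x$-action: invariance follows because $y\in U$ and $\gamma_0\in\B_x$ give $\tau_\gamma(\tau_{\gamma_0}(y))=\tau_{\gamma\gamma_0}(y)\in V$ for every $\gamma$, and the component of $x$ is preserved since each $\tau_\gamma$ fixes $x$. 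On such a $U$ the assignment $(\gamma,y)\mapsto\sigma_\gamma(y)$ is a well-defined morphism $\B_x\ltimes U\to\B\vert_U$.

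The key remaining point, which I expect to be the main obstacle, is to shrink $U$ further (staying invariant and connected) so that every arrow with source and target in $U$ already lies in $\bigsqcup_\gamma W_\gamma$; this is exactly where properness is used. I would argue by contradiction: were this to fail for every neighbourhood, I could pick arrows $g_n\notin\bigsqcup_\gamma W_\gamma$ with $s(g_n),t(g_n)\to x$, so that $(s,t)(g_n)\to(x,x)$ sits in a compact subset of $M\times M$; properness of $(s,t):\B\to M\times M$ then forces a subsequence $g_{n_k}\to g$ with $s(g)=t(g)=x$, i.e. $g\in\B_x\subset\bigsqcup_\gamma W_\gamma$, and since this set is open we get $g_{n_k}\in\bigsqcup_\gamma W_\gamma$ for large $k$, a contradiction. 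With $U$ so chosen, injectivity of $(\gamma,y)\mapsto\sigma_\gamma(y)$ follows from $s\circ\sigma_\gamma=\mathrm{id}$ together with the disjointness of the $W_\gamma$, and surjectivity onto $\B\vert_U$ from the containment $\B\vert_U\subset\bigsqcup_\gamma W_\gamma$. As every map involved is a restriction of the local diffeomorphisms $s$ and $t$, this bijective morphism $\B_x\ltimes U\to\B\vert_U$ is a diffeomorphism, hence an isomorphism of Lie groupoids. Source-properness of $\B\vert_U$ is then immediate, since under the isomorphism the source map becomes the projection $\B_x\times U\to U$ off the finite (compact) factor $\B_x$.
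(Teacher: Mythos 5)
Your proof is correct. The paper in fact gives no argument of its own for this lemma---it simply cites \cite[Proposition 5.30]{MoMr}---and what you have written is essentially the standard proof of that cited result: finiteness of $\B_x$ from properness plus \'etaleness, source-sections through each isotropy arrow assembled into a germ of a $\B_x$-action at $x$, and properness of $(s,t)$ (your sequential-compactness argument, which is legitimate since $\B$ and $M$ are manifolds, hence metrizable) to force every arrow over a sufficiently small invariant connected neighbourhood into $\bigsqcup_\gamma W_\gamma$, from which bijectivity and the isomorphism with the action groupoid follow.
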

\begin{proof} See \cite[Proposition 5.30]{MoMr}.
\end{proof}
\begin{lemma}\label{lemma:indtorbunacttoric}
Let $(\G,\Omega)$ be regular and proper symplectic groupoid and let $J:(S,\omega)\to M$ be a faithful multiplicity-free $(\G,\Omega)$-space. If the associated orbifold groupoid $\B=\G/\sT$ is etale, then $J:(S,\omega)\to M$ is a faithful {toric} $(\sT,\Omega_\sT)$-space with respect to the induced $\sT$-action. 
\end{lemma}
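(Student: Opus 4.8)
The plan is to exploit the hypothesis that $\B=\G/\sT$ is etale in order to reduce the whole statement to a symplectic torus bundle equipped with a finite symmetry, and then to import condition (iii) of Definition~\ref{toractdefi} through a purely local argument. The first observation is that if $\B$ is etale then its orbits, which coincide with those of $\G$, are discrete, so the foliation $\F$ is the foliation of $M$ by points. Hence $\No^*\F=T^*M$, the lattice $\Lambda$ of Subsection~\ref{iaorbsec} is a full-rank Lagrangian lattice in $T^*M$, and the pre-symplectic form $\Omega_\sT$ of Remark~\ref{presymptorbunrem} becomes non-degenerate; thus $(\sT,\Omega_\sT)\cong(\sT_\Lambda,\Omega_\Lambda)$ is a genuine symplectic torus bundle over the integral affine manifold $(M,\Lambda)$, so that the notion of a faithful toric $(\sT,\Omega_\sT)$-space is meaningful. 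The induced $\sT$-action on $S$ is the restriction of the $\G$-action and is Hamiltonian with the same momentum map $J$. It then remains to check the conditions (i), (ii$'$) and (iii$'$) characterizing a faithful toric space. Conditions (i) and (ii$'$) are verbatim conditions (i) and (ii) of Definition~\ref{toractdefi}, so they hold by hypothesis; in particular, since the $\sT$-orbits are exactly the fibres of $J$, the induced map $\bar J\colon S/\sT\to M$ is injective.

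The substance of the lemma is condition (iii$'$), that $\bar J$ is a topological embedding. As $\bar J$ is continuous and injective, it is enough to show that $\bar J^{-1}$ is continuous at each $\bar p\in S/\sT$. Writing $x:=\bar J(\bar p)$, I would use Lemma~\ref{proetgpoidlinlem} to choose a connected open $U\ni x$ with $\B\vert_U\cong\B_x\ltimes U$ and $\B_x$ finite, so that $\underline U:=U/\B_x$ is naturally an open subset of $\underline M$. Restricting over $U$ and setting $S_U:=J^{-1}(U)$ and $W:=S_U/\sT=\bar J^{-1}(U)$, the residual $\B$-action presents $\bar J$ over $U$ as a continuous injective $\B_x$-equivariant map $f\colon W\to U$, whose induced map on $\B_x$-orbit spaces is identified with the restriction of $\underline J$ to $\underline U$. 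Since $\underline J$ is a topological embedding by condition (iii) of Definition~\ref{toractdefi}, and $\underline U\hookrightarrow\underline M$ is an open embedding, the induced map $\bar f\colon W/\B_x\to U/\B_x$ is a topological embedding as well.

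The crux is then a purely topological statement about a finite group $F:=\B_x$ acting on the spaces $W$ and $U$ (which are second countable, locally compact Hausdorff, hence metrizable): \emph{if $f\colon W\to U$ is continuous, injective and $F$-equivariant and the induced map $\bar f\colon W/F\to U/F$ is a topological embedding, then $f$ is a topological embedding.} I expect this to be the main obstacle. I would argue as follows. First, injectivity together with equivariance forces the isotropy groups to match, $F_w=F_{f(w)}$ for every $w\in W$. Let $\pi_W\colon W\to W/F$ and $\pi_U\colon U\to U/F$ be the quotient maps; each is closed (for closed $A$ one has $\pi_W^{-1}(\pi_W(A))=\bigcup_{g\in F}gA$, a finite union of closed sets) with finite fibres, and a closed map with compact fibres is proper. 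Given $w\in W$ and a sequence with $f(w_k)\to f(w)$, applying $\pi_U$ gives $\bar f(\pi_W(w_k))\to\bar f(\pi_W(w))$, and since $\bar f$ is an embedding, $\pi_W(w_k)\to\pi_W(w)$. The preimage under the proper map $\pi_W$ of the compact set $\{\pi_W(w_k)\}_k\cup\{\pi_W(w)\}$ is compact and contains every $w_k$, so a subsequence converges, $w_{k_j}\to w^*$, with $\pi_W(w^*)=\pi_W(w)$, i.e. $w^*=g\cdot w$ for some $g\in F$. Applying $f$ yields $g\cdot f(w)=f(w^*)=\lim_j f(w_{k_j})=f(w)$, so $g\in F_{f(w)}=F_w$ and hence $w^*=w$. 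As every subsequence of $(w_k)$ admits a further subsequence converging to $w$, we conclude $w_k\to w$.

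This proves continuity of $f^{-1}$, hence of $\bar J^{-1}$ at $\bar p$; since $\bar p$ was arbitrary, $\bar J$ is a topological embedding, which verifies (iii$'$) and completes the proof that $J\colon(S,\omega)\to M$ is a faithful toric $(\sT,\Omega_\sT)$-space. I note that the isotropy-matching $F_w=F_{f(w)}$ used above is automatic from injectivity, but it is consistent with (and could alternatively be read off from) the isomorphism of component groups in Proposition~\ref{conedescrpsympnormrepprop}$b$; no dimension count is needed, since the characterization (i), (ii$'$), (iii$'$) of a faithful toric space does not refer to $\dim(S)$.
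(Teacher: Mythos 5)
Your proof is correct. It shares the paper's overall architecture — both reduce the lemma to showing that $\underline{J}_\sT:S/\sT\to M$ is a topological embedding (conditions (i) and (ii') being verbatim from Definition \ref{toractdefi}), and both localize around a point of the momentum image via Lemma \ref{proetgpoidlinlem} — but the key step is executed differently. The paper never passes to the intermediate quotient $S/\sT$: it notes that $\B\vert_{U_M}\cong \B_x\ltimes U_M$ is source-proper, hence so are $\G\vert_{U_M}$ and the action groupoid of the restricted action, so that $q_S:S\to\underline{S}$ is proper on $J^{-1}(U)$; combined with condition (iii) of Definition \ref{toractdefi} this makes $J:J^{-1}(U)\to U$ proper, hence closed (citing that a continuous proper map into a first-countable Hausdorff space is closed), and closedness of the continuous injection $\underline{J}_\sT$ into its image then gives the embedding. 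You instead divide out $\sT$ first and isolate a clean, self-contained topological lemma: a continuous injective $F$-equivariant map ($F$ finite) that induces a topological embedding on $F$-quotients is itself an embedding, which you prove by a sequential compactness argument in which properness enters only through the finite quotient maps $\pi_W,\pi_U$. Your route trades the paper's groupoid-properness bookkeeping for metrizability considerations and yields a reusable purely topological statement; the paper's route is shorter given the properness facts it invokes. One point you should make explicit: Hausdorffness (hence metrizability) of $W=J^{-1}(U)/\sT$, and the identification of $W/\B_x$ and $U/\B_x$ with open subsets of $\underline{S}$ and $\underline{M}$ carrying the correct topologies, silently use properness of the $\sT$-action (which holds because the fibres of $\sT$ are compact) and openness of the quotient maps; these are routine verifications, but your argument leans on them at several places.
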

\begin{proof} Note that if $\B$ is etale, then $(\sT,\Omega_\sT)$ is symplectic. To see that the induced Hamiltonian $(\sT,\Omega_\sT)$-action is faithful toric, the only thing to show is that the map $\underline{J}_{\sT}:S/{\sT}\to M$ is a topological embedding. This map clearly being a continuous injection, it remains to show that $\underline{J}_{\sT}:S/{\sT}\to \Delta$ is closed, or equivalently, that $J:S\to \Delta$ is closed. For this, it suffices to show that every $x\in \Delta$ admits an open neighbourhood $U$ in $\Delta$ such that $J:J^{-1}(U)\to U$ is closed. Let $x\in \Delta$, take $U_M$ to be an open in $M$ around $x$ as Lemma \ref{proetgpoidlinlem} and let $U=U_M\cap \Delta$. Then $\B\vert_{U_M}$ is source-proper, hence so is $\G\vert_{U_M}$ and so is the action groupoid of the restriction of the $\G$-action to $U_M$. Therefore, the quotient map $q_S:S\to \underline{S}$ restricts to a proper map from $J^{-1}(U)$ onto its image in $\underline{S}$. Combined with the fact that $\underline{J}:\underline{S}\to \underline{M}$ is a topological embedding, it follows that $J:J^{-1}(U)\to U$ is proper. Because any continuous proper map into a first-countable and Hausdorff space is closed (see e.g. \cite{Pa}), we conclude that $J^{-1}(U)\to U$ is indeed closed. 
\end{proof}
We are now ready to complete the proof. 
\begin{proof}[End of the proof of Theorem \ref{momimtoricthm} and Proposition \ref{conedescrpsympnormrepprop}] First suppose that the orbifold groupoid $\B=\G/\sT$ is etale, so that the integral affine structure $\Lambda$ induced by $(\G,\Omega)$ is an honest integral affine structure on $M$ and 
(by Lemma \ref{lemma:indtorbunacttoric}) $J:(S,\omega)\to M$ is a {faithful} toric $(\sT,\Omega_\sT)$-space with respect to the induced action. Let $p\in S$, $x:=J(p)$ and fix an integral affine isomorphism $\iota$ from an open $U$ in $(M,\Lambda)$ into an open $V$ in $(T_xM,\Lambda^*_x)$ that maps $x$ to the origin and the derivative of which at $x$ is the identity map on $T_xM$. This induces an isomorphism of symplectic torus bundles:
\begin{equation}\label{eqn:trivializationsymptorbun} (\sT,\Omega_{\sT})\vert_U\xrightarrow{(\ref{exptorbun2})}(\sT_\Lambda,\Omega_\Lambda)\vert_U \xrightarrow{\iota_*} (T\ltimes \t^*,-\d\lambda_\textrm{can})\vert_V, \quad T:=(\sT_\Lambda)_x.
\end{equation} Consider the Hamiltonian $(T\ltimes \t^*,-\d\lambda_\textrm{can})\vert_V$-action along $\iota\circ J:(J^{-1}(U),\omega)\to V$ corresponding to the restriction to $U$ of the Hamiltonian $(\sT,\Omega)$-action along $J$. The data of such a Hamiltonian action is the same as that of a Hamiltonian $T$-action with momentum map: 
\begin{equation}\label{eqn:hamTsploc:momimpf} \iota\circ J:(J^{-1}(U),\omega)\to T_xM=\t^*.
\end{equation} Since $J$ is a faithful multiplicity-free Hamiltonian $(\G,\Omega)$-space, (\ref{eqn:hamTsploc:momimpf}) is a Hamiltonian $T$-space as in Proposition \ref{prop:properties-non-compact-symp-tor-man}. Further notice that $T_p=\G_p\cap \sT_x$ and that the symplectic normal representation at $p$ of the Hamiltonian $T$-space (\ref{eqn:hamTsploc:momimpf}) coincides with the symplectic $T_p$-representation induced by the symplectic normal representation $\G_p\to \textrm{Sp}(\S\No_p,\omega_p)$ of the $(\G,\Omega)$-space $J$. So, it follows from parts $a$ and $b$ of Proposition \ref{prop:properties-non-compact-symp-tor-man} that the symplectic normal representation at $p$ of the $(\G,\Omega)$-space $J$ is maximally toric as well and that $\G_p\cap \sT_x$ is connected. The latter means that $\G_p\cap \sT_x$ is the identity component of $\G_p$, or in other words, (\ref{fundgpisogps}) is injective. For surjectivity, note that given $g\in \G_x$, both $p$ and $g^{-1}\cdot p$ belong to the same fiber of $J$ over $x$. So, since the $J$-fibers coincide with the $\sT$-orbits, there is a $t\in \sT_x$ such that $g^{-1}\cdot p=t\cdot p$. Then $gt\in \G_p$, and $[gt]\in \varGamma_{\G_p}$ is send to $[g]\in \varGamma_{\G_x}$. This proves parts $a$ and $b$ of Proposition \ref{conedescrpsympnormrepprop}. From part $c$ of Proposition \ref{prop:properties-non-compact-symp-tor-man} it follows that the germ of $\iota(U\cap \Delta)$ at the origin in $T_xM$ is that of the polyhedral cone $\pi_{\g_p}^{-1}(\Delta_{J_{\S\No_p}})$, where $\Delta:=J(S)$. This polyhedral cone in $(T_xM,\Lambda_x^*)$ smooth, since the polyhedral cone $\Delta_{J_{\S\No_p}}$ is smooth in $(\g_p^*,\Lambda_{T_p})$ and $\g_p^0\cap \Lambda_x$ is a full-rank lattice in $\g_p^0$ (because $\g_p$ is the Lie algebra of a subtorus of $\sT_x$). So, Theorem \ref{momimtoricthm} and Proposition \ref{conedescrpsympnormrepprop} hold when $\B=\G/\sT$ is etale. \\      

When $\B$ is not etale, we reduce to the etale case as follows. As before, let $p\in S$ and $x:=J(p)$. Choose a transversal $\Sigma$ for $\B$ through $x$. Then $\Sigma$ is a Poisson transversal in $(M,\pi)$, where $\pi$ is the Poisson structure on $M$ induced by $(\G,\Omega)$. Therefore, any Poisson map from a symplectic manifold into $(M,\pi)$ is transversal to $\Sigma$ and the pre-image of $\Sigma$ under such a map is a symplectic submanifold of its domain. So, $J^{-1}(\Sigma)$ is a symplectic submanifold of $(S,\omega)$ and $(\G,\Omega)\vert_\Sigma$ is a symplectic subgroupoid of $(\G,\Omega)$. The $(\G,\Omega)$-action along $J$ restricts to a Hamiltonian $(\G,\Omega)\vert_{\Sigma}$-action along: 
\begin{equation*} J_\Sigma:(J^{-1}(\Sigma),\omega\vert_{J^{-1}(\Sigma)})\to \Sigma.
\end{equation*} This is again a faithful multiplicity-free Hamiltonian action and its symplectic normal representation at $p$ is canonically isomorphic to that of the $(\G,\Omega)$-action along $J$. The symplectic groupoids $(\G,\Omega)$ and $(\G,\Omega)\vert_\Sigma$ are Morita equivalent via a bibundle like that in Example \ref{transversalmoreqex} and the induced the integral affine Morita equivalence (Example \ref{sympmoreqiamoreq}) is that in Example \ref{transversalmoreqex}. In view of Corollary \ref{delzsubspmorinvcor}, since the orbifold groupoid $\B\vert_\Sigma$ associated to $\G\vert_\Sigma$ is etale, this reduces the proof of Theorem \ref{momimtoricthm} and Proposition \ref{conedescrpsympnormrepprop} to the etale case. 
\end{proof}
\newpage
\section{Classification in the case of symplectic torus bundle actions}
In this part we complete the proof of Theorem \ref{classtorsymptorbunthm} (the classification in the case of symplectic torus bundle actions), for which it remains to prove part $b$. One reason for treating this case separately is that the classification simplifies, whilst the most of the main ideas of the classification are already present. The orbifold structure plays no role in this case (it is trivial) and the three structure theorems boil down to one and the same theorem, that we prove in Section \ref{sec:structurethm:torbunversion}. In Section \ref{consttorspoutofdelzsec} we complete the proof of Theorem \ref{classtorsymptorbunthm} by showing that for any Delzant subspace of an integral affine manifold, there is a canonical faithful toric space with that Delzant subspace as the image of its momentum map. This construction (and its naturality, in particular) will be key as well in the proofs of Theorem \ref{firststrthm} (the first structure theorem) and Theorem \ref{globalsplitthm} (the splitting theorem) in the next part. 

\subsection{The sheaf of Lagrangian sections and the structure theorem}\label{sec:structurethm:torbunversion}
\subsubsection{The sheaves of automorphisms and Lagrangian sections}\label{subsec:shfautomandlagrsec} In this subsection we explain and prove the theorem below, which will be key for the proof of Theorem \ref{classtorsymptorbunthm}$b$. 
\begin{thm}\label{isoautsymplagseccor:torbunversion} Let $(\sT,\Omega)$ be a symplectic torus bundle and let $J:(S,\omega)\to M$ be a faithful {toric} $(\sT,\Omega)$-space with associated Delzant subspace $\Delta:=J(S)$. There is an isomorphism of sheaves on $\Delta$ \textemdash induced by the map (\ref{torsectautiso:torbunversion}) below \textemdash between the sheaf $\textrm{Aut}_\sT(J,\omega)$ of automorphisms of the $(\sT,\Omega)$-space $J:(S,\omega)\to M$ and the sheaf $\L$ of Lagrangian sections of $\sT\vert_\Delta$ (as in Definition \ref{shfinvlagrsecdefi:torbunversion} below). 
\end{thm}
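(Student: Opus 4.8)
The plan is to show that the natural map (\ref{torsectautiso:torbunversion}), which to a Lagrangian section $\tau$ of $\sT$ over an open $U\subseteq \Delta$ assigns the fiberwise translation
\begin{equation*} \Phi_\tau\colon J^{-1}(U)\to J^{-1}(U),\qquad \Phi_\tau(p)=\tau(J(p))\cdot p,
\end{equation*}
is an isomorphism of sheaves of abelian groups. First I would check that $\Phi_\tau$ is a genuine automorphism of the $(\sT,\Omega)$-space. It manifestly covers the identity and satisfies $J\circ\Phi_\tau=J$, and it is $\sT$-equivariant because the fibers of $\sT$ are abelian, so that $\Phi_\tau(g\cdot p)=\tau(J(p))\cdot g\cdot p=g\cdot\Phi_\tau(p)$. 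The only substantial point here is that $\Phi_\tau$ is a symplectomorphism precisely when $\tau$ is Lagrangian. Writing the action map as $a\colon\sT\times_M S\to S$ and using that the action is Hamiltonian, so that $a^*\omega=\mathrm{pr}_S^*\omega+\mathrm{pr}_\sT^*\Omega_\sT$, and factoring $\Phi_\tau=a\circ(\tau\circ J,\mathrm{id})$, I obtain
\begin{equation*} \Phi_\tau^*\omega=\omega+J^*(\tau^*\Omega_\sT).
\end{equation*}
Since $J$ restricts to a submersion over the locus of $\Delta$ where the $\sT$-action is free, which is dense, the vanishing of $J^*(\tau^*\Omega_\sT)$ forces $\tau^*\Omega_\sT=0$ there and hence everywhere by continuity. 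As a Lagrangian section is exactly one with $\tau^*\Omega_\sT=0$, this shows $\Phi_\tau^*\omega=\omega$ if and only if $\tau$ is Lagrangian.

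Next I would verify that $\tau\mapsto\Phi_\tau$ is a morphism of sheaves of abelian groups. Compatibility with restriction is built into the definition, and the identity $\Phi_\tau\circ\Phi_{\tau'}=\Phi_{\tau+\tau'}$ (using $J\circ\Phi_{\tau'}=J$ and fiberwise addition in $\sT$) shows it intertwines composition of automorphisms with the additive group structure of $\L$; in particular $\Phi_{-\tau}$ inverts $\Phi_\tau$, so the image indeed lands in automorphisms. Injectivity on sections is immediate from density of the free locus: if $\Phi_\tau=\Phi_{\tau'}$, then $\bigl(\tau(x)^{-1}\tau'(x)\bigr)\cdot p=p$ for all $p\in J^{-1}(x)$, which forces $\tau=\tau'$ over the free locus and hence on all of $U$ by continuity.

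The substance of the argument is surjectivity on sections, and this is where I expect the main obstacle to lie. Given an automorphism $\Phi$ over $J^{-1}(U)$, for each $x$ in the free locus I would define $\tau(x)\in\sT_x$ as the unique element with $\Phi(p)=\tau(x)\cdot p$ for $p\in J^{-1}(x)$; equivariance together with abelianness of the fibers makes this independent of the chosen $p$, and reversing the computation above shows the resulting $\tau$ is a smooth Lagrangian section on the free locus. The difficulty is to extend $\tau$ smoothly across the corner and lower strata of $\Delta$, where the action is no longer free and $\tau(x)$ is only determined modulo the (positive-dimensional) stabilizer. The plan is to resolve this locally via the Marle--Guillemin--Sternberg normal form recalled in Proposition \ref{prop:properties-non-compact-symp-tor-man}: near a point of such a stratum the toric space is isomorphic to an explicit product model built from a maximally toric symplectic representation and a trivial Lagrangian fibration, in which every fiberwise automorphism is visibly a translation by a smooth Lagrangian section. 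Comparing $\Phi$ with this model identifies the continuous extension of $\tau$ with an honest smooth Lagrangian section near the stratum, completing the construction of a preimage.

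Together these steps exhibit $\tau\mapsto\Phi_\tau$ as a bijective morphism of sheaves of abelian groups on $\Delta$, hence an isomorphism of sheaves, which is precisely the assertion of the theorem.
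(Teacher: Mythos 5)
Your overall route coincides with the paper's: the computation $\Phi_\tau^*\omega=\omega+J^*(\tau^*\Omega_\sT)$ together with density of the free locus (this is Proposition \ref{lagrseccharprop:torbunversion}), injectivity by the same density argument, and the reduction of surjectivity to a local normal form. The gap is in the final, and only substantial, step. After invoking the normal form you assert that in the model $\T^{n-k}\times B^{n-k}\times B^k\subset \T^{n-k}\times\R^{n-k}\times\C^k$ "every fiberwise automorphism is visibly a translation by a smooth Lagrangian section". This is not visible; it is exactly the statement being proved, and it is where all the work lies. A $\T^n$-equivariant, $J$-preserving diffeomorphism $\psi$ determines, on the free locus only, rotation angles $g_j$ and translation components $f_j$ as functions of the image point; nothing elementary guarantees that these extend even continuously, let alone smoothly in the manifold-with-corners sense of Remark \ref{smseccornextrem}, across the locus where some $z_j=0$. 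Already in the simplest model ($n=k=1$, $S=B^2\subset\C$, $J(z)=|z|^2$) one has $\psi(z)=g(|z|^2)\,z$ for $z\neq 0$, and one must rule out behaviour like $g(s)=e^{i/s}$; this requires using the smoothness of $\psi$ at the origin in a genuinely nontrivial way. Your phrase "the continuous extension of $\tau$" presupposes existence of an extension that has not been established.

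The paper closes this gap with an adaptation of Delzant's argument (\cite[Lemma 2.6]{De}): restricting $\psi$ to the real locus $B^{n-k}\times(B^k\cap\textrm{Re}(\C^k))$, the components $\phi_j(1,x,u)$ and $\psi_j(1,x,u)$ are respectively even and odd in the $u$-variables, so Whitney's theorem \cite{Wh} (a special case of Schwarz's theorem for finite groups) produces functions $f_j,g_j$ of $(u_1^2,\dots,u_k^2,x)$ that extend smoothly to a neighbourhood of $\Delta$ in $\R^n$ and satisfy $\phi_j(1,x,u)=f_j(u_1^2,\dots,u_k^2,x)$ and $\psi_j(1,x,u)=u_j\,g_j(u_1^2,\dots,u_k^2,x)$; one then checks that $f_j$ and $g_j$ take values in $\mathbb{S}^1$, reassembles $\tau=(g,f)$, and uses equivariance to recover $\psi$ from $\tau$ on all of the model, not just the real locus. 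Some appeal to a result of this kind (or an equivalent smooth division/extension argument for equivariant maps) is unavoidable, and your proposal does not supply it, so as it stands the surjectivity \textemdash\ the heart of the theorem \textemdash\ is not proved.
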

The sheaf of Lagrangian sections referred to here and in Theorem \ref{classtorsymptorbunthm} is defined as follows. {
\begin{defi}\label{shfinvlagrsecdefi:torbunversion} Let $(\sT,\Omega)\to M$ be a symplectic torus bundle with induced integral affine structure $\Lambda$ on $M$ and let $\Delta$ a Delzant subspace of $(M,\Lambda)$. We denote by:
\begin{itemize}
\item $\mathcal{C}^\infty_\Delta(\sT)$ the sheaf on $\Delta$ of smooth sections of the bundle $\sT\vert_\Delta \to \Delta$ (where smooth is meant as maps between manifolds with corners),
\item $\L:=\L_\Delta$ the subsheaf of $\mathcal{C}^\infty_\Delta(\sT)$ consisting of those sections ${\tau}$ that are Lagrangian, in the sense that ${\tau}^*\Omega=0$. 
\end{itemize} 
\end{defi}
\begin{rem}\label{smseccornextrem} 
Given a submersion $f:M\to N$ between manifolds without corners and an embedded submanifold with corners $\Delta$ in $N$ (see Definition \ref{embsubmanwithcorndefi}), a local section $\sigma:U\to f^{-1}(\Delta)$ of $f$, defined on an open $U$ in $\Delta$, is smooth as map of manifolds with corners if and only if for every $x\in U$ there is a smooth local \textit{section} $U_x\to M$ of $f$, defined on an open $U_x$ in $N$ around $x$, that coincides with $\sigma$ on $U\cap U_x$ (also see Remark \ref{charembofmanwithcornrem}). This can be taken as a working definition. 
\end{rem}}
\begin{rem} The sheaf $\L$ is a sheaf of \textit{abelian} groups (with group structure induced by that on the fibers of $\sT$). So, automorphisms of a faithful {toric} $(\sT,\Omega)$-space commute. 
\end{rem}
To explain the definition of the isomorphism in Theorem \ref{isoautsymplagseccor:torbunversion}, consider the sheaf of groups $\textrm{Aut}_\sT(J)$ on $\Delta$ that assigns to an open $U$ in $\Delta$ the group: 
\begin{equation}\label{gpofsTequivdiffeos:torbunversion} \textrm{Aut}_{\sT}(J)(U)
\end{equation} consisting of $\sT$-equivariant diffeomorphisms:
\begin{center}
\begin{tikzcd} J^{-1}(U) \arrow[rr,"\cong"] \arrow[dr, "J"'] & & J^{-1}(U) \arrow[dl,"J"] \\
 & U & 
\end{tikzcd}
\end{center}
The sheaf $\textrm{Aut}_{\sT}(J,\omega)$ in Theorem \ref{isoautsymplagseccor:torbunversion} is the subsheaf of $\textrm{Aut}_{\sT}(J)$ that assigns to such an open the subgroup of $\sT$-equivariant \textit{symplectomorphisms}. To relate these to the sheaves in Definition \ref{shfinvlagrsecdefi:torbunversion}, consider the map of sheaves (of groups) on $\Delta$:
\begin{equation}\label{torsectautiso:torbunversion} \mathcal{C}^\infty_\Delta(\sT)\to \textrm{Aut}_\sT(J), \quad {\tau} \mapsto \psi_{{\tau}},
\end{equation} where, for ${\tau}\in \mathcal{C}^\infty_\Delta(\sT)(U)$, we define:
\begin{equation*} \psi_{{\tau}}:J^{-1}(U)\to J^{-1}(U), \quad \psi_{{\tau}}(p)={\tau}(J(p))\cdot p. 
\end{equation*} Notice that (because the $\sT$-action is free on a dense subset):
\begin{prop} The map of sheaves (\ref{torsectautiso:torbunversion}) is injective.
\end{prop}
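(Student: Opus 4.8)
The plan is to use that $(\ref{torsectautiso:torbunversion})$ is a homomorphism of sheaves of groups, so that injectivity is equivalent to triviality of its kernel. Concretely, it suffices to show: for every open $U$ in $\Delta$ and every $\tau\in\mathcal{C}^\infty_\Delta(\sT)(U)$ with $\psi_{\tau}=\textrm{id}_{J^{-1}(U)}$, one has $\tau(x)=1_x$ for all $x\in U$, where $1_x$ denotes the unit of the torus $\sT_x$. (The homomorphism property is a quick check: $J$ is constant along the $\sT$-action, so $\psi_{\tau_1}\circ\psi_{\tau_2}=\psi_{\tau_1\tau_2}$.)

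First I would unwind the hypothesis $\psi_{\tau}=\textrm{id}$ to the pointwise statement $\tau(J(p))\cdot p=p$ for all $p\in J^{-1}(U)$; that is, for each $x\in U$ the element $\tau(x)$ stabilizes every point of the fiber $J^{-1}(x)$. Here I would invoke the faithful-toric conditions: the fibers of $J$ coincide with the $\sT$-orbits (condition ii), and each fiber group $\sT_x$ is abelian, so all points of a single orbit share one stabilizer. It follows that the free locus $S_{\textrm{free}}$ on which the $\sT$-action is free is saturated, namely $S_{\textrm{free}}=J^{-1}(D)$ where $D:=J(S_{\textrm{free}})$ is the set of $x\in\Delta$ over which this common stabilizer is trivial. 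For such $x\in D\cap U$ the relation above immediately forces $\tau(x)=1_x$.

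It then remains to promote this from $D\cap U$ to all of $U$, and this is where I expect the only genuine subtlety to lie: transferring density from the total space $S$ down to the base $\Delta$. The claim is that $D$ is dense in $\Delta$. I would argue by contradiction: if some nonempty open $W\subset\Delta$ met $D$ trivially, then $J^{-1}(W)$ would be a nonempty open subset of $S$ (since $J$ is continuous onto $\Delta$) disjoint from $S_{\textrm{free}}=J^{-1}(D)$, contradicting the density of $S_{\textrm{free}}$ guaranteed by condition i). Hence $D\cap U$ is dense in $U$, and on it $\tau$ agrees with the unit section.

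Finally, since $\tau$ and the unit section $1$ are both continuous sections of the Hausdorff bundle $\sT\vert_\Delta\to\Delta$, their equalizer $\{x\in U:\tau(x)=1_x\}$ is closed in $U$; being simultaneously dense and closed, it is all of $U$, so $\tau=1$. This shows the kernel of $(\ref{torsectautiso:torbunversion})$ is trivial and hence that the map is injective. Apart from the saturation-and-density step flagged above, every ingredient is a routine Hausdorff-bundle continuity argument.
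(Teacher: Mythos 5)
Your proof is correct and follows the paper's own (implicit) argument: the paper states this proposition as an immediate consequence of condition i), that the $\sT$-action is free on a dense subset of $S$, and your write-up is exactly that argument carried out in full \textemdash{} evaluating $\psi_{\tau}$ at points of the free locus, transferring density from $S$ down to $\Delta$ via $J$, and concluding by continuity of sections into the Hausdorff bundle $\sT\vert_\Delta$. You have simply supplied the details (saturation of the free locus by orbits and the density-transfer step) that the paper leaves unstated.
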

Furthermore, we have:
\begin{prop}\label{lagrseccharprop:torbunversion} A section ${\tau}\in \mathcal{C}^\infty_\Delta(\sT)(U)$ is Lagrangian if and only if $\psi_{{\tau}}$ is a symplectomorphism.
\end{prop}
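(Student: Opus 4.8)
The plan is to factor $\psi_{\tau}$ through the action map and then exploit the multiplicativity of the symplectic form $\Omega$. Write the $\sT$-action as $m\colon \sT\times_M S\to S$, $(t,p)\mapsto t\cdot p$, defined on the fibre product $\sT\times_M S=\{(t,p): \mathrm{pr}_\sT(t)=J(p)\}$, where $\mathrm{pr}_\sT\colon\sT\to M$ is the bundle projection. For ${\tau}\in\mathcal{C}^\infty_\Delta(\sT)(U)$ consider
\[ \Phi_{\tau}\colon J^{-1}(U)\to \sT\times_M S,\qquad p\mapsto ({\tau}(J(p)),p), \]
which indeed lands in the fibre product because ${\tau}$ is a section of $\sT\to M$; by definition $\psi_{\tau}=m\circ\Phi_{\tau}$. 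The defining multiplicativity property of a Hamiltonian $(\sT,\Omega)$-action (see \cite{WeMi}) reads
\[ m^*\omega=\mathrm{pr}_S^*\omega+\mathrm{pr}_\sT^*\Omega \qquad\text{on }\sT\times_M S. \]
Since $\mathrm{pr}_S\circ\Phi_{\tau}=\mathrm{id}_{J^{-1}(U)}$ and $\mathrm{pr}_\sT\circ\Phi_{\tau}={\tau}\circ J$, pulling this identity back along $\Phi_{\tau}$ and using functoriality of pullback gives
\[ \psi_{\tau}^*\omega=\omega+J^*({\tau}^*\Omega). \]
Consequently $\psi_{\tau}$ is a symplectomorphism if and only if $J^*({\tau}^*\Omega)=0$, and it remains only to prove that $J^*$ is injective on $2$-forms over $U$, so that this is equivalent to ${\tau}^*\Omega=0$.

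For injectivity I would argue by reduction to the dense locus where $J$ is a genuine submersion. Let $S_{\mathrm{free}}\subset S$ be the open dense subset on which $\sT$ acts freely (the freeness condition i) of a faithful toric space; cf. Definition \ref{toractdefi}). There, by the momentum map condition the tangent space to a $\sT$-orbit equals $\omega^\sharp(\mathrm{im}(\d J_p)^*)$, so the orbit has dimension $\rk(\d J_p)$; since the orbits coincide with the fibres of $J$ and have dimension $\dim M=\dim \sT_x$, it follows that $\d J_p$ has rank $\dim M$, i.e. $J|_{S_{\mathrm{free}}}$ is a submersion onto an open subset $W\subseteq M$. Moreover $W$ is dense in $\Delta$: if some nonempty open $V\subseteq\Delta$ missed $W$ then $J^{-1}(V)$ would be a nonempty open subset of $S$ contained in the nowhere dense complement $S\setminus S_{\mathrm{free}}$, which is impossible. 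Over the dense open subset $W\cap U$ of $U$ the map $J$ is a surjective submersion, hence $J^*$ is injective there, so $J^*({\tau}^*\Omega)=0$ forces ${\tau}^*\Omega|_{W\cap U}=0$; as ${\tau}^*\Omega$ is continuous and $W\cap U$ is dense in $U$, we conclude ${\tau}^*\Omega=0$ on all of $U$. The converse implication is immediate from the displayed formula.

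The computation itself is routine once the multiplicativity relation is in hand; the only genuine subtlety is the last step, where $U$ is a neighbourhood in the manifold with corners $\Delta$ and $J$ fails to be a submersion along the boundary strata. I expect the main obstacle to be making the density argument clean \textemdash{} in particular verifying that the free locus maps onto a dense subset of $\Delta$ and that ${\tau}^*\Omega$, a priori only a smooth form on the corner manifold $U$, is determined by its restriction to the interior locus $W\cap U$ by continuity.
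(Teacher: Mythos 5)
Your proof is correct and follows essentially the same route as the paper: the identity $\psi_{\tau}^*\omega=\omega+J^*(\tau^*\Omega)$ obtained by pulling back the Hamiltonian (multiplicativity) condition along $(\tau\circ J,\mathrm{id}_S)$, followed by the reduction to the dense free locus where $J$ is a submersion and a density/continuity argument. The only difference is that you spell out details the paper leaves implicit \textemdash{} the rank computation showing $J$ is submersive at free points and the verification that the image of the free locus is dense in $\Delta$ \textemdash{} both of which are correct.
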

\begin{proof} Using that the $(\sT,\Omega)$-action is Hamiltonian, we deduce:
\begin{align*} (\psi_{{\tau}})^*\omega&=({\tau}\circ J,\textrm{id}_{S})^*(m_S)^*\omega\\
&=({\tau}\circ J,\textrm{id}_{S})^*\left((\textrm{pr}_S)^*\omega+(\textrm{pr}_\sT)^*\Omega\right)\\
&=\omega+J^*({\tau}^*\Omega).
\end{align*} Therefore, $\psi_{{\tau}}$ is a symplectomorphism if and only if $J^*({\tau}^*\Omega)=0$. If the $\sT$-action is free at a point $p\in J^{-1}(U)$, then $J$ is a submersion at $p$, so that $J^*({\tau}^*\Omega)_p=0$ if and only if $({\tau}^*\Omega)_{J(p)}=0$. As the set of points where $\sT$ acts freely is dense in $S$, it follows from continuity that $J^*({\tau}^*\Omega)=0$ if and only if ${\tau}^*\Omega=0$. This proves the proposition.
\end{proof} 
In view of these propositions, the map (\ref{torsectautiso:torbunversion}) induces injective maps of sheaves:
\begin{align}\label{torsectautisoinv:torbunversion} \mathcal{C}^\infty_\Delta(\sT)&\to \textrm{Aut}_{\sT}(J), \\
\label{torsectautisoinvsymp:torbunversion} \mathcal{\L}_{\Delta}&\to \textrm{Aut}_{\sT}(J,\omega).
\end{align} Theorem \ref{isoautsymplagseccor:torbunversion} says that (\ref{torsectautisoinvsymp:torbunversion}) is an isomorphism. To prove this, by Proposition \ref{lagrseccharprop:torbunversion} it suffices to show: 
\begin{thm}\label{torsectautisoprop:torbunversion} The map (\ref{torsectautisoinv:torbunversion}) is an isomorphism of sheaves. 
\end{thm}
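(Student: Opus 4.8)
The plan is to prove surjectivity of the sheaf map (\ref{torsectautisoinv:torbunversion}), injectivity already being established; since both sides are sheaves, it suffices to check surjectivity on stalks. So I fix $x\in\Delta$ and a germ at $x$ of a $\sT$-equivariant diffeomorphism $\psi$ of $J^{-1}(U)$ over the identity, for some open $U\ni x$ in $\Delta$. Using the trivialization (\ref{eqn:trivializationsymptorbun}), I may assume $(\sT,\Omega)$ is the cotangent groupoid $(T\ltimes\t^*,-\d\lambda_\textrm{can})$ of a torus $T$ and that $J$ is a toric $T$-space over an open in $\t^*$ whose quotient $S/T$ embeds into $\t^*$, exactly as in Proposition \ref{prop:properties-non-compact-symp-tor-man}. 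Under this reduction a section of $\sT\vert_\Delta$ is simply a smooth $T$-valued map on an open of $\Delta$ (smooth in the manifold-with-corners sense of Remark \ref{smseccornextrem}), and $\psi_{{\tau}}$ becomes $p\mapsto {\tau}(J(p))\cdot p$.

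Next I would put the action into normal form near the compact fibre $J^{-1}(x)=T\cdot p\cong T/T_x$. By Proposition \ref{conedescrpsympnormrepprop} the isotropy $T_x$ is a connected subtorus, say of dimension $k$, and the symplectic normal representation $(\S\No_p,\omega_p)$ is maximally toric, hence $T_x$-equivariantly isomorphic to $\C^k$ with weights forming a basis of the character lattice of $T_x$. Choosing a splitting $T=T_x\times T''$ and applying the Marle--Guillemin--Sternberg normal form, I obtain a $T$-equivariant symplectomorphism from a $T$-invariant neighbourhood of the fibre onto the standard local model $\C^k\times(T''\times\t_x^0)$, on which $T_x$ acts by the standard toric action on $\C^k$ (trivially on the rest) and $T''$ acts by translation on its own factor (trivially on $\C^k$), with momentum map $J=((|z_1|^2,\dots,|z_k|^2),\mu)$ up to an affine identification and with $\Delta$ locally the cone $\R^k_{\geq 0}\times\t_x^0$. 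After shrinking $U$ I transport $\psi$ to this model.

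On the model I read off the structure of $\psi$ from equivariance and the condition that it covers the identity. Writing coordinates $(z,w,\mu)$, $T''$-equivariance forces the $\C^k$-component $\psi^{\C}$ to be independent of $w$ and the $T''$-component to be of the form $w+\beta(z,\mu)$, while $T_x$-equivariance makes each $\psi^{\C}_j$ transform with weight $e_j$ and makes $\beta$ invariant; covering the identity gives $|\psi^{\C}_j|=|z_j|$. The heart of the argument is then the following computation, which I regard as the main obstacle, since it is exactly where smoothness up to the non-free locus (the facets and corners of $\Delta$) must be controlled. The functions $u_j:=\psi^{\C}_j\,\bar z_j$ are smooth and $T_x$-invariant, so by G.~Schwarz's theorem on smooth invariants $u_j=U_j(|z_1|^2,\dots,|z_k|^2,\mu)$ for smooth $U_j$; the identity $|U_j|=r_j$ shows that $U_j$ vanishes on the facet $\{r_j=0\}$ of $\Delta$, whence the manifold-with-corners version of Hadamard's lemma yields $U_j=r_j V_j$ with $V_j$ smooth and $|V_j|\equiv 1$. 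Likewise $\beta=B(|z_1|^2,\dots,|z_k|^2,\mu)$ with $B$ smooth and $T''$-valued. Setting ${\tau}:=(V_1,\dots,V_k,B)\colon U\to T_x\times T''=T$ gives a smooth section, and a direct check (including at points where some $z_j=0$) shows $\psi=\psi_{{\tau}}$; transporting back proves surjectivity of the stalk map at $x$.

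Combining surjectivity on stalks with the known injectivity proves Theorem \ref{torsectautisoprop:torbunversion}. Finally, Theorem \ref{isoautsymplagseccor:torbunversion} follows at once: by Proposition \ref{lagrseccharprop:torbunversion} the isomorphism (\ref{torsectautisoinv:torbunversion}) restricts to the claimed isomorphism (\ref{torsectautisoinvsymp:torbunversion}) between the subsheaf $\L$ of Lagrangian sections and the subsheaf $\textrm{Aut}_\sT(J,\omega)$ of equivariant symplectomorphisms.
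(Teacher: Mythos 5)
Your proof is correct, and its skeleton coincides with the paper's: stalkwise surjectivity, reduction via the trivialization (\ref{eqn:trivializationsymptorbun}) to the cotangent groupoid of a torus, and reduction to the standard local model via the normal form theorem of \cite[Lemma B.3]{KaLe} together with the fact that $\sT$-invariant opens in $S$ are of the form $J^{-1}(U)$. Where you genuinely diverge is at the key analytic step inside the model. The paper, following the argument of \cite[Lemma 2.6]{De}, restricts the components of $\psi$ to the real points $u\in B^k\cap\textrm{Re}(\C^k)$, exploits parity (oddness of $\psi_j$ in $u_j$, evenness of $\phi_j$ and in the remaining variables), and invokes Whitney's theorem on smooth even functions \textemdash{} Schwarz's theorem for the finite group $\Z_2^k$ \textemdash{} to produce $f_j,g_j$; equivariance then recovers the formula at complex $z$, and a final normalization turns the $\C$-valued smooth extension into a $\T^n$-valued one. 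You instead form the $T_x$-invariant functions $u_j=\psi^{\C}_j\bar z_j$, apply Schwarz's theorem for the full torus action on $\C^k$ (whose invariant polynomials are generated exactly by $|z_1|^2,\dots,|z_k|^2$), and then divide out $r_j$ by a Hadamard factorization at the corner. Both routes work: the paper's needs only the more elementary even-function theorem and no division step; yours avoids the restriction-to-real-points trick and the parity bookkeeping, at the price of the stronger Schwarz theorem plus the boundary-smoothness argument in the Hadamard step. On that step, one refinement: the smooth extension of $U_j$ to an open in $\R^n$ need not vanish on $\{r_j=0\}$ away from $\Delta$, so the factorization should be read off from the identity $U_j(r,\mu)-U_j(r_1,\dots,0,\dots,r_k,\mu)=r_j\int_0^1\partial_{r_j}U_j(r_1,\dots,tr_j,\dots,r_k,\mu)\,\d t$ on a product neighbourhood, the first term vanishing on $\Delta$; this defines $V_j$ smoothly near $\Delta$, and $|V_j|\equiv 1$ on $\Delta$ then follows by density from $\{r_j>0\}$. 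Your remaining glossed points (Schwarz on the saturated open $J^{-1}(U)$, which is a preimage of an open under the Hilbert map; $T''$-valuedness of $B$; smoothness of $\tau$ into $T$ obtained by normalizing the $\C$-valued extension) all go through exactly as in the paper's final normalization.
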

\begin{proof}[Proof of Theorem \ref{torsectautisoprop:torbunversion} (and hence of Theorem \ref{isoautsymplagseccor:torbunversion})] It remains to show that (\ref{torsectautisoinv:torbunversion}) is stalk-wise surjective. Let $x\in \Delta$ and let $\psi\in \textrm{Aut}_{\sT}(J)(U)$ for some open $U$ around $x$ in $\Delta$. We have to show that there is a smooth section ${\tau}$ of $\sT\vert_\Delta$, defined on an open neighbourhood of $x$ in $\Delta$, such that the germ of $\psi_{{\tau}}$ at $x$ coincides with that of $\psi$. The proof of this reduces to the case in which $M=\t^*$, $x$ is the origin and $(\sT,\Omega)$ is the symplectic torus bundle $(T\times \t^*,-\d \lambda_\textrm{can})$ associated to a torus $T$, by using an isomorphism (\ref{eqn:trivializationsymptorbun}) as in the proof of Theorem \ref{momimtoricthm} and Proposition \ref{conedescrpsympnormrepprop}. In that case, the data of a faithful complexity $(\sT,\Omega)$-space $J:(S,\omega)\to \t^*$ is the same as that of a (possibly non-compact and non-connected) symplectic toric $T$-manifold $(S,\omega)$ with momentum map $J$ that descends to a topological embedding $\underline{J}:S/T\to \t^*$. By appealing to the standard normal form theorem for such Hamiltonian $T$-spaces (see e.g. \cite[Lemma B.3]{KaLe}) and using that any $T$-invariant open in $S$ is the pre-image under $J$ of an open in $\t^*$ ($\underline{J}$ being a topological embedding), the proof further reduces to the subcase in which:
\begin{itemize}\item $T=\T^n$ (the standard torus), 
\item $S$ is an open in $\T^{n-k}\times \R^{n-k}\times \C^k$ of the form  $\T^{n-k}\times B^{n-k}\times B^k$, where $B^{n-k}\subset \R^{n-k}$ and $B^k\subset \C^k$ are open balls of the same radius and centered around the respective origins, and $\T^{n-k}\times \R^{n-k}\times \C^k$ is equipped with the standard symplectic form,
\item the $T$-action on $S$ is given by:
\begin{equation*} (\lambda_1,...,\lambda_n)\cdot (t_1,...,t_{n-k},x_1,...,x_{n-k},z_1,...,z_k)=(\lambda_{k+1}t_1,...,\lambda_{n}t_{n-k},x_1,...,x_{n-k},\lambda_1z_1,...,\lambda_kz_k),
\end{equation*} 
\item $J$ is the restriction of the map: 
\begin{align*} \T^{n-k}\times \R^{n-k}\times \C^k&\to \R^n,\\
 (t_1,...,t_{n-k}, x_1,....,x_{n-k},z_1,...,z_k)&\mapsto (|z_1|^2,...,|z_k|^2,x_1,...,x_{n-k}), 
\end{align*} 
\item $\psi$ is a $\T^n$-equivariant self-diffeomorphism of $\T^{n-k}\times B^{n-k}\times B^k$ such that $J\circ \psi=J$. 
\end{itemize} To conclude the proof we will show that there is map ${\tau}:\Delta\to \T^n$, defined on the image $\Delta$ of $\T^{n-k}\times B^{n-k}\times B^k$ under $J$, such that $\psi$ is given by:
\begin{equation}\label{firstpropsectoconstr} \psi(t,x,z)={\tau}(J(t,x,z))\cdot (t,x,z),\quad (t,x,z)\in \T^{n-k}\times B^{n-k}\times B^k, 
\end{equation} and such that ${\tau}$ extends to a smooth map from an open in $\R^n$ into $\T^n$. For this we use a variation of the argument used to prove \cite[Lemma 2.6]{De}. Let $\phi_j$ and $\psi_j$ denote the $j^\textrm{th}$ component of $\psi$ in $\T^{n-k}$ and $\C^k$, respectively. Using equivariance of $\psi$ and the fact that $\psi$ preserves $J$, we find:
\begin{equation}\label{delzlemeq1} \psi(t,x,z)=(t_1\cdot\phi_1(1,x,z),...,t_{n-k}\cdot\phi_{n-k}(1,x,z),x,\psi_1(1,x,z),...,\psi_k(1,x,z)). 
\end{equation}
Using equivariance of $\psi$ once more, it further follows that for all $(x,z)\in B^{n-k}\times B^k$ and $\lambda\in \T^k$:
\begin{align*} &\phi_j(1,x,\lambda\cdot z)=\phi_j(1,x,z),\\
&\psi_j(1,x,z)=\lambda_j\psi_j(1,x,z).
\end{align*} In particular, $(x,u)\mapsto \psi_j(1,x,u)$ restricts to a smooth function on $B^{n-k}\times (B^k\cap \textrm{Re}(\C^k))$, which is odd in the $u_j$ variable and even in the other $u$-variables, while $(x,u)\mapsto \phi_j(1,x,u)$ restricts to a smooth function on this domain as well, but is even in all $u$-variables. Therefore, a theorem due to Whitney \cite{Wh} (a particular case of Schwarz' theorem for finite groups \cite{Bi1,Schw1}) implies that there are continuous functions:
\begin{equation*} f_j,g_j:\Delta\to \C
\end{equation*} that satisfy:
\begin{align*} &\phi_j(1,x,u)=f_j(u_1^2,..., u_k^2,x)\\
& \psi_j(1,x,u)=u_jg_j(u_1^2,..., u_k^2,x),
\end{align*} for all $(x,u)\in B^{n-k}\times (B^k\cap \textrm{Re}(\C^k))$, and extend to smooth functions on some open neighbourhood of $\Delta$ in $\R^n$. The fact that $\psi$ preserves $J$ implies that:
\begin{equation*} u_j^2|g_j(u_1^2,...,u_k^2,x)|^2=|\psi_j(1,x,u)|^2=u_j^2
\end{equation*} for all $(x,u)\in B^{n-k}\times (B^k\cap \textrm{Re}(\C^k))$. Therefore $g_j$ takes values in $\mathbb{S}^1$ on the interior of $\Delta$ in $\R^n$, hence it must do so on all of $\Delta$ by a density argument. Note that $f_j$ does so as well, since $\phi_j$ does. Finally, observe that for $(x,z)\in B^{n-k}\times B^k$, writing $z_j=e^{i\theta_j}|z_j|$ one finds:
\begin{align}\label{delzlemeq2} \psi_j(1,x,z)&=e^{i\theta_j}\psi_j(1,x,|z_1|,...,|z_k|)\\ \nonumber
&=z_jg_j(|z_1|^2,...,|z_k|^2,x), \nonumber
\end{align} by equivariance of $\psi$. Similarly:
\begin{equation}\label{delzlemeq3} \phi_j(1,x,z)=f_j(|z_1|^2,...,|z_k|^2,x).
\end{equation} Now define:
\begin{equation*} f:\Delta\to \T^{n-k} \quad \& \quad g:\Delta\to \T^k
\end{equation*} to have $j^\textrm{th}$ component $f_j$ and $g_j$, respectively, and consider:
\begin{equation*} {\tau}:=(g,f):\Delta\to \T^{k}\times \T^{n-k}.
\end{equation*} Combining (\ref{delzlemeq1}), (\ref{delzlemeq2}) and (\ref{delzlemeq3}) we find that (\ref{firstpropsectoconstr}) holds. 
Moreover, by construction of $f$ and $g$, ${\tau}$ extends to a smooth map into $\C^n$ on an open neighbourhood of $\Delta$ in $\R^n$. Since none of its components vanish on a small enough such neighbourhood, by normalizing them we can find such an extension that maps smoothly into $\T^{n}$. So, ${\tau}$ has the desired properties. 
\end{proof}

\subsubsection{The structure theorem} For symplectic torus bundle actions the three structure theorems boil down to the statement below, which we prove in this subsection.
\begin{thm}\label{torsortorspthm:torbunversion}
Let $(\sT,\Omega)$ be a symplectic torus bundle over $M$ with induced integral affine structure $\Lambda$ and let $\Delta$ be a Delzant subspace of $(M,\Lambda)$. There is a natural free and transitive action of the group $H^1(\Delta,\L)$ (as in Theorem \ref{classtorsymptorbunthm}) on the set of isomorphism classes of faithful {toric} $(\sT,\Omega)$-spaces with momentum image $\Delta$.
\end{thm}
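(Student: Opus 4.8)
The plan is to run the standard torsor argument that classifies objects locally isomorphic to a fixed model in terms of the degree one cohomology of their automorphism sheaf, exactly as in the \v{C}ech-theoretic classification of principal bundles. The two inputs are Theorem \ref{isoautsymplagseccor:torbunversion}, which identifies the sheaf $\textrm{Aut}_\sT(J,\omega)$ of automorphisms of a faithful toric space with the \emph{abelian} sheaf $\L$ of Lagrangian sections, and a local uniqueness statement: any two faithful toric $(\sT,\Omega)$-spaces with momentum image $\Delta$ are isomorphic over a neighbourhood of each point of $\Delta$. Since $\Delta$ is paracompact (being an embedded submanifold with corners of $M$), $H^1(\Delta,\L)$ is computed by \v{C}ech cohomology, and I represent a class $\kappa$ by a cocycle $(\tau_{ij})$, with $\tau_{ij}\in \L(U_i\cap U_j)$ subordinate to an open cover $\{U_i\}$ of $\Delta$ and satisfying $\tau_{ij}+\tau_{jk}=\tau_{ik}$.

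To define the action, given a faithful toric space $J:(S,\omega)\to M$ with $J(S)=\Delta$, I twist it by $(\tau_{ij})$: I form
\begin{equation*} S^\tau := \Big(\bigsqcup_i J^{-1}(U_i)\Big)\big/\!\sim, \end{equation*}
gluing $p\in J^{-1}(U_i\cap U_j)$ in the $i$-th piece to $\psi_{\tau_{ij}}(p)$ in the $j$-th piece, where $\psi_{\tau_{ij}}$ is the automorphism attached to $\tau_{ij}$ under (\ref{torsectautisoinvsymp:torbunversion}). The cocycle condition makes $\sim$ an equivalence relation and the transition maps $\sT$-equivariant symplectomorphisms commuting with $J$, so $\omega$, the $\sT$-action and $J$ descend to $S^\tau$, which is again faithful toric with momentum image $\Delta$ (the defining conditions being local over $\Delta$, hence inherited from the local identifications with $S$). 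I then check that $[S^\tau]$ depends only on $\kappa$ and $[S]$: replacing $(\tau_{ij})$ by a cohomologous cocycle $\tau_{ij}+\sigma_j-\sigma_i$ yields an isomorphic space via the local maps $\psi_{\sigma_i}$, passing to a common refinement gives independence of the cover, and functoriality of $\textrm{Aut}_\sT(J,\omega)\cong\L$ under isomorphisms of toric spaces shows the construction descends to isomorphism classes. Additivity $\kappa\cdot(\kappa'\cdot[S])=(\kappa+\kappa')\cdot[S]$ follows by composing transition functions, and naturality with respect to isomorphisms is again this functoriality.

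Transitivity and freeness both come from the automorphism identification together with local uniqueness. For transitivity, given $J_1:S_1\to M$ and $J_2:S_2\to M$ with the same image $\Delta$, local uniqueness yields a cover $\{U_i\}$ and isomorphisms $\Phi_i:J_1^{-1}(U_i)\xrightarrow{\sim}J_2^{-1}(U_i)$ of toric spaces; the overlaps $\Phi_i^{-1}\circ\Phi_j$ are automorphisms of $J_1$ over $U_i\cap U_j$, hence equal $\psi_{\tau_{ij}}$ for unique $\tau_{ij}\in\L(U_i\cap U_j)$ by Theorem \ref{isoautsymplagseccor:torbunversion}, and these form a \v{C}ech cocycle whose class $\kappa$ satisfies $\kappa\cdot[S_1]=[S_2]$ by construction. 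For freeness, if $\kappa\cdot[S]=[S]$ then an isomorphism $S^\tau\cong S$ restricts over the $U_i$ to automorphisms $\psi_{\sigma_i}$ exhibiting $(\tau_{ij})$ as a coboundary, so $\kappa=0$.

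The main obstacle is the local uniqueness statement. I would obtain it exactly as in the proof of Theorem \ref{torsectautisoprop:torbunversion}: after trivializing $(\sT,\Omega)$ as $(T\times\t^*,-\d\lambda_\textrm{can})$ near $x$ via (\ref{eqn:trivializationsymptorbun}), a faithful toric space becomes a (possibly non-compact, non-connected) symplectic toric $T$-manifold whose momentum image has germ the smooth cone $C_x(\Delta)$, and the Marle--Guillemin--Sternberg normal form (see \cite[Lemma B.3]{KaLe}) shows this germ determines the space up to $T$-equivariant symplectomorphism over a neighbourhood of $x$; since $\underline{J}$ is a topological embedding, every invariant neighbourhood is the preimage of an open in $\Delta$, so the resulting local isomorphisms are genuinely defined over opens of $\Delta$. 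The remaining work is the routine but careful \v{C}ech bookkeeping (consistency of the gluing, independence of cover and representative, additivity), entirely parallel to the classification of principal bundles.
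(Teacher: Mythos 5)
Your proposal is correct and takes essentially the same approach as the paper: the paper's proof is exactly this \v{C}ech torsor argument, with the twisting construction, freeness and transitivity all resting on Theorem \ref{isoautsymplagseccor:torbunversion} together with the local uniqueness statement, which appears there as Proposition \ref{locclasstoricthm:torbunversion}. Your route to local uniqueness (trivialize the torus bundle via (\ref{eqn:trivializationsymptorbun}), reduce to Hamiltonian torus actions, invoke the Marle--Guillemin--Sternberg-type normal form, and use the topological embedding property of $\underline{J}$ to get isomorphisms over opens of $\Delta$) is also how the paper proceeds, via Theorem \ref{loceqthmhamact} after matching isotropy groups and symplectic normal representations, data which it extracts from the pointed momentum cone.
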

To prove Theorem \ref{torsortorspthm:torbunversion}, we will first show the following.
\begin{prop}\label{locclasstoricthm:torbunversion}
Let $(\sT,\Omega)$ be a symplectic torus bundle and suppose that we are given faithful {toric} $(\sT,\Omega)$-spaces $J_1:(S_1,\omega_1)\to M$ and $J_2:(S_2,\omega_2)\to M$. Let $x\in J_1(S_1)\cap J_2(S_2)$. Then there is an open neighbourhood $U$ of $x$ in $M$ and a $\sT$-equivariant symplectomorphism: 
\begin{center}
\begin{tikzcd} (J_1^{-1}(U),\omega_1) \arrow[rr,"\cong"] \arrow[dr, "J_1"'] & & (J_2^{-1}(U),\omega_2) \arrow[dl,"J_2"] \\
 & M & 
\end{tikzcd}
\end{center} if and only if the germs at $x$ in $M$ of $J_1(S_1)$ and $J_2(S_2)$ are equal.
\end{prop}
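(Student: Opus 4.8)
The plan is to prove the forward (``only if'') implication by a one-line diagram chase and to reduce the converse to the local normal form for symplectic toric manifolds, exactly as in the proof of Theorem \ref{momimtoricthm} and Proposition \ref{conedescrpsympnormrepprop}. Suppose first that we are given a $\sT$-equivariant symplectomorphism $\Phi:(J_1^{-1}(U),\omega_1)\to (J_2^{-1}(U),\omega_2)$ with $J_2\circ\Phi=J_1$. Then $J_1(S_1)\cap U=J_1(J_1^{-1}(U))=J_2(\Phi(J_1^{-1}(U)))=J_2(S_2)\cap U$, so the germs of $J_1(S_1)$ and $J_2(S_2)$ at $x$ coincide. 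This disposes of the ``only if'' direction.

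For the converse, assume the two germs at $x$ agree. I would fix an integral affine chart around $x$ and use the induced trivialization (\ref{eqn:trivializationsymptorbun}) to reduce to the case $M=\t^*$, $x=0$ and $(\sT,\Omega)=(T\times\t^*,-\d\lambda_\textrm{can})$ with $T:=(\sT_\Lambda)_x$. Under this reduction each $J_i:(S_i,\omega_i)\to\t^*$ becomes a (possibly non-compact, non-connected) symplectic toric $T$-manifold whose momentum map descends to a topological embedding $S_i/T\to\t^*$, so that Proposition \ref{prop:properties-non-compact-symp-tor-man} applies to it. Fix $p_i\in J_i^{-1}(0)$; since the fibers of $J_i$ coincide with the $\sT$-orbits, $J_i^{-1}(0)=T\cdot p_i$ is a single orbit, along which the isotropy $T_{p_i}$ is constant (as $T$ is abelian).

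Next I would observe that, by parts a, b and c of Proposition \ref{prop:properties-non-compact-symp-tor-man} (equivalently Proposition \ref{conedescrpsympnormrepprop}), the pair consisting of the isotropy subtorus $T_{p_i}$ and the symplectic slice representation $(\S\No_{p_i},\omega_{p_i})$ is completely determined by the cone $C_0$ of the germ of $J_i(S_i)$ at $0$. Indeed, the relation $C_0=\pi_{\t_{p_i}^*}^{-1}(\Delta_{J_{\S\No_{p_i}}})$ can be inverted: the maximal linear subspace contained in $C_0$ recovers $T_{p_i}$ (using pointedness of the slice cone $\Delta_{J_{\S\No_{p_i}}}$), and then $\Delta_{J_{\S\No_{p_i}}}=\pi_{\t_{p_i}^*}(C_0)$ determines the maximally toric representation $(\S\No_{p_i},\omega_{p_i})$ up to isomorphism via Proposition \ref{classtorrep}. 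Since the two germs agree, the cones $C_0$ agree, whence $T_{p_1}=T_{p_2}$ as subtori of $T$ and $(\S\No_{p_1},\omega_{p_1})\cong(\S\No_{p_2},\omega_{p_2})$ as symplectic $T_{p_1}$-representations. Now I invoke the Marle--Guillemin--Sternberg normal form for symplectic toric manifolds, in the form of \cite[Lemma B.3]{KaLe}: near $J_i^{-1}(0)$ the space $(S_i,\omega_i)$ is $T$-equivariantly symplectomorphic, compatibly with momentum maps, to a fixed local model depending only on $(T_{p_i},\S\No_{p_i})$ and the basepoint $0$. As these data coincide, composing the two identifications yields a $T$-equivariant symplectomorphism between neighbourhoods of $J_1^{-1}(0)$ and $J_2^{-1}(0)$ intertwining $J_1$ and $J_2$; since each $\underline{J}_i$ is a topological embedding, every $T$-invariant open is of the form $J_i^{-1}(U)$, so after shrinking to a common $U$ and transporting back through (\ref{eqn:trivializationsymptorbun}) one obtains the desired $\sT$-equivariant symplectomorphism over $U$.

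I expect the main obstacle to be the bookkeeping in this last step: making precise that the normal-form model depends only on $(T_{p_i},\S\No_{p_i})$ and the value $0$, so that the two models literally agree and the two normal-form symplectomorphisms compose while still intertwining the momentum maps, and arranging that the resulting neighbourhoods are $J$-saturated of the shape $J_i^{-1}(U)$ for one common $U$. The topological-embedding hypothesis on $\underline{J}_i$ is precisely what makes the saturation automatic, and the uniqueness statements in Proposition \ref{prop:properties-non-compact-symp-tor-man} and Proposition \ref{classtorrep} are what force the two models to coincide.
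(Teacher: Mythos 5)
Your proof is correct and follows essentially the same route as the paper's: the core step \textemdash recovering the isotropy subtorus and the symplectic normal representation from the germ of the momentum image via pointedness of the slice cone and Proposition \ref{classtorrep}, then invoking a local normal form and using the topological-embedding property of $\underline{J}_i$ to saturate to a common $J_i^{-1}(U)$ \textemdash is exactly the paper's argument. The only cosmetic difference is that the paper applies the local equivalence theorem (Theorem \ref{loceqthmhamact}) directly at the torus-bundle level, whereas you first trivialize via (\ref{eqn:trivializationsymptorbun}) and cite the classical Marle--Guillemin--Sternberg normal form; the paper itself remarks that this reduction is precisely how Theorem \ref{loceqthmhamact} is proved for symplectic torus bundles.
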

For this we use the theorem below, which is a generalization of a well-known theorem due to Marle \cite{Ma1} and Guillemin and Sternberg \cite{GS4} for Hamiltonian actions of compact Lie groups. 
\begin{thm}[\cite{Mol1}]\label{loceqthmhamact} Let $(\G,\Omega)\rightrightarrows M$ be a proper symplectic groupoid. Suppose we are given two Hamiltonian $(\G,\Omega)$-spaces $J_1:(S_1,\omega_1)\to M$ and $J_2:(S_2,\omega_2)\to M$. Let $p_1\in S_1$ and $p_2\in S_2$ be such that $J_1(p_1)=J_2(p_2)$, $\G_{p_1}=\G_{p_2}$ and $(\S\No_{p_1},\omega_{p_1})\cong (\S\No_{p_2},\omega_{p_2})$ are isomorphic symplectic representations. Then there are $\G$-invariant open neighbourhoods $U_i$ of $p_i$ in $S_i$ and an equivariant symplectomorphism that sends $p_1$ to $p_2$:
\begin{center}
\begin{tikzcd} (U_1,\omega_1,p_1) \arrow[rr,"\sim"] \arrow[dr, "J_1"'] & & (U_2,\omega_2,p_2) \arrow[dl,"J_2"] \\
 & M & 
\end{tikzcd}
\end{center} 
\end{thm}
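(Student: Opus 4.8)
The plan is to establish a local normal form showing that the germ of a Hamiltonian $(\G,\Omega)$-space along the orbit through $p$ depends only on the triple $(\G_x,\G_p,(\S\No_p,\omega_p))$ consisting of the isotropy group of $\G$ at $x=J(p)$, the stabilizer of the action at $p$, and the symplectic normal representation at $p$. Granting such a normal form, the theorem follows at once: by hypothesis $J_1(p_1)=J_2(p_2)=x$, $\G_{p_1}=\G_{p_2}$ and $(\S\No_{p_1},\omega_{p_1})\cong(\S\No_{p_2},\omega_{p_2})$, so the two germs have the same model and are therefore equivariantly symplectomorphic by a map sending $p_1$ to $p_2$.

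To set up the normal form I would first reduce to the isotropy group, exactly as in the reduction to the etale case in the proof of Theorem \ref{momimtoricthm}. Choose a Poisson transversal $\Sigma$ through $x$ of complementary dimension to the symplectic leaf. Then $J_i^{-1}(\Sigma)$ is a symplectic submanifold of $(S_i,\omega_i)$, the restriction $(\G,\Omega)\vert_\Sigma$ is again a proper symplectic groupoid, and $J_i\colon J_i^{-1}(\Sigma)\to\Sigma$ is a Hamiltonian $(\G,\Omega)\vert_\Sigma$-space with the same stabilizer $\G_{p_i}$ and the same symplectic normal representation at $p_i$ (the symplectic normal directions being transverse to the leaf, hence tangent to $J_i^{-1}(\Sigma)$). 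Now the $\G\vert_\Sigma$-orbit of $x$ is the single point $\{x\}$, and its isotropy group $K:=\G_x$ is compact since $\G$ is proper. By the symplectic linearization theorem for proper symplectic groupoids near a fixed point, $(\G,\Omega)\vert_\Sigma$ is, near $x$, isomorphic as a symplectic groupoid to the cotangent groupoid $(K\ltimes\g_x^*,\Omega_{\textrm{can}})$ restricted to a neighbourhood of the origin, under the identification $\g_x\cong\No_x^*\F$ of (\ref{imsymp}). Via this isomorphism the germ of $J_i$ along $\Sigma$ becomes the germ of a genuine Hamiltonian $K$-space near a point of stabilizer $\G_{p_i}\le K$ and symplectic normal representation $(\S\No_{p_i},\omega_{p_i})$.

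At this stage I would invoke the classical Marle--Guillemin--Sternberg normal form for the compact group $K$: near its orbit, such a Hamiltonian $K$-space is equivariantly symplectomorphic, intertwining momentum maps into $\g_x^*$, to the model $K\times_{H}(\g_p^0\oplus\S\No_p)$ built from $H:=\G_p$ and the annihilator $\g_p^0\subset\g_x^*$, with its canonical symplectic form and momentum map. Crucially this model depends only on $(K,H,(\S\No_p,\omega_p))$. Since these data agree for $p_1$ and $p_2$, the two models coincide, producing a $K$-equivariant symplectomorphism between the two germs on $\Sigma$ that sends $p_1$ to $p_2$ and intertwines $J_1,J_2$. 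Transporting this back through the symplectic linearization, and then reconstructing a full $\G$-invariant neighbourhood in $S_i$ from its intersection with $J_i^{-1}(\Sigma)$, yields the asserted equivariant symplectomorphism $U_1\xrightarrow{\sim}U_2$ over $M$ carrying $p_1$ to $p_2$.

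The main obstacle is this final reconstruction step: one must verify that the germ of the Hamiltonian $(\G,\Omega)$-space along the whole orbit $\G\cdot p_i$ is determined by its restriction to the transversal together with the linearized groupoid structure near $x$, and that this determination is compatible with the momentum map. Concretely this is an equivariant coisotropic-embedding and Moser argument: after matching the orbits, the presymplectic forms they carry, and the symplectic normal bundles over them, the two symplectic forms agree to first order along $\G\cdot p_1$, and one interpolates them by a $\G$-invariant Moser isotopy. The required invariance is available because $K$ is compact, so averaging is possible; and the isotopy must be arranged to preserve the momentum map condition $\iota_{a(\alpha)}\omega=J^*\alpha$ for the induced Lie algebroid action $a$, so that it proceeds through isomorphisms of Hamiltonian $(\G,\Omega)$-spaces rather than through mere symplectomorphisms.
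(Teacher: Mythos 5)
The paper does not actually prove this statement: it is imported wholesale from \cite{Mol1}, and the only proof content in the paper itself is the remark directly below it, that for symplectic torus bundles one reduces to Hamiltonian torus actions via a trivialization as in (\ref{eqn:trivializationsymptorbun}). Measured against that, your architecture \textemdash{} restrict along a Poisson transversal $\Sigma$ through $x$, linearize the resulting proper symplectic groupoid at its (locally) fixed point $x$ to the cotangent model $(K\ltimes \g_x^*,\Omega_\textrm{can})$ with $K=\G_x$ using the linearization theorem (\cite{Zu,CrMar1,CrFeTo1,CrFeTo2}, cited in the paper in Subsection \ref{flatsecdefsec}), apply classical Marle--Guillemin--Sternberg uniqueness at zero momentum value (the model $K\times_H(\g_p^0\oplus \S\No_p)$ depending only on $(K,H,(\S\No_p,\omega_p))$), and transport back \textemdash{} is exactly the expected route, of which the paper's torus-bundle remark is the special case where the linearization step is just the trivialization. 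Two small repairs in the reduction: you should shrink $\Sigma$, using properness, so that $\O_x\cap\Sigma=\{x\}$ before asserting that $x$ is a fixed point of $\G\vert_\Sigma$; and the claim that the symplectic normal representation is unchanged under restriction to $J_i^{-1}(\Sigma)$ is the same assertion the paper makes without proof (in the regular case) in its proof of Theorem \ref{momimtoricthm}, so it deserves the one-line verification via $T_pJ_i^{-1}(\Sigma)=(\d J_i)_p^{-1}(T_x\Sigma)$.

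The one genuinely incomplete step is the final reconstruction, which you rightly single out, but your proposed fix is the wrong tool. As sketched, the equivariant coisotropic-embedding/Moser argument is shaky: the orbit $\G\cdot p_i$ in $S_i$ is closed but in general non-compact, so first-order agreement along it plus a Moser isotopy needs uniform control of the primitive and completeness of the flow near the orbit, and arranging the isotopy to preserve the momentum map condition is itself a normal-form-type statement \textemdash{} you would in effect be re-proving the theorem. No Moser argument is needed: for a Poisson transversal $\Sigma$, the pair $(s^{-1}(\Sigma),\Omega\vert_{s^{-1}(\Sigma)})$ is a symplectic Morita equivalence between $(\G,\Omega)\vert_{\widehat{\Sigma}}$ and $(\G,\Omega)\vert_\Sigma$ (the symplectic analogue of Example \ref{transversalmoreqex}, invoked in the paper's proof of Theorem \ref{momimtoricthm}), and by the induced equivalence of categories of Hamiltonian spaces \textemdash{} used in the paper as (\ref{eqn:moreq:indequivofcatshamsp}) \textemdash{} the Hamiltonian $(\G,\Omega)\vert_{\widehat{\Sigma}}$-space $J_i^{-1}(\widehat{\Sigma})$ is recovered functorially from $J_i^{-1}(\Sigma)$ as $s^{-1}(\Sigma)\ast_{\G\vert_\Sigma}J_i^{-1}(\Sigma)$. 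Hence your $K$-equivariant symplectomorphism $\psi$ of the restricted germs transports, over a shrunken transversal, to a $\G$-equivariant symplectomorphism between invariant neighbourhoods, given on points by $g\cdot q\mapsto g\cdot \psi(q)$; this is well defined by equivariance of $\psi$ and sends $p_1$ to $p_2$. With that replacement your proof is complete and coincides in substance with the one behind \cite{Mol1}.
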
 
{For actions of symplectic torus bundles, Theorem \ref{loceqthmhamact} can be proved by reducing to the case of Hamiltonian torus actions via a trivialization of the symplectic torus bundle as in (\ref{eqn:trivializationsymptorbun}).  
\begin{proof}[Proof of Proposition \ref{locclasstoricthm:torbunversion}] The implication from left to right is straightforward. For the other implication, let $x\in J_1(S_1)\cap J_2(S_2)$ such that the germs at $x$ of $J_1(S_1)$ and $J_2(S_2)$ coincide. Since $\underline{J}:\underline{S}\to M$ is a topological embedding, every invariant $\sT$-invariant open in $S$ is of the form $J^{-1}(U)$ where $U$ is some open in $M$. So, in view of Theorem \ref{loceqthmhamact}, it remains to show that there are $p_1\in J_1^{-1}(x)$ and $p_2\in J_2^{-1}(x)$ such that $\sT_{p_1}=\sT_{p_2}$ and the symplectic representations $(\S\No_{p_1},\omega_{p_1})$ and $(\S\No_{p_2},\omega_{p_2})$ are isomorphic. Fix any $p_1\in J_1^{-1}(x)$ and $p_2\in J_2^{-1}(x)$. Since the germs at $x$ of $J_1(S_1)$ and $J_2(S_2)$ coincide, Proposition \ref{conedescrpsympnormrepprop}$c$ implies that:
\begin{equation}\label{eqpolcon} (\pi_{\t^*_{p_1}})^{-1}(\Delta_{\S\No_{p_1}})=(\pi_{\t^*_{p_2}})^{-1}(\Delta_{\S\No_{p_2}}).
\end{equation} Since the symplectic normal representations at $p_1$ and $p_2$ are maximally toric (Proposition \ref{conedescrpsympnormrepprop}$a$), the polyhedral cones $\Delta_{\S\No_{p_1}}$ and $\Delta_{\S\No_{p_2}}$ are pointed. So, (\ref{eqpolcon}) implies that $\ker(\pi_{\t^*_{p_1}})=\ker(\pi_{\t^*_{p_2}})$. Seeing as $\ker(\pi_{\t^*_{p_i}})$ is the annihilator of $\t_{p_i}=\textrm{Lie}(\sT_{p_i})$ in $\t_x^*$, this means that $\t_{p_1}=\t_{p_2}$, and hence $\sT_{p_1}=\sT_{p_2}$. In view of $(\ref{eqpolcon})$, we further conclude that $\Delta_{\S\No_{p_1}}=\Delta_{\S\No_{p_2}}$. So, it follows from Proposition \ref{classtorrep} that the symplectic normal representations at $p_1$ and $p_2$ are isomorphic. 
\end{proof} }

\begin{proof}[Proof of Theorem \ref{torsortorspthm:torbunversion}] The construction of the action uses a \v{C}ech model for sheaf cohomology, in much the same way as in the usual classification of principal torus bundles. Since the natural group homomorphism from \v{C}ech into sheaf cohomology:
\begin{equation}\label{eqn:cechtoderivedmorphism}
\check{H}^1(\Delta,\L)\to H^1(\Delta,\L)
\end{equation} is an isomorphism, given $\textrm{c}\in H^1(\Delta,\L)$ we can consider a \v{C}ech $1$-cocycle ${\tau}\in \check{C}^1_\U(\Delta,\L)$ with respect to some open cover $\U$ that represents $\textrm{c}$ via (\ref{eqn:cechtoderivedmorphism}). Write $\psi_{UV}:=\psi_{{\tau}(U,V)}$ as in (\ref{torsectautiso:torbunversion}). Furthermore, let $J:(S,\omega)\to M$ be a faithful toric $(\sT,\Omega)$-space with momentum image $\Delta$. Then we can define another $(\sT,\Omega)$-space $J_{{\tau}}:(S_{{\tau}},\omega_{{\tau}})\to M$, as follows. As topological space, define:
\begin{equation*} S_{{\tau}}:=\frac{\left( \bigsqcup_{U\in \U} J^{-1}(U) \right)}{\sim_{{\tau}}}
\end{equation*} where $(p,U)\sim_{{\tau}} (q,V)$ if and only if $J(p)=J(q)$ and $p=\psi_{UV}(q)$. This indeed defines an equivalence relation, because ${\tau}$ is a $1$-cocycle. Furthermore, because $\psi_{UV}\in \textrm{Aut}_\sT(J,\omega)(U\cap V)$ for all $U,V\in \U$, there is a unique smooth structure on $S_{{\tau}}$, a unique symplectic structure $\omega_{{\tau}}$ on $S_{{\tau}}$ and a unique $\sT$-action along the canonical map $J_{{\tau}}:S_{{\tau}}\to M$ such that, for each $U\in \U$, the canonical inclusion:
\begin{center}
\begin{tikzcd} (J^{-1}(U),\omega) \arrow[rr,hook] \arrow[dr, "J"'] & & (S_{{\tau}},\omega_{{\tau}}) \arrow[dl,"J_{{\tau}}"] \\
 & M & 
\end{tikzcd}
\end{center} is a smooth, symplectic and $\sT$-equivariant embedding. Clearly, $J_{{\tau}}:(S_{{\tau}},\omega_{{{\tau}}})\to M$ is a faithful toric $(\sT,\Omega)$-space with momentum image equal to $\Delta$. Now, define:
\begin{equation*} \textrm{c}\cdot [J:(S,\omega)\to M]:=[J_{{\tau}}:(S_{{\tau}},\omega_{{\tau}})\to M].
\end{equation*} As one readily verifies, this does not depend on the choice of representative ${\tau}$ or cover $\U$, 
and defines an action of $H^1(\Delta,\L)$. To see that this action is free, suppose that:
\begin{equation*} [J_{{\tau}}:(S_{{\tau}},\omega_{{\tau}})\to M]=[J:(S,\omega)\to M],
\end{equation*} meaning that there is an isomorphism of $(\sT,\Omega)$-spaces:
\begin{center}
\begin{tikzcd} (S_{{\tau}},\omega_{{\tau}}) \arrow[rr,"\psi"] \arrow[dr, "J_{{\tau}}"'] & & (S,\omega) \arrow[dl,"J"] \\
 & M & 
\end{tikzcd}
\end{center} For each $U\in \U$, consider: 
\begin{equation*} \psi_U:=\psi\vert_{J^{-1}(U)}\in \textrm{Aut}_{\sT}(J,\omega)(U).
\end{equation*} Since $\psi$ is well-defined, for each $q\in J^{-1}(U\cap V)$ it must hold that $\psi_V(q)=\psi_U(\psi_{UV}(q))$. Hence:
\begin{equation*} \psi_{UV}=\psi_U^{-1}\circ \psi_V.
\end{equation*} Letting $\sigma(U):U\to \sT$ be the Lagrangian section such that $\psi_{\sigma(U)}=\psi_U$ (which exists by Theorem \ref{isoautsymplagseccor:torbunversion}), we find that: 
\begin{equation*} {\tau}(U,V)=\sigma(V)-\sigma(U),
\end{equation*} hence ${\tau}$ is a \v{C}ech $1$-coboundary. This shows that the action is indeed free. To verify transitivity, let $J_1:(S_1,\omega_1)\to M$ and $J_2:(S_2,\omega_2)\to M$ be faithful toric $(\sT,\Omega)$-spaces with momentum image $\Delta$. By Proposition \ref{locclasstoricthm:torbunversion} we can find an open cover $\U$ of $M$, with for each $U\in \U$ an isomorphism of $(\sT,\Omega)$-spaces:
\begin{center}
\begin{tikzcd} (J_1^{-1}(U),\omega_1) \arrow[rr,"\psi_U"] \arrow[dr, "J_1"'] & & (J_2^{-1}(U),\omega_2) \arrow[dl,"J_2"] \\
 & M & 
\end{tikzcd}
\end{center} Due to Corollary \ref{isoautsymplagseccor:torbunversion}, there are unique Lagrangian sections ${\tau}(U,V):U\cap V\to \sT$ satisfying $\psi_{{\tau}(U,V)}=\psi_U\circ \psi_V^{-1}$ on $J_2^{-1}(U\cap V)$. As is readily verified, ${\tau}$ defines a \v{C}ech $1$-cocycle and:
\begin{equation*} [J_1:(S_1,\omega_1)\to M]=\textrm{c}\cdot [J_2:(S_2,\omega_2)\to M],
\end{equation*} where $\textrm{c}\in H^1(\Delta,\L)$ is the class represented by ${\tau}$ via (\ref{eqn:cechtoderivedmorphism}). Hence, the action is indeed transitive.
\end{proof}

\subsection{Constructing a (natural) toric space out of a Delzant subspace}\label{consttorspoutofdelzsec} 
\subsubsection{The result of the construction}\label{constrtorictorbunspthmintrosec} The aim of this section is to prove and explain the construction behind the theorem below. As  explained in Remark \ref{canbijremintro}, the assignment of a toric space in this theorem and the action in Theorem \ref{torsortorspthm:torbunversion} yield the bijection in Theorem \ref{classtorsymptorbunthm}$b$. 
\begin{thm}\label{constrtorictorbunspthm} Let $(M,\Lambda)$ be an integral affine manifold. For each Delzant subspace $\Delta$ of $(M,\Lambda)$, there is a {canonically} associated faithful toric $(\sT_\Lambda,\Omega_\Lambda)$-space:
\begin{equation*} J_\Delta:(S_\Delta,\omega_\Delta)\to M
\end{equation*} with momentum map image $\Delta$. {This }depends naturally and locally on $(M,\Lambda,\Delta)$ with respect to locally defined isomorphisms, in the sense explained below.
\end{thm}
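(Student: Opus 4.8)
The plan is to produce $J_\Delta:(S_\Delta,\omega_\Delta)\to M$ by a canonical \emph{symplectic collapse} of the restriction $\sT_\Lambda|_\Delta$, so that canonicity and naturality are built in from the outset and no gluing choices are ever made. Recall that in the present situation $\F$ is the foliation by points, so $\sT_\Lambda=T^*M/\Lambda$ is $2n$-dimensional and $\Omega_\Lambda$ is an honest symplectic form on it (Remark \ref{presymptorbunrem}); since $\Delta$ is a codimension-zero submanifold with corners of $M$, the space $\sT_\Lambda|_\Delta$ is a symplectic manifold with corners, and the idea is to fill in its corners by collapsing the appropriate subtori over the boundary.

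First I would define the collapsing data. For $x\in\Delta$ the cone $C_x(\Delta)\subset T_xM$ (Definition \ref{delzsubmandefi}) has lineality space $L_x:=C_x(\Delta)\cap(-C_x(\Delta))$, and I set $K_x\subset(\sT_\Lambda)_x$ to be the subtorus with Lie algebra the annihilator $L_x^{0}\subset T^*_xM$ (spanned by the primitive inward conormals of the facets of $\Delta$ active at $x$; these are part of a lattice basis by smoothness of the cone, so $K_x$ is indeed a subtorus, of dimension the codimension of the stratum through $x$). I would then define
\begin{equation*} S_\Delta:=\bigl(\sT_\Lambda|_\Delta\bigr)/\!\sim,\qquad u\sim u'\iff J_{\sT_\Lambda}(u)=J_{\sT_\Lambda}(u')=:x\ \text{ and }\ u'-u\in K_x, \end{equation*}
with $J_\Delta$ and the fiberwise $\sT_\Lambda$-action the descents of $J_{\sT_\Lambda}$ and of fiberwise translation. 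For $x$ in the interior of $\Delta$ one has $K_x=\{0\}$, so $S_\Delta$ contains $\sT_\Lambda|_{\mathrm{int}(\Delta)}$ as an open dense subset on which $\sT_\Lambda$ acts freely.

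The substance is then local and is settled by reducing to the linear model exactly as in the proof of Theorem \ref{momimtoricthm}: via $\log_x$ and the trivialization (\ref{eqn:trivializationsymptorbun}) the germ of $\sT_\Lambda|_\Delta$ over $x$ is identified with that of $(\T^{n}\times\R^{n})|_{C_x(\Delta)}$, and the collapse is there literally the symplectic cut collapsing the $i$-th circle over $\{|z_i|=0\}$, which produces the standard $\C$-factor. Hence near such a point $S_\Delta$ is the standard local model $\T^{n-k}\times\R^{n-k}\times\C^{k}$ of the proof of Theorem \ref{torsectautisoprop:torbunversion}. This simultaneously shows that $S_\Delta$ is a smooth manifold (without corners), that $\omega_\Delta$ (the descent of $\Omega_\Lambda$) is symplectic, and that $J_\Delta$ is a faithful toric $(\sT_\Lambda,\Omega_\Lambda)$-space: the $\sT_\Lambda$-orbits are the connected fibers of $J_\Delta$, $\dim S_\Delta=2\dim M$, and $J_\Delta$ is proper onto $\Delta$ because $\sT_\Lambda|_\Delta\to\Delta$ is a proper torus bundle and the collapse map is proper; its momentum image is $\Delta$ by construction. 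Naturality and locality are then immediate, since the whole construction is phrased in terms of data transported functorially by integral affine isomorphisms: a locally defined integral affine isomorphism $f:(U,\Lambda)\to(U',\Lambda')$ with $f(U\cap\Delta)=U'\cap\Delta'$ induces an isomorphism $\sT_\Lambda|_U\cong\sT_{\Lambda'}|_{U'}$ intertwining the cones (Corollary \ref{delzsubspmorinvcor}), hence the subtori $K_x$, and therefore descends to a $\sT$-equivariant symplectomorphism $S_\Delta|_U\cong S_{\Delta'}|_{U'}$ covering $f$; these lifts respect composition and restriction to smaller opens, which is the asserted naturality.

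I expect the one genuinely technical point to be the local claim that the collapse is smooth and symplectic across all corner strata simultaneously \textemdash{} that is, identifying the quotient near a codimension-$k$ corner with the standard $\C^{k}$-factor and checking that $\Omega_\Lambda$ descends to the standard symplectic form there. Everything else (canonicity, the momentum image, the faithful-toric conditions, and naturality) is then formal. This local identification is exactly the content of the toric normal form used in the proof of Theorem \ref{torsectautisoprop:torbunversion}, so the main work is to recast that normal form as a statement about the collapse of $\sT_\Lambda|_\Delta$ rather than about a prescribed Hamiltonian model.
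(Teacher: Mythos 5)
Your proposal is correct and is essentially the paper's own construction: your ``symplectic collapse'' of $\sT_\Lambda\vert_\Delta$ by the subtori $K_x$ is exactly the paper's quotient $S_\Delta=\sT_\Lambda\vert_\Delta/\F_\Delta$ (since the lineality space $L_x$ of $C_x(\Delta)$ equals the tangent space $F_x(\Delta)$ to the open face through $x$, your $K_x$ coincides with $(\F_\Delta)_x$), and your local identification with the standard toric model via $\log_x$ and the trivialization (\ref{eqn:trivializationsymptorbun}) is the content of Proposition \ref{chartdefnattorspprop}. The technical point you defer \textemdash{} that the collapse is smooth and symplectic across all corner strata and that the resulting structure is independent of the chosen chart \textemdash{} is precisely what the paper carries out in Propositions \ref{chartdefnattorspprop} and \ref{smoothandsympstrnattorspprop}, with the natural and local dependence then verified as you outline.
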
 
Here, by the statement that this depends \textbf{naturally and locally} on $(M,\Lambda,\Delta)$ with respect to locally defined isomorphisms we mean the following. If $(M_1,\Lambda_1)$ and $(M_2,\Lambda_2)$ are integral affine manifolds with respective Delzant subspaces $\Delta_1$ and $\Delta_2$, then for any two opens $U_1$ in $\Delta_1$ and $U_2$ in $\Delta_2$, and any diffeomorphism of manifolds with corners $\phi:U_1\to U_2$ such that $\phi^*\Lambda_2=\Lambda_1\vert_{U_1}$, there is an associated symplectomorphism:
\begin{equation}\label{assocsymplectotorbunspthm} \phi_*:(J_{\Delta_1}^{-1}(U_1),\omega_{\Delta_1})\to (J_{\Delta_2}^{-1}(U_2),\omega_{\Delta_2})
\end{equation} that fits into a commutative square:
\begin{center}
\begin{tikzcd} (J_{\Delta_1}^{-1}(U_1),\omega_{\Delta_1}) \arrow[r,"\phi_*"] \arrow[d,"J_{\Delta_1}"] & (J_{\Delta_2}^{-1}(U_2),\omega_{\Delta_2}) \arrow[d,"J_{\Delta_2}"] \\
  U_1 \arrow[r,"\phi"] & U_2 
\end{tikzcd} 
\end{center} and that is compatible with the actions, in the sense that for every $p\in S_{\Delta_1}$ and every $[\alpha]\in (\sT_{\Lambda_1})_{x}$ with $x=J_{\Delta_1}(p)$ it holds that:
\begin{equation*} \phi_*([\alpha]\cdot p)=[(\d \phi^{-1})^*\alpha]\cdot \phi_*(p).
\end{equation*} 
Furthermore, this association satisfies the following.
\begin{itemize} 
\item[i)] It is \textbf{natural}: given another integral affine manifold $(M_3,\Lambda_3)$ with a Delzant subspace $\Delta_3$ and a diffeomorphism of manifolds with corners $\psi:U_2\to U_3$ onto an open $U_3$ in $\Delta_3$ such that $\psi^*\Lambda_3=\Lambda_2\vert_{U_2}$, it holds that:
\begin{equation*} (\psi\circ\phi)_*=\psi_*\circ \phi_*.
\end{equation*} 
\item[ii)] It is \textbf{local}: if $V_1$ is another open in $\Delta_1$ such that $V_1\subset U_1$, then:
\begin{equation*} (\phi\vert_{V_1})_*=(\phi_*)\vert_{J_{\Delta_1}^{-1}(V_1)},
\end{equation*} where $\phi\vert_{V_1}:V_1\to \phi(V_1)$ denotes the restriction of $\phi:U_1\to U_2$. 
\end{itemize}
In the remainder of this section we give the construction behind Theorem \ref{constrtorictorbunspthm}. The essential idea behind this construction is the same as that behind the proof of \cite[Theorem 1.3.1]{KaLe}.
\subsubsection{The topology and the action}
Throughout this and the next subsection, let $(M,\Lambda)$ be a fixed integral affine manifold with a fixed Delzant subspace $\Delta$. As topological space, we define $S_\Delta$ as follows. Let $\F_\Delta$ be the set-theoretic bundle of groups over $\Delta$ with isotropy group at $x$ the torus:
\begin{equation*} (\F_\Delta)_x:=\frac{F_x(\Delta)^0}{\Lambda_x\cap F_x(\Delta)^0}, 
\end{equation*} where $F_x(\Delta)^0$ denotes the annihilator in $T_x^*M$ of the tangent space $F_x(\Delta)$ to the open face of $\Delta$ through $x\in \Delta$ (see Example \ref{stratmanwithcornex}). The groupoid $\F_\Delta$ includes canonically into $\sT_\Lambda\vert_\Delta$ as a set-theoretic wide normal subgroupoid and as such it acts along the bundle projection of $\sT_\Lambda\vert_\Delta$. We let $S_\Delta$ be the orbit space of this action:
\begin{equation*} S_\Delta:=\frac{\sT_\Lambda\vert_\Delta}{\F_\Delta},
\end{equation*} equipped with the quotient topology. The bundle projection descends to a continuous map:
\begin{equation}\label{mommaptoractconstrfromdelzsubman}  J_\Delta:S_\Delta\to M,
\end{equation} with image $\Delta$. Since it commutes with the $\F_\Delta$-action defined above, the canonical left $\sT_\Lambda\vert_\Delta$-action along the bundle projection of $\sT_\Lambda\vert_\Delta$ descends to a continuous left $\sT_\Lambda$-action along (\ref{mommaptoractconstrfromdelzsubman}). This defines the topological space and the action underlying the toric $(\sT_\Lambda,\Omega_\Lambda)$-space in Theorem \ref{constrtorictorbunspthm}. 
\begin{prop}\label{torproptoppartprop} The $\sT_\Lambda$-action along (\ref{mommaptoractconstrfromdelzsubman}) defined above has the following properties.
\begin{itemize}\item[a)] The action is free on the open and dense subset $J_\Delta^{-1}(\mathring{\Delta})$ of $S_\Delta$, with $\mathring{\Delta}$ as in Example \ref{stratmanwithcornex}.
\item[b)] The orbits coincide with the $J_\Delta$-fibers.
\item[c)] The transverse momentum map $\underline{J}_\Delta:\underline{S}_\Delta\to M$ is a homeomorphism onto $\Delta$.
\end{itemize}
\end{prop}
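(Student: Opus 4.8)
The plan is to reduce all three claims to the fiberwise description of the action and to dispatch the two genuinely topological points (density in part $a$ and continuity of the inverse in part $c$) by comparing quotient maps. Write $\textrm{pr}:\sT_\Lambda\vert_\Delta\to \Delta$ for the bundle projection, and let $q_1:\sT_\Lambda\vert_\Delta\to S_\Delta$ and $q_2:S_\Delta\to \underline{S}_\Delta$ be the two quotient maps, so that $J_\Delta\circ q_1=\textrm{pr}$ and $\underline{J}_\Delta\circ q_2\circ q_1=\textrm{pr}$. Over a point $x\in\Delta$ the fiber $J_\Delta^{-1}(x)$ is the quotient torus $(\sT_\Lambda)_x/(\F_\Delta)_x$, and the induced $(\sT_\Lambda)_x$-action on it is the left translation descended from the left translation action of the torus $(\sT_\Lambda)_x$ on itself (the descent being legitimate since $(\F_\Delta)_x$ is a subtorus and everything is abelian).

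For part $b$, left translation of a torus on a quotient of itself by a closed subgroup is transitive, so the orbit of any point of $J_\Delta^{-1}(x)$ is all of $J_\Delta^{-1}(x)$; as the action is along $J_\Delta$ the orbit is also contained in the fiber, which gives orbits $=$ fibers. For part $a$, the stabilizer of the translation action at any point is the kernel $(\F_\Delta)_x$ of $(\sT_\Lambda)_x\to(\sT_\Lambda)_x/(\F_\Delta)_x$. When $x\in\mathring{\Delta}$ the open face through $x$ is top-dimensional, so $F_x(\Delta)=T_xM$ and hence $F_x(\Delta)^0=0$, forcing $(\F_\Delta)_x=0$; thus the action is free over $\mathring{\Delta}$. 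Openness of $J_\Delta^{-1}(\mathring{\Delta})$ is immediate from continuity of $J_\Delta$. For density, observe that $\textrm{pr}$ is an open continuous surjection (being a locally trivial torus bundle projection over the manifold-with-corners $\Delta$), so $\textrm{pr}^{-1}(\mathring{\Delta})$ is dense because $\mathring{\Delta}$ is dense in $\Delta$; since $q_1$ is a continuous surjection and $J_\Delta^{-1}(\mathring{\Delta})=q_1(\textrm{pr}^{-1}(\mathring{\Delta}))$ (using $J_\Delta\circ q_1=\textrm{pr}$), its image is dense in $S_\Delta$.

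For part $c$, by part $b$ the map $q_2\circ q_1$ collapses precisely each torus fiber of $\textrm{pr}$ to a point, so $\textrm{pr}$ and $q_2\circ q_1$ are two maps out of $\sT_\Lambda\vert_\Delta$ with the same fibers: indeed $q_2(q_1(\alpha))=q_2(q_1(\beta))$ holds exactly when $q_1(\alpha)$ and $q_1(\beta)$ lie in one $\sT_\Lambda$-orbit, hence in one $J_\Delta$-fiber, i.e. when $\textrm{pr}(\alpha)=\textrm{pr}(\beta)$. Both maps are quotient maps: $\textrm{pr}$ because open continuous surjections are quotient maps, and $q_2\circ q_1$ as a composition of quotient maps. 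The canonical continuous bijection between their target spaces is exactly $\underline{J}_\Delta:\underline{S}_\Delta\to\Delta$, and the standard fact that two quotient maps with equal fibers induce a homeomorphism between the quotients then yields that $\underline{J}_\Delta$ is a homeomorphism onto $\Delta$.

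I do not expect a serious obstacle: once the fiberwise picture is recorded, parts $a$ and $b$ are formal, and the only care needed is topological. The main (minor) point is to resist proving part $c$ by exhibiting an explicit inverse, and instead argue via the coincidence of fibers of $\textrm{pr}$ and $q_2\circ q_1$, which sidesteps any direct verification of openness or properness of $\underline{J}_\Delta$. The one fact worth stating explicitly is the openness of $\textrm{pr}$ over $\Delta$, which holds by local triviality of $\sT_\Lambda\vert_\Delta\to\Delta$.
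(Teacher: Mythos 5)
Your proof is correct, and for part $c$ it takes a genuinely different route from the paper. Parts $a$ and $b$ are exactly the ``straightforward verifications'' the paper leaves implicit: identifying the fiber $J_\Delta^{-1}(x)$ with the quotient torus $(\sT_\Lambda)_x/(\F_\Delta)_x$ carrying the translation action gives orbits $=$ fibers and freeness over $\mathring{\Delta}$ (where $F_x(\Delta)^0=0$) at once, and your density argument via openness of $\textrm{pr}$ is sound. For part $c$, the paper argues as follows: $\underline{J}_\Delta$ is a continuous injection by part $b$, so it suffices to show it is \emph{closed} as a map into $\Delta$; since the bundle projection $\sT_\Lambda\to M$ is proper (a fiber bundle with compact fibers), $\underline{J}_\Delta$ is proper as a map into $\Delta$, and a continuous proper map into a first-countable Hausdorff space is closed. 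You instead compare the two quotient maps $\textrm{pr}$ and $q_2\circ q_1$ out of $\sT_\Lambda\vert_\Delta$: they have the same fibers by part $b$, $\textrm{pr}$ is a quotient map because local triviality makes it an open surjection, $q_2\circ q_1$ is a quotient map as a composition of quotient maps, and the induced comparison bijection \textemdash which is precisely $\underline{J}_\Delta$ onto $\Delta$ with its subspace topology \textemdash is therefore a homeomorphism. Your route is more elementary: it uses only openness of the bundle projection rather than compactness of its fibers, and it avoids the properness-implies-closedness fact (which needs first countability of the target). The paper's route, on the other hand, directly establishes closedness/properness of $\underline{J}_\Delta$, which is the form of the embedding condition the paper exploits repeatedly (the identical closedness-via-properness scheme reappears, for instance, in the proof of Lemma \ref{lemma:indtorbunacttoric}). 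Both arguments are complete.
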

\begin{proof} Parts $a$ and $b$ follow from straightforward verifications. It follows from part $b$ that $\underline{J}_\Delta$ a continuous injection so that to prove part $c$ it remains to show that it is closed as map into its image $\Delta$. To this end, notice that the bundle projection $\sT_\Lambda\to M$ is proper, because it is a fiber bundle with compact fibers. This implies that $\underline{J}_\Delta$ is proper as map into $\Delta$. Since any continuous proper map into a first-countable and Hausdorff space is closed, this concludes the proof.
\end{proof}
\subsubsection{The smooth and symplectic structure}\label{smsympstrconstrtorspsec} Next, we define a structure of symplectic manifold on $S_\Delta$ that is compatible with the $\sT_\Lambda$-action along $J_\Delta:S_\Delta\to M$. First, we construct a local model for this. By a $\textbf{$\Delta$-admissible}$ triple we will mean a triple $(x_0,U,\chi)$ consisting of a point $x_0\in \Delta$ and an integral affine chart $(U,\chi)$ for $(M,\Lambda)$ around $x_0$ with the property that $\chi(x_0)=0$, $U$ is connected and: 
\begin{equation}\label{imadapiachart} \chi(U\cap \Delta)=\chi(U)\cap \R^n_k,
\end{equation} where $n=\dim(M)$ and $k=\textrm{depth}_\Delta(x_0)$. Every $x_0\in \Delta$ belongs to such a triple. {For each such triple, the symplectic torus bundle: 
\begin{equation}\label{trivsymptorbuneq} (\T^n\times \R^n,\sum_{j=1}^n \d \theta_j\wedge \d x_j)\vert_{\chi(U)}\xrightarrow{\textrm{pr}_{\R^n}} \chi(U)
\end{equation} comes with a faithful {toric} space with momentum image (\ref{imadapiachart}), that we denote as:
\begin{equation}\label{mommaplocmodadmtriple} J_{(x_0,U,\chi)}:(S_{(x_0,U,\chi)},\omega_{(x_0,U,\chi)})\to \chi(U).
\end{equation} The symplectic manifold $(S_{(x_0,U,\chi)},\omega_{(x_0,U,\chi)})$ is that obtained by symplectic reduction at the origin of the diagonal (right) Hamiltonian $\T^k$-action with momentum map:
\begin{equation}\label{eqn:mommapred:stdlocmod} \textrm{pr}_{\R^k}\circ\textrm{pr}_{\chi(U)}-J_{\C^k}\circ \textrm{pr}_{\C^k}:(\T^n\times \chi(U),\sum_{j=1}^n \d \theta_j\wedge \d x_j)\times (\C^k,\omega_\textrm{st})\to \R^k, 
\end{equation} where the $\T^k$-action on $\T^n\times \chi(U)$ is by translation on the first $k$ components of $\T^n$ and the $\T^k$-action on $\C^k$ is the anti-standard one, which is Hamiltonian with momentum map $-J_{\C^k}$, where:
\begin{equation*} J_{\C^k}:\C^k \to \R^k,\quad (z_1,...,z_k)\mapsto (|z_1|^2,...,|z_k|^2).
\end{equation*} 
The map (\ref{mommaplocmodadmtriple}) is that induced by the projection to $\chi(U)$ and the action of the torus bundle (\ref{trivsymptorbuneq}) is that induced by the action on itself by translation. Viewed as Hamiltonian $\T^n$-space, (\ref{mommaplocmodadmtriple}) is the standard local model for symplectic toric $\T^n$-manifolds.} The proposition below shows that (\ref{mommaplocmodadmtriple}) is a local model for the topological $\sT_\Lambda$-space $J_\Delta$. 

\begin{prop}\label{chartdefnattorspprop} The map:
\begin{align}\label{indhomdeltadmtripleeq} h_{(x_0,U,\chi)}:J_\Delta^{-1}(U)&\to S_{(x_0,U,\chi)}, \\ 
\left[\sum_{j=1}^n \theta_j \d\chi^j_x \mod \Lambda_x\right] &\mapsto \left[\left(e^{2\pi i\theta_1},...,e^{2\pi i \theta_n},\chi(x),\sqrt{\chi^1(x)},...,\sqrt{\chi^k(x)}\right)\right], \nonumber
\end{align} is a homeomorphism and it is compatible with the $\sT_\Lambda$-action along $J_\Delta$ in the sense that it intertwines (\ref{mommaplocmodadmtriple}) with: 
\begin{equation*} \chi\circ J_\Delta:J_\Delta^{-1}(U)\to \chi(U),
\end{equation*} and for each $p\in J_\Delta^{-1}(U)$ and $t\in (\sT_\Lambda)_x$ with $x=J(p)$ it satisfies:
\begin{equation*} h_{(x_0,\chi,U)}(t\cdot p)=\Phi_\chi(t)\cdot  h_{(x_0,\chi,U)}(p),
\end{equation*} where $\Phi_\chi$ is the isomorphism of symplectic torus bundles given by:
\begin{align}\label{trivsymptorbuniachart} \Phi_\chi: (\sT_\Lambda,\Omega_\Lambda)\vert_U&\xrightarrow{\sim} (\T^n\times \R^n,\sum_{j=1}^n \d\theta_j\wedge \d x_j)\vert_{\chi(U)},\\
 \sum_{j=1}^n \theta_j \d\chi^j_x \mod \Lambda_x &\mapsto (e^{2\pi i\theta_1},...,e^{2\pi i\theta_n},\chi(x)). \nonumber
\end{align} 
\end{prop}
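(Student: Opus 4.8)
The map $h_{(x_0,U,\chi)}$ is assembled from explicit descriptions of both its source and its target, so the plan is to establish, in turn, that it is well defined, a fibrewise bijection, continuous, and a homeomorphism, and finally that it intertwines the actions. First I would record the two local pictures. Since $(U,\chi)$ is an integral affine chart, the coframe $\d\chi^1,\dots,\d\chi^n$ frames $\Lambda|_U$, so $\Lambda_x=\Z\,\d\chi^1_x\oplus\cdots\oplus\Z\,\d\chi^n_x$ and $\Phi_\chi$ of (\ref{trivsymptorbuniachart}) is an honest isomorphism of symplectic torus bundles, identifying $\sT_\Lambda|_{U\cap\Delta}$ with $\T^n\times(\chi(U)\cap\R^n_k)$. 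The admissibility condition (\ref{imadapiachart}) gives, for $x\in U\cap\Delta$, that $F_x(\Delta)^0=\Span\{\d\chi^j_x : j\le k,\ \chi^j(x)=0\}$, whence the fibre of $S_\Delta=\sT_\Lambda|_\Delta/\F_\Delta$ over $x$ is $T^*_xM/(\Lambda_x+F_x(\Delta)^0)$. On the target side I would note that the zero level set of (\ref{eqn:mommapred:stdlocmod}) is $Z_0=\{(t,y,z):|z_i|^2=y_i,\ i\le k\}$ and that the fibre of (\ref{mommaplocmodadmtriple}) over $y=\chi(x)$ is $(\T^n\times\{z:|z_i|^2=y_i\})/\T^k$ for the anti-standard diagonal $\T^k$-action.

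Well-definedness and fibrewise bijectivity then come down to two checks. Changing the representative $\sum_j\theta_j\,\d\chi^j_x$ by an element of $\Lambda_x$ shifts each $\theta_j$ by an integer, leaving $e^{2\pi i\theta_j}$, $\chi(x)$ and the $\sqrt{\chi^i(x)}$ unchanged; changing it by $\d\chi^j_x\in F_x(\Delta)^0$ (so $j\le k$ and $\chi^j(x)=0$, hence $z_j=\sqrt{\chi^j(x)}=0$) alters only $e^{2\pi i\theta_j}$, which is absorbed by the $j$-th circle of the $\T^k$-quotient since it acts trivially on $z_j=0$. Comparing the two fibre descriptions above shows $h$ is bijective on each fibre over $y=\chi(x)$. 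For continuity of $h$ itself I would lift: the continuous section $(t,y)\mapsto(t,y,\sqrt{y_1},\dots,\sqrt{y_k})$ of $Z_0$ followed by the orbit map $Z_0\to S_{(x_0,U,\chi)}$ descends through the quotient $\sT_\Lambda|_{U\cap\Delta}\to J_\Delta^{-1}(U)$ to $h$, which is therefore continuous by the universal property of the quotient topology.

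The main obstacle is continuity of the inverse. Here I would build $g$ from a $\T^k$-invariant map on $Z_0$: for $i\le k$ put $\tilde t_i:=t_i\cdot z_i/|z_i|$ where $z_i\ne0$ and $\tilde t_i:=t_i$ for $i>k$, and send $(t,y,z)$ to the class of $\Phi_\chi^{-1}(\tilde t,y)$ in $J_\Delta^{-1}(U)=\bigl(\T^n\times(\chi(U)\cap\R^n_k)\bigr)/\F_\Delta$. Since $t_i\mapsto\lambda_i t_i$ and $z_i\mapsto\bar\lambda_i z_i$ under $\T^k$, each $\tilde t_i$ is $\T^k$-invariant, so this descends to $g\colon S_{(x_0,U,\chi)}\to J_\Delta^{-1}(U)$; a direct check gives $g\circ h=\mathrm{id}$ and $h\circ g=\mathrm{id}$. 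The delicate point is continuity of $g$ across the loci $\{z_i=0\}$, where the phase $z_i/|z_i|$ and hence $\tilde t_i$ become indeterminate: this is exactly where $y_i=0$, so $\d\chi^i_x\in F_x(\Delta)^0$ and the $\F_\Delta$-quotient collapses the $i$-th torus coordinate, making the image of $\Phi_\chi^{-1}(\tilde t,y)$ independent of $\tilde t_i$. Thus $g$ extends continuously across these strata, and the whole argument hinges on matching the $\F_\Delta$-collapse with the reduction collapse at $z_i=0$.

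Finally, action-compatibility is a direct computation. The $\sT_\Lambda$-action on $J_\Delta^{-1}(U)$ is fibrewise translation, so adding $\sum_j\eta_j\,\d\chi^j_x$ multiplies $e^{2\pi i\theta_j}$ by $e^{2\pi i\eta_j}$ while fixing $\chi(x)$ and the $z_i$; under $h$ this is precisely translation of the $\T^n$-coordinate by $\Phi_\chi(t)$, so $h_{(x_0,\chi,U)}(t\cdot p)=\Phi_\chi(t)\cdot h_{(x_0,\chi,U)}(p)$. Since $J_{(x_0,U,\chi)}[(t,y,z)]=y=\chi(x)=\chi\circ J_\Delta(p)$, the map $h$ also intertwines (\ref{mommaplocmodadmtriple}) with $\chi\circ J_\Delta$, completing the proof.
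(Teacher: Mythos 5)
Your overall architecture is sound and, up to the last step, runs parallel to the paper's: the well-definedness and fibrewise bijectivity checks, the action compatibility, and the continuity of $h_{(x_0,U,\chi)}$ via the lift $(t,y)\mapsto (t,y,\sqrt{y_1},\dots,\sqrt{y_k})$ into the zero fiber of (\ref{eqn:mommapred:stdlocmod}) followed by the orbit map all match what the paper treats as "straightforward verification" plus its factorization argument. Where you genuinely diverge is the final step: the paper never constructs an inverse. It shows instead that the composition of (\ref{indhomdeltadmtripleeq}) with the quotient map $\sT_\Lambda\vert_{U\cap\Delta}\to J_\Delta^{-1}(U)$ is continuous \emph{and closed} (a closed topological embedding into the zero fiber, followed by the quotient by the compact group $\T^k$, which is a closed map), so that $h_{(x_0,U,\chi)}$ is a continuous closed bijection, hence a homeomorphism. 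You instead build an explicit inverse $g$ from the gauge-fixing $\tilde t_i=t_i z_i/|z_i|$ and must then prove $g$ continuous.

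That is where your proof has a genuine gap. At the loci $\{z_i=0\}$ you argue: the class of $\Phi_\chi^{-1}(\tilde t,y)$ is independent of $\tilde t_i$ there, "thus $g$ extends continuously." Independence of $\tilde t_i$ gives only \emph{well-definedness} of the lift $\tilde g$ on the zero fiber; it does not give continuity of a composition $q_\Delta\circ\Psi$ in which $\Psi$ is discontinuous and $q_\Delta$ merely collapses the discontinuous directions. Indeed this implication is false in general: collapse the non-compact fiber $\{0\}\times\R\subset\R^2$ to a point and compose with $x\mapsto (x,1/x)$, $0\mapsto(0,0)$; the value at $0$ is well defined, yet the composition is discontinuous at $0$, since the saturated open set $\{|xy|<1\}$ misses every point of the image. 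What rescues your construction is precisely the \emph{compactness} of the collapsed subtorus $\T^I$ (with $I=\{i\le k:\ y_i=0\}$): every $\F_\Delta$-saturated open neighbourhood of $(\tilde t,y)$ contains the whole compact orbit $\T^I\cdot(\tilde t,y)$, hence by the tube lemma a full tube $\T^I\times W$, and points $(\tilde t^m,y^m)$ with $y^m\to y$ and $\tilde t^m_i\to\tilde t_i$ for $i\notin I$ eventually land in that tube regardless of their oscillating $I$-phases. You need to insert this tube-lemma (or an equivalent properness/closedness) argument; as written, the sentence asserting continuity is a non sequitur. Note that the paper's route invokes compactness of $\T^k$ at exactly one point and for a cleaner purpose (closedness of the quotient map), which is why it avoids this issue entirely.
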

\begin{proof} The bijectivity and the compatibility of (\ref{indhomdeltadmtripleeq}) with the $\sT_\Lambda$-action follow from a straightforward verification. Furthermore, (\ref{indhomdeltadmtripleeq}) is continuous and closed, because its composition with the canonical map $\sT_\Lambda\vert_U\to J_\Delta^{-1}(U)$ (which is a topological quotient map) is continuous and closed (since it factors as a continuous and closed map { into the fiber of (\ref{eqn:mommapred:stdlocmod}) over the origin}, composed with the quotient map from this fiber onto $S_{(x_0,U,\chi)}$, which is closed due to compactness of $\T^k$) . So, (\ref{indhomdeltadmtripleeq}) is indeed a homeomorphism.
\end{proof}
\begin{cor} The space $S_\Delta$ is Hausdorff and second countable.
\end{cor}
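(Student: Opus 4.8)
The plan is to deduce both properties from the local models established in Proposition \ref{chartdefnattorspprop}. First I would observe that, since $J_\Delta$ is continuous and each $U$ is open in $M$, every set $J_\Delta^{-1}(U)$ arising from a $\Delta$-admissible triple $(x_0,U,\chi)$ is open in $S_\Delta$, and by Proposition \ref{chartdefnattorspprop} it is homeomorphic to $S_{(x_0,U,\chi)}$. The latter is a symplectic manifold, being obtained by reduction along the free diagonal $\T^k$-action of (\ref{eqn:mommapred:stdlocmod}), and hence it is Hausdorff and second countable by our standing conventions on manifolds. Because every point of $\Delta$ belongs to some $\Delta$-admissible triple, these opens cover $S_\Delta$, so $S_\Delta$ is at least locally homeomorphic to manifolds.

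For second countability I would use that $M$ \textemdash and therefore its subspace $\Delta$ \textemdash is second countable, hence Lindel\"of. Applying the Lindel\"of property to the open cover $\{U\cap\Delta\}$ of $\Delta$ indexed by $\Delta$-admissible triples yields a countable subcover $\{U_i\cap\Delta\}_{i\in\N}$. The corresponding family $\{J_\Delta^{-1}(U_i)\}_{i\in\N}$ is then a countable cover of $S_\Delta$ by open, second countable subspaces, and a space admitting such a cover is itself second countable.

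The Hausdorff property is the only point requiring genuine care, since a quotient of Hausdorff spaces (such as $S_\Delta=\sT_\Lambda\vert_\Delta/\F_\Delta$) need not be Hausdorff in general. I would separate an arbitrary pair of distinct points $p,q\in S_\Delta$ in two cases. If $J_\Delta(p)\neq J_\Delta(q)$, then since $M$ is Hausdorff I can choose disjoint opens $V,W$ around $J_\Delta(p)$ and $J_\Delta(q)$, and $J_\Delta^{-1}(V)$, $J_\Delta^{-1}(W)$ are disjoint opens separating $p$ and $q$. If instead $J_\Delta(p)=J_\Delta(q)=:x$, I would choose a $\Delta$-admissible triple with $x_0=x$; then both $p$ and $q$ lie in the single open set $J_\Delta^{-1}(U)$, which is Hausdorff by Proposition \ref{chartdefnattorspprop}, so they can be separated there by disjoint opens, which remain open in $S_\Delta$. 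The main (mild) obstacle is precisely this second case: it works because Proposition \ref{chartdefnattorspprop} guarantees that any two points sharing an image can be enclosed in one common manifold chart, which is exactly what upgrades local Hausdorffness to the global statement.
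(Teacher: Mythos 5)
Your proof is correct and follows essentially the same route as the paper: the paper's own (much terser) argument likewise combines the Hausdorffness and second countability of the local models $J_\Delta^{-1}(U)\cong S_{(x_0,U,\chi)}$ from Proposition \ref{chartdefnattorspprop} with the Hausdorffness and second countability of the base to conclude both properties for $S_\Delta$. Your two-case separation argument and the Lindel\"of/countable-subcover step are exactly the details the paper leaves implicit.
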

\begin{proof} By Proposition \ref{chartdefnattorspprop}, for each $\Delta$-admissible triple $(x_0,U,\chi)$ the open subspace $J_\Delta^{-1}(U)$ of $S_\Delta$ is Hausdorff and second countable. In view of this, the fact that $B$ is Hausdorff implies that $S_\Delta$ is Hausdorff and the fact that $B$ is second countable implies that $S_\Delta$ is second countable. 
\end{proof}
Next, we show that the local smooth and symplectic structures obtained via the homeomorphisms (\ref{indhomdeltadmtripleeq}) patch to a smooth and symplectic structure on all of $S_\Delta$. 
\begin{prop}\label{smoothandsympstrnattorspprop} The topological space $S_\Delta$ admits a smooth and symplectic structure, both uniquely determined by the property that for each $\Delta$-admissible triple $(x_0,U,\chi)$ the induced homeomorphism (\ref{indhomdeltadmtripleeq}) is a symplectomorphism onto the symplectic manifold $(S_{(x_0,U,\chi)},\omega_{(x_0,U,\chi)})$ defined above. 
\end{prop}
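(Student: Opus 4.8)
The plan is to transport the smooth and symplectic structures from the local models along the homeomorphisms (\ref{indhomdeltadmtripleeq}) and then show that the resulting local structures patch. First I would note that, since every $x_0\in\Delta$ belongs to a $\Delta$-admissible triple, the opens $J_\Delta^{-1}(U)$, ranging over all such triples $(x_0,U,\chi)$, form an open cover of $S_\Delta$. For each triple I declare $h_{(x_0,U,\chi)}$ to be a symplectomorphism onto $(S_{(x_0,U,\chi)},\omega_{(x_0,U,\chi)})$; this prescribes a smooth structure and a symplectic form $\omega_U:=h_{(x_0,U,\chi)}^*\omega_{(x_0,U,\chi)}$ on $J_\Delta^{-1}(U)$. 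Uniqueness of a global structure with the stated property is then immediate, and existence amounts to showing that for any two triples the transition map $T:=h_{(x_0',U',\chi')}\circ h_{(x_0,U,\chi)}^{-1}$, viewed on the overlap $h_{(x_0,U,\chi)}(J_\Delta^{-1}(U\cap U'))$, is a symplectomorphism.

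To analyse $T$, I would use the two equivariance properties recorded in Proposition \ref{chartdefnattorspprop}. They show that $T$ covers the integral affine coordinate change $g:=\chi'\circ\chi^{-1}$ on the base and is equivariant with respect to the isomorphism of symplectic torus bundles $\Phi_{\chi'}\circ\Phi_\chi^{-1}$ of (\ref{trivsymptorbuniachart}). By Lemma \ref{iamorphintaffvectsplemma} the map $g$ is the restriction of an affine transformation $x\mapsto Ax+b$ with $A\in\GL_n(\Z)$, and $\Phi_{\chi'}\circ\Phi_\chi^{-1}$ is the induced symplectomorphism of $(\T^n\times\R^n,\sum_{j}\d\theta_j\wedge\d x_j)$, linear on the torus fibers and covering $g$. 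Consequently $T$ is exactly the map induced on the local models by this explicit integral affine datum, and the problem is reduced to the purely local statement that such a change of integral affine chart induces a symplectomorphism between the corresponding standard symplectic toric local models.

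The main obstacle is that the two local models are built by symplectic reduction adapted to the corner points $x_0$ and $x_0'$, so a priori they are obtained by reducing along subtorus bundles of different rank; I must check that these reductions are compatible over the overlap. The plan here is to observe that the reduction datum is governed intrinsically, not by the chosen chart: near a point $x$ the relevant subtorus is the annihilator torus $(\F_\Delta)_x$ of the face of $\Delta$ through $x$, equivalently the subtorus determined by the cone $C_x(\underline{\Delta})$, which is chart-independent (cf. Corollary \ref{coneatxisinvrem}). Since $\Phi_{\chi'}\circ\Phi_\chi^{-1}$ is an honest symplectomorphism of trivialized symplectic torus bundles that intertwines these intrinsic subtorus bundles together with the corresponding momentum maps, and since symplectic reduction is functorial for equivariant symplectomorphisms respecting the reduction data, it descends to a symplectomorphism of the reduced spaces, which is precisely $T$. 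Smoothness of $T$ follows along the way (alternatively, directly from the explicit formula (\ref{indhomdeltadmtripleeq}) via the Schwarz--Whitney argument already used in the proof of Theorem \ref{torsectautisoprop:torbunversion}), completing the verification that the local structures patch. As a cross-check on the symplectic part, one may invoke Proposition \ref{locclasstoricthm:torbunversion}, which guarantees the existence of a $\sT_\Lambda$-equivariant symplectomorphism over the identity once the momentum images agree; the content of the argument above is that the canonical transition $T$ is this symplectomorphism.
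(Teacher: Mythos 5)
Your high-level skeleton (transport the structures from the local models, then show the transitions patch) is the same as the paper's, and your reduction of the transition map $T$ to the integral affine datum $x\mapsto Ax+b$, $A\in\GL_n(\Z)$, via Lemma \ref{iamorphintaffvectsplemma} matches how the paper begins. The gap is in the step where you invoke ``functoriality of symplectic reduction''. When the two triples have different depths $k\neq l$, the local models are reductions of \emph{different} manifolds of \emph{different} dimensions: $(\T^n\times\chi(U))\times\C^{k}$ by a diagonal $\T^k$-action, and $(\T^n\times\chi'(U'))\times\C^{l}$ by a $\T^l$-action. The map $\Phi_{\chi'}\circ\Phi_\chi^{-1}$ is a symplectomorphism of the trivialized torus bundles $(\T^n\times\R^n,\sum_j\d\theta_j\wedge\d x_j)$ over the overlap; it is not a map between these pre-reduced spaces, so there is no equivariant symplectomorphism upstairs ``respecting the reduction data'' to which functoriality could be applied. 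Relatedly, the intrinsic subtorus bundle $\F_\Delta$ you point to does not exhibit the local models as reductions of the torus bundle itself: the quotient $\sT_\Lambda\vert_\Delta/\F_\Delta$ is only the \emph{topological} definition of $S_\Delta$ (the ranks of the fibers of $\F_\Delta$ jump), and the $\C^k$ factor in the local model is exactly what supplies the smooth structure at the corners. What is actually needed, and what the paper does, is an explicit verification: translate both charts to a common point $w_0$ of depth $m\le\min(k,l)$, use the block form of $A$ (whose upper-left $k\times k$ block permutes the standard basis of $\R^k$) to handle the same-point case, and then compute the transition to the depth-$m$ model to be $[(t,x,z_1,\dots,z_m)]\mapsto[(t,x,z_1,\dots,z_m,\sqrt{\chi^{m+1}(x)},\dots,\sqrt{\chi^{k}(x)})]$, whose inverse divides by $\sqrt{\chi^{j}(x)}$; smoothness of both depends on arranging $\chi^j(x)>0$ for $m<j\le k$ on the shrunken neighbourhood. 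This positivity argument across strata of different depth is the analytic heart of the proposition and is absent from your proposal. (It can be phrased conceptually as reduction in stages---first reduce by the $\T^{k-m}$ factor, where the action on the relevant level set is free---but that amounts to the same computation, not to a formal functoriality statement.)

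Your fallback appeals do not close the gap either. Proposition \ref{locclasstoricthm:torbunversion} only yields the \emph{existence} of some $\sT_\Lambda$-equivariant symplectomorphism covering the identity once the momentum images agree; it says nothing about the specific transition $T$, and you cannot compare $T$ with that symplectomorphism via Theorem \ref{isoautsymplagseccor:torbunversion} without already knowing $T$ is smooth, which is precisely the point at issue (a $\T^n$-equivariant homeomorphism covering the identity is translation by a continuous section, which need not be smooth). Similarly, the Whitney--Schwarz argument in the proof of Theorem \ref{torsectautisoprop:torbunversion} concerns equivariant self-maps of a fixed model, not transitions between models of different depth. Finally, note the paper's much softer treatment of the symplectic part once smoothness is established: both candidate forms agree on the dense open $J_\Delta^{-1}(\mathring{\Delta})$, where the quotient map $\sT_\Lambda\vert_{\mathring{\Delta}}\to J_\Delta^{-1}(\mathring{\Delta})$ is a diffeomorphism and both forms are the push-forward of $\Omega_\Lambda$; continuity then gives agreement everywhere. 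Adopting that density argument would have spared you the symplectic half of the functoriality claim.
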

\begin{proof} We ought to show that any two given $\Delta$-admissible triples $(x_0,U,\chi)$ and $(y_0,V,\phi)$ induce the same smooth and symplectic structure on the intersection of $J_\Delta^{-1}(U)$ and $J_\Delta^{-1}(V)$ (via the homeomorphisms $h_{(x_0,U,\chi)}$ and $h_{(y_0,V,\phi)}$). Throughout, let $k=\textrm{depth}_\Delta(x_0)$ and $l=\textrm{depth}_\Delta(y_0)$. \\

First we address the smooth structure, starting with the case in which $(x_0,U)=(y_0,V)$. In this case, $\chi\circ \phi^{-1}$ is the restriction of an element of $A\in \textrm{GL}_n(\Z)$  that maps $\R^n_k$ onto $\R^n_k$ (because any linear map that maps an open neighbourhood of the origin in $\R^n_k$ to another such open must map $\R^n_k$ onto $\R^n_k$, as $\R^n_k$ is invariant under scaling by positive real numbers). So, $A$ is of the form:
\begin{equation*} 
A=\begin{pmatrix}
A_{k,k} & 0  \\
A_{n-k.k} & A_{n-k,n-k} 
\end{pmatrix}, \quad A_{k,k}\in \textrm{GL}_k(\Z), \quad A_{n-k,k}\in \textrm{M}_{n-k,k}(\Z), \quad A_{n-k,n-k}\in \textrm{GL}_{n-k}(\Z),
\end{equation*} where $A_{k,k}$ maps $[0,\infty[^k$ onto $[0,\infty[^k$. Any element of $\GL_k(\Z)$ that maps $[0,\infty[^k$ onto $[0,\infty[^k$ must permute the standard basis of $\R^k$ (this is a version of Proposition \ref{classtorrep}$b$ and is readily verified). Therefore, there is a permutation $\sigma$ of $\{1,...,k\}$ such that $\chi^{j}=\phi^{\sigma(j)}$ for all $j\in \{1,...,k\}$. The map $h_{(x_0,U,\chi)}\circ h_{(x_0,U,\phi)}^{-1}$ is given by:
\begin{equation*} S_{(x_0,U,\phi)}\to S_{(x_0,U,\chi)}, \quad [(t,x,z_1,...,z_k)]\mapsto [(A_*(t),A(x),...,A(x),z_{\sigma(1)},...,z_{\sigma(k)})],
\end{equation*} which is smooth. The inverse of this map is obtained by reversing the roles of $\chi$ and $\phi$. So, it is a diffeomorphism, which means that $(x_0,U,\chi)$ and $(x_0,U,\phi)$ indeed induce the same smooth structure on $J_\Delta^{-1}(U)$. \\

Next, we address the smooth structure in general, by reducing to the previous case. It is enough to show that for every $w_0\in U\cap V\cap \Delta$ there is an open neighbourhood $W$ of $w_0$ in $U\cap V$ such that the smooth structures on $J_\Delta^{-1}(U)$ and $J_\Delta^{-1}(V)$ induced by the respective triples $(x_0,U,\chi)$ and $(y_0,V,\phi)$ restrict to the same smooth structure on $J_\Delta^{-1}(W)$. To do so, let such $w_0$ be given and let $m=\textrm{depth}_\Delta(w_0)$. After possibly permuting the first $k$ components $\chi$ and the first $l$ components of $\phi$, we may assume without loss of generality that $\chi^j(w_0)=0$ if $j\leq m$, $\chi^j(w_0)>0$ if $m<j\leq k$, $\phi^j(w_0)=0$ if $j\leq m$ and $\phi^j(w_0)>0$ if $m<j\leq l$. Indeed, by the previous case, changing the given $\Delta$-admissible triples by such permutations leaves the induced smooth structures invariant. Now, choose a connected open neighbourhood $W$ of $w_0$ in $U\cap V$ such that for every $x\in W$:
\begin{align} \chi^{j}(x)>0 \quad &\textrm{if}\quad  m<j\leq k, \label{smoothandsympstrnattorsppropeq2}\\
 \phi^{j}(x)>0 \quad &\textrm{if}\quad m<j\leq l. \label{smoothandsympstrnattorsppropeq3}
\end{align} 
Consider the charts $\widetilde{\chi}:=\chi\vert_W-\chi(w_0)$ and $\widetilde{\phi}:=\phi\vert_W-\phi(w_0)$. By construction, the triples $(w_0,W,\widetilde{\chi})$ and $(w_0,W,\widetilde{\phi})$ are $\Delta$-admissible and by the previous case these induce the same smooth structure on the open $J_\Delta^{-1}(W)$.  The smooth structure on $J_\Delta^{-1}(U)$ induced by $h_{(x_0,U,\chi)}$ restricts to that on $J_\Delta^{-1}(W)$ induced by $h_{(w_0,W,\widetilde{\chi})}$, since the homeomorphism:
 \begin{equation*} h_{(x_0,U,\chi)}\circ h_{(w_0,W,\widetilde{\chi})}^{-1}:S_{(w_0,W,\widetilde{\chi})}\to J_{(x_0,U,\chi)}^{-1}(W) 
\end{equation*} is a diffeomorphism, for this map is given by:
\begin{equation*}  [(t,x,z_1,...z_m)]\mapsto \left[\left(t,x,z_1,...,z_m,\sqrt{\chi^{m+1}(x)},...,\sqrt{\chi^k(x)} \right)\right],
\end{equation*} and its inverse is given by:
\begin{equation*}
\left[\left(t_1,...,t_m,t_{m+1}\left(\frac{z_{m+1}}{\sqrt{\chi^{m+1}(x)}}\right),...,t_{k}\left(\frac{z_{k}}{\sqrt{\chi^{k}(x)}}\right),t_{k+1},...,t_n,x,z_1,...,z_m\right)\right]\mapsfrom [(t,x,z_1,...,z_k)],
\end{equation*} which are both smooth by (\ref{smoothandsympstrnattorsppropeq2}). Similarly, it follows from (\ref{smoothandsympstrnattorsppropeq3}) that the smooth structure on $J_\Delta^{-1}(V)$ induced by $h_{(y_0,V,\phi)}$ restricts to that on $J_\Delta^{-1}(W)$ induced by $h_{(w_0,W,\widetilde{\phi})}$. All together, this shows that the smooth structures on $J_\Delta^{-1}(U)$ and $J_\Delta^{-1}(V)$ induced by the respective triples $(x_0,U,\chi)$ and $(y_0,V,\phi)$ indeed restrict to the same smooth structure on $J_\Delta^{-1}(W)$. This proves the part of the proposition regarding the smooth structure. \\

For the part regarding the symplectic structure, it is enough to show (by continuity) that the symplectic forms induced by $h_{(x_0,U,\chi)}$ and $h_{(y_0,V,\phi)}$ coincide on the open  $J_\Delta^{-1}(U\cap V)\cap J_\Delta^{-1}(\mathring{\Delta})$, which is dense in $J_\Delta^{-1}(U\cap V)$. To see that these indeed coincide, notice that the quotient map $\sT_\Lambda\vert_\Delta\to S_\Delta$ restricts to a diffeomorphism: 
\begin{equation}\label{regpartdiffeoquotmap} \sT_\Lambda\vert_{\mathring{\Delta}}\to J_\Delta^{-1}(\mathring{\Delta})
\end{equation} and the symplectic forms on the open  $J_\Delta^{-1}(U\cap V)\cap J_\Delta^{-1}(\mathring{\Delta})$ induced by $h_{(x_0,U,\chi)}$ and $h_{(y_0,V,\phi)}$ both coincide with the push-forward of the symplectic form $\Omega_\Lambda$ along (\ref{regpartdiffeoquotmap}).
\end{proof}
\begin{rem} This proof shows that (\ref{regpartdiffeoquotmap}) is a symplectomorphism:
\begin{equation*} (\sT_\Lambda\vert_{\mathring{\Delta}},\Omega_\Lambda) \xrightarrow{\sim} (J_\Delta^{-1}(\mathring{\Delta}),\omega_\Delta).
\end{equation*} In particular, $(S_\Delta,\omega_\Delta)$ is simply $(\sT_\Lambda,\Omega_\Lambda)$ when $\Delta=M$.
\end{rem}
Next, we show that this structure of symplectic manifold on $S_\Delta$ is compatible with the $\sT_\Lambda$-action.
\begin{prop}\label{nattoractsmhamprop} The $\sT_\Lambda$-action along (\ref{mommaptoractconstrfromdelzsubman}) (defined in the previous subsection) is smooth and Hamiltonian with respect to the smooth and symplectic structure in Proposition \ref{smoothandsympstrnattorspprop}. 
\end{prop}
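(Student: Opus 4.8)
The statement is local over $M$: since the opens $U$ arising from $\Delta$-admissible triples cover $\Delta$ and the sets $J_\Delta^{-1}(U)$ cover $S_\Delta$, it suffices to work one chart at a time. The plan is to first deduce smoothness of the action from the standard toric local model, and then to obtain the Hamiltonian (multiplicativity) condition by a density argument anchored at the free locus $J_\Delta^{-1}(\mathring{\Delta})$.

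For smoothness, I would fix a $\Delta$-admissible triple $(x_0,U,\chi)$ and invoke Proposition \ref{chartdefnattorspprop}: the homeomorphism $h_{(x_0,U,\chi)}$ intertwines the $\sT_\Lambda$-action along $J_\Delta$ over $U$ with the action of the trivial symplectic torus bundle (\ref{trivsymptorbuneq}) on the local model (\ref{mommaplocmodadmtriple}), via the smooth isomorphism of torus bundles $\Phi_\chi$ of (\ref{trivsymptorbuniachart}). The latter action is manifestly smooth: it is the residual action induced on the symplectic reduction $(S_{(x_0,U,\chi)},\omega_{(x_0,U,\chi)})$ by the $\T^n$-translation action on $\T^n\times\chi(U)$, which commutes with the reducing $\T^k$-action of (\ref{eqn:mommapred:stdlocmod}). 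As $h_{(x_0,U,\chi)}$ is a diffeomorphism by Proposition \ref{smoothandsympstrnattorspprop} and $\Phi_\chi$ is a diffeomorphism, the $\sT_\Lambda$-action is smooth over each such $U$, hence on all of $S_\Delta$.

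For the Hamiltonian property, recall (as in the proof of Proposition \ref{lagrseccharprop:torbunversion}) that it amounts to the multiplicativity identity
\[ (m_S)^*\omega_\Delta=(\textrm{pr}_{S_\Delta})^*\omega_\Delta+(\textrm{pr}_{\sT_\Lambda})^*\Omega_\Lambda \]
for the action map $m_S$ defined on the fibre product $\sT_\Lambda\times_M S_\Delta$. This fibre product is the pullback $J_\Delta^*(\sT_\Lambda\vert_\Delta)$, a torus bundle over the smooth manifold $S_\Delta$, so both sides are genuine smooth $2$-forms on it. On the dense open subset $\sT_\Lambda\times_M J_\Delta^{-1}(\mathring{\Delta})$ they agree: there the symplectomorphism (\ref{regpartdiffeoquotmap}) identifies $J_\Delta^{-1}(\mathring{\Delta})$ with $(\sT_\Lambda\vert_{\mathring{\Delta}},\Omega_\Lambda)$ and carries the action into the groupoid multiplication of $\sT_\Lambda$ on itself, so the identity reduces to the multiplicativity of the symplectic form $\Omega_\Lambda$ on the symplectic groupoid $\sT_\Lambda$. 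Since $J_\Delta^{-1}(\mathring{\Delta})$ is dense in $S_\Delta$ by Proposition \ref{torproptoppartprop} and $\textrm{pr}_{S_\Delta}$ is a fibre bundle, the subset $\sT_\Lambda\times_M J_\Delta^{-1}(\mathring{\Delta})$ is dense, and by continuity the identity holds everywhere. Together with smoothness this shows the action is Hamiltonian, with momentum map $J_\Delta$.

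I expect the main obstacle to be the behaviour across the singular strata of $\Delta$, where the action is no longer free and the multiplicativity is not tautological. The density argument circumvents a direct computation there, but it relies on having established smoothness of $m_S$ beforehand so that continuity may be invoked; should one prefer to avoid density, the alternative is to verify the identity directly in the standard toric local model (\ref{mommaplocmodadmtriple}), where it is the familiar statement that the reduced space is Hamiltonian for the trivial symplectic torus bundle (\ref{trivsymptorbuneq}).
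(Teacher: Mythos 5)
Your proof is correct, and its smoothness half is exactly the paper's argument: the paper's entire proof consists of transporting the statement through the identifications of Proposition \ref{chartdefnattorspprop} \textemdash the homeomorphisms $h_{(x_0,U,\chi)}$ (symplectomorphisms by Proposition \ref{smoothandsympstrnattorspprop}) intertwine the $\sT_\Lambda$-action with the action of the trivial bundle (\ref{trivsymptorbuneq}) on the standard local model (\ref{mommaplocmodadmtriple}) via $\Phi_\chi$, and that local model action is smooth and Hamiltonian by its construction as a symplectic reduction. Where you genuinely diverge is the Hamiltonian (multiplicativity) condition: rather than invoking the Hamiltonian property of the local model \textemdash the route the paper takes, and which you yourself note as the alternative \textemdash you verify the identity $(m_S)^*\omega_\Delta=(\textrm{pr}_{S_\Delta})^*\omega_\Delta+(\textrm{pr}_{\sT_\Lambda})^*\Omega_\Lambda$ by continuity from the dense open locus over $\mathring{\Delta}$, where (\ref{regpartdiffeoquotmap}) identifies the action with the groupoid multiplication of $(\sT_\Lambda,\Omega_\Lambda)$, so that the identity becomes multiplicativity of $\Omega_\Lambda$. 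This is sound, and you put the pieces in the right order: smoothness of the action map is established first, so that both sides are smooth $2$-forms on the smooth manifold $\sT_\Lambda\times_M S_\Delta$; the locus is dense because $J_\Delta^{-1}(\mathring{\Delta})$ is open and dense (Proposition \ref{torproptoppartprop}) and the projection to $S_\Delta$ is a fibre bundle; and two continuous forms agreeing on a dense set agree everywhere. What your route buys is that no symplectic computation in the singular (reduced) model is needed \textemdash the only input is multiplicativity of $\Omega_\Lambda$, a standard fact about symplectic torus bundles \textemdash and it is very much in the spirit of density arguments the paper itself uses (for the symplectic structure in the proof of Proposition \ref{smoothandsympstrnattorspprop}, and in Proposition \ref{lagrseccharprop:torbunversion}). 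What the paper's route buys is brevity: smoothness and the Hamiltonian condition are transported in a single stroke, since the local model was already set up as a faithful toric (in particular Hamiltonian) space of (\ref{trivsymptorbuneq}).
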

\begin{proof} This follows from the part of Proposition \ref{chartdefnattorspprop} on compatibility of (\ref{indhomdeltadmtripleeq}) with the $\sT_\Lambda$-action. 
\end{proof}
Henceforth, we consider $S_\Delta$ as smooth manifold with the smooth structure in Proposition \ref{smoothandsympstrnattorspprop} and we let $\omega_\Delta$ denote the symplectic structure in this proposition. Combining Proposition \ref{torproptoppartprop} and Proposition \ref{nattoractsmhamprop}, we conclude that (equipped with the $\sT_\Lambda$-action defined in the previous subsection): 
\begin{equation*} J_\Delta:(S_\Delta,\omega_\Delta)\to M
\end{equation*} is a faithful toric $(\sT_\Lambda,\Omega_\Lambda)$-space.
{
\begin{ex} Consider the standard integral affine cylinder $(M,\Lambda):=(\mathbb{S}^1\times \R,\Z\d\theta\oplus \Z\d x)$ with Delzant subspace the compact cylinder $\Delta:=\mathbb{S}^1\times [-1,1]$. In this case, $(S_\Delta,\omega_\Delta)$ is the symplectic manifold $(\T^2\times\mathbb{S}^2,\omega_{\T^2}\oplus \omega_{\mathbb{S}^2})$, with $\omega_{\T^2}$ and $\omega_{\mathbb{S}^2}$ the standard area forms (up to a factor $2\pi$). 
\end{ex}
}
\subsubsection{The natural and local dependence} To complete the proof of Theorem \ref{constrtorictorbunspthm} it remains to address the natural and local dependence of the construction given in the previous subsections. Let $(M_1,\Lambda_1)$ and $(M_2,\Lambda_2)$ be integral affine manifolds with respective Delzant subspaces $\Delta_1$ and $\Delta_2$, and let $\phi:U_1\to U_2$ be a diffeomorphism of manifolds with corners between respective opens $U_1$ in $\Delta_1$ and $U_2$ in $\Delta_2$, such that $\phi^*(\Lambda_2)=\Lambda_1\vert_{U_1}$. Then $\phi$ induces a map:
\begin{equation}\label{indmaptorbun} \sT_{\Lambda_1}\vert_{U_1}\to \sT_{\Lambda_2}\vert_{U_2},\quad [\alpha]\mapsto [(\d\phi^{-1})^*\alpha]. 
\end{equation} Since $\phi$ is a diffeomorphism of manifolds with corners between opens in $\Delta_1$ and $\Delta_2$, it holds that:
\begin{equation*} \d \phi_x(F_x(\Delta_1))=F_{\phi(x)}(\Delta_2),
\end{equation*} for all $x\in U_1$. Therefore, (\ref{indmaptorbun}) descends to a map:
\begin{equation*} J_{\Delta_1}^{-1}(U_1)\to J_{\Delta_2}^{-1}(U_2).
\end{equation*} 
\begin{defi} We define (\ref{assocsymplectotorbunspthm}) to be the map induced by (\ref{indmaptorbun}).
\end{defi}
\begin{prop} This map is indeed a symplectomorphism $(J_{\Delta_1}^{-1}(U_1),\omega_{\Delta_1})\xrightarrow{\sim} (J_{\Delta_2}^{-1}(U_2),\omega_{\Delta_2})$. 
\end{prop}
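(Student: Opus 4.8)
The plan is to verify the symplectomorphism property locally, working in the charts on $S_{\Delta_1}$ and $S_{\Delta_2}$ supplied by the $\Delta$-admissible triples of Proposition~\ref{chartdefnattorspprop}, and to choose these charts on the two sides so that $\phi$ becomes the identity in coordinates. With such a choice the local coordinate expression of (\ref{assocsymplectotorbunspthm}) will be literally the identity map of a single standard local model, so that the statement follows at once from Proposition~\ref{smoothandsympstrnattorspprop}, which says that the maps $h_{(x_0,U,\chi)}$ are symplectomorphisms onto the local models and are simultaneously the charts defining the smooth and symplectic structures on $S_\Delta$.

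First I would note that (\ref{assocsymplectotorbunspthm}) is a homeomorphism with inverse $(\phi^{-1})_*$, both being the descents of the continuous maps (\ref{indmaptorbun}); it therefore remains to show that near each point it is a diffeomorphism intertwining $\omega_{\Delta_1}$ and $\omega_{\Delta_2}$. Fix $x_0\in U_1\cap\Delta_1$, put $y_0:=\phi(x_0)$ and $k:=\textrm{depth}_{\Delta_1}(x_0)=\textrm{depth}_{\Delta_2}(y_0)$, the equality holding because $\phi$ is a diffeomorphism of manifolds with corners. The key preliminary step is to build two compatible admissible triples. Choosing an integral affine chart $(W_1,\chi_1)$ around $x_0$ with $(x_0,W_1,\chi_1)$ being $\Delta_1$-admissible, I claim $\phi$ extends near $x_0$ to an integral affine diffeomorphism $\widetilde\phi$ of ambient opens of $M_1$ and $M_2$: expressed in $\chi_1$ and in any integral affine chart around $y_0$, the map $\phi$ is a morphism of integral affine manifolds on the interior of a corner region, hence by Lemma~\ref{iamorphintaffvectsplemma} the restriction of an integral affine transformation of $\R^n$ (with invertible integral linear part), which extends to ambient opens. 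Setting $\chi_2:=\chi_1\circ\widetilde\phi^{\,-1}$ then yields a $\Delta_2$-admissible triple $(y_0,W_2,\chi_2)$ with $\chi_2(W_2)=\chi_1(W_1)$, the same $k$, and $\chi_2\circ\phi=\chi_1$ on $\Delta_1$.

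With these choices the two standard local models of Subsection~\ref{smsympstrconstrtorspsec} coincide, $S_{(x_0,W_1,\chi_1)}=S_{(y_0,W_2,\chi_2)}$, since each depends only on $\chi_i(W_i)\subseteq\R^n$ and on $k$. The heart of the proof is then a direct computation showing that $h_{(y_0,W_2,\chi_2)}\circ\phi_*\circ h_{(x_0,W_1,\chi_1)}^{-1}$ is the identity of this common model. Using the explicit formula (\ref{indhomdeltadmtripleeq}): for $p\in J_{\Delta_1}^{-1}(W_1)$ with $x:=J_{\Delta_1}(p)$ and underlying covector $\alpha=\sum_j\theta_j\,\d(\chi_1)^j_x$, the cotangent lift defining (\ref{indmaptorbun}) gives $(\d\phi^{-1})^*\d(\chi_1)^j_x=\d(\chi_2)^j_{y}$ for $y=\phi(x)$, because $\chi_1=\chi_2\circ\phi$ on $\Delta_1$; since moreover $\chi_2(y)=\chi_1(x)$, plugging into (\ref{indhomdeltadmtripleeq}) on both sides produces the same point. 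As each $h_{(x_0,U,\chi)}$ is a symplectomorphism onto its model and a chart for the structure on $S_\Delta$, being locally conjugate to the identity forces $\phi_*$ to be a local diffeomorphism with $\phi_*^*\omega_{\Delta_2}=\omega_{\Delta_1}$; these local pieces agree on overlaps, all being restrictions of the single map $\phi_*$, and the global conclusion follows.

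I expect the main obstacle to be the preliminary step of promoting $\phi$, a priori only a diffeomorphism of the corner sets $U_1$ and $U_2$, to an integral affine diffeomorphism of ambient open sets, so that the ambient charts $\chi_1,\chi_2$ exist and the triples are genuinely $\Delta$-admissible with $\chi_2\circ\phi=\chi_1$. Once that is in place, the coincidence of the two local models and the identity computation are routine verifications with (\ref{indhomdeltadmtripleeq}) and the elementary action of the cotangent lift on the exact covectors $\d\chi^j$.
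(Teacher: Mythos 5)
Your proposal is correct and follows essentially the same route as the paper: both reduce to the standard local models via $\Delta$-admissible triples, use the rigidity argument of Lemma \ref{iamorphintaffvectsplemma} to conclude that $\phi$ is integral affine in coordinates, and then arrange the charts so that $\phi_*$ becomes the identity of the local model (the paper adjusts the source chart to $A\circ\chi_1$ with $A\in\GL_n(\Z)$, whereas you adjust the target chart to $\chi_1\circ\widetilde{\phi}^{-1}$ \textemdash a cosmetic difference). The only detail to tighten is connectedness: Lemma \ref{iamorphintaffvectsplemma} is stated for connected opens, so to run it on the corner region $\chi_1(W_1)\cap\R^n_k$ (or on its interior $\chi_1(W_1)\cap\mathring{\R}^n_k$, followed by continuity) you should arrange $\chi_1(W_1)$ to be, say, an open ball around the origin, exactly as the paper does.
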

\begin{proof} Let $x_0\in U_1$ and let $k=\textrm{depth}_{\Delta_1}(x_0)$ and $n=\dim(M_1)$. Since $\phi$ is a diffeomorphism of manifolds with corners from an open in $\Delta_1$ onto an open in $\Delta_2$, it holds that $\textrm{depth}_{\Delta_2}(\phi(x_0))=k$ and $\dim(M_2)=n$. Now, fix integral affine charts $(V_1,\chi_1)$ around $x_0$ and $(V_2,\chi_2)$ around $\phi(x_0)$ with the following properties:
\begin{itemize}\item $(x_0,V_1,\chi_1)$ is $\Delta_1$-admissible and $(\phi(x_0),V_2,\chi_2)$ is $\Delta_2$-admissible, as in (\ref{imadapiachart}), 
\item $V_1\cap \Delta_1\subset U_1$, $V_2\cap \Delta_2\subset U_2$ and $\phi(V_1\cap \Delta_1)\subset V_2\cap \Delta_2$,
\item $\chi_1(V_1)$ is an open ball around the origin in $\R^n$,
\end{itemize} and consider the coordinate representation: 
\begin{equation}\label{coordrepiaintmap} \chi_2\circ \phi\circ \chi_1^{-1}:\chi_1(V_1)\cap\R^n_k\to \chi_2(V_2)\cap\R^n_k.
\end{equation} Using the same arguments as in the proof of Lemma \ref{iamorphintaffvectsplemma}, the fact that $\phi^*\Lambda_2=\Lambda_1\vert_{U_1}$, the fact that any point in $\chi(V_1)\cap \R^n_k$ can be connected to the origin by a smooth path in $\chi(V_1)\cap \R^n_k$ and the fact that (\ref{coordrepiaintmap}) maps the origin to itself, it follows that (\ref{coordrepiaintmap}) is the restriction of an element $A\in \GL_n(\Z)$. Since $A$ is a linear map that maps an open neighbourhood of the origin in $\R^n_k$ to another such open, it must map $\R^n_k$ onto $\R^n_k$. Therefore, $(x_0,V_1,A\circ \chi_1)$ is another $\Delta_1$-admissible triple. To conclude the proof, notice that:
\begin{equation*} h_{(\phi(x_0),V_2,\chi_2)}\circ (\phi)_* \circ h_{(x_0,V_1,A\circ \chi_1)}^{-1}:(S_{(x_0,V_1,A\circ \chi_1)},\omega_{(x_0,V_1,A\circ \chi_1)})\to (S_{(\phi(x_0),V_2,\chi_2)},\omega_{(\phi(x_0),V_2,\chi_2)})
\end{equation*} is given by the inclusion:
\begin{equation*} [(t,x,z)]\mapsto [(t,x,z)], 
\end{equation*} which is clearly smooth and symplectic. Since $x_0\in U_1$ was arbitrary, this shows that $\phi_*$ is smooth and symplectic. Since $\phi_*^{-1}=(\phi^{-1})_*$, the inverse of $\phi_*$ is smooth and symplectic as well. 
\end{proof}
Seeing as the remaining properties listed in Subsection \ref{constrtorictorbunspthmintrosec} are clearly satisfied, this concludes the construction behind (and hence the proof of) Theorem \ref{constrtorictorbunspthm}.
\newpage
\section{The first structure theorem and the splitting theorem}
{In this part we address Theorems \ref{firststrthm} and \ref{globalsplitthm}. In Section \ref{sec:backgrequivshvs} we introduce the equivariant sheaf appearing in these theorems and provide the necessary background on the cohomology of such sheaves. In particular, we recall a \v{C}ech-type model for these cohomology groups that will be used in the proofs of these theorems. The proof of Theorem \ref{firststrthm} is given in Section \ref{sec:pffirststrthm}. Along the way (see Proposition \ref{naturalityofactprop}), we make the meaning of the naturality in that theorem precise. The proof of Theorem \ref{globalsplitthm} is the content of Section \ref{pfsplitthmsec}. In that section we also give the definition of the Lagrangian Dixmier-Douady class.}
\subsection{On equivariant sheaves for topological groupoids}\label{sec:backgrequivshvs}
\subsubsection{{Equivariant sheaves and their cohomology}}\label{subsec:equivshfcohom}
Let $\G\rightrightarrows X$ be a topological groupoid. Recall that a \textbf{$\G$-sheaf of abelian groups} (simply called $\G$-sheaf throughout) is a sheaf $\A$ of abelian groups on $X$ equipped with a continuous (right) action of $\G$ along the etale map of $\A$ by fiberwise group homomorphisms. 
The category $\textsf{Sh}(\G)$ of $\G$-sheaves, with morphisms the $\G$-equivariant maps of the underlying sheaves, is abelian and has enough injectives. The additive functor $\Gamma^{\G}:\textsf{Sh}(\G)\to \textsf{Ab}$ that takes $\G$-invariant global sections is left-exact, and the cohomology groups of a $\G$-sheaf $\A$ are defined as the right-derived functors of $\Gamma^\G$, meaning that:
\begin{equation*} H^n(\G,\A):=H^n(\Gamma^\G(I^\bullet)),
\end{equation*} with $0\to \A\to I^0\to I^1\to \dots$ an injective resolution of $\A$ in $\textsf{Sh}(\G)$. 
\begin{ex}\label{ex:orbifoldsheafoflagsec} Our main example of interest is the sheaf $\L_\Delta$ appearing in theorems \ref{firststrthm} and \ref{globalsplitthm}. This $\B\vert_\Delta$-sheaf is defined for any integral affine orbifold groupoid $\B\rightrightarrows (M,\Lambda)$ and any Delzant subspace $\underline{\Delta}$ thereof, as follows. Let $\Delta$ denote the corresponding invariant subspace of $M$ and consider the pre-symplectic torus bundle $(\sT_\Lambda,\Omega_\Lambda)$ defined as in Remark \ref{presymptorbunrem}. For simplicity, suppose first that $\B$ is etale, so that the torus bundle $(\sT_\Lambda,\Omega_\Lambda)$ is symplectic. Then $\L_\Delta$ is the sheaf in Definition \ref{shfinvlagrsecdefi:torbunversion} and, in fact, both sheaves in Definition \ref{shfinvlagrsecdefi:torbunversion} are naturally $\B\vert_\Delta$-sheaves. For $\gamma:x\to y$ in $\B\vert_\Delta$ and $\tau:V\to \sT_\Lambda$ a section over an open $V$ in $\Delta$ around $y$, the action on the germ of $\tau$ at $y$ is given by: 
\begin{equation*} [\tau]_y\cdot \gamma:=[(t\circ\sigma)^*(\tau)]_x,
\end{equation*} with $\sigma:U\to \B$ any smooth section of $s_\B$ over an open $U$ in $\Delta$ around $x$ such that $\sigma(x)=\gamma$ and $t(\sigma(U))\subset V$. In general (when $\B$ is not necessarily etale), we let $\L_\Delta$ denote the sheaf of isotropic sections of $\sT_\Lambda\vert_\Delta$ (i.e. smooth local sections $\tau$ of $\sT_\Lambda\vert_\Delta$ such that $\tau^*\Omega_\Lambda=0$). This is a subsheaf of the sheaf of $\F$-basic smooth sections of $\sT_\Lambda\vert_\Delta$, with $\F$ the foliation by connected components of the {orbits} of $\B$. Both of these are $\B\vert_\Delta$-sheaves, with the action defined as in the etale case.
\end{ex}
When working with these cohomology groups we usually reduce to the case in which the groupoid is etale, by restricting to a complete transversal (one that intersects all {orbits}). This is possible due to Example \ref{transversalmoreqex} and:  
\begin{prop}\label{prop:morinvshfcohom}
A Morita equivalence of Lie groupoids with corners (as in Definition \ref{def:moreqLiegpoidswithcorners}):
\begin{center}
\begin{tikzpicture} \node (G1) at (0,0) {$\G_1$};
\node (M1) at (0,-1.3) {$X_1$};
\node (S) at (1.4,0) {$P$};
\node (M2) at (2.7,-1.3) {$X_2$};
\node (G2) at (2.7,0) {$\G_2$};
 
\draw[->,transform canvas={xshift=-\shift}](G1) to node[midway,left] {}(M1);
\draw[->,transform canvas={xshift=\shift}](G1) to node[midway,right] {}(M1);
\draw[->,transform canvas={xshift=-\shift}](G2) to node[midway,left] {}(M2);
\draw[->,transform canvas={xshift=\shift}](G2) to node[midway,right] {}(M2);
\draw[->](S) to node[pos=0.25, below] {$\text{ }\text{ }\alpha_1$} (M1);
\draw[->] (0.8,-0.15) arc (315:30:0.25cm);
\draw[<-] (1.9,0.15) arc (145:-145:0.25cm);
\draw[->](S) to node[pos=0.25, below] {$\alpha_2$\text{ }} (M2);
\end{tikzpicture}
\end{center} induces an equivalence of categories:
\begin{equation*}
P_*:\textsf{Sh}(\G_1)\xrightarrow{\sim}\textsf{Sh}(\G_2),
\end{equation*} and hence, for any $\G_1$-sheaf $\A$, it induces an isomorphism:
\begin{equation*} H^\bullet(\G_1,\A)\cong H^\bullet(\G_2,P_*\A).
\end{equation*} This is functorial with respect to composition of Morita equivalences.
\end{prop}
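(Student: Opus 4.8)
The plan is to realise the equivalence $P_*$ explicitly from the bibundle $P$ underlying the Morita equivalence, to produce a quasi-inverse from the inverse bibundle, to verify that $P_*$ intertwines the two invariant-global-section functors, and finally to transport injective resolutions through $P_*$ in order to deduce the cohomology isomorphism. Throughout I write $\alpha_1\colon P\to X_1$ and $\alpha_2\colon P\to X_2$ for the two anchor maps, so that $\alpha_1$ is a principal $\G_2$-bundle and $\alpha_2$ a principal $\G_1$-bundle, with the two actions commuting.

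First I would construct $P_*$. Given $\A\in\textsf{Sh}(\G_1)$, pull it back along the surjective submersion $\alpha_1$ to a sheaf $\alpha_1^*\A$ on $P$. The $\G_1$-equivariant structure of $\A$, combined with the (left) $\G_1$-action on $P$, equips $\alpha_1^*\A$ with canonical descent data relative to the principal $\G_1$-bundle $\alpha_2\colon P\to X_2$; since $\alpha_2$ is such a bundle (in particular an open surjective submersion admitting local sections, even in the manifold-with-corners category), this descends to a well-defined sheaf $P_*\A$ on $X_2$. The residual, commuting $\G_2$-action on $P$ then makes $P_*\A$ into a $\G_2$-sheaf, and the assignment is plainly functorial in $\A$. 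The one point requiring care here is the justification of sheaf descent along $\alpha_2$ in the category of manifolds with corners; this reduces to ordinary étale descent once one uses local triviality of the principal bundle, so no essentially new difficulty arises from the corners.

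Next I would exhibit the quasi-inverse and thereby the equivalence. The bibundle $P$ admits an inverse bibundle $\bar P$ realising a Morita equivalence in the opposite direction, and the same recipe yields $(\bar P)_*\colon\textsf{Sh}(\G_2)\to\textsf{Sh}(\G_1)$. Using the canonical isomorphisms of composite bibundles $\bar P\circ P\cong\mathrm{id}_{\G_1}$ and $P\circ\bar P\cong\mathrm{id}_{\G_2}$, together with the pseudo-functoriality of the pushforward under composition, $(Q\circ P)_*\cong Q_*\circ P_*$ (which itself follows from the fact that pullback composes and that the composite bibundle is the appropriate $\G$-quotient of a fibre product, so that the two descents may be performed in either order), one obtains natural isomorphisms $(\bar P)_*\circ P_*\cong\mathrm{Id}$ and $P_*\circ(\bar P)_*\cong\mathrm{Id}$. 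Hence $P_*$ is an equivalence of abelian categories, and as such it is automatically exact and carries injective objects to injective objects.

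Finally I would compare the section functors and conclude. The key computation is a natural isomorphism $\Gamma^{\G_2}(P_*\A)\cong\Gamma^{\G_1}(\A)$: a $\G_2$-invariant global section of $P_*\A$ corresponds to a global section of $\alpha_1^*\A$ over $P$ invariant under both actions, and since $\alpha_1$ is a surjective principal $\G_2$-bundle such sections descend along $\alpha_1$ to $\G_1$-invariant global sections of $\A$, naturally in $\A$. Choosing an injective resolution $0\to\A\to I^\bullet$ in $\textsf{Sh}(\G_1)$, exactness and preservation of injectives make $0\to P_*\A\to P_*I^\bullet$ an injective resolution in $\textsf{Sh}(\G_2)$, whence
\[
H^n(\G_2,P_*\A)=H^n\!\big(\Gamma^{\G_2}(P_*I^\bullet)\big)\cong H^n\!\big(\Gamma^{\G_1}(I^\bullet)\big)=H^n(\G_1,\A).
\]
Functoriality with respect to composition of Morita equivalences is then inherited from the composition isomorphism $(Q\circ P)_*\cong Q_*\circ P_*$ together with the coherence of the section-functor identifications, which is a diagram chase on the defining fibre products. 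The genuinely technical part of the argument is thus the careful set-up of the descent defining $P_*$ and the verification that the composition isomorphisms are coherent; once $P_*$ is known to be an equivalence intertwining the section functors, the passage to derived functors is formal.
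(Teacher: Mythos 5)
Your construction of $P_*$ \textemdash{} pullback of $\A$ along $\alpha_1$ followed by descent along the principal $\G_1$-bundle $\alpha_2$ \textemdash{} is precisely the functor the paper writes down (its \'etale space is $(P\times_{X_1}E)/\G_1\to X_2$ with the $\G_2$-action $[p,e]\cdot g=[p\cdot g,e]$), and the remainder of your argument (quasi-inverse from the inverse bibundle via the composition isomorphism $(Q\circ P)_*\cong Q_*\circ P_*$, the natural identification $\Gamma^{\G_2}\circ P_*\cong\Gamma^{\G_1}$, and transport of injective resolutions through the exact, injective-preserving equivalence) is exactly the standard proof that the paper delegates to the citation \cite[Corollary 3.11]{MoMr1}, noting only that it carries over to the corners setting. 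So your proposal is correct and takes essentially the same approach, just written out in full detail.
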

\begin{cor}\label{cor:morinvshfcohom:iaequiv}
An integral affine Morita equivalence:
\begin{center}
\begin{tikzpicture} \node (G1) at (0,0) {$\B_1$};
\node (M1) at (0,-1.3) {$(M_1,\Lambda_1)$};
\node (S) at (1.4,0) {$P$};
\node (M2) at (2.7,-1.3) {$(M_2,\Lambda_2)$};
\node (G2) at (2.7,0) {$\B_2$};
 
\draw[->,transform canvas={xshift=-\shift}](G1) to node[midway,left] {}(M1);
\draw[->,transform canvas={xshift=\shift}](G1) to node[midway,right] {}(M1);
\draw[->,transform canvas={xshift=-\shift}](G2) to node[midway,left] {}(M2);
\draw[->,transform canvas={xshift=\shift}](G2) to node[midway,right] {}(M2);
\draw[->](S) to node[pos=0.25, below] {$\text{ }\text{ }\alpha_1$} (M1);
\draw[->] (0.8,-0.15) arc (315:30:0.25cm);
\draw[<-] (1.9,0.15) arc (145:-145:0.25cm);
\draw[->](S) to node[pos=0.25, below] {$\alpha_2$\text{ }} (M2);
\end{tikzpicture}
\end{center} that relates a Delzant subspace $\underline{\Delta}_1\subset \underline{M}_1$ to $\underline{\Delta}_2\subset \underline{M}_2$ induces an isomorphism:
\begin{equation*} H^\bullet(\B_1\vert_{\Delta_1},\L_1)\cong H^\bullet(\B_2\vert_{\Delta_2},\L_2),
\end{equation*} between the cohomology groups of $\L_1:=\L_{\Delta_1}$ and $\L_2:=\L_{\Delta_2}$ (as in Example \ref{ex:orbifoldsheafoflagsec}). This is functorial with respect to composition of integral affine Morita equivalences. 
\end{cor}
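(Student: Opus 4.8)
The plan is to deduce this directly from Proposition \ref{prop:morinvshfcohom} by restricting the given Morita equivalence to the Delzant subspaces and then checking that the resulting equivalence of categories carries $\L_1$ to $\L_2$. First I would restrict the bibundle. By Remark \ref{etalecasedelzsubmanrem} each $\Delta_i$ is an embedded submanifold with corners of codimension zero and each $\B_i\vert_{\Delta_i}$ is a Lie groupoid with corners. As noted in the proof of Proposition \ref{iamoreqdelzsubconeprop}, the hypothesis that the equivalence relates $\underline{\Delta}_1$ to $\underline{\Delta}_2$ means that the invariant subsets $\Delta_1$ and $\Delta_2$ have one and the same preimage $P_\Delta:=\alpha_1^{-1}(\Delta_1)=\alpha_2^{-1}(\Delta_2)$ in $P$. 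I would verify that restricting $\alpha_1,\alpha_2$ to $P_\Delta$ yields a Morita equivalence of Lie groupoids with corners (Definition \ref{def:moreqLiegpoidswithcorners}) between $\B_1\vert_{\Delta_1}$ and $\B_2\vert_{\Delta_2}$: the principality and surjectivity of the two legs are inherited from the ambient equivalence, and $P_\Delta$ is a submanifold with corners because each $\alpha_i$ is a submersion onto $M_i$ and hence transverse to $\Delta_i$.

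Applying Proposition \ref{prop:morinvshfcohom} to this restricted equivalence then gives an equivalence of categories $(P_\Delta)_*\colon\textsf{Sh}(\B_1\vert_{\Delta_1})\xrightarrow{\sim}\textsf{Sh}(\B_2\vert_{\Delta_2})$, together with the asserted cohomology isomorphisms, all functorial with respect to composition. It therefore remains to identify the image of $\L_1$ under this equivalence, that is, to show $(P_\Delta)_*\L_1\cong\L_2$. For this I would argue as in the proof of Proposition \ref{iamoreqdelzsubconeprop}: since the Morita equivalence is integral affine, the lattices $\Lambda_1$ and $\Lambda_2$ pull back along $\alpha_1$ and $\alpha_2$ to one and the same smooth Lagrangian lattice $\Lambda$ in $\No^*\F$ over $P_\Delta$, where $\F$ is the common pullback foliation. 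Consequently the two pre-symplectic torus bundles pull back to a single $(\sT_\Lambda,\Omega_\Lambda)$ over $P_\Delta$, and the fiberwise isomorphisms $(\psi_p)_*$ of Remark \ref{altdefiamoreqrem} assemble into isomorphisms $\alpha_i^*\sT_{\Lambda_i}\cong\sT_\Lambda$. Pulling a section back and imposing invariance under the leg-wise groupoid action then matches the isotropic sections of $\sT_{\Lambda_1}\vert_{\Delta_1}$ with those of $\sT_{\Lambda_2}\vert_{\Delta_2}$, compatibly with the $\B\vert_\Delta$-actions from Example \ref{ex:orbifoldsheafoflagsec}, which is exactly the claim.

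The main obstacle I expect is this last identification of sheaves. One must check two things that the mere groupoid Morita equivalence does not give: that the integral affine condition forces the pre-symplectic forms $\Omega_{\Lambda_1}$ and $\Omega_{\Lambda_2}$ to be identified on the common torus bundle over $P_\Delta$, so that the isotropy condition $\tau^*\Omega_\Lambda=0$ is genuinely preserved under the correspondence; and that the resulting bijection on germs of isotropic sections intertwines the residual $\B_2\vert_{\Delta_2}$-action with the $\B_1\vert_{\Delta_1}$-action, and not merely the fiberwise torus isomorphisms. Both reduce to the computation already packaged in Remark \ref{altdefiamoreqrem} and the proof of Proposition \ref{iamoreqdelzsubconeprop}, together with the fact that $\Omega_\Lambda$ is canonically determined by the canonical symplectic form on $T^*P_\Delta$ via the Lagrangian lattice $\Lambda$, so that its pullback descriptions from $M_1$ and $M_2$ coincide.

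Finally, functoriality with respect to composition of integral affine Morita equivalences would follow formally: the formation of $P_\Delta$ is compatible with composition of bibundles (the composite bibundle restricts over $\Delta$ to the composite of the restrictions), and the identifications $(P_\Delta)_*\L_1\cong\L_2$ are natural, so the functoriality statement is inherited verbatim from that in Proposition \ref{prop:morinvshfcohom}.
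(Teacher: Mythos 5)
Your proposal follows the paper's own route: restrict the bibundle over the Delzant subspaces to a Morita equivalence of Lie groupoids with corners between $\B_1\vert_{\Delta_1}$ and $\B_2\vert_{\Delta_2}$, apply Proposition \ref{prop:morinvshfcohom}, and then identify $(P_\Delta)_*\L_1$ with $\L_2$ using the integral affine condition (the common Lagrangian lattice on the bibundle, the fiberwise isomorphisms of Remark \ref{altdefiamoreqrem}, and the fact that isotropic sections are basic). The paper carries out this last identification concretely over a basis of ``simple'' foliation-chart opens in $M_2$ admitting suitable local sections $\sigma$ of $\alpha_2$, pulling sections back along $\alpha_1\circ\sigma$ --- which is precisely the descent argument you sketch --- so your approach is essentially the same.
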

\begin{proof}[Proof of Proposition \ref{prop:morinvshfcohom} and Corollary \ref{cor:morinvshfcohom:iaequiv}] Proposition \ref{prop:morinvshfcohom} can be proved as for Lie groupoids without corners (see \cite[Corollary 3.11]{MoMr1}). The functor $P_*$ sends a $\G_1$-sheaf $\A$ with etale space $E\to X_1$ to the $\G_2$-sheaf corresponding to the etale space: 
\begin{equation*} \alpha_2\circ \textrm{pr}_P:(P\times_{X_1}E)/\G_1\to X_2,
\end{equation*} equipped with the $\G_2$-action given by $[p,e]\cdot g:=[p\cdot g,e]$. The corollary follows from this, because for an integral affine Morita equivalence that relates $\underline{\Delta}_1$ to $\underline{\Delta}_2$ the $\B\vert_{\Delta_2}$-sheaves $P_*(\L_{\Delta_1})$ and $\L_{\Delta_2}$ are isomorphic, via the isomorphism given as follows. Let us call an open $U$ in $M_i$ simple if it is the domain of a foliation chart $\chi$ with $\chi(U)$ a product of a leafwise open cube and a transversal open cube centered around the origin, such that $\chi(U\cap \Delta)=\chi(U)\cap \R^n_k$ and the $\F_i$-basic coordinate $1$-forms span $\Lambda_i\vert_U$. Suppose that $U$ is a simple open in $M_2$ over which $\alpha_2$ admits a smooth section $\sigma:U\to P$ such that $\alpha_1\circ \sigma$ maps $U$ into a simple open $V$ in $M_1$ and $(\alpha_1\circ \sigma)(U)$ intersects all plaques of $\F_1$ in $V$. Since the Morita equivalence is integral affine and sections of $\L_1$ are $\F_1$-basic, pull-back along $\alpha_1\circ \sigma$ (whose differential at $x$ lifts the map $\psi^{-1}_{\sigma(x)}$ as in Remark \ref{altdefiamoreqrem}) induces an isomorphism:
\begin{equation*} \L_1(V\cap \Delta_1)\xrightarrow{\sim} \L_2(U\cap \Delta_2), \quad \tau \mapsto (\alpha_1\circ \sigma)^*(\tau).
\end{equation*}
On the other hand, we have an isomorphism:
\begin{equation*} \L_{1}(V\cap \Delta_1)\xrightarrow{\sim} P_*(\L_{1})(U\cap \Delta_2),\quad \tau\mapsto [\sigma(-),[\tau]_{(\alpha_1\circ \sigma)(-)}].
\end{equation*} Over opens of the form $U\cap \Delta_2$, which form a basis of $\Delta_2$, the isomorphism between $P_*(\L_{1})$ and $\L_{2}$ is given by the composition of these, which does not depend on the choice of $\sigma$ or $V$.  
 \end{proof}
\subsubsection{Bisections, embedding categories and their cohomology}\label{bisectionsembcatsec} 
Recall that a \textbf{continuous bisection} of a topological groupoid $\G\rightrightarrows X$ is a continuous section $\sigma:U\to \G$ of the source-map of $\G$, defined on an open $U$ in $X$, with the property that $t\circ\sigma:U\to X$ is an open embedding. For the image of this open embedding we use the notation:
\begin{equation*} \sigma\cdot U:=(t\circ \sigma)(U).
\end{equation*} Given two continuous bisections $\sigma_1:U_1\to \G$ and $\sigma_2:U_2\to \G$ such that $\sigma_2\cdot U_2\subset U_1$, we can consider their composition, which is again a continuous bisection of $\G$: \begin{equation*} \sigma_1\sigma_2:U_2\to \G, \quad (\sigma_1\sigma_2)(x):=\sigma_1((t\circ\sigma_2)(x))\sigma_2(x).
\end{equation*} In the smooth and symplectic setting we consider the following variations of this.
\begin{itemize} \item By a \textbf{smooth bisection} of a Lie groupoid with corners $\G\rightrightarrows X$ (see Definition \ref{liegpwithcorndef}) we mean a continuous bisection $\sigma:U\to \G$ with the property that $\sigma$ is smooth as map between manifolds with corners and $t\circ \sigma$ is a diffeomorphism onto its image. 
\item By a \textbf{Lagrangian bisection} of a symplectic groupoid with corners $(\G,\Omega)\rightrightarrows X$ (see Definition \ref{sympgpwithcorndef}) we mean a smooth bisection $\sigma:U\to \G$ with the property that $\sigma^*\Omega=0$.
\end{itemize} The composition of two smooth (resp. Lagrangian) bisections is again smooth (resp. Lagrangian). Moreover, continuous bisections of etale Lie groupoids with corners are automatically smooth. \\

We will use a \v{C}ech-type model from \cite{Mo2,Mo,CrMo1} for the cohomology of equivariant sheaves for etale groupoids. To recall this, let $\E\rightrightarrows X$ be a topological etale groupoid. Given a basis $\U$ for the topology of $X$, the \textbf{embedding category} over $\U$ is the category $\textsf{Emb}_\U(\E)$ with objects the opens in $\U$ and arrows $V\leftarrow U$ the continuous bisections $\sigma:U\to \E$ with $\sigma\cdot U\subset V$. We denote such an arrow as $V\xleftarrow{\sigma} U$. The composition of two arrows  $U_0\xleftarrow{\sigma_1}U_1\xleftarrow{\sigma_2}U_2$ is the arrow $U_0\xleftarrow{\sigma_1\sigma_2} U_2$ with underlying bisection the composition of $\sigma_1$ and $\sigma_2$. A \textbf{category approximating} $\E$ over $\U$ is a wide subcategory $\textsf{E}$ of $\textsf{Emb}_\U(\E)$ with the following properties. 
\begin{itemize}
\item[i)] For any $U,V\in \U$ with $U\subset V$, the arrow $V\hookleftarrow U$ (with bisection the unit map) lies in $\textsf{E}$.
\item[ii)] If $V\xleftarrow{\sigma} U$ belongs to \textsf{E}, then so does $V'\xleftarrow{\sigma} U'$, for any $U',V'\in \U$ with $\sigma\cdot U'\subset V'$.
\item[iii)] For any $V\in \U$ and $y\xleftarrow{g} x$ in $\E$ with $y\in V$, there is an arrow $V\xleftarrow{\sigma}U$ in \textsf{E} with $x\in U$ and $\sigma(x)=g$.
\end{itemize}
Give such a category $\textsf{E}$, an $\E$-sheaf $\A$ induces a pre-sheaf on $\textsf{E}$ (a contravariant functor from $\textsf{E}$ to the category of abelian groups) that assigns to a $U\in \U$ the abelian group $\A(U)$ and to an arrow $V\xleftarrow{\sigma}U$ the map $\A(V)\to \A(U)$ that sends $a\in \A(V)$ to the section $a\cdot \sigma\in \A(V)$ given by:
\begin{equation*} [a\cdot \sigma]_x:=[a]_{(t\circ \sigma)(x)}\cdot \sigma(x)\in \A_x,\quad x\in U. 
\end{equation*}
The cohomology that we will use is that of the small category $\textsf{E}$ with values in this pre-sheaf. We denote this as $\check{H}^\bullet(\textsf{E},\A)$, or as $\check{H}^\bullet_\U(\E,\A)$ when $\textsf{E}=\textsf{Emb}_\U(\E)$. Explicitly, $\check{H}^\bullet(\textsf{E},\A)$ is the cohomology of the cochain complex $(\check{C}(\textsf{E},\A),\check{\d})$, with $n$-cochains:
\begin{equation*} \check{C}^n(\textsf{E},\A)=\bigsqcap_{U_0\xleftarrow{\sigma_1} ... \xleftarrow{\sigma_n} U_n} \A(U_n)
\end{equation*} where the product runs through all $n$-strings of composable arrows in $\textsf{E}$. So, an $n$-cochain $c$ assigns to each such string $U_0\xleftarrow{\sigma_1} ... \xleftarrow{\sigma_n} U_n$ an element:
\begin{equation*} c(U_0\xleftarrow{\sigma_1} ... \xleftarrow{\sigma_n} U_n)\in \A(U_n). 
\end{equation*} The differential $\check{d}:\check{C}^n(\textsf{E},\A)\to \check{C}^{n+1}(\textsf{E},\A)$ is given by:
\begin{align*} \check{d}(c)(U_0\xleftarrow{\sigma_1} ...\xleftarrow{\sigma_{n+1}}U_{n+1})&=c(U_1\xleftarrow{\sigma_2} ...\xleftarrow{\sigma_{n+1}}U_{n+1})\\
&+\sum_{i=1}^n(-1)^ic(U_0\xleftarrow{\sigma_1} ... \xleftarrow{\sigma_{i-1}} U_{i-1} \xleftarrow{\sigma_{i}\sigma_{i+1}} U_{i+1}\xleftarrow{\sigma_{i+2}} ...\xleftarrow{\sigma_{n+1}}U_{n+1})\\
&+(-1)^{n+1}c(U_0\xleftarrow{\sigma_1}...\xleftarrow{\sigma_n} U_n)\cdot\sigma_{n+1}.
\end{align*} 
Like for usual \v{C}ech cohomology of sheaves on topological spaces, there is a natural morphism:
\begin{equation}\label{eqn:cechtoderivedmorph:equivshf} \phi_\textsf{E}:\check{H}^\bullet(\textsf{E},\A)\to H^\bullet(\E,\A),
\end{equation} induced, for instance, by the double complex obtained by applying $\check{C}^\bullet(\textsf{E},-)$ to an injective resolution of the $\G$-sheaf $\A$. This is an isomorphism if $\A$ is \textbf{$\U$-acyclic}, meaning that $H^n(U,\A)=0$ for each $n>0$ and each $U\in \U$. If $\U'$ is another basis and $\textsf{E}'$ a category approximating $\E$ over $\U'$, then is $\U\cup \U'$ is also a basis and the diagram:
\begin{center}
\begin{tikzcd} 
& H^\bullet(\E,\A) & \\
\check{H}^\bullet(\textsf{E},\A)\arrow[ru,"\phi_\textsf{E}"]\arrow[r,"(i_\textsf{E})^*"'] & \check{H}_{\U\cup \U'}^\bullet(\E,\A) \arrow[u,"\phi_{\textsf{E}''}"'] & \check{H}_{\U'}^\bullet(\textsf{E}',\A)\arrow[lu,"\phi_{\textsf{E}'}"']\arrow[l,"(i_{\textsf{E}'})^*"]
\end{tikzcd}
\end{center} commutes, where $\textsf{E}'':=\textsf{Emb}_{\U\cup\U'}(\E)$ and $i_\textsf{E}:\textsf{E}\to \textsf{E}''$ and $i_{\textsf{E}'}:\textsf{E}'\to \textsf{E}''$ are the inclusion functors. \\

\begin{rem}\label{vanlemrem} {Given an etale integral affine orbifold groupoid $\B\rightrightarrows (M,\Lambda)$ and a Delzant subspace $\underline{\Delta}$ with corresponding invariant subspace $\Delta\subset M$, there always is a basis $\U$ of $\Delta$ for which $\L:=\L_\Delta$ is $\U$-acyclic. This follows from the lemma below.}
\end{rem}
\begin{lemma}\label{vanlem} {Let $(V,\Lambda_V)$ be an integral affine vector space and let $\Delta$ be a convex Delzant subspace. Then $H^\bullet(\Delta,\L)$ vanishes in all degrees greater than zero.}
\end{lemma}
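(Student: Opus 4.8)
The plan is to realize $\L$ as the quotient of the sheaf of closed $1$-forms by the period lattice, and then to feed this through two long exact sequences, exploiting that a convex $\Delta$ is contractible. Since $(V,\Lambda_V)$ is an integral affine vector space the associated orbifold groupoid is the unit groupoid, so the foliation $\F$ is trivial, $\No^*\F=T^*V$, and $\sT_\Lambda=T^*V/\Lambda$ as in Remark \ref{presymptorbunrem}; in this case $\L$ is an ordinary (non-equivariant) sheaf on $\Delta$ and $H^\bullet(\Delta,\L)$ is ordinary sheaf cohomology. First I would observe that, because the quotient map $T^*V\to \sT_\Lambda$ is fiberwise the universal covering of a torus, every Lagrangian section of $\sT_\Lambda$ over a contractible open lifts to a closed $1$-form: a local lift is a section $\alpha$ of $T^*V$, and since the canonical $1$-form pulls back to $\alpha$ along $\alpha$, the Lagrangian condition $\tau^*\Omega_\Lambda=0$ becomes $d\alpha=0$. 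Two such lifts differ by a smooth section of the lattice bundle $\Lambda$, which over a connected set is an integral combination of the closed coframe $1$-forms. This yields a short exact sequence of sheaves on $\Delta$
\begin{equation*}
0\to \Lambda \to \mathcal{Z}^1 \to \L \to 0,
\end{equation*}
where $\mathcal{Z}^1$ denotes the sheaf of closed $1$-forms and $\Lambda$ now denotes the sheaf of smooth sections of the lattice bundle, a locally constant sheaf with stalk $\mathbb{Z}^n$, $n=\dim V$.

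Next I would compute $H^\bullet(\Delta,\mathcal{Z}^1)$. On the manifold with corners $\Delta$ a Poincar\'e lemma holds on the local corner models $\mathbb{R}^n_k$ (these are star-shaped about the origin), so the de Rham differential gives a second short exact sequence
\begin{equation*}
0\to \underline{\mathbb{R}} \to \mathcal{C}^\infty_\Delta \xrightarrow{d} \mathcal{Z}^1 \to 0.
\end{equation*}
Since $\Delta$ admits smooth partitions of unity, $\mathcal{C}^\infty_\Delta$ is fine, hence acyclic, and the long exact sequence gives $H^k(\Delta,\mathcal{Z}^1)\cong H^{k+1}(\Delta,\underline{\mathbb{R}})$ for $k\geq 1$. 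Because $\Delta$ is convex it is contractible, so the cohomology of the constant sheaf $\underline{\mathbb{R}}$ vanishes in positive degrees; thus $H^k(\Delta,\mathcal{Z}^1)=0$ for all $k\geq 1$. For the same reason $\Delta$ is simply connected, so the locally constant sheaf $\Lambda$ is in fact constant with stalk $\mathbb{Z}^n$, and $H^j(\Delta,\Lambda)\cong H^j_{\mathrm{sing}}(\Delta;\mathbb{Z}^n)=0$ for all $j\geq 1$ (sheaf cohomology of a constant sheaf on the locally contractible, paracompact space $\Delta$ agrees with singular cohomology).

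Finally I would run the long exact sequence of the first short exact sequence. For $k\geq 1$ both $H^k(\Delta,\mathcal{Z}^1)$ and $H^{k+1}(\Delta,\mathcal{Z}^1)$ vanish, so it yields $H^k(\Delta,\L)\cong H^{k+1}(\Delta,\Lambda)=0$, which is exactly the claim. The only genuinely delicate points are the verifications at the level of manifolds with corners: that the de Rham Poincar\'e lemma and the fineness of $\mathcal{C}^\infty_\Delta$ survive the presence of corners, and that the comparison between sheaf and singular cohomology of constant coefficients is valid on $\Delta$. I expect the main obstacle to be checking local exactness of $d$ at corner points, i.e. the Poincar\'e lemma on $\mathbb{R}^n_k$, although this follows from the usual ray-integration homotopy operator since $\mathbb{R}^n_k$ is star-shaped.
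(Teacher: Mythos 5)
Your proof is correct, and it uses exactly the same three ingredients as the paper's proof; the only real difference is how the d\'evissage is spliced. You factor through closed $1$-forms, using $0\to \mathcal{C}^\infty(\Lambda)\to \mathcal{Z}^1\to \L\to 0$ (your sheaf ``$\Lambda$'' of smooth lattice sections is what the paper denotes $\mathcal{C}^\infty(\Lambda)$; since $\Lambda=\Lambda_V^*\times V$ it is the constant sheaf with stalk $\Lambda_V^*\cong \Z^n$) together with the de Rham sequence $0\to \R\to \mathcal{C}^\infty_\Delta\xrightarrow{\d}\mathcal{Z}^1\to 0$. The paper instead factors through the sheaf $(\mathcal{C}^\infty)_\Lambda$ of functions $f$ with $\d f$ valued in $\Lambda$: it uses $0\to (\mathcal{C}^\infty)_\Lambda\to \mathcal{C}^\infty_\Delta\xrightarrow{\d}\L\to 0$ (so a Lagrangian section is locally of the form $\d f \bmod \Lambda$, merging your covering-lifting step and your Poincar\'e-lemma step into a single surjection) and then $0\to \R\to (\mathcal{C}^\infty)_\Lambda\xrightarrow{\d}\mathcal{C}^\infty(\Lambda)\to 0$. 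These are the two ways of traversing one and the same $3\times 3$ diagram of sheaves on $\Delta$, and both arguments consume identical inputs: the Poincar\'e lemma on the corner models $\R^n_k$ (which the paper also invokes, and records in its appendix on manifolds with corners), fineness of $\mathcal{C}^\infty_\Delta$ via partitions of unity (cf.\ Remark \ref{manwithcornreddiffbsprem}), and convexity of $\Delta$ to kill the positive-degree cohomology of the two locally constant sheaves ($\R$ and the lattice sections). Accordingly neither route is more general or materially shorter; yours has the mild expository advantage of isolating the two classical sequences (de Rham and period-lattice), while the paper's keeps the lifting argument in a single step, and the delicate point you flag at the end (local exactness of $\d$ at corner points) is precisely the point the paper also rests on.
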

\begin{proof} By the Poincar\'{e} lemma for manifolds with corners we have a short exact sequence:
\begin{equation*} 0\to (\mathcal{C}^\infty)_\Lambda\to \mathcal{C}^\infty\xrightarrow{\d} \L\to 0,
\end{equation*} where $\L:=\L_\Delta$, $\mathcal{C}^\infty:=\mathcal{C}^\infty_\Delta$ and $(\mathcal{C}^\infty)_\Lambda$ is the subsheaf of $\mathcal{C}^\infty$ of those functions $f$ such that $\d f$ takes value in the lattice bundle $\Lambda\subset T^*V$ of $(V,\Lambda_V)$. Secondly, we have a short exact sequence:
\begin{equation*}0\to \R\to (\mathcal{C}^\infty)_\Lambda\xrightarrow{\d} \mathcal{C}^\infty(\Lambda)\to 0,
\end{equation*}
where $\R$ denotes the sheaf on $\Delta$ of locally constant functions with values in $\R$ and $\mathcal{C}^\infty(\Lambda)$ denotes the sheaf of smooth sections of $\Lambda\vert_\Delta$. Since $\mathcal{C}^\infty$ is a fine sheaf and $H^\bullet(\Delta,\R)$ vanishes in degree greater than zero ($\Delta$ being convex), we derive from the resulting long exact sequences that:
\begin{equation*} H^p(\Delta,\L)\cong H^{p+1}(\Delta,(\mathcal{C}^\infty)_\Lambda)\cong H^{p+1}(\Delta,\mathcal{C}^\infty(\Lambda))
\end{equation*} for all $p>0$. The lemma follows from this, because $\mathcal{C}_\Delta^\infty(\Lambda)$ is the sheaf of locally constant functions with values in $\Lambda_V^*$ (since $\Lambda=\Lambda^*_V\times V$), so that the cohomology $H^\bullet(\Delta,\mathcal{C}^\infty(\Lambda))$ vanishes in all degrees greater than zero (by convexity of $\Delta$). 
\end{proof}

\subsection{Proof of the first structure theorem}\label{sec:pffirststrthm} 
\subsubsection{Lifting to a Lagrangian cocycle}\label{liftlagrcocyclesec} 
The aim of this subsection is to prove the lemma below, which will be key in the proofs of both the first structure theorem and the splitting theorem.
\begin{lemma}\label{lifrlagrcocyleprop} Let $(\G,\Omega)$ be a regular and proper symplectic groupoid for which the associated orbifold groupoid $\B=\G/\sT$ is etale, let $\underline{\Delta}$ be a Delzant subspace of $\underline{M}$ and let $\U$ be a basis of $\Delta$ for which $\L_\Delta$ is $\U$-acyclic. If $\Delta$ is the momentum image of a faithful multiplicity-free $(\G,\Omega)$-space, then for each arrow $V\xleftarrow{\sigma} U$ in $\textsf{Emb}_\U(\B\vert_\Delta)$ there is a Lagrangian bisection:
\begin{equation}\label{lagrbislift} g(V\xleftarrow{\sigma} U):U\to (\G,\Omega)\vert_\Delta
\end{equation} lifting $\sigma:U\to \B\vert_\Delta$, and these lifts can be chosen so as to satisfy the cocycle condition: 
\begin{equation}\label{cocyccondlagrbis} g(U_0\xleftarrow{\sigma_1\sigma_2} U_2)=g(U_0\xleftarrow{\sigma_1} U_1)g(U_1\xleftarrow{\sigma_2} U_2)
\end{equation} for any two composable arrows $U_0\xleftarrow{\sigma_1} U_1\xleftarrow{\sigma_2} U_2$. 
\end{lemma}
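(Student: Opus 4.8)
The plan is to lift bisections of $\B\vert_\Delta$ to Lagrangian bisections of $(\G,\Omega)\vert_\Delta$ by transporting a reference Lagrangian section of the given faithful multiplicity-free space $J:(S,\omega)\to M$ through the $\G$-action, so that the cocycle condition (\ref{cocyccondlagrbis}) becomes a formal consequence of associativity of that action. First I would record the relevant book-keeping. Since $\B$ is étale, every continuous bisection $\sigma:U\to\B\vert_\Delta$ is smooth, and $\phi_\sigma:=t\circ\sigma:U\to\sigma\cdot U$ is a diffeomorphism of opens in $\Delta$ with $\phi_\sigma^*\Lambda=\Lambda\vert_U$, because $\B$ preserves the transverse integral affine structure. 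The smooth bisections of $\G\vert_\Delta$ lifting $\sigma$ through the projection $\pi:\G\to\B$ form a torsor under the smooth sections of $\sT\vert_{\sigma\cdot U}$, and the Lagrangian ones among them, if nonempty, form a torsor under $\L(\sigma\cdot U)$; this reduces the statement to producing a single coherent family of such lifts.

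Next I would fix, for each $U\in\U$, a Lagrangian section $s_U:U\to J^{-1}(U)$ of $J$. Such a section exists: by Lemma \ref{lemma:indtorbunacttoric} the restriction $J\vert_{J^{-1}(U)}$ is a faithful toric $(\sT,\Omega_\sT)$-space with momentum image $\Delta\cap U$, and since $\L$ is $\U$-acyclic the group $H^1(\Delta\cap U,\L)$ vanishes, so by Theorem \ref{torsortorspthm:torbunversion} this space is isomorphic to the canonical toric space $J_{\Delta\cap U}$ of Theorem \ref{constrtorictorbunspthm}, whose zero section is Lagrangian. Pulling this section back through the isomorphism yields $s_U$. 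Then, for each arrow $V\xleftarrow{\sigma}U$, I would define $g(V\xleftarrow{\sigma}U)(x)\in\G$, for $x$ in the dense locus where $\sT$ acts freely, as the unique lift of $\sigma(x)$ satisfying
\[
g(V\xleftarrow{\sigma}U)(x)\cdot s_U(x)=s_V(\phi_\sigma(x)),
\]
extended over the corner strata by continuity. Uniqueness holds because $J^{-1}(\phi_\sigma(x))$ is a single free $\sT$-orbit on which the lifts of $\sigma(x)$ act simply transitively, and smoothness as a map of manifolds with corners is read off from the standard local model.

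With the lifts defined in this way, the cocycle condition is immediate: for composable arrows $U_0\xleftarrow{\sigma_1}U_1\xleftarrow{\sigma_2}U_2$, both $g(\sigma_1)g(\sigma_2)$ and $g(\sigma_1\sigma_2)$ lift $\sigma_1\sigma_2$ and, by associativity of the $\G$-action, send $s_{U_2}(x)$ to $s_{U_0}(\phi_{\sigma_1\sigma_2}(x))$, so they coincide by the same uniqueness. It then remains to verify that each $g(V\xleftarrow{\sigma}U)$ is Lagrangian. Here I would use that over the acyclic opens the whole situation is canonically identified with the canonical toric space, under which the induced $\sT$-equivariant diffeomorphism $p\mapsto g(V\xleftarrow{\sigma}U)(J(p))\cdot p$ of $J^{-1}(U)$ onto $J^{-1}(\sigma\cdot U)$, covering $\phi_\sigma$ and matching $s_U$ with $s_V$, must agree with the naturality symplectomorphism $(\phi_\sigma)_*$ of Theorem \ref{constrtorictorbunspthm}: the two differ by a $\sT$-equivariant automorphism over the identity fixing the zero section, which by Theorem \ref{isoautsymplagseccor:torbunversion} corresponds to a Lagrangian section vanishing on the zero section, hence is the identity. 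Being symplectic, it forces $g(V\xleftarrow{\sigma}U)^*\Omega=0$ via the generalization of Proposition \ref{lagrseccharprop:torbunversion}.

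The main obstacle, and the only point where the hypothesis that $\Delta$ is a momentum image is essential, is the construction of the local Lagrangian sections $s_U$: without the space $S$ there is nothing to transport, and the abstract obstruction to assembling coherent Lagrangian lifts is precisely a class in $\check{H}^2(\textsf{Emb}_\U(\B\vert_\Delta),\L)$ — the Lagrangian Dixmier--Douady class of Definition \ref{def:lagrdixmierdouadyclass} — whose vanishing is exactly what the existence of $S$ supplies through the acyclicity argument above. I would therefore be most careful about justifying that $J\vert_{J^{-1}(U)}$ is trivial over each $U\in\U$ and that the pointwise-defined $g(V\xleftarrow{\sigma}U)$ are genuinely smooth bisections; once these technical points are settled, the associativity argument closes the cocycle condition with no further cohomological input.
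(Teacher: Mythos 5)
Your overall skeleton is the paper's: use $\U$-acyclicity and Theorem \ref{torsortorspthm:torbunversion} to identify $J^{-1}(U)$ with the canonical toric model of Theorem \ref{constrtorictorbunspthm} over each $U\in\U$, transport an arrow $V\xleftarrow{\sigma}U$ through these identifications, and extract the cocycle condition from a uniqueness statement. But the object your construction rests on does not exist: a smooth Lagrangian section $s_U:U\to J^{-1}(U)$ of $J$ cannot exist once $U$ meets a corner stratum of $\Delta$. In the standard local model $J(t,x,z)=(|z_1|^2,\dots,|z_k|^2,x)$ on $\T^{n-k}\times B^{n-k}\times B^k$, a smooth section would require smooth functions $z_j$ with $|z_j(y,x)|^2=y_j$; restricting to a ray $y_j=s\geq 0$ forces $z_j(s)=as+O(s^2)$, hence $|z_j(s)|^2=O(s^2)\neq s$, a contradiction. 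Equivalently, the ``zero section'' of the canonical model, which in the chart $h_{(x_0,U,\chi)}$ reads $x\mapsto\left[\left(1,\dots,1,\chi(x),\sqrt{\chi^1(x)},\dots,\sqrt{\chi^k(x)}\right)\right]$, is merely continuous at points of depth $\geq 1$ because of the square roots; it is not a smooth section there, so calling it Lagrangian is not even meaningful. This is exactly where your two ``technical points'' break down: the pointwise lifts $g(V\xleftarrow{\sigma}U)(x)$ are only defined over the free locus, and the claim that smoothness of their extension across the corners can be ``read off from the standard local model'' is precisely where the square-root singularity sits. There is also a secondary circularity in your Lagrangian-ness argument: to compare $p\mapsto g(J(p))\cdot p$ with $(\phi_\sigma)_*$ via Theorem \ref{isoautsymplagseccor:torbunversion} (really Theorem \ref{torsectautisoprop:torbunversion}, since the discrepancy is a priori only a diffeomorphism, not a symplectomorphism), you must already know that $g$ is a smooth bisection, which is the unproven step.

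The repair is the paper's proof: keep the whole isomorphisms $\psi_U:(J_\Delta^{-1}(U),\omega_\Delta)\to(J^{-1}(U),\omega)$ supplied by acyclicity and Theorem \ref{torsortorspthm:torbunversion}, rather than only their values on a (nonexistent smooth) section; form the symplectomorphism $\psi_V\circ(t\circ\sigma)_*\circ\psi_U^{-1}$, which covers $t\circ\sigma$ and is compatible with the $\sT$-action; and invoke Proposition \ref{liftinglagrbiswithsymplectoprop}$b$, which converts such a map into a unique smooth Lagrangian bisection lifting $\sigma$ \textemdash\ that proposition (resting on Theorem \ref{torsectautisoprop:torbunversion} and Lemma \ref{lemma:indtorbunacttoric}) is where smoothness at the corners is genuinely established. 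On the free locus the resulting bisection satisfies your defining identity with the continuous zero sections, so your pointwise picture is the shadow of the correct construction; and your associativity argument for (\ref{cocyccondlagrbis}) then goes through verbatim, being the pointwise form of the paper's appeal to the naturality and locality of $(\cdot)_*$ in Theorem \ref{constrtorictorbunspthm} combined with the uniqueness clause of Proposition \ref{liftinglagrbiswithsymplectoprop}$b$.
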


To prove this, the following observation will be key.
\begin{prop}\label{liftinglagrbiswithsymplectoprop} Let $(\G,\Omega)$ be as in Lemma \ref{lifrlagrcocyleprop}, let $J:(S,\omega)\to M$ be a faithful multiplicity-free $(\G,\Omega)$-space and let $\Delta:=J(S)$. Further, suppose that we are given a continuous (or equivalently, smooth) bisection $\sigma:U\to \B\vert_\Delta$ defined on an open $U$ in $\Delta$. Then the following hold.
\begin{itemize}\item[a)] For any Lagrangian bisection $g_\sigma:U\to (\G,\Omega)$ lifting $\sigma$, the map: 
\begin{equation*} \psi_{g_\sigma}:(J^{-1}(U),\omega)\to (J^{-1}(\sigma\cdot U),\omega), \quad \psi_{g_\sigma}(p)=g_\sigma(J(p))\cdot p,
\end{equation*} is a symplectomorphism that fits into a commutative square:
\begin{equation}\label{commsqliftinglagrbiswithsymplectoprop}
\begin{tikzcd} (J^{-1}(U),\omega) \arrow[r,"\psi_{g_\sigma}"] \arrow[d,"J"] & (J^{-1}(\sigma\cdot U),\omega) \arrow[d,"J"] \\
 U \arrow[r,"t\circ \sigma"] & \sigma\cdot U 
\end{tikzcd}
\end{equation} and is compatible with the $\sT$-action in the sense that for each $p\in J^{-1}(U)$ and $t\in \sT_{J(p)}$:
\begin{equation*} \psi_{g_\sigma}(t\cdot p)=(\sigma(J(p))\cdot t)\cdot \psi_{g_\sigma}(p).
\end{equation*} 
\item[b)] Conversely, for any symplectomorphism $\psi:(J^{-1}(U),\omega)\to (J^{-1}(\sigma\cdot U),\omega)$ that fits into a commutative square and is compatible with the $\sT$-action as above, there is a unique Lagrangian bisection $g_\sigma:U\to (\G,\Omega)$ that lifts $\sigma$ and is such that $\psi=\psi_{g_\sigma}$.
\end{itemize}
\end{prop}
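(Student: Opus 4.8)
The entire statement rests on the \emph{multiplicativity} of a Hamiltonian action: writing $m\colon\G\times_M S\to S$ for the action map, the momentum map condition gives $m^*\omega=\textrm{pr}_S^*\omega+\textrm{pr}_\G^*\Omega$ on the fibre product (the groupoid analogue of the identity already exploited in the proof of Proposition \ref{lagrseccharprop:torbunversion}). I would use this throughout. Also recall that here $\B$ is etale, so $(\sT,\Omega_\sT)$ is symplectic and, by Lemma \ref{lemma:indtorbunacttoric}, $J$ is a faithful toric $(\sT,\Omega_\sT)$-space.

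For part $a)$ I would first note that $\psi_{g_\sigma}$ is the composite of $p\mapsto(g_\sigma(J(p)),p)$ with $m$, hence smooth, and that $J(\psi_{g_\sigma}(p))=t(g_\sigma(J(p)))=(t\circ\sigma)(J(p))$ gives the commuting square. The $\sT$-compatibility is a one-line manipulation: since $J(t\cdot p)=J(p)$ and $g_\sigma$ lifts $\sigma$, conjugation by $g_\sigma(J(p))$ realizes the $\B$-action on $\sT$, so $\psi_{g_\sigma}(t\cdot p)=(g_\sigma(J(p))t)\cdot p=(\sigma(J(p))\cdot t)\cdot\psi_{g_\sigma}(p)$ by associativity of the action. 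That $\psi_{g_\sigma}$ is a diffeomorphism follows because its inverse is the map induced by the inverse bisection $g_\sigma^{-1}$ (again by associativity), and the symplectic property is the computation $\psi_{g_\sigma}^*\omega=(g_\sigma\circ J,\textrm{id})^*m^*\omega=\omega+J^*(g_\sigma^*\Omega)=\omega$, using $g_\sigma^*\Omega=0$.

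For part $b)$ the plan is to produce $g_\sigma$ from an arbitrary smooth lift and to delegate the genuine smoothness statement to Theorem \ref{torsectautisoprop:torbunversion}. Since $\G\to\B$ is a surjective submersion with torus-bundle fibres, after shrinking $U$ I can choose a smooth (not necessarily Lagrangian) bisection $h\colon U\to\G\vert_\Delta$ lifting $\sigma$; then $\psi_h(p):=h(J(p))\cdot p$ is a smooth diffeomorphism carrying the same $\sigma$-twisted $\sT$-equivariance as $\psi$. The key observation is that $\Phi:=\psi\circ\psi_h^{-1}$ over $\sigma\cdot U$ is then \emph{genuinely} $\sT$-equivariant, the two $\sigma$-twists cancelling, so that $\Phi$ is a section over $\sigma\cdot U$ of the sheaf $\textrm{Aut}_\sT(J)$. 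Theorem \ref{torsectautisoprop:torbunversion} then furnishes a unique smooth section $\tau$ of $\sT$ with $\Phi=\psi_\tau$, and setting $g_\sigma:=(\tau\circ t\circ\sigma)\cdot h$ yields a smooth bisection lifting $\sigma$ with $\psi=\psi_{g_\sigma}$. That $g_\sigma$ is Lagrangian follows by running the part $a)$ computation backwards: $J^*(g_\sigma^*\Omega)=\psi^*\omega-\omega=0$, whence $g_\sigma^*\Omega=0$ on the dense locus where $J$ is submersive and so everywhere, exactly as in Proposition \ref{lagrseccharprop:torbunversion}.

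Uniqueness is the easy part: if $\psi_{g}=\psi_{g'}$ for two continuous lifts of $\sigma$, then for every $x$ in the dense set $J(W)$ (with $W\subset S$ the dense locus of free $\sT$-action from condition i)) and any $p\in W\cap J^{-1}(x)$ one has $g(x)\cdot p=\psi(p)=g'(x)\cdot p$, so $g(x)^{-1}g'(x)\in\sT_x$ fixes $p$ and hence is trivial; continuity then forces $g=g'$ on all of $U$. I expect the only real subtlety to be the smoothness of $g_\sigma$ across the corner locus of $\Delta$, and the whole point of the argument is that this subtlety has already been isolated and resolved inside Theorem \ref{torsectautisoprop:torbunversion} (ultimately through the Schwarz--Whitney extension used there); everything else is formal bookkeeping with bisections and associativity of the action.
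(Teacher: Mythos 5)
Your proposal is correct and is essentially the paper's proof: part $a)$ is the multiplicativity computation of Proposition \ref{lagrseccharprop:torbunversion}, and part $b)$ corrects an arbitrary smooth local lift $h$ of $\sigma$ by the section $\tau$ of $\sT$ that Theorem \ref{torsectautisoprop:torbunversion} (applicable by Lemma \ref{lemma:indtorbunacttoric}) associates to the genuinely $\sT$-equivariant automorphism $\psi\circ\psi_h^{-1}$ (the paper works with $\psi_h^{-1}\circ\psi$ over $U$ rather than over $\sigma\cdot U$, which is immaterial), with uniqueness coming from density of the locus where $\sT$ acts freely. The one step you should make explicit is that smooth lifts of $\sigma$ exist in general only locally, so your construction produces $g_\sigma$ only on a shrunken $U$; you must then invoke the uniqueness you proved to glue the locally defined bisections $(\tau\circ t\circ\sigma)\cdot h$ over a cover of $U$ into a single bisection on all of $U$ --- precisely the local-to-global reduction the paper spells out before its existence argument.
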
 
\begin{proof}
Notice first that, given a smooth bisection $g_\sigma:U\to \G$ lifting $\sigma$, the associated map $\psi_{g_\sigma}$ is a diffeomorphism, fits into a commutative square as above and is compatible with the $\sT$-action. As in {Proposition \ref{lagrseccharprop:torbunversion}} one can prove that $\psi_{g_\sigma}$ is a symplectomorphism if and only if the smooth bisection $g_\sigma:U\to (\G,\Omega)$ is Lagrangian. This proves $a$ and shows that, to prove $b$, it is enough to prove that for any diffeomorphism $\psi:J^{-1}(U)\to J^{-1}(\sigma\cdot U)$ that fits into a commutative square like (\ref{commsqliftinglagrbiswithsymplectoprop}) and is compatible with the $\sT$-action as above, there is a unique smooth bisection $g_\sigma:U\to \G$ that lifts $\sigma$ and is such that $\psi=\psi_{g_\sigma}$. For uniqueness, suppose that $g_\sigma,h_\sigma:U\to \G$ are two such smooth bisections. Let $p\in J^{-1}(U)$ such that $\sT$ acts freely at $p$. Then $(g_\sigma^{-1}h_\sigma)(J(p))$ belongs to $\sT$ (since both $g_\sigma$ and $h_\sigma$ lift $\sigma$) and it fixes $p$ (since $\psi_{g_\sigma}=\psi_{h_\sigma}$). Hence, $g_\sigma(J(p))=h_\sigma(J(p))$ for all $p\in J^{-1}(U)$ at which $\sT$ acts freely. Since the set of such $p\in S$ is dense in $S$, it follows by continuity that $g_\sigma=h_\sigma$, as claimed. In view of this uniqueness, to prove existence it is enough to show that for every $x\in U$ there is an open neighbourhood $U_x$ of $x$ in $U$ and a smooth section $g_\sigma:U_x\to \G$ of the source map such that $\psi(p)=g_\sigma(J(p))\cdot p$ for all $p\in J^{-1}(U_x)$. To this end, let $x\in U$ and let $h_\sigma:U_x\to \G$ be any smooth bisection, defined on some open neighbourhood $U_x$ of $x$ in $U$. Then $\psi_{h_\sigma}^{-1}\circ (\psi\vert_{J^{-1}(U_x)})$ belongs to $\textrm{Aut}_\sT(J)(U_x)$, {as in (\ref{gpofsTequivdiffeos:torbunversion})}. Hence, it follows from {Theorem \ref{torsectautisoprop:torbunversion} }applied to the $(\sT,\Omega_\sT)$-space $J:(S,\omega)\to M$ {(which is faithful toric by Lemma \ref{lemma:indtorbunacttoric})}, that there is a section $\tau\in \mathcal{C}^\infty_\Delta(\sT)(U_x)$ with the property that: 
\begin{equation*} \psi_{h_{\sigma}}^{-1}(\psi(p))=\tau(J(p))\cdot p
\end{equation*} for every $p\in J^{-1}(U_x)$. The smooth bisection $g_\sigma=h_\sigma\tau:U_x\to \G$ has the desired property.  
\end{proof}
\begin{proof}[Proof of Lemma \ref{lifrlagrcocyleprop}] Since {$H^1(U,\L_\Delta)=0$} for each $U\in \U$, it follows from {Theorem \ref{torsortorspthm:torbunversion}} that there is an isomorphism of Hamiltonian $(\sT,\Omega_\sT)$-spaces:
\begin{center}
\begin{tikzcd} (J_\Delta^{-1}(U),\omega_\Delta)\arrow[rr,"\psi_U"]\arrow[rd,"J_\Delta"'] & & (J^{-1}(U),\omega)\arrow[ld,"J"]\\  
& M & 
\end{tikzcd}
\end{center} where $\sT$ acts along $J_\Delta$ via the $\sT_\Lambda$-action in Theorem \ref{constrtorictorbunspthm} and (\ref{exptorbun2}). Fix such an isomorphism for each $U\in \U$. Now, let $V\xleftarrow{\sigma} U$ be an arrow in $\textsf{Emb}_\U(\B\vert_\Delta)$. It follows from $\B$-invariance of the lattice $\Lambda$ in $T^*M$ that $(t\circ\sigma)^*\Lambda=\Lambda\vert_U$. So, since $t\circ \sigma$ is a diffeomorphism of manifolds with corners from $U$ onto an open in $\Delta$, by Theorem \ref{constrtorictorbunspthm} we have an induced symplectomorphism:
\begin{equation*} (J_\Delta^{-1}(U),\omega)\xrightarrow{(t\circ \sigma)_*} (J^{-1}_\Delta(\sigma\cdot U),\omega).
\end{equation*} Now notice that:
\begin{equation*} \psi_V\circ (t\circ \sigma)_* \circ\psi_U^{-1}:(J^{-1}(U),\omega)\to (J^{-1}(\sigma\cdot U),\omega)
\end{equation*} meets the assumptions of Proposition \ref{liftinglagrbiswithsymplectoprop}$b$ (as is readily verified using Lemma \ref{sympmoreqnormreplem}), so that there is a unique Lagrangian bisection:
\begin{equation*} g(V\xleftarrow{\sigma}U):U\to (\G,\Omega)\vert_\Delta
\end{equation*} that lifts $\sigma$ and satisfies:
\begin{equation*} g(V\xleftarrow{\sigma}U)(J(p))\cdot p=(\psi_V\circ (t\circ \sigma)_* \circ\psi_U^{-1})(p)
\end{equation*} for all $p\in J^{-1}(U)$. Using the natural and local dependence in Theorem \ref{constrtorictorbunspthm}, it is readily verified that these choices of lifts satisfy the cocycle condition (\ref{cocyccondlagrbis}). 
\end{proof}
\subsubsection{The rest of the proof} We start with a proof of the first structure theorem for the case in which $\B$ is etale, excluding naturality with respect to symplectic Morita equivalences. The proof in full generality and the naturality will be addressed afterwards. 
\begin{proof}[Proof of Theorem \ref{firststrthm}; definition of the action on (\ref{isoclwithprescrmomimnonempty}) in the etale case] Let $(\G,\Omega)\rightrightarrows M$ be a regular and proper symplectic groupoid for which the associated orbifold groupoid $\B=\G/\sT$ is etale and let $\underline{\Delta}\subset \underline{M}$ be a Delzant subspace. To define the action, {fix a basis $\U$ of $\Delta$ such that $\L:=\L_\Delta$ is $\U$-acyclic}. {Let $\textrm{c}\in H^1(\B\vert_\Delta,\L)$ and consider a $1$-cocycle $\tau$ representing the class $[\tau]\in \check{H}^1_\U(\B\vert_\Delta,\L)$ corresponding to $\textrm{c}$ via the isomorphism (\ref{eqn:cechtoderivedmorph:equivshf}).} Let $J:(S,\omega)\to M$ be a faithful multiplicity-free $(\G,\Omega)$-space with momentum image $\Delta$. First consider the topological space:
\begin{equation}\label{disjunionspeq} \bigsqcup_{U\in \U} J^{-1}(U).
\end{equation} Given two elements $(p_1,U_1)$ and $(p_2,U_2)$ of (\ref{disjunionspeq}), we write $(p_1,U_1)\hookleftarrow (p_2,U_2)$ if $U_2\subset U_1$ and:
\begin{equation*} \tau(U_1\hookleftarrow U_2)(J(p_2))\cdot p_2=p_1,
\end{equation*} where $U_2 \hookleftarrow U_1$ denotes the arrow in $\textsf{Emb}_\U(\B\vert_\Delta)$ with underlying bisection the unit map. This relation is reflexive and transitive (as follows from the fact that $\tau$ is a $1$-cocycle), but not necessarily symmetric. The equivalence relation generated by this relation is given by: $(p_1,U_1)\sim (p_2,U_2)$ if and only if there is a pair $(p_{12},U_{12})$, with $U_{12}\in \U$ and $p_{12}\in U_{12}\subset U_1\cap U_2$, such that $(p_1,U_1)\hookleftarrow (p_{12},U_{12})$ and $(p_{12},U_{12})\hookrightarrow (p_2,U_2)$. Indeed, this relation is clearly reflexive, symmetric and contains the first relation. To prove transitivity, suppose that $(p_1,U_1)\sim (p_2,U_2)$ and $(p_2,U_2)\sim (p_3,U_3)$. Then there are $(p_{12},U_{12})$ and $(p_{23},U_{23})$ with $U_{12},U_{23}\in \U$, $p_{12}\in U_{12}\subset U_1\cap U_2$ and $p_{23}\in U_{23}\subset U_2\cap U_3$, such that:
\begin{center}
\begin{tikzcd} (p_1,U_1) & & (p_2,U_2) & & (p_3,U_3)\\
& (p_{12},U_{12}) \arrow[ul, hook']\arrow[ur, hook] & & (p_{23},U_{23})\arrow[ul, hook']\arrow[ur, hook] &  
\end{tikzcd}
\end{center} Let $U_{123}\in \U$ be such that $J(p_2)\in U_{123}\subset U_{12}\cap U_{23}$ and consider the element
\begin{equation*} p_{123}:=\tau(U_2\hookleftarrow U_{123})(J(p_2))^{-1}\cdot p_2\in J^{-1}(U_{123}),
\end{equation*} which is well-defined since the source and target of $\tau(U_2\hookleftarrow U_{123})(J(p_2))$ coincide. It follows from the cocycle condition (\ref{cocyccondlagrbis}) that $(p_{12},U_{12})\hookleftarrow (p_{123},U_{123})$ and $(p_{123},U_{123})\hookrightarrow (p_{23},U_{23})$, and hence that $(p_1,U_1)\hookleftarrow (p_{123},U_{123})$ and $(p_{123},U_{123})\hookrightarrow (p_{3},U_{3})$. This shows that $(p_1,U_1)\sim (p_3,U_3)$, which proves transitivity. Now, consider the quotient space:
\begin{equation*} S_\tau:=\frac{\left(\bigsqcup_{U\in \U} J^{-1}(U)\right)}{\sim}. 
\end{equation*} From the above description of the equivalence relation it is clear that for each $U\in \U$ the map:
\begin{equation}\label{injopenintoquot} j_U:J^{-1}(U)\to S_\tau, \quad p\mapsto [p,U]
\end{equation} is an injection. Since for each inclusion $V\hookleftarrow U$ the map:
\begin{equation*} (J^{-1}(U),\omega)\to (J^{-1}(U),\omega), \quad p\mapsto \tau(V\hookleftarrow U)(J(p))\cdot p
\end{equation*} is a symplectomorphism (which follows from {Proposition \ref{lagrseccharprop:torbunversion}}), there is a unique structure of symplectic manifold on $S_\tau$ with the property that for each $U\in \U$ the injection (\ref{injopenintoquot}) is a symplectomorphism onto an open in $S_\tau$ (with respect to the symplectic form $\omega$ on $J^{-1}(U)$). Let $\omega_\tau$ be the corresponding symplectic form on $S_\tau$. Next, note that $J:(S,\omega)\to M$ induces a smooth map:
\begin{equation*} J_\tau:(S_\tau,\omega_\tau)\to M, \quad [p,U]\mapsto J(p),
\end{equation*} along which $(\G,\Omega)$ acts in a Hamiltonian fashion, as follows. Given $g\in \G$ and $p_\tau\in S_\tau$ such that $s(g)=J_\tau(p_\tau)$, let $(p,U)$ be a representative of $p_\tau$ with $U\in \U$ small enough such that there is an arrow $V\xleftarrow{\sigma} U$ in $\textsf{Emb}_\U(\B\vert_\Delta)$ with the property that $\sigma(s(g))=[g]$, and set:
\begin{equation*} g\cdot p_\tau=[g\cdot\tau(V\xleftarrow{\sigma} U)(s(g))\cdot p,V]. 
\end{equation*} It follows from the fact that $\tau$ is a $1$-cocycle that this is independent of the choice of representative $(p,U)$ and the choice of arrow $V\xleftarrow{\sigma} U$, and that this defines a $\G$-action. Furthermore, the fact that $\tau(V\xleftarrow{\sigma} U)$ is Lagrangian implies that {the} action is Hamiltonian. 
 It is readily verified that the $(\G,\Omega)$-space $J_\tau:(S_\tau,\omega_\tau)\to M$ is {faithful multiplicity-free}, has momentum image $\Delta$ and that its isomorphism class {does not depend on the choice of representative $\tau$ or basis $\U$. 
So, we can define: 
\begin{equation*} \textrm{c}\cdot [J:(S,\omega)\to M]:=[J_\tau:(S_\tau,\omega_\tau)\to M].
\end{equation*} }We leave it for the reader to check that this defines an action of $H^1(\B\vert_\Delta,\L)$.
\end{proof}
\begin{proof}[Proof of Theorem \ref{firststrthm}; freeness and transivity of the action in the etale case] {Let $\U$ be a basis as above.} To see that the action is free, let $\tau$ be a $1$-cocycle as above and suppose that:{
\begin{equation*} [J_\tau:(S_\tau,\omega_\tau)\to M]=[J:(S,\omega)\to M],
\end{equation*} meaning that there is an isomorphism of Hamiltonian} $(\G,\Omega)$-spaces:
\begin{center}
\begin{tikzcd} (S_\tau,\omega_\tau) \arrow[rr,"\psi"]\arrow[rd,"J_\tau"'] & & (S,\omega)\arrow[ld, "J"] \\
& M & 
\end{tikzcd}
\end{center} Then for each $U\in \U$ we have an automorphism of {Hamiltonian} $(\sT,\Omega_{\sT})$-spaces:
\begin{equation}\label{autprimitivefreeactfirststrthm} \psi\circ j_U \in \textrm{Aut}_{\sT}(J,\omega)(U), 
\end{equation} and it follows from $\G$-equivariance of $\psi$ that the $0$-cochain:
\begin{equation*} c\in \check{C}^0_\U(\B\vert_\Delta,\L),
\end{equation*} that assigns to an open $U\in \U$ the Lagrangian section corresponding to (\ref{autprimitivefreeactfirststrthm}) {via (\ref{torsectautisoinvsymp:torbunversion})} is a primitive for the $1$-cocycle $\tau$ {(here Theorem \ref{isoautsymplagseccor:torbunversion} applies by Lemma \ref{lemma:indtorbunacttoric})}. So, the class {represented by $\tau$} is trivial, which proves freeness. For transitivity, suppose $J_1:(S_1,\omega_1)\to M$ and $J_2:(S_2,\omega_2)\to M$ are {faithful multiplicity-free} $(\G,\Omega)$-spaces with momentum image $\Delta$. By {Theorem \ref{torsortorspthm:torbunversion} (which applies by Lemma \ref{lemma:indtorbunacttoric})} there is, for each $U\in \U$, an isomorphism of Hamiltonian $(\sT,\Omega_{\sT})$-spaces:
\begin{center}
\begin{tikzcd} (J_1^{-1}(U),\omega_1) \arrow[rr,"\psi_U"]\arrow[rd,"J_1"'] & & (J_2^{-1}(U),\omega_2)\arrow[ld, "J_2"] \\
& M & 
\end{tikzcd}
\end{center} 
Furthermore, by Lemma \ref{lifrlagrcocyleprop} we can find for each arrow $V\xleftarrow{\sigma} U$ in $\textsf{Emb}_\U(\B\vert_\Delta)$ a Lagrangian bisection (\ref{lagrbislift}) lifting $\sigma$, chosen so as to satisfy the cocycle condition (\ref{cocyccondlagrbis}). By Proposition \ref{liftinglagrbiswithsymplectoprop}, this choice provides us with symplectomorphisms: 
\begin{equation*} \psi_{i,V\xleftarrow{\sigma} U}:(J_i^{-1}(U),\omega_i)\to (J_i^{-1}(\sigma\cdot U),\omega_i), \quad \psi_{i,V\xleftarrow{\sigma} U}(p)=g(V\xleftarrow{\sigma} U)(J_i(p))\cdot p,
\end{equation*} for each $i\in \{1,2\}$ and each arrow $V\xleftarrow{\sigma} U$. Now, consider the $1$-cochain: 
\begin{equation*} \tau\in \check{C}^1_\U(\B\vert_\Delta,\L)
\end{equation*} that assigns to an arrow $V\xleftarrow{\sigma} U$ the Lagrangian section corresponding {via (\ref{torsectautisoinvsymp:torbunversion})} to the automorphism of the $(\sT,\Omega_{\sT})$-space $J_2:(J_2^{-1}(U),\omega_2)\to M$ given by the composition:
\begin{center}
\begin{tikzcd} (J_2^{-1}(U),\omega_2) \arrow[rr,"\psi_U^{-1}"] & & (J_1^{-1}(U),\omega_1) \arrow[d,"\psi_{1,V\xleftarrow{\sigma} U}"] \\
(J_2^{-1}(\sigma\cdot U),\omega_2) \arrow[u,"\psi_{2,V\xleftarrow{\sigma} U}^{-1}"]   & & (J_1^{-1}(\sigma\cdot U),\omega_1) \arrow[ll,"\psi_V"]
\end{tikzcd}
\end{center}
From the cocycle condition (\ref{cocyccondlagrbis}) it follows that $\tau$ is a $1$-cocycle. Now, consider the unique map $\psi:S_1\to S_{2,\tau}$ defined by the property that $\psi\vert_{J_1^{-1}(U)}=j_{2,U}\circ \psi_U$ for each $U\in \U$, with $j_{2,U}$ as in (\ref{injopenintoquot}). To see that this is well-defined, notice first that (since $\U$ is a basis) this boils down to showing that if $V,U\in \U$ such that $U\subset V$, then $j_{2,V}\circ \psi_V$ restricts to $j_{2,U}\circ \psi_U$ on $J_1^{-1}(U)$, and this in turn readily follows from the fact that $\psi_U$ is $\sT$-equivariant and $g(V\hookleftarrow U)$ takes values in $\sT$, being a lift of the unit bisection of $\B$. Clearly, $\psi$ is a symplectomorphism that intertwines $J_1$ and $J_{2,\tau}$. Furthermore, it is $\G$-equivariant. To see this, let $g\in \G$ with source $x\in M$ and target $y\in M$, and let $p\in S_1$ be such that $J_1(p)=x$. Let $V\xleftarrow{\sigma} U$ be an arrow in $\textsf{Emb}_\U(\B\vert_\Delta)$ such that $x\in U$ and $\sigma(x)=[g]$. Then $J_1(p)=x\in U$ and $J_1(g\cdot p)=y\in V$, so that:
\begin{equation*} g\cdot \psi(p)=g\cdot [\psi_U(p),U]=[g\cdot \tau(V\xleftarrow{\sigma} U)(x)\cdot \psi_U(p),V],
\end{equation*} whereas:
\begin{equation*} \psi(g\cdot p)=[\psi_V(g\cdot p),V].
\end{equation*} To see that these are equal, notice that:
\begin{equation*} g\cdot \tau(V\xleftarrow{\sigma} U)(x)\cdot \psi_U(p)=(g\cdot g(V\xleftarrow{\sigma} U)(x)^{-1})\cdot \psi_V(g(V\xleftarrow{\sigma} U)(x)\cdot p)=\psi_V(g\cdot p),
\end{equation*} where the first step follows by spelling out the definition of $\tau(V\xleftarrow{\sigma} U)$ and the second step follows from $\sT$-equivariance of $\psi_V$ and the observation that: 
\begin{equation*} g\cdot g(V\xleftarrow{\sigma} U)(x)^{-1}\in \sT,
\end{equation*} since both $g$ and $g(V\xleftarrow{\sigma} U)(x)$ project to $[g]\in \B$. So, $\psi$ is an isomorphism of Hamiltonian $(\G,\Omega)$-spaces, leading us to conclude that:
\begin{equation*} [J_1:(S_1,\omega_1)\to M]=\textrm{c}\cdot [J_2:(S_2,\omega_2)\to M],
\end{equation*} where $\textrm{c}\in H^1(\B\vert_\Delta,\L)$ is the class represented by $\tau$. This proves transitivity of the action. 
\end{proof} 
Next, we address the proof of the first structure theorem {in full generality, including the naturality. For this we will use the fact (due to \cite{Xu}) that a symplectic Morita equivalence $((P,\omega_P),\alpha_1,\alpha_2)$ between two symplectic groupoids $(\G_1,\Omega_1)$ and $(\G_2,\Omega_2)$ induces an equivalence:
\begin{equation}\label{eqn:moreq:indequivofcatshamsp} P_*:\textsf{Ham}(\G_1,\Omega_1)\xrightarrow{\sim} \textsf{Ham}(\G_2,\Omega_2)
\end{equation}
between the categories of Hamiltonian spaces of the respective symplectic groupoids, and hence it induces a bijection:
\begin{equation}\label{eqn:moreq:indbijisoclasseshamsp} P_*:\left\{\begin{aligned} &\quad\text{ }\textrm{ Isomorphism classes of } \\ & \text{ }\text{ }\textrm{Hamiltonian $(\G_1,\Omega_1)$-spaces } \end{aligned}\right\}\xrightarrow{\sim} \left\{\begin{aligned} &\quad\text{ }\textrm{ Isomorphism classes of} \\ & \text{ }\text{ }\textrm{Hamiltonian $(\G_2,\Omega_2)$-spaces }  \end{aligned}\right\}
\end{equation}
This associates to a Hamiltonian $(\G_1,\Omega_1)$-space $J:(S,\omega)\to M_1$ the Hamiltonian $(\G_2,\Omega_2)$-space:
\begin{equation*} P_*(J):(P\ast_{\G_1}S,\omega_{PS})\to M_2,
\end{equation*} with $P\ast_{\G_1}S$ the orbit space of the diagonal $\G_1$-action on $P\times_{M_1}S$, $\omega_{PS}$ the symplectic form inherited from $\omega_P$ and $\omega_S$, and  $P_*(J)$ the map induced by $\alpha_2\circ \textrm{pr}_P$. Our notation and conventions for this are as in \cite[Subsection 1.4.2]{Mol1}, where further details can also be found. We leave it for the reader to verify:
 \begin{prop}\label{prop:sympmoreqinv:faithfulcomplzerosp} Given a symplectic Morita equivalence $((P,\omega_P),\alpha_1,\alpha_2)$ between regular and proper symplectic groupoids $(\G_1,\Omega_1)$ and $(\G_2,\Omega_2)$ that relates a Delzant subspace $\underline{\Delta}_1\subset \underline{M}_1$ to $\underline{\Delta}_2\subset \underline{M}_2$,  (\ref{eqn:moreq:indbijisoclasseshamsp}) restricts to a bijection:
 \begin{equation}\label{indbijtorspsympmoreq} P_*:\left\{\begin{aligned} &\quad\quad\quad\textrm{ Isomorphism classes of } \\ & \textrm{faithful multiplicity-free $(\G_1,\Omega_1)$-spaces} \\ & \quad\quad\quad\textrm{with momentum image $\Delta_1$} \end{aligned}\right\}\xrightarrow{\sim} \left\{\begin{aligned} &\quad\quad\quad\textrm{ Isomorphism classes of} \\ & \textrm{faithful multiplicity-free $(\G_2,\Omega_2)$-spaces} \\ & \quad\quad\quad\textrm{with momentum image $\Delta_2$} \end{aligned}\right\}
\end{equation}
 \end{prop}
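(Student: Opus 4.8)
The plan is to exploit that, by \cite{Xu}, the functor $P_*$ of (\ref{eqn:moreq:indequivofcatshamsp}) is an equivalence of categories, so that (\ref{eqn:moreq:indbijisoclasseshamsp}) is \emph{already} a bijection, whose inverse is $\bar{P}_*$, the push-forward along the conjugate symplectic Morita equivalence $\bar{P}$ from $(\G_2,\Omega_2)$ to $(\G_1,\Omega_1)$ (obtained by interchanging $\alpha_1$ and $\alpha_2$, and which therefore relates $\underline{\Delta}_2$ back to $\underline{\Delta}_1$ since $\alpha_2^{-1}(\Delta_2)=\alpha_1^{-1}(\Delta_1)$). Consequently it suffices to prove the single implication that $P_*$ carries every faithful multiplicity-free $(\G_1,\Omega_1)$-space with momentum image $\Delta_1$ to a faithful multiplicity-free $(\G_2,\Omega_2)$-space with momentum image $\Delta_2$: applying the same implication to $\bar{P}$ yields the reverse inclusion, and the relations $\bar{P}_*\circ P_*=\mathrm{id}$ and $P_*\circ\bar{P}_*=\mathrm{id}$ then force (\ref{indbijtorspsympmoreq}) to be a bijection. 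So I would fix such a space $J:(S,\omega)\to M_1$ and write $P_*(J):(P\ast_{\G_1}S,\omega_{PS})\to M_2$ for its image.

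For the momentum image, recall that $P_*(J)$ is induced by $\alpha_2\circ\textrm{pr}_P$, so its image equals $\alpha_2(\alpha_1^{-1}(\Delta_1))$. Since the Morita equivalence relates $\underline{\Delta}_1$ to $\underline{\Delta}_2$ we have $\alpha_1^{-1}(\Delta_1)=\alpha_2^{-1}(\Delta_2)$, and as $\alpha_2$ is a surjective submersion this set maps onto $\Delta_2$; hence the momentum image of $P_*(J)$ is $\Delta_2$. For condition iii) of Definition \ref{toractdefi}, I would use that the Morita equivalence induces a canonical homeomorphism $\underline{S}=S/\G_1\xrightarrow{\sim}(P\ast_{\G_1}S)/\G_2$ (sending the class of $s$ to the class of $[p,s]$ for any $p$ with $\alpha_1(p)=J(s)$) which, together with the induced homeomorphism $\underline{M}_1\xrightarrow{\sim}\underline{M}_2$, fits into a commuting square with $\underline{J}$ and $\underline{P_*(J)}$; being a topological embedding on one side is therefore equivalent to being one on the other.

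The heart of the matter is conditions i) and ii), which I would verify fibre-by-fibre using the isotropy comparison of Example \ref{sympmoreqiamoreq}. Fix $p_0\in P$ and set $x_1=\alpha_1(p_0)$, $x_2=\alpha_2(p_0)$. Since $\alpha_2$ is a principal $\G_1$-bundle, $\G_1$ acts freely on $P$ and the $\alpha_2$-fibre through $p_0$ is a single $\G_1$-orbit; together with the quotient relation defining $P\ast_{\G_1}S$ this shows that $s\mapsto[p_0,s]$ is a bijection of $J^{-1}(x_1)$ onto $P_*(J)^{-1}(x_2)$. Moreover the defining relation $g\cdot p=p\cdot\phi_p(g)$ recorded by the isomorphism $\phi_{p_0}\colon(\G_1)_{x_1}\xrightarrow{\sim}(\G_2)_{x_2}$ of (\ref{moreqindisoisotgps}) — which restricts to an isomorphism $(\sT_1)_{x_1}\xrightarrow{\sim}(\sT_2)_{x_2}$ — intertwines, through this bijection, the $(\sT_1)_{x_1}$-action on $J^{-1}(x_1)$ with the $(\sT_2)_{x_2}$-action on $P_*(J)^{-1}(x_2)$. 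Hence $\sT_2$ acts transitively on each fibre of $P_*(J)$ if and only if $\sT_1$ does on each fibre of $J$ (condition ii)), and $\sT_2$ acts freely at $[p_0,s]$ if and only if $\sT_1$ acts freely at $s$ (the pointwise form of condition i)).

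It then remains to promote the pointwise freeness statement to the density statement in i): since the quotient maps $P\times_{M_1}S\to S$ and $P\times_{M_1}S\to P\ast_{\G_1}S$ are open surjective submersions whose pointwise free loci have the same preimage in $P\times_{M_1}S$, density of one free locus is equivalent to density of the other. I expect the only genuine bookkeeping — and the step most sensitive to the left/right and sign conventions — to be the verification that $\phi_{p_0}$ intertwines the two torus actions on the fibres; everything else is a direct translation through the equivalence $P_*$, in the spirit of \cite[Subsection 1.4.2]{Mol1}, which is why the statement is left to the reader.
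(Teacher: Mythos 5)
The paper gives no proof of this proposition (it is explicitly prefaced with ``We leave it for the reader to verify''), so there is nothing to compare against; your argument is a correct completion of exactly the verification intended: the reduction to a single implication via the inverse equivalence, the computation of the momentum image from $\alpha_1^{-1}(\Delta_1)=\alpha_2^{-1}(\Delta_2)$, the fibrewise transfer of conditions i) and ii) through the isomorphisms $\phi_{p_0}$ of (\ref{moreqindisoisotgps}), the open-map density argument, and the orbit-space square for condition iii) are all sound. The only imprecision is that the inverse symplectic Morita equivalence $\bar{P}$ is not obtained merely by interchanging $\alpha_1$ and $\alpha_2$: with the conventions of Definition \ref{sympgpwithcorndef} one must also replace $\omega_P$ by $-\omega_P$ and convert the left and right actions through the groupoid inversions (which are anti-symplectic); since the existence and functoriality of inverses and compositions of such equivalences is standard (cf. Proposition \ref{moreqwithcorniseqrelprop}), this does not affect your proof.
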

 }
\begin{proof}[Proof of Theorem \ref{firststrthm}; the torsor structure in full generality] Let $(\G,\Omega)\rightrightarrows M$ be a regular and proper symplectic groupoid. To define the action of {$H^1(\B\vert_\Delta,\L)$}, choose a complete transversal $\Sigma$. By the etale case, we have a free and transitive action of {$H^1(\B\vert_{\Sigma\cap\Delta},\L_{\Sigma\cap\Delta})$} on the set of isomorphism classes of {faithful multiplicity-free} $(\G,\Omega)\vert_\Sigma$-spaces with momentum image $\Sigma\cap \Delta$. {Via the isomorphism between $H^1(\B\vert_{\Sigma\cap\Delta},\L_{\Sigma\cap\Delta})$ and $H^1(\B\vert_\Delta,\L)$ obtained by applying Corollary \ref{cor:morinvshfcohom:iaequiv} to Example \ref{transversalmoreqex}, combined with }the bijection: 
\begin{equation*} \left\{\begin{aligned} &\quad\quad\quad\textrm{Isomorphism classes of } \\ & \textrm{{faithful multiplicity-free} $(\G,\Omega)\vert_\Sigma$-spaces} \\ & \quad\text{ }\textrm{ with momentum image $\Sigma\cap \Delta$} \end{aligned}\right\}\xrightarrow{\sim} \left\{\begin{aligned} &\quad\quad\quad\textrm{Isomorphism classes of} \\ & \textrm{{faithful multiplicity-free} $(\G,\Omega)$-spaces} \\ & \quad\quad\textrm{ with momentum image $\Delta$} \end{aligned}\right\}
\end{equation*} induced (as in {Proposition \ref{prop:sympmoreqinv:faithfulcomplzerosp}}) by the canonical symplectic Morita equivalence between $(\G,\Omega)$ and its restriction to $\Sigma$ {(like in Example \ref{transversalmoreqex})}, we obtain a free and transitive action of {$H^1(\B\vert_\Delta,\L)$} on the right-hand set. In Proposition \ref{naturalityofactprop} we show that this does not depend on the choice of $\Sigma$.  
\end{proof} 
To conclude the proof {of Theorem \ref{firststrthm}}, it remains to address the naturality, which is made precise in the proposition below. Suppose that we are given a symplectic Morita equivalence and Delzant subspaces as in Proposition \ref{prop:sympmoreqinv:faithfulcomplzerosp}. 
{In view of Corollary \ref{cor:morinvshfcohom:iaequiv}} the integral affine Morita equivalence $(\underline{P},\underline{\alpha}_1,\underline{\alpha}_2)$ associated to $((P,\omega_P),\alpha_1,\alpha_2)$ (see Example \ref{sympmoreqiamoreq}) induces an isomorphism:
\begin{equation}\label{indgpisocechlikecohomorshfsympmoreq} \underline{P}_*:{H^\bullet}(\B_1\vert_{\Delta_1},\L_1)\xrightarrow{\sim} {H^\bullet}(\B_2\vert_{\Delta_2},\L_2).
\end{equation}
\begin{prop}\label{naturalityofactprop} The action of ${H^1}(\B\vert_\Delta,\L)$ defined above is natural, in the following sense.
\begin{itemize}\item[a)] It does not depend on the choice of complete transversal $\Sigma$.
\item[b)] For any symplectic Morita equivalence as above, it holds that:
\begin{equation*} P_*({\emph{\textrm{c}}}\cdot[J:(S,\omega)\to M_1])=\underline{P}_*({\emph{\textrm{c}}})\cdot P_*([J:(S,\omega)\to M_1])
\end{equation*} for each {$\emph{\textrm{c}}\in H^1(\B_1\vert_{\Delta_1},\L_1)$} and each faithful multiplicity-free $(\G_1,\Omega_1)$-space $J:(S,\omega)\to M_1$ with momentum image $\Delta_1$.
\end{itemize}
\end{prop}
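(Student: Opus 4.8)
The plan is to prove the etale case of part (b) directly from the gluing construction of the action, and then to deduce both part (a) and the general case of part (b) from it by invoking the functoriality of the two transports that enter the definition of the action. Recall that these transports are the cohomology isomorphism $\underline{P}_*$ of Corollary \ref{cor:morinvshfcohom:iaequiv} (functorial with respect to composition of integral affine Morita equivalences) and the bijection $P_*$ of Proposition \ref{prop:sympmoreqinv:faithfulcomplzerosp}, which is the restriction of the equivalence $P_*\colon \textsf{Ham}(\G_1,\Omega_1)\xrightarrow{\sim}\textsf{Ham}(\G_2,\Omega_2)$ of (\ref{eqn:moreq:indequivofcatshamsp}) and is functorial under composition of symplectic Morita equivalences (this follows from \cite{Xu}; cf. \cite{Mol1}). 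Note that in the etale case the action is defined without any choice of transversal, so the etale case of (b) is an unconditional statement that does not presuppose (a).

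For the reductions, first consider (a). Given two complete transversals $\Sigma$ and $\Sigma'$, the restrictions $(\G,\Omega)\vert_\Sigma$ and $(\G,\Omega)\vert_{\Sigma'}$ have etale associated orbifold groupoids and are symplectically Morita equivalent, for instance via the composite of the canonical bibundles of Example \ref{transversalmoreqex} relating each of them to $(\G,\Omega)$. Applying the etale case of (b) to this composite, and using that the $\Sigma$- and $\Sigma'$-transports compose to the composite transport (functoriality of $\underline{P}_*$ and of $P_*$, together with the compatibility recorded in Example \ref{sympmoreqiamoreq}), one obtains $\textrm{c}\cdot_\Sigma[J]=\textrm{c}\cdot_{\Sigma'}[J]$; hence the action is independent of the transversal. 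For the general case of (b), given a symplectic Morita equivalence $P$ between $(\G_1,\Omega_1)$ and $(\G_2,\Omega_2)$ and complete transversals $\Sigma_1,\Sigma_2$, the composite bibundle $\G_1\vert_{\Sigma_1}\sim\G_1\sim_P\G_2\sim\G_2\vert_{\Sigma_2}$ is a symplectic Morita equivalence between the two etale restrictions; applying the etale case of (b) to it and unwinding the definitions of the $\Sigma_1$- and $\Sigma_2$-transports via functoriality yields the desired identity $P_*(\textrm{c}\cdot[J])=\underline{P}_*(\textrm{c})\cdot P_*([J])$.

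It remains to prove the etale case of (b), which carries the actual content. Here both $\B_i=\G_i/\sT_i$ are etale, so by Lemma \ref{lemma:indtorbunacttoric} every faithful multiplicity-free space is faithful toric for the induced torus-bundle action and the action of $H^1$ is given by the explicit \v{C}ech gluing: choosing a basis $\U$ of $\Delta_1$ with $\L_1$ being $\U$-acyclic and a $1$-cocycle $\tau$ representing $\textrm{c}$, the twisted space $S_\tau$ is glued from the pieces $J^{-1}(U)$, $U\in\U$, along the symplectomorphisms determined by $\tau$ (which are $\sT_1$-twists by Proposition \ref{lagrseccharprop:torbunversion}), with the $\G_1$-action twisted by $\tau$. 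The key step will be to show that the functor $P_*=P\ast_{\G_1}(-)$ is compatible with this construction: namely that it commutes with restriction to the $\G_1$-invariant opens $J^{-1}(U)$ (because $P\ast_{\G_1}(-)$ localizes over the orbit space, the fiber product and quotient being local over the base), and that under the identifications $\L_i\cong\textrm{Aut}_{\sT_i}(-,\omega)$ of Theorem \ref{isoautsymplagseccor:torbunversion} it carries the automorphism sheaf of $J$ to that of $P_*(J)$ via precisely the sheaf isomorphism $P_*(\L_1)\cong\L_2$ exhibited in the proof of Corollary \ref{cor:morinvshfcohom:iaequiv}. Granting these two facts, $P_*$ transports the gluing datum $\tau$ to a \v{C}ech $1$-cocycle representing $\underline{P}_*(\textrm{c})$ and commutes with the gluing colimit, so that $P_*(S_\tau)\cong (P_*S)_{\underline{P}_*\tau}$ as Hamiltonian $(\G_2,\Omega_2)$-spaces; this is exactly the etale case of (b).

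The main obstacle I anticipate is the cocycle-level bookkeeping in this last step: matching the basis $\U$ on $\Delta_1$ with a compatible basis on $\Delta_2$ through the local sections $\sigma\colon U\to P$ of $\alpha_2$ used in the proof of Corollary \ref{cor:morinvshfcohom:iaequiv}, and verifying that the automorphism of $P_*(J)$ induced by an automorphism $\psi_\tau$ of $J$ is indeed $\psi_{(\alpha_1\circ\sigma)^*\tau}$, i.e. that $P_*$ agrees on automorphism sheaves with the pull-back isomorphism $\tau\mapsto(\alpha_1\circ\sigma)^*\tau$. Once this identification is in place, the compatibility of $P_*$ with the gluing and with the twisted $\G$-actions is a routine diagram chase, using $\G$-equivariance of the structure maps and the cocycle condition (\ref{cocyccondlagrbis}), in the same spirit as the transitivity argument for the first structure theorem.
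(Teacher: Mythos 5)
Your proposal follows essentially the same route as the paper's proof: the paper also establishes the etale case of (b) first \textemdash{} by matching a $\L_2$-acyclic basis of $\Delta_2$ with opens of $\Delta_1$ through local sections $\zeta_U$ of $\alpha_2$, transporting the representing cocycle via the relation (\ref{relassociatedcocycleeq}), and writing down an explicit isomorphism $(P\ast_{\G_1}S)_{\tau_2}\cong P\ast_{\G_1}S_{\tau_1}$ of Hamiltonian $(\G_2,\Omega_2)$-spaces \textemdash{} and then deduces part (a) and the general case of (b) exactly as you do, from the functoriality of $P_*$ and $\underline{P}_*$ under composition of symplectic Morita equivalences. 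One small caution: the pieces $J^{-1}(U)$, $U\in\U$, are $\sT_1$-invariant but not $\G_1$-invariant, so your parenthetical appeal to ``localization over the orbit space'' is not quite right as stated; the identification of $P_*(J)^{-1}(U)$ with $J^{-1}(f_U(U))$ must be run through the chosen sections of $\alpha_2$, which is precisely the bookkeeping you flag in your final paragraph and which the paper's explicit map $\psi$ carries out.
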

\begin{proof} First, suppose that $\B$ is etale. In this case, we defined the action for the choice of complete transversal $\Sigma:=M$, and it suffices to prove part $b$ for this action with $\B_1$ and $\B_2$ etale. {Let $\textrm{c}\in H^1(\B_1\vert_{\Delta_1},\L_1)$} and let $J:(S,\omega)\to M_1$ be a faithful multiplicity-free $(\G_1,\Omega_1)$-space with momentum image $\Delta_1$. {Choose a basis $\U_2$ of $\Delta_2$ for which $\L_2:=\L_{\Delta_2}$ is $\U_2$-acyclic, such that each $U\in \U_2$ admits a smooth local section $\underline{\zeta}_U:U_{M_2}\to \underline{P}$ of $\underline{\alpha}_2$, defined on an open $U_{M_2}$ in $M_2$ satisfying $U=U_{M_2}\cap \Delta_2$, with the property that:
\begin{equation*} f_U:=\alpha_1\circ \zeta_U=\underline{\alpha_1}\circ \underline{\zeta}_U:U_{M_2}\to M_1
\end{equation*} is a smooth open embedding. Here $\underline{\zeta}_U$ denotes the induced section $U_{M_2}\to \underline{P}$ of $\underline{\alpha}_2$. Fix such a collection $\{\zeta_U\mid U\in \U_2\}$. Since the Morita equivalence $(\underline{P},\underline{\alpha}_1,\underline{\alpha}_2)$ is integral affine, pull-back along $f_U$ induces an isomorphism of sheaves:
\begin{equation*} (f_U)^*:\L_1\vert_{f_U(U)}\xrightarrow{\sim} \L_2\vert_U
\end{equation*} covering the homeomorphism $f_U:U\to f_U(U)$. In particular, for each $U\in \U_2$ and each $n>0$:
\begin{equation*} H^n(f_U(U),\L_1)\cong H^n(U,\L_2)=0,
\end{equation*} and so we can extend $\{f_U(U)\mid U\in \U_2\}$ to a basis $\U_1$ of $\Delta_1$ for which $\L_1:=\L_{\Delta_1}$ is $\U_1$-acyclic. }
The composition of isomorphisms:
\begin{equation}\label{eqn:isoincechcohom:pfnaturalityfirststrthm}
 \check{H}^\bullet_{\U_1}(\B_1\vert_{\Delta_1},\L_1)\xrightarrow{(\ref{eqn:cechtoderivedmorph:equivshf})} H^\bullet(\B_1\vert_{\Delta_1},\L_1)\xrightarrow{\underline{P}_*} H^\bullet(\B_2\vert_{\Delta_2},\L_2)\xrightarrow{(\ref{eqn:cechtoderivedmorph:equivshf})} \check{H}^\bullet_{\U_2}(\B_2\vert_{\Delta_2},\L_2)
\end{equation} is induced by the map of complexes:
\begin{equation*} \check{C}^\bullet_{\U_1}(\B_1\vert_{\Delta_1},\L_1)\to \check{C}^\bullet_{\U_2}(\B_2\vert_{\Delta_2},\L_2),
\end{equation*} that associates to an $n$-cochain $\tau_1$ the $n$-cochain $\tau_2$ given by:
\begin{equation*} \tau_2(U_0\xleftarrow{\sigma_1} ...\xleftarrow{\sigma_n} U_n)=(f_{U_n})^*\tau_1(f_{U_0}(U_0)\xleftarrow{f(\sigma_1)} ...\xleftarrow{f(\sigma_n)} f_{U_n}(U_n)),
\end{equation*} where we denote by $f(\sigma_j)$ the unique continuous bisection of $\B_1\vert_{\Delta_1}$ on $f_{U_j}(U_j)$ satisfying:
\begin{equation*} f(\sigma_j)(f_{U_j}(x))\cdot \underline{\zeta}_{U_j}(x)=\underline{\zeta}_{U_{j-1}}((t\circ \sigma_j)(x))\cdot \sigma_j(x), 
\end{equation*} for each $x\in U_j$. 
{Consider a $1$-cochain $\tau_1$ representing the class $[\tau_1]\in \check{H}^1_{\U_1}(\B_1\vert_{\Delta_1},\L_1)$ corresponding to $\textrm{c}$ via (\ref{eqn:cechtoderivedmorph:equivshf}). From the above} it follows that the image of $[\tau_1]$ under (\ref{eqn:isoincechcohom:pfnaturalityfirststrthm}) is represented by the unique $1$-cocycle $\tau_2$ with the property that: 
\begin{equation}\label{relassociatedcocycleeq} \tau_2(V\xleftarrow{\sigma} U)(x)=\phi_{{\zeta_U(x)}}\left(\tau_1(f_V(V)\xleftarrow{f(\sigma)} f_U(U))(f_U(x))\right)
\end{equation} for each arrow $V\xleftarrow{\sigma} U$ in $\textsf{Emb}_{\U_2}(\B_2\vert_{\Delta_2})$ and each $x\in U$. Here $\phi_{\zeta_U(x)}:(\sT_1)_{f_U(x)}\xrightarrow{\sim} (\sT_2)_{x}$ denotes the group isomorphism given by (\ref{moreqindisoisotgps}). Consider the map:
\begin{align*} \psi:((P\ast_{\G_1}S)_{\tau_2},(\omega_{PS})_{\tau_2})&\to (P\ast_{\G_1}S_{\tau_1},\omega_{PS_{\tau_1}}),\\
 \left[[p_P,p_S],U\right]&\mapsto \left[{\zeta}_U(\alpha_2(p_P)),[[{\zeta}_U(\alpha_2(p_P)):p_P]\cdot p_S,f_U(U)]\right],
\end{align*} where $[{\zeta}_U(\alpha_2(p_P)):p_P]\in \G_1$ denotes the unique element satisfying: 
\begin{equation*} [{\zeta}_U(\alpha_2(p_P)):p_P]\cdot p_P={\zeta}_U(\alpha_2(p_P)).
\end{equation*} Using (\ref{relassociatedcocycleeq}) a somewhat tedious, but straightforward verification shows that $\psi$ is well-defined, $\G$-equivariant and bijective. Moreover, $\psi$ is readily verified to be smooth and symplectic (and hence immersive), so that for dimensional reasons it must be a symplectomorphism. So, it is an isomorphism of Hamiltonian $(\G_2,\Omega_2)$-spaces. Therefore: 
\begin{align*} P_*({\textrm{c}}\cdot [J:(S,\omega)\to M_1])&=[P_*(J_{\tau_1}):(P\ast_{\G_1}S_{\tau_1},\omega_{PS_{\tau_1}})\to M_2]\\
&=[P_*(J)_{\tau_2}:((P\ast_{\G_1}S)_{\tau_2},(\omega_{PS})_{\tau_2})\to M_2]\\
&=\underline{P}_*({\textrm{c}})\cdot P_*([J:(S,\omega)\to M_1]),
\end{align*} as claimed. This concludes the proof of the etale case.
In the general setting both parts $a$ and $b$ readily follow from the case treated above by using the fact that the bijection (\ref{indbijtorspsympmoreq}) only depends on the isomorphism class of the Hamiltonian bibundle $((P,\omega_P),\alpha_1,\alpha_2)$ and the fact that (\ref{indbijtorspsympmoreq}) and (\ref{indgpisocechlikecohomorshfsympmoreq}) are both functorial with respect to composition of symplectic Morita equivalences. 
\end{proof}

\subsection{Proof of the splitting theorem}\label{pfsplitthmsec}
\subsubsection{Overview of the proof} {In this section we prove Theorem \ref{globalsplitthm}. In Subsection \ref{constrtorspsemidirprodsec} we show that, if $\B$ is etale, then for any Delzant subspace the semi-direct product groupoid appearing in Theorem \ref{globalsplitthm} admits a faithful multiplicity-free space with momentum image the given Delzant subspace. In Subsection \ref{sympgpoidscornerssec} we prove that a symplectic Morita equivalence between the restrictions of two proper and regular symplectic groupoids to Delzant subspaces induces an equivalence between their categories of faithful multiplicity-free spaces with momentum image equal to the respective Delzant subspaces. Together with the result of Subsection \ref{constrtorspsemidirprodsec}, this leads to the implication from $b)$ to $a)$ in Theorem \ref{globalsplitthm}. After this, in Subsection \ref{sec:lagdixmierdouady} we turn to the construction of the Lagrangian Dixmier-Douady class and explain the implication from $a)$ to $c)$. The proof of the implication from $c)$ to $b)$ is the content of Subsection \ref{pfglobalsplitthmsec}. }
\subsubsection{Existence of a faithful multiplicity-free $(\B\Join \sT,\textrm{pr}_{\sT}^*\Omega)$-space}\label{constrtorspsemidirprodsec} The aim of this subsection is to prove:
\begin{prop}\label{exttoractsemidirectprodprop} Let $\B\rightrightarrows (M,\Lambda)$ be an etale integral affine orbifold groupoid and $\underline{\Delta}$ a Delzant subspace. Consider the corresponding invariant subspace $\Delta$ of $(M,\Lambda)$ (which is a Delzant {subspace}) and a faithful toric $(\sT_\Lambda,\Omega_\Lambda)$-space $J_\Delta:(S_\Delta,\omega_\Delta)\to M$ as in Theorem \ref{constrtorictorbunspthm}. This action extends to a faithful multiplicity-free action along $J_\Delta:(S_\Delta,\omega_\Delta)\to M$ of the semi-direct product symplectic groupoid $(\B\Join \sT_\Lambda,\textrm{pr}_{\sT_\Lambda}^*\Omega_\Lambda)$ (where we view $\sT_\Lambda$ as $\B$-space, as in Remark \ref{presymptorbunrem}). 
\end{prop}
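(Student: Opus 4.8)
The plan is to extend the given $\sT_\Lambda$-action along $J_\Delta$ to an action of $\B\Join\sT_\Lambda$ by adjoining a $\B$-action that arises from the natural and local dependence of the construction of $S_\Delta$ on $(M,\Lambda,\Delta)$ established in Theorem \ref{constrtorictorbunspthm}. Recall that, since $\B$ is étale, $\B\Join\sT_\Lambda\rightrightarrows M$ fits into a short exact sequence $1\to\sT_\Lambda\to\B\Join\sT_\Lambda\to\B\to 1$, with $\sT_\Lambda$ acting trivially on the base, so that the Poisson structure induced on $M$ vanishes and $\textrm{pr}_{\sT_\Lambda}^*\Omega_\Lambda$ makes $\B\Join\sT_\Lambda$ a regular proper symplectic groupoid. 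An arrow may be written as $(\beta,\theta)$ with $\beta\colon x\to y$ in $\B$ and $\theta\in(\sT_\Lambda)_y$, and it factors as $(\textrm{id}_y,\theta)\cdot(\beta,\textrm{id}_x)$ through the canonical embeddings of $\sT_\Lambda$ and of $\B$ (the latter via the unit section of $\sT_\Lambda$).

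First I would define the $\B$-action. Given $\beta\colon x\to y$ and $p\in J_\Delta^{-1}(x)$, I choose a local bisection $\sigma\colon U\to\B$ through $\beta$; since $\B$ is étale and $\Lambda$ is $\B$-invariant, $t\circ\sigma$ is a germ-unique diffeomorphism onto an open of $M$ with $(t\circ\sigma)^*\Lambda=\Lambda$, and I set $\beta\cdot p:=(t\circ\sigma)_*(p)$ using the symplectomorphism $(t\circ\sigma)_*$ of Theorem \ref{constrtorictorbunspthm}. The locality clause (ii) of that theorem, together with the uniqueness of germs of bisections of an étale groupoid, makes this independent of the choice of $\sigma$, while the naturality clause (i) yields $(\beta_2\beta_1)\cdot p=\beta_2\cdot(\beta_1\cdot p)$, so this is a genuine $\B$-action along $J_\Delta$. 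The compatibility of $(t\circ\sigma)_*$ with the torus actions, namely $\phi_*([\alpha]\cdot p)=[(\d\phi^{-1})^*\alpha]\cdot\phi_*(p)$, is precisely the semi-direct product relation $\beta\cdot(\theta\cdot p)=(\beta\cdot\theta)\cdot(\beta\cdot p)$, where $\beta\cdot\theta$ is the conormal $\B$-action on $\sT_\Lambda$; hence the $\B$- and $\sT_\Lambda$-actions assemble into an action of $\B\Join\sT_\Lambda$ given by $(\beta,\theta)\cdot p=\theta\cdot(\beta\cdot p)$. Smoothness I would check locally using the trivialization $\B\vert_U\cong\B_x\ltimes U$ of Lemma \ref{proetgpoidlinlem}: over each sheet $\sigma_\gamma(U)$ the action is the smooth map $(t\circ\sigma_\gamma)_*$, and these glue to a smooth action map since $\B\vert_U$ is the disjoint union of these sheets.

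Next I would verify that the action is Hamiltonian. Since the set of arrows $g$ for which the multiplicativity identity $m_{S_\Delta}^*\omega_\Delta=\textrm{pr}_{S_\Delta}^*\omega_\Delta+\textrm{pr}^*(\textrm{pr}_{\sT_\Lambda}^*\Omega_\Lambda)$ holds is a subgroupoid, it suffices to check it on the two generating subgroupoids $\sT_\Lambda$ and $\B$. On $\sT_\Lambda$ the identity is exactly the statement that $J_\Delta$ is the given faithful toric $(\sT_\Lambda,\Omega_\Lambda)$-space. On the image of $\B$ under the unit section of $\sT_\Lambda$ the form $\textrm{pr}_{\sT_\Lambda}^*\Omega_\Lambda$ restricts to zero, because the unit section of the symplectic groupoid $(\sT_\Lambda,\Omega_\Lambda)$ is Lagrangian; there the identity reduces to the assertion that $\B$ acts by symplectomorphisms, which holds by construction. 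Finally, $J_\Delta$ is trivially a Poisson map, since the Poisson structure induced on $M$ by $\B\Join\sT_\Lambda$ vanishes (as $\B$ is étale).

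It remains to check that the resulting Hamiltonian $(\B\Join\sT_\Lambda)$-space is faithful multiplicity-free, i.e. satisfies the three conditions of Definition \ref{toractdefi}. The torus bundle of identity components of the isotropy groups of $\B\Join\sT_\Lambda$ is exactly $\sT_\Lambda$, so conditions i) and ii) are precisely parts $a$ and $b$ of Proposition \ref{torproptoppartprop}. For condition iii) I would identify $\underline{S}_\Delta=(S_\Delta/\sT_\Lambda)/\B$ with $\Delta/\B=\underline{\Delta}$ via the homeomorphism $\underline{J}_\Delta\colon S_\Delta/\sT_\Lambda\xrightarrow{\sim}\Delta$ of Proposition \ref{torproptoppartprop}$c$ (which intertwines the residual $\B$-action with the $\B$-action on $\Delta\subset M$); under this identification the transverse momentum map becomes the inclusion $\underline{\Delta}\hookrightarrow\underline{M}$, which is a topological embedding since $\Delta$ is $\B$-invariant. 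I expect the main obstacle to be the bookkeeping in the Hamiltonicity step — making the multiplicativity reduction and the semi-direct compatibility precise — rather than any deep difficulty, since all the essential geometric input is already packaged in the naturality and locality of Theorem \ref{constrtorictorbunspthm} and in Proposition \ref{torproptoppartprop}.
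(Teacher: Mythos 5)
Your construction of the $\B$-action coincides with the paper's proof: the paper also sets $\gamma\cdot p:=(t\circ \sigma)_*(p)$ for a bisection $\sigma$ through $\gamma$, uses the locality clause of Theorem \ref{constrtorictorbunspthm} for independence of the choice (germ-uniqueness of bisections of an etale groupoid), the naturality clause for the action axiom, and the compatibility of $(t\circ\sigma)_*$ with the torus actions for the semi-direct product relation (\ref{compcondsemidirecprodact}); the concluding reduction of faithful multiplicity-freeness to Proposition \ref{torproptoppartprop} also matches. The one structural difference is that the paper assembles the Hamiltonian $(\sT_\Lambda,\Omega_\Lambda)$-action and the symplectic $\B$-action into a Hamiltonian $(\B\Join\sT_\Lambda,\textrm{pr}_{\sT_\Lambda}^*\Omega_\Lambda)$-action by invoking Lemma \ref{semidirecprodactlem} (stated just before the proposition, with proof left as straightforward), whereas you attempt this assembly inline.

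It is in that inline assembly that your argument has a gap as written. For the reduction ``the set of arrows at which multiplicativity holds is a subgroupoid, so it suffices to check the generators'' to be usable, membership of an arrow $g$ in that set must mean that the identity holds for \emph{all} tangent vectors to $(\B\Join\sT_\Lambda)\times_M S_\Delta$ at points over $g$. For arrows of $\sT_\Lambda$ this is what your check gives, since $\sT_\Lambda$ is \emph{open} in $\B\Join\sT_\Lambda$ ($\B$ being etale). But the image of $\B$ under the unit section is a submanifold of codimension $\dim(M)$, and your verification there (the form pulls back to zero on the unit section, and the identity reduces to $\B$ acting symplectically) only establishes the identity for vectors \emph{tangent to} $\B\times_M S_\Delta$, not in the transverse $\sT_\Lambda$-fiber directions; so the closure argument does not apply as stated. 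The repair is easy with the inputs you already have: near a point $((\beta,1),p)$, parametrize arrows as $(\sigma(z),\theta)$ using a bisection $\sigma$ through $\beta$, so that on this neighbourhood the action map becomes $(\theta,p')\mapsto \theta\cdot\psi(p')$ with $\psi:=(t\circ\sigma)_*$. Writing $\Xi((\sigma(z),\theta),p'):=(\theta,\psi(p'))$, one gets on the whole neighbourhood
\begin{equation*}
m_{S_\Delta}^*\omega_\Delta=\Xi^*\bigl((m^{\sT_\Lambda})^*\omega_\Delta\bigr)=\Xi^*\bigl(\textrm{pr}_{S_\Delta}^*\omega_\Delta+\textrm{pr}_{\sT_\Lambda}^*\Omega_\Lambda\bigr)=\textrm{pr}_{S_\Delta}^*(\psi^*\omega_\Delta)+\textrm{pr}_{\sT_\Lambda}^*\Omega_\Lambda=\textrm{pr}_{S_\Delta}^*\omega_\Delta+\textrm{pr}_{\sT_\Lambda}^*\Omega_\Lambda,
\end{equation*}
using first that the toric action is Hamiltonian and then that $\psi$ is a symplectomorphism. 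Such neighbourhoods cover $(\B\Join\sT_\Lambda)\times_M S_\Delta$, so multiplicativity holds everywhere; this computation replaces the generators argument entirely and is, in substance, the proof of the paper's Lemma \ref{semidirecprodactlem}.
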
 
For this we will use the following lemma, the proof of which is straightforward. 
\begin{lemma}\label{semidirecprodactlem} Let $(\sT,\Omega_\sT)\rightrightarrows M$ be a symplectic torus bundle equipped with a symplectic action of an etale Lie groupoid $\B\rightrightarrows M$ by fiberwise group automorphisms, where by the action being symplectic we mean that $(m^\B_\sT)^*\Omega_\sT=\textrm{pr}_\sT^*\Omega_\sT$. Given a smooth map $J:(S,\omega)\to M$ from a symplectic manifold into $M$, there is a bijection between:
\begin{itemize}\item[i)] left Hamiltonian $(\B\Join \sT,\textrm{pr}_\sT^*\Omega_\sT)$-actions along $J:(S,\omega)\to M$,
\item[ii)] pairs consisting of a left Hamiltonian $(\sT,\Omega_{\sT})$-action along $J:(S,\omega)\to M$ and a left symplectic $\B$-action along $J:(S,\omega)\to M$ satisfying:
\begin{equation}\label{compcondsemidirecprodact} \gamma\cdot (t\cdot p)=(t\cdot \gamma^{-1}) \cdot (\gamma\cdot p),
\end{equation} for all $\gamma\in \B$, $t\in \sT$ and $p\in S$ such that $s_{\B}(\gamma)=\pi_\sT(t)=J(p)$,
\end{itemize} via which the two actions are related as:
\begin{equation*} (\gamma,t)\cdot p=\gamma\cdot (t\cdot p),
\end{equation*} for all $\gamma\in \B$, $t\in \sT$ and $p\in S$ such that $s_{\B}(\gamma)=\pi_\sT(t)=J(p)$. The same holds in the setting of symplectic groupoids with corners.
\end{lemma}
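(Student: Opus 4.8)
The plan is to exploit the fact that in the semi-direct product groupoid $\B\Join\sT$ both $\sT$ and $\B$ sit as wide subgroupoids, and that every arrow factors canonically as a product of one arrow from each. Writing an arrow of $\B\Join\sT$ over $s_\B(\gamma)=\pi_\sT(t)=x$ as a pair $(\gamma,t)$ with $\gamma\in\B$ and $t\in\sT_x$, one has a factorisation $(\gamma,t)=(\gamma,1)(1,t)$, where $\gamma\mapsto(\gamma,1)$ and $t\mapsto(1,t)$ are the embeddings of $\B$ and of $\sT$, and the groupoid multiplication is dictated by the conjugation action of $\B$ on $\sT$ through the relation $(\gamma,1)(1,t)=(1,t\cdot\gamma^{-1})(\gamma,1)$. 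First I would record these structure maps (source, target, multiplication, inverse) together with the two embeddings and the observation that under the projection $\textrm{pr}_\sT\colon\B\Join\sT\to\sT$ the image of $\B$ lands in the units of $\sT$; the entire correspondence is read off from this data.

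For the forward direction, given a Hamiltonian $(\B\Join\sT,\textrm{pr}_\sT^*\Omega_\sT)$-action along $J$, I would pull it back along the two embeddings to obtain a $\sT$-action $t\cdot p:=(1,t)\cdot p$ and a $\B$-action $\gamma\cdot p:=(\gamma,1)\cdot p$ along $J$. That the $\sT$-action is Hamiltonian for $\Omega_\sT$ and the $\B$-action is symplectic follows by restricting the defining identity $m^*\omega=\textrm{pr}_S^*\omega+\textrm{pr}_{\B\Join\sT}^*(\textrm{pr}_\sT^*\Omega_\sT)$ to the two subgroupoids, using that $\textrm{pr}_\sT^*\Omega_\sT$ pulls back to $0$ on $\B$ since its image meets $\sT$ only in units (which are isotropic). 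The compatibility condition (\ref{compcondsemidirecprodact}) is then exactly the image under the action of the conjugation relation $(\gamma,1)(1,t)=(1,t\cdot\gamma^{-1})(\gamma,1)$ applied to $p$; spelling this out via the factorisation is the only substantive check.

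Conversely, given a pair as in (ii), I would define $(\gamma,t)\cdot p:=\gamma\cdot(t\cdot p)$ and verify it is a Hamiltonian action. The matching of source and target is immediate, the unit axiom is clear, and the associativity axiom $((\gamma_1,t_1)(\gamma_2,t_2))\cdot p=(\gamma_1,t_1)\cdot((\gamma_2,t_2)\cdot p)$ is precisely where (\ref{compcondsemidirecprodact}) is needed, to move a $\sT$-element past a $\B$-element in agreement with the semi-direct multiplication. For the Hamiltonian property I would compute the pullback of $\omega$ along the action map using $(\gamma,t)\cdot p=\gamma\cdot(t\cdot p)$: the $\sT$-step contributes $\textrm{pr}_\sT^*\Omega_\sT$ because that action is Hamiltonian, the $\B$-step contributes nothing because that action is symplectic, and the hypothesis $(m^\B_\sT)^*\Omega_\sT=\textrm{pr}_\sT^*\Omega_\sT$ is exactly what ensures these contributions assemble into $\textrm{pr}_\sT^*\Omega_\sT$ on $\B\Join\sT$ rather than a twisted form. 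Finally I would check that the two constructions are mutually inverse, which is immediate from the factorisation, and note that nothing changes in the category of manifolds with corners, all formulas being local and identical.

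The main obstacle is bookkeeping rather than ideas: one must fix a convention for the semi-direct multiplication (in particular whether $\B$ acts on $\sT_{s(\gamma)}$ or $\sT_{t(\gamma)}$, and the left/right placement matching the paper's $t\cdot\gamma^{-1}$) and then verify that the resulting associativity axiom reproduces (\ref{compcondsemidirecprodact}) on the nose, while separately carrying out the pullback computation for the symplectic forms. Neither step is deep, which is why the lemma is asserted to be straightforward; care is only needed so that the conventions adopted here agree with those used wherever $(\B\Join\sT,\textrm{pr}_\sT^*\Omega_\sT)$ is invoked later, in particular in Theorem \ref{globalsplitthm}.
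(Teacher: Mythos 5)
Your proof is correct, and since the paper gives no argument for this lemma at all (it is invoked with ``the proof of which is straightforward''), your factorization $(\gamma,t)=(\gamma,1)(1,t)$, the restriction of a $(\B\Join \sT)$-action to the two embedded subgroupoids, the identification of (\ref{compcondsemidirecprodact}) with the action-image of the conjugation relation, and the two pullback computations constitute exactly the routine verification the paper intends. The one refinement worth recording is that the hypothesis $(m^\B_\sT)^*\Omega_\sT=\textrm{pr}_\sT^*\Omega_\sT$ enters most fundamentally in making $\textrm{pr}_\sT^*\Omega_\sT$ multiplicative, i.e.\ in making $(\B\Join \sT,\textrm{pr}_\sT^*\Omega_\sT)$ a (pre-)symplectic groupoid in the first place; the Hamiltonian-condition computation in your converse direction goes through using only the two conditions in (ii).
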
 
\begin{proof}[Proof of Proposition \ref{exttoractsemidirectprodprop}] First, we define an action of $\B$ along $J_\Delta$, as follows. Given a $\gamma\in \B$ and $p\in S_\Delta$ such that $s_\B(\gamma)=J_\Delta(p)$, choose a smooth bisection $\sigma:U\to \B$ defined on an open $U$ in $M$ around $J_\Delta(p)$ such that $\sigma(J_\Delta(p))=\gamma$. Then $t\circ \sigma:U\cap \Delta\to (\sigma\cdot U)\cap \Delta$ is a diffeomorphism of manifolds with corners such that $(t\circ \sigma)^*\Lambda=\Lambda\vert_{U\cap \Delta}$. So, we can consider the associated symplectomorphism (as in Theorem \ref{constrtorictorbunspthmintrosec}):
\begin{equation}\label{indsymplectoexttoractsemidirectprodprop} (t\circ \sigma)_*:(J_\Delta^{-1}(U),\omega_\Delta)\to (J_\Delta^{-1}(\sigma\cdot U),\omega_\Delta),
\end{equation} and set:
\begin{equation*} \gamma\cdot p:=(t\circ \sigma)_*(p).
\end{equation*} It follows from the local dependence in Theorem \ref{constrtorictorbunspthmintrosec} that this does not depend on the choice of bisection, because the germ of the bisection chosen above is uniquely determined by the fact that it maps $J_\Delta(p)$ to $\gamma$ (since $\B$ is etale). Moreover, natural dependence and the compatibility of $(t\circ \sigma)_*$ with $J_\Delta$ imply that this indeed defines an action. From the facts that a bisection $\sigma$ as above is an open embedding and (\ref{indsymplectoexttoractsemidirectprodprop}) is a symplectomorphism it follows that this action is smooth and symplectic. The compatibilty of (\ref{indsymplectoexttoractsemidirectprodprop}) with the $\sT_\Lambda$-action implies that this $\B$-action and the $\sT_\Lambda$-action along $J_\Delta$ satisfy the compatibility condition (\ref{compcondsemidirecprodact}). So, in view of Lemma \ref{semidirecprodactlem} these actions define a Hamiltonian $(\B\Join \sT_{\Lambda},\Omega_\Lambda)$-action along $J_\Delta$. Because the $(\sT_\Lambda,\Omega_{\Lambda})$-action is {faithful multiplicity-free}, so is the $(\B\Join \sT_{\Lambda},\textrm{pr}_{\sT_\Lambda}^*\Omega_\Lambda)$-action. 
\end{proof}
\subsubsection{Morita equivalences between pre-symplectic groupoids with corners}\label{sympgpoidscornerssec}
Next, we will prove:
\begin{prop}\label{indequivofcatsmoreqsympgroupoidscorners} Let $(\G_1,\Omega_1)\rightrightarrows M_1$ and $(\G_2,\Omega_2)\rightrightarrows M_2$ be regular and proper symplectic groupoids and let $\underline{\Delta}_1$ and $\underline{\Delta}_2$ be Delzant subspaces of $\underline{M}_1$ and $\underline{M}_2$, respectively. A symplectic Morita equivalence $(P,\omega_P,\alpha_1,\alpha_2)$ between the restriction of $(\G_1,\Omega_1)$ to $\Delta_1$ and the restriction of $(\G_2,\Omega_2)$ to $\Delta_2$ induces an equivalence {between the category of faithful multiplicity-free $(\G_1,\Omega_1)$-spaces with momentum map image $\Delta_1$ and that of faithful multiplicity-free $(\G_2,\Omega_2)$-spaces with momentum map image $\Delta_2$.}
\end{prop}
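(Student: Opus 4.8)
The plan is to carry over to the corners setting the equivalence induced by a symplectic Morita equivalence (due to \cite{Xu}; see (\ref{eqn:moreq:indequivofcatshamsp})), the only genuinely new point being to verify that the output lands in the correct category. First I would observe that, since $J(S)=\Delta_1$, every faithful multiplicity-free $(\G_1,\Omega_1)$-space with momentum image $\Delta_1$ is the same datum as a Hamiltonian space for the symplectic groupoid with corners $(\G_1,\Omega_1)\vert_{\Delta_1}$ whose induced $\sT_1$-action is free on a dense subset, whose $\sT_1$-orbits are the $J$-fibers, and whose transverse momentum map is a homeomorphism onto $\Delta_1$; this is exactly why a Morita equivalence of the restricted groupoids suffices. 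I would then define $P_\ast$ on objects by the usual formula
\begin{equation*} P_\ast(J):(P\ast_{\G_1}S,\omega_{PS})\to M_2,
\end{equation*}
with $P\ast_{\G_1}S$ the orbit space of the diagonal $\G_1\vert_{\Delta_1}$-action on $P\times_{M_1}S$, $\omega_{PS}$ the form inherited from $\omega_P$ and $\omega$, and $P_\ast(J)$ induced by $\alpha_2\circ\textrm{pr}_P$ (so that its image equals $\alpha_2(P)=\Delta_2$); on morphisms $P_\ast$ is defined by functoriality of $P\ast_{\G_1}(-)$.

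The crux is to show that $P\ast_{\G_1}S$ is a \emph{genuine}, corner-free symplectic manifold and that $P_\ast(J)$ is faithful multiplicity-free: in the corner-free case of \cite{Xu} smoothness is immediate from submersivity of $\alpha_1$, whereas here $P$ carries corners over the corners of $\Delta_1$, and one must check that they cancel against the degeneration of the $\sT_1$-action along $J$. Both properties being local on $\underline{\Delta}_2$, I would first reduce to the case where $\B_1$ and $\B_2$ are etale by restricting the Morita equivalence to $\alpha_i^{-1}(\Sigma_i)$ for a pair of transversals related by the induced integral affine Morita equivalence $\underline{P}$ (Example \ref{sympmoreqiamoreq}), using Morita invariance and the transversal equivalence of Example \ref{transversalmoreqex}; in the etale case faithful multiplicity-free spaces are faithful toric (Lemma \ref{lemma:indtorbunacttoric}).

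To verify smoothness and the normal-form data I would argue as follows. The space $P\ast_{\G_1}S$ carries a natural topology, and on the open dense locus where $\sT_2$ acts freely it carries the symplectic structure pushed forward from $\omega_{PS}$; it remains to produce compatible smooth symplectic charts near each orbit. Given $[p,q]\in P\ast_{\G_1}S$ lying over $x_2=\alpha_2(p)\in\Delta_2$, set $x_1=\alpha_1(p)\in\Delta_1$. The Morita equivalence supplies the isomorphisms $\phi_p:(\G_1)_{x_1}\xrightarrow{\sim}(\G_2)_{x_2}$ and $\psi_p$ of (\ref{moreqindisoisotgps}), (\ref{moreqindisoisotreps}), compatible via Lemma \ref{sympmoreqnormreplem}, and these identify the isotropy group and the (maximally toric, by Proposition \ref{conedescrpsympnormrepprop}) symplectic normal representation of $P_\ast(J)$ at $[p,q]$ with those of a faithful multiplicity-free $(\G_2,\Omega_2)\vert_{\Delta_2}$-space at the corresponding orbit. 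By the normal form theorem (Theorem \ref{loceqthmhamact} and the local model of \cite{Mol1}, reducing via Theorem \ref{constrtorictorbunspthm} to the explicit toric local model), there is a corner-free local model realizing this data; matching it with $P\ast_{\G_1}S$ on the free locus equips a neighbourhood of $[p,q]$ with a smooth corner-free symplectic structure. These patch, and the resulting $P_\ast(J)$ is faithful multiplicity-free with momentum image $\Delta_2$ by construction.

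Finally, to upgrade $P_\ast$ to an equivalence I would use the opposite bibundle $(\overline{P},\alpha_2,\alpha_1)$ with $\omega_{\overline{P}}=-\omega_P$, a symplectic Morita equivalence in the reverse direction, together with the standard isomorphisms of Hamiltonian bibundles $\overline{P}\ast_{\G_2}P\cong\G_1\vert_{\Delta_1}$ and $P\ast_{\G_1}\overline{P}\cong\G_2\vert_{\Delta_2}$. As in ordinary Morita theory these yield natural isomorphisms $\overline{P}_\ast\circ P_\ast\cong\textrm{Id}$ and $P_\ast\circ\overline{P}_\ast\cong\textrm{Id}$, which restrict to the subcategories of faithful multiplicity-free spaces with the prescribed momentum images by the previous paragraph. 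I expect the main obstacle to be precisely the corner-cancellation in the crux step: making rigorous that the corners of $P$ are absorbed into the collapsing $\sT_1$-directions of $S$, for which the reduction to the explicit toric local model of Theorem \ref{constrtorictorbunspthm} seems unavoidable.
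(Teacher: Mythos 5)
Your overall skeleton coincides with the paper's: define $P_*$ on objects by $P_*(J):(P\ast_{\G_1}S,\omega_{PS})\to M_2$, check that it lands in the category of faithful multiplicity-free $(\G_2,\Omega_2)$-spaces with momentum image $\Delta_2$, and invert it with the opposite bibundle. However, your treatment of what you correctly single out as the crux \textemdash\ that $P\ast_{\G_1}S$ is a smooth symplectic manifold \emph{without} corners \textemdash\ rests on a misdiagnosis and leaves a genuine gap. The corners of $P$ do not need to ``cancel against the degeneration of the $\sT_1$-action along $J$''; their disappearance is a purely corner-calculus fact with no toric input at all. By the second description of the tangent cone in Corollary \ref{fundproptamesubmcor}$c$, since $\alpha_1:P\to\Delta_1$ is a \emph{tame} submersion and $J:S\to\Delta_1$ is smooth, the tangent cone of $P\times_{\Delta_1}S$ at $(p,q)$ is $\{(v,w)\in T_pP\times C_qS\mid \d(\alpha_1)_p(v)=\d J_q(w)\}$; as $S$ has no corners, $C_qS=T_qS$, so this cone is the full tangent space and $P\times_{\Delta_1}S$ is an embedded submanifold of $P\times S$ without corners. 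The quotient by the free and proper diagonal $\G_1$-action is then a manifold without corners, the form $(-\omega_P)\oplus\omega$ descends, and the remaining verifications are routine. This one observation replaces your entire third paragraph.

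By contrast, the route you propose \textemdash\ building the smooth structure on $P\ast_{\G_1}S$ by matching corner-free local models obtained from Theorem \ref{loceqthmhamact} along the free locus \textemdash\ is circular as stated: Theorem \ref{loceqthmhamact} compares two objects that are \emph{already} smooth Hamiltonian $(\G,\Omega)$-spaces, so it cannot be invoked on a topological quotient whose smooth structure is yet to be constructed. To make your argument rigorous you would need a by-hand construction of charts from the local model, together with a proof that the transition maps (homeomorphisms that are equivariant, momentum-preserving, and symplectic only on a dense open) are smooth and mutually compatible; this is precisely the kind of work carried out in Proposition \ref{smoothandsympstrnattorspprop} for the special case of $S_\Delta$, and your ``these patch'' hides all of it. The reduction to the etale case via transversals and the inversion via the opposite bibundle are unobjectionable (the paper needs neither, since it never passes through local models), but without repairing the crux step the proof does not go through.
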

\begin{proof} {When $\Delta_1=M_1$ and $\Delta_2=M_2$, the equivalence is given by the restriction of (\ref{eqn:moreq:indequivofcatshamsp}). The construction of this equivalence} extends to general Delzant subspaces, as follows. To define the functor from left to right, let $J:(S,\omega)\to M$ be a faithful multiplicity-free $(\G_1,\Omega_1)$-space. In view of Corollary \ref{fundproptamesubmcor}$c$, the fiber product $P\times_{\Delta_1}S$ is an embedded submanifold of $P\times S$ \textit{without} corners, since $S$ has no corners. So, since the diagonal $\G_1$-action along $\alpha_1\circ \textrm{pr}_P:P\times_{\Delta_1}S\to M_1$ is smooth, free and proper, the quotient:
\begin{equation*} P\ast_{\G_1}S:=\frac{(P\times_{\Delta_1}S)}{\G_1}
\end{equation*} is naturally a manifold without corners. Consider the left action of $\G_2$ along:
\begin{equation*} P_*(J):P\ast_{\G_1}S\to M_2, \quad [p_P,p_S]\mapsto \alpha_2(p_P),
\end{equation*} given by:
\begin{equation*} g\cdot[p_P,p_S]=[p_P\cdot g^{-1},p_S].
\end{equation*} The symplectic form $(-\omega_P)\oplus \omega_S$ descends to a symplectic form $\omega_{PS}$ on $P\ast_{\G_1}S$ and the $(\G_2,\Omega_2)$-action along $P_*(J)$ is Hamiltonian with respect to this. {It is straightforward to verify that this $(\G_2,\Omega_2)$-space is faithful multiplicity-free. Since this construction is clearly functorial, it yields a functor $P_*$. An entirely analogous construction from right to left gives an inverse functor. }
\end{proof}
With this at hand, we can prove one implication in the splitting theorem.
\begin{proof}[Proof of Theorem \ref{globalsplitthm}: {implication from $b)$ to $a)$}] The restriction of the pre-symplectic groupoid: 
\begin{equation*} (\B\Join \sT_\Lambda, \textrm{pr}_{\sT_\Lambda}^*\Omega_\Lambda)
\end{equation*} to a complete transversal $\Sigma$ for $\G$ is canonically isomorphic to the symplectic groupoid: 
\begin{equation*} (\B_{\vert_\Sigma}\Join \sT_{\Lambda_\Sigma},\textrm{pr}_{\sT_{\Lambda_\Sigma}}^*\Omega_{\Lambda_\Sigma})
\end{equation*} associated to the integral affine etale orbifold groupoid $\B\vert_\Sigma\rightrightarrows (\Sigma,\Lambda_\Sigma)$ (cf. Example \ref{transversalmoreqex}). Hence, by Proposition \ref{exttoractsemidirectprodprop} there exists a faithful multiplicity-free $(\B\ltimes \sT_\Lambda, \textrm{pr}_{\sT_\Lambda}^*\Omega_\Lambda)\vert_\Sigma$-space {with momentum map image $\Delta\cap\Sigma$}. If $(\G,\Omega)\vert_\Delta$ is pre-symplectic Morita equivalent to $(\B\Join \sT_\Lambda, \textrm{pr}_{\sT_\Lambda}^*\Omega_\Lambda)\vert_\Delta$, it follows from Proposition \ref{moreqwithcorniseqrelprop} and Remark \ref{presympmoreqissymprem} that $(\G,\Omega)\vert_\Delta$ is symplectic Morita equivalent to $(\B\Join \sT_\Lambda, \textrm{pr}_{\sT_\Lambda}^*\Omega_\Lambda)\vert_{\Delta\cap\Sigma}$. So, in view of Proposition \ref{indequivofcatsmoreqsympgroupoidscorners}, there exists a faithful multiplicity-free $(\G,\Omega)$-space {with momentum map image $\Delta$} as well.  
\end{proof}
\subsubsection{{The Lagrangian Dixmier-Douady class}}\label{sec:lagdixmierdouady} We will now extend the definition of the Lagrangian Dixmier-Douady class of a regular and proper symplectic groupoid in \cite{CrFeTo} to a version of this class for the restriction of the symplectic groupoid to a Delzant subspace. Let $(\G,\Omega)\rightrightarrows M$ be such a symplectic groupoid and $\underline{\Delta}\subset \underline{M}$ a Delzant subspace. First suppose that $\B:=\G/\sT$ is etale. Fix a basis $\U$ of $\Delta$ for which $\L:=\L_\Delta$ is $\U$-acyclic and consider the wide subcategory $\textsf{E}_\text{lift}$ of $\textsf{Emb}_\U(\B\vert_\Delta)$ consisting of those arrows $V\xleftarrow{\sigma} U$ that admit a Lagrangian bisection $U\to (\G,\Omega)\vert_\Delta$ lifting the underlying bisection $\sigma:U\to \B\vert_\Delta$. 

\begin{prop} This is a category approximating the etale groupoid $\B\vert_\Delta$ (see Subsection \ref{bisectionsembcatsec}). 
\end{prop}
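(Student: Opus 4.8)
The plan is to verify the three defining properties of an approximating category from Subsection \ref{bisectionsembcatsec}, after first recording that $\textsf{E}_\text{lift}$ is a wide subcategory. Wideness and closure under composition are immediate: the unit bisections lift to the unit section of $(\G,\Omega)$, which is Lagrangian, and the composition of two Lagrangian bisections is again Lagrangian (as recalled in Subsection \ref{bisectionsembcatsec}), so the composite of two liftable arrows is liftable via the product of the chosen Lagrangian lifts. Property (i) is then exactly the observation about unit sections, restricted to an inclusion $U\subset V$. For property (ii), if $g_\sigma\colon U\to(\G,\Omega)\vert_\Delta$ is a Lagrangian lift of $\sigma$ for an arrow $V\xleftarrow{\sigma}U$ in $\textsf{E}_\text{lift}$ and $U',V'\in\U$ satisfy $\sigma\cdot U'\subset V'$ (so that necessarily $U'\subset U$), then the restriction $g_\sigma\vert_{U'}$ is again Lagrangian, lifts $\sigma\vert_{U'}$, and satisfies $g_\sigma\cdot U'=\sigma\cdot U'\subset V'$ because the projection $\G\to\B$ is a morphism over $M$, so that $t_\G\circ g_\sigma=t_\B\circ\sigma$. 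Hence $V'\xleftarrow{\sigma}U'$ lies in $\textsf{E}_\text{lift}$.

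The substantive point is property (iii): through every arrow of $\B\vert_\Delta$ there must pass an arrow of $\textsf{E}_\text{lift}$. Here I would use that $\B$ is étale, so that $\F$ is the point foliation, $\No^*\F=T^*M$ and $(\sT_\Lambda,\Omega_\Lambda)=(T^*M/\Lambda,\Omega_\Lambda)$ is a genuine symplectic torus bundle. Given $y\xleftarrow{g}x$ in $\B\vert_\Delta$ with $y\in V$, étaleness of the source map provides a smooth bisection $\sigma\colon U\to\B\vert_\Delta$ with $\sigma(x)=g$ on some $U\in\U$ containing $x$, which I shrink so that $\sigma\cdot U\subset V$. Since $\G\to\B$ is a surjective submersion (indeed a principal $\sT$-bundle) by (\ref{sestorbun}), $\sigma$ admits a smooth lift to a bisection $b\colon U\to(\G,\Omega)\vert_\Delta$; it is a bisection because $s_\G\circ b=\mathrm{id}_U$ and $t_\G\circ b=t_\B\circ\sigma$ is a diffeomorphism onto its image. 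It then remains to correct $b$ to a Lagrangian bisection by multiplying it with a suitable section of $\sT_\Lambda$.

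I expect the correction step to be the main obstacle, and I would carry it out using multiplicativity of $\Omega$. Any other lift of $\sigma$ has the form $\tau\cdot b$ for a smooth section $\tau$ of $\sT_\Lambda$ along $w:=t\circ b$, and multiplicativity of $\Omega$ gives $(\tau\cdot b)^*\Omega=\hat\tau^*\Omega_\Lambda+b^*\Omega$, where $\hat\tau\colon U\to\sT_\Lambda$ is the associated map. Thus $\tau\cdot b$ is Lagrangian precisely when $\hat\tau^*\Omega_\Lambda=-b^*\Omega$. Writing $\tau$ over $w(U)\cap\Delta$ as the class $[\alpha]$ of a $1$-form $\alpha$ (sections of $T^*M/\Lambda$ being $1$-forms modulo $\Lambda$), the defining property of the canonical $1$-form $\lambda_\textrm{can}$ underlying $\Omega_\Lambda$ (see Remark \ref{presymptorbunrem}) yields $\hat\tau^*\Omega_\Lambda=\pm\,\d(w^*\alpha)$, so the problem reduces to finding a primitive of the closed $2$-form $b^*\Omega$ on $w(U)\cap\Delta$.

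Finally, choosing the basis $\U$ to consist of opens with $w(U)\cap\Delta$ contractible (compatible with $\U$-acyclicity of $\L$ through Lemma \ref{vanlem}), the Poincar\'e lemma for manifolds with corners produces such a primitive $\alpha$; the resulting $\tau\cdot b$ is then the desired Lagrangian lift of $\sigma$, so $V\xleftarrow{\sigma}U$ lies in $\textsf{E}_\text{lift}$. The only delicate points are that $\alpha$ can be chosen smooth up to the corners of $\Delta$ (exactly the content of the corners version of the Poincar\'e lemma, already invoked in the proof of Lemma \ref{vanlem}) and that the identification of sections of $\sT_\Lambda$ with $1$-forms modulo $\Lambda$ intertwines $\hat\tau^*\Omega_\Lambda$ with $\d\alpha$; both are routine given the description of $(\sT_\Lambda,\Omega_\Lambda)$ in Remark \ref{presymptorbunrem}.
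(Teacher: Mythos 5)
Your proposal is correct in substance, but on the one axiom that carries real content it takes a genuinely different route from the paper. For axiom (iii) the paper simply observes that, $\B$ being etale, the quotient map $(\G,\Omega)\to\B$ is a proper Lagrangian fibration (in the sense of Example \ref{proplagrfibrex}), hence admits a local Lagrangian section $s$ around any given arrow; composing $s$ with a small bisection $\sigma$ of $\B\vert_\Delta$ through that arrow gives the required lift, which is Lagrangian because $(s\circ\sigma)^*\Omega=\sigma^*(s^*\Omega)=0$. You instead prove the needed existence by hand: lift $\sigma$ arbitrarily using the principal $\sT$-bundle structure of $\G\to\B$, then use multiplicativity of $\Omega$ to reduce the Lagrangian condition on a corrected lift $\tau\cdot b$ to the equation $\d(w^*\alpha)=-b^*\Omega$, solved by the Poincar\'e lemma. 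Your two key computations \textemdash\ $(\tau\cdot b)^*\Omega=\hat\tau^*\Omega_\Lambda+b^*\Omega$ and $\hat\tau^*\Omega_\Lambda=\d(w^*\alpha)$ for $\tau=[\alpha]$ \textemdash\ are both correct, and your argument is essentially an in-situ proof of the very fact the paper cites (existence of local Lagrangian sections of a proper Lagrangian fibration). What this buys is self-containedness: no appeal to Duistermaat theory. What the paper's version buys is brevity, and it also sidesteps the with-corners Poincar\'e lemma entirely, since its Lagrangian section lives over an open in the manifold $\B$, which has no corners.

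One step in your write-up does need repair: you are not free to re-choose the basis $\U$. The proposition is asserted for the basis fixed before its statement \textemdash\ an \emph{arbitrary} basis for which $\L$ is $\U$-acyclic \textemdash\ and the Lagrangian Dixmier-Douady class is then defined via $\check{H}^2(\textsf{E}_\text{lift},\L)$ for that same basis; replacing $\U$ by a basis of contractible opens proves a weaker statement. Fortunately the repair is one line: axiom (iii) only demands \emph{some} $U\in\U$ containing $x$, so run your correction argument on a small neighbourhood $W$ of $x$ on which closed $2$-forms are exact (such $W$ exist by the Poincar\'e lemma but need not belong to $\U$), then choose $U\in\U$ with $x\in U\subset W$ and $\sigma\cdot U\subset V$, and restrict the resulting Lagrangian lift to $U$. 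With that adjustment your proof goes through for the given basis.
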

\begin{proof} The first axiom holds because the unit section of a symplectic groupoid is Lagrangian, the second axiom is immediate and the third axiom follows from the fact that for any $\gamma\in \B$ the quotient map $(\G,\Omega)\to \B$ admits a local Lagrangian section defined around $\gamma$, since it is a proper Lagrangian fibration (in the sense of Example \ref{proplagrfibrex}).
\end{proof}
In view of this, $H^2(\B\vert_\Delta,\L)$ is isomorphic to $\check{H}^2(\textsf{E}_\text{lift},\L)$ via (\ref{eqn:cechtoderivedmorph:equivshf}). For each arrow $V\xleftarrow{\sigma} U$ in $\textsf{E}_\text{lift}$, choose a Lagrangian bisection: 
\begin{equation}\label{eqn:lagrlift:choice} g(V\xleftarrow{\sigma}U):U\to (\G,\Omega)\vert_\Delta
\end{equation} lifting $\sigma:U\to \B\vert_\Delta$. Consider the $2$-cochain $c\in \check{C}^2(\textsf{E}_\text{lift},\L)$ that assigns to a pair of composable arrows $U_0\xleftarrow{\sigma_1} U_1\xleftarrow{\sigma_2} U_2$ the unique Lagrangian section $c(U_0\xleftarrow{\sigma_1} U_1\xleftarrow{\sigma_2} U_2)\in \L(U_2)$ such that:
\begin{equation*}g(U_0\xleftarrow{\sigma_1} U_1)g(U_1\xleftarrow{\sigma_2} U_2)=g(U_0\xleftarrow{\sigma_1\sigma_2} U_2)c(U_0\xleftarrow{\sigma_1} U_1\xleftarrow{\sigma_2} U_2).
\end{equation*} As is readily verified, this defines a $2$-cocycle whose cohomology class $[c]\in\check{H}^2(\textsf{E}_\text{lift},\L)$ does not depend on the choice of lifts (\ref{eqn:lagrlift:choice}). Moreover, this cohomology class vanishes if and only if there is a choice of such lifts for which the cocycle condition (\ref{cocyccondlagrbis}) is satisfied. 
\begin{defi}\label{def:lagrdixmierdouadyclass}
Let $(\G,\Omega)\rightrightarrows M$ be a regular and proper symplectic groupoid and let $\underline{\Delta}\subset \underline{M}$ be a Delzant subspace. The \textbf{Lagrangian Dixmier-Douady class}:
\begin{equation}\label{eqn:defilagrdixmdouadycl} \textrm{c}_2(\G,\Omega,\underline{\Delta})\in H^2(\B\vert_\Delta,\L)
\end{equation} is defined as follows. When $\B:=\G/\sT$ is etale, it is the cohomology class corresponding via (\ref{eqn:cechtoderivedmorph:equivshf}) to the class in $\check{H}^2(\textsf{E}_\text{lift},\L)$ represented by the $2$-cocycle defined above. In general, consider a complete transversal $\Sigma$ and define (\ref{eqn:defilagrdixmdouadycl}) to be the class corresponding to $\textrm{c}_2((\G,\Omega)\vert_\Sigma,\underline{\Delta})$ via the isomorphism between $H^2(\B\vert_\Delta,\L)$ and $H^2(\B\vert_{\Delta\cap \Sigma},\L_{\Delta\cap \Sigma})$ induced (as in Corollary \ref{cor:morinvshfcohom:iaequiv}) by the canonical Morita equivalence between $(\G,\Omega)$ and its restriction to $\Sigma$. 
\end{defi} 
This does not depend on the choice of complete transversal.
\begin{proof}[Proof of Theorem \ref{globalsplitthm}: implication from $a)$ to $c)$] After restricting to a complete transversal $\Sigma$, the proof reduces to the case in which $\B:=\G/\sT$ is etale. The etale case follows from Lemma \ref{lifrlagrcocyleprop}. 
\end{proof}
\subsubsection{End of the proof: constructing a principal Hamiltonian bundle out of a Lagrangian cocycle}\label{pfglobalsplitthmsec} 
To prove the forward implication in the splitting theorem, we first show the following.
\begin{prop}\label{fromlagrcocycletofibhamprincbunprop} Let $(\G,\Omega)\rightrightarrows M$ be a regular and proper symplectic groupoid for which the associated orbifold groupoid $\B=\G/\sT$ is etale and let $\underline{\Delta}$ be a Delzant space of $\underline{M}$. Further, let $\U$ be a basis of the corresponding invariant subspace $\Delta$ of $M$ and suppose that for each arrow $V\xleftarrow{\sigma} U$ in $\textsf{Emb}_\U(\B\vert_\Delta)$ there is a Lagrangian bisection {(\ref{eqn:lagrlift:choice})} lifting $\sigma$, such that the collection of these Lagrangian bisections satisfies the cocycle condition (\ref{cocyccondlagrbis}). Then there is a principal Hamiltonian $(\G,\Omega)\vert_\Delta$-bundle `fibered over $\B\vert_\Delta$'. That is, there are: 
\begin{itemize}\item a $(\G\vert_\Delta,\B\vert_\Delta)$-bibundle $(P,\beta_1,\beta_2)$ (of Lie groupoids with corners), 
\item a symplectic form $\omega_P$ on $P$,
\item a smooth map $j:P\to \B\vert_\Delta$,
\end{itemize} that form a map of bi-bundles:
\begin{center}
\begin{tikzpicture} 
\node (H1) at (-1.6,0) {$(\G,\Omega)\vert_\Delta$};
\node (S1) at (-1.6,-1.3) {$\Delta$};
\node (Q) at (1.35,0) {$(P,\omega_P)$};
\node (S2) at (4.5,-1.3) {$\Delta$};
\node (H2) at (4.5,0) {$\B\vert_\Delta$};

\node (G1) at (-1,-3) {$\B\vert_\Delta$};
\node (M1) at (-1,-4.3) {$\Delta$};
\node (P) at (1.35,-3) {$\B\vert_\Delta$};
\node (M2) at (3.7,-4.3) {$\Delta$};
\node (G2) at (3.7,-3) {$\B\vert_\Delta$};

\draw[->, bend right=52](H1) to node[pos=0.45,below] {$\textrm{pr}\quad\quad\quad$} (G1);
\draw[->, bend right=22](S1) to node[pos=0.45,below] {$\textrm{Id}\quad\quad$} (M1);
\draw[->, bend left=52](H2) to node[pos=0.45,below] {$\quad\quad\quad\textrm{Id}$} (G2);
\draw[->, bend left=22](S2) to node[pos=0.45,below] {$\quad\quad \textrm{Id}$} (M2);
\draw[->](Q) to node[pos=0.45,below] {$j\quad\quad$} (P);
 
\draw[->,transform canvas={xshift=-\shift}](H1) to node[midway,left] {}(S1);
\draw[->,transform canvas={xshift=\shift}](H1) to node[midway,right] {}(S1);
\draw[->,transform canvas={xshift=-\shift}](H2) to node[midway,left] {}(S2);
\draw[->,transform canvas={xshift=\shift}](H2) to node[midway,right] {}(S2);
\draw[->](Q) to node[pos=0.25, below] {$\quad\text{ }\beta_1$} (S1);
\draw[->] (0.5,-0.15) arc (315:30:0.25cm);
\draw[<-] (2.2,0.15) arc (145:-145:0.25cm);
\draw[->](Q) to node[pos=0.25, below] {$\beta_2$\text{ }} (S2); 
 
\draw[->,transform canvas={xshift=-\shift}](G1) to node[midway,left] {}(M1);
\draw[->,transform canvas={xshift=\shift}](G1) to node[midway,right] {}(M1);
\draw[->,transform canvas={xshift=-\shift}](G2) to node[midway,left] {}(M2);
\draw[->,transform canvas={xshift=\shift}](G2) to node[midway,right] {}(M2);
\draw[->](P) to node[pos=0.25, below] {$\text{ }\text{ }\text{ }\text{ }t_{\B}$} (M1);
\draw[->] (0.37,-3.15) arc (315:30:0.25cm);
\draw[<-] (2.29,-2.85) arc (145:-145:0.25cm);
\draw[->](P) to node[pos=0.25, below] {$s_{\B}$\text{ }\text{ }} (M2);
\end{tikzpicture}
\end{center}

in which the lower Morita equivalence is the identity equivalence, the upper left bundle is principal and the action is Hamiltonian, whereas the upper right action is free and symplectic (meaning that $(m_P)^*\omega_P=(\textrm{pr}_P)^*\omega_P$, where $m_P,\textrm{pr}_P:P\rtimes \B\vert_\Delta\to P$ denote the action and projection maps). 
\end{prop}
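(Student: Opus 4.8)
The plan is to construct $P$ by gluing, using the given Lagrangian lifts as transition data, in the same spirit as the gluing constructions in the proofs of Theorem \ref{torsortorspthm:torbunversion} and Theorem \ref{firststrthm}. For each $U\in\U$ consider the open piece $s_\G^{-1}(U)\cap\G\vert_\Delta$ of the symplectic groupoid with corners $(\G,\Omega)\vert_\Delta$ (which is a manifold with corners by Remark \ref{etalecasedelzsubmanrem}), and set
\begin{equation*} P := \Bigl( \bigsqcup_{U\in\U} \bigl(s_\G^{-1}(U)\cap\G\vert_\Delta\bigr) \Bigr)\Big/\sim, \end{equation*}
where $\sim$ is generated by identifying $\gamma$ in the piece over $U$ with $\gamma\cdot g(U\xleftarrow{\iota}U')(s_\G(\gamma))$ in the piece over $U'$, for each inclusion arrow $U\xleftarrow{\iota}U'$ in $\textsf{Emb}_\U(\B\vert_\Delta)$ and each $\gamma$ with $s_\G(\gamma)\in U'$ (the lift of the unit bisection landing in $\sT$, so that $s_\G$ is preserved). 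The cocycle condition (\ref{cocyccondlagrbis}) guarantees that this is a well-defined equivalence relation and, as in loc.\ cit., that each piece includes as an open subset, endowing $P$ with the structure of a Hausdorff, second countable manifold with corners. Left translation $\gamma_0\cdot\gamma:=\gamma_0\gamma$ descends to a left $\G\vert_\Delta$-action along $\beta_1:=t_\G$, the projection $\textrm{pr}\colon\G\to\B$ descends to a map $j\colon P\to\B\vert_\Delta$, and I set $\beta_2:=s_\G$, so that $\beta_1=t_\B\circ j$ and $\beta_2=s_\B\circ j$.

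The right $\B\vert_\Delta$-action is where the cocycle enters: given $b\in\B\vert_\Delta$ with $t_\B(b)=\beta_2(\gamma)=s_\G(\gamma)$, choose an arrow $U\xleftarrow{\sigma}U'$ in $\textsf{Emb}_\U(\B\vert_\Delta)$ realizing $b$ at $x:=(t_\B\circ\sigma)^{-1}(s_\G(\gamma))$ and put $\gamma\cdot b:=\gamma\cdot g(U\xleftarrow{\sigma}U')(x)$. Again by (\ref{cocyccondlagrbis}) this is independent of the chosen bisection and of the chosen representative, so it defines a free right action along $\beta_2$ (freeness because right translation in $\G$ is free and the lifts project isomorphically onto $\B$, so $\gamma\cdot b=\gamma$ forces $b$ to be a unit). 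It plainly commutes with left translation, making $(P,\beta_1,\beta_2)$ a $(\G\vert_\Delta,\B\vert_\Delta)$-bibundle with $j$ equivariant via $\textrm{pr}$ on the left and the identity on the right. For the symplectic form I would let $\omega_P$ be induced by $\Omega$ on each piece; this patches precisely because right translation by a Lagrangian bisection of $(\G,\Omega)$ is a symplectomorphism, which is also exactly what makes the right $\B\vert_\Delta$-action symplectic. Since left translation of a symplectic groupoid on itself is the tautological Hamiltonian action with moment map $t_\G$, the free left $\G\vert_\Delta$-action on $P$ is Hamiltonian and principal as a bundle over $\beta_2=s_\G$ (its orbits being exactly the $s_\G$-fibers, which lie in single classes by the remark above).

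Assembling $\beta_1,\beta_2,j,\omega_P$ and the two actions yields the claimed map of bibundles, with lower row the identity Morita equivalence of $\B\vert_\Delta$, upper-left part principal and Hamiltonian, and upper-right action free and symplectic. The main obstacle, and where the cocycle condition is used most delicately, is the simultaneous well-definedness of the gluing relation and of the right $\B\vert_\Delta$-action: one must check that the equivalence relation generated by the inclusion transitions is the expected one (the transitivity bookkeeping being analogous to that in the proof of Theorem \ref{firststrthm}), that the quotient is genuinely a Hausdorff manifold with corners, and that the right action is free, which hinges on the interplay between the lifts and the subbundle $\sT=\ker(\textrm{pr})$. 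The remaining verifications — smoothness of both actions, the patching of $\omega_P$, and the moment-map and equivariance identities — are local and reduce to the corresponding statements on a single piece $s_\G^{-1}(U)\cap\G\vert_\Delta$.
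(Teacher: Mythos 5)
Your construction coincides with the paper's own proof: it too glues the source-fibers $s_\G^{-1}(U)$, $U\in\U$, along (right) multiplication by the Lagrangian lifts of the inclusion arrows, patches $\Omega$ to a symplectic form on the quotient, takes left translation along $t_\G$ as the principal Hamiltonian $(\G,\Omega)\vert_\Delta$-action over $\beta_1$, right multiplication by the lifts $g(V\xleftarrow{\sigma}U)$ as the free symplectic $\B\vert_\Delta$-action over $\beta_2=s_\G$, and the map induced by $\textrm{pr}\colon\G\to\B\vert_\Delta$ as $j$. The steps you defer (the explicit description of the generated equivalence relation, the well-definedness and freeness of the right action via the cocycle condition, and the local smooth/symplectic verifications) are exactly the bookkeeping the paper carries out, in the same way as in the proof of Theorem \ref{firststrthm}, so your proposal is correct and essentially identical to the paper's argument.
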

\begin{proof} Consider the topological space:
\begin{equation}\label{disjunspeq} \bigsqcup_{U\in \U} s_\G^{-1}(U)
\end{equation} where $s_\G:\G\to M$ denotes the source map. Given two elements $(g_1,U_1)$ and $(g_2,U_2)$ of (\ref{disjunspeq}), we write $(g_1,U_1)\hookleftarrow (g_2,U_2)$ if $U_2\subset U_1$ and:
\begin{equation*} g(U_1\hookleftarrow U_2)(J(p_2))\cdot g_2=g_1,
\end{equation*} where $U_2 \hookleftarrow U_1$ denotes the arrow in $\textsf{Emb}_\U(\B\vert_\Delta)$ with underlying bisection the unit map. This relation is reflexive and transitive (by the cocycle condition (\ref{cocyccondlagrbis})), but not necessarily symmetric. The equivalence relation generated by this relation is given by: $(g_1,U_1)\sim (g_2,U_2)$ if and only if there is a pair $(g_{12},U_{12})$, with $U_{12}\in \U$ and $s_\G(g_{12})\in U_{12}\subset U_1\cap U_2$, such that $(g_1,U_1)\hookleftarrow (g_{12},U_{12})$ and $(g_{12},U_{12})\hookrightarrow (g_2,U_2)$. Indeed, this relation is clearly reflexive, symmetric and contains the first relation. To prove transitivity, suppose that $(g_1,U_1)\sim (g_2,U_2)$ and $(g_2,U_2)\sim (g_3,U_3)$. Then there are $(g_{12},U_{12})$ and $(g_{23},U_{23})$ with $U_{12},U_{23}\in \U$, $s_\G(g_{12})\in U_{12}\subset U_1\cap U_2$ and $s_\G(g_{23})\in U_{23}\subset U_2\cap U_3$, such that:
\begin{center}
\begin{tikzcd} (g_1,U_1) & & (g_2,U_2) & & (g_3,U_3)\\
& (g_{12},U_{12}) \arrow[ul, hook']\arrow[ur, hook] & & (g_{23},U_{23})\arrow[ul, hook']\arrow[ur, hook] &  
\end{tikzcd}
\end{center} Let $U_{123}\in \U$ be such that $s_\G(g_2)\in U_{123}\subset U_{12}\cap U_{23}$ and consider the element
\begin{equation*} g_{123}:=g(U_2\hookleftarrow U_{123})(s_\G(g_2))^{-1}\cdot g_2\in s_\G^{-1}(U_{123}),
\end{equation*} which is well-defined since the source and target of $g(U_2\hookleftarrow U_{123})(s_\G(g_2))$ coincide, being a lift of the unit map of $\B$. It follows from the cocycle condition (\ref{cocyccondlagrbis}) that $(g_{12},U_{12})\hookleftarrow (g_{123},U_{123})$ and $(g_{123},U_{123})\hookrightarrow (g_{23},U_{23})$, and hence that $(g_1,U_1)\hookleftarrow (g_{123},U_{123})$ and $(g_{123},U_{123})\hookrightarrow (g_{3},U_{3})$. This shows that $(g_1,U_1)\sim (g_3,U_3)$, which proves transitivity. Now, consider the quotient space:
\begin{equation*} P:=\frac{\left(\bigsqcup_{U\in \U} s_\G^{-1}(U)\right)}{\sim}.
\end{equation*} From the explicit description of the equivalence relation it is clear that for each $U\in \U$ the map:
\begin{equation}\label{caninjUeq} s_\G^{-1}(U)\to P, \quad g\mapsto [g,U] 
\end{equation} is an injection. Since for each inclusion $V\hookleftarrow U$ the map:
\begin{equation*} (s_\G^{-1}(U),\Omega)\to (s_\G^{-1}(U),\Omega), \quad h\mapsto g(V\hookleftarrow U)(s_\G(h))\cdot h
\end{equation*} is a symplectomorphism (which follows from the same type of arguments as for Proposition \ref{liftinglagrbiswithsymplectoprop}$a$), there is a unique structure of symplectic manifold with corners on the topological space $P$ with the property that for each $U\in \U$ the injection (\ref{caninjUeq}) is a symplectomorphism onto an open in $P$ (with respect to the symplectic form $\Omega$ on $s_\G^{-1}(U)$). Let $\omega_P$ be the corresponding symplectic form on $P$. Next, notice that the projection $\textrm{pr}:\G \to \B$ and the target and source map of $\G$ induce tame submersions:
\begin{equation*} j:P\to \B\vert_\Delta,\quad\quad \beta_1,\beta_2:P\to \Delta. 
\end{equation*} Furthermore, the canonical left Hamiltonian action of $(\G,\Omega)$ along its target maps induces a left Hamiltonian $(\G,\Omega)\vert_\Delta$-action along $\beta_1:(P,\omega_P)\to \Delta$, given by:
\begin{equation*} g\cdot [h,U]=[g\cdot h,U].
\end{equation*} On the other hand, $\B$ acts along $\beta_2$ from the right, as follows. Given $[h,V]\in P$ and $\gamma\in \B$ such that $\beta_2([h,V])=t_{\B}(\gamma)$, let $V\xleftarrow{\sigma} U$ be an arrow in $\textsf{Emb}_\U(\B\vert_\Delta)$ such that $s(\gamma)\in U$ and $\sigma(s_{\B}(\gamma))=\gamma$, and set:
\begin{equation*} [h,V]\cdot \gamma=[h\cdot g(V\xleftarrow{\sigma} U)(s_{\B}(\gamma)), U].
\end{equation*} It follows from the cocycle condition (\ref{cocyccondlagrbis}) that this depends neither on the choice of arrow $V\xleftarrow{\sigma} U$ nor on that of the representative of $[h,V]\in P$, and that this defines an action. Furthermore, the fact that $g(V\xleftarrow{\sigma} U)$ is Lagrangian implies that this $\B$-action is symplectic. These two actions and the map $j:P\to \B\vert_\Delta$ define a fibered principal Hamiltonian $(\G,\Omega)\vert_\Delta$-bundle over $\B\vert_\Delta$. 
\end{proof} 

\begin{proof}[Proof of Theorem \ref{globalsplitthm}: {implication from $c)$ to $b)$}] Restricting to a complete transversal for $\G$ reduces the proof to the case in which $\B:=\G/\sT$ is etale. In that case, {the vanishing of the Lagrangian Dixmier-Douady class} implies that there is a basis $\U$ of $\Delta$ as in Proposition \ref{fromlagrcocycletofibhamprincbunprop}, and so there is a fibered principal Hamiltonian $(\G,\Omega)\vert_\Delta$-bundle over $\B\vert_\Delta$. From this we can construct the desired Morita equivalence, as follows. Consider the left Hamiltonian $(\sT,\Omega_\sT)\vert_\Delta$-action along $\beta_2:(P,\omega_P)\to \Delta$ given by:
\begin{equation*} t\ast p=(j(p)\cdot t)\cdot p,
\end{equation*} where the second action on the right denotes the $\sT$-action induced by the $\G$-action on $P$, and the left symplectic $\B$-action along $\beta_2:(P,\omega_P)\to \Delta$ given by:
\begin{equation*} \gamma\cdot p=p\cdot \gamma^{-1}.
\end{equation*} These form a pair as in Lemma \ref{semidirecprodactlem}. So, they encode a left Hamiltonian $(\B\Join \sT,\textrm{pr}_{\sT}^*\Omega_{\sT})\vert_\Delta$-action along $\beta_2$. This commutes with the left $\G$-action since both of the above actions do so. Furthermore, the action is free and its orbits coincide with the $\beta_1$-fibers. Passing to a right action along $\beta_2$ via the groupoid inversion of $\B\Join \sT$ and the groupoid automorphism of $\B\Join \sT$ that maps $(\gamma,t)$ to $(\gamma,t^{-1})$ (which are both anti-symplectic with respect to $\textrm{pr}_{\sT}^*\Omega_{\sT}$) we obtain a right Hamiltonian $(\B\Join \sT,\textrm{pr}_{\sT}^*\Omega_{\sT})\vert_\Delta$-action along $\beta_2$, which completes the left principal Hamiltonian $(\G,\Omega)\vert_{\Delta}$-bundle $\beta_2:(P,\omega_P)\to \Delta$ to the desired symplectic Morita equivalence. \end{proof} 
\begin{rem}\label{sympgerberem} This proof shows that, in the language of \cite{CrFeTo}, the existence of a {faithful multiplicity-free} $(\G,\Omega)$-space with momentum image $\Delta$ is also equivalent to the condition that the symplectic gerbe represented by the symplectic central extension:
\begin{equation}\label{centextoverdelzsubps} 1\to (\sT,\Omega_\sT)\vert_\Delta\to (\G,\Omega)\vert_\Delta\to \B\vert_\Delta\to 1
\end{equation} is trivial. This means that (\ref{centextoverdelzsubps}) is Morita equivalent as pre-symplectic central extension to the trivial such extension:
\begin{equation*} 1\to (\sT,\Omega_{\sT})\vert_\Delta\to (\B\Join \sT,\textrm{pr}_\sT^*\Omega_\sT)\vert_\Delta\to \B\vert_\Delta\to 1.
\end{equation*} Here Morita equivalence of pre-symplectic extensions is meant in the sense of \cite{CrFeTo}, extended to the setting with corners using Definition \ref{sympgpwithcorndef}.
\end{rem}
\newpage
\section{The second and third structure theorems}
{In this part we address the second and third structure theorem. These theorems result naturally from a more classical approach to the classification of faithful multiplicity-free $(\G,\Omega)$-spaces than that taken in the previous part, which only involves the cohomology of sheaves on the topological space $\underline{\Delta}\subset \underline{M}$, rather than that of equivariant sheaves. The key ingredients in this approach are a generalization of Theorem \ref{isoautsymplagseccor:torbunversion} for the sheaf of $\G$-equivariant automorphisms (Theorem \ref{isoautsymplagseccor}) and the ext-invariant, both of which are introduced in Section \ref{locclassthmsec}. In that section we also prove the second structure theorem. The third structure theorem is proved in Section \ref{sec:thirdstrthm}.
}
\subsection{The ext-invariant and the second structure theorem}\label{locclassthmsec}
\subsubsection{The sheaves of automorphisms and invariant isotropic sections}\label{sec:shfofautosandinvisotrsections}
{The main point of this subsection is to prove the following generalization of Theorem \ref{isoautsymplagseccor:torbunversion}.}
\begin{thm}\label{isoautsymplagseccor} Let $(\G,\Omega)$ be a regular and proper symplectic groupoid with associated orbifold groupoid $\B=\G/\sT$ and let $J:(S,\omega)\to M$ be a faithful multiplicity-free $(\G,\Omega)$-space with associated Delzant subspace $\underline{\Delta}:=\underline{J}(\underline{S})$. There is an isomorphism of sheaves on $\underline{\Delta}$ \textemdash defined as in (\ref{torsectautiso:torbunversion}) \textemdash between the sheaf $\underline{\textrm{Aut}}_{\G}(J,\omega)$ of automorphisms of the $(\G,\Omega)$-space $J:(S,\omega)\to M$ and the sheaf $\underline{\L}_\Delta$ of $\B$-invariant isotropic sections of $\sT\vert_\Delta$ (as in Definition \ref{shfinvlagrsecdefi} below).
\end{thm}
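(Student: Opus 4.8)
The plan is to prove Theorem \ref{isoautsymplagseccor} by reducing it to the already-established torus bundle version (Theorem \ref{isoautsymplagseccor:torbunversion}) together with a descent argument that handles the $\G$-equivariance. First I would set up the map of sheaves exactly as in (\ref{torsectautiso:torbunversion}): to an isotropic $\B$-invariant section $\tau$ of $\sT\vert_\Delta$ defined over an open $\underline{U}$ in $\underline{\Delta}$, one associates the diffeomorphism $\psi_\tau(p)=\tau(\underline{J}(p))\cdot p$ of $J^{-1}(U)$, where $U$ is the invariant subset of $M$ corresponding to $\underline{U}$. I would first check that this is well-defined and lands in $\underline{\textrm{Aut}}_\G(J,\omega)$: the content here is that $\B$-invariance of $\tau$ is precisely what is needed for $\psi_\tau$ to be $\G$-equivariant (not merely $\sT$-equivariant), and isotropy of $\tau$ is what makes $\psi_\tau$ symplectic, by the same computation as in Proposition \ref{lagrseccharprop:torbunversion}. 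Injectivity follows as before from density of the free locus of the $\sT$-action.

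The main work is stalk-wise surjectivity: given a germ of a $\G$-equivariant symplectomorphism $\psi$ over a point $\O_x\in \underline{\Delta}$, I must produce a $\B$-invariant isotropic section $\tau$ with $\psi_\tau=\psi$. The strategy is to reduce to the etale case by restricting to a complete transversal $\Sigma$ through $x$, as in Example \ref{transversalmoreqex} and the proofs of Theorem \ref{momimtoricthm} and Lemma \ref{lemma:indtorbunacttoric}. On the transversal, $\B\vert_\Sigma$ is an etale orbifold groupoid and, by Lemma \ref{proetgpoidlinlem}, locally isomorphic to an action groupoid $\B_x\ltimes U$; moreover the $(\sT,\Omega_\sT)$-space $J$ becomes a faithful toric $(\sT,\Omega_\sT)\vert_\Sigma$-space by Lemma \ref{lemma:indtorbunacttoric}. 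Forgetting equivariance, Theorem \ref{isoautsymplagseccor:torbunversion} applied to this toric space yields a unique isotropic section $\tau$ of $\sT\vert_\Delta$ over a small neighbourhood of $x$ with $\psi_\tau=\psi$. The crux is then to show that this uniquely determined $\tau$ is automatically $\B_x$-invariant: since $\psi$ is assumed $\G$-equivariant and $\tau$ is the unique isotropic section representing it, applying the $\B_x$-action (equivalently, conjugating $\psi$ by the action of a bisection lifting an element of $\B_x$) must send $\tau$ to another isotropic section representing the same automorphism, hence to $\tau$ itself by the uniqueness in Theorem \ref{isoautsymplagseccor:torbunversion}. I expect this invariance-from-uniqueness argument to be the main obstacle, as it requires carefully matching the $\G$-equivariance of $\psi$ with the conjugation action on sections and invoking the compatibility of $\psi_\tau$ with the $\sT$-action under lifts of $\B_x$-arrows, in the spirit of Proposition \ref{liftinglagrbiswithsymplectoprop}.

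Finally, I would transport the result back from the transversal to $\underline{M}$ using the integral affine Morita equivalence of Example \ref{transversalmoreqex} together with Corollary \ref{cor:morinvshfcohom:iaequiv} and the naturality of the construction of $\psi_\tau$, noting that the sheaf $\underline{\L}_\Delta$ of $\B$-invariant isotropic sections is precisely the sheaf obtained by pushing forward along the quotient map $q_\Delta:\B\vert_\Delta\to \underline{\Delta}$. Since the construction $\tau\mapsto \psi_\tau$ is evidently local and compatible with restriction, stalk-wise bijectivity over a basis of $\underline{\Delta}$ suffices to conclude that (\ref{torsectautiso:torbunversion}) induces the desired isomorphism of sheaves. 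The definition of $\underline{\L}_\Delta$ referenced as Definition \ref{shfinvlagrsecdefi} will record exactly this description as $\B$-invariant (equivalently, $q_\Delta$-pushed-forward) isotropic sections, so that the statement is then immediate from the stalk-wise isomorphism.
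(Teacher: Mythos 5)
Your proposal is correct and follows essentially the same route as the paper: the paper likewise reduces stalk-wise surjectivity to the symplectic torus bundle case (Theorem \ref{torsectautisoprop:torbunversion}) by restricting to a transversal $\Sigma$ through the given orbit, obtains the section there, and then extends it $\B$-invariantly to the saturation using the Morita-equivalence identification of invariant sections. The only cosmetic difference is that where you derive $\B\vert_\Sigma$-invariance of the section from injectivity of $\tau\mapsto\psi_\tau$ plus conjugation by bisection lifts, the paper records this as a standalone equivalence (Proposition \ref{invsecprop}: $\psi_\tau$ is $\G$-equivariant if and only if $\tau$ is $\B$-invariant), proved by the same density-of-the-free-locus argument that underlies your uniqueness step.
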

{The sheaf $\underline{\L}_\Delta$ appearing in this theorem is defined as follows. 
\begin{defi}\label{shfinvlagrsecdefi} Let $\B\rightrightarrows (M,\Lambda)$ be an integral affine orbifold groupoid and let $\underline{\Delta}\subset \underline{M}$ be a Delzant subspace. We denote by:
\begin{itemize}
\item $\mathcal{C}^\infty_\Delta(\sT_\Lambda)$ the sheaf on $\Delta$ of smooth local section of $\sT_\Lambda\vert_\Delta\to \Delta$,
\item $\underline{\mathcal{C}}^\infty_\Delta(\sT_\Lambda)$ the sheaf on $\underline{\Delta}$ that assigns to an open $\underline{U}$ the subgroup of $\mathcal{C}^\infty_\Delta(\sT)(U)$ consisting of $\B$-invariant sections, with $U$ the invariant open in $\Delta$ corresponding to $\underline{U}$,
\item $\underline{\L}:=\underline{\L}_\Delta$ the subsheaf of $\underline{\mathcal{C}}^\infty_\Delta(\sT_\Lambda)$ consisting of those sections that are isotropic (in the same sense as in Example \ref{ex:orbifoldsheafoflagsec}). 
\end{itemize}
Here $\B$-invariance of a section $\tau:U\to \sT_\Lambda$ means that for all $\gamma:x\to y$ in $\B$:
\begin{equation*} \tau(y)\cdot \gamma=\tau(x). 
\end{equation*}
\end{defi}
When the integral affine orbifold is that associated to a regular and proper symplectic groupoid $(\G,\Omega)$, we will usually view the sheaves in Definition \ref{shfinvlagrsecdefi} as consisting of sections ${\tau}:U\to \sT$ via the isomorphism (\ref{exptorbun2}), without further notice. 
\begin{rem}\label{bsheaveslagrsecrem}
As for any topological groupoid, there is a canonical functor:
\begin{equation}\label{shadowfunctorbsh}
(q_\Delta)_*:\textsf{Sh}(\B\vert_\Delta)\to \textsf{Sh}(\underline{\Delta})
\end{equation} from the abelian category of $\B\vert_\Delta$-sheaves into that of sheaves on the topological space $\underline{\Delta}$. This is the push-forward along the canonical map of topological groupoids $q_\Delta:\B\vert_\Delta\to U(\underline{\Delta})$ into the unit groupoid over $\underline{\Delta}$. Explicitly, $(q_\Delta)_*$ associates to a $\B\vert_\Delta$-sheaf $\A$ the sheaf on $\underline{\Delta}$ that assigns to an open $\underline{U}$ the group of $\B$-invariant sections of $\A$ over the invariant open $U$ in $\Delta$ corresponding to $\underline{U}$. Here $\B$-invariance of a section $a\in \A(U)$ means that for all $\gamma:x\to y$ in $\B$:
\begin{equation*} [a]_y\cdot \gamma=[a]_x\in \A_x. 
\end{equation*} The sheaves in Definition \ref{shfinvlagrsecdefi} arise like this from the respective $\B\vert_\Delta$-sheaves in Example \ref{ex:orbifoldsheafoflagsec}. 
\end{rem}
To explain the isomorphism in Theorem \ref{isoautsymplagseccor}, note that there is an injective map of sheaves:
\begin{equation}\label{torsectautiso} \mathcal{C}^\infty_\Delta(\sT)\to \textrm{Aut}_\sT(J),\quad {\tau} \mapsto \psi_{{\tau}},
\end{equation} defined as in (\ref{torsectautiso:torbunversion}), using the induced $\sT$-action. Proposition \ref{lagrseccharprop:torbunversion} generalizes (by the same arguments):
\begin{prop}\label{lagrseccharprop} A section ${\tau}\in \mathcal{C}^\infty_\Delta(\sT)(U)$ is isotropic if and only if $\psi_{{\tau}}$ is a symplectomorphism.
\end{prop}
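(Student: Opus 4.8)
The plan is to run the proof of Proposition \ref{lagrseccharprop:torbunversion} essentially verbatim, with the pre-symplectic form $\Omega_\sT:=\Omega\vert_\sT$ on the isotropy torus bundle $\sT\subseteq \G$ (as in Remark \ref{presymptorbunrem}) playing the role that the symplectic form of a symplectic torus bundle played in the torus bundle case. First I would observe that restricting the Hamiltonian condition $m^*\omega=\textrm{pr}_S^*\omega+\textrm{pr}_\G^*\Omega$ for the $(\G,\Omega)$-action along the inclusion $\sT\hookrightarrow \G$ yields
\begin{equation*} m_S^*\omega=\textrm{pr}_S^*\omega+\textrm{pr}_\sT^*\Omega_\sT \end{equation*}
on $\sT\times_M S$, where $m_S$ denotes the induced $\sT$-action map. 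Since $\psi_{{\tau}}=m_S\circ({\tau}\circ J,\textrm{id}_S)$, pulling this identity back gives, exactly as in the torus bundle case,
\begin{equation*} \psi_{{\tau}}^*\omega=\omega+J^*({\tau}^*\Omega_\sT), \end{equation*}
so that $\psi_{{\tau}}$ is a symplectomorphism if and only if the $2$-form $J^*({\tau}^*\Omega_\sT)$ vanishes on $J^{-1}(U)$.

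It then remains to show that $J^*({\tau}^*\Omega_\sT)=0$ is equivalent to isotropy $\tau^*\Omega_\sT=0$ of the section ${\tau}$, where ${\tau}^*\Omega_\sT$ is a $2$-form on the open subset $U$ of the codimension-zero submanifold with corners $\Delta$ (Remark \ref{etalecasedelzsubmanrem}). The bridge between the two is the locus where the induced $\sT$-action is free, which is dense in $S$ by condition i) of Definition \ref{toractdefi}. At such a point $p$, writing $x:=J(p)$, I claim that $J$ is a submersion. Granting this, $\d J_p$ is surjective onto $T_xM$; in particular $x$ lies in the open, dense top stratum $\mathring{\Delta}$ of $\Delta$, where $T_x\Delta=T_xM$, so pullback along $\d J_p$ is injective on $2$-forms. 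Hence over the free locus the two vanishing conditions are pointwise equivalent, and since $J^*({\tau}^*\Omega_\sT)$ is continuous on $J^{-1}(U)$ while ${\tau}^*\Omega_\sT$ is continuous on $U$, the density of the free locus in $J^{-1}(U)$ and of $\mathring{\Delta}\cap U$ in $U$ promotes this to the global equivalence, finishing the argument.

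The main obstacle is the submersion claim, which in the torus bundle case was automatic from $\dim(S)=2\dim(M)$ but requires a short argument here. I would deduce it from the general momentum map identity $\ker(\d J_p)=(T_p\O)^\omega$, with $\O=\G\cdot p$ the $\G$-orbit through $p$, which follows from the momentum map condition $\iota_{a(\beta)}\omega=J^*\beta$ (as used in the proof of Lemma \ref{sympmoreqnormreplem}). Taking $\omega$-orthogonals gives $\rk(\d J_p)=\dim(\O)$, so it suffices to see that $\dim(\O)=\dim(M)$ at a free point. This in turn follows because the action isotropy group $\G_p$ is then discrete: indeed $\G_p\cap \sT_x$ is the identity component of $\G_p$ (cf. the proof of Proposition \ref{conedescrpsympnormrepprop}$b$), and freeness of the $\sT$-action at $p$ forces $\G_p\cap \sT_x=\{e\}$; combined with $\dim(s^{-1}(x))=\dim(\G)-\dim(M)=\dim(M)$ this yields $\dim(\O)=\dim(M)-\dim(\G_p)=\dim(M)$. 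Alternatively, one can sidestep the claim by reducing along a complete transversal (Example \ref{transversalmoreqex}) to the case where $\B=\G/\sT$ is etale, in which case Lemma \ref{lemma:indtorbunacttoric} presents $J$ as a faithful toric $(\sT,\Omega_\sT)$-space and Proposition \ref{lagrseccharprop:torbunversion} applies directly.
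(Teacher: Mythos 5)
Your main argument is correct and is precisely the paper's intended proof: the paper establishes this proposition simply by noting that Proposition \ref{lagrseccharprop:torbunversion} ``generalizes by the same arguments,'' i.e.\ the computation $\psi_{\tau}^*\omega=\omega+J^*(\tau^*\Omega_\sT)$ from the restricted Hamiltonian condition followed by the free-locus/density argument, and your derivation of the submersion claim from $\ker(\d J_p)=(T_p(\G\cdot p))^{\omega}$ (giving $\rk(\d J_p)=\dim(\G\cdot p)=\dim(M)$ at points where $\G_p$ is discrete) correctly supplies the one step that genuinely needs extra care in the groupoid setting. One caveat: your closing ``alternative'' via a complete transversal is not actually a substitute, since applying Lemma \ref{lemma:indtorbunacttoric} and Proposition \ref{lagrseccharprop:torbunversion} to $(\G,\Omega)\vert_\Sigma$ only yields the statement for the restricted data \textemdash\ that the pullback of $\tau^*\Omega_\sT$ to $\Sigma\cap U$ vanishes iff $\psi_\tau$ is symplectic on $J^{-1}(\Sigma\cap U)$ \textemdash\ and recovering the vanishing of $\tau^*\Omega_\sT$ on all of $U$ (equivalently, symplecticity of $\psi_\tau$ on all of $J^{-1}(U)$) from its restrictions to transversals would require an additional argument.
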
}
It further holds that:
\begin{prop}\label{invsecprop} A section {${\tau}\in \mathcal{C}^\infty_\Delta(\sT)(U)$} is {$\B\vert_U$}-invariant if and only if $\psi_{{\tau}}$ is $\G\vert_U$-equivariant.
\end{prop}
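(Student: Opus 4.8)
\textbf{Proof proposal for Proposition \ref{invsecprop}.}

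The plan is to unravel what $\G\vert_U$-equivariance of the diffeomorphism $\psi_{{\tau}}$ means and to match it, pointwise, with the $\B\vert_U$-invariance condition on ${\tau}$. Recall that $\psi_{{\tau}}(p)={\tau}(J(p))\cdot p$, where ${\tau}(J(p))$ is an element of the torus $\sT_{J(p)}$ acting on $p$ via the induced $\sT$-action. Equivariance of $\psi_{{\tau}}$ with respect to $\G\vert_U$ asks that for every $g\in \G$ with $s(g)=x\in U$, $t(g)=y$, and every $p\in J^{-1}(x)$, one has
\begin{equation*} \psi_{{\tau}}(g\cdot p)=g\cdot \psi_{{\tau}}(p),
\end{equation*}
that is, ${\tau}(y)\cdot (g\cdot p)=g\cdot ({\tau}(x)\cdot p)$. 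The key structural fact I would use is that $\sT$ is the bundle of identity components of the isotropy groups of $\G$, and that the $\B$-action on $\sT$ is exactly the action by conjugation in $\G$ (via the short exact sequence (\ref{sestorbun}) and the discussion following (\ref{exptorbun2})). Thus for $g$ projecting to $\gamma=[g]:x\to y$ in $\B$, conjugation gives $g\,{\tau}(x)\,g^{-1}={\tau}(x)\cdot \gamma^{-1}$ inside $\sT_y$ (adopting the right-action convention of Definition \ref{shfinvlagrsecdefi}). Rewriting $g\cdot({\tau}(x)\cdot p)=(g\,{\tau}(x)\,g^{-1})\cdot (g\cdot p)=({\tau}(x)\cdot \gamma^{-1})\cdot (g\cdot p)$, the equivariance equation becomes
\begin{equation*} {\tau}(y)\cdot (g\cdot p)=({\tau}(x)\cdot \gamma^{-1})\cdot (g\cdot p).
\end{equation*}

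From here the two implications go as follows. If ${\tau}$ is $\B\vert_U$-invariant, then by definition ${\tau}(y)\cdot\gamma={\tau}(x)$, equivalently ${\tau}(y)={\tau}(x)\cdot\gamma^{-1}$, so the two sides coincide as elements of $\sT_y$ and hence act identically on $g\cdot p$; this gives equivariance. Conversely, suppose $\psi_{{\tau}}$ is $\G\vert_U$-equivariant. Then for all such $g$ and $p$ the two torus elements ${\tau}(y)$ and ${\tau}(x)\cdot\gamma^{-1}$ in $\sT_y$ have the same action on $g\cdot p$. To conclude ${\tau}(y)={\tau}(x)\cdot\gamma^{-1}$ (and therefore $\B$-invariance of ${\tau}$), I would invoke the hypothesis that the $\sT$-action is free on a dense subset of $S$ (Definition \ref{toractdefi}i): choosing $p$ (equivalently $g\cdot p$) in the dense free locus forces the equality of the two group elements, and continuity in the arrow $g$ and the point $p$ — together with the fact that every arrow of $\B$ lifts locally to $\G$ — propagates it to all of $U$.

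The main obstacle, and the point requiring the most care, is the bookkeeping of conventions: verifying that the $\B$-action on $\sT$ by conjugation in $\G$ matches the right-action convention in which $\B$-invariance reads ${\tau}(y)\cdot\gamma={\tau}(x)$, and that the induced $\sT$-action on $S$ interacts with the $\G$-action exactly via this conjugation. Once this identification $g\,{\tau}(x)\,g^{-1}={\tau}(x)\cdot[g]^{-1}$ is pinned down precisely, the equivalence is a direct translation. I would also note that, because the statement is local and both equivariance and invariance are pointwise-over-$\underline{\Delta}$ conditions, it suffices to check the equation for a single $g$ over each arrow $\gamma$ of $\B\vert_U$; the freeness-on-a-dense-set argument then removes any ambiguity coming from the choice of lift $g$ of $\gamma$, since two lifts differ by an element of $\sT_x$ which already preserves ${\tau}$ by abelianness of the torus.
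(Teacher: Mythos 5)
Your proof is correct and follows essentially the same route as the paper's: the forward implication rests on the same conjugation identity (the paper's equation (\ref{invseceq}), in rearranged form), and the converse uses the same density-of-the-free-locus argument, noting that equality of two elements of $\sT_y$ acting on a free point forces them to coincide. The only difference is one of precision: where you write "continuity in the arrow $g$ and the point $p$ propagates it to all of $U$", the paper implements this by choosing a smooth local section $\sigma$ of the source map of $\G\vert_U$ through $g$ and a sequence of free points $p_n\to p$, so that invariance at the nearby arrows $g_n=\sigma(J(p_n))$ passes to $g$ in the limit.
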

\begin{proof} Notice that, for any arrow $g:x\to y$ in $\G\vert_U$ and any $p\in J^{-1}({x})$:
\begin{equation}\label{invseceq} ({\tau}({y})\cdot {[g]})\cdot p={g^{-1}}\cdot \psi_{{\tau}}({g}\cdot p).
\end{equation} It is clear from this that if ${\tau}$ is {$\B\vert_U$}-invariant, then $\psi_{{\tau}}$ is $\G\vert_U$-equivariant. On the other hand, the converse follows from (\ref{invseceq}) by a density argument. Indeed, suppose that $\psi_{{\tau}}$ is $\G\vert_U$-equivariant. By (\ref{invseceq}): if $\sT$ acts freely at $p$, then ${\tau(y)\cdot[g]}={\tau(x)}$. Now, let an arrow $g$ as above be given and pick $p\in J^{-1}({x})$. Let ${\sigma}$ be a smooth section of ${s:\G\vert_U\to U}$, defined on an open $V$ in $U$ around ${x}$, such that ${\sigma(x)}=g$. Since $\sT$ acts freely on a dense subset, there is a sequence of $p_n\in J^{-1}(V)$ at which $\sT$ acts freely, such that $p_n\to p$. Set $g_n={\sigma}(J(p_n)):x_n\to y_n$. By the discussion above we have ${\tau(y_n)\cdot [g_n]}={\tau}(x_n)$. So, taking $n\to \infty$, we find ${\tau(y)\cdot [g]}={\tau(x)}$.  
\end{proof}
In view of these propositions, the map (\ref{torsectautiso}) induces injective maps of sheaves on $\underline{\Delta}$:
\begin{align}\label{torsectautisoinv} \underline{\mathcal{C}}^\infty_\Delta(\sT)&\to \underline{\textrm{Aut}}_{\G}(J), \\
\label{torsectautisoinvsymp} \underline{\mathcal{\L}}_{\Delta}&\to \underline{\textrm{Aut}}_{\G}(J,\omega).
\end{align} {where $\underline{\textrm{Aut}}_{\G}(J)$ assigns to an open $\underline{U}$ the group of $\G$-equivariant self-diffeomorphisms of $J^{-1}(U)$ that preserve $J$ and $\underline{\textrm{Aut}}_{\G}(J,\omega)$ is the subsheaf consisting those diffeomorphisms that preserve $\omega$. }Theorem \ref{isoautsymplagseccor} states that (\ref{torsectautisoinvsymp}) is an isomorphism of sheaves. To prove this, by Proposition \ref{lagrseccharprop} it is enough to show: 
\begin{thm}\label{torsectautisoprop} The map (\ref{torsectautisoinv}) is an isomorphism of sheaves. 
\end{thm}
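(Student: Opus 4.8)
The statement (Theorem \ref{torsectautisoprop}) asserts that the injective sheaf map (\ref{torsectautisoinv}), from $\underline{\mathcal{C}}^\infty_\Delta(\sT)$ to $\underline{\textrm{Aut}}_\G(J)$, is an isomorphism. Since injectivity is already known (it is the $\B$-invariant restriction of (\ref{torsectautiso}), which is injective because the $\sT$-action is free on a dense subset), the plan is to establish \emph{stalk-wise surjectivity}. Given a $\G$-equivariant automorphism $\psi$ of $J^{-1}(U)$ over some invariant open $U$ around a point $x\in\Delta$, I must produce a $\B$-invariant smooth section ${\tau}:U\to \sT$ (defined possibly after shrinking) whose associated automorphism $\psi_{{\tau}}$ has the same germ as $\psi$ at the orbit $\O_x$.

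\textbf{Key steps.} The first move is to forget the equivariance momentarily and exploit that we have already proven the \emph{non-equivariant} statement. Indeed, a $\G$-equivariant automorphism is in particular $\sT$-equivariant, so by Theorem \ref{torsectautisoprop:torbunversion} (applied to the underlying faithful toric $(\sT,\Omega_\sT)$-space, which is legitimate by Lemma \ref{lemma:indtorbunacttoric}, at least after restricting to a transversal) there exists a smooth section ${\tau}\in \mathcal{C}^\infty_\Delta(\sT)(U)$ with $\psi=\psi_{{\tau}}$ locally. The second move is to observe, via Proposition \ref{invsecprop}, that $\psi_{{\tau}}=\psi$ being $\G$-equivariant forces ${\tau}$ to be $\B$-invariant; hence ${\tau}$ automatically lands in $\underline{\mathcal{C}}^\infty_\Delta(\sT)$, which is exactly what surjectivity of (\ref{torsectautisoinv}) requires. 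Thus the whole theorem reduces to combining the torus-bundle version (Theorem \ref{torsectautisoprop:torbunversion}) with the equivariance criterion (Proposition \ref{invsecprop}).

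\textbf{Reduction to the etale case.} To make the appeal to Theorem \ref{torsectautisoprop:torbunversion} rigorous when $\B$ is not etale, I would first reduce to the etale case exactly as in the proof of Theorem \ref{momimtoricthm} and Proposition \ref{conedescrpsympnormrepprop}: choose a transversal $\Sigma$ through $x$, so that $\B\vert_\Sigma$ is etale (Lemma \ref{proetgpoidlinlem}), $J^{-1}(\Sigma)$ is a symplectic submanifold, and $(\G,\Omega)\vert_\Sigma$ is a symplectic subgroupoid with an induced faithful multiplicity-free action whose symplectic normal representations agree with those of $J$. Because $\underline{\textrm{Aut}}_\G(J)$ and $\underline{\L}$ are sheaves on $\underline{\Delta}$ and the transversal meets every nearby orbit, germs of sections and of automorphisms on $\underline{\Delta}$ are identified with germs of the corresponding objects for the restriction to $\Sigma$; so it suffices to prove surjectivity for the etale groupoid $\B\vert_\Sigma$. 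In the etale case the induced $\sT$-action makes $J_\Sigma$ a faithful toric $(\sT,\Omega_\sT)\vert_\Sigma$-space (Lemma \ref{lemma:indtorbunacttoric}), so Theorem \ref{torsectautisoprop:torbunversion} supplies the required non-invariant section ${\tau}$, and Proposition \ref{invsecprop} upgrades it to a $\B\vert_\Sigma$-invariant one.

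\textbf{Main obstacle.} The genuinely new content beyond the torus-bundle case is precisely the interplay captured by Proposition \ref{invsecprop}, namely that a section ${\tau}$ recovered from a $\G$-equivariant $\psi$ is forced to be $\B$-invariant; the density argument there (using that $\sT$ acts freely on a dense set and passing to a limit along a sequence $p_n\to p$ via a local source-section of $\G$) is the crucial point. Consequently the expected difficulty is not in constructing ${\tau}$ — that is delegated wholesale to the etale torus-bundle theorem — but in the bookkeeping of the transversal reduction: verifying that the sheaf-theoretic germs on $\underline{\Delta}$ genuinely correspond to germs on $\Sigma\cap\Delta$, and that the constructed ${\tau}$ is defined on an \emph{invariant} open so that it defines a section of the sheaf $\underline{\mathcal{C}}^\infty_\Delta(\sT)$ on $\underline{\Delta}$ rather than merely on $\Sigma$. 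Once these identifications are in place, surjectivity follows immediately, completing the proof of Theorem \ref{torsectautisoprop} and hence of Theorem \ref{isoautsymplagseccor}.
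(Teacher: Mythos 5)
Your proposal is correct and takes essentially the same route as the paper's own proof: restrict to a transversal $\Sigma$ through the point so that $\B\vert_\Sigma$ is etale, apply the torus-bundle statement (Theorem \ref{torsectautisoprop:torbunversion}, legitimized by Lemma \ref{lemma:indtorbunacttoric}) to produce a section $\tau_\Sigma$ with $\psi\vert_{J^{-1}(\Sigma)}=\psi_{\tau_\Sigma}$, and invoke Proposition \ref{invsecprop} to get $\B\vert_\Sigma$-invariance. The "bookkeeping" you flag is exactly what the paper carries out: the unique $\B$-invariant extension of $\tau_\Sigma$ to the $\G$-saturation $U$ via $\tau(x)=\tau_\Sigma(y)\cdot[g]$ (using the Morita invariance of $\underline{\mathcal{C}}^\infty_\Delta(\sT)$ from Examples \ref{transversalmoreqex1} and \ref{sympmoreqindiaeqisoofinvlagrsecex}), together with the observation that $\G$-equivariance of $\psi$ forces $\psi=\psi_\tau$ once it holds over $\Sigma$.
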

In the remainder of this subsection we give a proof of this, by reducing to Theorem \ref{torsectautisoprop:torbunversion} via:
\begin{prop}\label{morinvinvsecprop} An integral affine Morita equivalence between two integral affine orbifold groupoids $\B_1\rightrightarrows (M_1,\Lambda_1)$ and $\B_2\rightrightarrows (M_2,\Lambda_2)$ that relates a Delzant subspace $\underline{\Delta}_1\subset \underline{M}_1$ with $\underline{\Delta}_2\subset \underline{M}_2$ induces an isomorphism of sheaves:
\begin{equation*} \underline{\mathcal{C}}^\infty_{\Delta_1}(\sT_{\Lambda_1})\xrightarrow{\sim} \underline{\mathcal{C}}^\infty_{\Delta_2}(\sT_{\Lambda_2})
\end{equation*} covering the induced homeomorphism between $\underline{\Delta}_1$ and $\underline{\Delta}_2$. This restricts to an isomorphism:
\begin{equation*} \underline{\L}_{\Delta_1}\xrightarrow{\sim} \underline{\L}_{\Delta_2}.
\end{equation*}
\end{prop}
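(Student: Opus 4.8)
The plan is to establish Proposition \ref{morinvinvsecprop} by exploiting the fact that the two sheaves involved, $\underline{\mathcal{C}}^\infty_{\Delta_i}(\sT_{\Lambda_i})$ and $\underline{\L}_{\Delta_i}$, arise as push-forwards along $q_{\Delta_i}$ (in the sense of Remark \ref{bsheaveslagrsecrem}) of the $\B_i\vert_{\Delta_i}$-sheaves already discussed in Example \ref{ex:orbifoldsheafoflagsec}. Concretely, $\underline{\mathcal{C}}^\infty_{\Delta_i}(\sT_{\Lambda_i}) = (q_{\Delta_i})_*(\mathcal{C}^\infty_{\Delta_i}(\sT_{\Lambda_i}))$ where the latter is the $\B_i\vert_{\Delta_i}$-sheaf of $\F_i$-basic smooth sections, and $\underline{\L}_{\Delta_i} = (q_{\Delta_i})_*(\L_{\Delta_i})$ with $\L_{\Delta_i}$ the $\B_i\vert_{\Delta_i}$-sheaf of isotropic sections. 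Thus the statement is really a functoriality assertion: an integral affine Morita equivalence should intertwine these equivariant sheaves, and $\B$-invariant sections should match up.

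First I would invoke Corollary \ref{cor:morinvshfcohom:iaequiv}, or rather its proof, which already constructs an explicit isomorphism of $\B_2\vert_{\Delta_2}$-sheaves $P_*(\L_{\Delta_1})\xrightarrow{\sim} \L_{\Delta_2}$. The very same construction, run on the larger $\B_i\vert_{\Delta_i}$-sheaves of $\F_i$-basic sections rather than just the isotropic subsheaves, yields an isomorphism $P_*(\mathcal{C}^\infty_{\Delta_1}(\sT_{\Lambda_1}))\xrightarrow{\sim} \mathcal{C}^\infty_{\Delta_2}(\sT_{\Lambda_2})$; the only thing to check is that pull-back along a local section $\alpha_1\circ\sigma$ of $\underline{\alpha}_2$ (whose differential lifts the integral affine isomorphism $\psi_p$ of Remark \ref{altdefiamoreqrem}) sends $\F_1$-basic sections to $\F_2$-basic sections and, being induced by an integral affine isomorphism of the fiberwise tori, restricts to the isotropic subsheaves. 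This is precisely the content already verified in the proof of Proposition \ref{prop:morinvshfcohom}, so no new computation is needed.

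Next I would apply the push-forward functor $(q_\Delta)_*$ of (\ref{shadowfunctorbsh}). The general principle is that for any Morita equivalence the functor $P_*:\textsf{Sh}(\B_1\vert_{\Delta_1})\xrightarrow{\sim}\textsf{Sh}(\B_2\vert_{\Delta_2})$ of Proposition \ref{prop:morinvshfcohom} is compatible with taking $\B$-invariant sections, i.e. $(q_{\Delta_2})_*\circ P_* \cong (q_{\Delta_1})_*$ as functors into $\textsf{Sh}$ over the identified base spaces $\underline{\Delta}_1\cong\underline{\Delta}_2$. Indeed $\B$-invariant global (and local) sections are precisely the morphisms from the terminal object in the respective sheaf categories, and an equivalence of categories preserves such morphism groups; concretely, the explicit description of $P_*$ given in the proof of Proposition \ref{prop:morinvshfcohom} shows that a section $[p,[\tau]_{\cdots}]$ is $\B_2$-invariant exactly when the underlying $\tau$ is $\B_1$-invariant, because invariance is transported along the groupoid isomorphisms $\phi_p$ of (\ref{moreqindisoisotgps}). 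Applying this identification to the isomorphisms of $\B\vert_\Delta$-sheaves produced in the previous step gives the desired isomorphisms $\underline{\mathcal{C}}^\infty_{\Delta_1}(\sT_{\Lambda_1})\xrightarrow{\sim}\underline{\mathcal{C}}^\infty_{\Delta_2}(\sT_{\Lambda_2})$ and, by restriction, $\underline{\L}_{\Delta_1}\xrightarrow{\sim}\underline{\L}_{\Delta_2}$, each covering the homeomorphism $\underline{\Delta}_1\cong\underline{\Delta}_2$.

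The main obstacle, such as it is, lies not in any single hard estimate but in bookkeeping: one must make sure that the basis of simple opens $U\cap\Delta_2$ used in the proof of Proposition \ref{prop:morinvshfcohom} is compatible with the passage to $\B$-invariant sections, and that the gluing of the local isomorphisms over such opens is well-defined independently of the choices of local section $\sigma$ and simple open $V$. Since this independence was already established for the isotropic subsheaf in Corollary \ref{cor:morinvshfcohom:iaequiv}, and the argument is purely formal once one observes that invariance is a local, $\sigma$-independent condition, I expect the verification to be routine. I would therefore present the proof tersely, citing the construction in the proof of Proposition \ref{prop:morinvshfcohom} for the sheaf isomorphism and the compatibility of $P_*$ with $\B$-invariant sections, and remarking that restriction to the isotropic subsheaves gives the second assertion.
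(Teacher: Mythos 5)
Your proposal is correct and takes essentially the route the paper intends: the paper's entire proof is the remark that the statement is straightforward, citing the proof of Corollary \ref{cor:morinvshfcohom:iaequiv}, and your argument is exactly that construction (applied to the $\B\vert_\Delta$-sheaves of $\F$-basic and isotropic sections from Example \ref{ex:orbifoldsheafoflagsec}) combined with the identification of Remark \ref{bsheaveslagrsecrem} and the compatibility of $P_*$ with passing to $\B$-invariant sections, which is also how the paper's explicit formula in Example \ref{sympmoreqindiaeqisoofinvlagrsecex} arises. One cosmetic correction: for equivariant sheaves of abelian groups, invariant sections are morphisms from the constant sheaf $\Z$ with trivial action (the unit object), not from the terminal object (which is the zero sheaf); this does not affect your proof, since your concrete verification via the explicit description of $P_*$ is what carries the argument.
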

{The proof of this proposition is straightforward (cf. the proof of Corollary \ref{cor:morinvshfcohom:iaequiv}).}
\begin{ex}\label{transversalmoreqex1} In the setting of Example \ref{transversalmoreqex} there is a $\B\vert_\Sigma$-equivariant isomorphism: 
\begin{equation*} (\sT_\Lambda,\Omega_\Lambda)\vert_{\Sigma} \xrightarrow{\sim} (\sT_{\Lambda_\Sigma},\Omega_{\Lambda_\Sigma}), \quad [(x,\alpha)]\mapsto[(x,\alpha\vert_{T_x\Sigma})],
\end{equation*} For that example the isomorphisms of sheaves in Proposition \ref{morinvinvsecprop} are given by restriction of sections.  
\end{ex}
\begin{ex}\label{sympmoreqindiaeqisoofinvlagrsecex} { For an} integral affine Morita equivalence arising from a symplectic Morita equivalence as in Example \ref{sympmoreqiamoreq}, via (\ref{exptorbun2}) we obtain an isomorphism:
\begin{equation*} \underline{\mathcal{C}}_{\Delta_1}^\infty(\sT_1)\xrightarrow{\sim} \underline{\mathcal{C}}_{\Delta_2}^\infty(\sT_2)
\end{equation*} {from Proposition \ref{morinvinvsecprop},} that restricts to an isomorphism:
\begin{equation*}
 \underline{\L}_{\Delta_1}\xrightarrow{\sim} \underline{\L}_{\Delta_2}.
 \end{equation*}
This associates to $\tau_1\in \underline{\mathcal{C}}_{\Delta_1}^\infty(\sT_1)(\underline{U}_1)$ the section $\tau_2\in \underline{\mathcal{C}}_{\Delta_2}^\infty(\sT_2)(\underline{U}_2)$ given by:
\begin{equation*} \tau_2(x_2)=\phi_p(\tau_1(x_1)),
\end{equation*} where $p\in P$ is any choice of element such that $\alpha_2(p)=x_2$, $x_1:=\alpha_1(p)$ and $\phi_p$ is as in (\ref{moreqindisoisotgps}). 
\end{ex}

\begin{proof}[Proof of Theorem \ref{torsectautisoprop}] Let $x_0\in \Delta$ and let $\psi\in \underline{\textrm{Aut}}_{\G}(J)(\underline{U})$ for some $\G$-invariant open $U$ around $x_0$ in $\Delta$. We have to show that there is a $\B$-invariant smooth section ${\tau}$, defined in a $\G$-invariant open neighbourhood of $x_0$ in $\Delta$, such that the germ $\psi_{{\tau}}$ at $x_0$ coincides with that of $\psi$. To find such a section, choose a transversal $\Sigma$ for $\B$ through $x$ such that $\Delta\cap\Sigma\subset U$. As in the proof of Theorem \ref{momimtoricthm} and Proposition \ref{conedescrpsympnormrepprop}, the given $(\G,\Omega)$-space $J$ restricts to a faithful multiplicity-free $(\G,\Omega)\vert_{\Sigma}$-space: 
\begin{equation}\label{restrmommaptransveq} J_\Sigma:(J^{-1}(\Sigma),\omega\vert_{J^{-1}(\Sigma)})\to \Sigma.
\end{equation} After possibly shrinking $U$, we can assume that $U$ is the $\G$-saturation of $\Sigma\cap \Delta$. Combining Example \ref{transversalmoreqex1} and Example \ref{sympmoreqindiaeqisoofinvlagrsecex}, we find that any smooth $\B\vert_{\Sigma}$-invariant section ${{\tau}}_\Sigma:\Delta\cap \Sigma\to \sT$ extends uniquely to a smooth $\B$-invariant section ${\tau}:U\to \sT$, defined by the property that for any arrow $g:x\to y$ in $\G\vert_U$ with ${y}\in\Sigma$: 
\begin{equation*} {\tau(x)}={{\tau}_{\Sigma}(y)\cdot [g]}. 
\end{equation*} Further, notice that if $\psi\vert_{J^{-1}(\Sigma)}=\psi_{{{\tau}}_{\Sigma}}$, then $\psi=\psi_{{\tau}}$. So, to conclude the proof of the theorem it remains to find (after possibly shrinking $\Sigma$) a smooth $\B\vert_{\Sigma}$-invariant section ${{\tau}}_\Sigma:\Delta\cap \Sigma\to \sT$ with the property that $\psi\vert_{J^{-1}(\Sigma)}=\psi_{{{\tau}}_{\Sigma}}$. Since $(\sT,\Omega_\sT)\vert_\Sigma$ is a symplectic torus bundle and (by {Lemma \ref{lemma:indtorbunacttoric}}) its action along (\ref{restrmommaptransveq}) is faithful toric, it follows from {Theorem \ref{torsectautisoprop:torbunversion}} that, after possibly shrinking $\Sigma$, we can find a smooth section ${{\tau}}_{\Sigma}:\Delta\cap \Sigma\to \sT$ with the property that $\psi\vert_{J^{-1}(\Sigma)}=\psi_{{{\tau}}_\Sigma}$. In view of Proposition \ref{invsecprop}, this section must also be $\B\vert_{\Sigma}$-invariant.
\end{proof}

\subsubsection{The point-wise ext-invariant and proof of the second structure theorem}\label{extinvconstsec} The proof of Theorem \ref{torsortorspthm} will be analogous to that for the case of symplectic torus bundles (Theorem \ref{torsortorspthm:torbunversion}). The remaining ingredient for this is the following generalization of Proposition \ref{locclasstoricthm:torbunversion}. 
\begin{prop}\label{locclasstoricthm} Let $(\G,\Omega)$ be a regular and proper symplectic groupoid and suppose that we are given a faithful multiplicity-free $(\G,\Omega)$-actions along $J_1:(S_1,\omega_1)\to M$ and $J_2:(S_2,\omega_2)\to M$. Further, let $x\in J_1(S_1)\cap J_2(S_2)$. Then there is a $\G$-invariant open neighbourhood $U$ of $x\in M$ and a $\G$-equivariant symplectomorphism: 
\begin{center}
\begin{tikzcd} (J_1^{-1}(U),\omega_1) \arrow[rr,"\cong"] \arrow[dr, "J_1"'] & & (J_2^{-1}(U),\omega_2) \arrow[dl,"J_2"] \\
 & M & 
\end{tikzcd}
\end{center} if and only if both of the following hold.
\begin{itemize}\item[i)] The germs at $x$ of $J_1(S_1)$ and $J_2(S_2)$ are equal.
\item[ii)] The ext-classes $e(J_1)_x$ and $e(J_2)_x$ at $x$ are equal. 
\end{itemize}
\end{prop}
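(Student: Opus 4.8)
The plan is to reduce the statement to the already-established local classification for symplectic torus bundles, Proposition \ref{locclasstoricthm:torbunversion}, supplemented by the ext-invariant which records the ``disconnected'' part of the isotropy data that is invisible to the torus bundle $\sT$. The forward implication is the easy direction: if there is a $\G$-equivariant symplectomorphism $\Psi$ over an open $U$ as in the statement, then it is in particular $\sT$-equivariant and symplectic, so by Proposition \ref{locclasstoricthm:torbunversion} (applied after reducing to the torus bundle using Lemma \ref{lemma:indtorbunacttoric} on a transversal) the germs of $J_1(S_1)$ and $J_2(S_2)$ at $x$ agree, giving i). Moreover, $\Psi$ identifies the symplectic normal representations at corresponding points as \emph{equivariant} symplectic representations, hence by naturality of the ext-class (Lemma \ref{smpeqtorrepindisoextinvprop}, via Proposition \ref{conedescrpsympnormrepprop}$a$) the ext-classes match, giving ii). I expect this direction to be essentially a bookkeeping argument once the ext-invariant $e(J)_x$ has been defined so that it is manifestly an isomorphism invariant.

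\textbf{The reverse implication.}
For the converse, assume i) and ii). First I would reduce to the case where the orbifold groupoid $\B=\G/\sT$ is etale, by choosing a transversal $\Sigma$ through $x$ and passing to $(\G,\Omega)\vert_\Sigma$ via the Morita equivalence of Example \ref{transversalmoreqex} and Example \ref{sympmoreqiamoreq}; by Proposition \ref{indequivofcatsmoreqsympgroupoidscorners} and Corollary \ref{delzsubspmorinvcor} the hypotheses i) and ii) and the desired conclusion are all preserved under this reduction. In the etale case, the essential point is to produce, at the level of the isotropy group $\G_x$, points $p_1\in J_1^{-1}(x)$ and $p_2\in J_2^{-1}(x)$ such that the isotropy groups $(\G_1)_{p_1}$ and $(\G_2)_{p_2}$ coincide as subgroups of $\G_x$ and the symplectic normal representations $(\S\No_{p_1},\omega_{p_1})$ and $(\S\No_{p_2},\omega_{p_2})$ are isomorphic as symplectic $\G_x$-representations (\emph{not} merely as $\sT_x$-representations). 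Condition i) together with Proposition \ref{conedescrpsympnormrepprop}$c$ forces the cones, and hence (by pointedness, as in the proof of Proposition \ref{locclasstoricthm:torbunversion}) the subtori $\sT_{p_i}$, to agree; this handles the connected part. Condition ii), via the definition of the ext-invariant as the stalkwise ext-class in $I^1(\G_x,\sT_x)$ together with Theorem \ref{isoclasstorrepthm}, is precisely what is needed to promote the $\sT_x$-equivariant isomorphism of symplectic normal representations to a $\G_x$-equivariant one.

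\textbf{Conclusion via the local normal form.}
Having arranged matching isotropy groups and isomorphic symplectic normal representations at $p_1$ and $p_2$, I would invoke the equivariant local normal form Theorem \ref{loceqthmhamact} to obtain a $\G$-equivariant symplectomorphism between $\G$-invariant neighbourhoods $U_1\ni p_1$ and $U_2\ni p_2$ sending $p_1$ to $p_2$. Because $\underline{J}$ is a topological embedding (Definition \ref{toractdefi}iii), every $\G$-invariant open in $S_i$ is of the form $J_i^{-1}(U)$ for an open $U$ in $\Delta$, so after shrinking this normal-form isomorphism is defined over a common $\G$-invariant open $U$ around $x$, as required. The main obstacle I anticipate is the second bullet of the preliminary step: carefully checking that the ext-class equality $e(J_1)_x=e(J_2)_x$ translates, through Theorem \ref{isoclasstorrepthm} applied to the maximally toric symplectic normal representations of $\G_x$ (legitimate by Proposition \ref{conedescrpsympnormrepprop}$a$ and $b$, which identifies $\varGamma_{\G_p}$ with $\varGamma_{\G_x}$), into an actual $\G_x$-equivariant isomorphism of the slices, rather than just an equality of cohomology classes. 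Once the ext-invariant $e(J)_x$ is defined so that it \emph{is} the ext-class of the symplectic normal representation viewed in $I^1(\G_x,\sT_x)$, this translation should follow directly from the injectivity part of Theorem \ref{isoclasstorrepthm}.
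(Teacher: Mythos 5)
Your overall architecture matches the paper's: the forward implication is routine, and the converse is proved by producing points $p_1\in J_1^{-1}(x)$ and $p_2\in J_2^{-1}(x)$ with equal isotropy groups and isomorphic symplectic normal representations, then invoking Theorem \ref{loceqthmhamact} together with the fact that $\underline{J}$ is a topological embedding to obtain the equivariant symplectomorphism over a common invariant open. One remark on efficiency: your preliminary reduction to the etale case is unnecessary, since Theorem \ref{loceqthmhamact} holds for arbitrary proper symplectic groupoids and the cone/pointedness argument is purely isotropy-level; the paper runs the argument for $(\G,\Omega)$ directly, with no transversal.

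However, there is a genuine gap at exactly the step you flag as the anticipated obstacle, and your proposed fix does not work as stated. You claim that equality of the ext-classes in $I^1(\G_x,\sT_x)$, together with the injectivity part of Theorem \ref{isoclasstorrepthm}, directly yields a $\G_x$-equivariant isomorphism of the symplectic normal representations at arbitrary $p_1\in J_1^{-1}(x)$, $p_2\in J_2^{-1}(x)$. But Theorem \ref{isoclasstorrepthm} classifies representations of a single fixed group $H$, whereas $(\S\No_{p_1},\omega_{p_1})$ and $(\S\No_{p_2},\omega_{p_2})$ are representations of the a priori \emph{distinct} subgroups $\G_{p_1}$ and $\G_{p_2}$ of $\G_x$ (sharing only the identity component $\sT_{p_1}=\sT_{p_2}$), and their ext-classes live in the different sets $I^1(\G_{p_1},\sT_{p_1})$ and $I^1(\G_{p_2},\sT_{p_2})$; hypothesis ii) only says that their images under the extension maps (\ref{extinvcohommap}) agree. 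The missing idea \textemdash{} which is the heart of the paper's proof \textemdash{} is to use this equality to \emph{move the point}: equality in $I^1(\G_x,\sT_x)$ means, via Proposition \ref{splitcharprop}, that the splitting subgroups $\varGamma_{p_1}\subset\G_{p_1}$ and $\varGamma_{p_2}\subset\G_{p_2}$ determined by representing cocycles are conjugate by some $t\in\sT_x$. Replacing $p_1$ by $t\cdot p_1$ (still in $J_1^{-1}(x)$, since the $J_1$-fibers are $\sT$-orbits) one gets
\begin{equation*}
\G_{t\cdot p_1}=t\,\G_{p_1}t^{-1}=(t\,\varGamma_{p_1}t^{-1})\,\sT_{p_1}=\varGamma_{p_2}\,\sT_{p_2}=\G_{p_2},
\end{equation*}
and $e(\S\No_{t\cdot p_1},\omega_{t\cdot p_1})=(C_t)_*\,e(\S\No_{p_1},\omega_{p_1})$ now literally coincides with $e(\S\No_{p_2},\omega_{p_2})$ inside $I^1(\G_{p_2},\sT_{p_2})$. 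Only at this point, with both normal representations being representations of the same group and having equal momentum cones and equal ext-classes, does Theorem \ref{isoclasstorrepthm} apply to produce the required isomorphism. Without this conjugation-and-relocation step your argument has no mechanism for arranging the hypothesis $\G_{p_1}=\G_{p_2}$ of Theorem \ref{loceqthmhamact}.
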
  
{The ext-classes at $x$ in this proposition are certain elements of $I^1(\G_x,\sT_x)$ (as in Definition \ref{classgpcohomdefi}). To define these,} 
let $(\G,\Omega)$ be a regular and proper symplectic groupoid {and $J:(S,\omega)\to M$ a faithful multiplicity-free $(\G,\Omega)$-space}. For $x\in M$ and $p\in J^{-1}(x)$, consider the canonical extensions:
\begin{align} &1\to \sT_p\to \G_p\to \varGamma_{\G_p}\to 1 \label{isoextp}, \\
 &1\to \sT_x\to \G_x\to \varGamma_{\G_x}\to 1 \label{isoextx}.
\end{align} Any $1$-cocycle ${c}_p:\G_p\to \sT_p$ that restricts to the identity map on $\sT_p$ extends uniquely to a $1$-cocycle ${c}_x:\G_x\to \sT_x$ that restricts to the identity map on $\sT_x$. Indeed, let $\varGamma_{{c}}$ be the subgroup of $\G_p$ corresponding to ${c}_p$ via the bijection in Proposition \ref{splitcharprop}, applied to (\ref{isoextp}). Then $\varGamma_{{c}}\to \varGamma_{\G_p}$ is an isomorphism. So, since $\varGamma_{\G_p}\to \varGamma_{\G_x}$ is an isomorphism (Proposition \ref{conedescrpsympnormrepprop}$b$), $\varGamma_{{c}}$ is also a subgroup of $\G_x$ with the property that $\varGamma_{{c}}\to \varGamma_{\G_x}$ is an isomorphism. Therefore $\varGamma_{{c}}$ corresponds to a $1$-cocycle ${c}_x:\G_x\to \sT_x$ via the bijection in Proposition \ref{splitcharprop}, applied to (\ref{isoextx}). This is the desired cocycle extending ${c}_p$. The association of ${c}_p$ to ${c}_x$ descends to a map:
\begin{equation}\label{extinvcohommap} I^1(\G_p,\sT_p)\to I^1(\G_x,\sT_x),
\end{equation} as is readily verified. Since the symplectic normal representation $(\S\No_p,\omega_p)$ is {maximally} toric (Proposition \ref{conedescrpsympnormrepprop}$a$), we can consider its ext-class (as in Definition \ref{extclasstoricrep}):
\begin{equation}\label{extclasssympnormrep} e(\S\No_p,\omega_p)\in I^1(\G_p,\sT_p). 
\end{equation}

\begin{prop}\label{extinvwelldefprop} For any $p_1,p_2\in J^{-1}(x)$, the ext-classes $e(\S\No_{p_1},\omega_{p_1})$ and $e(\S\No_{p_2},\omega_{p_2})$ are mapped to the same class in $I^1(\G_x,\sT_x)$.
\end{prop}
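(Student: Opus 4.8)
The plan is to prove that the point-wise ext-invariant $e(J)_x \in I^1(\G_x,\sT_x)$ is well-defined by showing that the class obtained from any $p \in J^{-1}(x)$ does not depend on the choice of $p$. The essential observation is that any two points $p_1, p_2$ in the same fiber $J^{-1}(x)$ lie on the same $\sT$-orbit (by condition ii) in Definition \ref{toractdefi}), so there is a $t \in \sT_x$ with $p_2 = t \cdot p_1$. The strategy is to use this $t$ to relate the symplectic normal representations $(\S\No_{p_1},\omega_{p_1})$ and $(\S\No_{p_2},\omega_{p_2})$ via an explicit equivalence, and then to appeal to the functoriality statement in Lemma \ref{smpeqtorrepindisoextinvprop} to conclude that the induced map on $I^1$ sends one ext-class to the other. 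Finally I would check that the map $I^1(\G_{p_1},\sT_{p_1}) \to I^1(\G_x,\sT_x)$ of (\ref{extinvcohommap}) intertwines these, so that after pushing forward to $\G_x$ the two classes agree.

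First I would set up the equivalence. Since $p_2 = t\cdot p_1$ with $t \in \sT_x$, conjugation by $t$ (or more precisely by a chosen group element projecting appropriately) relates the isotropy groups: the map $C_t : \G_{p_1} \to \G_{p_2}$, $g \mapsto \widetilde{t} g \widetilde{t}^{-1}$ for a suitable lift, is an isomorphism of Lie groups. Acting by $t$ on the symplectic normal space gives a linear symplectic isomorphism $\S\No_{p_1} \to \S\No_{p_2}$, and the pair $(C_t, \text{action by } t)$ should constitute an equivalence of maximally toric representations in the sense of Definition \ref{isoequivsymprepdef}. The identity components $\sT_{p_1}$ and $\sT_{p_2}$ both equal $\G_{p_i}\cap \sT_x$, and because $t$ centralizes $\sT_x$ (the fibers of $\sT$ being abelian tori), the isomorphism $C_t$ restricts to the identity on this common subtorus. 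This is the key point that makes the diagram in Lemma \ref{smpeqtorrepindisoextinvprop} commute in a way compatible with the maps (\ref{extinvcohommap}) into $\G_x$.

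With the equivalence in place, Lemma \ref{smpeqtorrepindisoextinvprop} immediately yields $(C_t)_* \, e(\S\No_{p_1},\omega_{p_1}) = e(\S\No_{p_2},\omega_{p_2})$ in $I^1(\G_{p_2},\sT_{p_2})$. It then remains to verify that the extension maps (\ref{extinvcohommap}) for $p_1$ and for $p_2$ fit into a commutative triangle with $(C_t)_*$, i.e. that pushing each $e(\S\No_{p_i},\omega_{p_i})$ up to $I^1(\G_x,\sT_x)$ gives the same class. This follows from naturality of the construction in (\ref{extinvcohommap}): the extension of a cocycle is characterized by the corresponding splitting subgroup $\varGamma_{c} \subset \G_x$ (via Proposition \ref{splitcharprop}), and the two subgroups arising from $p_1$ and $p_2$ are conjugate by $\widetilde{t}$, hence define the same class in $I^1(\G_x,\sT_x)$ after modding out by the $\sT_x$-conjugation action. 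I expect the main obstacle to be bookkeeping: one must choose lifts of $t$ carefully and check that the conjugation isomorphism $C_t$ genuinely restricts to the identity on the relevant subtorus and is compatible with the identifications of connected-component groups $\varGamma_{\G_{p_i}} \xrightarrow{\sim} \varGamma_{\G_x}$ from Proposition \ref{conedescrpsympnormrepprop}$b$, so that the whole picture descends consistently to $I^1(\G_x,\sT_x)$ modulo $\sT_x$-conjugation. Once these compatibilities are confirmed, the equality of classes is essentially forced.
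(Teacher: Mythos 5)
Your proposal is correct and follows essentially the same route as the paper's proof: write $p_2=t\cdot p_1$ with $t\in\sT_x$, observe that conjugation $C_t$ together with the $t$-action on symplectic normal spaces is an equivalence of symplectic representations, apply Lemma \ref{smpeqtorrepindisoextinvprop}, and then kill the residual conjugation by $t$ downstairs. The only cosmetic difference is in the last step, where the paper verifies a commutative square and invokes Lemma \ref{conjtoreltrivincohomlem} (conjugation acts trivially on $H^1(H,T)$), while you phrase the same fact via Proposition \ref{splitcharprop}: the two splitting subgroups of $\G_x$ are $\sT_x$-conjugate and hence represent the same class in $I^1(\G_x,\sT_x)$; also note that no lift $\widetilde{t}$ is needed, since $t\in\sT_x\subset\G_x$ already conjugates $\G_{p_1}$ onto $\G_{p_2}$.
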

\begin{defi}\label{extclassdefi} Let $(\G,\Omega)$ be a regular and proper symplectic groupoid and suppose that we are given a faithful multiplicity-free $(\G,\Omega)$-action along $J:(S,\omega)\to M$ with momentum image $\Delta:=J(S)$. The \textbf{ext-class} of the action \textbf{at a point} $x\in \Delta$:
\begin{equation*} e(J)_x\in I^1(\G_x,\sT_x),
\end{equation*} is the image under the map (\ref{extinvcohommap}) of the ext-class (\ref{extclasssympnormrep}) at any point $p\in S$ in the fiber of $J$ over $x$. 
\end{defi}
\begin{proof}[Proof of Proposition \ref{extinvwelldefprop}] Since the fibers of $J$ coincide with the $\sT$-orbits, it holds that $p_2=t\cdot p_1$ for some $t\in \sT_x$. Writing $p:=p_1$, we have $\G_{t\cdot p}=t\G_pt^{-1}$ and the pair:
\begin{equation*} (C_t,\psi):(\S\No_p,\omega_p)\to (\S\No_{t\cdot p},\omega_{t\cdot p}), \quad [v]\mapsto [\d(m_S)_{(t,p)}(0,v)],
\end{equation*} is an equivalence of symplectic representations, where $C_t:\G_p\to \G_{t\cdot p}$ denotes conjugation by $t$. Appealing to Lemma \ref{smpeqtorrepindisoextinvprop}, it follows that: \begin{equation*} e(\S\No_{t\cdot p},\omega_{t\cdot p})=(C_t)_*(e(\S\No_p,\omega_p)).
\end{equation*} As is readily verified, the square:
\begin{center} 
\begin{tikzcd} I^1(\G_p,\sT_p) \arrow[r,"(C_t)_*"]\arrow[d] & I^1(\G_{t\cdot p},\sT_{t\cdot p}) \arrow[d] \\ 
 I^1(\G_x,\sT_x)\arrow[r,"(C_t)_*"] &  I^1(\G_x,\sT_x)
\end{tikzcd} 
\end{center} commutes. By Lemma \ref{conjtoreltrivincohomlem} below the lower arrow is the identity map. So, the respective images of $e(\S\No_p,\omega_p)$ and $e(\S\No_{t\cdot p},\omega_{t\cdot p})$ under the vertical maps are equal. 
\end{proof} 
Here we used the lemma below, which is readily verified.
\begin{lemma}\label{conjtoreltrivincohomlem} Let $H$ be an infinitesimally abelian compact Lie group with identity component $T$. For any $h\in H$, conjugation by $h$ induces the identity map in $H^1(H,T)$.
\end{lemma}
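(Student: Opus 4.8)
The statement to prove is Lemma~\ref{conjtoreltrivincohomlem}: for an infinitesimally abelian compact Lie group $H$ with identity component $T$, conjugation by any $h\in H$ induces the identity map on $H^1(H,T)$.

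The plan is to invoke the general principle from group cohomology that \emph{inner} automorphisms act trivially on the cohomology of a group with coefficients in any module, and to check that conjugation by $h$ on $H$, together with the corresponding action on the coefficient module $T$, assembles into precisely such an inner situation. First I would recall the explicit cochain-level description already in use throughout this section: a class in $H^1(H,T)$ is represented by a map $c:H\to T$ with $c(h_1h_2)=(c(h_1)\cdot h_2)c(h_2)$, where $T$ is the $H$-module via conjugation, and coboundaries are the maps $h\mapsto (t\cdot h)t^{-1}$ for $t\in T$. The automorphism $C_h:H\to H$, $g\mapsto hgh^{-1}$, pulls back such a cocycle to $C_h^*c=c\circ C_h$, and I must show $c\circ C_h$ is cohomologous to $c$.

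The key computational step is to produce an explicit primitive exhibiting $c\circ C_h - c$ as a coboundary. I would compute $c(hgh^{-1})$ using the cocycle identity, expanding $c(hgh^{-1})=c(h\cdot gh^{-1})$ and then $c(gh^{-1})$, and simplifying using $c(hh^{-1})=c(1)=1$ together with the module structure. The standard outcome is that
\begin{equation*}
(c\circ C_h)(g)=\big(u\cdot g\big)\,c(g)\,u^{-1}\cdot\big(\text{correction}\big),
\end{equation*}
where the relevant element $u\in T$ is built from $c(h)$; here the crucial simplification is that $T$ is abelian and central in the sense that the conjugation action of $H$ on $T$ is what defines the module structure, so the terms involving $c(h)$ rearrange into a coboundary of the form $g\mapsto (t\cdot g)t^{-1}$ with $t$ determined by $c(h)$ and $h$. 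I would carry this out carefully at the level of the defining formulas rather than citing the black box, since the $1$-cocycle convention here is the non-abelian-looking one adapted to the conjugation action.

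The main obstacle I anticipate is purely bookkeeping: keeping the module-action dots and the multiplicative structure straight in the cocycle identity, because the coefficient action of $H$ on $T$ is nontrivial (conjugation), so one cannot treat $c$ as an ordinary homomorphism. The cleanest route is probably to reduce to the inner-automorphism principle abstractly: the pair $(C_h,\,\mathrm{conj}_h:T\to T)$ is the automorphism of the $H$-module $T$ induced by the inner automorphism of $H$ given by $h$, and a general lemma states that for a $G$-module $A$, an element $g_0\in G$ acting by $(C_{g_0}, a\mapsto g_0\cdot a\cdot g_0^{-1})$ induces the identity on $H^\bullet(G,A)$. Verifying that hypothesis here amounts only to noting that the conjugation action of $h$ on the module $T$ agrees with the module structure restricted to the cyclic data of $h$, which is immediate since the module structure on $T$ \emph{is} conjugation in $H$. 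Hence the lemma reduces to this standard fact, and the explicit cochain verification above serves as a self-contained confirmation; either way the argument is short, which is consistent with the paper's remark that it is "readily verified."
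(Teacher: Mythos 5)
Your proposal is correct and is exactly what the paper intends: the paper gives no proof (the lemma is left as "readily verified"), and your reduction to the standard fact that inner automorphisms act trivially on group cohomology applies verbatim here, since the $H$-module structure on $T$ \emph{is} conjugation in $H$, so the pair (conjugation by $h$ on $H$, its restriction to $T$) is precisely the compatible "inner" pair of that principle. For the record, the cochain computation you sketch closes cleanly in the paper's conventions: using the cocycle identity and $c(h^{-1})=c(h)^{-1}\cdot h^{-1}$ one gets $c(hgh^{-1})\cdot h=(c(h)\cdot g)\,c(g)\,c(h)^{-1}$, so the conjugated cocycle differs from $c$ by the coboundary $g\mapsto(u\cdot g)u^{-1}$ with $u=c(h)\in T$, as you predicted.
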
 
{Having defined the invariants appearing in Proposition \ref{locclasstoricthm}, we now turn to its proof.}
\begin{proof}[Proof of Proposition \ref{locclasstoricthm}] The implication from left to right is straightforward to verify. For the other implication, let $x\in J_1(S_1)\cap J_2(S_2)$ such that the germs at $x$ of $J_1(S_1)$ and $J_2(S_2)$ coincide and such that $e(J_1)_x=e(J_2)_x$. Since $\underline{J}:\underline{S}\to \underline{M}$ is a topological embedding, every invariant $\G$-invariant open in $S$ is of the form $J^{-1}(U)$ where $U$ is some $\G$-invariant open in $M$. So, in view of Theorem \ref{loceqthmhamact}, it remains to show that there are $p_1\in J_1^{-1}(x)$ and $p_2\in J_2^{-1}(x)$ such that $\G_{p_1}=\G_{p_2}$ and the symplectic representations $(\S\No_{p_1},\omega_{p_1})$ and $(\S\No_{p_2},\omega_{p_2})$ are isomorphic. Fix any $p\in J_1^{-1}(x)$ and $q\in J_2^{-1}(x)$. {The same argument as in the proof of Proposition \ref{locclasstoricthm:torbunversion} shows that $\sT_p=\sT_q$ and $\Delta_{\S\No_p}=\Delta_{\S\No_q}$.} Let $\varGamma_p$ and $\varGamma_q$ be the respective subgroups of $\G_p$ and $\G_q$ corresponding (as in Proposition \ref{splitcharprop}) to a choice of $1$-cocycles representing $e(\S\No_p,\omega_p)$ and $e(\S\No_q,\omega_q)$. By assumption, $e(\S\No_p,\omega_p)$ and $e(\S\No_q,\omega_q)$ are mapped to the same cohomology class in $I^1(\G_x,\sT_x)$. So, there is a $t\in \sT_x$ such that: \begin{equation}\label{eq2-locclasstoricthm} t\varGamma_pt^{-1}=\varGamma_q. 
\end{equation} 
Since any element of $\G_p$ can be written as product of an element of $\varGamma_p$ and one of $\sT_p$, while any element of $\G_q$ can be written as product of an element of $\varGamma_q$ and one of $\sT_q$, it follows that:
\begin{align*} \G_{t\cdot p}&=t\G_pt^{-1}\\
&=(t\varGamma_pt^{-1})\sT_p\\
&=\varGamma_q\sT_q\\
&=\G_q.
\end{align*} Moreover, as in the proof of Proposition \ref{extinvwelldefprop}, we find that $\G_{t\cdot p}=t\G_pt^{-1}$ and the subgroup $t\varGamma_pt^{-1}$ of $\G_{t\cdot p}$ corresponds to the $1$-cocycle representing the cohomology class $e(\S\No_{t\cdot p},\omega_{t\cdot p})$. In light of (\ref{eq2-locclasstoricthm}), this is the same as the $1$-cocycle representing the class $e(\S\No_q,\omega_q)$, hence:
\begin{equation*} e(\S\No_{t\cdot p},\omega_{t\cdot p})=e(\S\No_q,\omega_q).
\end{equation*} In combination with the fact that:
\begin{equation*} \Delta_{\S\No_{t\cdot p}}=\Delta_{\S\No_{p}}=\Delta_{\S\No_{q}}.
\end{equation*} we conclude from Theorem \ref{isoclasstorrepthm} that $(\S\No_{t\cdot p},\omega_{t\cdot p})$ and $(\S\No_q,\omega_q)$ are isomorphic as symplectic representations. Thus, $p_1:=t\cdot p\in J_1^{-1}(x)$ and $p_2:=q\in J_2^{-1}(x)$ are as required. 
\end{proof}
{ As mentioned in the introduction, the ext-invariant appearing in Theorem \ref{torsortorspthm} is a global section of a certain sheaf (\ref{flatsecshintro}) on $\underline{\Delta}$ (the ext-sheaf) with stalk at the orbit $\O_x\in \underline{\Delta}$ through $x\in \Delta$ given by $I^1(\G_x,\sT_x)$. To understand the proof of Theorem \ref{torsortorspthm}, it is not necessary to know the precise definition of the ext-sheaf or the ext-invariant. Rather, the only relevant fact for the proof is that the germ at $x$ of the ext-invariant of $J$ is the ext-class $e(J)_x$, so that two faithful multiplicity-free $(\G,\Omega)$-spaces with momentum image $\Delta$ have the same ext-invariant if and only if they have the same ext-class at every $x\in \Delta$. With this in mind, we give the proof of Theorem \ref{torsortorspthm} now and give the definition of the ext-sheaf and ext-invariant in the next subsection. 
\begin{proof}[Proof of Theorem \ref{torsortorspthm}] The $H^1(\underline{\Delta},\underline{\L})$-action is defined just as in the proof of Theorem \ref{torsortorspthm:torbunversion} and freeness and transitivity follow as in that proof, using Theorem \ref{isoautsymplagseccor} and Proposition \ref{locclasstoricthm}. By naturality of the action with respect to Morita equivalences we mean the following. Let $((P,\omega_P),\alpha_1,\alpha_2)$ be a symplectic Morita equivalence between regular and proper symplectic groupoids $(\G_1,\Omega_1)$ and $(\G_2,\Omega_2)$ with related Delzant subspaces $\underline{\Delta}_1\subset \underline{M}_1$ and $\underline{\Delta}_2\subset \underline{M}_2$. Consider the bijection $P_*$ in Proposition \ref{prop:sympmoreqinv:faithfulcomplzerosp}. By an argument similar to that for Proposition \ref{extinvwelldefprop}, it follows that if $x_2\in \Delta_2$ and $p\in \alpha_2^{-1}(x_2)$, then for any faithful multiplicity-free $(\G_1,\Omega_1)$-space $J$ with momentum map image $\Delta_1$, it holds that:
\begin{equation}\label{eqn:extinvofassociatedspace:moreq} e(P_*(J))_{x_2}=(\phi_p)_*(e(J)_{x_1}),
\end{equation} where $x_1:=\alpha_1(p)$ and $\phi_p:(\G_1)_{x_1}\xrightarrow{\sim}(\G_2)_{x_2}$ is the induced isomorphism of Lie groups (\ref{moreqindisoisotgps}). So, given a global section $e_1$ of the ext-sheaf $\mathcal{I}^1_{(\G_1,\Omega_1,\underline{\Delta}_1)}$, the bijection $P_*$ restricts to a bijection:
\begin{equation*}
P_*:\left\{\begin{aligned} &\quad\quad\quad\textrm{ Isomorphism classes of } \\ & \textrm{faithful multiplicity-free $(\G_1,\Omega_1)$-spaces} \\ & \quad\quad\quad\textrm{with momentum image $\Delta_1$} \\ & \quad\quad\quad\textrm{ and with ext-invariant $e_1$}\end{aligned}\right\}\xrightarrow{\sim} \left\{\begin{aligned} &\quad\quad\quad\textrm{ Isomorphism classes of} \\ & \textrm{faithful multiplicity-free $(\G_2,\Omega_2)$-spaces} \\ & \quad\quad\quad\textrm{with momentum image $\Delta_2$} \\ & \quad\quad\quad\textrm{ and with ext-invariant $e_2$}\end{aligned}\right\}
\end{equation*} for a unique global section $e_2$ of the ext-sheaf $\mathcal{I}^1_{(\G_2,\Omega_2,\underline{\Delta}_2)}$. By the aforementioned naturality, we mean that for any $\underline{c}\in H^1(\underline{\Delta}_1,\underline{\L}_1)$ and any faithful multiplicity-free $(\G_1,\Omega_1)$-space $J:(S,\omega)\to M_1$ with momentum image $\Delta_1$ and ext-invariant $e_1$, it holds that:
\begin{equation}\label{naturalityactionscndstrcthmeq} P_*(\underline{c}\cdot [J:(S,\omega)\to M_1])=\underline{P}_*(\underline{c})\cdot P_*([J:(S,\omega)\to M_1]),
\end{equation} where
\begin{equation*} \underline{P}_*:\check{H}^1(\underline{\Delta}_1,\underline{\L}_1)\xrightarrow{\sim} \check{H}^1(\underline{\Delta}_2,\underline{\L}_2)
\end{equation*} denotes the isomorphism induced by the associated isomorphism of sheaves as in Example \ref{sympmoreqindiaeqisoofinvlagrsecex}. The validity of (\ref{naturalityactionscndstrcthmeq}) is straightforward to verify and left to the reader.
\end{proof}}
\subsubsection{The ext-invariant as global section of the ext-sheaf}\label{flatsecdefsec} Let $(\G,\Omega)\rightrightarrows M$ be a regular and proper symplectic groupoid and $\underline{\Delta}\subset \underline{M}$ be a Delzant subspace $\underline{\Delta}\subset \underline{M}$ with corresponding invariant subspace $\Delta\subset M$. Consider the set-theoretic bundle:
\begin{equation}\label{discrstalkbunextinv} \bigsqcup_{x\in \Delta} I^1(\G_x,\sT_x) \to \Delta.
\end{equation} {Viewed as section of this bundle, the collection of ext-classes of a faithful multiplicity-free $(\G,\Omega)$-space with momentum map image $\Delta$ turns out to have the property that (in a sense to be made precise) its value at any given point $x\in \Delta$ determines its values in some invariant open neighbourhood of $x$. Roughly speaking, the ext-sheaf will be the sheaf of sections of (\ref{discrstalkbunextinv}) with this property. This sheaf and the fact that the ext-classes form a global section of it are both of conceptual value (see Theorem \ref{thm:extsheaftorsorfirstdirectimage}), as well as of use when applying the second and third structure theorem in concrete examples (see Example \ref{example:concretecompsecondstrthm} below)}. \\

{In order to define the ext-sheaf, note first that }the groupoid $\B:=\G/\sT$ acts along (\ref{discrstalkbunextinv}): a given $[g]:x\to y$ in $\B$ acts as the bijection
\begin{equation*} I^1(\G_x,\sT_x)\xrightarrow{\sim} I^1(\G_y,\sT_y)
\end{equation*} induced by the conjugation map $C_g:\G_x\xrightarrow{\sim}\G_y$ (cf. Lemma \ref{conjtoreltrivincohomlem}). 
\begin{defi}\label{setheoreticinvsecshfextinvdefi} We let $\mathcal{I}^1_\textrm{Set}=\mathcal{I}^1_{\textrm{Set},(\G,\Omega,\underline{\Delta})}$ denote the sheaf on $\underline{\Delta}$ consisting of $\B$-invariant set-theoretic local sections of (\ref{discrstalkbunextinv}). That is, $\mathcal{I}^1_\textrm{Set}(\underline{U})$ consists of set-theoretic sections $\sigma$ of (\ref{discrstalkbunextinv}) defined on $U\subset \Delta$, with the property that for every $[g]:x\to y$ in $\B\vert_U$:
\begin{equation*} \sigma(y)=[g]\cdot \sigma(x). 
\end{equation*} 
\end{defi} An argument similar to that for Proposition \ref{extinvwelldefprop} shows:
\begin{prop} Given a faithful multiplicity-free $(\G,\Omega)$-space with momentum map image $\Delta$, the section of (\ref{discrstalkbunextinv}) formed by its ext-classes is $\B$-invariant. 
\end{prop}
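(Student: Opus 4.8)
The plan is to mirror the proof of Proposition~\ref{extinvwelldefprop}, replacing translation by an element of $\sT_x$ with transport along an arrow of $\G$. Fix an arrow $[g]\colon x\to y$ in $\B\vert_U$ and choose a lift $g\in \G$ with $s(g)=x$ and $t(g)=y$. Pick any $p\in J^{-1}(x)$; since $J$ is $\G$-equivariant, $g\cdot p$ lies in $J^{-1}(y)$. As in the proof of Proposition~\ref{extinvwelldefprop}, the action of $g$ furnishes an equivalence of symplectic representations
\begin{equation*} (C_g,\psi_g)\colon (\S\No_p,\omega_p)\to (\S\No_{g\cdot p},\omega_{g\cdot p}),\quad \psi_g([v])=[\d(m_S)_{(g,p)}(0,v)], \end{equation*}
where $C_g\colon \G_p\xrightarrow{\sim}\G_{g\cdot p}$ is conjugation by $g$ (both representations being maximally toric by Proposition~\ref{conedescrpsympnormrepprop}$a$). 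Appealing to Lemma~\ref{smpeqtorrepindisoextinvprop} then gives
\begin{equation*} e(\S\No_{g\cdot p},\omega_{g\cdot p})=(C_g)_*\big(e(\S\No_p,\omega_p)\big)\in I^1(\G_{g\cdot p},\sT_{g\cdot p}). \end{equation*}

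Next I would establish the naturality of the extension map (\ref{extinvcohommap}) with respect to conjugation: that the square with horizontal maps $(C_g)_*$ (induced by conjugation by $g$) and vertical maps the extension maps $I^1(\G_p,\sT_p)\to I^1(\G_x,\sT_x)$ and $I^1(\G_{g\cdot p},\sT_{g\cdot p})\to I^1(\G_y,\sT_y)$ commutes. This is where the real content lies, though it is essentially formal: via the bijection of Proposition~\ref{splitcharprop} a class in $I^1(\G_p,\sT_p)$ is encoded by a complementary subgroup $\varGamma_c\subset \G_p$; its extension to $\G_x$ is represented by the very same subgroup $\varGamma_c\subset \G_p\subset \G_x$ (using that $\varGamma_{\G_p}\to\varGamma_{\G_x}$ is an isomorphism, Proposition~\ref{conedescrpsympnormrepprop}$b$), and conjugation by $g$ carries $\varGamma_c\subset \G_p$ to $g\varGamma_c g^{-1}\subset \G_{g\cdot p}$. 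Since $C_g\colon \G_x\to\G_y$ restricts compatibly with the inclusions, the two ways around the square both produce the subgroup $g\varGamma_c g^{-1}\subset\G_y$, hence the same class.

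Combining the two displays with this square, and recalling that by Definition~\ref{extclassdefi} the class $e(J)_x$ (resp.\ $e(J)_y$) is the image of $e(\S\No_p,\omega_p)$ (resp.\ of $e(\S\No_{g\cdot p},\omega_{g\cdot p})$) under the extension map, I would conclude
\begin{equation*} e(J)_y=(C_g)_*\big(e(J)_x\big)=[g]\cdot e(J)_x, \end{equation*}
the last equality being the definition of the $\B$-action on (\ref{discrstalkbunextinv}). Finally, I would note that this is independent of the chosen lift $g$ of $[g]$: two lifts differ by an element of $\sT_x$, and Lemma~\ref{conjtoreltrivincohomlem} shows that conjugation by such an element acts trivially on $I^1(\G_x,\sT_x)$ — which is precisely the well-definedness of the $\B$-action already recorded before Definition~\ref{setheoreticinvsecshfextinvdefi}. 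The main obstacle is the commutativity of the naturality square; once that is in place the statement follows by the same bookkeeping as in Proposition~\ref{extinvwelldefprop}.
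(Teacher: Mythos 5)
Your proof is correct and is exactly the argument the paper intends: the paper states this proposition with only the remark that ``an argument similar to that for Proposition \ref{extinvwelldefprop} shows'' it, and your write-up is precisely that adaptation — replacing translation by $t\in \sT_x$ with the action of a lift $g\in\G$ of $[g]$, invoking Lemma \ref{smpeqtorrepindisoextinvprop} for the induced equivalence of symplectic normal representations, and checking the conjugation-naturality square for the extension map (\ref{extinvcohommap}) via the subgroup description from Proposition \ref{splitcharprop}. Your closing remark on independence of the lift is just the well-definedness of the $\B$-action on (\ref{discrstalkbunextinv}), which the paper already records via Lemma \ref{conjtoreltrivincohomlem}, so nothing is missing.
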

The ext-sheaf will be the subsheaf of $\mathcal{I}^1_\textrm{Set}$ consisting of what we call flat sections. To define the notion of flatness, first consider:  

\begin{lemma}\label{shfisodiscinvseclem} A Morita equivalence between regular and proper symplectic groupoids $(\G_1,\Omega_1)\rightrightarrows M_1$ and $(\G_2,\Omega_2)\rightrightarrows M_2$ that relates a Delzant subspace $\underline{\Delta}_1\subset \underline{M}_1$ to a Delzant subspace $\underline{\Delta}_2\subset \underline{M}_2$ induces an isomorphism of sheaves:
\begin{equation}\label{indisoshvsextinvsetsec} \mathcal{I}^1_\textrm{Set,1}:=\mathcal{I}^1_{\textrm{Set},(\G_1,\Omega_1,\underline{\Delta}_1)}\xrightarrow{\sim} \mathcal{I}^1_{\textrm{Set},(\G_2,\Omega_2,\underline{\Delta}_2)}=:\mathcal{I}^1_\textrm{Set,2}
\end{equation}
covering the induced homeomorphism between $\underline{\Delta}_1$ and $\underline{\Delta}_2$. This is functorial with respect to composition of Morita equivalences.
\end{lemma}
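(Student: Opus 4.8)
The plan is to define the isomorphism (\ref{indisoshvsextinvsetsec}) fibrewise through the group isomorphisms $\phi_p$ of (\ref{moreqindisoisotgps}), and to reduce every claim to the transformation behaviour of these isomorphisms under the two principal actions on $P$. Let $((P,\omega_P),\alpha_1,\alpha_2)$ be the given Morita equivalence, with induced homeomorphism $\underline{P}:\underline{\Delta}_1\to\underline{\Delta}_2$. Given a $\B_1$-invariant set-theoretic section $\sigma_1$ of (\ref{discrstalkbunextinv}) over an invariant open $U_1\subset\Delta_1$, I would define its image $\sigma_2$, over the corresponding invariant open $U_2\subset\Delta_2$, by
\[ \sigma_2(x_2):=(\phi_p)_*\bigl(\sigma_1(x_1)\bigr), \qquad x_1:=\alpha_1(p), \]
for any $p\in P$ with $\alpha_2(p)=x_2$, where $(\phi_p)_*\colon I^1((\G_1)_{x_1},(\sT_1)_{x_1})\to I^1((\G_2)_{x_2},(\sT_2)_{x_2})$ is the bijection induced by the Lie group isomorphism $\phi_p$. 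This is well-defined on the level of $I^1$ because, as noted in Example \ref{sympmoreqiamoreq}, $\phi_p$ carries $(\sT_1)_{x_1}$ onto $(\sT_2)_{x_2}$ and hence is an isomorphism of the corresponding extensions. This fibrewise recipe is precisely the one already recorded pointwise in (\ref{eqn:extinvofassociatedspace:moreq}); the entire content of the lemma is to check that it is consistent and functorial.

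The heart of the argument consists of two transformation laws, both obtained by a direct computation from the defining relation $g\cdot p=p\cdot\phi_p(g)$. Writing $C_g(a)=gag^{-1}$ for conjugation: (i) if $p'=g\cdot p$ for $g\colon x_1\to x_1'$ in $\G_1$, then $\phi_{p'}=\phi_p\circ C_{g^{-1}}$; and (ii) if $p'=p\cdot h^{-1}$ for $h\colon x_2\to y_2$ in $\G_2$, then $\phi_{p'}=C_h\circ\phi_p$. For the independence of $\sigma_2(x_2)$ from the choice of $p$, any two choices over $x_2$ are related by $p'=g\cdot p$ with $g\colon x_1\to x_1'$; using $\B_1$-invariance $\sigma_1(x_1')=(C_g)_*\sigma_1(x_1)$ together with (i) and the identity $C_{g^{-1}}\circ C_g=\mathrm{id}_{(\G_1)_{x_1}}$ gives $(\phi_{p'})_*\sigma_1(x_1')=(\phi_p)_*\sigma_1(x_1)$, so $\sigma_2(x_2)$ is well-defined. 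For $\B_2$-invariance of $\sigma_2$, given $[h]\colon x_2\to y_2$ in $\B_2$ one evaluates $\sigma_2(y_2)$ using the representative $p'=p\cdot h^{-1}$ (which satisfies $\alpha_1(p')=x_1$) and applies (ii) to obtain $\sigma_2(y_2)=(C_h)_*\sigma_2(x_2)=[h]\cdot\sigma_2(x_2)$. This computation is the analogue of the conjugation argument in Proposition \ref{extinvwelldefprop}, and it is where I expect the only real care to be needed: keeping the left/right action conventions straight and tracking which conjugation acts on which isotropy group.

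Since the recipe commutes with restriction to smaller invariant opens, the assignment $\sigma_1\mapsto\sigma_2$ is a morphism of sheaves covering $\underline{P}$. Running the same construction for the Morita equivalence in the opposite direction (equivalently, replacing each $\phi_p$ by $\phi_p^{-1}$) produces a two-sided inverse, so (\ref{indisoshvsextinvsetsec}) is an isomorphism. Finally, functoriality with respect to composition of Morita equivalences reduces to the relation $\phi_{[p,q]}=\phi_q\circ\phi_p$ for the composite bibundle $P\times_{\G_2}Q$ and to the fact that the identity equivalence induces the identity, both immediate from the defining relations; the compatibility of the $\B_i$-actions with the conjugations (the fact that $C_t$ acts trivially on $I^1$ for $t$ in an isotropy torus) is already subsumed into Definition \ref{setheoreticinvsecshfextinvdefi} via Lemma \ref{conjtoreltrivincohomlem} and requires no further verification here.
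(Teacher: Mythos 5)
Your proposal is correct and follows essentially the same route as the paper's proof: both define the map fibrewise by $\sigma_2(x_2):=(\phi_p)_*(\sigma_1(x_1))$, deduce independence of the choice of $p$ from the relation $\phi_{g\cdot p}=\phi_p\circ C_{g^{-1}}$ together with $\B_1$-invariance, establish $\B_2$-invariance by the analogous conjugation identity, obtain the inverse from the reversed equivalence, and derive functoriality from that of the isomorphisms $\phi_p$. Your write-up merely makes explicit two steps the paper leaves to the reader (the $\B_2$-invariance computation and the composite-bibundle identity $\phi_{[p,q]}=\phi_q\circ\phi_p$), and both are carried out correctly.
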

\begin{proof} Let a Morita equivalence:
\begin{center}
\begin{tikzpicture} \node (G1) at (-0.5,0) {$(\G_1,\Omega_1)$};
\node (M1) at (-0.5,-1.3) {$M_1$};
\node (S) at (1.4,0) {$(P,\omega_P)$};
\node (M2) at (3.2,-1.3) {$M_2$};
\node (G2) at (3.2,0) {$(\G_2,\Omega_2)$};
 
\draw[->,transform canvas={xshift=-\shift}](G1) to node[midway,left] {}(M1);
\draw[->,transform canvas={xshift=\shift}](G1) to node[midway,right] {}(M1);
\draw[->,transform canvas={xshift=-\shift}](G2) to node[midway,left] {}(M2);
\draw[->,transform canvas={xshift=\shift}](G2) to node[midway,right] {}(M2);
\draw[->](S) to node[pos=0.25, below] {$\text{ }\text{ }\alpha_1$} (M1);
\draw[->] (0.65,-0.15) arc (315:30:0.25cm);
\draw[<-] (2.05,0.15) arc (145:-145:0.25cm);
\draw[->](S) to node[pos=0.25, below] {$\alpha_2$\text{ }} (M2);
\end{tikzpicture}
\end{center}  be given. Given an open $\underline{U}_1$ in $\underline{\Delta}_1$, let $\underline{U}_2$ be its image under the induced homeomorphism between $\underline{\Delta}_1$ and $\underline{\Delta}_2$. This means that the respective invariant opens $U_1$ and $U_2$ in $M_1$ and $M_2$ satisfy: 
\begin{equation*} \alpha_1^{-1}(U_1)=\alpha_2^{-1}(U_2).
\end{equation*} Given $\sigma_1\in \mathcal{I}^1_\textrm{Set,1}(\underline{U}_1)$ and $x_2\in U_2$, choose a $p\in P$ such that $\alpha_2(p)=x_2$ and define:
\begin{equation*} \sigma_2(x_2):=(\phi_p)_*(\sigma_1(x_1))\in I^1((\G_2)_{x_2},\sT_{x_2}),
\end{equation*} where $x_1=\alpha_1(p)$ and $\phi_p:(\G_1)_{x_1}\xrightarrow{\sim} (\G_2)_{x_2}$ is the {induced} isomorphism of Lie groups  {(\ref{moreqindisoisotgps})}. It follows from $\B_1$-invariance of $\sigma_1$ that this does not depend on the choice of $p$. Indeed, any other element of $\widetilde{p}\in\alpha_2^{-1}(x_2)$ is of the form $g\cdot p$ for some $g\in \G_1$, and it holds that $\phi_{g\cdot p}=\phi_p\circ C_{g^{-1}}$, from which it follows that:
\begin{equation*} (\phi_{\widetilde{p}})_*(\sigma_1(\widetilde{x}_1))=(\phi_p)_*(C_{g^{-1}})_*(\sigma_1(\widetilde{x}_1))=(\phi_p)_*(\sigma_1(x_1)),
\end{equation*} where $\widetilde{x}_1=\alpha_1(\widetilde{p})$. A similar argument shows that $\sigma_2$ is $\B_2$-invariant. Hence, this defines a section $\sigma_2\in \mathcal{I}^1_\textrm{Set,2}(\underline{U}_2)$. In this way we obtain a map:
\begin{equation}\label{moremapdiscinvsec} \mathcal{I}^1_\textrm{Set,1}(\underline{U}_1)\to \mathcal{I}^1_\textrm{Set,2}(\underline{U}_2).
\end{equation} This is functorial with respect to composition of Morita equivalences, since the isomorphisms $\phi_p$ are. A completely analogous construction gives an inverse to (\ref{moremapdiscinvsec}). Moreover, (\ref{moremapdiscinvsec}) is compatible with restrictions to smaller opens. So, we have constructed the desired isomorphism of sheaves. 
\end{proof}

Due to the linearization theorem for proper symplectic groupoids \cite{Zu,CrMar1,CrFeTo1,CrFeTo2}, for every {orbit} $\O$ of $\G$ in $M$, there is an invariant open neighbourhood ${W}$ of $\O$ together with an infinitesimally abelian compact Lie group $G$, a $G$-invariant open neighbourhood ${W_{\g^*}}$ of the origin in $\g^*$ and a symplectic Morita equivalence:
\begin{center}
\begin{tikzpicture} \node (G1) at (-1.3,0) {$(\G,\Omega)\vert_W$};
\node (M1) at (-1.3,-1.3) {$W$};
\node (S) at (1.4,0) {$(P,\omega_P)$};
\node (M2) at (4.2,-1.3) {$W_{\g^*}$};
\node (G2) at (4.2,0) {$(G\ltimes \g^*,-\d \lambda_{\textrm{can}})\vert_{W_{\g^*}}$};
 
\draw[->,transform canvas={xshift=-\shift}](G1) to node[midway,left] {}(M1);
\draw[->,transform canvas={xshift=\shift}](G1) to node[midway,right] {}(M1);
\draw[->,transform canvas={xshift=-\shift}](G2) to node[midway,left] {}(M2);
\draw[->,transform canvas={xshift=\shift}](G2) to node[midway,right] {}(M2);
\draw[->](S) to node[pos=0.25, below] {$\text{ }\text{ }\alpha_1$} (M1);
\draw[->] (0.65,-0.15) arc (315:30:0.25cm);
\draw[<-] (2.05,0.15) arc (145:-145:0.25cm);
\draw[->](S) to node[pos=0.25, below] {$\alpha_2$\text{ }} (M2);
\end{tikzpicture}
\end{center} that relates $\O$ to the origin in $\g^*$. Let $\Delta_{\g^*}$ denote the invariant subspace of $\g^*$ related to $W\cap\Delta$ by this Morita equivalence. Then, via the induced isomorphism of sheaves of Lemma \ref{shfisodiscinvseclem}, to each local section of $\mathcal{I}^1_{\textrm{Set},(\G,\Omega,\underline{\Delta})}\vert_{\underline{W}\cap\underline{\Delta}}$ corresponds an invariant local section of the set-theoretic bundle:
\begin{equation}\label{set-theoric-bundle-ext-inv-cent} \bigsqcup_{\alpha\in \Delta_{\g^*}} I^1(G_\alpha,T)\to \Delta_{\g^*}. 
\end{equation} This is because the isotropy group of $G\ltimes \g^*$ at $\alpha\in \g^*$ is the isotropy group $G_\alpha$ of the coadjoint action and (since $G$ is infinitesimally abelian) the identity component of $G_\alpha$ is the identity component $T$ of $G$. Further notice that for each $\alpha\in \g^*$ there is a restriction map $I^1(G,T)\to I^1(G_\alpha,T)$. 
\begin{defi}\label{centeredsecdefi} We will call a local section $\sigma$ of (\ref{set-theoric-bundle-ext-inv-cent}) \textbf{centered} if the origin belongs to its domain and for each $\alpha$ in its domain it holds that:
\begin{equation*} \sigma(\alpha)=\sigma(0)\vert_{G_\alpha}\in I^1(G_\alpha,T).
\end{equation*}
\end{defi}
\begin{defi}\label{extinvsheafdef} Let $(\G,\Omega)\rightrightarrows M$ be a regular and proper symplectic groupoid together with a Delzant subspace $\underline{\Delta}\subset \underline{M}$. Given an open $\underline{V}$ in $\underline{\Delta}$ and an {orbit} $\O\in \underline{V}$, we call a section: 
\begin{equation}\label{sectionextinvsheafdefeq} \sigma\in \mathcal{I}^1_\textrm{Set}(\underline{V})
\end{equation} \textbf{flat at $\O$} if there is an open neighbourhood {$\underline{W}$} of $\O$ in $\underline{M}$ and a symplectic Morita equivalence as above, such that the invariant local section of (\ref{set-theoric-bundle-ext-inv-cent}) corresponding to $\sigma\vert_{\underline{W}\cap \underline{V}}$ (via the induced isomorphism of sheaves of Lemma \ref{shfisodiscinvseclem}) is centered. We call a section (\ref{sectionextinvsheafdefeq}) flat if it is so at all $\O\in \underline{V}$. The flat sections form a subsheaf of $\mathcal{I}^1_\textrm{Set}$ on $\underline{\Delta}$, that we denote as: 
\begin{equation}\label{locconstinvsecextsheaf} \mathcal{I}^1=\mathcal{I}^1_{(\G,\Omega,\underline{\Delta})}
\end{equation} and call the \textbf{ext-sheaf} of $(\G,\Omega)$ on $\underline{\Delta}$.
\end{defi}
\begin{rem}\label{rem:stalkofextsheaf}
Given $x\in \Delta$, evaluation at $x$ defines a bijection between the stalk of (\ref{locconstinvsecextsheaf}) at the {orbit} $\O_x$ and the set $I^1(\G_x,\sT_x)$. 
\end{rem}
Having defined the ext-sheaf, we now turn to the ext-invariant. The theorem below shows not only that the collection of ext-classes of a faithful multiplicity-free $(\G,\Omega)$-space is flat, but that  flatness is also a sufficient condition for there to be local solutions to the problem of existence of a faithful multiplicity-free $(\G,\Omega)$-space with momentum map image $\Delta$ and with collection of ext-classes a prescribed $\B$-invariant section of (\ref{discrstalkbunextinv}). More precisely:
\begin{thm}\label{exlocsolprop} Let $(\G,\Omega)\rightrightarrows M$ be a regular and proper symplectic groupoid with associated orbifold groupoid $\B=\G/\sT$. Further, let $\underline{\Delta}\subset \underline{M}$ be a Delzant subspace and let $\sigma$ be a $\B$-invariant section of the set-theoretic bundle (\ref{discrstalkbunextinv}). Then {$\sigma$ is flat at $\O\in\underline{\Delta}$} if and only if there is an invariant open $U$ in $M$ around {$\O$} and a faithful multiplicity-free $(\G,\Omega)$-space $J:(S,\omega)\to M$ {with $J(S)=U\cap\Delta$ and with $\sigma\vert_{\underline{U}\cap \underline{\Delta}}$ as the collection of its ext-classes}. In particular, the collection of ext-classes of a faithful multiplicity-free $(\G,\Omega)$-space is flat.  
\end{thm}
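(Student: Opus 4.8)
The plan is to reduce the global statement to the linearized model around each orbit, where flatness becomes exactly the condition that a $G$-invariant section of (\ref{set-theoric-bundle-ext-inv-cent}) is centered, and where existence of a faithful multiplicity-free space reduces to a construction in the infinitesimally abelian compact Lie group setting. The two directions of the equivalence will be handled separately, but both pivot on the linearization theorem for proper symplectic groupoids together with the Morita-invariance results already established (Lemma \ref{shfisodiscinvseclem}, Proposition \ref{prop:sympmoreqinv:faithfulcomplzerosp} and Lemma \ref{indequivofcatsmoreqsympgroupoidscorners}).

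\textbf{Reduction to the linear model.} Fix an orbit $\O \in \underline{\Delta}$. By the linearization theorem, choose an invariant open $W$ around $\O$, an infinitesimally abelian compact Lie group $G$, an invariant open $W_{\g^*}$ around the origin in $\g^*$, and a symplectic Morita equivalence relating $(\G,\Omega)\vert_W$ to $(G\ltimes\g^*, -\d\lambda_{\textrm{can}})\vert_{W_{\g^*}}$ that carries $\O$ to the origin. Under this equivalence $W\cap\Delta$ is related to an invariant subspace $\Delta_{\g^*}$ of $\g^*$, which is Delzant by Corollary \ref{delzsubspmorinvcor}. Via Lemma \ref{shfisodiscinvseclem}, the section $\sigma$ corresponds to an invariant section $\sigma_{\g^*}$ of (\ref{set-theoric-bundle-ext-inv-cent}), and by definition $\sigma$ is flat at $\O$ precisely when $\sigma_{\g^*}$ is centered. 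Simultaneously, Proposition \ref{prop:sympmoreqinv:faithfulcomplzerosp} and (\ref{eqn:extinvofassociatedspace:moreq}) show that faithful multiplicity-free $(\G,\Omega)$-spaces over $W\cap\Delta$ with prescribed ext-classes $\sigma$ correspond bijectively to faithful multiplicity-free $(G\ltimes\g^*,-\d\lambda_{\textrm{can}})$-spaces over $\Delta_{\g^*}$ with ext-classes $\sigma_{\g^*}$. So it suffices to prove the theorem for the cotangent groupoid $G\ltimes\g^*$ near the origin.

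\textbf{The linear case.} In the cotangent-groupoid model a faithful multiplicity-free space is a maximally toric Hamiltonian $G$-space in the sense of Example \ref{almabcompLiegpexintro0}, and its germ of ext-classes along the origin is governed by Theorem \ref{isoclasstorrepthm}: the symplectic normal representation at a point over the origin is a maximally toric $G$-representation whose ext-class is an element of $I^1(G,T)$, and the value of the ext-class at a nearby $\alpha\in\Delta_{\g^*}$ is, by the construction of (\ref{extinvcohommap}) and part $b$ of Proposition \ref{conedescrpsympnormrepprop}, the restriction of this class to $I^1(G_\alpha,T)$. This is exactly the centeredness condition. Hence, \emph{if} $\sigma_{\g^*}$ arises as the ext-classes of such a space, it must be centered, giving the forward implication and in particular the final assertion that the ext-classes of any faithful multiplicity-free $(\G,\Omega)$-space are flat. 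Conversely, given a centered $\sigma_{\g^*}$, let $\Delta$ denote the momentum image and pick the value $\sigma_{\g^*}(0)\in I^1(G,T)$; combining the normal form (Theorem \ref{loceqthmhamact}), Theorem \ref{isoclasstorrepthm} and the construction of Theorem \ref{constrtorictorbunspthm}, one builds a maximally toric $G$-space whose symplectic normal representation at the origin has ext-class $\sigma_{\g^*}(0)$ and whose momentum image is $\Delta_{\g^*}$; centeredness then guarantees, via Proposition \ref{locclasstoricthm}, that its ext-class at each $\alpha$ agrees with $\sigma_{\g^*}(\alpha)$, yielding the desired local space.

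\textbf{Main obstacle.} The genuine difficulty is the converse (existence) direction in the linear case: one must produce a single maximally toric $G$-space whose ext-class \emph{varies correctly} over all of $\Delta_{\g^*}$, not merely at the origin. The point is that centeredness is precisely the compatibility condition that makes the germ of the construction of Theorem \ref{constrtorictorbunspthm}, together with the chosen splitting datum $\sigma_{\g^*}(0)$, glue into a genuine $G$-space rather than only a $T$-space; checking that the resulting ext-class at each stratum $G_\alpha$ equals the prescribed restriction $\sigma_{\g^*}(0)\vert_{G_\alpha}$ requires carefully tracking how the splitting behaves under passing to the isotropy subgroups $G_\alpha\subset G$, and invoking the uniqueness in Proposition \ref{locclasstoricthm}(ii) to identify the constructed ext-class with $\sigma_{\g^*}$. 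All remaining verifications (well-definedness of the reduction under different Morita charts, independence of the choice of linearization) follow from the functoriality statements already recorded in Lemma \ref{shfisodiscinvseclem} and Proposition \ref{prop:sympmoreqinv:faithfulcomplzerosp}.
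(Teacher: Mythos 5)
Your overall architecture (linearize around the orbit via the Morita equivalence, then settle the statement for the cotangent groupoid $G\ltimes\g^*$) matches the paper's strategy, but both halves of your ``linear case'' contain genuine gaps. In the forward direction you assert that the value of the ext-class at a nearby $\alpha\in\Delta_{\g^*}$ is ``by the construction of (\ref{extinvcohommap}) and part $b$ of Proposition \ref{conedescrpsympnormrepprop}'' the restriction to $I^1(G_\alpha,T)$ of the ext-class at the origin, and that this ``is exactly the centeredness condition.'' That is not definitional: the ext-class at $\alpha$ is built from the symplectic normal representation at a point of the fiber $J^{-1}(\alpha)$, a \emph{different} fiber from $J^{-1}(0)$, and (\ref{extinvcohommap}) together with Proposition \ref{conedescrpsympnormrepprop}$b$ only concern a single fiber. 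Relating the two is the substantive content of the paper's Proposition \ref{torlocmodextinvconstprop}$c$: one passes to the Marle--Guillemin--Sternberg-type model $J_\textrm{mod}$ (Proposition \ref{torlocmoddataprop}), identifies the symplectic normal representation at a point over $\alpha$ inside the model, and carries out a weight-by-weight cocycle computation showing ${c}_{\psi_p}={c}_\psi\vert_{H_p}$; the actual space is then compared to $J_\textrm{mod}$ via the local uniqueness of Proposition \ref{locclasstoricthm}. Without this computation the forward implication (and hence the final assertion of the theorem) is unproved. A related imprecision: flatness only asserts centeredness for \emph{some} linearizing Morita equivalence, so your sentence ``by definition $\sigma$ is flat at $\O$ precisely when $\sigma_{\g^*}$ is centered'' for the equivalence you fixed is only valid a posteriori, via Remark \ref{extinvflatindchartrem}, which the paper proves together with the theorem.

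In the converse direction your construction is underspecified at exactly the point where the work lies. You propose to build a maximally toric $G$-space with momentum image $\Delta_{\g^*}$ directly from Theorem \ref{isoclasstorrepthm}, but that theorem produces representations whose momentum cones are \emph{pointed}, whereas the cone $C_{\g^*}$ of $\Delta_{\g^*}$ at the origin generally contains a nontrivial linear subspace $\h^0$; also Theorem \ref{constrtorictorbunspthm}, which you invoke, is the torus-bundle construction and does not produce $G$-actions. The paper instead constructs a pair: a closed subgroup $H\subset G$ with $\varGamma_H\xrightarrow{\sim}\varGamma_G$, defined using a cocycle $c$ representing $\sigma_{\g^*}(0)$ (so that $H$ corresponds to $\varGamma_G\times T_H$ under the splitting $G\cong\varGamma_G\ltimes T_G$ determined by $c$, with $T_H=\exp_G(\h)$, $\h$ the annihilator of $\h^0$), and a maximally toric $H$-representation $(V,\omega_V)$ with momentum cone $\pi_{\h^*}(C_{\g^*})$ and ext-class $[c\vert_H]$; the desired space is then $J_\textrm{mod}$ associated to $(G,H,V)$. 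Finally, your appeal to ``the uniqueness in Proposition \ref{locclasstoricthm}(ii)'' to conclude that the constructed space has ext-class $\sigma_{\g^*}(\alpha)$ at every $\alpha$ cannot work: that proposition compares two given spaces, while what is needed is again the centeredness of the ext-classes of $J_\textrm{mod}$ itself, i.e.\ Proposition \ref{torlocmodextinvconstprop}$c$ — the same computation missing from your forward direction. So the proposal correctly identifies the skeleton of the argument but leaves out, rather than proves, its two essential ingredients.
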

\begin{defi}\label{extinvdefi} Let $(\G,\Omega)\rightrightarrows M$ be a regular and proper symplectic groupoid. The \textbf{ext-invariant} $e(J)$ of a faithful multiplicity-free $(\G,\Omega)$-space $J:(S,\omega)\to M$ with momentum map image $\Delta$ is the global section of (\ref{locconstinvsecextsheaf}) given by the collection of its ext-classes. 
\end{defi}
\begin{rem}\label{extinvflatindchartrem} From Theorem \ref{exlocsolprop} one can derive that if a section (\ref{sectionextinvsheafdefeq}) is flat at $\O$, then for \textit{every} symplectic Morita equivalence as above the invariant local section of (\ref{set-theoric-bundle-ext-inv-cent}) corresponding to $\sigma\vert_{{\underline{W}}\cap\underline{V}}$ (via the induced isomorphism of sheaves of Lemma \ref{shfisodiscinvseclem}) is centered on some neighbourhood of the origin. 
\end{rem}
A proof of Theorem \ref{exlocsolprop} and Remark \ref{extinvflatindchartrem} will be given in the next subsection. Before that, we illustrate their use in a concrete example. 
\begin{ex}\label{example:concretecompsecondstrthm}
Consider the standard integral affine cylinder $(M,\Lambda):=(\R\times \mathbb{S}^1,\Z\d x\oplus \Z\d\theta)$ with Delzant subspace $\Delta:=[-1,1]\times \mathbb{S}^1$. Let $\Gamma:=\Z_2\times \Z_2$ and consider the $\Gamma$-action given by $\varepsilon\cdot (x,\mu)=(\varepsilon_1x,\mu^{\varepsilon_2})$, for $\varepsilon=(\varepsilon_1,\varepsilon_2)\in \Z_2\times \Z_2$ and $(x,\mu)\in \R\times \mathbb{S}^1$ (where we view $\Z_2$ as $\{\pm 1\}$). Let $(\G,\Omega)$ be the semi-direct product symplectic groupoid $(\B\Join \sT,\textrm{pr}_{\sT}^*\Omega_\sT)$ of the etale integral affine orbifold groupoid $\B:=\Gamma\ltimes M$ over $(M,\Lambda)$ with its canonical action on $\sT:=\sT_\Lambda$. There are eight isomorphism classes of faithful multiplicity-free $(\G,\Omega)$-spaces with momentum map image $\Delta$. These are represented by  $(\G,\Omega)$-spaces with the same underlying symplectic manifold $(S,\omega):=(\mathbb{S}^2\times\T^2,\omega_{\mathbb{S}^2}\oplus\omega_{\T^2})$ (where $\omega_{\mathbb{S}^2}$ and $\omega_{\T^2}$ denote the standard area forms) and momentum map $J:=h\times \textrm{pr}_{\mathbb{S}^1,2}$ (the product of the height function $h$ on $\mathbb{S}^2$ with the second coordinate projection from $\mathbb{T}^2$ to $\mathbb{S}^1$), but with different actions indexed by the eight group $1$-cocycles $\tau:\Gamma\to \L(\Delta)$ determined by the values listed in the table below. Here we canonically identify $\sT$ with $\T^2\times \R\times \mathbb{S}^1$.
\begin{center}
\begin{tabular}{ |c|c|c|c|c|c|c|c|c| } 
 \hline
 $\textrm{pr}_{\mathbb{T}^2}\left(\tau(1,-1)(x,\mu)\right)=$ & (1,1) & (-1,1) & (1,1) & (-1,1) & ($\mu$,$e^{2\pi i x}$) &  (-$\mu$,$e^{2\pi i x}$) &  ($\mu$,$e^{2\pi i x}$) &  (-$\mu$,$e^{2\pi i x}$) \\ 
 \hline
  $\textrm{pr}_{\mathbb{T}^2}\left(\tau(-1,1)(x,\mu)\right)=$ & (1,1) & (1,1) & (1,-1) & (1,-1) & ($\mu$,$e^{2\pi i x}$) &  ($\mu$,$e^{2\pi i x}$) &  ($\mu$,-$e^{2\pi i x}$) &  ($\mu$,-$e^{2\pi i x}$) \\  
 \hline
\end{tabular}
\end{center}
The corresponding actions are given by:
\begin{equation*}
\left(\varepsilon,\lambda_1,\lambda_2,z,\mu_2\right)\cdot_\tau\left(x+iy,z,\mu_1,\mu_2\right)=\left((\lambda_1\,\tau_1(\varepsilon)(z,\mu_2))^{\varepsilon_1}\,(x+i\varepsilon_1y),\varepsilon_1z,(\lambda_2\,\tau_2(\varepsilon)(z,\mu_2)\,\mu_1)^{\varepsilon_2},\mu_2^{\varepsilon_2}\right),
\end{equation*} for $\varepsilon=(\varepsilon_1,\varepsilon_2)\in \Z_2\times \Z_2=\Gamma$, $\lambda_1,\lambda_2,\mu_1,\mu_2\in \mathbb{S}^1\subset \C$, $(x+iy,z)\in \mathbb{S}^2\subset \C\times \R$ and $\theta_1,\theta_2\in \R$, where we canonically identify the space of arrows of $\B\Join \sT$ with $\Gamma\times \T^2\times \R\times \mathbb{S}^1$.\\

This claim can be verified using Theorem \ref{torsortorspthm}, Theorem \ref{exlocsolprop} and Remark \ref{extinvflatindchartrem}, as follows. Consider the section of $\mathcal{I}^1$, with germ at the orbit $\O_x\in \underline{\Delta}$ through $(x,\mu)\in \Delta$ represented by the $1$-cocycle: 
\begin{equation*} \textrm{pr}_{\sT_{(x,\mu)}}:\G_{(x,\mu)}=\Gamma_{(x,\mu)}\ltimes \sT_{(x,\mu)}\to \sT_{(x,\mu)}.
\end{equation*} Using this and Remark \ref{degonegpcohomtorsorrem} to identify the stalk of $\mathcal{I}^1$ at an orbit through $(x,\mu)$ with the degree one group cohomology of the $\Gamma_{(x,\mu)}$-module $\sT_{(x,\mu)}$, one can compute the stalks of $\mathcal{I}^1$ to be as in the following figure. \\

\begin{figure}[H]
  \centering
  \includegraphics[scale=0.2]{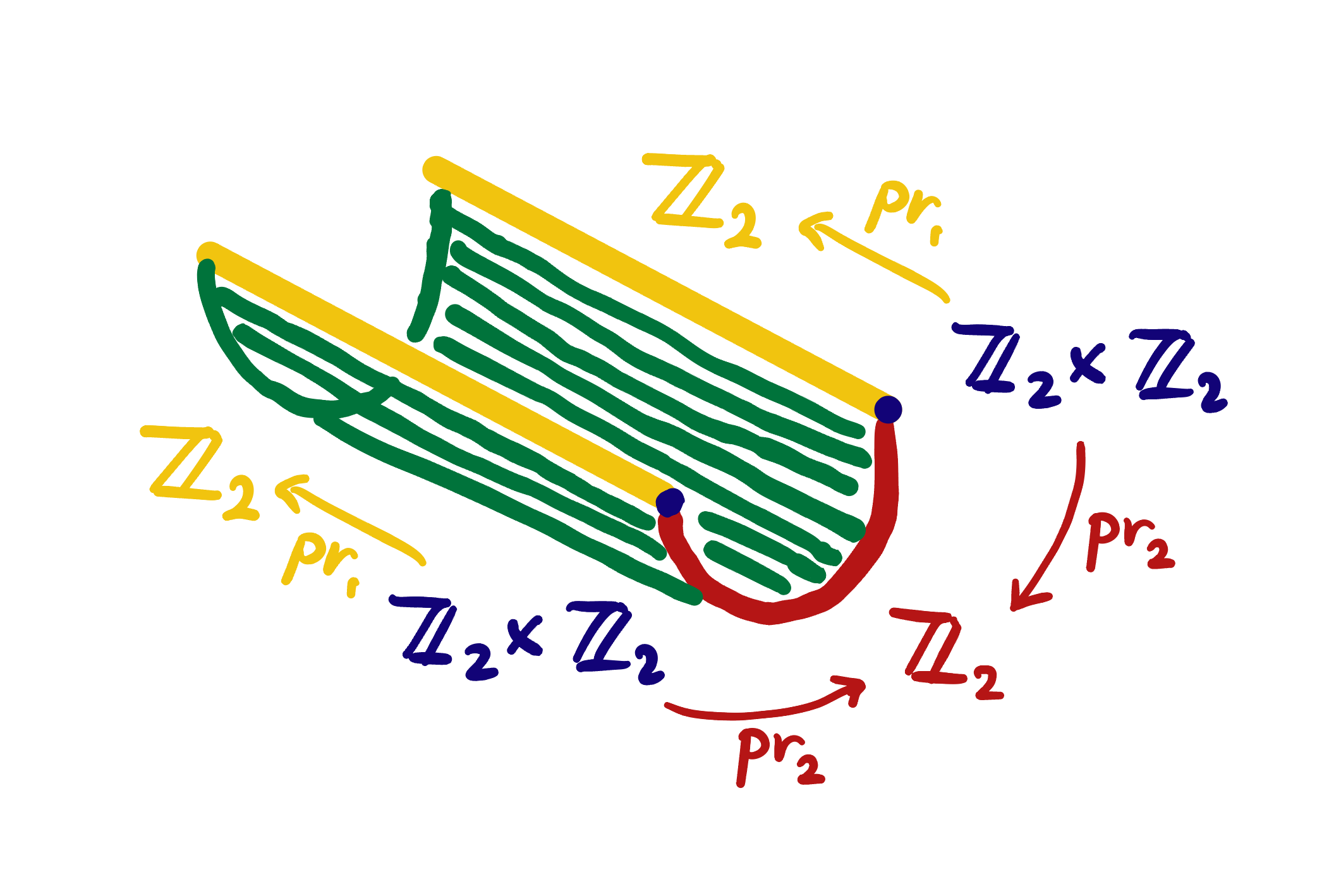}
    \caption{\footnotesize{A fundamental domain for the $\Gamma$-action on $\Delta$, partitioned by collecting points with the same isotropy group. For connected components of members with non-trivial isotropy groups, the corresponding stalks of $\mathcal{I}^1$ are depicted.}}
    \label{figure2}
  \end{figure}

Using Remark \ref{extinvflatindchartrem} one readily sees that any global section of $\mathcal{I}^1$ is determined by its germs at the two $\Gamma$-fixed points $(0,1)$ and $(0,-1)$ via the maps in this figure, and moreover, the second $\Z_2$-components of these two germs must coincide. Therefore, there are at most eight global sections of $\mathcal{I}^1$. Each of these possibilities is realized by the ext-invariants of the $(\G,\Omega)$-spaces listed above. So, to verify our claim, it remains to show that $H^1(\underline{\Delta},\underline{\L})=0$. This will be done in Example \ref{example:concretecompsecondstrthm2}, in the next section. 
\end{ex}

\subsubsection{Proof of Theorem \ref{exlocsolprop} and Remark \ref{extinvflatindchartrem}}\label{sec:pfextshfthmandrem}
In the proof we will use the following construction, which is a particular case of a well-known construction used in the Marle-Guillemin-Sternberg normal form for Hamiltonian Lie group actions \cite{GS4,Ma1}.  Consider the data of:
\begin{itemize} 
\item an infinitesimally abelian compact Lie group $G$,
\item a closed subgroup $H$ of $G$ for which the inclusion induces an isomorphism: 
\begin{equation*} {\varGamma}_H\xrightarrow{\sim}{\varGamma_G}
\end{equation*} between the groups of connected components of $H$ and $G$,
\item a maximally toric $H$-representation $(V,\omega_V)$.
\end{itemize} Associated to this is a Hamiltonian $G$-space: 
\begin{equation}\label{eqn:hamGsplocmod}
 J_\textrm{mod}:(S_\textrm{mod},\omega_\textrm{mod})\to \g^*,
\end{equation} where:
\begin{itemize}\item $(S_\textrm{mod},\omega_\textrm{mod})$ is the symplectic manifold obtained from symplectic reduction (at value $0$) of the (right) Hamiltonian $H$-space:
\begin{equation*} (G\times \g^*,-\d\lambda_\textrm{can})\times (V,\omega_V)\to \h^*, \quad (g,\alpha,v)\mapsto \alpha\vert_\h-J_V(v),
\end{equation*} with $J_V$ as in (\ref{quadsympmommap}) and with the $H$-action given by $(g,\alpha,v)\cdot h=(gh,h^{-1}\cdot\alpha,h^{-1}\cdot v)$,
\item the $G$-action on $S_\textrm{mod}$ is that inherited from the $G$-action on $G\times \g^*\times V$ by left translation on the $G$-component,
\item the momentum map $J_\textrm{mod}$ is the map induced by the projection map $\textrm{pr}_{\g^*}:G\times \g^*\times V\to \g^*$. 
\end{itemize} It follows from a straighforward verification that:
\begin{prop}\label{torlocmoddataprop}
Given the data above, consider the regular and proper symplectic groupoid $(\G,\Omega):=(G\ltimes\g^*,-\d\lambda_\textrm{can})$ (the cotangent groupoid, as in Example \ref{almabcompLiegpexintro0}). The Hamiltonian $(\G,\Omega)$-space corresponding to the Hamiltonian $G$-space (\ref{eqn:hamGsplocmod}) is a faithful multiplicity-free $(\G,\Omega)$-space. 
\end{prop}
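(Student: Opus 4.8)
The goal is to verify Proposition \ref{torlocmoddataprop}: that the Hamiltonian $(\G,\Omega)$-space associated to the local model (\ref{eqn:hamGsplocmod}) is faithful multiplicity-free, for $(\G,\Omega)=(G\ltimes\g^*,-\d\lambda_\textrm{can})$. Since Hamiltonian $G$-spaces correspond to Hamiltonian $(\G,\Omega)$-spaces for this cotangent groupoid (as in Example \ref{almabcompLiegpexintro0}), and since the associated torus bundle $\sT$ has fiber $\sT_\alpha$ the identity component $T$ of $G_\alpha$, it suffices to verify the three conditions of Definition \ref{toractdefi} directly for the $G$-space (\ref{eqn:hamGsplocmod}). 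Concretely, I would first describe $S_\textrm{mod}$ and the induced $\sT$-action explicitly enough to read off the relevant properties. Writing points of the reduced space as classes $[g,\alpha,v]$, the momentum map is $J_\textrm{mod}([g,\alpha,v])=g\cdot\alpha$ and the $\sT$-action is the restriction to the identity component of the action of the isotropy groups of $G\ltimes\g^*$; because $G$ is infinitesimally abelian, the identity component of every $G_\alpha$ is $T$, so $\sT$ is the trivial bundle $T\times\g^*$ acting in the obvious way along $J_\textrm{mod}$.

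\textbf{The three conditions.} For condition i) (the $\sT$-action free on a dense subset), the key point is that the $H$-representation $(V,\omega_V)$ is maximally toric, hence by Proposition \ref{classtorrep} its weights form a basis of the character lattice, so the induced $T$-action on $V$ is free on a dense open subset; combined with the free $G$-action on the $G$-factor of $G\times\g^*\times V$ this propagates to freeness of $\sT$ on a dense subset of $S_\textrm{mod}$. For condition ii) (the $\sT$-orbits coincide with the $J_\textrm{mod}$-fibers), the dimension count is the crucial ingredient: since $(V,\omega_V)$ is maximally toric, $\dim V=2\dim T$, and a direct computation of the dimension of the reduced space $S_\textrm{mod}$ shows $\dim S_\textrm{mod}=2\dim\g^*=2\dim M$, matching the requirement $\dim S=2\dim M$ from condition ii) of Proposition \ref{toracttorrepprop-cd}; I would then check that the $\sT$-orbit and the $J_\textrm{mod}$-fiber through a generic point have the same (full complementary) dimension and that the fibers are connected. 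For condition iii) (the transverse momentum map $\underline{J}_\textrm{mod}$ a topological embedding), I would compute the orbit space $\underline{S}_\textrm{mod}=S_\textrm{mod}/G$ and show that $\underline{J}_\textrm{mod}$ is injective into $\underline{\g^*}=\g^*/G$ with the image a Delzant subspace near the origin, exactly as in the known picture for the Marle--Guillemin--Sternberg model of a Hamiltonian $T$-space.

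\textbf{Reduction to the torus case.} The cleanest route, which avoids redoing the symplectic reduction computation from scratch, is to exploit the fact that the underlying symplectic $T$-manifold of $(S_\textrm{mod},\omega_\textrm{mod})$, equipped with the momentum map for the induced $T$-action, is precisely the standard Marle--Guillemin--Sternberg normal form for a Hamiltonian $T$-space with symplectic slice representation $(V,\omega_V)$ viewed as a $T$-representation. For this $T$-space, conditions i), ii), iii) are exactly the hypotheses (and conclusions) of Proposition \ref{prop:properties-non-compact-symp-tor-man}: the slice representation being maximally toric forces $T_p$ connected and the momentum map to descend to a topological embedding $S_\textrm{mod}/T\to\t^*$ locally. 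I would then promote the statement from $T$ to $G$ using that $\varGamma_H\to\varGamma_G$ is an isomorphism, so that the $G$-orbits and $T$-orbits coincide as subsets (they differ only by the finite group $\varGamma$, which acts within each $J_\textrm{mod}$-fiber), and the quotient $\underline{S}_\textrm{mod}$ maps homeomorphically into $\g^*_\textrm{reg}/G\cong\t^*_\textrm{loc}/\varGamma$.

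\textbf{The main obstacle.} The genuinely delicate step is condition iii), the topological embedding property of $\underline{J}_\textrm{mod}$: injectivity follows formally from ii), but showing that the induced map on orbit spaces is a homeomorphism onto its image (equivalently, a closed, or proper, map) requires care, since $S_\textrm{mod}$ is an open local model rather than a compact space. I expect to handle this exactly as in the proof of Lemma \ref{lemma:indtorbunacttoric} and Proposition \ref{torproptoppartprop}: use properness of the quotient map coming from the compactness of $T$ (and the compactness of the isotropy groups of the proper groupoid $\G$) to deduce that $J_\textrm{mod}$ is proper onto its image on a suitable invariant neighbourhood of the central orbit, whence closed, and therefore $\underline{J}_\textrm{mod}$ is a topological embedding. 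All remaining verifications are the ``straightforward'' computations the proposition alludes to, and I would present them tersely.
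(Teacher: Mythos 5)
The paper itself offers no argument for this proposition: it is introduced with ``It follows from a straightforward verification that\dots'', so there is no official proof to compare against, and your plan of checking Definition \ref{toractdefi} (equivalently, the four conditions of Proposition \ref{toracttorrepprop-cd}) directly on the model, reducing to the induced $T$-action, is certainly the intended kind of verification. However, three of your steps are wrong as written. The most serious is the claim that ``the $G$-orbits and $T$-orbits coincide as subsets'', with $\varGamma$ ``acting within each $J_\textrm{mod}$-fiber''. Writing $J_\textrm{mod}[g,\alpha,v]=g\cdot\alpha$, equivariance shows that $G$ \emph{permutes} the fibers through the coadjoint (i.e.\ $\varGamma_G$-) action on $\g^*$; the $G$-orbit of a point $p$ is $J_\textrm{mod}^{-1}\bigl(G\cdot J_\textrm{mod}(p)\bigr)$, which is a union of several distinct $T$-orbits whenever $J_\textrm{mod}(p)$ is not a $\varGamma_G$-fixed point. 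What you actually need is that each single fiber of $J_\textrm{mod}$ is one $T$-orbit, and this is exactly where the hypothesis that $\varGamma_H\to\varGamma_G$ is an isomorphism enters: surjectivity gives $G=TH$, so every point can be written $[t,\beta,w]$ with $t\in T$; two such points in the fiber over $\beta$ satisfy $J_V(w_1)=J_V(w_2)=\beta\vert_\h$, hence $w_2\in T_H\cdot w_1$ because the $J_V$-fibers are single $T_H$-orbits ($V$ being maximally toric), and then $[t_2,\beta,w_2]=(t_2st_1^{-1})\cdot[t_1,\beta,w_1]$ for suitable $s\in T_H$. This gives condition ii) of Definition \ref{toractdefi} and connectedness of fibers at once; injectivity of $\underline{J}$ follows, and condition iii) is then obtained by passing the $\varGamma_G$-equivariant map $S_\textrm{mod}/T\to\g^*$ to $\varGamma_G$-quotients \textemdash{} not by identifying $G$-orbits with $T$-orbits.

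Second, you cannot invoke Proposition \ref{prop:properties-non-compact-symp-tor-man} to obtain the embedding property: that $J$ descends to a topological embedding is a \emph{hypothesis} of that proposition, so citing it here is circular. Two non-circular routes: (a) your properness argument, done correctly \textemdash{} but note that compactness of $T$ (or of the isotropy groups of $\G$) does not suffice, since $S_\textrm{mod}$ is noncompact in the $V$-directions; what makes $J_\textrm{mod}$ proper (globally, not merely near the central orbit) is compactness of $G$ together with properness of $J_V$, which holds precisely because the weights of $V$ form a basis of $\Lambda_{T_H}^*$; or (b) observe that, as a Hamiltonian $T$-space, $S_\textrm{mod}$ is isomorphic (again via $G=TH$) to the space $S_\Delta$ of Theorem \ref{constrtorictorbunspthm} for the Delzant cone $\Delta=\pi_{\h^*}^{-1}(\Delta_V)\subset\g^*$, whose faithful toricity the paper establishes in Propositions \ref{torproptoppartprop}--\ref{nattoractsmhamprop} (compare the local models $S_{(x_0,U,\chi)}$ of Proposition \ref{chartdefnattorspprop}). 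Finally, a small but real slip: maximal toricity gives $\dim V=2\dim T_H$, the identity component of $H$, not $2\dim T$; with that correction the count $\dim S_\textrm{mod}=\dim G+\dim\g^*+\dim V-2\dim H=2\dim\g^*$ is right, and one should also record that $\rk(\pi)=0$ (the Lie--Poisson structure of an abelian $\g$ vanishes) when matching condition ii) of Proposition \ref{toracttorrepprop-cd}.
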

Up to Morita equivalence, this construction provides a local model for faithful multiplicity-free actions of general regular proper symplectic groupoids. In view of this, it will be useful to first single out the following case of the {backward} implication in Theorem \ref{exlocsolprop}.   
\begin{prop}\label{torlocmodextinvconstprop} {Suppose that we are given $G$, $H$ and $(V,\omega_V)$ as above}, so that as for (\ref{extinvcohommap}) there is an induced map:
\begin{equation}\label{extinvcohommap2} I^1(H,T_H)\to I^1(G,T_G),
\end{equation} where $T_H$ and $T_G$ denote the respective identity components of $H$ and $G$. {Consider} the associated faithful multiplicity-free $(G\ltimes \g^*,-\d\lambda_\textrm{can})$-space ${J_\textrm{mod}}$ (as in Proposition \ref{torlocmoddataprop}). The following hold.   
\begin{itemize}\item[a)] The image of ${J_\textrm{mod}}$ is $\pi_{\h^*}^{-1}(\Delta_V)$, where $\Delta_V$ is the image of the momentum map (\ref{quadsympmommap}) and $\pi_{\h^*}:\g^*\to \h^*$ is the canonical projection.
\item[b)] The map (\ref{extinvcohommap2}) sends $e(V,\omega_V)\in I^1(H,T_H)$ to the ext-invariant $e({J_\textrm{mod}})_0\in I^1(G,T_G)$ of ${J_\textrm{mod}}$ at the origin in $\g^*$. 
\item[c)] The {collection of ext-classes} of ${J_\textrm{mod}}$ is centered (in the sense of Definition \ref{centeredsecdefi}).
\end{itemize}
\end{prop}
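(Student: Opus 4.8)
The plan is to verify the three parts of Proposition \ref{torlocmodextinvconstprop} by unraveling the explicit symplectic reduction defining $(S_\textrm{mod},\omega_\textrm{mod})$ and by reducing parts $b)$ and $c)$ to facts about maximally toric representations established in Subsection \ref{classtorrepsec}. Throughout I write $T:=T_H=T_G$, since the hypothesis that $\varGamma_H\to\varGamma_G$ is an isomorphism forces $H$ and $G$ to have the same identity component $T$. The key observation is that a point of $S_\textrm{mod}$ can be represented by a triple $(g,\alpha,v)\in G\times\g^*\times V$ subject to the constraint $\alpha\vert_\h=J_V(v)$, modulo the $H$-action, and that the residual $G$-action is by left translation on $g$ while $J_\textrm{mod}[g,\alpha,v]=\alpha$.

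For part $a)$, I would compute the image of $J_\textrm{mod}$ directly. A coadjoint value $\alpha\in\g^*$ lies in the image if and only if there is some $v\in V$ with $J_V(v)=\alpha\vert_\h$; since $\pi_{\h^*}(\alpha)=\alpha\vert_\h$ and $\Delta_V=J_V(V)$ by definition, this is precisely the condition $\pi_{\h^*}(\alpha)\in\Delta_V$, giving $J_\textrm{mod}(S_\textrm{mod})=\pi_{\h^*}^{-1}(\Delta_V)$. For part $b)$, I would identify the fiber $J_\textrm{mod}^{-1}(0)$ and the symplectic normal representation at a distinguished point $p_0:=[e,0,0]$ lying over the origin. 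The isotropy group $G_{p_0}$ is exactly $H$ (this is the standard feature of the Marle--Guillemin--Sternberg model: the stabilizer of the class of $(e,0,0)$ is the subgroup $H$ one reduces by), and the symplectic normal representation $(\S\No_{p_0},\omega_{p_0})$ is canonically isomorphic to $(V,\omega_V)$ as a symplectic $H$-representation. Granting this identification, the ext-class $e(\S\No_{p_0},\omega_{p_0})\in I^1(H,T)$ equals $e(V,\omega_V)$ by Definition \ref{extclasstoricrep}, and then Definition \ref{extclassdefi} says $e(J_\textrm{mod})_0$ is the image of this class under the extension map (\ref{extinvcohommap}), which for the cotangent groupoid $G\ltimes\g^*$ at the origin is exactly (\ref{extinvcohommap2}). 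This yields part $b)$.

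For part $c)$, I would show that for each $\alpha$ in the image the ext-class $e(J_\textrm{mod})_\alpha$ is the restriction of $e(J_\textrm{mod})_0$ to $G_\alpha$, matching Definition \ref{centeredsecdefi}. The point is that for $\alpha$ near the origin the isotropy group of the coadjoint action is $G_\alpha\subset G$, and one can choose a representative $p$ over $\alpha$ whose isotropy group $G_p$ and symplectic normal representation are obtained from those at $p_0$ by restriction along $G_\alpha\hookrightarrow G$. Concretely, the weight-tuple of the symplectic normal representation is the same subtuple of $V$'s weights that survives, and the cocycle representing the ext-class restricts accordingly; combined with the compatibility of the extension maps (\ref{extinvcohommap}) with restriction of cocycles, this gives $e(J_\textrm{mod})_\alpha=e(J_\textrm{mod})_0\vert_{G_\alpha}$.

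I expect the main obstacle to be part $c)$, specifically making precise and verifying the claim that the symplectic normal representation at a nearby point $p$ over $\alpha$ is the restriction, along $G_\alpha\hookrightarrow G$, of the representation at $p_0$, together with the bookkeeping that the ext-cocycle behaves compatibly under this restriction. This requires carefully tracking which weights of $V$ contribute to $\S\No_p$ (those whose associated face of the cone $\Delta_V$ contains $\pi_{\h^*}(\alpha)$) and checking that the $1$-cocycle $c_p\colon G_p\to \sT_p$ representing $e(\S\No_p,\omega_p)$ is the restriction of the one at $p_0$; the other two parts are essentially direct computations with the reduction. Since the proposition is asserted in the text to follow from a straightforward verification, I would keep these checks at the level of identifying the relevant stabilizers, weight subtuples, and the naturality of the extension maps with respect to restriction, rather than writing out the reduction in coordinates.
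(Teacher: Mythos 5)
Your strategy is the same as the paper's for all three parts: for b), the $H$-equivariant identification of $(\S\No_{[1,0,0]},(\omega_\textrm{mod})_{[1,0,0]})$ with $(V,\omega_V)$ induced by the quotient map, together with $G_{[1,0,0]}=H$; for c), choosing over each $\alpha$ a representative $[1,\alpha,p]$ with $p\in J_V^{-1}(\alpha\vert_\h)$ and identifying its symplectic normal representation with that of $p$ inside $V$. However, two points are genuinely wrong or missing.

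First, your opening normalization ``$T_H=T_G$, forced by $\varGamma_H\cong\varGamma_G$'' is false: the hypothesis constrains only the component groups, and in the intended examples $T_H$ is a proper subtorus of $T_G$ (take $G=\T^2$, $H=\mathbb{S}^1\times\{1\}$, $V=\C$ with the standard action, which is the local model of a toric manifold at a codimension-one face). Worse, if $T_H=T_G$ did hold, then $\varGamma_H\cong\varGamma_G$ would give $G=H\cdot T_G=H$, so your standing assumption trivializes the proposition: the whole point of (\ref{extinvcohommap2}) is that both the groups and the coefficient tori differ. Your outline of b) happens not to use the assumption, but in c) it hides the actual content: the cocycles in play take values in three different tori $T_{H_p}\subset T_H\subset T_G$, and what must be proved is $c_{\psi_p}=c_\psi\vert_{H_p}$, \emph{including} the nontrivial assertion that $c_\psi(H_p)\subset T_{H_p}$. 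The paper verifies this character by character: $\chi_{\alpha_i}\circ c_{\psi_p}=\chi_{\alpha_i}\circ c_\psi\vert_{H_p}$ for the weights $\alpha_i$ with $t_{\alpha_i}=0$ (those contributing to $\S\No_p$), using the defining property (\ref{defprop1coc}); and both sides equal $1$ for the remaining weights, because $H_p$ fixes $\alpha$ and hence permutes the weights preserving the coefficients $t_{\alpha_i}$, while $c_{\psi_p}$ fixes the nonzero components of $p$. You flag this verification as the main obstacle but do not carry it out; it is where the proposition lives. (Also, centeredness is a condition at \emph{every} $\alpha$ in the image, not only ``near the origin''.)

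Second, in a) your formula $J_\textrm{mod}[g,\alpha,v]=\alpha$ is not well-defined: $(g,\alpha,v)\sim(gh,h^{-1}\cdot\alpha,h^{-1}\cdot v)$, so the induced (equivariant) momentum map is $[g,\alpha,v]\mapsto g\cdot\alpha$. Its image is therefore the saturation $G\cdot\pi_{\h^*}^{-1}(\Delta_V)$, and the equality with $\pi_{\h^*}^{-1}(\Delta_V)$ is exactly the statement that this set is $G$-invariant. That invariance uses $H$-equivariance of $\pi_{\h^*}$ and $J_V$, triviality of the coadjoint action of $T_G$, and, crucially, surjectivity of $\varGamma_H\to\varGamma_G$ — which is precisely the one-line content of the paper's proof of a), and which your argument never invokes.
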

\begin{proof} Part $a$ readily follows from surjectivity of ${\varGamma}_H\to {\varGamma}_G$. For the remainder, let: \begin{equation*} q:(G\times \g^*)\times_{\h^*} V\to {S_\textrm{mod}}
\end{equation*} denote the quotient map. Part $b$ follows from the observation that the symplectic normal representation at $[1,0,0]\in {J_\textrm{mod}^{-1}(0)}$ is canonically isomorphic to $(V,\omega_V)$, via the $H$-equivariant linear symplectomorphism: 
\begin{equation*} (V,\omega_V)\xrightarrow{\sim}(\S\No_{[1,0,0]},({\omega_{\textrm{mod}}})_{[1,0,0]}), \quad v\mapsto [\d q_{(1,0,0)}(0,0,v)].
\end{equation*} We turn to part $c$. Fix an isomorphism of symplectic $T_H$-representations:
\begin{equation*} \psi:(V,\omega)\xrightarrow{\sim}(\C_{\alpha_1},\omega_\textrm{st})\oplus ...\oplus (\C_{\alpha_n},\omega_\textrm{st}),
\end{equation*} and let $\mathcal{W}=\{\alpha_1,...,\alpha_n\}$ denote the set of weights. First notice that (by part $b$) the class {$e({J_\textrm{mod}})_0$} is represented by the unique $1$-cocycle ${c}_0:G\to T_G$ that restricts to the identity map on $T_G$ and that restricts to ${c}_\psi$ on $H$. Next, let $\alpha\in {J_\textrm{mod}(S_\textrm{mod})}$. Then $\alpha\vert_\h\in \Delta_V$ by part $a$. From the description (\ref{quadmom}) it is clear that: 
\begin{equation*} \alpha\vert_\h=\sum_{\alpha_i\in \mathcal{W}} t_{\alpha_i}\alpha_i
\end{equation*} for unique $t_{\alpha_i}\in \R_{\geq 0}$. Let $\mathcal{W}_\alpha=\{\alpha_i\in \mathcal{W}\mid t_{\alpha_i}=0\}$ and let $p\in J_V^{-1}(\alpha\vert_{\h})$ be the element with component $\psi(p)_{\alpha_i}\in \C_{\alpha_i}$ equal to $\sqrt{t_{\alpha_i}}$ for each $\alpha_i\in \mathcal{W}$. Consider $[1,\alpha,p]\in J_\textrm{mod}^{-1}(\alpha)$. As is readily verified, the map:
\begin{equation*} (\S\No_p,\omega_p)\to (\S\No_{[1,\alpha,p]},({\omega_\textrm{mod}})_{[1,\alpha,p]}),\quad [v]\mapsto [\d q_{(1,\alpha,p)}(0,0,v)],
\end{equation*} is a symplectic linear isomorphism. Furthermore, it is equivariant with respect to the action of $H_{[1,\alpha,p]}=H_\alpha\cap H_p$. As the identity component of $H_\alpha\cap H_p$ is the identity component $T_{H_p}$ of $H_p$, it follows that: 
\begin{equation*} e\left(\S\No_{[1,\alpha,p]},({\omega_{\textrm{mod}}})_{[1,\alpha,p]}\right)=[{c}_{\psi_p}\vert_{H_\alpha\cap H_p}]\in I^1(H_\alpha\cap H_p,T_{H_p}).
\end{equation*} Therefore, $e({J_\textrm{mod}})_\alpha$ is represented by the unique $1$-cocycle ${c}_\alpha:G_\alpha\to T_G$ that restricts to the identity map on $T_G$ and to ${c}_{\psi_p}\vert_{H_\alpha\cap H_p}$ on $H_\alpha\cap H_p$. To complete the proof, we will now show that:
\begin{equation}\label{eq0torrepextinvconstrprop} {c}_0\vert_{G_\alpha}={c}_\alpha.
\end{equation} Using (\ref{quadmom}) one computes that:
\begin{equation*} \psi(\ker(\d J_V)_p)=\left(\bigoplus_{\alpha_i\in \mathcal{W}_\alpha} \C_{\alpha_i}\right)\oplus \left(\bigoplus_{\alpha_i\in \mathcal{W}-\mathcal{W_\alpha}} \sqrt{-1}\cdot \R_{\alpha_i}\right).
\end{equation*} On the other hand, it is clear that:
\begin{equation*} \psi(T_p\O)=\bigoplus_{\alpha_i\in \mathcal{W}-\mathcal{W_\alpha}} \sqrt{-1}\cdot \R_{\alpha_i}.
\end{equation*} It follows from this that $\psi$ induces a $(T_H)_p$-equivariant symplectic linear isomorphism:
\begin{equation*} \psi_p:(\S\No_p,\omega_p)\xrightarrow{\sim} \bigoplus_{\alpha_i\in \mathcal{W}_\alpha} \C_{\alpha_i},\quad \psi_p([v])_{\alpha_i}=\psi(v)_{\alpha_i}.
\end{equation*} Hence, the weights of the symplectic normal representation at $p$ are given by $\alpha_i\vert_{\t_p}\in \t_p^*$ for $\alpha_i\in \mathcal{W}_\alpha$ and from (\ref{defprop1coc}) it follows that the $1$-cocycle ${c}_{\psi_p}:H_p\to T_{H_p}$ representing $e(\S\No_p,\omega_p)\in I^1(H_p,T_{H_p})$ has the property that for each $h\in H_p$, $v\in \psi^{-1}(\bigoplus_{\alpha_i\in \mathcal{W}_\alpha} \C_{\alpha_i})$ and each $\alpha_i\in \mathcal{W}_\alpha$:
\begin{equation*} \chi_{\alpha_i}({c}_{\psi_p}(h))\cdot \psi(v)_{\alpha_i}=\psi(h\cdot v)_{[h]\cdot \alpha_i}. 
\end{equation*} The same property is satisfied by the $1$-cocycle ${c}_\psi:H\to T_H$ representing $e(V,\omega)\in I^1(H,T_H)$. Therefore, we conclude that: 
\begin{equation}\label{eq1torrepextinvconstrprop} \chi_{\alpha_i}\circ{c}_{\psi_p}=\chi_{\alpha_i}\circ {c}_{\psi}\vert_{H_p}, \quad \forall\text{ } \alpha_i\in \mathcal{W}_{\alpha}.
\end{equation} On the other hand, it holds that:
\begin{equation}\label{eq2torrepextinvconstrprop} \chi_{\alpha_i}\circ{c}_{\psi_p}=1=\chi_{\alpha_i}\circ {c}_{\psi}\vert_{H_p},\quad \forall\text{ } \alpha_i\in \mathcal{W}-\mathcal{W}_\alpha.
\end{equation} Indeed, the first equality in (\ref{eq2torrepextinvconstrprop}) follows from the observation that, because ${c}_{\psi_p}$ takes values in $(T_H)_p$, for all $h\in H_p$ and $\alpha_i\in \mathcal{W}$ we have:
\begin{equation*} \chi_{\alpha_i}({c}_{\psi_p}(h))\cdot \psi(p)_{\alpha_i}=({c}_{\psi_p}(h)\cdot \psi(p))_{\alpha_i}=\psi(p)_{\alpha_i}.
\end{equation*} For the latter equality, notice first that, since each $h\in H_p$ fixes $\alpha$ (as $J_V$ is $H$-equivariant), it holds that $t_{[h]\cdot \alpha_i}=t_{\alpha_i}$, and so $\psi(p)_{[h]\cdot \alpha_i}=\psi(p)_{\alpha_i}$ for each $\alpha_i\in \mathcal{W}$. Using this and (\ref{defprop1coc}), we find that for such $h$ and $\alpha_i$:
\begin{equation*} \chi_{\alpha_i}(c_\psi(h))\cdot \psi(p)_{\alpha_i}=\psi(h\cdot p)_{[h]\cdot \alpha_i}=\psi(p)_{[h]\cdot \alpha_i}=\psi(p)_{\alpha_i},
\end{equation*} from which we conclude that the second equality in (\ref{eq2torrepextinvconstrprop}) holds. Together, (\ref{eq1torrepextinvconstrprop}) and (\ref{eq2torrepextinvconstrprop}) imply that: 
\begin{equation}\label{eq3torrepextinvconstrprop} {c}_{\psi_p}={c}_{\psi}\vert_{H_p}. \end{equation} We conclude from this that (\ref{eq0torrepextinvconstrprop}) indeed holds, since ${c}_0\vert_{G_\alpha}:G_\alpha\to T_G$ is a $1$-cocycle that restricts to the identity on $T_G$, and that restricts to ${c}_{\psi_p}\vert_{H_\alpha\cap H_p}$ on $H_\alpha\cap H_p$ by (\ref{eq3torrepextinvconstrprop}). 
\end{proof}
We are now ready to complete the proof.
\begin{proof}[End of the proof of Theorem \ref{exlocsolprop} and Remark \ref{extinvflatindchartrem}] {First, in the setting of Theorem \ref{exlocsolprop}, suppose that there is an invariant open $U$ in $M$ around $\O\in \underline{\Delta}$ and a faithful multiplicity-free $(\G,\Omega)$-space $J:(S,\omega)\to M$ such that $J(S)=U\cap\Delta$ and the collection of its ext-invariants is $\sigma\vert_{\underline{U}\cap \underline{\Delta}}$. We will not only show that $\sigma$ is flat at $\O$, but that for \textit{every} symplectic Morita equivalence as after Lemma \ref{shfisodiscinvseclem}, with $W\subset U$, the invariant local section of (\ref{set-theoric-bundle-ext-inv-cent}) corresponding to $\sigma\vert_{\underline{W}\cap\underline{\Delta}}$ via (\ref{indisoshvsextinvsetsec}) is centered on some neighbourhood of the origin. Suppose that we are given any such symplectic Morita equivalence:}
\begin{center}
\begin{tikzpicture} \node (G1) at (-1.3,0) {$(\G,\Omega)\vert_W$};
\node (M1) at (-1.3,-1.3) {$W$};
\node (S) at (1.4,0) {$(P,\omega_P)$};
\node (M2) at (4.2,-1.3) {$W_{\g^*}$};
\node (G2) at (4.2,0) {$(G\ltimes \g^*,-\d \lambda_{\textrm{can}})\vert_{W_{\g^*}}$};
 
\draw[->,transform canvas={xshift=-\shift}](G1) to node[midway,left] {}(M1);
\draw[->,transform canvas={xshift=\shift}](G1) to node[midway,right] {}(M1);
\draw[->,transform canvas={xshift=-\shift}](G2) to node[midway,left] {}(M2);
\draw[->,transform canvas={xshift=\shift}](G2) to node[midway,right] {}(M2);
\draw[->](S) to node[pos=0.25, below] {$\text{ }\text{ }\alpha_1$} (M1);
\draw[->] (0.65,-0.15) arc (315:30:0.25cm);
\draw[<-] (2.05,0.15) arc (145:-145:0.25cm);
\draw[->](S) to node[pos=0.25, below] {$\alpha_2$\text{ }} (M2);
\end{tikzpicture}
\end{center} {After choosing a $p\in \alpha_1^{-1}(\O)$ and composing the Morita equivalence with the automorphism of $(G\ltimes \g^*,-\d \lambda_{\textrm{can}})$ associated to the isomorphism of isotropy groups $\phi_{p}:G\to G$ induced, as in (\ref{moreqindisoisotgps}), by the Morita equivalence, we can assume without loss of generality that there is a $p\in \alpha_1^{-1}(\O)$ such that $G$ is the isotropy group of $\G$ at $x:=\alpha_1(p)$ and such that $\phi_p$ is the identity map of $G$. Then the set-germ at the origin of the invariant subset of $W_{\g^*}$ corresponding to $W\cap \Delta$ is that of the $G$-invariant smooth polyhedral cone $C_{\g^*}:=((\rho_\Omega)_x^*)^{-1}(C_x(\Delta))$. This follows by combining Lemma \ref{sympmoreqnormreplem}, Proposition \ref{iamoreqdelzsubconeprop} and the observation that $(\rho_{-\d\lambda_\textrm{can}})_0^*:\g^*\to T_0\g^*$ (which is the canonical isomorphism of the vector space $\g^*$ with its tangent space at $0$) represents the map germ $\log_0$. Note here that the polyhedral cone $C_{\g^*}$ is indeed $G$-invariant, as follows from Corollary \ref{coneatxisinvrem} and $G$-equivariance of $(\rho_\Omega)_x^*:\g^*\to \No_x{\O}$ with respect to the coadjoint action of $G$ on $\g^*$ (which follows from Lemma \ref{sympmoreqnormreplem}, applied to the identity equivalence). Let $H$ and $(V,\omega_V)$ be the isotropy group and the symplectic normal representation of the $(\G,\Omega)$-space $J$ at some point in $J^{-1}(x)$, and consider the faithful multiplicity-free $(G\ltimes \g^*,-\d \lambda_{\textrm{can}})$-space $J_\textrm{mod}$ associated to this data, as in Proposition \ref{torlocmoddataprop} (which applies because of Proposition \ref{conedescrpsympnormrepprop}). In view of the above, (\ref{eqn:extinvofassociatedspace:moreq}), Proposition \ref{conedescrpsympnormrepprop}$c$ and Proposition \ref{torlocmodextinvconstprop}, the $(G\ltimes \g^*,-\d \lambda_{\textrm{can}})$-spaces $J_\textrm{mod}$ and $P_*(J)$ satisfy the criteria in Proposition \ref{locclasstoricthm} at the origin in $\g^*$. Hence, there is an invariant open neighbourhood of the origin in $\g^*$ such that the restrictions of the $(G\ltimes \g^*,-\d \lambda_{\textrm{can}})$-spaces $J_\textrm{mod}$ and $P_*(J)$ to this open neighbourhood are isomorphic. Since the ext-invariant of $J_\textrm{mod}$ is centered (Proposition \ref{torlocmodextinvconstprop}), the same holds for the restriction of $P_*(J)$ to this open neighbourhood. Because of (\ref{eqn:extinvofassociatedspace:moreq}), the collection of ext-invariants of $P_*(J)$ is the invariant local section of (\ref{set-theoric-bundle-ext-inv-cent}) corresponding to $\sigma\vert_{\underline{W}\cap\underline{\Delta}}$ via (\ref{indisoshvsextinvsetsec}), which is therefore centered on a neighbourhood of the origin, as we wanted to show.} \\

{Having shown this, to complete the proof of both Theorem \ref{exlocsolprop} and Remark \ref{extinvflatindchartrem}, it remains to prove the forward implication in Theorem \ref{exlocsolprop}. }To this end, suppose that $\sigma$ is flat at $\O$. By flatness and the same reasoning as above, there is a symplectic Morita equivalence $((P,\omega_P),\alpha_1,\alpha_2)$ with a $p\in \alpha_1^{-1}(\O)$ as above, with the additional property that the invariant local section corresponding to $\sigma\vert_{\underline{W}\cap\underline{\Delta}}$ via (\ref{indisoshvsextinvsetsec}) is centered. Since the set-germ at the origin of the invariant subset of $W_{\g^*}$ corresponding to $W\cap \Delta$ is that of the $G$-invariant smooth polyhedral cone $C_{\g^*}$ (defined as above), after possibly shrinking $W$ and $W_{\g^*}$ we can further arrange that $W\cap \Delta$ is related to the invariant subset $W_{\g^*}\cap C_{\g^*}$. To construct a faithful multiplicity-free $(\G,\Omega)$-space with momentum image $W\cap \Delta$ and ext-invariant $\sigma\vert_{\underline{W}\cap \underline{\Delta}}$, we will construct:
\begin{itemize}\item a closed subgroup $H$ of $G$ with the property that ${\varGamma}_H\to {\varGamma}_G$ is bijective,
\item a {maximally} toric $H$-representation $(V,\omega_V)$ such that $\pi_{\h^*}^{-1}(\Delta_V)=C_{\g^*}$ and such that $e(V,\omega_V)\in I^1(H,T_H)$ is mapped to $\sigma(x)\in I^1(G,T_G)$ by (\ref{extinvcohommap2}) (for $x:=\alpha_1(p)$ as above).
\end{itemize} {The restriction to $W_{\g^*}$ of the faithful multiplicity-free space $(G\ltimes \g^*,-\d \lambda_{\textrm{can}})$-space $J_\textrm{mod}$ associated to this data (as in Proposition \ref{torlocmoddataprop}) would have momentum map image $W_{\g^*}\cap C_{\g^*}$ and collection of ext-invariants equal to the invariant local section corresponding to $\sigma\vert_{\underline{W}\cap\underline{\Delta}}$ via (\ref{indisoshvsextinvsetsec}) (because both are centered and they have the same value at the origin). Therefore, the $(\G,\Omega)\vert_W$-space associated to this by the symplectic Morita equivalence $((P,\omega_P),\alpha_1,\alpha_2)$ would have momentum image $W\cap \Delta$ and ext-invariant $\sigma\vert_{\underline{W}\cap \underline{\Delta}}$. }\\

So, to complete the proof it remains to construct $H$ and $(V,\omega_V)$ satisfying the requirements listed above. For this, let ${c}:G\to T_G$ be a $1$-cocycle such that $[{c}]=\sigma(x)\in I^1(G,T_G)$. Further, let us (suggestively) denote by $\h^0$ the largest linear subspace of $\g^*$ that is contained in $C_{\g^*}$ and let $\h\subset \g$ be the annihilator of $\h^0$. Since the polyhedral cone $C_{\g^*}$ is smooth in $(\g^*,\Lambda_{T_G}^*)$, the lattice $\h\cap \Lambda_{T_G}$ has full rank in $\h$ and so $T_H:=\exp_{G}(\h)$ is a subtorus of $T_G$ with Lie algebra $\h$. Since $C_{\g^*}$ is invariant under the coadjoint action of $G$, $T_H$ is invariant under conjugation by elements of $G$. From this it readily follows that ${\varGamma}_G\times T_H$ is a subgroup of ${\varGamma}_G\ltimes T_G$. Now, let $H$ be the subgroup of $G$ corresponding to ${\varGamma}_G\times T_H$ under the isomorphism of Lie groups:
\begin{equation*} G\xrightarrow{\sim} {\varGamma}_G\ltimes T_G,\quad g\mapsto ([g],{c}(g)).
\end{equation*}
Then $H$ is a closed Lie subgroup of $G$ with Lie algebra $\h$ and ${\varGamma}_H\to {\varGamma}_G$ is bijective. Further notice that the $1$-cocycle ${c}$ restricts to a $1$-cocycle ${c}\vert_H:H\to T_H$, that restricts to the identity map on $T_H$. Next, we construct the desired representation of $H$. Let us (suggestively) denote by $\Delta_V$ the image of $C_{\g^*}$ under the projection $\pi_{\h^*}:\g^*\to \h^*$. By construction, $\Delta_V$ is a smooth and pointed polyhedral cone in the integral affine vector space $(\h^*,\Lambda_{T_H}^*)$. Furthermore, $\Delta_V$ is ${\varGamma}_H$-invariant, because $\pi_{\h^*}$ is $H$-equivariant and $C_{\g^*}$ is ${\varGamma}_G$-invariant. Hence, by Theorem \ref{isoclasstorrepthm} there is a {maximally} toric $H$-representation $(V,\omega_V)$ with momentum image $\Delta_V$ and $e(V,\omega_V)=[{c}\vert_H]\in I^1(H,T_H)$. As is clear from their construction, $H$ and $(V,\omega_V)$ satisfy the requirements.  
\end{proof}

\subsection{The third structure theorem}\label{sec:thirdstrthm}
\subsubsection{Proof of the third structure theorem} We now turn to the proof of Theorem \ref{thirdstrthm}, starting with:
\begin{prop}\label{compatfirstsecstrthmprop} The injection (\ref{inclstrgpsintro}) is compatible with the actions in the first and second structure theorems.
\end{prop}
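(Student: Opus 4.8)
The plan is to reduce the statement to an identification of two glued spaces. Both torsor actions are built by the same gluing recipe: the action of Theorem \ref{firststrthm} glues copies of $J^{-1}(U)$ by the $\sT$-equivariant symplectomorphisms $\psi_{\tau(\cdots)}$ attached to a $\B\vert_\Delta$-equivariant \v{C}ech cocycle $\tau\in\check{C}^1_\U(\B\vert_\Delta,\L)$, whereas the action of Theorem \ref{torsortorspthm} (built, as in the proof of Theorem \ref{torsortorspthm:torbunversion}, via Theorem \ref{isoautsymplagseccor}) glues the same pieces by the $\G$-equivariant symplectomorphisms attached to a \v{C}ech cocycle $\underline{\tau}$ of $\underline{\L}$ on the topological space $\underline{\Delta}$. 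Writing $\iota$ for the injection (\ref{inclstrgpsintro}), the goal is the equality $\iota(\underline{c})\cdot[J]=\underline{c}\cdot[J]$ for $\underline{c}\in H^1(\underline{\Delta},\underline{\L})$ and any $[J]$ in the domain of the second-theorem torsor (momentum image $\Delta$, fixed ext-invariant), where the left action is that of Theorem \ref{firststrthm} and the right one that of Theorem \ref{torsortorspthm}.

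First I would reduce to the case in which $\B=\G/\sT$ is etale. Both actions are natural with respect to symplectic Morita equivalence (Proposition \ref{naturalityofactprop}, and the naturality asserted in Theorem \ref{torsortorspthm}), and the injection $\iota$ is natural as well: being the edge map of the Leray-type spectral sequence for $q_\Delta$ (Remark \ref{expldescrpactthirdstrcthmrem}), it is compatible with the Morita-invariance isomorphisms of Corollary \ref{cor:morinvshfcohom:iaequiv} and with its evident analogue for the sheaves $\underline{\L}$ on the orbit spaces. Restricting $(\G,\Omega)$ to a complete transversal $\Sigma$ then reduces the entire statement to the etale case.

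In the etale case I would make the cochain-level description of $\iota$ explicit. Fixing a basis $\U$ of invariant opens for which $\L=\L_\Delta$ is $\U$-acyclic (Remark \ref{vanlemrem}), a \v{C}ech $1$-cocycle $\underline{\tau}$ of $\underline{\L}$ on $\underline{\Delta}$ extends by $\B$-invariance to a $1$-cocycle on $\textsf{Emb}_\U(\B\vert_\Delta)$: the value on an arrow $V\xleftarrow{\sigma}U$ is obtained from the corresponding overlap section of $\underline{\tau}$ by transporting along $\sigma$, and by invariance of $\underline{\tau}$ this is independent of the chosen lift $\sigma$ and depends only on the underlying inclusion of orbit-space opens. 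The cocycle condition for this $\tau$ is immediate from that for $\underline{\tau}$, and on cohomology the assignment realizes the edge map $\iota$ of (\ref{eqn:fivetermexseqlowdeg}). This last identification of extension-by-invariance with the spectral-sequence edge map is the one homological point that must be checked, and it is precisely what is recorded when the five-term sequence (\ref{eqn:fivetermexseqlowdeg}) is set up.

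Finally I would compare the two constructions for $\underline{c}=[\underline{\tau}]$. By Proposition \ref{invsecprop} the symplectomorphism used to glue in Theorem \ref{torsortorspthm} is $\G$-equivariant and, as a self-map of the underlying manifold, equals $p\mapsto\underline{\tau}(J(p))\cdot p$ — which is exactly the map $\psi_{\tau(\cdots)}$ produced from $\tau=\iota(\underline{\tau})$ in the construction of Theorem \ref{firststrthm}, a $\B$-invariant isotropic section inducing literally the same self-map whether read as a $\G$-equivariant or merely a $\sT$-equivariant symplectomorphism. Hence the two gluings define the same equivalence relation, symplectic form and $\G$-action, so the space built in Theorem \ref{torsortorspthm} from $\underline{\tau}$ and the space $S_\tau$ of Theorem \ref{firststrthm} coincide as Hamiltonian $(\G,\Omega)$-spaces. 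This gives $\iota(\underline{c})\cdot[J]=\underline{c}\cdot[J]$; in particular the first-theorem action by elements in the image of $\iota$ preserves the ext-invariant, so it restricts to the level sets of the ext-invariant and there agrees with the second-theorem action, which is what compatibility means. I expect the only genuinely delicate part to be the bookkeeping between the open-cover \v{C}ech complex on $\underline{\Delta}$ and the embedding-category complex of $\B\vert_\Delta$, together with the verification that extension-by-invariance computes the edge map; once that is in place, the geometric identification of the glued spaces is formal.
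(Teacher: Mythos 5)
Your overall strategy coincides with the paper's: reduce to the etale case via Morita naturality, describe the injection (\ref{inclstrgpsintro}) at the cochain level as extension of invariant cocycles, and then compare the two glued spaces. However, there is a genuine gap at the point where you fix ``a basis $\U$ of invariant opens for which $\L=\L_\Delta$ is $\U$-acyclic''. No such basis exists in general: if some $\B$-orbit in $\Delta$ contains two distinct points $x\neq y$, then every invariant open containing $x$ also contains $y$, so invariant opens cannot form a basis for the topology of $\Delta$. (They form a basis only when all orbits are singletons, which is essentially the symplectic torus bundle case, where the first and second structure theorems coincide anyway and there is nothing to prove.) Both the \v{C}ech model for $H^\bullet(\B\vert_\Delta,\L)$ and the very definition of the action in Theorem \ref{firststrthm} require $\U$ to be a basis of $\Delta$, so the cocycle you build cannot be fed into that construction as described.

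This is not a cosmetic issue, because it invalidates your final step. Since the gluing of Theorem \ref{firststrthm} must be performed over basis opens $U\subset\Delta$ (not invariant), while the gluing of Theorem \ref{torsortorspthm} is performed over invariant opens $\widehat U$ (preimages of opens of $\underline\Delta$), the two constructions do not glue ``the same pieces'': one glues copies of $J^{-1}(U)$, the other copies of $J^{-1}(\widehat U)$. Hence the assertion that they ``define the same equivalence relation, symplectic form and $\G$-action'' is false as stated; what is true is that the resulting Hamiltonian $(\G,\Omega)$-spaces are canonically isomorphic, and producing that isomorphism is the actual content of the paper's proof. Concretely, the paper represents $\underline{\mathrm{c}}$ by a cocycle $\underline\tau$ on the cover $q(\U)$ of $\underline\Delta$, sets $\tau:=(q_\Delta)^*(\underline\tau)$ (which is your extension-by-invariance formula, and whose compatibility with the maps in (\ref{eqn:fivetermexseqlowdeg}) is asserted at the level of complexes, much as you do), and then constructs $\psi:S_\tau\to S_{\underline\tau}$, $[p,U]\mapsto[p,\widehat U]$, verifying that it is well defined, that it is a symplectomorphism (locally it factors through the symplectic open embeddings $J^{-1}(U)\hookrightarrow J^{-1}(\widehat U)\hookrightarrow S_{\underline\tau}$), and that it is $\G$-equivariant (using $\B$-invariance of the sections $\underline\tau(q(U),q(V))$). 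Your appeal to Proposition \ref{invsecprop} is the right ingredient for the equivariance, but the comparison map itself, and the bookkeeping between the cover of $\Delta$ and the cover of $\underline\Delta$, cannot be dispensed with.
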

\begin{proof} {After restricting to a complete transversal, the proof reduces to the case in which $\B$ is etale. Suppose we are in that case. Let $\underline{\textrm{c}}\in H^1(\underline{\Delta},\underline{\L})$ and let $J:(S,\omega)\to M$ be a faithful multiplicity-free $(\G,\Omega)$-space with momentum image $\Delta$. Via the natural isomorphism from \v{C}ech to sheaf cohomology we can represent $\underline{\textrm{c}}$ by a \v{C}ech $1$-cocycle $\underline{\tau}\in \check{C}^1_{q(\U)}(\underline{\Delta},\underline{\L})$ with respect to an open cover of the form $q(\U):=\{q(U)\mid U\in \U\}$, where $q_\Delta:\Delta\to \underline{\Delta}$ is the orbit projection and $\U$ is some basis of $\Delta$ for which $\L:=\L_\Delta$ is $\U$-acyclic. The composition:
\begin{equation*} \check{H}^1_{q(\U)}(\underline{\Delta},\underline{\L})\to \check{H}^1(\underline{\Delta},\underline{\L})\xrightarrow{\sim} H^1(\underline{\Delta},\underline{\L})\to H^1(\B\vert_\Delta,\L)\to \check{H}^1_\U(\B\vert_\Delta,\L)
\end{equation*} is induced by the map of complexes given in degree $n$ by:
\begin{equation*} (q_\Delta)^*:\check{C}^n_{q(\U)}(\underline{\Delta},\underline{\L})\to \check{C}^n_{\U}(\B\vert_\Delta,\L), \quad (q_\Delta)^*(c)(U_0\xleftarrow{\sigma_1}...\xleftarrow{\sigma_n} U_n):=c(q(U_1),...,q(U_n))\vert_{U_n}\in \L(U_n).
\end{equation*}
So, denoting $\tau:=(q_\Delta)^*(\underline{\tau})\in \check{C}^1_\U(\B\vert_\Delta,\L)$, to prove the proposition we ought to give an isomorphism of faithful multiplicity-free $(\G,\Omega)$-spaces:}
\begin{center}
\begin{tikzcd} (S_{\tau},\omega_{\tau}) \arrow[rr,"\psi"]\arrow[rd,"J_\tau"'] & & (S_{\underline{\tau}},\omega_{\underline{\tau}})\arrow[ld, "J_{\underline{\tau}}"] \\
& M & 
\end{tikzcd}
\end{center}{ where the left-hand $(\G,\Omega)$-space is that defined in the proof of Theorem \ref{firststrthm}, whereas the right-hand one is that defined as in the proof of Theorem  \ref{torsortorspthm:torbunversion}, but with $\L$ replaced by $\underline{\L}$. }Consider the map $\psi:S_{\tau}\to S_{\underline{\tau}}$ given by $\psi([p,U])=[p,\widehat{U}]$, where $\widehat{U}$ denotes the $\G$-saturation of $U$. A straightforward verification shows that this is well-defined, bijective and intertwines the maps $J_\tau$ and $J_{\underline{\tau}}$. Further notice that, for each $U\in \U$, the pre-composition of $\psi$ with the inclusion (\ref{injopenintoquot}) coincides with the composition:
\begin{equation*} (J^{-1}(U),\omega)\hookrightarrow (J^{-1}(\widehat{U}),\omega)\hookrightarrow (S_{\underline{\tau}},\omega_{\underline{\tau}}),
\end{equation*} which is a symplectomorphism onto an open in $(S_{\underline{\tau}},\omega_{\underline{\tau}})$. Therefore, $\psi$ is a symplectomorphism. Finally, the fact that ${\underline{\tau}}(q(U),q(V))$ is $\B$-invariant for all $U,V\in \U$ implies that $\psi$ is $\G$-equivariant. So, $\psi$ is an isomorphism of faithful multiplicity-free $(\G,\Omega)$-spaces.
\end{proof}
\begin{proof}[Proof of Theorem \ref{thirdstrthm}] {The second structure theorem and Proposition \ref{compatfirstsecstrthmprop} imply }that the action in the first structure theorem descends to an action of the quotient group (\ref{quotgpthirdstrthmeqintro}) on the image of (\ref{extinvariantmapthrdstrthm}), uniquely determined by the fact that (\ref{extinvariantmapthrdstrthm}) becomes equivariant with respect to the $\check{H}^1(\B\vert_\Delta,\L)$-action on its image corresponding to the quotient group action. It follows by combining the first and second structure theorem with Proposition \ref{compatfirstsecstrthmprop} that this action is free and transitive. 
\end{proof}
\subsubsection{{On the action in the third structure theorem}}\label{explcdescrpactthirdstrthmsec} 
In this subsection we give a more direct description of the action in the third structure theorem and show that the ext-sheaf comes with a free and transitive action of a more familiar sheaf on $\underline{\Delta}$: the first right-derived functor $\H^1:=R^1(q_\Delta)_*(\L)$ of the push-forward $(q_\Delta)_*$ (as in Remark \ref{bsheaveslagrsecrem}) applied to $\L:=\L_\Delta$.\\ 

Let $(\G,\Omega)\rightrightarrows M$ be a regular and proper symplectic groupoid with associated orbifold groupoid $\B:=\G/\sT$ and let $\underline{\Delta}\subset \underline{M}$ be a Delzant subspace. There is a Grothendieck spectral sequence \cite{Gro} associated to the commutative triangle of functors:
\begin{center}
\begin{tikzcd} \textsf{Sh}(\B\vert_\Delta) \arrow[r,"\Gamma( \cdot )^\B"] \arrow[d,"(q_\Delta)_*"'] & \textsf{Ab} \\
 \textsf{Sh}(\underline{\Delta}) \arrow[ur,"\Gamma( \cdot )"'] & 
\end{tikzcd}
\end{center} which is a version of the usual Leray spectral sequence for the canonical map of topological groupoids $q_\Delta$ from $\B\vert_\Delta$ to the unit groupoid of $\underline{\Delta}$. The associated exact sequence in low degrees (\ref{eqn:fivetermexseqlowdeg}) has as first few terms:
\begin{equation}\label{eqn:threetermexseqlowdeg} 0\to H^1(\underline{\Delta},\underline{\L})\to H^1(\B\vert_\Delta,\L)\to \H^1(\underline{\Delta}).
\end{equation} Concretely, $\mathcal{H}^1$ is the sheafification of the pre-sheaf on $\underline{\Delta}$ that assigns to an open $\underline{U}$ the abelian group $H^1(\B\vert_{U},\L)$ (with the restriction maps induced by pull-back along the inclusion $\B\vert_V\hookrightarrow\B\vert_U$ for opens $\underline{V}\subset \underline{U}$) and the map:
\begin{equation}\label{eqn:secondmapsesorblerspecseq}
 H^1(\B\vert_\Delta,\L)\to \H^1(\underline{\Delta})
\end{equation} in the above sequence takes a global section of this pre-sheaf to the induced global section of its sheafification. To define the $\H^1$-action on the ext-sheaf $\mathcal{I}^1:=\mathcal{I}^1_{((\G,\Omega),\underline{\Delta})}$, let $x\in \Delta$. Given an open $\underline{U}$ in $\underline{\Delta}$ such that $x\in U$, we consider the map:
\begin{equation}\label{stalkmapfirsthigherimage}
\Phi_{x}:=(\textrm{ev}_x)_*\circ (i_x)^*:H^\bullet(\B\vert_U,\L)\xrightarrow{(i_x)^*}H^\bullet(\B_x,\L_x)\xrightarrow{(\textrm{ev}_x)_*} H^\bullet(\B_x,\sT_x),
\end{equation} where $i_x:\B_x\hookrightarrow \B$ is the inclusion of the isotropy group $\B_x$ at $x$ (viewed as groupoid over $\{x\}$) and $\textrm{ev}_x:\L_x\to \sT_x$ is the evaluation map from the stalk of $\L$ at $x$ into the fiber of $\sT$ over $x$. The last two groups denote group cohomology. Combining the map $\Phi_x$ (in degree $1$) and the action in Remark \ref{degonegpcohomtorsorrem}, we obtain an $H^1(\B\vert_U,\L)$-action on the set $I^1(\G_x,\sT_x)$ \textemdash the stalk of $\mathcal{I}^1$ at the {orbit} $\O_x$ of $\G$ through $x$ (see Remark \ref{rem:stalkofextsheaf}). For varying $\underline{U}$ the actions on this set are compatible with restriction of opens and so descend to an action of the stalk of $\H^1$ at $\O_x$. 

\begin{thm}\label{thm:extsheaftorsorfirstdirectimage} The stalk-wise actions given above define an action of the sheaf of abelian groups $\H^1$ on the ext-sheaf $\mathcal{I}^1$ that is free and transitive (meaning that for each open the associated group action is free and transitive). Moreover, via the map (\ref{eqn:secondmapsesorblerspecseq}), the induced $\H^1(\underline{\Delta})$-action on $\mathcal{I}^1(\underline{\Delta})$ restricts to the action in Theorem \ref{thirdstrthm} on the image of (\ref{extinvariantmapthrdstrthm}).
\end{thm}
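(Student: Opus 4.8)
The plan is to prove both assertions by reduction to the stalks of $\mathcal{I}^1$ and $\H^1$. Since all objects and maps in sight — the ext-sheaf $\mathcal{I}^1$, the sheaf $\H^1=R^1(q_\Delta)_*\L$, the map (\ref{eqn:secondmapsesorblerspecseq}) and the action of Theorem \ref{thirdstrthm} — are natural with respect to (integral affine, resp. symplectic) Morita equivalences by Corollary \ref{cor:morinvshfcohom:iaequiv}, Lemma \ref{shfisodiscinvseclem} and the naturality built into the structure theorems, I would first restrict to a complete transversal and assume throughout that $\B=\G/\sT$ is etale. Freeness and transitivity of a sheaf action being local in nature, and each stalk $\mathcal{I}^1_{\O_x}\cong I^1(\G_x,\sT_x)$ being nonempty (the cone model of Proposition \ref{torlocmoddataprop}, or Theorem \ref{exlocsolprop}, produces a local faithful multiplicity-free space and hence a germ) and a torsor over $H^1(\B_x,\sT_x)$ by Remark \ref{degonegpcohomtorsorrem} (note $\varGamma_{\G_x}=\B_x$), the first assertion reduces to showing that the stalk map
\begin{equation*} \Phi_x\colon \H^1_{\O_x}=\varinjlim_{\underline U\ni \O_x}H^1(\B\vert_U,\L)\longrightarrow H^1(\B_x,\sT_x) \end{equation*}
is an isomorphism of abelian groups. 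The stalkwise torsor structure it then induces makes $\mathcal{I}^1$ a sheaf of $\H^1$-torsors, from which freeness and transitivity over each open follow by gluing, using that $\H^1$ is a sheaf and that the $\Phi_x$-difference of two local sections of $\mathcal{I}^1$ is represented by a well-defined local section of $\H^1$.

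The computational core is therefore this stalk isomorphism, which I would establish in a linearized local model. By Lemma \ref{proetgpoidlinlem} every $x$ has a connected neighbourhood $U$ with $\B\vert_U\cong \B_x\ltimes U$, $\B_x$ finite and fixing $x$; shrinking $U$ so that its trace on $\Delta$ is convex in an integral affine chart, Lemma \ref{vanlem} makes $\L$ acyclic there, so a standard computation (e.g. via the embedding-category model of Subsection \ref{bisectionsembcatsec} applied to $\B_x\ltimes U$) gives $H^\bullet(\B\vert_U,\L)\cong H^\bullet(\B_x,\L(U\cap\Delta))$, and unwinding the definition (\ref{stalkmapfirsthigherimage}) shows that $\Phi_x$ is induced by the $\B_x$-equivariant evaluation $\varepsilon_U\colon \L(U\cap\Delta)\to \sT_x$, $\tau\mapsto \tau(x)$. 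I would then run the dévissage from the proof of Lemma \ref{vanlem}: there is a compatible map of short exact sequences of $\B_x$-modules, with top row $0\to(\mathcal{C}^\infty)_\Lambda(U\cap\Delta)\to \mathcal{C}^\infty(U\cap\Delta)\xrightarrow{\d}\L(U\cap\Delta)\to 0$ and bottom row $0\to\Lambda_x\to T^*_xM\to \sT_x\to 0$, connected by the evaluation maps $f\mapsto \d f(x)$ and $\varepsilon_U$. The middle terms $\mathcal{C}^\infty(U\cap\Delta)$ and $T^*_xM$ are $\R$-vector spaces on which the finite group $\B_x$ acts linearly, so their higher $\B_x$-cohomology vanishes; the connecting homomorphisms then identify $\Phi_x$ in degree one with the map $H^2(\B_x,(\mathcal{C}^\infty)_\Lambda(U\cap\Delta))\to H^2(\B_x,\Lambda_x)$ induced by $f\mapsto \d f(x)$. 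Since over connected $U$ one has $(\mathcal{C}^\infty)_\Lambda(U\cap\Delta)\cong \Lambda_x\oplus\R$ (a flat lattice primitive together with an additive constant), with $\R$ the trivial module, this last map is an isomorphism. Thus $\Phi_x$ is already an isomorphism for each good $U$, hence in the colimit.

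With the first assertion in hand, the second amounts to matching the $\H^1(\underline\Delta)$-action, pulled back along (\ref{eqn:secondmapsesorblerspecseq}), with the first structure theorem action underlying Theorem \ref{thirdstrthm}. I would verify this pointwise: for $\textrm{c}\in H^1(\B\vert_\Delta,\L)$ represented by a Čech cocycle $\tau$ of Lagrangian bisections, the twisted space $J_\tau$ from the proof of Theorem \ref{firststrthm} has, at a point $x\in\Delta$, a symplectic normal representation obtained from that of $J$ by twisting the $\G_x$-action by the germ of $\tau$ restricted to the isotropy $\B_x$. By Remark \ref{firststrthmlinversrem} and Lemma \ref{smpeqtorrepindisoextinvprop} this changes the ext-class $e(J)_x\in I^1(\G_x,\sT_x)$ precisely by the $H^1(\B_x,\sT_x)$-action of the class of $\tau\vert_{\B_x}$, which is $\Phi_x(\textrm{c})$; hence $e(\textrm{c}\cdot[J])_x=\Phi_x(\textrm{c})\cdot e(J)_x$, exactly the stalk of the $\H^1$-action by the image of $\textrm{c}$ under (\ref{eqn:secondmapsesorblerspecseq}). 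The main obstacle is this last step: extracting the pointwise cocycle twist of the symplectic normal representation from the global gluing construction of $J_\tau$, and checking its compatibility with the identification of $\Phi_x$ as restriction-to-isotropy, is where the bookkeeping is heaviest. The clean dévissage establishing the stalk isomorphism $\Phi_x$, by contrast, is the conceptual heart of the argument.
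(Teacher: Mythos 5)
Your reduction to stalks, and the d\'evissage you use to identify the stalk of $\H^1$ at $\O_x$ with $H^1(\B_x,\sT_x)$, is sound and is essentially the content of the paper's Lemma \ref{linthmintaffmoreqprop}$a$ (the paper runs it in the linearized model $(G\ltimes\g^*,-\d\lambda_\textrm{can})$ with a $\Gamma$-fixed point and the sequences $0\to \mathcal{C}^\infty(\Lambda)\to\Omega^1_{\textrm{cl}}\to\L\to 0$, $0\to\Lambda_T\to\g\to T\to 0$, rather than your function-based sequences in an etale chart, but the two computations are interchangeable). The genuine gap is that this stalk isomorphism alone does not yield the first assertion of the theorem. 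What must be proved is that the point-wise rule $y\mapsto\Phi_y(\textrm{c})\cdot\sigma(y)$ carries a \emph{flat} section $\sigma$ to a \emph{flat} section; your phrase ``the stalkwise torsor structure makes $\mathcal{I}^1$ a sheaf of $\H^1$-torsors'' assumes exactly this. Stalk-wise torsors do not assemble into a sheaf of torsors for free: the action must first be well defined on sections, and flatness of $\textrm{c}\cdot\sigma$ is precisely where the paper needs part $b$ of Lemma \ref{linthmintaffmoreqprop} \textemdash the compatibility $\Phi_y=(i_{\Gamma_y})^*\circ\Phi_x$ relating the map at the (fixed) central point to the maps at nearby points $y$ \textemdash together with Lemma \ref{lem:moreqcompwithstalkhomfirstdirim} and the centered-section characterization of flatness. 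Your d\'evissage only produces the analogue of part $a$; it says nothing about how $\Phi_y$ for $y$ near $x$ interacts with restriction to isotropy, so a priori your action could take flat sections into $\mathcal{I}^1_\textrm{Set}\setminus\mathcal{I}^1$, and then both well-definedness and your gluing argument for transitivity (which also silently invokes this well-definedness when you form the local sections ``$c_V\cdot\sigma_1$'') collapse. Note also that your parenthetical claim that every stalk $I^1(\G_x,\sT_x)$ is non-empty is false in general: $\G_x$ need not be split (e.g.\ the normalizer of a maximal torus in $SU(2)$), and no Delzant condition forces splitness; this is harmless for the statement (everything holds vacuously on opens where $\mathcal{I}^1$ has no sections) but should not be used as an ingredient.

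For the second assertion you identify the right strategy \textemdash evaluate both actions on ext-classes at a point and match the twist of the symplectic normal representation with $\Phi_x(\textrm{c})$ via Lemma \ref{smpeqtorrepindisoextinvprop} \textemdash but you explicitly defer the verification you yourself call the main obstacle. That verification is the substance of the paper's proof: one fixes a Lemma \ref{proetgpoidlinlem}-neighbourhood $U$, computes $\Phi_x$ in \v{C}ech terms through the functor $i_U:\B_x\to\textsf{Emb}_\U(\B\vert_\Delta)$, and then constructs an explicit automorphism $\phi_\tau(g)=g\,\tau(U\xleftarrow{\sigma_{[g]}}U)(s(g))^{-1}$ of $(\G,\Omega)\vert_U$ which, paired with the symplectomorphism $j_U$, gives an equivalence of symplectic normal representations between $J$ and $J_\tau$ at points over $x$; the identity $\check{\Phi}_x([\tau])\cdot e(J)_x=e(J_\tau)_x$ then follows from Lemma \ref{smpeqtorrepindisoextinvprop} because $\phi_\tau$ is the identity on $\sT$. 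Without this construction (or a substitute), the claimed equality of the two actions remains unproved.
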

To prove this we will use the lemma below. 
\begin{lemma}\label{linthmintaffmoreqprop} Let $G$ be an infinitesimally abelian compact Lie group and consider the regular and proper symplectic groupoid $(\G,\Omega):=(G\ltimes \g^*,-\d \lambda_{\textrm{can}})$ over $M:=\g^*$, with associated orbifold groupoid $\B:=\G/\sT=\Gamma\ltimes \g^*$, where $\Gamma:=G/T$ with $T$ the identity component of $G$. Suppose that $\underline{\Delta}\subset \underline{M}=\g^*/G$ is a Delzant subspace with the property that the corresponding invariant subspace $\Delta$ in $\g^*$ is convex. Let $x\in \Delta^\Gamma$ be a $\Gamma$-fixed point in $\Delta$. 
\begin{itemize}\item[a)] The map as in (\ref{stalkmapfirsthigherimage}):
\begin{equation*} \Phi_x:H^n(\Gamma\ltimes\Delta,\L)\to H^n(\Gamma,T)
\end{equation*} is an isomorphism in all degrees $n>0$.
\item[b)] For any $y\in \Delta$, the maps $\Phi_x$ and $\Phi_y$ are related via restriction, meaning that these fit into a commutative diagram:
\begin{center} 
\begin{tikzcd}
H^n(\Gamma\ltimes\Delta,\L)\arrow[r,"\Phi_x"]\arrow[dr,"\Phi_y"'] & H^n(\Gamma,T)\arrow[d,"(i_{\Gamma_y})^*"]\\
& H^n(\Gamma_y,T)
\end{tikzcd}
\end{center} {for each $n>0$}, where $i_{\Gamma_y}:\Gamma_y\hookrightarrow{\Gamma}$ denotes the inclusion.
\end{itemize}
\end{lemma}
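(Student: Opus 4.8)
<br>

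The plan is to prove Lemma \ref{linthmintaffmoreqprop} by analyzing the two maps in (\ref{stalkmapfirsthigherimage}) separately, using the product structure of the integral affine orbifold groupoid $\Gamma\ltimes\g^*$ together with the convexity of $\Delta$.

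\textbf{Part a).} First I would unravel the equivariant cohomology $H^n(\Gamma\ltimes\Delta,\L)$. Since $\Gamma$ is a finite group acting on the convex set $\Delta$ fixing $x$, there is a Leray-type (or Hochschild--Serre) comparison: the $\Gamma$-sheaf $\L$ on $\Delta$ has underlying sheaf cohomology $H^n(\Delta,\L)$, and I would invoke Lemma \ref{vanlem} to conclude that $H^n(\Delta,\L)=0$ for all $n>0$ because $\Delta$ is convex. By the spectral sequence of the étale groupoid $\Gamma\ltimes\Delta$ (the Grothendieck spectral sequence for $(q_\Delta)_*$ discussed in Subsection \ref{explcdescrpactthirdstrthmsec}, which in the finite-group case degenerates since the higher direct images $\mathcal{H}^q=R^q(q_\Delta)_*\L$ vanish for $q>0$ by the same convexity argument applied to invariant neighbourhoods of each orbit), the edge map gives an isomorphism
\begin{equation*} H^n(\Gamma\ltimes\Delta,\L)\cong H^n(\Gamma,\Gamma(\Delta,\L)).
\end{equation*}
The restriction $(i_x)^*$ composed with $(\mathrm{ev}_x)_*$ is then precisely the map on group cohomology induced by the $\Gamma$-equivariant evaluation $\Gamma(\Delta,\L)\to \sT_x=T$ at the fixed point $x$. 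So it suffices to show that $\mathrm{ev}_x:\Gamma(\Delta,\L)\to T$ induces an isomorphism on $H^n(\Gamma,-)$ for $n>0$. For this I would use the short exact sequences from the proof of Lemma \ref{vanlem}, namely $0\to(\mathcal{C}^\infty)_\Lambda\to\mathcal{C}^\infty\xrightarrow{\d}\L\to0$ and $0\to\R\to(\mathcal{C}^\infty)_\Lambda\to\mathcal{C}^\infty(\Lambda)\to0$, now read $\Gamma$-equivariantly over the convex $\Delta$. Since $\mathcal{C}^\infty$ is a fine $\Gamma$-sheaf (it admits $\Gamma$-invariant partitions of unity, $\Gamma$ being finite and acting properly), its higher $\Gamma$-equivariant cohomology vanishes, and $\mathcal{C}^\infty(\Lambda)$ is the constant sheaf $\Lambda^*_T$; convexity forces its global sections to be all of $\Lambda^*_T$ with the evaluation at $x$ an isomorphism. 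Chasing the resulting long exact sequences in $\Gamma$-cohomology shows $\mathrm{ev}_x$ induces an isomorphism $H^n(\Gamma,\Gamma(\Delta,\L))\cong H^n(\Gamma,T)$ for $n>0$, which is exactly $\Phi_x$.

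\textbf{Part b).} The compatibility of $\Phi_x$ and $\Phi_y$ via restriction is essentially functoriality of the two constituent maps. Both $\Phi_x$ and $\Phi_y$ are built from restriction along the inclusions of isotropy groups $i_{\Gamma_x}=i_\Gamma:\{x\}\hookrightarrow\Delta$ and $i_{\Gamma_y}:\{y\}\hookrightarrow\Delta$ of the groupoid $\Gamma\ltimes\Delta$, followed by evaluation. Since $x$ is a $\Gamma$-fixed point, $\Gamma_x=\Gamma$, and the inclusion of isotropy groups $\Gamma_y\hookrightarrow\Gamma$ is realized by a morphism of the approximating data. The commutativity then follows because the evaluation maps $\mathrm{ev}_x$ and $\mathrm{ev}_y$ on stalks of $\L$ are compatible with the $\Gamma_y$-equivariant restriction of sections (a germ of a $\Gamma$-invariant, hence $\Gamma_y$-invariant, isotropic section of $\sT$ evaluated at $y$ is the $\Gamma_y$-restriction of its value at $x$ under the flat identification provided by the product integral affine structure $\sT=T\times\g^*$). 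Concretely, I would note that over the trivially-bundled $(\sT,\Omega_\Lambda)=(T\times\g^*,\ldots)$ a global isotropic section is determined by a single element of $T$ up to the lattice, and tracing this through the definitions makes the triangle commute on the level of cochains.

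\textbf{Main obstacle.} The genuinely delicate point is Part a): verifying that the \emph{evaluation} map at the fixed point $x$, rather than merely the abstract vanishing of higher cohomology, induces the isomorphism on $\Gamma$-cohomology with the correct identification $\sT_x\cong T$ as $\Gamma$-modules. The subtlety is that one must keep track of the $\Gamma$-module structures throughout the two short exact sequences — the coadjoint $\Gamma$-action on $T$ must match the action on global sections restricted to $x$ — and confirm that the fineness of $\mathcal{C}^\infty$ and the convexity-driven vanishing are both compatible with the $\Gamma$-equivariance so that the edge maps of the spectral sequence are genuine isomorphisms and not merely isomorphisms up to the action. Handling the constant-sheaf term $\mathcal{C}^\infty(\Lambda)=\Lambda_T^*$ as a (possibly non-trivial) $\Gamma$-module, and checking that its contribution to the long exact sequence is annihilated by the combination of convexity and fineness, is where the argument requires the most care.
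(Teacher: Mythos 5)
Your part a) rests on a false vanishing claim, and this is a genuine gap. You invoke the Grothendieck--Leray spectral sequence for $(q_\Delta)_*$ and assert that the higher direct images $\H^q=R^q(q_\Delta)_*(\L)$ vanish for $q>0$ ``by the same convexity argument applied to invariant neighbourhoods of each orbit''. But the stalk of $\H^q$ at an orbit $\O_y$ is the direct limit of the \emph{equivariant} cohomologies $H^q(\Gamma\ltimes U,\L)$ over invariant opens $U$ around $\O_y$, not of the plain sheaf cohomologies $H^q(U,\L)$; Lemma \ref{vanlem} kills only the latter. In fact, by the very lemma you are proving (combined with Morita invariance, Corollary \ref{cor:morinvshfcohom:iaequiv}), these stalks are $H^q(\Gamma_y,T)$, which are nonzero in general; their nonvanishing is essential elsewhere in the paper, since $\H^1$ acts freely and transitively on the ext-sheaf $\mathcal{I}^1$ (Theorem \ref{thm:extsheaftorsorfirstdirectimage}), whose stalks are visibly nontrivial in Example \ref{example:concretecompsecondstrthm} (Figure \ref{figure2}). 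Moreover, even if the vanishing held, the Leray sequence for $(q_\Delta)_*$ would identify $H^n(\Gamma\ltimes\Delta,\L)$ with $H^n(\underline{\Delta},\underline{\L})$ (which vanishes in degree one for convex $\Delta$ by Proposition \ref{prop:strgps:caseofgpactions}$b$), not with the group cohomology $H^n(\Gamma,\L(\Delta))$ you want; so your argument would force $H^1(\Gamma\ltimes\Delta,\L)=0$, contradicting part a) itself whenever $H^1(\Gamma,T)\neq 0$. What you actually need is the other spectral sequence of the $\Gamma$-action, $H^p(\Gamma,H^q(\Delta,\L))\Rightarrow H^{p+q}(\Gamma\ltimes\Delta,\L)$, whose degeneration \emph{does} follow from Lemma \ref{vanlem}; with that substitution the first step can be repaired. (The paper avoids spectral sequences altogether: it works in the \v{C}ech model $\check{H}^\bullet_\U$ with $\Delta\in\U$, uses the functors $i_\Delta$ and $r_\Delta$ with explicit homotopies, and replaces your $\mathcal{C}^\infty$-based sequences by $0\to\mathcal{C}^\infty(\Lambda)\to\Omega^1_{\textrm{cl}}\to\L\to 0$, which maps onto $0\to\Lambda_T\to\g\to T\to 0$ by genuine evaluation at $x$; with your sequences, evaluation of functions at $x$ lands in $\R$ rather than $\g$, so the comparison maps must instead be $f\mapsto \d f_x$ --- this also works, but needs to be said and checked for $\Gamma$-equivariance.)

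Part b) also has a gap. Your justification rests on the claim that a $\Gamma$-invariant global isotropic section of $\sT=T\times\g^*$ ``is determined by a single element of $T$'', so that its value at $y$ is the restriction of its value at $x$. This is false: isotropic sections of $\sT$ over $\Delta$ are precisely closed $1$-forms modulo lattice-valued sections, an infinite-dimensional space, and they are nowhere near constant; the cochains computing $H^n(\Gamma,\L(\Delta))$ take values in all of $\L(\Delta)$, not only in constant sections. The triangle commutes only at the level of cohomology, and that requires an argument. The paper's is to introduce the map of $\Gamma$-modules $\tau_\Delta:T\to\L(\Delta)$ given by constant sections: since $\textrm{ev}_x\circ\tau_\Delta=\textrm{id}_T$ and $(\textrm{ev}_x)_*$ is an isomorphism by part a), $(\tau_\Delta)_*$ is its inverse, so it suffices to check the square with $(\tau_\Delta)_*$ in place of $((\textrm{ev}_x)_*)^{-1}$, which is immediate on cocycles because $\textrm{ev}_y\circ\tau_\Delta=\textrm{id}_T$ as well. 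Some replacement for this step is required in your write-up.
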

\begin{proof} Let $\U$ be the basis of $\Delta$ consisting of all convex opens. By assumption, $\Delta\in \U$. Moreover, by Lemma \ref{vanlem}, $\L$ is $\U$-acyclic. To prove part $a$, we will show that the composition:
\begin{equation}\label{eqn:linthmintaffmoreqprop}
\check{H}^n_\U(\Gamma\ltimes\Delta,\L)\xrightarrow{(\ref{eqn:cechtoderivedmorph:equivshf})} H^n(\Gamma\ltimes\Delta,\L) \xrightarrow{\Phi_x} H^n(\Gamma,T)
\end{equation} is an isomorphism. Consider the $\Gamma$-module $\L(\Delta)$ of global section of $\L$, with action given by: 
\begin{equation*} (\tau\cdot \gamma)(y)=\tau(\gamma\cdot y)\cdot (\gamma,y)\in \sT,\quad \gamma\in \Gamma, \quad y\in \Delta.
\end{equation*} The map (\ref{eqn:linthmintaffmoreqprop}) is equal to the composition: 
\begin{equation*} \check{H}^n_\U(\Gamma\ltimes\Delta,\L)\xrightarrow{(i_\Delta)^*}H^n(\Gamma,\L(\Delta))\xrightarrow{(\textrm{ev}_x)_*} H^n(\Gamma,T),
\end{equation*} where $\textrm{ev}_x:\L(\Delta)\to T$ is the map of $\Gamma$-modules given by evaluation at $x$ and $i_\Delta$ is the functor:
\begin{equation*} \Gamma\to \textsf{Emb}_\U(\Gamma\ltimes\Delta),\quad \gamma \mapsto (\Delta\xleftarrow{\sigma_\gamma} \Delta),
\end{equation*} where we view $\Gamma$ as category with a single object and $\sigma_\gamma$ denotes the bisection with $\Gamma$-component constantly equal to $\gamma$. Consider the short exact sequence of $(\Gamma\ltimes\Delta)$-sheaves:
\begin{equation}\label{eqn:sesBsheaveslagrsec} 0\to \mathcal{C}^\infty(\Lambda)\to \Omega^1_{\textrm{cl}}\xrightarrow{(\ref{exptorbun})} \L\to 0,
\end{equation} where $\mathcal{C}^\infty(\Lambda)$ is the sheaf of smooth local sections of $\Lambda\vert_\Delta$, with $\Lambda$ the lattice bundle associated to $(\G,\Omega)$, and $\Omega^1_{\textrm{cl}}$ is the sheaf of closed $1$-forms on the manifold with corners $\Delta$, each equipped with the $(\Gamma\ltimes\Delta)$-action defined like that on $\L$ (see Example \ref{ex:orbifoldsheafoflagsec}). 
Since $\U$ consists of convex opens, any section of $\L$ over an open $U\in \U$ lifts to a closed $1$-form on $U$ along the universal covering map $T^*\g^*\to \sT$. It follows from this that global sections of (\ref{eqn:sesBsheaveslagrsec}) form an exact sequence of $\Gamma$-modules and the sequence of pre-sheaves on $\textsf{Emb}_\U(\Gamma\ltimes\Delta)$ associated to (\ref{eqn:sesBsheaveslagrsec}) is exact too. Moreover, we have a short exact sequence of $\Gamma$-modules:
\begin{equation*} 0\to \Lambda_T\to \g\to T\to 0. 
\end{equation*}
The associated long exact sequences in cohomology give rise to a commutative diagram:
\begin{center}
\begin{tikzcd}
\check{H}^n_\U(\Gamma\ltimes\Delta,\Omega^1_{\textrm{cl}})\arrow[r,""]\arrow[d,"(i_\Delta)^*"] &  \check{H}^n_\U(\Gamma\ltimes\Delta,\L)\arrow[r]\arrow[d,"(i_\Delta)^*"]  &  \check{H}^{n+1}_\U(\Gamma\ltimes\Delta,\mathcal{C}^\infty(\Lambda))\arrow[r]\arrow[d,"(i_\Delta)^*"]  & \check{H}^{n+1}_\U(\Gamma\ltimes\Delta,\Omega^1_{\textrm{cl}})\arrow[d,"(i_\Delta)^*"] \\ 
H^n(\Gamma,\Omega^1_{\textrm{cl}}(\Delta))\arrow[r]\arrow[d,"(\textrm{ev}_x)_*"] & H^n(\Gamma,\L(\Delta))\arrow[r]\arrow[d,"(\textrm{ev}_x)_*"] & H^{n+1}(\Gamma,\mathcal{C}^\infty(\Lambda)(\Delta))\arrow[r]\arrow[d,"(\textrm{ev}_x)_*"] & H^{n+1}(\Gamma,\Omega^1_{\textrm{cl}}(\Delta))\arrow[d,"(\textrm{ev}_x)_*"]\\
H^n(\Gamma,\g)\arrow[r] & H^n(\Gamma,T)\arrow[r] & H^{n+1}(\Gamma,\Lambda_T)\arrow[r] & H^{n+1}(\Gamma,\g)
\end{tikzcd}
\end{center} with exact rows. The outer left and right groups in the middle and bottom row vanish, because the group cohomology of an $\R$-linear representation of a finite group vanishes in all degrees $n>0$. The outer left and right groups in the top row vanish as well. One way to see this is to note that an explicit null-homotopy in strictly positive degree is given by:
\begin{equation*} h:\check{C}^{\bullet+1}_\U(\Gamma\ltimes\Delta,\Omega^1_{\textrm{cl}})\to \check{C}^{\bullet}_\U(\Gamma\ltimes\Delta,\Omega^1_{\textrm{cl}}), \quad h(c)(U_0\xleftarrow{\sigma_1}\dots \xleftarrow{\sigma_{n}} U_{n})=\frac{1}{|\Gamma|}\sum_{\gamma\in \Gamma}c(\Delta\xleftarrow{\sigma_\gamma}U_0\xleftarrow{\sigma_1} \dots \xleftarrow{\sigma_{n}} U_{n}).
\end{equation*} In view of this, it is enough to show that each of the maps in the composition:
\begin{equation*} \check{H}^{n+1}_\U(\Gamma\ltimes\Delta,\mathcal{C}^\infty(\Lambda)) \xrightarrow{(i_\Delta)^*} H^{n+1}(\Gamma,\mathcal{C}^\infty(\Lambda)(\Delta))\xrightarrow{(\textrm{ev}_x)_*} H^{n+1}(\Gamma,\Lambda_T)
\end{equation*} is an isomorphism. For this, note that $\Lambda=\Lambda_T\times \g^*$ (viewed as subset of $\g\times \g^*=T^*\g^*$). In view of this, the map of $\Gamma$-modules $\textrm{ev}_x:\mathcal{C}^\infty(\Lambda)(\Delta)\to \Lambda_T$ is an isomorphism (since $\Delta$ is connected), hence so is the induced map of cohomology groups $(\textrm{ev}_x)_*$. Moreover, the pre-sheaf on $\textsf{Emb}_\U(\Gamma\ltimes\Delta)$ induced by $\mathcal{C}^\infty(\Lambda)$ is the pull-back of the $\Gamma$-module $\mathcal{C}^\infty(\Lambda)(\Delta)$ along the functor $r_\Delta:\textsf{Emb}_\U(\Gamma\ltimes\Delta)\to \Gamma$ that sends an arrow $V\xleftarrow{\sigma} U$ to the value of the $\Gamma$-component of $\sigma$ (which is constant as $U$ is connected). So, there is the morphism: 
\begin{equation*} (r_\Delta)^*:H^{\bullet}(\Gamma,\mathcal{C}^\infty(\Lambda)(\Delta))\to\check{H}_\U^{\bullet}(\Gamma\ltimes\Delta,\mathcal{C}^\infty(\Lambda)).
\end{equation*} This is inverse to $(i_\Delta)^*$. Indeed, on the level of complexes $(i_\Delta)^*\circ (r_\Delta)^*$ is the identity map (since $r_\Delta\circ i_\Delta=\textrm{id}_{\Gamma}$) and $(r_\Delta)^*\circ (i_\Delta)^*$ is homotopic to the identity map via the homotopy $H$ given by:
\begin{equation*} H(c)(U_0\xleftarrow{\sigma_1}\dots \xleftarrow{\sigma_{n}} U_{n})=\sum_{i=0}^{n}(-1)^ic(\Delta\xleftarrow{\sigma_{\gamma_1}}\dots \xleftarrow{\sigma_{\gamma_i}} \Delta\hookleftarrow U_i \xleftarrow{\sigma_{i+1}}\dots \xleftarrow{\sigma_{n}} U_{n}),
\end{equation*} where $\Delta\hookleftarrow U_i$ is the arrow with underlying bisection the unit map and $\gamma_i$ is the constant value of the $\Gamma$-component of $\sigma_i$ (so that $\sigma_i=\sigma_{\gamma_i}\vert_{U_i}$). So, both $(i_\Delta)^*$ and $(\textrm{ev}_x)_*$ are isomorphisms. This proves part $a$. For part $b$, note that the composition:
\begin{equation*} \check{H}^n_\U(\Gamma\ltimes\Delta,\L)\xrightarrow{(\ref{eqn:cechtoderivedmorph:equivshf})} H^n(\Gamma\ltimes\Delta,\L) \xrightarrow{\Phi_y} H^n(\Gamma_y,T)
\end{equation*} is equal to the composition:
\begin{equation*}
\check{H}^n_\U(\Gamma\ltimes\Delta,\L)\xrightarrow{(i_\Delta)^*}H^n(\Gamma,\L(\Delta))\xrightarrow{(i_{\Gamma_y})^*}H^n(\Gamma_y,\L(\Delta))\xrightarrow{(\textrm{ev}_y)_*} H^n(\Gamma_y,T),
\end{equation*} where $\textrm{ev}_y:\L(\Delta)\to T$ is the map of $\Gamma_y$-modules given by evaluation at $y$ and $i_\Delta$ is as before. So, it suffices to show that:
\begin{center}
\begin{tikzcd}
H^n(\Gamma,\L(\Delta))\arrow[r,"(\textrm{ev}_x)_*"]\arrow[d,"(i_{\Gamma_y})^*"] & H^n(\Gamma,T)\arrow[d,"(i_{\Gamma_y})^*"] \\
H^n(\Gamma_y,\L(\Delta))\arrow[r,"(\textrm{ev}_y)_*"'] & H^n(\Gamma_y,T)
\end{tikzcd} 
\end{center} commutes. Consider the map of $\Gamma$-modules $\tau_\Delta:T\to \L(\Delta)$ that sends $t\in T$ to the section of $ \sT=T\times \g^*$ with $T$-component constantly equal to $t$. Since $\textrm{ev}_x\circ \tau_\Delta=\textrm{id}_T$, the induced map: 
\begin{equation*} (\tau_\Delta)_*:H^n(\Gamma,T)\to H^n(\Gamma,\L(\Delta))
\end{equation*} is the inverse of the isomorphism $(\textrm{ev}_x)_*$. Therefore, it is enough to check the commutativity of:
\begin{center}
\begin{tikzcd}
H^n(\Gamma,\L(\Delta))\arrow[d,"(i_{\Gamma_y})^*"] & H^n(\Gamma,T)\arrow[d,"(i_{\Gamma_y})^*"]\arrow[l,"(\tau_\Delta)_*"'] \\
H^n(\Gamma_y,\L(\Delta))\arrow[r,"(\textrm{ev}_y)_*"'] & H^n(\Gamma_y,T)
\end{tikzcd} 
\end{center} and this is readily verified on the level of cocycles, using the fact that $\textrm{ev}_y\circ \tau_\Delta=\textrm{id}_T$. 
\end{proof} 
We will also use the lemma below, which we leave for the reader to verify.
\begin{lemma}\label{lem:moreqcompwithstalkhomfirstdirim} Suppose we are given a symplectic Morita equivalence between regular and proper symplectic groupoids $(\G_1,\Omega_1)\rightrightarrows M_1$ and $(\G_2,\Omega_2)\rightrightarrows M_2$ that relates a Delzant subspace $\underline{\Delta}_1\subset \underline{M}_1$ to a Delzant subspace $\underline{\Delta}_2\subset \underline{M}_2$. For any $p\in P$, the isomorphism in cohomology induced (as in Corollary \ref{cor:morinvshfcohom:iaequiv}) by the associated integral affine Morita equivalence (Example \ref{sympmoreqiamoreq}) fits into a commutative square: 
\begin{center}
\begin{tikzcd}
H^1(\B\vert_{\Delta_1},\L_1)\arrow[d,"\sim" {anchor=south, rotate=90}]\arrow[r,"\Phi_{x_1}"] & H^1(\B_{x_1},\sT_{x_1})\arrow[d,"(\phi_p)_*", "\sim"' {anchor=south, rotate=90}]\\
H^1(\B_2\vert_{\Delta_2},\L_2)\arrow[r,"\Phi_{x_2}"] & H^1(\B_{x_2},\sT_{x_2}) 
\end{tikzcd}
\end{center} where $x_1=\alpha_1(p)$, $x_2=\alpha_2(p)$ and $\phi_p:\G_{x_1}\xrightarrow{\sim} \G_{x_2}$ is the isomorphism of isotropy groups (\ref{moreqindisoisotgps}). 
\end{lemma}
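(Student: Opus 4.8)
The statement to prove is Lemma \ref{lem:moreqcompwithstalkhomfirstdirim}: a symplectic Morita equivalence relating Delzant subspaces induces, via the associated integral affine Morita equivalence, an isomorphism in equivariant sheaf cohomology that commutes with the stalk-homomorphisms $\Phi_{x_i}$ through the Lie group isomorphism $\phi_p$. The author explicitly leaves this for the reader, so the plan is to unwind both vertical and both horizontal maps to the level of germs/cocycles and check naturality directly.

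\textbf{Plan and key steps.} First I would recall the two ingredients entering the square. The left vertical isomorphism is the one from Corollary \ref{cor:morinvshfcohom:iaequiv}, which at the level of sheaves is the isomorphism $P_*(\L_{\Delta_1})\cong \L_{\Delta_2}$ described in the proof of Proposition \ref{prop:morinvshfcohom}: over a simple open it is given by pull-back along a local section $\alpha_1\circ\sigma$ of $\alpha_2$, whose differential lifts $\psi^{-1}_{\sigma(x)}$ as in Remark \ref{altdefiamoreqrem}. The horizontal map $\Phi_{x_i}=(\mathrm{ev}_{x_i})_*\circ (i_{x_i})^*$ (defined in (\ref{stalkmapfirsthigherimage})) restricts a class to the isotropy group $\B_{x_i}$ and then evaluates the stalk $\L_{x_i}$ into the fiber $\sT_{x_i}$. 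The right vertical map is the group-cohomology isomorphism $(\phi_p)_*$ induced by the isotropy isomorphism $\phi_p$ of (\ref{moreqindisoisotgps}). So the whole square lives, after restriction, over the single point $x_i$, and the claim becomes a statement about compatibility of the sheaf isomorphism with evaluation at $x_i$ and with the isotropy-group identification.

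\textbf{Main reduction.} The key observation I would isolate is that the sheaf isomorphism $P_*(\L_{\Delta_1})\cong\L_{\Delta_2}$, when restricted to the isotropy groups over $x_1$ and $x_2$ and composed with evaluation into the torus fibers, is compatible with $\phi_p$ in the precise sense that the diagram of \emph{coefficient} maps
\begin{center}
\begin{tikzcd}
\L_{x_1}\arrow[r,"\mathrm{ev}_{x_1}"]\arrow[d] & \sT_{x_1}\arrow[d,"(\phi_p)_*"] \\
\L_{x_2}\arrow[r,"\mathrm{ev}_{x_2}"] & \sT_{x_2}
\end{tikzcd}
\end{center}
commutes and is equivariant for $\phi_p:\B_{x_1}\xrightarrow{\sim}\B_{x_2}$. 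Concretely, the local section $\sigma$ used in the sheaf isomorphism can be chosen through $p$, so that the induced identification of stalks at $x_1$ and $x_2$ is pull-back along $\alpha_1\circ\sigma$, and on the torus fibers this agrees with $\phi_p$ by Lemma \ref{sympmoreqnormreplem} (which relates the conormal identification $\psi_p$ to the isotropy identification $\phi_p$ via $\rho_\Omega$). The $\B$-action compatibility likewise follows because the Morita equivalence intertwines conjugation in $\G_1$ with conjugation in $\G_2$ through $\phi$, exactly as in the unravelling of (\ref{moreqindisoisotgps}). Once this coefficient-level square is established, functoriality of the cohomology operations $(\mathrm{ev}_{x_i})_*$ and $(i_{x_i})^*$ with respect to an equivariant morphism of coefficient systems gives the commutativity of the full square. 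A clean way to organize this is via a \v{C}ech model: pick a basis $\U_2$ for $\Delta_2$ of simple opens each admitting a section $\sigma_U$ as in the proof of naturality (Proposition \ref{naturalityofactprop}), take the cover $\{f_U(U)\}$ of $\Delta_1$, and use the explicit cochain map $\tau_1\mapsto\tau_2$ there together with the restriction-to-isotropy functor $i_\Delta$ as in Lemma \ref{linthmintaffmoreqprop}; then both routes around the square send a cocycle $\tau_1$ to the same isotropy cocycle on $\B_{x_2}$, namely $x\mapsto\phi_p(\tau_1(\cdots)(x_1))$, matching formula (\ref{relassociatedcocycleeq}) with $n=1$.

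\textbf{Expected obstacle.} The routine but delicate point will be tracking the base-point dependence: $\Phi_{x_i}$ is defined using the isotropy group at $x_i$, whereas the sheaf isomorphism is phrased over opens and depends a priori on the choice of local section $\sigma$. The main work is to verify that choosing $\sigma$ with $\sigma(x_2)=p$ makes the two restriction maps $(i_{x_1})^*$ and $(i_{x_2})^*$ correspond under $\phi_p$ on the nose, and that the evaluation maps are then intertwined by $(\phi_p)_*$ rather than by some other isomorphism differing by an inner automorphism. That the ambiguity is exactly an inner automorphism of $\G_{x_2}$ by an element of $\sT_{x_2}$ (from changing $p$ within its $\sT$-orbit) is harmless in cohomology by Lemma \ref{conjtoreltrivincohomlem}, so the diagram is independent of the choice of $p$ within a fiber; I would state this independence explicitly to close the argument. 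The remainder is bookkeeping of the type already carried out in the proof of Proposition \ref{naturalityofactprop}, so I would refer to that computation rather than repeat it.
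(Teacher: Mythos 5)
The paper gives no proof of this lemma (it is explicitly left to the reader), so the comparison is between your proposal and the verification the paper's toolkit clearly intends — and on that score your argument is essentially right. Your identification of the two composites at the cochain level is the correct core: choosing the section $\zeta_{U_2}$ of $\alpha_2$ with $\zeta_{U_2}(x_2)=p$ makes $\phi_{\zeta_{U_2}(x_2)}=\phi_p$ in formula (\ref{relassociatedcocycleeq}) and gives $f_{U_2}(x_2)=x_1$, while the defining relation $f(\sigma)(f_{U}(x))\cdot \underline{\zeta}_{U}(x)=\underline{\zeta}_{U}((t\circ\sigma)(x))\cdot\sigma(x)$, applied to a loop $\sigma_{\gamma_2}$ at $x_2$, shows that $f(\sigma_{\gamma_2})$ is exactly the bisection through the image of $\phi_p^{-1}(\gamma_2)$ in $(\B_1)_{x_1}$ (unique on a suitable open by Lemma \ref{proetgpoidlinlem}); so both routes send a cocycle $\tau_1$ to the group cocycle $\gamma_2\mapsto \phi_p\bigl(\tau_1(f_{U_2}(U_2)\xleftarrow{f(\sigma_{\gamma_2})}f_{U_2}(U_2))(x_1)\bigr)$, as you say. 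Your coefficient-level square is also correctly grounded: that the fiberwise torus identification $(\psi_p)_*$ agrees with $\phi_p\vert_{\sT_{x_1}}$ under (\ref{exptorbun2}) is precisely the content of Lemma \ref{sympmoreqnormreplem}, and is already recorded fiberwise in Example \ref{sympmoreqindiaeqisoofinvlagrsecex}. Likewise, your observation that replacing $p$ by $g\cdot p$ or $p\cdot h$ changes $\phi_p$ only by conjugations, which act trivially on $H^1$ by (the argument of) Lemma \ref{conjtoreltrivincohomlem}, is exactly what is needed for the square to commute for \emph{every} $p$ once it is checked for the convenient one.

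The one concrete omission is the reduction to the etale case. Everything you invoke — the embedding-category \v{C}ech model, the bisections of Lemma \ref{proetgpoidlinlem}, the computation of $\Phi_x$ as $\check{\Phi}_x=(\textrm{ev}_x)_*\circ (i_U)^*$, and formula (\ref{relassociatedcocycleeq}) itself (derived in the proof of Proposition \ref{naturalityofactprop} under the hypothesis that $\B_1,\B_2$ are etale) — is unavailable for general regular proper symplectic groupoids, for which the lemma is stated. You should first restrict to complete transversals $\Sigma_i$ through $x_i$, using (i) functoriality of the isomorphisms of Corollary \ref{cor:morinvshfcohom:iaequiv} under composition of Morita equivalences, and (ii) the instance of the square for the canonical transversal equivalence of Example \ref{transversalmoreqex}, which follows from your same computation with $p=1_{x_i}$, hence $\phi_p=\textrm{id}$. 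Relatedly, the bookkeeping you defer needs one explicit arrangement: $U_2\in\U_2$ containing $x_2$ must be chosen small enough that Lemma \ref{proetgpoidlinlem} applies both to $U_2$ and to $f_{U_2}(U_2)$ (the latter lying in $\U_1$, as in the proof of Proposition \ref{naturalityofactprop}), so that both $\check{\Phi}_{x_1}$ and $\check{\Phi}_{x_2}$ can be computed through the functors $i_{f_{U_2}(U_2)}$ and $i_{U_2}$. With these two points supplied, your proof is complete.
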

\begin{proof}[Proof of Theorem \ref{thm:extsheaftorsorfirstdirectimage}] Given an open $\underline{U}$ in $\underline{\Delta}$, the $H^1(\B\vert_U,\L)$-actions on the sets $I^1(\G_x,\sT_x)$ for $x\in U$ (described before the statement of Theorem \ref{thm:extsheaftorsorfirstdirectimage})
induce an action on the set $\mathcal{I}^1_\textrm{Set}(\underline{U})$ (denoting $\mathcal{I}^1_\textrm{Set}:=\mathcal{I}^1_{\textrm{Set},(\G,\Omega,\underline{\Delta})}$ as in Definition \ref{setheoreticinvsecshfextinvdefi}) given by:
\begin{equation*} 
(\textrm{c}\cdot \sigma)(x):=\Phi_{x}(\textrm{c})\cdot \sigma(x),\quad \textrm{c}\in H^1(\B\vert_U,\L),\quad \sigma\in \mathcal{I}^1_\textrm{Set}(\underline{U}),\quad x\in U,
\end{equation*} where on the right we use the action in Remark \ref{degonegpcohomtorsorrem}. Note here that $\textrm{c}\cdot \sigma$ is indeed $\B$-invariant and hence defines an element of $\mathcal{I}^1_\textrm{Set}(\underline{U})$. This can be seen using Lemma \ref{lem:moreqcompwithstalkhomfirstdirim}, applied to the identity equivalence. For varying $\underline{U}$, these actions are compatible with restriction of opens and so descend to an action of the sheaf $\H^1$ on the sheaf $\mathcal{I}^1_\textrm{Set}$. As we will now show, this action is free in the sense that the $\H^1(\underline{U})$-action on $\mathcal{I}^1_\textrm{Set}(\underline{U})$ is free for each open $\underline{U}$ in $\underline{\Delta}$. For this it is enough to show that each $\O\in \underline{U}$ admits an open neighbourhood $\underline{V}$ in $\underline{U}$ such that the $\H^1(\underline{V})$-action on $\mathcal{I}^1_\textrm{Set}(\underline{V})$ is free. To this end, note that for each such $\O$ there is (as in Subsection \ref{flatsecdefsec}) an invariant open $W$ around $\O$ in $M$, together with an infinitesimally abelian compact Lie group $G$, a $G$-invariant open $W_{\g^*}$ around the origin in $\g^*$, and a symplectic Morita equivalence:
\begin{center}
\begin{tikzpicture} \node (G1) at (-1.3,0) {$(\G,\Omega)\vert_{W}$};
\node (M1) at (-1.3,-1.3) {$W$};
\node (S) at (1.4,0) {$(P,\omega_P)$};
\node (M2) at (4.2,-1.3) {$W_{\g^*}$};
\node (G2) at (4.2,0) {$(G\ltimes \g^*,-\d \lambda_{\textrm{can}})\vert_{W_{\g^*}}$};
 
\draw[->,transform canvas={xshift=-\shift}](G1) to node[midway,left] {}(M1);
\draw[->,transform canvas={xshift=\shift}](G1) to node[midway,right] {}(M1);
\draw[->,transform canvas={xshift=-\shift}](G2) to node[midway,left] {}(M2);
\draw[->,transform canvas={xshift=\shift}](G2) to node[midway,right] {}(M2);
\draw[->](S) to node[pos=0.25, below] {$\text{ }\text{ }\alpha_1$} (M1);
\draw[->] (0.65,-0.15) arc (315:30:0.25cm);
\draw[<-] (2.05,0.15) arc (145:-145:0.25cm);
\draw[->](S) to node[pos=0.25, below] {$\alpha_2$\text{ }} (M2);
\end{tikzpicture}
\end{center} that relates the orbit $\O$ of $\G$ to the origin in $\g^*$. After possibly shrinking $W$, we can further arrange that $V:=W\cap \Delta\subset U$ and that the invariant subset $\Delta_{\g^*}$ of $\g^*$ related to $V$ is convex. By Lemma \ref{lem:moreqcompwithstalkhomfirstdirim}, for any choice of $x\in \O$ and $p\in \alpha_1^{-1}(x)$ we have a commutative square:
\begin{center}
\begin{tikzcd}
H^1(\B\vert_V,\L)\arrow[d, "\sim"'{anchor=south, rotate=90}]\arrow[r,"\Phi_x"] & H^1(\B_x,\sT_x)\arrow[d,"(\phi_p)_*", "\sim"' {anchor=south, rotate=90}]\\
H^1(\Gamma\ltimes \Delta_{\g^*},\L_{\Delta_{\g^*}})\arrow[r,"\Phi_x"] & H^1(\Gamma,T)
\end{tikzcd}
\end{center} in which the vertical maps are isomorphisms. This and part $a$ of Lemma \ref{linthmintaffmoreqprop} show that: 
\begin{equation}\label{eqn:stalkmaphigherdirimiso} \Phi_x:H^1(\B\vert_V,\L)\to H^1(\B_x,\sT_x)
\end{equation} is an isomorphism as well. It follows that if $\textrm{c}\in H^1(\B\vert_V,\L)$ and $\sigma\in \mathcal{I}^1_\textrm{Set}(\underline{V})$ are such that $\textrm{c}\cdot \sigma=\sigma$, then $\Phi_x(\textrm{c})\cdot \sigma(x)=\sigma(x)$, hence $\Phi_x(\textrm{c})=0$ (since the action in Remark \ref{degonegpcohomtorsorrem} is free), and so $\textrm{c}=0$. So, the $\H^1(\underline{V})$-action on $\mathcal{I}^1_\textrm{Set}(\underline{V})$ is free. This shows freeness of the $\H^1$-action on $\mathcal{I}^1_\textrm{Set}$. \\

Next, we will show that it restricts to a transitive action on $\mathcal{I}^1$. This would prove the first part of the theorem, since stalk-wise this $\H^1$-action on $\mathcal{I}^1$ would be given by the actions described before the statement of Theorem \ref{thm:extsheaftorsorfirstdirectimage}. To show that the action restricts, it is enough to show that for each open $\underline{U}$ the $H^1(\B\vert_U,\L)$-action on $\mathcal{I}^1_\textrm{Set}(\underline{U})$ preserves $\mathcal{I}^1(\underline{U})$. Let $\sigma\in \mathcal{I}(\underline{U})$ and $\O\in \underline{U}$. Since $\sigma$ is flat, there is a symplectic Morita equivalence as above that relates $\sigma\vert_{\underline{V}}$ (via Lemma \ref{shfisodiscinvseclem}) to a centered section (Definition \ref{centeredsecdefi}). In view of Lemma \ref{lem:moreqcompwithstalkhomfirstdirim} and part $b$ of Lemma \ref{linthmintaffmoreqprop}, for any choice of $x\in \O$, $p\in \alpha_1^{-1}(x)$ and $q\in Q$ the diagram:
\begin{center}
\begin{tikzcd} 
H^1(\B\vert_V,\L)\arrow[r,"\Phi_x"]\arrow[rd,"\Phi_y"']  & H^1(\B_x,\sT_x)\arrow[r,"(\phi_p)_*"] & H^1(\Gamma,T)\arrow[d,"(i_{\Gamma_\alpha})^*"]\\
& H^1(\B_y,\sT_y) \arrow[r,"(\phi_q)_*"] & H^1(\Gamma_{\alpha},T)
\end{tikzcd}
\end{center} commutes, where $y:=\alpha_1(q)$ and $\alpha:=\alpha_2(q)$. Because of this, the section related to $(\textrm{c}\cdot \sigma)\vert_{\underline{V}}$ by the Morita equivalence is centered as well, and so $\textrm{c}\cdot \sigma$ is flat at $\O$, for any $\textrm{c}\in H^1(\B\vert_U,\L)$. So, the $H^1(\B\vert_U,\L)$-action indeed preserves $\mathcal{I}^1(\underline{U})$. For transitivitity, let $\sigma_1,\sigma_2\in \mathcal{I}^1(\underline{U})$ and $\O\in \underline{U}$. In view of Remark \ref{extinvflatindchartrem}, there is a symplectic Morita equivalence as above that relates both $\sigma_1\vert_{\underline{V}}$ and $\sigma_2\vert_{\underline{V}}$ to centered sections. Consider an $x\in \O$. By surjectivity of (\ref{eqn:stalkmaphigherdirimiso}) and transitivity of the $H^1(\B_x,\sT_x)$-action on $I^1(\G,\sT_x)$, there is a $\textrm{c}_V\in H^1(\B\vert_V,\L)$ such that $\Phi_x(\textrm{c}_V)\cdot \sigma_1(x)=\sigma_2(x)$. From commutativity of the last diagram above and the fact that both $\sigma_1\vert_{\underline{V}}$ and $\sigma_2\vert_{\underline{V}}$ are related to centered sections by the Morita equivalence, it follows that in fact $\textrm{c}_V\cdot {\sigma_1}\vert_{\underline{V}}=\sigma_2\vert_{\underline{V}}$. This shows that there is an open cover $\underline{\mathcal{V}}$ of $\underline{U}$ with a collection $\{\textrm{c}_{V}\in H^1(\B\vert_V,\L)\mid \underline{V}\in \underline{\mathcal{V}}\}$ such that $\textrm{c}_V\cdot {\sigma_1}\vert_{\underline{V}}=\sigma_2\vert_{\underline{V}}$ for each $\underline{V}\in \underline{\mathcal{V}}$. Since the $\H^1$-action is free, for any $\underline{V}_1,\underline{V}_2$ the $\H^1$-germs of $\textrm{c}_{V_1}$ and $\textrm{c}_{V_2}$ must coincide at all points in $\underline{V}_1\cap\underline{V}_2$. So, these glue to a section $\textrm{c}\in \H^1(\underline{U})$ such that $\textrm{c}\cdot \sigma_1=\sigma_2$. This shows that the $\mathcal{H}^1$-action on $\mathcal{I}^1$ is indeed transitive. \\

It remains to prove the second part of the theorem. We ought to show that the map (\ref{extinvariantmapthrdstrthm}) is $H^1(\B\vert_\Delta,\L)$-equivariant with respect to the action in the first structure theorem and that on $\mathcal{I}^1(\underline{\Delta})$ obtained from the $\H^1(\underline{\Delta})$-action via the group homomorphism (\ref{eqn:secondmapsesorblerspecseq}). After restricting to a complete transversal the proof reduces to the case in which $\B$ is etale. To prove the equivariance in that case, suppose that $\B$ is etale, that $\textrm{c}\in H^1(\B\vert_\Delta,\L)$ and that $J:(S,\omega)\to M$ is a faithful multiplicity-free $(\G,\Omega)$-space with momentum map image $\Delta$. Let $\U$ be a basis of $\Delta$ for which $\L$ is $\U$-acyclic, consisting of connected opens. Consider a $1$-cocycle $\tau$ representing the class $[\tau]\in\check{H}^1_\U(\B\vert_\Delta,\L)$ corresponding to $\textrm{c}$ via the isomorphism (\ref{eqn:cechtoderivedmorph:equivshf}). Let $x\in \Delta$. In view of Lemma \ref{proetgpoidlinlem}, there is a $U\in \U$ with the property that for each $\gamma\in \B_x$ there is a (necessarily unique) smooth bisection $\sigma_\gamma:U\to \B\vert_U$ such that $\sigma(x)=\gamma$, and the map $\B_x\times U\to \B\vert_U$, $(\gamma,y)\mapsto \sigma_\gamma(y)$ is surjective. Then, in fact, for each $\gamma\in \B\vert_U$ there is a unique smooth bisection $\sigma_\gamma:U\to \B\vert_U$ that maps $s(\gamma)$ to $\gamma$. The composition:
\begin{equation*} \check{H}^1_\U(\B\vert_\Delta,\L)\xrightarrow{(\ref{eqn:cechtoderivedmorph:equivshf})} H^1(\B\vert_\Delta,\L)\xrightarrow{\Phi_x} H^1(\B_x,\sT_x)
\end{equation*} is equal to the composition:
\begin{equation*} \check{\Phi}_x:=(\textrm{ev}_x)_*\circ(i_U)^*:\check{H}^1_\U(\B\vert_\Delta,\L)\xrightarrow{(i_U)^*} H^1(\B_x,\L(U))\xrightarrow{(\textrm{ev}_x)_*} H^1(\B_x,\sT_x),
\end{equation*} with $i_U:\B_x\to \textsf{Emb}_\U(\B\vert_\Delta)$ the functor (where we view $\B_x$ as category over $\{x\}$) that sends $\{x\}$ to $U$ and that sends an arrow $\gamma\in \B_x$ to the arrow $U\xleftarrow{\sigma_\gamma} U$. So, we have to show that:
\begin{equation}\label{extinvequivariantpropeqtoprove} \check{\Phi}_x([\tau])\cdot e(J)_x=e(J_\tau)_x.
\end{equation}  
Consider the map:
\begin{equation*} \phi_\tau:(\G,\Omega)\vert_U\to (\G,\Omega)\vert_U,\quad g\mapsto g\tau(U\xleftarrow{\sigma_{{[g]}}} U)(s(g))^{-1}.
\end{equation*} This is an automorphism of symplectic groupoids with corners. To see this, let $\gamma\in \B\vert_U$. The image $\sigma_\gamma(U)$ in $\B\vert_\Delta$ is open, hence the pre-image of this under the projection $\G\vert_\Delta\to \B\vert_\Delta$ is an open in $\G\vert_U$ on which the map $\phi_\tau$ is given by the smooth map: 
\begin{equation*} g \mapsto g\tau(U\xleftarrow{\sigma_\gamma} U)(s(g))^{-1}.
\end{equation*} Since $\G\vert_U$ can be covered by such opens, $\phi_\tau$ is smooth. Furthermore, the fact that each $\tau(U\xleftarrow{\sigma_\gamma} U)$ is Lagrangian implies that $\phi_\tau$ is symplectic, and $\tau$ being a $1$-cocycle implies that $\phi_\tau$ a morphism of groupoids. Moreover, $\phi_{-\tau}$ is inverse to $\phi_{\tau}$. So, $\phi_\tau$ is indeed an automorphism. The pair $(\phi_\tau,j_U)$ consisting of $\phi_\tau$ and the symplectomorphism (\ref{injopenintoquot}) is compatible with the $\G\vert_U$-actions, in the sense that for all $g\in \G\vert_U$ and $p\in J^{-1}(U)$ such that $s(g)=J(p)$ it holds that:
\begin{equation*} j_U(g\cdot p)=\phi_\tau(g)\cdot j_U(p).
\end{equation*} Consequently, this pair induces an equivalence of symplectic representations:
\begin{equation*} (\phi_\tau,(\d j_U)_p):(\G_p,(\S\No_p,\omega_p))\xrightarrow{\sim} (\G_{[p,U]},(\S\No_{[p,U]},(\omega_\tau)_{[p,U]})) 
\end{equation*} for each $p\in S$ such that $J(p)=x$. Using Lemma \ref{smpeqtorrepindisoextinvprop} and the observation that $\phi_\tau$ restricts to the identity on $\sT$, (\ref{extinvequivariantpropeqtoprove}) readily follows from this.  
\end{proof} 

\subsubsection{{The structure groups for the case of group actions}}\label{sec:caseofgpactions} In this subsection we give the remaining details for Examples \ref{almabcompLiegpexintro} and \ref{gencompLiegpexintro}. The facts used in Example \ref{almabcompLiegpexintro} that remain to be proved are collected in the following proposition. 
\begin{prop}\label{prop:strgps:caseofgpactions}
Let $G$ be an infinitesimally abelian compact Lie group with identity component $T$ and group of connected components $\Gamma:=G/T$. Consider the cotangent groupoid $(\G,\Omega):=(G\ltimes \g^*, -\d\lambda_\textrm{can})$ and let $\underline{\Delta}$ be a non-empty Delzant subspace of $\underline{M}=\g^*/G$ such that the corresponding invariant subspace $\Delta$ of $\g^*$ is convex. Then the following hold. 
\begin{itemize}
\item[a)] {For each $n>0$,} there is a canonical group isomorphism:
\begin{equation*} H^n(\B\vert_\Delta,\L)\xrightarrow{\sim} H^n(\Gamma,T)
\end{equation*}
\item[b)] $H^1(\underline{\Delta},\underline{\L})=0$.  
\item[c)] There is a canonical bijection between the set $\mathcal{I}^1(\underline{\Delta})$ of global sections of the ext-sheaf (\ref{flatsecshintro}) and $I^1(G,T)$, which together with the above group isomorphism is compatible with the action of $H^1(\B\vert_\Delta,\L)$ on $\mathcal{I}^1(\underline{\Delta})$ and the canonical action of $H^1(\varGamma,T)$ on $I^1(G,T)$. 
\end{itemize}
\end{prop}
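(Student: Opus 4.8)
The strategy is to reduce all three statements to the computations already available in Lemma \ref{linthmintaffmoreqprop} and Theorem \ref{thm:extsheaftorsorfirstdirectimage}, using the explicit orbifold groupoid $\B = \Gamma \ltimes \g^*$ for the cotangent groupoid $(\G,\Omega) = (G\ltimes\g^*,-\d\lambda_\textrm{can})$. For part $a$, the plan is to invoke Lemma \ref{linthmintaffmoreqprop}$a$ directly: since $\Delta$ is convex and $\G$-invariant, and since a non-empty $\Gamma$-invariant convex subset of $\g^*$ contains the (unique) $\Gamma$-fixed point obtained by averaging any point over the finite group $\Gamma$, there is a $\Gamma$-fixed point $x_0 \in \Delta^\Gamma$. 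Lemma \ref{linthmintaffmoreqprop}$a$ then asserts that the stalk-type map
\begin{equation*}
\Phi_{x_0}:H^n(\B\vert_\Delta,\L) \to H^n(\Gamma,T)
\end{equation*}
is an isomorphism in all degrees $n>0$ (note $\B_{x_0}=\Gamma$ and $\sT_{x_0}=T$ since $x_0$ is fixed). This is the desired canonical isomorphism; I would remark that it is canonical precisely because $x_0$ is the unique $\Gamma$-fixed point, and independence of further choices follows from Lemma \ref{linthmintaffmoreqprop}$b$, which relates the maps $\Phi_x$ at different points by restriction.

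For part $b$, the plan is to use the exact sequence \eqref{eqn:threetermexseqlowdeg} extracted from the Grothendieck/Leray spectral sequence for $q_\Delta$, namely
\begin{equation*}
0\to H^1(\underline{\Delta},\underline{\L})\to H^1(\B\vert_\Delta,\L)\xrightarrow{(\ref{eqn:secondmapsesorblerspecseq})} \H^1(\underline{\Delta}).
\end{equation*}
By Theorem \ref{thm:extsheaftorsorfirstdirectimage} the sheaf $\H^1$ acts freely and transitively on the ext-sheaf $\mathcal{I}^1$, and the map $\Phi_{x_0}$ of part $a$ is an isomorphism; the convexity hypothesis guarantees (again via Lemma \ref{linthmintaffmoreqprop}) that the local stalk maps \eqref{eqn:stalkmaphigherdirimiso} assemble so that the pre-sheaf defining $\H^1$ is already a sheaf with $\H^1(\underline{\Delta})\cong H^1(\B\vert_\Delta,\L)$, i.e.\ the second map in the sequence is injective. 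Concretely, I would show that the map $H^1(\B\vert_\Delta,\L)\to\H^1(\underline{\Delta})$ is injective by identifying it, via $\Phi_{x_0}$ and the compatibility in Lemma \ref{linthmintaffmoreqprop}$b$, with the restriction-to-stalks map, which is injective because $\Delta$ contains the fixed point $x_0$ whose isotropy is all of $\Gamma$. Exactness then forces $H^1(\underline{\Delta},\underline{\L})=0$.

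For part $c$, the plan is to combine Remark \ref{extinvflatindchartrem} and Theorem \ref{exlocsolprop} (flatness, applied to the trivial Morita self-equivalence so that "centered" becomes literal) to show that a global flat section of $\mathcal{I}^1$ over $\underline{\Delta}$ is determined by, and realizes, its value at the fixed point $x_0$, giving the evaluation bijection $\mathcal{I}^1(\underline{\Delta})\xrightarrow{\sim} I^1(\G_{x_0},\sT_{x_0}) = I^1(G,T)$; here the stalk identification of Remark \ref{rem:stalkofextsheaf} is used, and flatness (centeredness) guarantees both that evaluation is injective and that every element of $I^1(G,T)$ extends to a flat section by the centered construction. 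Finally I would check compatibility of actions: under $\Phi_{x_0}$ the $H^1(\B\vert_\Delta,\L)$-action on $\mathcal{I}^1(\underline{\Delta})$ (described in Theorem \ref{thm:extsheaftorsorfirstdirectimage} via the stalk maps and Remark \ref{degonegpcohomtorsorrem}) matches the canonical $H^1(\Gamma,T)$-torsor action on $I^1(G,T)$ from Remark \ref{degonegpcohomtorsorrem}, since both are induced by the same formula $[\kappa]\cdot[c]\mapsto [\kappa\,c]$ at $x_0$. The main obstacle I anticipate is part $b$: verifying that the second arrow in the low-degree exact sequence is genuinely injective (equivalently, that the spectral sequence edge map has trivial kernel) requires carefully tracking how the sheafification defining $\H^1$ interacts with the fact that $\Delta$ has a global $\Gamma$-fixed point, and ensuring the local isomorphisms \eqref{eqn:stalkmaphigherdirimiso} are compatible across overlaps; this is where the convexity hypothesis and Lemma \ref{linthmintaffmoreqprop}$b$ must be used most delicately.
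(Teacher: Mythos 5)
Your parts $a$ and $b$ follow the paper's own route: average over the finite group $\Gamma$ to produce a fixed point $x_0\in\Delta^\Gamma$, apply Lemma \ref{linthmintaffmoreqprop}$a$ to see that $\Phi_{x_0}:H^n(\B\vert_\Delta,\L)\to H^n(\Gamma,T)$ is an isomorphism (with Lemma \ref{linthmintaffmoreqprop}$b$ giving independence of the choice of fixed point), and for $b$ use that $\Phi_{x_0}$ factors through the edge map $H^1(\B\vert_\Delta,\L)\to\H^1(\underline{\Delta})$ of the exact sequence (\ref{eqn:threetermexseqlowdeg}), so that injectivity of $\Phi_{x_0}$ forces injectivity of the edge map and hence vanishing of its kernel $H^1(\underline{\Delta},\underline{\L})$. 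Two slips there are harmless but worth flagging: the fixed point is in general \emph{not} unique ($\Delta^\Gamma=\Delta\cap(\g^*)^\Gamma$ is a convex set, possibly of positive dimension), so canonicity genuinely rests on Lemma \ref{linthmintaffmoreqprop}$b$ rather than uniqueness; and your stronger claim that the presheaf defining $\H^1$ is already a sheaf with $\H^1(\underline{\Delta})\cong H^1(\B\vert_\Delta,\L)$ is neither needed nor justified --- injectivity of the edge map is all that exactness requires.

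The genuine gap is in part $c$, in the injectivity of the evaluation map $\textrm{ev}_{x_0}:\mathcal{I}^1(\underline{\Delta})\to I^1(G,T)$. Flatness is a purely local, germ-level condition: it says that near each orbit $\O_y$ a section is obtained by restricting its value at $y$, i.e. $\sigma(z)=\sigma(y)\vert_{G_z}$ for $z$ close to $y$. This does not by itself propagate equality at $x_0$ to equality on all of $\Delta$: the locus where two flat sections agree is open, but a naive open-closed argument does not finish, because the restriction maps $I^1(G_y,T)\to I^1(G_z,T)$ for $G_z\subset G_y$ are in general not injective, so neither closedness of the agreement locus nor openness of its complement is automatic. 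This is exactly where convexity must enter in part $c$, and your plan never invokes it there. The paper closes this gap with a segment argument: for $y\in\Delta$, convexity puts $(1-t)x_0+ty$ in $\Delta$ for all $t\in[0,1]$, and since $x_0$ is $\Gamma$-fixed and the action is linear, $G_{(1-t)x_0+ty}=G_y$ for every $t\in\,]0,1]$; hence along the open segment all stalk sets are literally the same set $I^1(G_y,T)$, flatness makes the level sets $S([c])=\{t\in\,]0,1]\mid \sigma_1((1-t)x_0+ty)=[c]\}$ open, and connectedness of $]0,1]$ together with centeredness at $x_0$ forces $\sigma_1(y)=\sigma_2(y)$. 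Your surjectivity construction ($[c]\mapsto(y\mapsto[c\vert_{G_y}])$) and the compatibility of the $H^1(\B\vert_\Delta,\L)$- and $H^1(\Gamma,T)$-actions do match the paper, but without the constant-isotropy segment argument the injectivity step fails as stated.
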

\begin{proof} Since the finite group $\Gamma$ acts linearly on $\g^*$ and $\Delta$ is convex and non-empty, there is a $\Gamma$-fixed point $x\in \Delta^\Gamma$ (as follows by averaging over $\Gamma$). The map $\Phi_x:H^n(\B\vert_\Delta,\L)\to H^n(\Gamma,T)$ is an isomorphism by part $a$ of Lemma \ref{linthmintaffmoreqprop}, and by part $b$ of the same lemma this does not depend on the choice of $x\in \Delta^\Gamma$. This proves part $a$. For part $b$, note that $\Phi_x$ factors as:
\begin{center}
\begin{tikzcd} 
H^1(\B\vert_\Delta,\L)\arrow[dr,"\Phi_x"']\arrow[r] & \mathcal{H}^1(\underline{\Delta})\arrow[d,"\widehat{\Phi}_x",dashed] \\
& H^1(\Gamma,T) 
\end{tikzcd}
\end{center} with the horizontal map as in the sequence (\ref{eqn:threetermexseqlowdeg}) and with $\widehat{\Phi}_x$ given by composition of the evaluation at the stalk of $\H^1$ at $\O_x$ with the map from this stalk into $H^1(\Gamma,T)$ induced by the maps (\ref{stalkmapfirsthigherimage}). As (\ref{eqn:threetermexseqlowdeg}) is exact, part $b$ follows. For part $c$, consider the evaluation map:
\begin{equation}\label{eqn:evatfixpt:extsheaf}\textrm{ev}_x:\mathcal{I}^1(\underline{\Delta})\to I^1(G,T). 
\end{equation} Clearly, this and the map $\Phi_x$ are compatible with the action of $H^1(\B\vert_\Delta,\L)$ and that of $H^1(\Gamma,T)$. To prove part $c$, we will show that this is a bijection that does not depend on the choice of $x\in \Delta^\Gamma$. For injectivity, suppose that $\sigma_1,\sigma_2\in \mathcal{I}^1(\underline{\Delta})$ are such that $\sigma_1(x)=\sigma_2(x)$. Let $y\in \Delta$. 
Then $(1-t)x+ty\in \Delta$ for all $t\in [0,1]$, by convexity. Since $x$ is fixed by $\Gamma$ and the action is linear, it holds that $\Gamma_{(1-t)x+ty}=\Gamma_y$ for all $t\in ]0,1]$. Hence, the interval $]0,1]$ is partitioned by the sets:
\begin{equation*} S([c])=\{t\in]0,1]\mid \sigma_1((1-t)x+ty)=[c]\},\quad [c]\in I^1(\Gamma_y,T). 
\end{equation*} From flatness of $\sigma_1$ and $\sigma_2$ and the fact that $\sigma_1(x)=\sigma_2(x)$, it follows that $S(\sigma_2(y))$ is non-empty. Moreover, $S([c])$ is open in $]0,1]$ for each $[c]\in I^1(G,T)$ due to flatness of $\sigma_1$. So, by connectedness of $]0,1]$ it must hold that $\sigma_1(y)=\sigma_2(y)$, which shows that (\ref{eqn:evatfixpt:extsheaf}) is indeed injective. For the remainder, consider the map:
\begin{equation*} I^1(G,T)\to \mathcal{I}^1(\underline{\Delta})
\end{equation*}
that sends $[c]\in I^1(G,T)$ to the global section of $\mathcal{I}^1$ defined by the section of (\ref{set-theoric-bundle-ext-inv-cent}) with value at $y\in \Delta$ given by $[c\vert_{G_y}]\in I^1(G_y,T)$ (which is indeed invariant, as follows from Lemma \ref{conjtoreltrivincohomlem}). Since this is a section of (\ref{eqn:evatfixpt:extsheaf}), we conclude that (\ref{eqn:evatfixpt:extsheaf}) is a bijection with this map as its inverse. As its inverse is independent of the choice of $x\in \Delta^\Gamma$, so is the map (\ref{eqn:evatfixpt:extsheaf}). 
\end{proof}
{
Next, we return to Example \ref{gencompLiegpexintro}. 
\begin{ex}\label{ex:gencompLiegpexintro:details}
Let $G$ be any compact Lie group. Fix a maximal torus $T$ and an open Weyl chamber $\c$ in $\t^*$. Further, let $N(\c)$ be the normalizer of $\c$ in $G$. Consider the regular and proper symplectic groupoid $(\G,\Omega):=(G\ltimes \g^*_\textrm{reg},-\d\lambda_\textrm{can})$ over $M:=\g^*_\textrm{reg}$. Since $\c$ is a complete transversal, $(\G,\Omega)$ is canonically Morita equivalent to its restriction $(\G,\Omega)\vert_\c$, which coincides with the restriction to $\c$ of cotangent symplectic groupoid of $N(\c)$. This Morita equivalence induces a bijection, as in \eqref{eqn:moreq:indbijisoclasseshamsp}, between isomorphism classes of compact, connected, regular, multiplicity-free Hamiltonian $G$-spaces (in the sense of Example \ref{gencompLiegpexintro}) and isomorphism classes of compact, connected, multiplicity-free Hamiltonian $N(\c)$-spaces with momentum map image contained in $\c$. Explicitly, this is given by associating to a regular Hamiltonian $G$-space its restriction to $\c$. To a Hamiltonian $N(\c)$-space $J:(S,\omega)\to \t^*$ as above we can associate the triple $(K,\Delta,e)$ consisting of:
\begin{itemize}\item the kernel $K$ of the morphism $T\to \textrm{Diff}(S)$ obtained by restricting the $N(\c)$-action to $T$,  
\item the image $\Delta:=J(S)$,
\item the ext-invariant $e$ of the induced maximally toric Hamiltonian $N(\c)/K$-action (as in Example \ref{almabcompLiegpexintro}), with momentum map $\mu-x:(S,\omega)\to (\t/\mathfrak{k})^*$, for any $N(\c)$-fixed point $x\in \Delta$.
\end{itemize} Note here that $e$ does not depend on the choice of $N(\c)$-fixed point. All together, this gives a bijection between isomorphism classes of compact, connected, regular, multiplicity-free Hamiltonian $G$-spaces and triples as in Example \ref{gencompLiegpexintro}. 
\end{ex}
}
Finally, using the above proposition we can also complete the argument in Example \ref{example:concretecompsecondstrthm}.
\begin{ex}
\label{example:concretecompsecondstrthm2}
To show that $H^1(\underline{\Delta},\underline{\L})$ vanishes in Example \ref{example:concretecompsecondstrthm}, consider the open cover $\underline{\U}=\{\underline{U}^+,\underline{U}^-\}$ with $U^{\pm}:=W^{\pm}\cap \Delta$ for $W^{\pm}:=\{(x,\mu)\in \R\times \mathbb{S}^1\mid \mu\neq \pm 1\}$. Since $(\G,\Omega)\vert_{W^+}$ and $(\G,\Omega)\vert_{W^-}$ are both isomorphic to restrictions of symplectic groupoids of the form in Proposition \ref{prop:strgps:caseofgpactions} via isomorphisms that identify $U^+$ and $U^-$ with convex sets, the cohomology groups $H^1(\underline{U}^+,\underline{\L})$ and $H^1(\underline{U}^-,\underline{\L})$ vanish. So, in view of the Mayer-Vietoris sequence, to show that $H^1(\underline{\Delta},\underline{\L})=0$ it suffices to show that every $\tau \in \underline{\L}(\underline{U}^+\cap \underline{U}^-)$ can be written as the difference of a section of $\underline{\L}$ on $\underline{U}^+$ and one on $\underline{U}^-$. For this, note that by $\Gamma$-invariance of $\tau$, after possibly subtracting the section $[\frac{1}{2}\d x]\in \underline{\L}(\underline{\Delta})$ (obtained from the closed $1$-form $\frac{1}{2}\d x$ by projection along $T^*M\to \sT_\Lambda$), we can assume that $\tau(0,i)=[c\,\d\theta_{(0,i)}]$ for some $c\in \R$. Let $V^{\pm}:=\{(x,\mu)\in \Delta\mid \pm\im(\mu)>0\}$ denote the connected components of $U^+\cap U^-$. Since $V^+$ is simply-connected, there is a unique $f\in C^\infty(V^+)$ such that $\d f$ projects to $\tau\vert_{V^+}$ along the covering map $T^*M\to \sT_\Lambda$ and such that $\d f_{(0,i)}=c\,\d\theta_{(0,i)}$ and $f(0,i)=0$. Since $(0,i)$ and $\d f_{(0,i)}$ are fixed by $\Z_2\times \{1\}$, the function $f$ is $\Z_2\times \{1\}$-invariant. Hence, the function $\widehat{f}\in C^\infty(U^+\cap U^-)$ given by:
\begin{equation*}
\widehat{f}(x,\mu)=\begin{cases} f(x,\mu) \quad&\text{ if } (x,\mu)\in V^+ \\
f(x,\mu^{-1})\quad &\text{ if }(x,\mu)\in V^-
\end{cases} 
\end{equation*} is $\Gamma$-invariant. Choose a smooth function $\rho\in C^\infty(\mathbb{S}^1)$ that has constant value $1$ on a neighbourhood of $1$, that is supported in $\mathbb{S}^1-\{-1\}$ and that is $\Z_2$-invariant with respect to the action given by $\varepsilon\cdot \mu=\mu^\varepsilon$, for $\varepsilon\in \Z_2$ and $\mu\in \mathbb{S}^1$. The respective functions: 
\begin{equation*} (x,\mu)\mapsto \rho(\mu)\widehat{f}(x,\mu)\quad\&\quad (x,\mu)\mapsto (\rho(\mu)-1)\widehat{f}(x,\mu)
\end{equation*} on $U^+\cap U^-$ extend by zero to $\Gamma$-invariant smooth functions $f_+$ and $f_-$ on $U^+$ and $U^-$, and the closed $1$-forms $\d f_+$ and $\d f_-$ project to respective sections $\tau_+\in \underline{\L}(\underline{U}^+)$ and $\tau_-\in \underline{\L}(\underline{U}^-)$ with the property that $\tau_+\vert_{U^+\cap U^-}-\tau_-\vert_{U^+\cap U^-}=\tau$. This proves the claim. 
\end{ex}

\newpage
\appendix
\section{{On the multiplicity-free condition}}
In this appendix we use results on the Poisson geometry of the orbit space of a Hamiltonian action to derive the following (in the same spirit as \cite[Proposition A.1]{Wo}).
\begin{prop}\label{toracttorrepprop-cd} Let $(\G,\Omega)\rightrightarrows M$ be a regular and proper symplectic groupoid. A Hamiltonian $(\G,\Omega)$-action along $J:(S,\omega)\to M$ is {faithful multiplicity-free} if and only if the following four conditions hold.
\begin{itemize} \item[i)] The induced action of $\mathcal{T}$ is free on a dense subset of $S$.
\item[ii)] The equality:
\begin{equation}\label{compzero} \dim(S)=2\dim(M)-\rk(\pi)
\end{equation} holds, where $\pi$ is the Poisson structure on $M$ induced by $(\G,\Omega)$.
\end{itemize}
\begin{itemize}
\item[iii)] The momentum map $J$ has connected fibers.
\item[iv)] The transverse momentum map $\underline{J}:\underline{S}\to \underline{M}$ is closed as a map into its image. 
\end{itemize} 
\end{prop}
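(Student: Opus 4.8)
The plan is to show that the three defining conditions of faithful multiplicity-freeness (Definition \ref{toractdefi}) are equivalent to conditions (i)--(iv). Since condition (i) is literally the first condition in both characterizations, the real work is to match up conditions (ii)--(iii) of Definition \ref{toractdefi} with conditions (ii)--(iv) here. The natural strategy is to pass to the orbit space $\underline{M}=M/\G$ and use the regular Poisson structure $\pi$ induced by $(\G,\Omega)$, exploiting the fact that the symplectic leaves of $\pi$ are precisely the connected components of the $\G$-orbits $\F$, so that $\rk(\pi)=\dim(\F)$ and $\dim(M)-\rk(\pi)=\dim(\underline{M})$ (the leaf-space dimension). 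The key dimensional identity to establish is that, under condition (i), condition (ii') of Definition \ref{toractdefi} (the $\sT$-orbits coincide with the $J$-fibers) is equivalent to the dimension count \eqref{compzero} together with connectedness of the $J$-fibers. I would first reduce the dimension count to a \emph{transverse} statement by slicing with a Poisson transversal $\Sigma$ through a point of $M$, exactly as in the proofs of Theorem \ref{momimtoricthm} and Proposition \ref{conedescrpsympnormrepprop}: this replaces $(\G,\Omega)$ by its etale restriction $(\G,\Omega)\vert_\Sigma$ (a symplectic torus bundle up to Morita equivalence) and replaces $\pi$ by a symplectic leaf, so that $\rk(\pi)$ drops out and the count \eqref{compzero} becomes the transverse count $\dim(J^{-1}(\Sigma))=2\dim(\Sigma)$, which is the toric dimension condition for the faithful toric $(\sT,\Omega_\sT)\vert_\Sigma$-space $J_\Sigma$ (via Lemma \ref{lemma:indtorbunacttoric}).

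After this reduction I would argue as follows. Because the $\sT$-action is free on a dense set (condition (i)), a generic $J$-fiber has dimension at least $\dim(\sT_x)=\dim(M)-\dim(\underline{M})$, and generically the $J$-image is transverse to the leaves; a dimension count using $\dim(\sT_x)=\tfrac12(\dim S-\dim\F_{\underline{S}})$ for multiplicity-free actions shows that the $\sT$-orbits fill out the $J$-fibers at generic points precisely when \eqref{compzero} holds. The passage from ``generic'' to ``all'' fibers is where connectedness of the fibers (condition (iii)) enters: the $\sT$-orbits are always contained in the $J$-fibers (since $J$ is $\G$-invariant along orbits and $\sT$ sits in the isotropy), each $\sT$-orbit is a union of connected components of a $J$-fiber of the right dimension, and connectedness forces the inclusion to be an equality. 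This is the argument that converts (i) $+$ \eqref{compzero} $+$ connectedness into condition (ii') $=$ condition (ii) of Definition \ref{toractdefi}; conversely, if the $\sT$-orbits equal the $J$-fibers then the fibers are connected (being tori-orbits) and the dimension count is immediate, giving (iii) and \eqref{compzero}. Here I would lean on the Poisson-geometric description of the orbit space (the reference to \cite{Wo}, \cite[Proposition A.1]{Wo} in particular) to organize the ranks: $\rk(\pi)$ measures the leaf dimension, and multiplicity-freeness is exactly the statement that the reduced spaces are points, i.e. that $J$ collapses each leaf-direction maximally.

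Finally, condition (iii) of Definition \ref{toractdefi} (that $\underline{J}$ is a topological embedding) must be matched with condition (iv) (that $\underline{J}$ is closed onto its image). Since $\underline{J}$ is injective as soon as the $\sT$-orbits equal the $J$-fibers (so that distinct orbits map to distinct points of $\underline{M}$), and since a continuous injection onto its image is a topological embedding precisely when it is closed onto its image, these two conditions are equivalent \emph{given} the fiber/orbit coincidence already secured above. I would make the injectivity explicit: two points of $\underline{S}$ with the same $\underline{J}$-image lie in $J$-fibers over the same $\G$-orbit, hence (using that $\G$ acts transitively on its orbits and the $\sT$-orbits are the $J$-fibers) lie in the same $\G$-orbit in $S$, so $\underline{J}$ is injective. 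The main obstacle I anticipate is the careful bookkeeping of the equivalence between \eqref{compzero} and the orbit--fiber coincidence, specifically getting the dimension count right off the dense free locus and then propagating the equality of $\sT$-orbits with $J$-fibers to \emph{every} fiber using connectedness; the slicing reduction to the etale (symplectic torus bundle) case, where the corresponding facts about symplectic toric manifolds (Proposition \ref{prop:properties-non-compact-symp-tor-man}) are available, should make this manageable, but it requires verifying that the transverse data faithfully encodes the full data, which is the crux.
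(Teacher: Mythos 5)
Your converse direction (faithful multiplicity-free $\Rightarrow$ (i)--(iv)) and your matching of condition (iv) with the embedding condition (a continuous injection is a topological embedding iff it is closed onto its image) are both fine and agree with the paper. The gap is in the forward direction, at exactly the step you flag as ``the passage from generic to all fibers''. Your dimension count does show that the $\sT_x$-orbit through a point where $\sT$ acts \emph{freely} is open and closed in $J^{-1}(x)$: near such points $J$ is submersive, so the fiber is a manifold of dimension $\dim(S)-\dim(M)=\dim(M)-\rk(\pi)=\dim(\sT_x)$, and the orbit is a compact submanifold of the same dimension. Combined with connectedness this gives orbit $=$ fiber, but only for fibers that \emph{meet the free locus}. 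Density of the free locus in $S$ does not guarantee that every fiber meets it, and for a fiber contained entirely in the non-free locus your argument collapses: there the $\sT_x$-orbits have dimension strictly smaller than $\dim(M)-\rk(\pi)$, the fiber need not be a manifold, and your claim that ``each $\sT$-orbit is a union of connected components of a $J$-fiber of the right dimension'' is false. (Already for the standard toric $\mathbb{S}^1$-action on $\mathbb{S}^2$, the fibers over the poles are fixed points; connectedness of such a fiber gives you nothing in your scheme, since you have no a priori control on its size.) So connectedness cannot carry out the propagation; a structural input is needed to transfer the generic count to the special fibers. A side remark: the identity you quote, $\dim(\sT_x)=\tfrac{1}{2}(\dim S-\dim\F_{\underline{S}})$, is not a statement appearing in the paper and as written is not meaningful.

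The paper supplies precisely the missing input through the Poisson geometry of the orbit space $\underline{S}$: by \cite{Mol1}, the symplectic leaves of $\underline{S}$ are the strata of the reduced spaces $\underline{J}^{-1}(\O_x)\cong J^{-1}(x)/\G_x$ and their dimension is locally non-decreasing, whence the complexity formula $\textrm{C}(J)=\tfrac{1}{2}\left(\dim(S)-2\dim(M)+\rk(\pi)\right)$ holds as soon as $J$ is submersive on a dense set (which follows from (i)). Condition (ii) then says exactly $\textrm{C}(J)=0$, and Proposition \ref{complexityzerocharprop} converts this into discreteness of \emph{every} reduced space $J^{-1}(x)/\G_x$, including those over non-generic $x$. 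Only at that point does connectedness of $J^{-1}(x)$ enter, in the way you intend: it forces $J^{-1}(x)/\G_x$ to be a single point, so the (compact, hence embedded) $\G_x$-orbit is all of $J^{-1}(x)$, and since this orbit is a finite union of compact $\sT_x$-orbits, a second application of connectedness gives that the single $\sT_x$-orbit is all of $J^{-1}(x)$. If you prefer to keep your transversal-slicing framework, the tool you would need on the slice is not the generic dimension count but a local normal form controlling fibers through non-free points (Marle--Guillemin--Sternberg, as in Theorem \ref{loceqthmhamact} or \cite[Lemma B.3]{KaLe}); as written, your proof does not establish the forward implication.
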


To prove Proposition \ref{toracttorrepprop-cd}, let us recall some facts on the Poisson geometry of Hamiltonian actions. Let $(\G,\Omega)$ be a proper symplectic groupoid and let $J:(S,\omega)\to M$ be a Hamiltonian $(\G,\Omega)$-space. 
\begin{itemize} \item The sheaf of smooth functions $\mathcal{C}^\infty_{\underline{S}}$ on the orbit space $\underline{S}:=S/\G$ is the sheaf of $\R$-algebras consisting of $\G$-invariant smooth functions on invariant opens in $S$. This can naturally be viewed as a subsheaf of the sheaf of continuous functions on $\underline{S}$. For each invariant open $U$ in $S$, the subalgebra $\mathcal{C}^\infty_{\underline{S}}(\underline{U})$ of $\mathcal{C}^\infty_S(U)$ is a Poisson subalgebra with respect to the Poisson bracket associated to the symplectic form $\omega$ on $S$. These Poisson brackets make $\mathcal{C}^\infty_{\underline{S}}$ into a sheaf of Poisson algebras, and as for smooth manifolds these brackets are uniquely determined by the single Poisson bracket on the algebra of global smooth functions on $\underline{S}$. The stratification $\S_\textrm{Gp}(\underline{S})$ of $\underline{S}$ induced by the $\G$-action has the property that each stratum $\underline{\Sigma}$ admits a natural structure of Poisson manifold, uniquely determined by the fact that restriction along the inclusion $i:\underline{\Sigma}\hookrightarrow \underline{S}$ induces a surjective map of sheaves $i^*:\mathcal{C}^\infty_{\underline{S}}\vert_{\underline{\Sigma}}\to \mathcal{C}^\infty_{\underline{\Sigma}}$ that respects the Poisson brackets.
\item The \textbf{complexity} \textrm{C}($J$) of the Hamiltonian $(\G,\Omega)$-action is, by definition, half of the maximum of the dimensions of the symplectic leaves on all of these strata. The dimension of the symplectic leaves in $\underline{S}$ is locally non-decreasing (\cite[Proposition 2.92]{Mol1}). Therefore, the union of the symplectic leaves in $\underline{S}$ of maximal dimension is open in $\underline{S}$. This can be used to deduce (using for instance \cite[Proposition 2.69 and Remark 2.93]{Mol1}) that, if the set of points in $S$ at which the momentum map $J$ is a submersion is dense in $S$, then the complexity of the Hamiltonian action is:
\begin{equation}\label{complexityeq} \textrm{C}(J)=\frac{1}{2}\left(\dim(S)-2\dim(M)+\max_{x\in J(S)}\rk(\pi_x)\right),
\end{equation} where $\pi$ is the Poisson structure on $M$ induced by $(\G,\Omega)$. 
\item The symplectic leaves of the orbit space $\underline{S}$ can be described in terms of the symplectic reduced spaces of the Hamiltonian action. As topological spaces, the reduced spaces are the subspaces of $\underline{S}$ of the form:
\begin{equation*} J^{-1}(\O)/\G=\underline{J}^{-1}(\O),
\end{equation*} where $\O$ is an {orbit} of $\G$ in $M$. The reduced spaces can naturally be stratified into symplectic manifolds (generalizing the main result of \cite{LeSj}) and the symplectic leaves of the orbit space $\underline{S}$ coincide with the symplectic strata of the reduced spaces. 
\end{itemize} The facts mentioned above are probably well-known for Hamiltonian actions of compact Lie groups. {Such Hamiltonian actions that satisfy the equivalent conditions in the proposition below are called \textbf{multiplicity-free}} \cite{GS3}. 
\begin{prop}\label{complexityzerocharprop} Let $(\G,\Omega)$ be a proper symplectic groupoid and let $J:(S,\omega)\to M$ be a Hamiltonian $(\G,\Omega)$-space. Then the following are equivalent.
\begin{itemize}\item[a)] The induced Poisson bracket on $C^\infty(\underline{S})$ is the zero bracket.
\item[b)] The Hamiltonian action has complexity zero.
\item[c)] All of the reduced spaces are discrete topological subspaces of $\underline{S}$. 
\end{itemize}
\end{prop}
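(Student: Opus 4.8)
The plan is to prove the three equivalences by exploiting the dictionary between the complexity of the Hamiltonian action, the Poisson bracket on the orbit space, and the symplectic leaves / reduced spaces recalled in the paragraphs preceding the statement. Concretely, I would first establish $(a)\Leftrightarrow(b)$, then $(b)\Leftrightarrow(c)$, using the fact that the symplectic leaves of $\underline{S}$ coincide with the symplectic strata of the reduced spaces and that these leaves carry the Poisson structure induced by $\omega$.

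For $(a)\Leftrightarrow(b)$, recall that $\mathcal{C}^\infty_{\underline{S}}$ is a sheaf of Poisson algebras whose bracket is determined by the single bracket on global functions $C^\infty(\underline{S})$, and that each stratum $\underline{\Sigma}$ of $\S_\textrm{Gp}(\underline{S})$ is a Poisson manifold for which the restriction $i^*:\mathcal{C}^\infty_{\underline{S}}\vert_{\underline{\Sigma}}\to \mathcal{C}^\infty_{\underline{\Sigma}}$ is a surjective Poisson map. If the bracket on $C^\infty(\underline{S})$ is zero, then by the sheaf property every local bracket vanishes, so $i^*$ forces the Poisson bracket on each stratum $\underline{\Sigma}$ to vanish as well; hence every symplectic leaf in $\underline{S}$ is a point, so the maximal leaf dimension is $0$ and $\textrm{C}(J)=0$. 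Conversely, complexity zero means all symplectic leaves on all strata are points, so each stratum is a Poisson manifold with the zero bivector; surjectivity of $i^*$ then shows the bracket of any two invariant functions vanishes on each stratum, and since the strata cover $\underline{S}$ this gives vanishing of the bracket on $C^\infty(\underline{S})$. The one point to be careful about here is that vanishing on each stratum really does imply vanishing of the global bracket: this follows because $\{f,g\}$ is itself a continuous invariant function whose restriction to each (dense-in-its-closure) stratum is zero, together with the local determination of the bracket.

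For $(b)\Leftrightarrow(c)$, I would use the identification of the symplectic leaves of $\underline{S}$ with the symplectic strata of the reduced spaces $\underline{J}^{-1}(\O)$. Complexity zero says every symplectic leaf is zero-dimensional; since the reduced space $\underline{J}^{-1}(\O)$ is a disjoint union (a stratification) of such leaves, each reduced space consists of a locally finite collection of points, i.e.\ is a discrete topological subspace of $\underline{S}$. Conversely, if every reduced space is discrete, then each of its symplectic strata is a point, so every symplectic leaf of $\underline{S}$ has dimension zero, whence $\textrm{C}(J)=0$. This direction is essentially a matter of translating ``discrete'' into ``zero-dimensional strata'' and invoking the leaf/stratum identification.

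I expect the main obstacle to be the careful handling of the stratified Poisson geometry in $(a)\Leftrightarrow(b)$: one must ensure that the passage between the bracket on $C^\infty(\underline{S})$, the brackets on the individual strata $\underline{\Sigma}$ via the surjections $i^*$, and the dimensions of the symplectic leaves is logically airtight, in particular that ``all leaves are points'' is genuinely equivalent to ``all stratumwise brackets vanish'' rather than merely implied in one direction. Once the recalled facts (the sheaf-of-Poisson-algebras structure, surjectivity of $i^*$ as a Poisson map, and the leaf-stratum identification) are in hand, each implication reduces to a short argument, so I would present the proof as a cycle or as two separate biconditionals rather than grinding through any explicit computation.
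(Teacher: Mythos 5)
Your proposal is correct and follows essentially the same route as the paper's proof: the equivalence $(a)\Leftrightarrow(b)$ via the determination of the local brackets by the global one and the surjective Poisson maps $i^*$ onto the strata (with pointwise evaluation for the converse), and $(b)\Leftrightarrow(c)$ via the identification of the symplectic leaves of $\underline{S}$ with the strata of the reduced spaces. The paper merely states this last equivalence as clear, whereas you spell out the ``zero-dimensional strata $\Leftrightarrow$ discrete'' translation; this is a harmless elaboration, not a different argument.
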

\begin{proof} As mentioned above, the Poisson brackets on the algebras associated to the sheaf of Poisson algebras $\mathcal{C}^\infty_{\underline{S}}$ are uniquely determined by the single Poisson bracket on the algebra $C^\infty(\underline{S})$. So, if $a$ holds, then the Poisson bracket on $\mathcal{C}^\infty_{\underline{S}}(\underline{U})$ is zero for each invariant open $U$ in $S$. The Poisson structure on each stratum of $\S_\textrm{Gp}(\underline{S})$ must then also be zero, so that $b$ holds. Conversely, from $b$ it is immediate that the Poisson bracket on each stratum is zero. Since the inclusion of each stratum is a Poisson map, the bracket on $C^\infty(\underline{S})$ is zero as well, as follows from pointwise evaluation. So, $b$ implies $a$. This proves the equivalence between $a$ and $b$. The equivalence between $b$ and $c$ is clear from the description of the symplectic leaves as strata of the symplectic reduced spaces. 
\end{proof}
We now turn to the proof of the main result of this appendix. 
\begin{proof}[Proof of Proposition \ref{toracttorrepprop-cd}] First notice that both sets of conditions contain the assumption that the induced $\sT$-action is free on a dense subset of $S$. The momentum map $J$ is a submersion at all points in $S$ at which the $\sT$-action is free. Therefore, under both sets of conditions, the set of points in $S$ at which $J$ is a submersion is dense in $S$, which implies that the complexity of the Hamiltonian action is given by (\ref{complexityeq}). Since $(\G,\Omega)$ is regular, this means that:
\begin{equation*} \textrm{C}(J)=\frac{1}{2}\left(\dim(S)-2\dim(M)+\rk(\pi)\right).
\end{equation*}
Now, suppose that all properties in Proposition \ref{toracttorrepprop-cd} are satisfied. By the above equation for the complexity of the Hamiltonian action, the second condition in Proposition \ref{toracttorrepprop-cd} means that the action has complexity zero, which by Proposition \ref{complexityzerocharprop} means that for every $x\in M$, the subspace $\underline{J}^{-1}({\O}_x)$ of $\underline{S}$ is discrete. Since $\underline{J}^{-1}(\O_x)$ is (canonically) homeomorphic to the quotient $J^{-1}(x)/\G_x$ and the $J$-fibers are assumed to be connected, it follows that both $J^{-1}(x)/\G_x$ and $\underline{J}^{-1}(\O_x)$ consist of a single point. Firstly, it follows from this that the $\G_x$-orbit (which is embedded, seeing as $\G_x$ is compact) is the subspace $J^{-1}(x)$ of $S$. Since $J^{-1}(x)$ is connected, the $\mathcal{T}_x$-orbit must then also be the entire space $J^{-1}(x)$. Secondly, it follows that $\underline{J}$ is injective (its fibers being points). So, since it is assumed to be closed as map into its image, it must be a topological embedding. This proves that the Hamiltonian action is faithful multiplicity-free. \\

Next, suppose that the action is faithful multiplicity-free. Then clearly the transverse momentum map is closed as map into its image. Furthermore, since the $J$-fibers coincide with the $\sT$-orbits, they are connected and for each $x\in M$ the quotient $J^{-1}(x)/\G_x$ consists of a single point. So, for every $x\in M$ the set $\underline{J}^{-1}(\O_x)$ consists of a single point as well, which by Proposition \ref{complexityzerocharprop} implies that the Hamiltonian action has complexity zero. By the above equation for the complexity, this proves that the Hamiltonian action satisfies all conditions in Proposition \ref{toracttorrepprop-cd}. 
\end{proof}
\newpage
\section{Background on manifolds with corners}\label{appmanwithcornsec}
Let $X$ be a topological space. Given integers $n\geq 0$ and $k\in \{0,...,n\}$, we use the notation:
\begin{equation*} \R^n_k:=[0,\infty[^k\times \R^{n-k}.
\end{equation*}
By an \textbf{$n$-dimensional chart with corners} for $X$ we mean a pair $(U,\chi)$ consisting of an open $U$ in $X$ and a homeomorphism $\chi$ from $U$ onto an open in $\R^n_k$, for some $k \in \{0,...,n\}$. Given two subsets $A\subset \R^n$ and $B\subset \R^m$, we say that a map $f:A\to B$ is smooth if for every $x\in A$ there is an open $U_x$ in $\R^n$ around $x$ and a smooth map $U_x\to \R^m$ that coincides with $f$ on $U_x\cap A$. Two charts with corners $(U,\chi)$ and $(V,\phi)$ on $X$ are called smoothly compatible if both transition maps between them are smooth maps in this sense. Just as for manifolds without corners, this leads to a notion of smooth atlas consisting of charts with corners for $X$ (that we require to consist of charts of a fixed dimension) and every such atlas is contained in a unique maximal one. We refer to a maximal such atlas as a \textbf{smooth structure with corners} on $X$.
\begin{defi} A \textbf{smooth manifold with corners} is a second countable and Hausdorff space $X$ together with a smooth structure with corners on $X$. Henceforth, by a manifold with corners we always mean a \textit{smooth} manifold with corners and we omit the smooth structure from the notation. Furthermore, we use the following terminology and notation.
\begin{itemize} \item The common dimension of the charts for $X$ is called the \textbf{dimension} of $X$, denoted $\dim(X)$. 
\item By a \textbf{smooth map} $f:X\to Y$ between two manifolds with corners we will mean a continuous map with the property that for any chart with corners $(U,\chi)$ for $X$ and any chart with corners $(V,\phi)$ for $Y$, the coordinate representation: 
\begin{equation*} \phi\circ f\circ \chi^{-1}:\chi(U\cap f^{-1}(V))\to \phi(V)
\end{equation*} is smooth in the sense above. 
\item A homeomorphism $f:X\to Y$ between manifolds with corners is called a \textbf{diffeomorphism} if $f$ and $f^{-1}$ are smooth. 
\end{itemize}
\end{defi}
\begin{rem} The above definition of smooth map coincides with that used in \cite{NiWeXu,KaLe,Ni}. For a further comparison to the literature on manifolds with corners and maps between them, see \cite{Jo} (where a smooth map in the above sense is called weakly smooth).  
\end{rem}
\begin{rem}\label{maxsmatlasdetbysmfunrem} Any open subspace of a manifold with corners $X$ inherits a smooth structure with corners and the $\R$-valued smooth functions on these opens form a sheaf of algebras $\mathcal{C}^\infty_X$ on $X$. Two smooth structures with corners coincide if their associated sheaves of smooth functions coincide.  
\end{rem}
\begin{rem}\label{manwithcornreddiffbsprem} A manifold with corners $X$ is a reduced differentiable space with structure sheaf $\mathcal{C}^\infty_X$, in the sense of \cite{GoSa} (also see \cite{Mol1} for a more direct introduction to these). Furthermore, a continuous map $f:X\to Y$ between manifolds with corners is smooth if and only if it is a morphism of the underlying reduced differentiable spaces (meaning that for every smooth function on an open in $Y$, the pull-back along $f$ is again smooth).  As for any second countable and Hausdorff reduced differentiable space, there exist $\mathcal{C}^\infty_X$-partitions of unity subordinate to any open cover. 
\end{rem}

Let $X$ be an $n$-dimensional manifold with corners. Given $x\in X$, let $\textrm{Charts}_x(X)$ denote the set of charts for $X$ around $x$. As for manifolds without corners, one can define the tangent space $T_xX$ of $X$ at $x$ as the $n$-dimensional real vector space of maps:
\begin{equation*} \textrm{Charts}_x(X)\to \R^n
\end{equation*} that are compatible with coordinate changes (so that any such map is determined by its value on a single chart), and for every smooth map $f:X\to Y$ between manifolds with corners one can define its differential $\d f_x:T_xX\to T_{f(x)}Y$ at $x$. The composition of two smooth maps is again smooth, and the chain rule still holds. 
\begin{rem}\label{tanspderdescriprem} The tangent space $T_xX$ is naturally isomorphic to the vector space of derivations at $x$ of the stalk of $\mathcal{C}^\infty_X$ at $x$, and to that of the algebra of global smooth functions on $X$. 
\end{rem}
\begin{rem} On manifolds with corners one can define smooth vector fields and differential forms, the pull-back of differential forms along smooth maps, their wedge-product and their exterior derivative, as for manifolds without corners. The Poincar\'{e} Lemma still holds: every closed differential form is locally exact, by arguments as in the case without corners (e.g. as in \cite{BoTu}). 
\end{rem}
Unlike for manifolds without corners, there may be tangent vectors that cannot be realized as the derivative of a smooth curve in $X$. A tangent vector $v\in T_xX$ is called \textbf{inward pointing} if there is an $\varepsilon>0$ and a smooth curve $\gamma:[0,\varepsilon[\to X$ such that $v=\dot{\gamma}(0)$. The inward pointing tangent vectors form a polyhedral cone $C_xX$ in $T_xX$, that we call the \textbf{tangent cone} of $X$ at $x$. We let $F_xX$ denote the largest linear subspace of $T_xX$ that is contained in $C_xX$. This consists of those $v\in T_xX$ for which there is an $\varepsilon>0$ and a smooth curve $\gamma:]-\varepsilon,\varepsilon[\to X$ such that $v=\dot{\gamma}(0)$. For any smooth map $f:X\to Y$ and $x\in X$, it holds that:
\begin{equation}\label{smmapconrel} \d f_x(C_xX)\subset C_{f(x)}Y \quad \& \quad \d f_x(F_xX)\subset F_{f(x)}Y. 
\end{equation} For any chart $(U,\chi)$ for $X$ onto an open in $\R^n_k$ that sends $x\in U$ to the origin, the differential $\d\chi_x:T_xX\xrightarrow{\sim} \R^n$ identifies $C_xX$ with $\R^n_k$ and $F_xX$ with $\{0\}\times \R^{n-k}$. The \textbf{depth} of $x\in X$ is: \begin{equation*}\textrm{depth}_X(x):=\dim(X)-\dim(F_xX).
\end{equation*} For any chart $(U,\chi)$ for $X$ around $x$ mapping onto an open in $\R^n_k$, the depth of $x$ equals the number of $j\in\{1,...,k\}$ such that $\chi^j(x)=0$. \\

Next, we turn to embeddings. Following \cite{KaLe}, we use the definition below.
\begin{defi}\label{emsubmanwithcorndef} We call a topological embedding $i:X\to Y$ between manifolds with corners a \textbf{smooth embedding} if it is smooth and at each point in $X$ its differential is injective. 
\end{defi} 
\begin{ex}\label{graphembmanwithcorn} Given manifolds with corners $X$ and $Y$, the product $X\times Y$ inherits a natural structure of smooth manifold with corners. For any smooth map $f:X\to Y$, the graph map: \begin{equation*} 
X\to X\times Y, \quad x\mapsto (x,f(x)),
\end{equation*} is a smooth embedding. 
\end{ex}
\begin{rem}\label{charembofmanwithcornrem} A topological embedding $i:X\to Y$ between manifolds with corners is a smooth embedding if and only if $i:(X,\mathcal{C}^\infty_X)\to (Y,\mathcal{C}^\infty_Y)$ is an embedding of reduced differentiable spaces, meaning that $i$ is smooth and for every smooth function $g$ on an open $U$ in $X$ and every $x\in U$ there is a smooth function $\widehat{g}$ on an open $U_x$ in $Y$ around $i(x)$ such that $\widehat{g}\circ i$ coincides with $g$ on $U\cap i^{-1}(U_x)$. Here, the forward implication follows using the immersion theorem for smooth maps between opens in Euclidean spaces, while the backwards implication is clear from Remark \ref{tanspderdescriprem}. In view of Remark \ref{manwithcornreddiffbsprem} we conclude from this characterization that: if $i:X\to Y$ is a smooth embedding of manifolds with corners, then a map $f:Z\to X$ from another manifold with corners into $X$ is smooth if and only if $i\circ f$ is smooth.
\end{rem}
\begin{rem}\label{smstrembuniquerem} In view of Remark \ref{maxsmatlasdetbysmfunrem} and Remark \ref{charembofmanwithcornrem} it holds that: given a topological embedding $i:X\to Y$ from a topological space $X$ into a manifold with corners $Y$, there is at most one smooth structure with corners for $X$ that makes $i:X\to Y$ a smooth embedding. 
\end{rem}
\begin{defi}\label{embsubmanwithcorndefi} We call a subspace of a manifold with corners an \textbf{embedded submanifold} if it admits a (necessarily unique) smooth structure with corners that makes the inclusion map a smooth embedding. 
\end{defi}
\begin{ex}\label{stratmanwithcornex} Each of the subspaces:
\begin{equation*} X_k:=\{x\in X\mid \textrm{depth}_X(x)=k\}
\end{equation*}
is an embedded submanifold of $X$ without corners, with tangent space at $x\in X_k$ equal to $F_xX$. Their connected components -- called the \textbf{open faces} -- form a stratification of $X$. The open and dense subset $X_0$ of $X$ is the regular part of this stratification (that is, the union of all open strata). We usually denote $X_0$ as $\mathring{X}$.
\end{ex}
We now turn to submersions. Following \cite{NiWeXu,Ni}, we use the notion below. 
\begin{defi}\label{tamesubmdefi} By a \textbf{submersion} $f:X\to Y$ between manifolds with corners we mean a smooth map with the property that, for each $x\in X$, the differential $\d f_x:T_xX\to T_{f(x)}Y$ is surjective. Such a submersion is called \textbf{tame} if in addition, for each $x\in X$ it holds that:
\begin{equation*} \d f_x^{-1}(C_{f(x)}Y)=C_xX.
\end{equation*} In other words: if $v\in T_xX$ and $\d f_x(v)$ is an inward pointing, then $v$ is inward pointing.
\end{defi} Note here that tame submersions are simply called submersions in \cite{NiWeXu}.
\begin{ex}\label{prototamesubmex} The prototypical example of a tame submersion is the following. Let $f:M\to N$ be a submersion between smooth manifolds without corners and let $Z$ be an embedded submanifold with corners in $N$. Then it follows from the submersion theorem that the pre-image $f^{-1}(Z)$ is an embedded submanifold with corners in $M$, with tangent space and tangent cone: 
\begin{equation*} T_xf^{-1}(Z)=\d f_x^{-1}(T_{f(x)}Z)\quad \& \quad C_xf^{-1}(Z)=\d f_x^{-1}(C_{f(x)}Z), \quad\quad x\in f^{-1}(Z),
\end{equation*} and the restriction $f:f^{-1}(Z)\to Z$ is a tame submersion.
\end{ex}
The following shows that tame submersions behave much like submersions between manifolds without corners.
\begin{prop}\label{fundproptamesubm} Let $f:X\to Y$ be a submersion between manifolds with corners. The following are equivalent.
\begin{itemize}\item[a)] The submersion $f$ is tame.
\item[b)] The submersion $f$ preserves depth. That is, for each $x\in X$ it holds that: 
\begin{equation*} \textrm{depth}_Y(f(x))=\textrm{depth}_X(x).
\end{equation*} 
\item[c)] For each $x\in X$, there is a chart $(U,\chi)$ around $x$ onto an open in $\R^n_k$ that sends $x$ to the origin and there is a chart $(V,\phi)$ around $f(x)$ onto an open in $\R^m_k$ that sends $f(x)$ to the origin, such that $f(U)=V$ and the coordinate representation of $f$ is:
\begin{equation*} \phi\circ f\circ \chi^{-1}: \chi(U)\to \phi(V),\quad (x_1,...,x_n)\mapsto (x_1,...,x_m).
\end{equation*} Here $n=\dim(X)$, $m=\dim(Y)$ and $k$ is the depth of $x$ and $f(x)$. 
\end{itemize}
\end{prop}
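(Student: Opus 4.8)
The plan is to prove the cycle of implications $a)\Rightarrow b)\Rightarrow c)\Rightarrow a)$, which gives the full three-fold equivalence. The implication $c)\Rightarrow a)$ (and, equally, the unstated $c)\Rightarrow b)$) is the easy one. If $f$ has the stated form $(x_1,\dots,x_n)\mapsto(x_1,\dots,x_m)$ on charts onto opens in $\R^n_k$ and $\R^m_k$, then at any point $u$ with vanishing-coordinate set $I\subset\{1,\dots,k\}$ one reads off $C_uX=\{v:v_i\ge 0,\ i\in I\}$ and $C_{f(u)}Y=\{w:w_j\ge 0,\ j\in I\}$; since $\d f_u$ is the constant projection, one checks directly that $\d f_u^{-1}(C_{f(u)}Y)=C_uX$ and $\mathrm{depth}_X(u)=|I|=\mathrm{depth}_Y(f(u))$, which are $a)$ and $b)$.

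For $a)\Rightarrow b)$ I would argue pointwise and purely linearly. Fix $x$, put $y=f(x)$, $k=\mathrm{depth}_X(x)$, $l=\mathrm{depth}_Y(y)$, and choose linear coordinates identifying $(T_xX,C_xX)\cong(\R^n,\R^n_k)$ and $(T_yY,C_yY)\cong(\R^m,\R^m_l)$, so $F_xX=\{0\}^k\times\R^{n-k}$ and $F_yY=\{0\}^l\times\R^{m-l}$. Writing $A:=\d f_x$ and using $\d f_x(F_xX)\subset F_{f(x)}Y$ from (\ref{smmapconrel}), the ``corner block'' of $A$ factors through a linear map $B:\R^k\to\R^l$ recording the first $l$ output coordinates as a function of the first $k$ input coordinates. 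Surjectivity of $A$ forces $B$ surjective, so $k\ge l$, while $\d f_x(C_xX)\subset C_{f(x)}Y$ gives $B\ge 0$. Tameness says $A^{-1}(\R^m_l)=\R^n_k$, i.e. $B^{-1}(\R^l_{\ge 0})=\R^k_{\ge 0}$; feeding $v$ and $-v$ for $v\in\ker B$ into this shows $\ker B=0$, so $B$ is injective and $l\ge k$. Hence $k=l$, which is $b)$.

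The substantive step is $b)\Rightarrow c)$, a normal-form theorem for tame submersions. By $b)$ I may take charts sending $x$ and $f(x)$ to the origins of opens in $\R^n_k$ and $\R^m_k$ with the \emph{same} $k$. The first task is a facet-matching argument: a point in the relative interior of the domain facet $\{x_i=0\}$ has depth $1$, so by $b)$ its image has depth $1$ and, by connectedness of the (small) relative interior, lies in a single codomain facet; this defines a map $\pi$ of $\{1,\dots,k\}$ with $f$ carrying facet $i$ into facet $\pi(i)$. Thus the $\pi(i)$-th boundary-defining component of $f$ vanishes on $\{x_i=0\}$, so by Hadamard's lemma equals $x_i\,h_i$ with $h_i$ smooth, and comparing differentials at $0$ shows row $\pi(i)$ of the corner block $B$ is supported in column $i$. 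Invertibility of $B$ (which holds since, as in the previous paragraph, $A$ surjective and $k=l$ force $B$ bijective) then forces $\pi$ to be a bijection, $B$ a positive monomial matrix, and each $h_i(0)>0$. After permuting codomain corner coordinates I may assume $\pi=\mathrm{id}$, so $f_i=x_i h_i$ with $h_i(0)>0$ for $i\le k$.

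To finish I would straighten $f$ by a corner version of the inverse function theorem. One checks that $\ker\d f_x$ lies in the free directions $F_xX$, chooses free coordinates $\phi_{m+1},\dots,\phi_n$ completing $(f_1,\dots,f_m)$ to a map $\Phi$ with invertible differential at $0$, and notes that the first $k$ components of $\Phi$ are exactly the boundary-defining functions $x_ih_i$ (nonnegative iff $x_i\ge 0$, as $h_i>0$) while the rest are free. Hence $\Phi$ is a corner-preserving local diffeomorphism onto a neighborhood of $0$ in $\R^n_k$, and in the coordinates $\tilde x=\Phi(x)$ the map $f$ becomes the standard projection, giving $c)$. The main obstacle is precisely this last step: verifying that $\Phi$ is genuinely a diffeomorphism of manifolds with corners — that the corner inverse function theorem applies and that exactly the first $k$ new coordinates cut out the boundary — which is where the monomial structure of $B$ from facet-matching, together with the positivity $h_i(0)>0$ and the divisibility supplied by Hadamard's lemma, is indispensable.
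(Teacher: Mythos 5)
Your proposal is correct, and it follows the paper's overall skeleton: the cycle $a)\Rightarrow b)\Rightarrow c)\Rightarrow a)$, with $c)\Rightarrow a)$ immediate, and with an $a)\Rightarrow b)$ argument that is essentially the paper's in different packaging (your corner block $B$ and the injectivity/surjectivity count amount to the paper's observation that tameness forces $\d f_x^{-1}(F_{f(x)}Y)=F_xX$, after which rank-nullity gives the depth equality).

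Where you genuinely diverge is the substantive implication $b)\Rightarrow c)$. The paper (following \cite{Ni}) extends $f$ near $x$ to a smooth map $\widehat f$ on a full open $W\subset \R^n$, applies the classical Euclidean submersion theorem to $\widehat f$ to get a straightening chart $\chi$ on $W$, and then proves that this chart is automatically adapted to the corners, i.e. $\chi(U)=\chi(W)\cap\R^n_k$; the entire content is a soft connectedness argument along the segments $\gamma(t)=\chi^{-1}((1-t)\chi(\widetilde x)+ty)$, where depth preservation is invoked pointwise to show that the set of $t$ with $\gamma(t)\in\R^n_k$ is open as well as closed and nonempty. You instead analyze the boundary structure first: depth preservation on the depth-one strata yields the facet correspondence $\pi$, Hadamard's lemma yields $f_{\pi(i)}=x_ih_i$, invertibility of the corner block (which needs only surjectivity of $\d f_x$, the inclusion $\d f_x(F_xX)\subset F_{f(x)}Y$ from (\ref{smmapconrel}), and $k=l$) forces $\pi$ to be a bijection and $B$ a positive monomial matrix with $h_i(0)>0$, and only then do you build the chart directly as $\Phi=(f_1,\dots,f_m,\phi_{m+1},\dots,\phi_n)$, verifying corner-compatibility by hand through the extensions $x_i\widehat h_i$ and the ordinary inverse function theorem. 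Each route buys something. The paper's is shorter and needs no boundary combinatorics, no division lemma, and no explicit corner inverse function theorem. Yours extracts more structure: it exhibits $f$ locally as an interior boundary-respecting map, with each boundary-defining function of $Y$ pulling back to a boundary-defining function of $X$ times a positive smooth factor, and with the facet bijection and monomial corner Jacobian made explicit. The price is bookkeeping that you should make explicit to be airtight: the chart neighbourhood must be taken convex, so that the depth-one part of each facet is connected and dense in that facet, and so that Hadamard's lemma applies in the corner setting via
\begin{equation*}
f_{\pi(i)}(x)=x_i\int_0^1 \partial_i f_{\pi(i)}(x_1,\dots,t x_i,\dots,x_n)\,\d t,
\end{equation*}
and the $\widehat h_i$ must be chosen as smooth extensions of the $h_i$ so that $\widehat\Phi^{-1}(\R^n_k)=\R^n_k$ holds near the origin. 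These points are routine, so they are refinements rather than gaps.
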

\begin{proof} For the implication from $a$ to $b$ notice that, since $f$ is tame, the linear subspace $\d f_x^{-1}(F_{f(x)}Y)$ of $T_xX$ is contained in $C_xX$ and hence it must be contained in the largest such subspace $F_xX$. Combining this with (\ref{smmapconrel}), we conclude that $\d f_x^{-1}(F_{f(x)}Y)=F_xX$. Using this and surjectivity of $\d f_x$, it follows from the rank-nullity theorem that $\textrm{depth}_Y(f(x))=\textrm{depth}_X(x)$. \\

For the implication from $b$ to $c$, we essentially follow the proof of \cite[Lemma 1.3]{Ni}. To this end, suppose that $f$ preserves depth and let $x\in X$. Since $\textrm{depth}_X(x)=\textrm{depth}_Y(f(x))$, we can assume (after fixing appropriate charts with corners for $X$ and $Y$ around $x$ and $f(x)$) that $f$ is a depth-preserving submersion between opens $U$ in $\R^n_k$ and $V$ in $\R^m_k$ around the respective origins, and that $x$ and $f(x)$ are the respective origins. After possibly further shrinking $U$, we can assume that there is an open $W$ in $\R^n$ and a smooth map $\widehat{f}:W\to \R^m$ such that $U=W\cap \R^n_k$ and $\widehat{f}$ coincides with $f$ on $U$. Then the differential of $\widehat{f}$ at the origin coincides with that of $f$, so that it is surjective. Hence, by the submersion theorem for maps between opens in Euclidean spaces, we can (after possibly shrinking $U$ and $W$) further arrange for there to be a diffeomorphism $\chi:W\to ]-\varepsilon,\varepsilon[^n$, for some $\varepsilon>0$, that maps the origin to the origin and is such that $\widehat{f}\circ \chi^{-1}:]-\varepsilon,\varepsilon[^n\to \R^m$ is the projection onto the first $m$ coordinates. To prove part $b$, it is now enough to show that $\chi(U)=]-\varepsilon,\varepsilon[^n\cap \R^n_k$. The inclusion from left to right is clear. For the other inclusion, suppose that $y\in ]-\varepsilon,\varepsilon[^n\cap \R^n_k$. To show that $\chi^{-1}(y) \in \R^n_k$, we fix an $\widetilde{x}\in \mathring{U}=U\cap\mathring{\R}^n_k$ (meaning that its first $k$ components are strictly positive; cf. Rem \ref{stratmanwithcornex}) and we show that the curve: 
\begin{equation*} \gamma:[0,1]\to W,\quad \gamma(t)=\chi^{-1}((1-t)\chi(\widetilde{x})+ty).
\end{equation*} takes values in $\R^n_k$. For this, by continuity of $\gamma$ it is enough to show that $\gamma(t)\in \R^n_k$ for all $t\in [0,1[$. The set of $t\in [0,1[$ such that $\gamma(t)\in \R^n_k$ is non-empty (for $\gamma(0)\in \R^n_k$) and is clearly closed in $[0,1[$. Next, we will show that for $t\in [0,1[$: $\gamma(t)\in \R^n_k$ if and only if $\gamma(t)\in \mathring{\R}^n_k$, so that the set of $t\in [0,1[$ such that $\gamma(t)\in \R^n_k$ is open in $[0,1[$ as well ($\mathring{\R}^n_k$ being open in $\R^n$) and hence it must be all of $[0,1[$, by connectedness. To this end, let $t\in [0,1[$ such that $\gamma(t)\in \R^n_k$. Then $\gamma(t)\in U$, and so: 
\begin{equation*} f(\gamma(t))=\widehat{f}(\gamma(t))=(1-t)(\chi^1(\widetilde{x}),...,\chi^m(\widetilde{x}))+t(y_1,...,y_m). 
\end{equation*} Since $f$ preserves depth, the first $k$ components of $(\chi^1(\widetilde{x}),...,\chi^m(\widetilde{x}))=f(\widetilde{x})$ are strictly positive. So, because $y\in \R^n_k$, the first $k$ components of $f(\gamma(t))$ are strictly positive as well. Therefore, the depth of $f(\gamma(t))$ in $V$ is zero, hence so is the depth of $\gamma(t)$ in $U$, meaning that $\gamma(t)$ indeed belongs to $\mathring{\R}^n_k$. This proves that $b$ implies $c$. Furthermore, the implication from $c$ to $a$ is clear and so we conclude that $a$, $b$ and $c$ are indeed equivalent. 
\end{proof}
\begin{cor}\label{fundproptamesubmcor} Every tame submersion $f:X\to Y$ has the following properties.
\begin{itemize}
\item[a)] The map $f:X\to Y$ is open and for every $x\in X$ there is a smooth local section of $f$, defined on an open around $f(x)$ in $Y$, that maps $f(x)$ to $x$.
\item[b)] For each $y\in Y$ the fiber $f^{-1}(y)$ is an embedded submanifold of $X$ without corners, with tangent space:
\begin{equation*} T_xf^{-1}(y)=\ker(\d f_x).
\end{equation*} 
\item[c)] For every smooth map $g:Z\to Y$ from another manifold with corners into $Y$, the set-theoretic fiber product $X\times_YZ$ is an embedded submanifold with corners of $X\times Z$, with tangent space:
\begin{equation*} T_{(x,z)}(X\times_YZ)=\{(v,w)\in T_xX\times T_zZ\mid \d f_x(v)=\d g_z(w)\},
\end{equation*} and tangent cone:
\begin{align*} C_{(x,z)}(X\times_YZ)&=\{(v,w)\in C_xX\times C_zZ\mid \d f_x(v)=\d g_z(w)\}\\
&=\{(v,w)\in T_xX\times C_zZ\mid \d f_x(v)=\d g_z(w)\}.
\end{align*} The analogous statement holds when interchanging the roles of $X$ and $Z$. 
\end{itemize}
\end{cor}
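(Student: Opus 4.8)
The plan is to derive all three parts from the local normal form provided by Proposition \ref{fundproptamesubm}. The statement is entirely local over points of $X$, so I would fix $x_0\in X$, set $y_0=f(x_0)$, $n=\dim(X)$, $m=\dim(Y)$, and $k=\textrm{depth}_X(x_0)=\textrm{depth}_Y(y_0)$ (equal by tameness, via Proposition \ref{fundproptamesubm}$b$), and invoke Proposition \ref{fundproptamesubm}$c$ to get charts in which $f$ is the projection $(t_1,\dots,t_n)\mapsto(t_1,\dots,t_m)$ from an open in $\R^n_k$ onto an open in $\R^m_k$. The key reduction is to upgrade this to a genuine \emph{product} chart: since $k=\textrm{depth}_Y(y_0)\le \dim(Y)=m$, we have $\R^n_k=\R^m_k\times \R^{n-m}$, so after shrinking the chart to a product box $B_m\times B'$ with $B_m$ open in $\R^m_k$ and $B'$ open in $\R^{n-m}$, we obtain a diffeomorphism $U\cong V\times B'$ under which $f$ becomes $\textrm{pr}_V$, with $B'$ a manifold \emph{without} corners. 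All three parts will be read off from this model.

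\textbf{Parts (a) and (b).} In the product model $f=\textrm{pr}_V:V\times B'\to V$ is manifestly open with $f(U)=V$, and since such $U$ cover $X$ and send opens to opens, $f$ is an open map; moreover $y\mapsto(y,b'_0)$ (for the coordinate $b'_0$ of $x_0$) is a smooth local section through $x_0$, proving (a). For (b), the fibre $f^{-1}(y)$ meets the chart in $\{v\}\times B'$, which is diffeomorphic to the open set $B'\subset\R^{n-m}$; since tameness forces $k\le m$ at \emph{every} point of the fibre (depth preservation again), the model applies throughout and exhibits $f^{-1}(y)$ as an embedded submanifold without corners, with $T_x f^{-1}(y)=\ker(\d f_x)$ read off from the model. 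Here I would use Remark \ref{smstrembuniquerem} to glue the local smooth structures into the unique one making the inclusion a smooth embedding (Definition \ref{embsubmanwithcorndefi}).

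\textbf{Part (c), the main obstacle.} Because $Z$ itself has corners, Example \ref{prototamesubmex} does not apply directly; instead I would use the product normal form to factor out the smooth directions. Over a chart $U\cong V\times B'$, the fibre product meets $U\times g^{-1}(V)$ in the image of the map
\begin{equation*}
\Gamma:g^{-1}(V)\times B'\to X\times Z,\qquad (z,b')\mapsto\bigl(\iota(g(z),b'),\,z\bigr),
\end{equation*}
where $\iota:V\times B'\xrightarrow{\sim}U$ is the chart identification. This $\Gamma$ is a graph-type map in the sense of Example \ref{graphembmanwithcorn} (its $Z$-component recovers $z$ and its $X$-component is a smooth function of $(z,b')$), hence a smooth embedding of the manifold with corners $g^{-1}(V)\times B'$, with image exactly $(X\times_Y Z)\cap(U\times g^{-1}(V))$; gluing via Remark \ref{smstrembuniquerem} then shows $X\times_Y Z$ is an embedded submanifold with corners. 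The tangent space formula follows by differentiating $\Gamma$ and using $f\circ\iota=\textrm{pr}_V$. The hard part will be the tangent cone, and this is precisely where tameness is indispensable: in the model the cone of $g^{-1}(V)\times B'$ is $C_zZ\times\R^{n-m}$ (as $B'$ has no corners), and pushing forward by $\d\Gamma$ gives $\{(v,w)\in C_xX\times C_zZ:\d f_x v=\d g_z w\}$. To identify this with the second stated expression I would argue that the constraint $v\in C_xX$ is automatic: if $w\in C_zZ$ then $\d g_z w\in C_{y}Y$ by (\ref{smmapconrel}), so $\d f_x v=\d g_z w\in C_{f(x)}Y$, and tameness ($\d f_x^{-1}(C_{f(x)}Y)=C_xX$, Definition \ref{tamesubmdefi}) forces $v\in C_xX$. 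The symmetric statement follows by interchanging the roles of the two factors, noting that tameness is used only on the $f$-side. I expect the bookkeeping in matching $\d\Gamma$ of the model cone with the intrinsic tangent cone to be the most delicate step, but conceptually it reduces to this single application of the tameness condition.
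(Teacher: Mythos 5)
Your proposal is correct and follows essentially the same route as the paper's proof: reduce to the product-box normal form of Proposition \ref{fundproptamesubm}$c$, realize $X\times_Y Z$ locally as the image of a graph-type embedding (Example \ref{graphembmanwithcorn}), glue via Remark \ref{smstrembuniquerem}, and obtain the two cone descriptions from (\ref{smmapconrel}) together with the tameness condition $\d f_x^{-1}(C_{f(x)}Y)=C_xX$. The only (immaterial) difference is that the paper deduces part $b$ as a special case of part $c$ and leaves $a$, $b$ to the reader, whereas you prove them directly from the product chart.
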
 
\begin{proof} 
We leave it to the reader to derive property $a$ from Proposition \ref{fundproptamesubm} and $b$ from $c$. To verify property $c$, let $g:Z\to Y$ be a smooth map from another manifold with corners into $Y$. By Remark \ref{smstrembuniquerem} it is enough to show that every point $(x,z)\in X\times_YZ$ admits an open neighbourhood in $X\times_YZ$ that is an embedded submanifold of $X\times Z$, with the prescribed tangent space and tangent cone at $(x,z)$. To this end, let $(x,z)\in X\times_YZ$ and for this $x\in X$ consider charts $(U,\chi)$ and $(V,\phi)$ as in Proposition \ref{fundproptamesubm}, such that: 
\begin{align*} \chi(U)&=[0,\varepsilon[^k\times]-\varepsilon,\varepsilon[^{m-k}\times ]-\varepsilon,\varepsilon[^{n-m},\\
 \phi(V)&=[0,\varepsilon[^k\times]-\varepsilon,\varepsilon[^{m-k},
\end{align*} for some $\varepsilon >0$. Then $U\times_Yg^{-1}(V)$ is an open in $X\times_YZ$ around $(x,z)$ and the map:
\begin{equation*} i:]-\varepsilon,\varepsilon[^{n-m}\times g^{-1}(V)\to X\times Z,\quad (p,q)\mapsto (\chi^{-1}(\phi(g(q)),p),q)
\end{equation*} is a smooth embedding (it is essentially a graph map, cf. Example \ref{graphembmanwithcorn}) with image $U\times_Yg^{-1}(V)$. Therefore $U\times_Yg^{-1}(V)$ is an embedded submanifold of $X\times Z$, with tangent space:
\begin{align*} T_{(x,z)}(U\times_Yg^{-1}(V))&=\d i_{(0,z)}(T_0\R^{n-m} \oplus T_{z}Z),\\
&=\{(v,w)\in T_{x}X\times T_{z}Z\mid \d f_{x}(v)=\d g_{z}(w)\}
\end{align*} and tangent cone:
\begin{align*}
C_{(x,z)}(U\times_Yg^{-1}(V))&=\d i_{(0,z)}(T_0\R^{n-m} \oplus C_{z}Z),\\
&=\{(v,w)\in C_{x}X\times C_{z}Z\mid \d f_{x}(v)=\d g_{z}(w)\},\\
&=\{(v,w)\in T_{x}X\times C_{z}Z\mid \d f_{x}(v)=\d g_{z}(w)\},
\end{align*} where the last equality follows from (\ref{smmapconrel}) (applied to $g$) and the equality $\d f_{x}^{-1}(C_{f(x)}Y)=C_{x}X$. This concludes the proof of property $c$.
\end{proof}
\begin{ex} Unlike for manifolds without corners, for the conclusion of Proposition \ref{fundproptamesubm}$c$ to hold at a given point $x\in X$, it is not enough to require the conditions in Definition \ref{tamesubmdefi} just at that point. To see this, consider for instance the map: 
\begin{equation*} f:[0,\infty[^2\to [0,\infty[^2,\quad (x,y)\mapsto (x,y+x^2).
\end{equation*} This is a submersion that satisfies the tameness condition in Definition \ref{tamesubmdefi} at the origin, but does not satisfy the conclusion of Proposition \ref{fundproptamesubm}$c$ there.
\end{ex}
Now, we turn to Lie groupoids with corners.
\begin{defi}[\cite{NiWeXu}]\label{liegpwithcorndef} A (Hausdorff) \textbf{Lie groupoid with corners} $\G\rightrightarrows X$ is a groupoid for which $\G$ and $X$ are manifolds with corners, the source and target map are tame submersions and all structure maps are smooth as maps between manifolds with corners.
\end{defi} 
Note here that, because the source and target of $\G$ are tame submersions, by Corollary \ref{fundproptamesubmcor}$c$ the space of composable arrows $\G^{(2)}$ is an embedded submanifold with corners of $\G\times \G$, so that (as usual) the requirement for the multiplication map of $\G$ to be smooth makes sense. 
\begin{rem} Let $\G\rightrightarrows X$ be a Lie groupoid with corners. In view of Proposition \ref{fundproptamesubm}, for each integer $0\leq k\leq \dim (X)$, the embedded submanifold $\G_k$ of $\G$ (as in Example \ref{stratmanwithcornex}) coincides with both $s^{-1}(X_k)$ and $t^{-1}(X_k)$, so that $X_k$ is $\G$-invariant and the structure maps of $\G$ restrict to give $\G_k$ the structure of Lie groupoid without corners over $X_k$. From this and the standard theory of Lie groupoids without corners we conclude that for each $x\in X$ the following hold. 
\begin{itemize}\item[a)] The isotropy group $\G_x$ of $\G$ is an embedded submanifold of $\G$ without corners and, as such, it is a Lie group.
\item[b)] The source-fiber $s^{-1}(x)$ is an embedded submanifold of $\G$ without corners, and the {orbit} $\O_x$ is an initial submanifold of $X_k$ without corners, for $k=\textrm{depth}_X(x)$, with smooth manifold structure uniquely determined by the fact that:
\begin{equation*} t:s^{-1}(x)\to \O_x,
\end{equation*} is a (right) principal $\G_x$-bundle. 
\end{itemize}
\end{rem}
As in the case without corners, we can define Morita equivalences. 
\begin{defi}\label{def:moreqLiegpoidswithcorners}
Let $\G_1\rightrightarrows X_1$ and $\G_2\rightrightarrows X_2$ be Lie groupoid with corners. A \textbf{Morita equivalence} from $\G_1$ to $\G_2$ is a principal $(\G_1,\G_2)$-bibundle $(P,\alpha_1,\alpha_2)$. This consists of:
\begin{itemize} \item A manifold with corners $P$ with two surjective tame submersions $\alpha_i:P\to X_i$.
\item A smooth left action of $\G_1$ along $\alpha_1$ that is free and the orbits of which coincide with the $\alpha_2$-fibers.
\item A smooth right action of $\G_2$ along $\alpha_2$ that is free and the orbits of which coincide with the $\alpha_1$-fibers.
\end{itemize} Furthermore, the two actions are required to commute.
\end{defi} Here, smoothness of the actions means that the action maps $\G_1\times_{X_1}P\to P$ and $P\times_{X_2}\G_2\to P$ are smooth as maps between manifolds with corners (which makes sense in view of Corollary \ref{fundproptamesubmcor}$c$). As in the case without corners, the following holds.
\begin{prop}\label{altdefprincbunliegpoidwithcorn} The respective conditions on the left and right action above are equivalent to the requirement that the respective maps:
\begin{align} \G_1\times_{X_1}P&\to P\times_{X_2}P,\quad (g,p)\mapsto (g\cdot p,p), \label{moreqleftprinbunmap}\\
P\times_{X_2}\G_2&\to P\times_{X_1}P,\quad (p,g)\mapsto (p,p\cdot g), \label{moreqrightprinbunmap}
\end{align} are well-defined diffeomorphisms of manifolds with corners. 
\end{prop}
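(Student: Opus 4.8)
The two asserted equivalences are symmetric, so the plan is to treat the left-hand map \eqref{moreqleftprinbunmap}, which I will denote $\Phi\colon \G_1\times_{X_1}P\to P\times_{X_2}P$, $(g,p)\mapsto(g\cdot p,p)$; the argument for \eqref{moreqrightprinbunmap} is word-for-word the same with the roles of the two actions interchanged. The overall strategy is to recognize $\Phi$ as a \emph{bijective tame submersion between two manifolds with corners of equal dimension}, and then invoke Proposition \ref{fundproptamesubm} to upgrade it to a local diffeomorphism, which together with bijectivity yields a diffeomorphism.

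I would first dispatch the easy (converse) direction. Assuming $\Phi$ is a well-defined diffeomorphism, well-definedness means $\alpha_2(g\cdot p)=\alpha_2(p)$, so every $\G_1$-orbit lies inside an $\alpha_2$-fibre; injectivity of $\Phi$ forces freeness (if $g\cdot p=p$ then $\Phi(g,p)=(p,p)=\Phi(1_{\alpha_1(p)},p)$, whence $g=1_{\alpha_1(p)}$), and surjectivity forces each $\alpha_2$-fibre to be a single orbit. Thus the stated conditions on the left action hold, using only the underlying bijection.

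The substance is the forward direction. Assuming the left action is free with orbits the $\alpha_2$-fibres, I would first use Corollary \ref{fundproptamesubmcor}(c) (applied to the tame submersions $s_{\G_1}$ and $\alpha_2$) to see that both $\G_1\times_{X_1}P$ and $P\times_{X_2}P$ are embedded submanifolds with corners and to record their tangent cones in the ``one factor in the full tangent space'' form provided there. A dimension count, using that the freely parametrized orbits (of dimension $\dim\G_1-\dim X_1$) exhaust the $\alpha_2$-fibres (of dimension $\dim P-\dim X_2$), gives $\dim(\G_1\times_{X_1}P)=\dim(P\times_{X_2}P)$. The map $\Phi$ is smooth (its components are the action map and a projection) and bijective (injective by freeness, surjective by transitivity on fibres). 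I would then show $\d\Phi_{(g,p)}$ is a linear isomorphism: since the dimensions agree it suffices to check injectivity, which reduces to injectivity of the differential of the orbit map $g'\mapsto g'\cdot p$, and this holds because freeness makes the isotropy group of $p$ trivial, hence its Lie algebra zero. Finally, the tameness condition of Definition \ref{tamesubmdefi} follows from the tangent-cone formulas together with the invariance $\alpha_2\circ(\text{action})=\alpha_2\circ\mathrm{pr}_P$ and \eqref{smmapconrel}: the relaxed cone descriptions make both $\d\Phi(C_{(g,p)})\subseteq C_{\Phi(g,p)}$ and $\d\Phi^{-1}(C_{\Phi(g,p)})\subseteq C_{(g,p)}$ transparent, so $\Phi$ is a tame submersion between equidimensional manifolds with corners and hence a local diffeomorphism by Proposition \ref{fundproptamesubm} (equivalence of (a) and (c)). Being bijective, it is a diffeomorphism.

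The main obstacle, and the place where the corners setting genuinely differs from the classical one, is that a smooth bijection with everywhere-invertible differential need \emph{not} be a diffeomorphism of manifolds with corners; the map $(x,y)\mapsto(x,y+x^2)$ of $[0,\infty[^2$ from the example following Proposition \ref{fundproptamesubm} exhibits precisely this failure. Thus the naive inverse function theorem is inadequate, and the real work is the verification of tameness at boundary points. I expect this to be the crux, and the tool that resolves it cleanly is the explicit tangent-cone description in Corollary \ref{fundproptamesubmcor}(c), which reduces the cone computation to the single invariance $\alpha_2\circ(\text{action})=\alpha_2\circ\mathrm{pr}_P$.
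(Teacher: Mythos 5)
Your proposal is correct and follows essentially the same route as the paper: reduce to showing that the smooth bijection determined by the action conditions is a diffeomorphism, verify bijectivity of the differential by the classical (corner-free) freeness-plus-dimension-count argument, and obtain tameness immediately from the relaxed tangent-cone description in Corollary \ref{fundproptamesubmcor}$c$, so that Proposition \ref{fundproptamesubm} / Corollary \ref{fundproptamesubmcor}$a$ upgrades the map to a diffeomorphism. The only difference is one of detail: you spell out the steps the paper compresses into ``bijectivity of the differential follows as in the case without corners,'' which is a faithful expansion rather than a different argument.
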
 
\begin{proof} The respective conditions on the left and right action above mean that the respective maps (\ref{moreqleftprinbunmap}) and (\ref{moreqrightprinbunmap}) are well-defined, smooth and bijective. So, we ought to show that if (\ref{moreqleftprinbunmap}) respectively (\ref{moreqrightprinbunmap}) is a smooth bijection, then it is in fact a diffeomorphism. In view of Corollary \ref{fundproptamesubmcor}$a$ it is enough to show for each of the respective maps that if it is a smooth bijection, then at every point its differential is bijective and it is tame. Bijectivity of the differential follows as in the case without corners and tameness is immediate from the second description of the tangent cone in Corollary \ref{fundproptamesubmcor}$c$. 
\end{proof}
As in the case without corners, Morita equivalences can be adapted to the (pre-)symplectic setting \cite{Xu,Xu1,BuCrWeZh}. Explicitly:
\begin{defi}\label{sympgpwithcorndef} A pre-symplectic form $\omega$ on a manifold with corners $P$ is a closed differential $2$-form. Such a pre-symplectic form is called \textbf{symplectic} if it is non-degenerate. We call $(P,\omega)$ a \textbf{(pre-)symplectic manifold with corners}. A \textbf{pre-symplectic groupoid with corners} $(\G,\Omega)\rightrightarrows X$ is a Lie groupoid with corners equipped with a pre-symplectic form $\Omega$ on $\G$, satisfying:
\begin{itemize}\item[i)] $\dim(\G)=2\dim(X)$,
\item[ii)] the form $\Omega$ is \textbf{multiplicative}, in the sense that:
\begin{equation*} m^*\Omega=(\textrm{pr}_1)^*\Omega+(\textrm{pr}_2)^*\Omega, 
\end{equation*} where we denote by: 
\begin{equation*} m,\textrm{pr}_1,\textrm{pr}_2:\G^{(2)}\to \G
\end{equation*} the multiplication and projection maps from the space of composable arrows $\G^{(2)}$ to $\G$, 
\item[iii)] the form $\Omega$ satisfies the \textbf{non-degeneracy} condition:
\begin{equation*} \ker(\Omega)_{1_x}\cap \ker(\d s)_{1_x}\cap \ker(\d t)_{1_x}=0,\quad \forall x\in X. 
\end{equation*}
\end{itemize} This is called a \textbf{symplectic groupoid with corners} if $\Omega$ is symplectic. A \textbf{Morita equivalence $(P,\omega_P,\alpha_1,\alpha_2)$ between (pre-)symplectic groupoids with corners} $(\G_1,\Omega_1)\rightrightarrows X_1$ and $(\G_2,\Omega_2)\rightrightarrows X_2$ consists of:
\begin{itemize}\item a (pre-)symplectic manifold with corners $(P,\omega_P)$, 
\item a Morita equivalence $(P,\alpha_1,\alpha_2)$ between the underlying Lie groupoids with corners, with the additional property that both actions are Hamiltonian, in the sense that:
\begin{equation*} (m^L_P)^*\omega_P=(\textrm{pr}_{\G_1})^*\Omega_1+(\textrm{pr}_{P}^L)^*\omega_P \quad \& \quad (m_{P}^R)^*\omega_P=(\textrm{pr}_{\G_2})^*\Omega_2+(\textrm{pr}_{P}^R)^*\omega_P
\end{equation*} where we denote by:
\begin{align*} &m_{P}^L,\textrm{pr}_{P}^L:\G_1\times_{X_1}P\to P, \quad \textrm{pr}_{\G_1}:\G_1\times_{X_1}P\to \G_1,\\ 
&m_{P}^R,\textrm{pr}_{P}^R:P\times_{X_2}\G_2\to P,\quad \textrm{pr}_{\G_2}:P\times_{X_2}\G_2\to \G_2,
\end{align*} the maps defining the actions and the projections onto $P$, $\G_1$ and $\G_2$. 
\end{itemize}
\end{defi}
\begin{rem}\label{presympmoreqissymprem} A pre-symplectic Morita equivalence $((P,\omega_P),\alpha_1,\alpha_2)$ between two symplectic groupoids with corners is automatically symplectic. 
\end{rem}
\begin{prop}\label{moreqwithcorniseqrelprop} Morita equivalence between Lie or (pre-)symplectic groupoids with corners is an equivalence relation. 
\end{prop}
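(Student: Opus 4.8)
The plan is to verify the three defining properties of an equivalence relation — reflexivity, symmetry and transitivity — first for Lie groupoids with corners and then to indicate the modifications needed in the (pre-)symplectic setting. Throughout I would phrase principality in terms of Proposition \ref{altdefprincbunliegpoidwithcorn}, so that the conditions to be checked reduce to certain fiber products being manifolds with corners and certain canonical maps being diffeomorphisms.

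For reflexivity I would use the unit bibundle: take $P := \G$ with $\alpha_1 := t$ and $\alpha_2 := s$, and $\G$ acting on itself by left and right multiplication. Both maps are tame submersions by Definition \ref{liegpwithcorndef}, the two actions commute by associativity, and the maps (\ref{moreqleftprinbunmap}) and (\ref{moreqrightprinbunmap}) become the evident diffeomorphisms $(g,h)\mapsto(gh,h)$ and $(g,h)\mapsto(g,gh)$ of the relevant fiber products (which are manifolds with corners by Corollary \ref{fundproptamesubmcor}$c$); hence $\G$ is Morita equivalent to itself. In the (pre-)symplectic case I would set $\omega_P := \Omega$ and read off the two Hamiltonian conditions directly from multiplicativity of $\Omega$. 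For symmetry I would pass to the opposite bibundle $\overline{P}$: the same manifold with corners $P$, with $\alpha_1$ and $\alpha_2$ interchanged and the left and right actions converted into one another by composing with the groupoid inversions (i.e. $g\cdot_{\overline{P}} p := p\cdot g^{-1}$, and symmetrically). This exchanges the roles of (\ref{moreqleftprinbunmap}) and (\ref{moreqrightprinbunmap}), so $\overline{P}$ is again principal; in the (pre-)symplectic case one takes $\omega_{\overline{P}} := -\omega_P$, which turns each Hamiltonian condition into the other since inversion is anti-symplectic.

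The substance is transitivity. Given principal bibundles $(P,\alpha_1,\alpha_2)$ from $\G_1$ to $\G_2$ and $(Q,\beta_2,\beta_3)$ from $\G_2$ to $\G_3$, I would form the composition
\[ P \ast_{\G_2} Q := (P \times_{X_2} Q)/\G_2, \]
where $\G_2$ acts diagonally by $g\cdot(p,q) = (p\cdot g^{-1}, g\cdot q)$. By Corollary \ref{fundproptamesubmcor}$c$ the fiber product $P \times_{X_2} Q$ is an embedded submanifold with corners of $P \times Q$, and the diagonal action is free and proper. The key step — and the main obstacle — is to equip the quotient with a structure of manifold with corners: I would construct this by hand, using that $\beta_3$ is a tame submersion and hence admits local sections $\sigma:U\to Q$ (Corollary \ref{fundproptamesubmcor}$a$); over each such $U$ every $\G_2$-orbit meets the slice $P \times_{X_2} \sigma(U)$ in exactly one point (by freeness together with the orbit-equals-fiber property), which transports the corner structure of the slice to the quotient, after which one checks that these charts are smoothly compatible. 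The descended maps $\beta_1 := \alpha_1\circ\textrm{pr}_P$ and $\beta_3' := \beta_3\circ\textrm{pr}_Q$ are then tame submersions; here the new subtlety relative to the corner-free case is the equality of tangent cones required in Definition \ref{tamesubmdefi}, which I expect to be the most delicate point and which I would deduce from the explicit description of the tangent cone of a fiber product in Corollary \ref{fundproptamesubmcor}$c$ together with depth preservation (Proposition \ref{fundproptamesubm}).

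Finally, the residual $\G_1$- and $\G_3$-actions descend to $P \ast_{\G_2} Q$ and satisfy the principality conditions via Proposition \ref{altdefprincbunliegpoidwithcorn}, yielding a principal $(\G_1,\G_3)$-bibundle, so transitivity holds in the Lie case. In the (pre-)symplectic setting the form $\textrm{pr}_P^*\omega_P + \textrm{pr}_Q^*\omega_Q$ on $P \times_{X_2} Q$ is $\G_2$-basic — invariant under the diagonal action and with the orbit directions in its kernel, both being consequences of the Hamiltonian conditions and the cancellation of the two $\Omega_2$-contributions — so it descends to a (pre-)symplectic form on $P \ast_{\G_2} Q$ that makes the composition a Morita equivalence. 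When $(\G_1,\Omega_1)$ and $(\G_3,\Omega_3)$ are symplectic, the resulting pre-symplectic Morita equivalence is automatically symplectic by Remark \ref{presympmoreqissymprem}, completing the proof.
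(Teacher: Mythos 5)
Your proposal is correct and follows essentially the same route as the paper: identity and inverse bibundles handle reflexivity and symmetry (with $\Omega$ and $-\omega_P$ respectively in the (pre-)symplectic case), and the substance of transitivity is the smooth structure with corners on $P\ast_{\G_2}Q$, which the paper also builds from local sections $\sigma:U\to Q$ of the tame submersion $\beta_3$, identifying $P\ast_{\G_2}\beta_3^{-1}(U)$ with the slice $P\times_{X_2}U$ and using smoothness of the division map (Proposition \ref{altdefprincbunliegpoidwithcorn}). The only cosmetic difference is that where you would check compatibility of the slice charts directly, the paper instead characterizes the quotient structure as the unique one making the quotient map a tame submersion, so that the local constructions glue automatically by uniqueness (via Corollary \ref{fundproptamesubmcor}$a$).
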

\begin{proof} This proposition follows from the observation that, as in the case without corners, for Lie and pre-symplectic groupoids with corners there is the identity Morita equivalence and Morita equivalences can be inverted and composed. The only extra technicality arising here is in the construction of composition of two Morita equivalences, for it involves quotients by actions of Lie groupoids with corners. To be more precise, suppose that we are given two Morita equivalences between Lie groupoids with corners:
\begin{center}
\begin{tikzpicture} 
\node (H1) at (-3.1,0) {$\G_1$};
\node (S1) at (-3.1,-1.3) {$X_1$};
\node (Q) at (-1.35,0) {$P$};
\node (S2) at (0.4,-1.3) {$X_2$};
\node (H2) at (0.4,0) {$\G_2$};

\node (G1) at (2,0) {$\G_2$};
\node (M1) at (2,-1.3) {$X_2$};
\node (P) at (3.75,0) {$Q$};
\node (M2) at (5.5,-1.3) {$X_3$};
\node (G2) at (5.5,0) {$\G_3$};
 
\draw[->,transform canvas={xshift=-\shift}](H1) to node[midway,left] {}(S1);
\draw[->,transform canvas={xshift=\shift}](H1) to node[midway,right] {}(S1);
\draw[->,transform canvas={xshift=-\shift}](H2) to node[midway,left] {}(S2);
\draw[->,transform canvas={xshift=\shift}](H2) to node[midway,right] {}(S2);
\draw[->](Q) to node[pos=0.25, below] {$\quad\text{ }\alpha_1$} (S1);
\draw[->] (-2,-0.15) arc (315:30:0.25cm);
\draw[<-] (-0.7,0.15) arc (145:-145:0.25cm);
\draw[->](Q) to node[pos=0.25, below] {$\alpha_2$\text{ }} (S2); 
 
\draw[->,transform canvas={xshift=-\shift}](G1) to node[midway,left] {}(M1);
\draw[->,transform canvas={xshift=\shift}](G1) to node[midway,right] {}(M1);
\draw[->,transform canvas={xshift=-\shift}](G2) to node[midway,left] {}(M2);
\draw[->,transform canvas={xshift=\shift}](G2) to node[midway,right] {}(M2);
\draw[->](P) to node[pos=0.25, below] {$\quad\text{ }\beta_2$} (M1);
\draw[->] (3.1,-0.15) arc (315:30:0.25cm);
\draw[<-] (4.4, 0.15) arc (145:-145:0.25cm);
\draw[->](P) to node[pos=0.25, below] {$\beta_3$\text{ }} (M2);
\end{tikzpicture} 
\end{center} Consider the induced left anti-diagonal $\G_2$-action along $\alpha_2\circ \textrm{pr}_P:P\times_{X_2}Q\to X_2$. We will show that the topological quotient space (which is second countable and Hausdorff):
\begin{equation}\label{compmoreqbun} P\ast_{\G_2} Q:=\frac{P\times_{X_2} Q}{\G_2}
\end{equation} admits a unique smooth structure with corners with respect to which the quotient map:
\begin{equation}\label{quotmapmoreqcomp} P\times_{X_2}Q\to P\ast_{\G_2} Q
\end{equation} is a tame submersion. As for Lie groupoids without corners, one can then define the composite Morita equivalence:
\begin{center}
\begin{tikzpicture} \node (G1) at (-1.5,0) {$\G_1$};
\node (M1) at (-1.5,-1.3) {$X_1$};
\node (S) at (1.4,0) {$P\ast_{\G_2} Q$};
\node (M2) at (4.3,-1.3) {$X_3$};
\node (G2) at (4.3,0) {$\G_3$};
 
\draw[->,transform canvas={xshift=-\shift}](G1) to node[midway,left] {}(M1);
\draw[->,transform canvas={xshift=\shift}](G1) to node[midway,right] {}(M1);
\draw[->,transform canvas={xshift=-\shift}](G2) to node[midway,left] {}(M2);
\draw[->,transform canvas={xshift=\shift}](G2) to node[midway,right] {}(M2);
\draw[->](S) to node[pos=0.5, below] {$\quad\quad\quad{ }\text{ }\underline{\alpha}_1\circ \underline{\textrm{pr}}_P$} (M1);
\draw[->] (0.05,-0.15) arc (315:30:0.25cm);
\draw[<-] (2.65,0.15) arc (145:-145:0.25cm);
\draw[->](S) to node[pos=0.35, below] {$\underline{\beta}_3\circ \underline{\textrm{pr}}_Q$\quad\text{ }} (M2);
\end{tikzpicture}
\end{center} 
Moreover, if the given groupoids and Morita equivalences are (pre-)symplectic with corners, with (pre-)symplectic forms $\omega_P$ and $\omega_Q$, then as for (pre-)symplectic groupoids without corners (see \cite{Xu,Xu1}) the form $\omega_{P}\oplus\omega_{Q}$ on $P\times_{X_2}Q$ descends to a (pre-)symplectic form on $P\ast_{\G_2}Q$ that makes the composite Morita equivalence (pre-)symplectic. \\

To prove that (\ref{compmoreqbun}) indeed admits a unique smooth structure with corners with respect to which (\ref{quotmapmoreqcomp}) is a tame submersion, we give an adaptation of the proof of \cite[Lemma B.1.4]{Vi} (in fact, we believe that Lie groupoids with corners fall into the much more general framework developed in that thesis). Uniqueness follows from Corollary \ref{fundproptamesubmcor}$a$. By this uniqueness, to prove existence it is enough to show that every $[p,q]\in P\ast_{\G_2} Q$ admits an open neighbourhood with a smooth structure with corners, with respect to which the restriction of (\ref{quotmapmoreqcomp}) is a tame submersion. To this end, let $[p,q]\in P\ast_{\G_2} Q$. By Corollary \ref{fundproptamesubmcor}$a$ there is a smooth local section $\sigma:U\to Q$ of $\beta_3$, defined on an open neighbourhood $U$ of $\beta_3(q)$ in $X_3$, that maps $\beta_3(q)$ to $q$. This induces a diffeomorphism:
\begin{equation*} \Phi_\sigma:\G_2\times_{X_2}U \to \beta_3^{-1}(U), \quad (g,x)\mapsto g\cdot \sigma(x),
\end{equation*} where the fiber-product $\G_2\times_{X_2}U$ is taken with respect to the source-map of $\G_2$ and the map $\beta_2\circ \sigma:U\to X_2$. To see that the inverse of this map is indeed smooth, consider the division map:
\begin{equation*}  Q\times_{M_3} Q\to \G_2, \quad (q_1,q_2)\mapsto [q_1:q_2],
\end{equation*} that assigns the unique element of $\G_2$ satisfying $[q_1:q_2]\cdot q_2=q_1$. This is smooth, as a consequence of Proposition \ref{altdefprincbunliegpoidwithcorn}. Therefore, so is $\Phi_{\sigma}^{-1}$, for it is given by:
\begin{equation*} \Phi_\sigma^{-1}:\beta_3^{-1}(U)\to \G_2\times_{X_2}U,\quad q\mapsto ([q:(\sigma\circ\beta_3)(q)],\beta_3(q)).
\end{equation*}  
Now consider the composition:
\begin{equation}\label{locsmstrquotmapmoreqcomp0} P\times_{X_2}\beta_3^{-1}(U)\xrightarrow{\textrm{Id}_P\times\Phi_\sigma^{-1}} P\times_{X_2}\G_2\times_{X_2}U \xrightarrow{(\ref{moreqrightprinbunmap})\times\textrm{Id}_U} P\times_{X_1}P\times_{X_2} U\xrightarrow{\textrm{pr}_{P,2}\times\textrm{Id}_U} P\times_{X_2}U,
\end{equation} where the last fiber-product is taken with respect to $\alpha_2:P\to X_2$ and $\beta_2\circ \sigma:U\to X_2$. This being a composition of two diffeomorphisms and a tame submersion, the composite (\ref{locsmstrquotmapmoreqcomp0}) is a tame submersion. It factors through a homeomorphism from $P\ast_{\G_2}\beta_3^{-1}(U)$ to $P\times_{X_2}U$, with inverse:
\begin{equation}\label{locsmstrquotmapmoreqcomp} P\times_{X_2}U\to P\ast_{\G_2}\beta_3^{-1}(U), \quad (p,x)\mapsto [p,\sigma(x)].
\end{equation} Since the composite (\ref{locsmstrquotmapmoreqcomp0}) is a tame submersion, this homeomorphism induces a smooth structure with corners on the open $P\ast_{\G_2}\beta_3^{-1}(U)$ around $[p,q]$, with respect to which the restriction of (\ref{quotmapmoreqcomp}) to $P\times_{X_2}\beta_3^{-1}(U)$ is a tame submersion, as was to be constructed. \end{proof}
\newpage
\bibliographystyle{plain}

\bibliography{ref}
\Addresses
\end{document}